\def\M{{\cal  M}}
\def\f12{\frac 1 2}
\def\hh{\mathcal{H}^{+}}
\def\z{\zeta}
\def\a{\alpha}
\def\b{\beta}
\def\f12{\frac 1 2}
\newcommand{\nabb}{\mbox{$\nabla \mkern-13mu /$\,}}
\newcommand{\lapp}{\mbox{$\triangle \mkern-13mu /$\,}}
\newtheorem{definition}{Definition}[section]
\newtheorem{remark}{Remark}[section]
\newtheorem{lemma}{Lemma}[subsection]
\newtheorem{theorem}{Theorem}[section]
\newtheorem{proposition}{Proposition}[subsection]
\newtheorem{corollary}{Corollary}[section]
\newtheorem{mytheo}{Theorem}
\begin{document}

\title{The Wave Equation on  Extreme Reissner-Nordstr\"{o}m Black Hole Spacetimes: Stability and Instability Results}
\author{Stefanos Aretakis\thanks{University of Cambridge,
Department of Pure Mathematics and Mathematical Statistics,
Wilberforce Road, Cambridge, CB3 0WB, United Kingdom}}
\date{October 18, 2010}
\maketitle

\begin{abstract}

We consider solutions to the linear wave equation $\Box_{g}\psi=0$ on a suitable globally hyperbolic subset of an  extreme Reissner-Nordstr\"{o}m spacetime, arising from regular initial data prescribed on a Cauchy hypersurface $\Sigma_{0}$ crossing the future event horizon $\mathcal{H}^{+}$.  We obtain boundedness, decay, non-decay and blow-up results. Our estimates hold up to and including $\mathcal{H}^{+}$. The fundamental new aspect of this problem is the degeneracy of the redshift on the event horizon $\mathcal{H}^{+}$. Several new analytical features of degenerate horizons are also presented.
\end{abstract}

\tableofcontents

\section{Introduction}
\label{sec:Introduction}

\textit{Black holes} are one of the most celebrated predictions of General Relativity, and perhaps of Mathematical Physics in general. A particularly interesting but  peculiar example of a  black hole spacetime is given by the so-called  \textit{extreme Reissner-Nordstr\"{o}m} metric, which in local coordinates $(t,r,\theta,\phi)$ takes the form 
\begin{equation}
g=-Ddt^{2}+\frac{1}{D}dr^{2}+r^{2}g_{\scriptstyle\mathbb{S}^{2}},
\label{1g1}
\end{equation}
where 
\begin{equation*}
D=D\left(r\right)=\left(1-\frac{M}{r}\right)^{2},
\end{equation*}
$g_{\scriptstyle\mathbb{S}^{2}}$ is the standard metric on $\mathbb{S}^{2}$ and $M>0$ (see also Appendix \ref{sec:OnTheGeometryOfReissnerNordstrOM}).   This spacetime has been the object of considerable study in the physics literature (see for instance the very recent \cite{marolf}), but the analysis of its waves has not been adequately studied from a mathematical point of view.   In this paper, we shall attempt a more or less complete treatment of  the wave equation
\begin{equation}
\Box_{g}\psi=0
\label{1eq}
\end{equation}
on  extreme Reissner-Nordstr\"{o}m. Our main results (see Section \ref{sec:TheMainTheorems}) include:
\begin{enumerate}
 \item Local integrated decay of energy, up to and including the event horizon $\mathcal{H}^{+}$ (Theorem \ref{th1}).
	\item Energy and pointwise uniform boundedness of solutions, up to and including  $\mathcal{H}^{+}$ (Theorems \ref{t2}, \ref{t5}).
	\item Energy and pointwise decay of solutions,  up to and including  $\mathcal{H}^{+}$ (Theorems \ref{t4}, \ref{t6}, \ref{theo8}).
    \item Non-decay and blow-up estimates for higher order derivatives of solutions along  $\mathcal{H}^{+}$ (Theorem \ref{theo9}).
\end{enumerate}
The latter blow-up estimates are in sharp contrast with the non-extreme case, for which decay holds for all higher order derivatives of $\psi$ along $\mathcal{H}^{+}$.
Several new analytical features of degenerate event horizons are also presented.  The above theorems, in particular, resolve Open Problem 4 (for  extreme Reissner-Nordstr\"{o}m) from Section 8 of \cite{md}. The fundamentally new aspect of this problem is the degeneracy of the redshift on the event horizon $\mathcal{H}^{+}$.

\subsection{Preliminaries}
\label{sec:Preliminaries}

Before we discuss in detail our results, let us present the distinguishing properties  of extreme Reissner-Nordstr\"{o}m and put this spacetime in the context of previous results.

\subsubsection{Extreme Black Holes}
\label{sec:ExtremeBlackHoles}

We  briefly describe here  the geometry of the horizon of extreme Reissner-Nordstr\"{o}m. (For a nice introduction to the relevant notions, we refer the reader to \cite{haw}). The event horizon $\mathcal{H}^{+}$ corresponds to $r=M$, where the $(t,r)$ coordinate system \eqref{1g1} breaks down. The coordinate vector field $\partial_{t}$, however, extends to a regular null Killing vector field $T$ on $\mathcal{H}^{+}$. The integral curves of $T$ on $\mathcal{H}^{+}$ are in fact affinely parametrized:
\begin{equation}
\nabla_{T}T=0.
\label{1t1}
\end{equation}
More generally, if an event horizon admits a Killing  tangent vector field $T$ for which \eqref{1t1} holds, then the horizon is called degenerate and the black hole \textit{extreme}. In other words, a black hole is called extreme if the surface gravity vanishes on $\mathcal{H}^{+}$(see Section \ref{sec:RedshiftEffectAndSurfaceGravityOfH}). Under suitable circumstances, the notion of extreme black holes can in fact be defined even  in case the spacetime does not admit a Killing field (see \cite{price}). 

The extreme  Reissner-Nordstr\"{o}m corresponds to the $M=e$ subfamily of the two parameter  Reissner-Nordstr\"{o}m family with parameters  mass $M>0$ and  charge $e>0$. It  sits between the non-extreme black hole case $e<M$ and the so-called naked singularity case $M<e$. Note that the physical relevance of the black hole notion rests in the expectation  that black holes are ``stable'' objects in the context of the dynamics of the Cauchy problem for the Einstein equations. On the other hand, the so-called weak cosmic censorship conjecture suggests that naked singularities are dynamically unstable (see the discussion in \cite{neo}). That is to say, extreme black holes are expected to have both stable and unstable properties; this makes their analysis very interesting and challenging.

\subsubsection{Linear Waves}
\label{sec:LinearWaves}

 The first step in understanding the dynamic  stability (or instability) of a spacetime  is by considering the wave equation \eqref{1eq}. This is precisely the motivation of the present paper. Indeed, to show stability one would have  to prove that solutions of the wave equation decay sufficiently fast. For potential future applications all methods should be robust and the resulting estimates   quantitative. Robust means that the methods still apply when the background metric is replaced by a nearby metric, and quantitative means that any estimate for $\psi$ must be in terms of uniform constants and (weighted Sobolev) norms of the initial data.  Note also that it is essential to obtain non-degenerate estimates for $\psi$ on $\mathcal{H}^{+}$ and to consider initial data that  do not vanish on the horizon. As we shall see,  the issues at the horizon  turn out to be the most challenging part in understanding the evolution of waves on  extreme Reissner-Nordstr\"{o}m.

\subsubsection{Previous Results for Waves on Non-Extreme Black Holes}
\label{sec:PreviousResultsForNonExtremeBlackHoles}

The wave equation \eqref{1eq} on black hole spacetimes  has  long been studied beginning with the pioneering work of Regge and Wheeler \cite{RW} for Schwarzschild. Subsequently, a series of heuristic and numerical arguments were put forth for obtaining decay results for $\psi$ (see \cite{other2, price72}). However, the first complete quantitative result (uniform boundedness) was obtained only in 1989 by Kay and Wald \cite{wa1}, extending the restricted result of  \cite{drimos}. Note that the proof of \cite{wa1} heavily depends on the exact symmetries of the Schwarzschild spacetime.

During the last decade, the wave equation on black hole spacetimes  has become a very active area in mathematical physics. As regards the Schwarzschild spacetime,   ``$X$ estimates" providing local integrated energy decay (see Section \ref{sec:MorawetzAndXEstimates} below) were derived   in \cite{blu0,blu3,dr3}. Note that \cite{dr3} introduced a vector field estimate which captures in a stable manner the so-called  \textit{redshift effect}, which allowed the authors to obtain quantitative pointwise estimates on the horizon $\mathcal{H}^{+}$.  Refinements for Schwarzschild were achieved in \cite{dr5} and \cite{tataru1}. Similar estimates to \cite{blu0} were derived in \cite{blu1} for the whole parameter range of Reissner-Nordstr\"{o}m including the extreme case. However, these estimates  degenerate on $\mathcal{H}^{+}$ and require  the initial data to be supported away from $\mathcal{H}^{+}$.  
 
The first boundedness result for solutions of the wave equation on slowly rotating Kerr ($\left|a\right|\ll M$) spacetimes was proved in \cite{dr7} and decay results  were derived in \cite{blukerr,md,tataru2}. Decay results for general subextreme Kerr  spacetimes ($\left|a\right|<M$) are proven in \cite{megalaa}. Two new methods were presented recently for obtaining sharp decay of energy flux and pointwise decay on black hole spacetimes; see \cite{new,tataru3}. For results on the coupled wave equation see \cite{price}. For other results see \cite{other1,finster1,kro}. For an exhaustive list of references, see \cite{md}.

Note that all previous arguments for obtaining boundedness and  decay results on non-extreme black hole  spacetimes near the horizon would break down in our case (see Sections \ref{sec:RedshiftEffectAndSurfaceGravityOfH}, \ref{sec:TheSpacetimeTermKN}, \ref{sec:ConservationLawsOnDegenerateEventHorizons}). The reason for this is precisely the degeneracy of the redshift on $\mathcal{H}^{+}$.

\subsection{Overview of Results and Techniques}
\label{sec:OverviewOfMethodsAndResults}

We next describe the results we prove in this paper.   All our estimates are with respect to $L^{2}$ norms and we mainly use the robust vector field method.  Many of our results hold for more general (spherically symmetric) extreme black hole spacetimes (but we shall not pursue this here). We deduce that in some aspects the waves on extreme Reissner-Nordstr\"{o}m  have stable properties but in other aspects they appear to be unstable; this result is consistent with our discussion above about extreme black holes.

\subsubsection{Zeroth Order Morawetz and $X$ Estimates}
\label{sec:MorawetzAndXEstimates}

\noindent Our analysis begins with local $L^{2}$ spacetime estimates. We refer to local spacetime estimates controlling the derivatives of $\psi$  as ``$X$ estimates" and $\psi$ itself  as ``zeroth order Morawetz estimate". Both these types of estimates have a long history (see \cite{md}) beginning with the seminal work of Morawetz \cite{mor2} for the wave equation on Minkowski spacetime. They arise from the spacetime term of energy currents $J^{X}_{\mu}$ associated to a vector field $X$. For Schwarzschild,  such estimates appeared in \cite{blu0,blu3,blu2,dr3,dr5}  and for  Reissner-Nordstr\"{o}m in \cite{blu1}. The biggest difficulty in deriving an $X$ estimate for black hole spacetimes has to do with the \textit{trapping effect}. Indeed, from a continuity argument one can infer the existence of null geodesics which neither cross $\mathcal{H}^{+}$ nor terminate at $\mathcal{I}^{+}$. In our case these geodesics lie on a hypersurface of constant radius (see Section \ref{sec:PhotonSphereAndTrappingEffect}) known as the \textit{photon sphere}. From the analytical point of view, trapping affects the derivatives tangential to the photon sphere  and any non-degenerate spacetime estimate must lose (tangential) derivatives (i.e.~must require high regularity for $\psi$).
 
In this paper, we first (making minimal use of the spherical decomposition)  derive a zeroth order Morawetz estimate for $\psi$ which does not degenerate at the photon sphere.  For the case $l\geq 1$ (where $l$ is related to the eigenvalues of the spherical Laplacian, see Section \ref{sec:EllipticTheoryOnMathbbS2}) our work is inspired by \cite{dr5} and for $l=0$ we present a method which is 
robust and uses only geometric properties of the domain of outer communications. Our argument applies for a wider class of black hole spacetimes and, in particular, it applies for Schwarzschild. Note that no unphysical conditions are imposed on the initial data  which, in particular, are not required to be compactly supported or supported away from $\mathcal{H}^{+}$. Once this Morawetz estimate is established, we then show how to derive a degenerate (at the photon sphere) X estimate which does not require higher regularity and  a non-degenerate X estimate (for which we need, however, to commute with the  Killing $T$). These estimates, however, degenerate on $\mathcal{H}^{+}$; this degeneracy will be dropped later.  See Theorem \ref{th1} of Section \ref{sec:TheMainTheorems}.

\subsubsection{Uniform Boundedness of Non-Degenerate Energy}
\label{sec:UniformBoundednessOfEnergy}

The vector field $T=\partial_{v}$ is causal and Killing and  the energy flux of the current $J_{\mu}^{T}$ is non-negative definite (and bounded) but degenerates on the horizon (see Section \ref{sec:TheVectorFieldTextbfM}). Moreover, in view of the lack of redshift along $\mathcal{H}^{+}$, the divergence of the energy current $J_{\mu}^{N}$ associated to the redshift vector field $N$, first introduced in \cite{dr3}, is not positive definite near $\mathcal{H}^{+}$ (see Section \ref{sec:TheVectorFieldN}). For this reason we appropriately modify $J_{\mu}^{N}$ so the new bulk term is non-negative definite near $\mathcal{H}^{+}$. Note that the arising boundary terms can be bounded using  Hardy-like inequalities. It is important here to mention that a Hardy inequality (in the first form presented in Section \ref{sec:HardyInequalities}) allows us to bound the  local $L^{2}$ norm of $\psi$ on hypersurfaces crossing $\mathcal{H}^{+}$ using the (conserved) degenerate energy of $T$. Note also that  the bulk term is not positive far away from $\mathcal{H}^{+}$ and so to control these terms we use the $X$ and zeroth order Morawetz estimates. See Theorem \ref{t2} of Section \ref{sec:TheMainTheorems}.

\subsubsection{Conservation Laws on $\mathcal{H}^{+}$}
\label{sec:ConservationLawsOnMathcalH}

\noindent Although the bulk term of the modified redshift current is non-negative definite, it degenerates on $\mathcal{H}^{+}$ with respect to the derivative \textbf{transversal} to $\mathcal{H}^{+}$. This degeneracy is a characteristic feature of degenerate event horizons. 
Indeed, in Section \ref{sec:ConservationLawsOnDegenerateEventHorizons}, we show that the lack of redshift along $\mathcal{H}^{+}$ gives rise to a series of conservation laws (see Theorem \ref{t3} of Section \ref{sec:TheMainTheorems}). These laws apply for waves which are supported on the angular frequency $l$, i.e.~for which $\psi_{k}=0$ for all $k\neq l$, where $\psi_{k}$ is the projection of $\psi$ (viewed as an $L^{2}$ function on the spheres of symmetry) on the eigenspace $V_{k}$ of the spherical Laplacian  $\lapp$ (see Section \ref{sec:EllipticTheoryOnMathbbS2}). According to these laws a linear combination of the transversal derivatives of $\psi$ of order at most $l+1$ is conserved along the null geodesics of $\mathcal{H}^{+}$. As we shall see, it is these conserved quantities that allow us to infer the unstable properties of extreme black holes, and thus, understanding their structure is crucial and essential.  A consequence of these laws is that for generic initial data, the derivatives  transversal to $\mathcal{H}^{+}$  for waves $\psi$ which are supported on the  low angular frequencies  \textbf{do not decay}, and if the order of the derivatives is sufficiently high then they in fact \textbf{blow up} along $\mathcal{H}^{+}$. The genericity here refers to data for which certain quantities do not vanish on the horizon.

Based on these conservation laws, we will also prove that the Schwarzschild boundedness argument of Kay and Wald \cite{wa1} cannot be applied in the extreme case, i.e.~for generic $\psi$, there does not exist a Cauchy hypersurface $\Sigma$ crossing $\mathcal{H}^{+}$ and a solution $\tilde{\psi}$ such that 
\begin{equation*}
T\tilde{\psi}=\psi
\end{equation*}
in the  causal future of $\Sigma$.

\subsubsection{Sharp Higher Order $L^{2}$ Estimates}
\label{sec:SharpHigherOrderL2Estimates}

\noindent We next return to the problem of retrieving the derivative tranversal to $\mathcal{H}^{+}$  in the $X$ estimate in a neighbourhood of $\mathcal{H}^{+}$. More generally, we establish  $L^{2}$ estimates of the derivatives of $\psi$; see Theorem \ref{theorem3} of Section \ref{sec:TheMainTheorems}. In view of the conservation laws one expects to derive $k'$th order ($k\geq 1)$ $L^{2}$ estimates close to $\mathcal{H}^{+}$ only if $\psi_{l}=0$ for all $l\leq k$. However, on top of this low frequency obstruction comes another new feature of degenerate event horizons. Indeed, to obtain such estimates, one needs to require higher regularity for $\psi$ and commute  with the vector field transversal to $\mathcal{H}^{+}$ . This shows that $\mathcal{H}^{+}$ exhibits phenomena characteristic of trapping (see also the discussion in Section \ref{sec:TrappingEffectOnMathcalH}). Then by using  appropriate modifications and the Hardy inequalities we obtain the sharpest possible result. See Section \ref{sec:HigherOrderEstimates}. Note that although (an appropriate modification of) the redshift current can be used as a multiplier for all angular frequencies, the redshift vector field $N$ can only be used as a commutator\footnote{The redshift vector field was used as a commutator for the first time in \cite{dr7}.} for $\psi$ supported on the frequencies $l\geq 1$.

\subsubsection{Energy and Pointwise Decay}
\label{sec:EnergyAndPointwiseDecay}

\noindent Using an adaptation of  methods developed in the recent \cite{new}, we  obtain energy and pointwise decay for $\psi$. See Theorems \ref{t4} and \ref{t6} of Section \ref{sec:TheMainTheorems}.  In \cite{new}, a general framework is provided for obtaining decay results. The ingredients necessary for applying the framework are: 1) good asymptotics of the metric towards null infinity, 2) uniform boundedness of energy and 3) integrated local energy decay (where the spacetime integral of energy should be controlled by the energy of $\psi$ and $T\psi$). We first verify that extreme  Reissner-Nordstr\"{o}m satisfies the first hypothesis. However, in view of the trapping on the event horizon $\mathcal{H}^{+}$, it turns out that the method described in \cite{new} can not be directly used to yield decay results in the extreme case. Indeed, the third hypothesis of \cite{new} is not satisfied in extreme Reissner-Nordstr\"{o}m.  For this reason, we obtain several  hierarchies of estimates in an appropriate neighbourhood of $\mathcal{H}^{+}$. These estimates avoid multipliers or commutators with weights in $t$, following the philosophy of \cite{new}. Our method applies to   black hole spacetimes where trapping is present on $\mathcal{H}^{+}$ (including, in particular, a wide class of extreme black holes). Pointwise estimates follow then by commuting with the generators of so(3) and Sobolev estimates. We shall also see that the low angular frequencies decay more slowly than the higher ones. See Section \ref{sec:PointwiseEstimates}.

\subsubsection{Higher Order Pointwise Estimates}
\label{sec:HigherOrderPointwiseEstimates1}

\noindent In order to provide a complete picture of the behaviour of waves $\psi$, it remains to derive pointwise estimates for all derivatives of $\psi$.  We show that if $\psi$ is supported on the angular frequency $l$, then the derivatives transversal to $\mathcal{H}^{+}$  of $\psi$ decay if the order is at most $l$. If the order is $l+1$, then  for generic initial data this derivative converges along $\mathcal{H}^{+}$ to a non-zero number. As before, by generic initial data we mean data for which certain quantities do not vasish on $\mathcal{H}^{+}$. If, moreover, the order is at least $l+2$, then for generic initial data these derivatives blow up asymptotically along $\mathcal{H}^{+}$.  On the other hand, for $T^{m}\psi$  one needs to consider the order to be at least $l+2+m$  so the derivatives blow up. See Theorems \ref{theo8} and \ref{theo9} of Section \ref{sec:TheMainTheorems}. Therefore, the $T$ derivatives\footnote{It is also shown that $T\psi$ decays faster than $\psi$.} slightly counteract the action of the derivatives transversal to $\mathcal{H}^{+}$. We conclude this paper by deriving similar decay, non-decay and blow-up results for the higher order energy. In particular, this  shows that one can commute with the redshift vector field at most $l$ times for   $\psi$ supported on the frequency $l$.  See Section \ref{sec:HigherOrderPointwiseEstimates}.  

\subsection{Remarks on the Analysis of Extreme Black Holes}
\label{sec:EventHorizonVsPhotonSphere}

We conclude this introductory section by discussing several new features of degenerate event horizons. 

\subsubsection{Dispersion vs Redshift}
\label{sec:DispersionVsRedshift}

In \cite{md}, it was shown that for a wide variety of non-extreme black holes, the redshift on $\mathcal{H}^{+}$ suffices to yield uniform boundedness of the non-degenerate energy without any need of understanding the dispersion properties of $\psi$. However, in the extreme case, the degeneracy of the redshift makes the understanding of the dispersion of $\psi$ essential even for the problem of  boundedness. In particular, one has to derive spacetime integral estimates for $\psi$ and its derivatives.

\subsubsection{Trapping Effect on $\mathcal{H}^{+}$}
\label{sec:TrappingEffectOnMathcalH}

According to the results of Sections \ref{sec:CommutingWithAVectorFieldTransversalToMathcalH} and \ref{sec:HigherOrderEstimates}, in order to obtain $L^{2}$ estimates in neighbourhoods of $\mathcal{H}^{+}$ one must require higher regularity for $\psi$ and commute with the vector field transversal to $\mathcal{H}^{+}$ (which is not Killing). This loss of a derivative is characteristic  of trapping. Geometrically, this is related to the fact that the null generators of $\mathcal{H}^{+}$ viewed as integrals curves of the Killing vector field $T$ are affinely parametrized. 

The trapping properties of the photon sphere have different analytical flavour. Indeed, in order to obtain $L^{2}$ estimates in regions which include the photon sphere, one needs to commute with  either $T$ or the generators of the Lie algebra so(3) (note that all these vector fields are Killing). Only the high angular frequencies are trapped on the photon sphere (and for the low frequencies no commutation is required) while all the angular frequencies are trapped (in the above sense) on $\mathcal{H}^{+}$.

\subsubsection{Instability of Degenerate Horizons}
\label{sec:InstabilityOfDegenerateHorizons}

Our setting is appropriate for understading the dynamic formation of extreme black holes from gravitational collapse:

 \begin{figure}[H]
	\centering
		\includegraphics[scale=0.14]{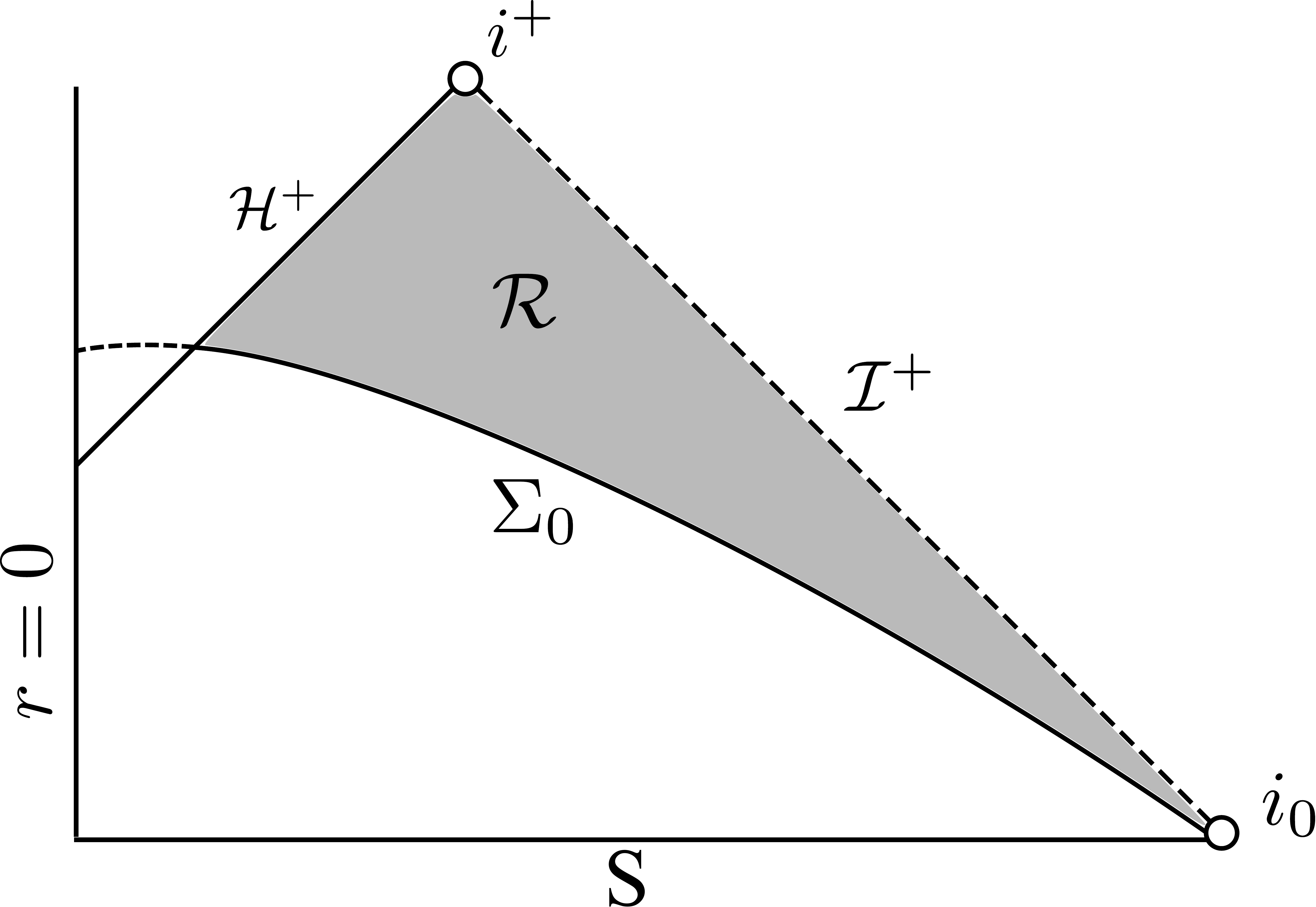}
	\label{fig:collapse}
\end{figure}
One can construct spacetimes (for instance solutions of the Einstein-Maxwell-Scalar field system) such that the initial hypersurface $S\sim\mathbb{R}^{3}$ is complete and asymptotically flat with one end and the spacetime in region $\mathcal{R}$ coincides with the exterior region of extreme R-N. Consider arbitrary initial data $ID_{\Sigma_{0}}$ on $\Sigma_{0}$ for the wave equation (in particular, suppose that their support includes $\mathcal{H}^{+}$).
By (extending and) solving backwards, we obtain a new initial data set $ID_{S}$ on the initial hypersurface $S$. 
The initial data  $ID_{\Sigma_{0}}$ and $ID_{S}$ are isomorphic. Indeed, if one considers a scalar perturbation which corresponds to the data set $ID_{S}$, then the trace of the  perturbation on $\Sigma_{0}$ corresponds to the  data   $ID_{\Sigma_{0}}$. Hence, the support of  our initial data on $\Sigma_{0}$ should include $\mathcal{H}^{+}$. That is to say, our results represent a poor man's linearisation of the non-linear problem.

In the linear level,  the low angular frequencies not only  are they trapped on $\mathcal{H}^{+}$, but their evolution is governed by  conservation laws along null generators. These laws imply that for generic initial data (i.e.~data for which certain quantities do not vanish on $\mathcal{H}^{+}\cap\Sigma_{0}$), the higher order derivatives transversal to $\mathcal{H}^{+}$ blow up along $\mathcal{H}^{+}$. These differential operators are translation invariant and do not depend on the choice of a coordinate system. The blow-up of these geometric quantities suggests that extreme black holes are dynamically unstable\footnote{Price's law conjectures that all ``parameters'' of the exterior spacetime other than the mass, charge and angular momentum --so-caled ``hair''-- should decay polynomially along the event horizon or null infinity. This decay suggests that black holes with regular event horizons are stable from the point of view of far away observers and can form dynamicaly in collapse. Clearly, our results prove that Price's law does not hold in extreme R-N.}. We hope that the methods we develop in this paper will be useful for proving such a result in the nonlinear setting.

\subsection{Open Problems}
\label{sec:FutureWork}

An important problem is that of understanding the solutions of the wave equation on the extreme Kerr spacetime. This spacetime is not spherically symmetric and there is no globally causal Killing field in the domain of outer communications (in particular, $T$ becomes spacelike close to the event horizon). Recent results \cite{megalaa} overcome these difficulties for the whole subextreme range of Kerr. The extreme case remains open.

One could  consider the problem of the wave equation coupled with the Einstein-Maxwell equations. Then decay for the scalar field was proven in the deep work of Dafermos and Rodnianski  \cite{price}. Again these results hold for non-extreme black holes. For extreme black holes even boundedness of waves remains open. 

\subsection{Addendum: Published Version}
\label{add}
 
After its original appearance on the arxiv, this work was subsequently improved with various new results and split into two (now published) parts \cite{aretakis1,aretakis2}.

\section{Geometry of  Extreme Reissner-Nordstr\"{o}m \\ Spacetime}
\label{sec:GeometryOfExtremeReissnerNordstromSpacetime}

The extreme Reissner-Nordstr\"{o}m family is a one parameter subfamily sitting inside the two parameter Reissner-Nordstr\"{o}m family of 4-dimensional Lorentzian manifolds $(\mathcal{N}_{M,e},g_{M,e})$, where the parameters are the mass $M>0$ and the electromagnetic charge $e\geq 0$. The extreme case corresponds to $M=e$. In the ingoing Eddington-Finkelstein coordinates $(v,r)$ the metric of the extreme Reissner-Nordstr\"{o}m takes the form
 \begin{equation}
g=-Ddv^{2}+2dvdr+r^{2}g_{\scriptstyle\mathbb{S}^{2}},
\label{RN1}
\end{equation}
where 
\begin{equation*}
D=D\left(r\right)=\left(1-\frac{M}{r}\right)^{2}
\end{equation*}
and $g_{\scriptstyle\mathbb{S}^{2}}$ is the standard metric on $\mathbb{S}^{2}$. The Penrose diagram (see also Appendix \ref{sec:PenroseDiagrams}) of the spacetime $\mathcal{N}$ covered by this coordinate system for $v\in\mathbb{R},r\in\mathbb{R}^{+}$ is
 \begin{figure}[H]
	\centering
		\includegraphics[scale=0.14]{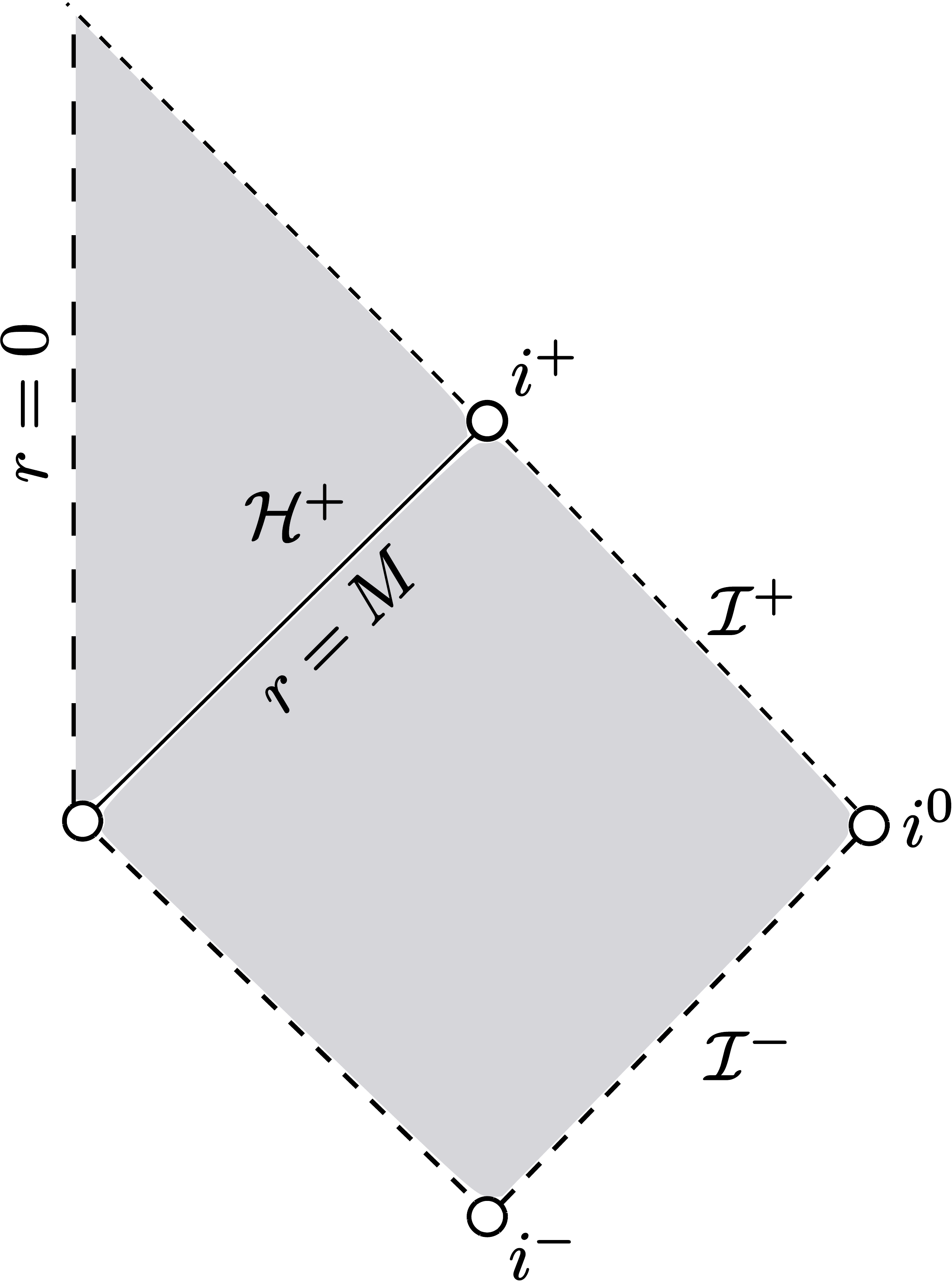}
	\label{fig:ern001}
\end{figure}
We will refer to the hypersurface  $r=M$ as the event horizon (and denote it by $\mathcal{H}^{+}$) and the region $r\leq M$ as the black hole region. The region where $M<r$ corresponds to  the domain of outer communications.

In view of the existence of the timelike curvature singularity $\left\{r=0\right\}$ `inside' the black hole (thought of here as a singular boundary of the black hole region) and its unstable behaviour,  we are only interested in studying the wave equation in the domain of outer communications \textbf{including the horizon} $\mathcal{H}^{+}$. Note that the study of the horizon is of fundamental importance. The horizon determines the existence of the black hole and therefore any attempt to prove the nonlinear stability of the exterior of black holes must come to terms with the structure of the horizon.

We consider a connected asymptotically flat SO(3)-invariant spacelike hypersurface $\Sigma_{0}$ in $\mathcal{N}$ terminating at $i^{0}$ with boundary  such that $\partial\Sigma_{0}=\Sigma_{0}\cap\mathcal{H}^{+}$. We also assume that if $n$ is its future directed unit normal and $T=\partial_{v}$ then there exist positive constants $C_{1}<C_{2}$ such that 
\begin{equation*}
\begin{split}
&C_{1}<-g\left(n,n\right)<C_{2},\\
&C_{1}<-g\left(n,T\right)<C_{2}.
\end{split}
\end{equation*}
Let $\mathcal{M}$ be the domain of dependence of $\Sigma_{0}$. Then, using the coordinate system $(v,r)$ we have
\begin{equation}
\mathcal{M}=\left(\left(-\infty,+\infty\right)\times\left[M\right.\!,+\infty\left.\right)\times\mathbb{S}^{2}\right)\cap J^{+}\left(\Sigma_{0}\right),
\label{Mern}
\end{equation}where  $J^{+}\left(\Sigma_{0}\right)$ is the causal future of $\Sigma_{0}$ (which by our convention includes $\Sigma_{0}$). Note  that $\mathcal{M}$ is a manifold with stratified (piecewise smooth) boundary  $\partial\mathcal{M}=(\mathcal{H}^{+}\cap\mathcal{M})\cup\Sigma_{0}$.
\begin{figure}[H]
	\centering
		\includegraphics[scale=0.14]{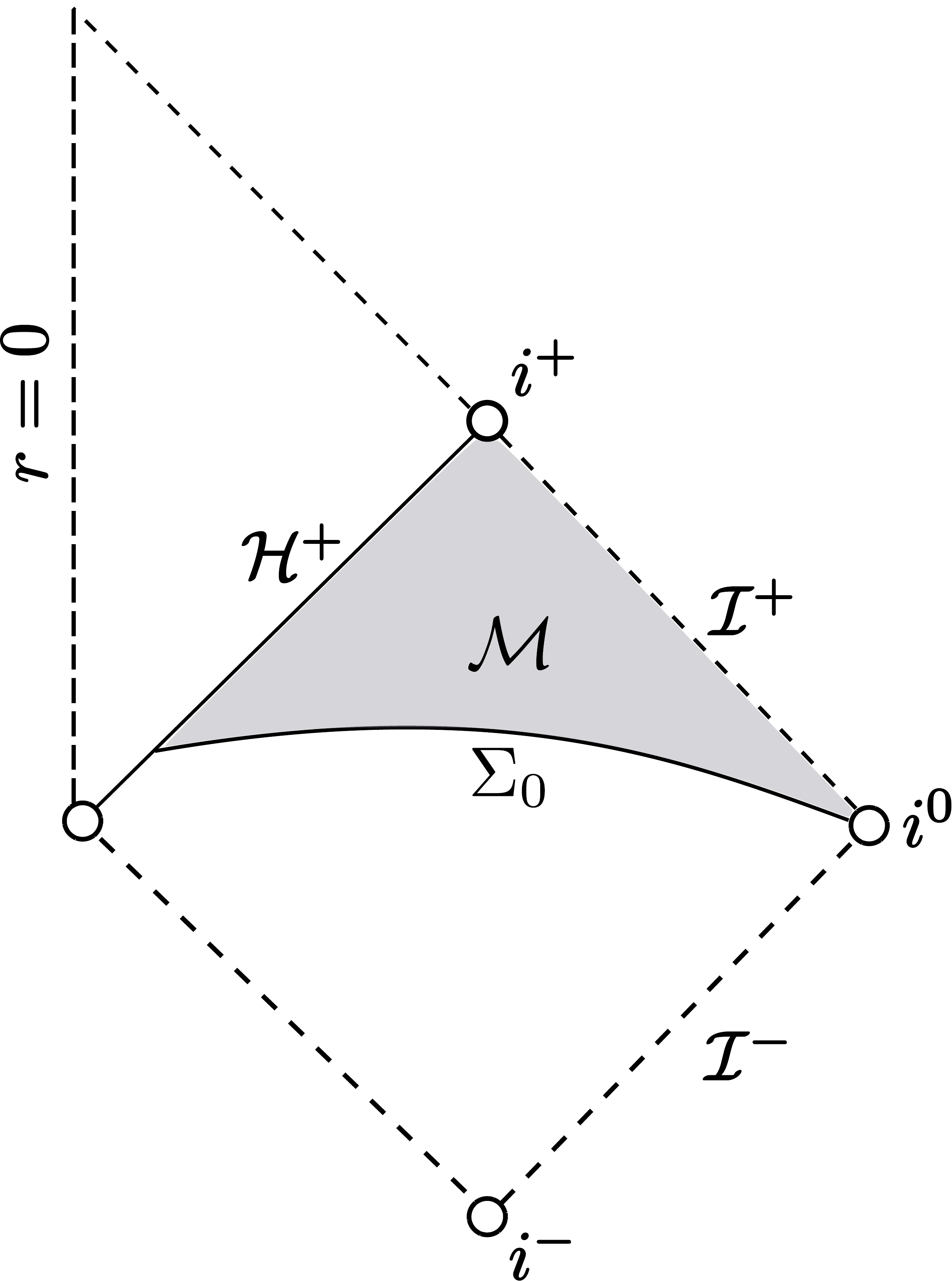}
	\label{fig:exrnw1}
\end{figure}

\subsection{The Foliations $\Sigma_{\tau}$ and $\tilde{\Sigma}_{\tau}$}
\label{sec:TheFoliationsSigmaTauAndTildeSigmaTau}

We  consider the foliation $\Sigma_{\tau}=\varphi_{\tau}(\Sigma_{0})$, where $\varphi_{\tau}$ is the flow of $T=\partial_{v}$. Of course, since $T$ is Killing, the hypersurfaces $\Sigma_{\tau}$ are all isometric to $\Sigma_{0}$. 
 \begin{figure}[H]
	\centering
		\includegraphics[scale=0.14]{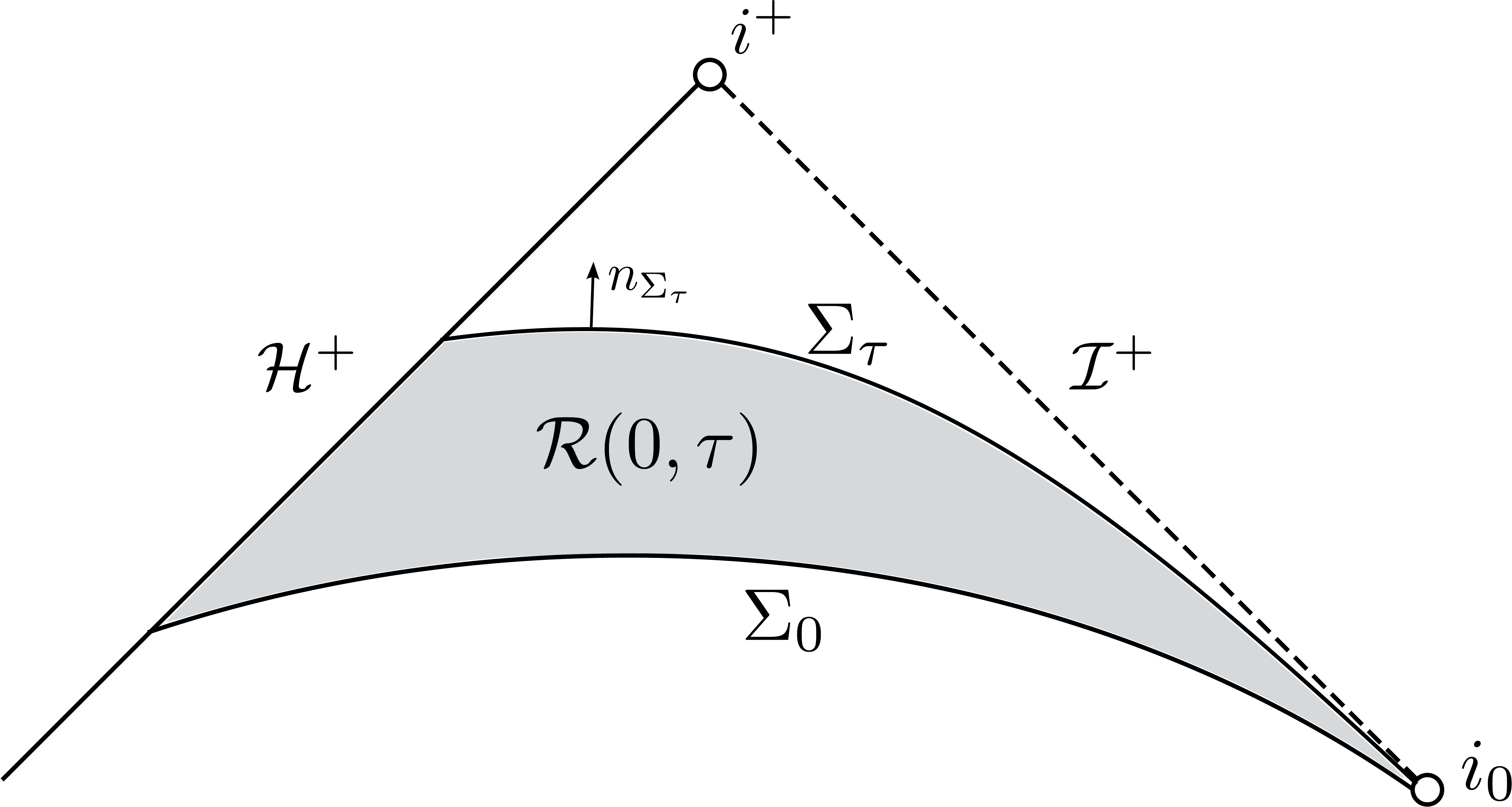}
	\label{fig:ernt1}
\end{figure}
We define the region
\begin{equation*} 
\mathcal{R}(0,\tau)=\cup_{0\leq \tilde{\tau}\leq \tau}\Sigma_{\tilde{\tau}}. 
\end{equation*}
On $\Sigma_{\tau}$ we have an induced Lie propagated coordinate system $(\rho,\omega)$ such that $\rho\in[\left.\right.\!\!\M,+\infty)$ and $\omega\in\mathbb{S}^{2}$. These coordinates are defined such that if $Q\in\Sigma_{\tau}$ and $Q=(v_{Q},r_{Q},\omega_{Q})$  then $\rho=r_{Q}$ and $\omega=\omega_{Q}$. Our assumption on the normal $n_{\Sigma_{0}}$ (and thus for $n_{\Sigma_{\tau}})$ implies that there exists a bounded function $g_{1}$ such that  
\begin{equation*}
\partial_{\rho}=g_{1}\partial_{v}+\partial_{r}.
\end{equation*}
This defines a coordinate system since $[\partial_{\rho},\partial_{\theta}]=[\partial_{\rho},\partial_{\phi}]=0$. Moreover, the volume form of $\Sigma_{\tau}$ is
\begin{equation}
dg_{\scriptstyle\Sigma_{\tau}}=V\rho^{2}d\rho d\omega,
\label{volumeformfoliation}
\end{equation}
 where $V$ is a positive bounded function.
 \begin{figure}[H]
	\centering
		\includegraphics[scale=0.14]{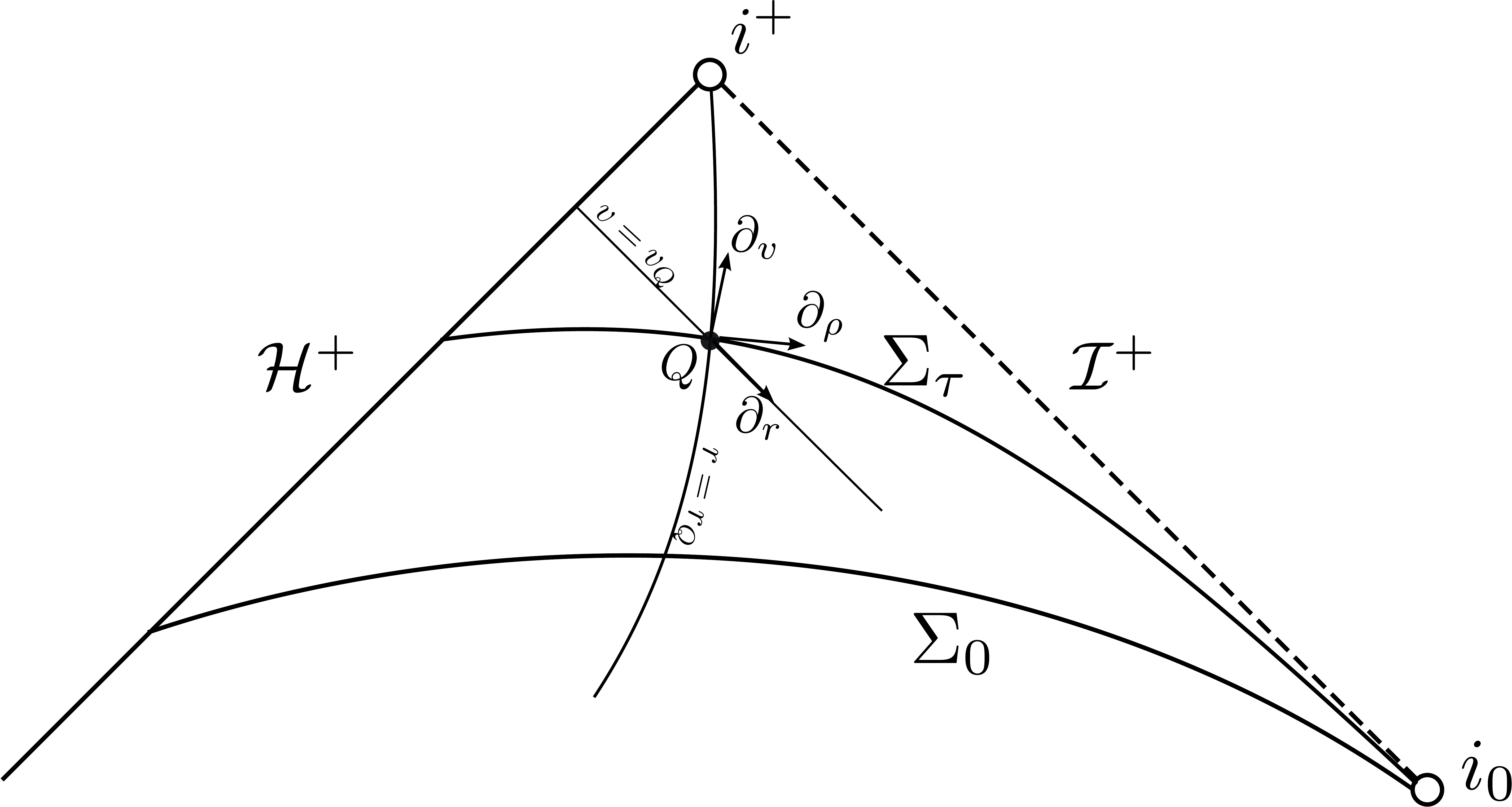}
	\label{fig:ernt2}
\end{figure}
Another foliation is $\tilde{\Sigma}_{\tau}$ which, instead of terminating at $i^{0}$,  terminates at $\mathcal{I}^{+}$ and thus  ``follows" the waves to the future. 
 \begin{figure}[H]
	\centering
		\includegraphics[scale=0.14]{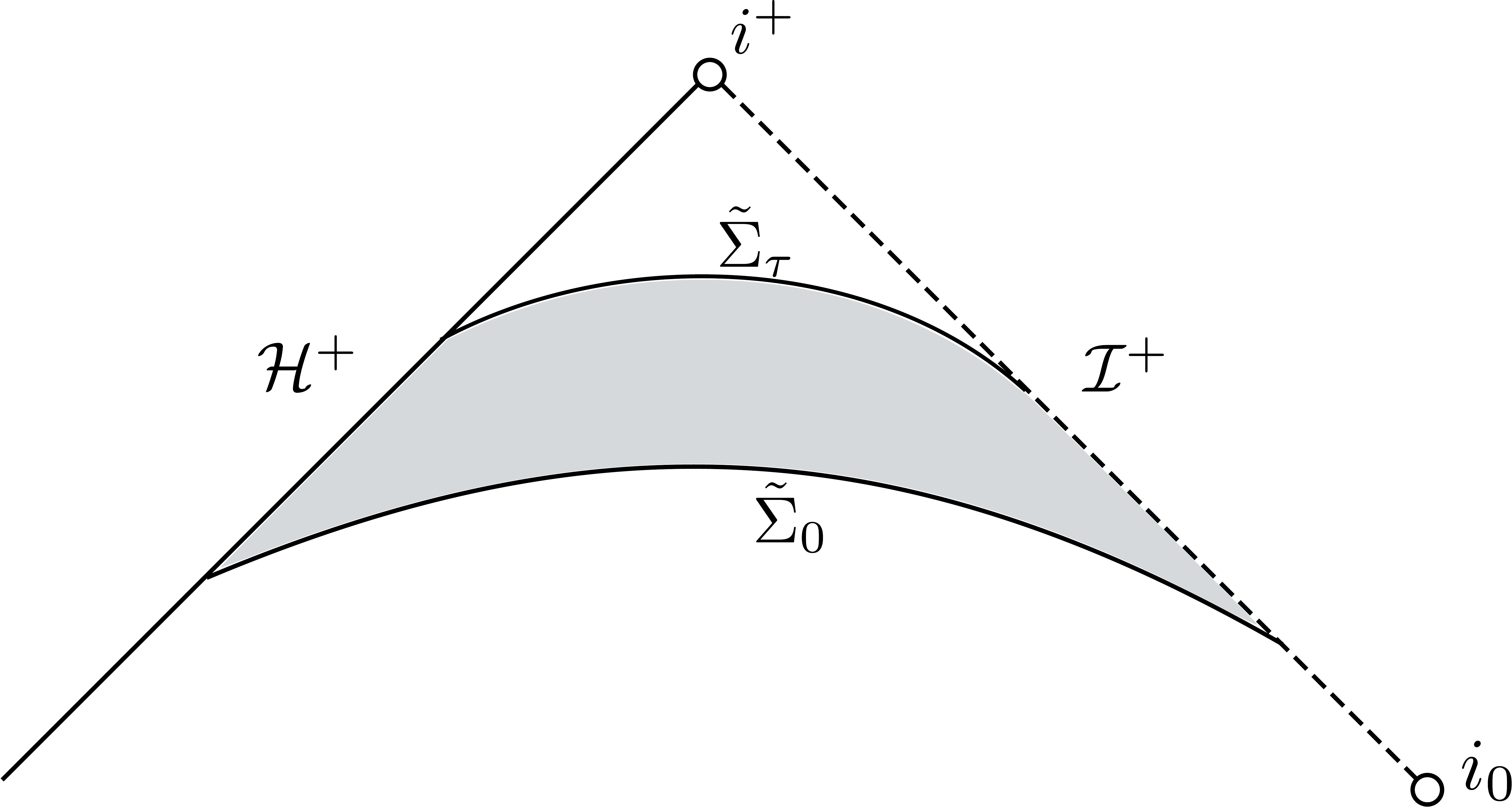}
	\label{fig:ernt3}
\end{figure}
One can similarly define an induced coordinate system on $\tilde{\Sigma}_{\tau}$. Note that only local elliptic estimates (see Appendix \ref{sec:EllipticEstimates}) are to be applied on $\tilde{\Sigma}_{\tau}$.

\subsection{The Photon Sphere and Trapping Effect}
\label{sec:PhotonSphereAndTrappingEffect}

One can easily see that there exist orbiting future directed null geodesics, i.e.~null geodesics that neither cross the horizon $\mathcal{H}^{+}$ nor meet null infinity $\mathcal{I}^{+}$. A class of such  geodesics $\gamma$ is of the form
\begin{equation*}
\begin{split}
\gamma:&\mathbb{R}\rightarrow\mathcal{M}\\
&\tau\mapsto \gamma\left(\tau\right)=\left(t\left(\tau\right),Q,\frac{\pi}{2},\phi\left(\tau\right)\right).
\end{split}
\end{equation*}
The conditions $\nabla_{\overset{.}{\gamma}}\overset{.}{\gamma}=0$ and $g\left(\overset{.}{\gamma},\overset{.}{\gamma}\right)=0$ imply that 
\begin{equation*}
Q=2M,
\end{equation*}
which is the radius of the so called \textit{photon sphere}. The $t,\phi$ depend linearly on $\tau$.
 \begin{figure}[H]
	\centering
		\includegraphics[scale=0.13]{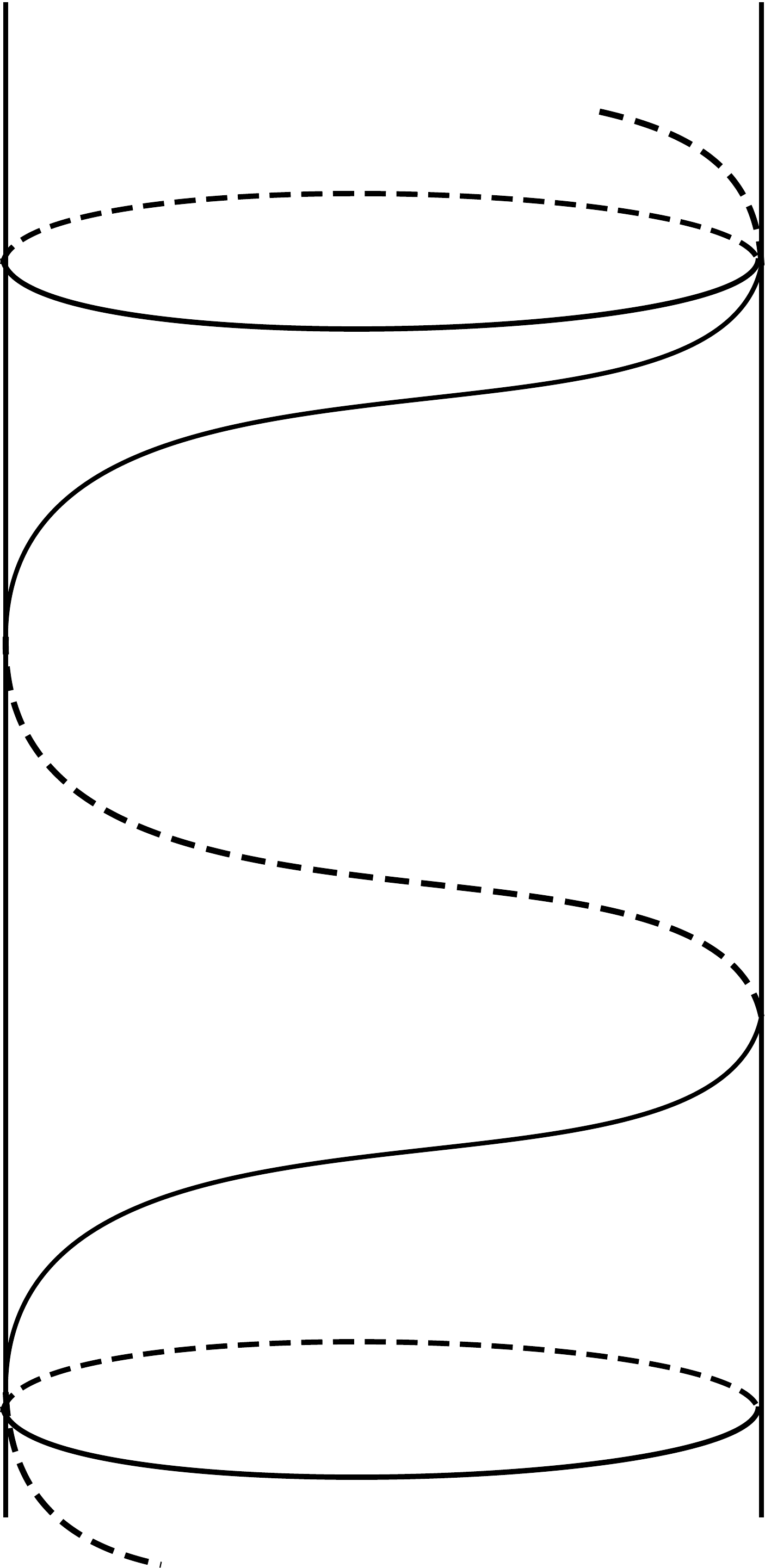}
	\label{fig:ps1}
\end{figure}

In the subextreme case we have $Q=\frac{3M}{2}\left(1+\sqrt{1-\frac{8e^{2}}{9M^{2}}}\right)$. In fact, from any point there is a family of null geodesics which asymptotically converge to the photon sphere to the future. The physical interpretation of the photon sphere will be crucial in what follows. Indeed, the existence of this ``sphere'' (which is in fact a 3-dimensional timelike hypersurface) implies that the energy of some photons is not scattered to null  infinity or  the black hole region. This is the so called \textit{trapping effect}. Note, in comparison, that in Minkowski spacetime all future directed null geodesics meet future  null infinity $\mathcal{I}^{+}$. As we shall see, this effect forces us to require higher regularity for the waves in order to achieve decay results.

\subsection{The Redshift Effect and Surface Gravity of $\mathcal{H}^{+}$}
\label{sec:RedshiftEffectAndSurfaceGravityOfH}
As we have already seen, the vector field $\partial_{v}$ becomes null on the event horizon $\mathcal{H}^{+}$ and is also  tangent to it. Therefore,  $\mathcal{H}^{+}$ is a null hypersurface. In view of the symmetry of the Levi-Civita connection, one  easily sees that if $N=\nabla f$, where the null hypersurface is given by $f=0$, then $N$ generates geodesics and  satisfies the equation 
\begin{equation}
\nabla_{N}N=\lambda N.
\label{sg0}
\end{equation}
A similar equation holds if there exists a Killing field $V$ which is normal to the null hupersurface. In this case, we  have
\begin{equation}
\nabla_{V}V=\kappa V
\label{sg}
\end{equation}
on the hypersurface. Since $V$ is Killing, the function $\kappa$ is constant along the integral curves of $V$. This can be seen by taking the pushforward of \eqref{sg} via the flow of $V$ and noting that since the flow of $V$ consists of isometries, the pushforward of the Levi-Civita connection is the same connection. The quantity $\kappa$ is called the \textit{surface gravity}\footnote{This plays a significant role in  black hole ``thermodynamics" (see also~\cite{wald} and~\cite{poisson}).} of the null hypersurface. Note that, in Riemannian geometry, any Killing field that satisfies \eqref{sg} must have $\kappa=0$; this is not the case for Lorentzian manifolds, however. It is the equation $g\left(V,V\right)=0$ that allows $\kappa$ not to be zero. In  the Reissner-Nordstr\"{o}m family, the surface gravities of the two horizons $\left\{r=r_{-}\right\}$ and $\left\{r=r_{+}\right\}$ are given by 
\begin{equation}
\kappa_{\pm}=\frac{r_{\pm}-r_{\mp}}{2r^{2}_{\pm}}=\frac{1}{2}\left.\frac{dD\left(r\right)}{dr}\right|_{r=r_{\pm}},
\label{sgrn}
\end{equation}
where $D$ is given by \eqref{d} of Appendix \ref{sec:OnTheGeometryOfReissnerNordstrOM}. Note that in  extreme Reissner-Nordstr\"{o}m spacetime we have $D\left(r\right)=\left(1-\frac{M}{r}\right)^{2}$ and so $r_{+}=r_{-}=M$ which implies that the surface gravity vanishes. In general, a horizon whose surface gravity vanishes is called \textit{degenerate}.

Physically, the surface gravity is related to the so-called \textit{redshift effect} that is observed along (and close to) $\mathcal{H}^{+}$. According to this effect, the wavelength of radiation close to $\mathcal{H}^{+}$ becomes longer as $v$ increases and thus the radiation gets less energetic.  This effect has a long history in the heuristic analysis of waves but only in the last decade has it  been used mathematically. For example,  Price's law (see \cite{price}) and the stability and instability of Cauchy horizons in appropriate setting (see \cite{d1}) were proved using heavily this effect. (Note that for the latter, one also needs to use the dual blueshift effect which is present in the interior of black holes.) Moreover, a second proof of the boundedness of waves in Schwarzschild  (the first proof was given in \cite{wa1}) was established in  \cite{dr3} based on redshift, whereas in \cite{md} it is proved that under some geometric assumptions, the positivity of surface gravity suffices to prove boundedness of waves without understanding the trapping.

\section{The Cauchy Problem for the Wave Equation}
\label{sec:TheCauchyProblemForTheWaveEquation}

We consider solutions of the Cauchy problem of the wave equation \eqref{1eq} with initial data 
\begin{equation}
\left.\psi\right|_{\Sigma_{0}}=\psi_{0}\in H^{k}_{\operatorname{loc}}\left(\Sigma_{0}\right), \left.n_{\Sigma_{0}}\psi\right|_{\Sigma_{0}}=\psi_{1}\in H^{k-1}_{\operatorname{loc}}\left(\Sigma_{0}\right),
\label{cd}
\end{equation}
where the hypersurface $\Sigma_{0}$ is as defined in Section \ref{sec:GeometryOfExtremeReissnerNordstromSpacetime} and $n_{\Sigma_{0}}$ denotes the future unit normal of $\Sigma_{0}$.  In view of the global hyperbolicity of $\mathcal{M}$, there exists a unique solution to the above equation. Moreover, as long as $k\geq 1$, we have that for any spacelike hypersurface  $S$
\begin{equation*}
\left.\psi\right|_{S}\in H^{k}_{\operatorname{loc}}\left(S\right), \left.n_{S}\psi\right|_{S}\in H^{k-1}_{\operatorname{loc}}\left(S\right).
\end{equation*}
In this paper we will be interested in the case where $k\geq 2$.  Moreover, we assume that 
\begin{equation}
\lim_{x\rightarrow i^{0}}r\psi^{2}(x)=0.
\label{condition}
\end{equation}
 For simplicity, from now on, \textbf{when we say ``for all solutions $\psi$ of the wave equation" we will assume that $\psi$ satisfies the above conditions}. Note that for obtaining sharp decay results we will have to consider even higher regularity for $\psi$.

\section{The Main Theorems}
\label{sec:TheMainTheorems}

We consider the Cauchy problem for the wave equation (see Section \ref{sec:TheCauchyProblemForTheWaveEquation}) on the extreme Reissner-Nordstr\"{o}m spacetime. This spacetime is partially covered by the coordinate systems $(t,r)$, $(t,r^{*})$, $(v,r)$ and $(u,v)$  described in Appendix \ref{sec:ConstructingTheExtentionOfReissnerNordstrOM}. Recall that $M$ is a positive parameter and  $D=D(r)=\left(1-\frac{M}{r}\right)^{2}$. Recall also that the horizon $\mathcal{H}^{+}$ is located at $\left\{r=M\right\}$ and the photon sphere at $\left\{r=2M\right\}$.

We denote $T=\partial_{v}=\partial_{t}$, where $\partial_{v}$ corresponds to the system $(v,r)$ and $\partial_{t}$ corresponds to $(t,r)$.  From now on, $\partial_{v},\partial_{r}$ are the vector fields corresponding to $(v,r)$, unless otherwise stated. Note that $\partial_{r^{*}}=\partial_{t}$ on $\mathcal{H}^{+}$ and therefore it is not transversal to $\mathcal{H}^{+}$, whereas $\partial_{r}$ is transversal to $\mathcal{H}^{+}$.

The foliations $\Sigma_{\tau}$ and $\tilde{\Sigma}_{\tau}$  are defined in Sections \ref{sec:TheFoliationsSigmaTauAndTildeSigmaTau} and \ref{sec:EnergyDecay} and the current $J^{V}$ associated to the vector field $V$ is defined in Section \ref{sec:TheCurrentsJKAndMathcalE}. For reference, we mention that close to  $\mathcal{H}^{+}$ we have 
\begin{equation*}
J_{\mu}^{T}(\psi)n^{\mu}_{\Sigma}\sim \, (T\psi)^{2}+\left(1-\frac{M}{r}\right)^{2}(\partial_{r}\psi)^{2}+\left|\nabb\psi\right|^{2},
\end{equation*}
which degenerates on $\mathcal{H}^{+}$ whereas
\begin{equation*}
J_{\mu}^{n}(\psi)n^{\mu}_{\Sigma}\sim \, (T\psi)^{2}+(\partial_{r}\psi)^{2}+\left|\nabb\psi\right|^{2},
\end{equation*}
which does not degenerate on $\mathcal{H}^{+}$. As regards the foliation $\tilde{\Sigma}_{\tau}$, we have for $r$ sufficiently large
\begin{equation*}
J_{\mu}^{T}(\psi)n^{\mu}_{\tilde{\Sigma}}\sim (\partial_{v}\psi)^{2}+\left|\nabb\psi\right|^{2},
\end{equation*}
where $\partial_{v}$ corresponds to the null coordinate system $(u,v)$.
The Fourier decomposition of $\psi$ on $\mathbb{S}^{2}(r)$ is discussed in Section \ref{sec:EllipticTheoryOnMathbbS2}, where it is also defined what it means for a function to be supported on a given range of angular frequencies. The notation $\psi_{l}$ is also introduced in Section \ref{sec:EllipticTheoryOnMathbbS2}.  Note that all the integrals are considered with respect to the volume form. The initial data are assumed to be as in Section \ref{sec:TheCauchyProblemForTheWaveEquation} and  sufficiently regular  such that the right hand side of the estimates below are all finite. Then we have the following

\begin{mytheo}(\textbf{Morawetz and $X$ estimates})
There exists a  constant $C>0$ which depends  on $M$ and $\Sigma_{0}$ such that for all solutions $\psi$ of the wave equation the following estimates hold
\begin{enumerate}
	\item 
\textbf{Non-Degenerate Zeroth Order Morawetz Estimate:}
	 \begin{equation*}
	\begin{split}
	\displaystyle\int_{\mathcal{R}(0,\tau)}{\frac{1}{r^{4}}\psi^{2}}\leq C\displaystyle\int_{\Sigma_{0}}{J^{n_{\Sigma_{0}}}_{\mu}(\psi)n^{\mu}_{\Sigma_{0}}}.
		\end{split}
	\end{equation*}
	\item 
\textbf{$X$ Estimate with Degeneracy at $\mathcal{H}^{+}$ and Photon Sphere:}
	\begin{equation*}
	\begin{split}
&\displaystyle\int_{\mathcal{R}(0,\tau)}\!\!\!{\left(\frac{\sqrt{D}}{r^{4}}(\partial_{r^{*}}\psi)^{2}+\frac{(r-2M)^{2}}{r^{6}}\left((\partial_{t}\psi)^{2}+\left|\nabb\psi\right|^{2}\right)\right)}\\&\ \ \ \ \ \ \ \ \ \ \ \ \ \ \leq C\displaystyle\int_{\Sigma_{0}}{J_{\mu}^{T}(\psi)n^{\mu}_{\Sigma_{0}}}.
	\end{split}
	\end{equation*}
		\item \textbf{$X$ Estimate with Degeneracy at $\mathcal{H}^{+}$:}
		\begin{equation*}
	\begin{split}
&\displaystyle\int_{\mathcal{R}(0,\tau)}\!\!\!{\left(\frac{1}{r^{4}}\left(\partial_{t}\psi\right)^{2}+\frac{\sqrt{D}}{r^{4}}\left(\partial_{r^{*}}\psi\right)^{2}+\frac{1}{r}\left|\nabb\psi\right|^{2}\right)}
\\&\ \ \ \ \ \ \ \ \ \ \ \ \ \ \ \ \ \ \ \leq C\displaystyle\int_{\Sigma_{0}}{\left(J_{\mu}^{T}(\psi)n^{\mu}_{\Sigma_{0}}+J_{\mu}^{T}(T\psi)n^{\mu}_{\Sigma_{0}}\right)}.
		\end{split}
	\end{equation*}
		\item \textbf{Non-Degenerate $X$ Estimate with Commutation:}
		
		 If, in addition,  $\psi$  is supported on the angular frequencies $l\geq 1$ then
\begin{equation*}
	\begin{split}
&\displaystyle\int_{\mathcal{R}(0,\tau)}\!\!\!{\left(\frac{1}{r^{4}}(\partial_{r}\psi)^{2}+\frac{1}{r^{4}}(\partial_{t}\psi)^{2}+\frac{1}{r}\left|\nabb\psi\right|^{2}\right)}\\&\ \ \ \ \ \ \leq C\displaystyle\int_{\Sigma_{0}}{J_{\mu}^{n_{\Sigma_{0}}}(\psi)n^{\mu}_{\Sigma_{0}}}+C\displaystyle\int_{\Sigma_{0}}{J_{\mu}^{n_{\Sigma_{0}}}(n_{\Sigma_{0}}\psi)n^{\mu}_{\Sigma_{0}}}.
	\end{split}
	\end{equation*}		
	\end{enumerate}
	\label{th1}
\end{mytheo}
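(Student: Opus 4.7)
The plan is to establish all four estimates by the vector field method, using multipliers of the form $X = f(r^{*})\partial_{r^{*}}$ supplemented by lower-order corrections built from currents of the form $h(r)\psi\,\partial_{\mu}\psi$ to handle the $\psi^{2}$ terms that appear in the bulk. The backbone is estimate (1), from which (2)--(4) follow after progressively more work to remove first the photon-sphere degeneracy and then the horizon degeneracy. For (1), I would decompose $\psi=\sum_{l}\psi_{l}$ using the spherical Laplacian as in Section \ref{sec:EllipticTheoryOnMathbbS2}; each mode satisfies a $1{+}1$-dimensional wave equation in $(t,r^{*})$ with an angular mass term $l(l+1)/r^{2}$. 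For $l\geq 1$, adapting \cite{dr5}, I would choose a multiplier $X_{l}=f_{l}(r^{*})\partial_{r^{*}}$ together with a zeroth-order correction of the form $g_{l}(r)\psi$ so that the bulk is pointwise bounded below by a positive multiple of $\psi_{l}^{2}/r^{4}$, exploiting the $l(l+1)\psi_{l}^{2}/r^{2}$ contribution from the angular piece. For $l=0$ this angular help is absent; here I would use a separate argument depending only on the global structure of the domain of outer communications, exploiting the asymptotics of $D$ both at $\mathcal{H}^{+}$ (where $D\to 0$) and at infinity, to produce an auxiliary current whose bulk controls $\psi_{0}^{2}/r^{4}$ pointwise.

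For estimate (2), I would pick $f(r^{*})$ with a simple zero at the photon-sphere value $r^{*}(2M)$ and $f'(r^{*})>0$, so that the bulk term generated by $X=f(r^{*})\partial_{r^{*}}$ is pointwise bounded below by a positive multiple of $\tfrac{\sqrt{D}}{r^{4}}(\partial_{r^{*}}\psi)^{2}+\tfrac{(r-2M)^{2}}{r^{6}}\bigl((\partial_{t}\psi)^{2}+|\nabb\psi|^{2}\bigr)$, plus a zeroth-order error absorbable via (1). The factor $(r-2M)^{2}$ in the angular and time-derivative coefficients arises naturally from the simple zero of $f$ at the photon sphere, which is the analytic reflection of trapping. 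The boundary terms split into a contribution on $\Sigma_{\tau}$ and one on $\mathcal{H}^{+}\cap\mathcal{R}(0,\tau)$; both are controlled by $\int_{\Sigma_{0}}J^{T}_{\mu}n^{\mu}_{\Sigma_{0}}$ using that $X$ is tangential to $\mathcal{H}^{+}$ (so the horizon flux has a favourable sign) and that $J^{X}$ is $T$-dominated at infinity.

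Estimate (3) is obtained by applying (2) to $T\psi$, which also satisfies the wave equation since $T$ is Killing. The resulting bulk controls $\tfrac{(r-2M)^{2}}{r^{6}}\bigl((\partial_{t}^{2}\psi)^{2}+|\nabb T\psi|^{2}\bigr)$; combined with the control of $\tfrac{(r-2M)^{2}}{r^{6}}\bigl((\partial_{t}\psi)^{2}+|\nabb\psi|^{2}\bigr)$ from (2), a Hardy/interpolation argument in $r$ across the photon sphere removes the $(r-2M)^{2}$ factor from the $\partial_{t}\psi$ and $|\nabb\psi|^{2}$ terms at the expense of the extra $T$-commuted flux on the right-hand side. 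The $\partial_{r^{*}}\psi$ term retains its $\sqrt{D}$ weight because no transversal commutation has been performed, and the overall price paid for removing the photon-sphere degeneracy is precisely the loss of one $T$-derivative, consistent with trapping.

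For (4), the new ingredient is the removal of the degeneracy at $\mathcal{H}^{+}$. Under the assumption $\psi_{0}=0$, the conservation laws of Section \ref{sec:ConservationLawsOnMathcalH} do not obstruct non-degenerate control, and I would combine (3) with a localized multiplier of redshift type in the spirit of \cite{dr3} whose bulk controls $(\partial_{r}\psi)^{2}/r^{4}$ without a $D$-weight in a neighbourhood of $\mathcal{H}^{+}$. The horizon boundary term would be absorbed via a Hardy-type inequality that uses the Poincar\'e inequality on each sphere, available since $l\geq 1$; this also accounts for the appearance of $J^{n_{\Sigma_{0}}}(n_{\Sigma_{0}}\psi)$ on the right. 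The main obstacle I expect is the $l=0$ case of (1): without angular-momentum assistance and without an effective redshift at $\mathcal{H}^{+}$, producing a current whose bulk is pointwise a positive multiple of $\psi^{2}/r^{4}$ throughout $[M,\infty)$ requires a genuinely new geometric input, since the standard multiplier choices all leave a sign-indefinite region somewhere between $\mathcal{H}^{+}$ and spatial infinity.
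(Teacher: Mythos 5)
Your outline for estimates (1)--(3) follows the paper's actual route reasonably closely: the split into $\psi_{0}$ and $\psi_{\geq 1}$, a modified current of the type $J^{X}_{\mu}+h\psi\nabla_{\mu}\psi+\dots$ built on a multiplier $f(r^{*})\partial_{r^{*}}$ with $f$ increasing and vanishing at $r=2M$, the Poincar\'e inequality supplying the zeroth-order control for $l\geq 1$, a separate elementary current for $l=0$ (the paper takes $f^{0}=-r^{-3}$, for which $K^{X^{0}}$ is pointwise positive, plus the currents with $f=1$ far out and $H=(r-M)^{2}$ near the horizon), and commutation with $T$ as the price for removing the photon-sphere degeneracy. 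Two soft spots there: first, the paper does \emph{not} produce a current whose bulk is pointwise $\gtrsim\psi^{2}/r^{4}$ down to $r=M$; near $\mathcal{H}^{+}$ it only obtains $D\psi^{2}$ and upgrades this to the non-degenerate weight of statement (1) only \emph{after} proving boundedness of the $N$-energy, via the third Hardy inequality --- so statement (1) with $J^{n_{\Sigma_{0}}}$ on the right is logically downstream of the boundedness theorem, not an input to it. Second, your ``Hardy/interpolation across the photon sphere'' removes the $(r-2M)^{2}$ weight from $(\partial_{t}\psi)^{2}$ but not from $\left|\nabb\psi\right|^{2}$, since that would require control of $\left|\nabb\partial_{r^{*}}\psi\right|^{2}$ near $r=2M$, which (2) does not give even after $T$-commutation; the paper instead uses the Lagrangian current $L^{f}_{\mu}=f\psi\nabla_{\mu}\psi$ with $f=D^{3/2}r^{-3}$, whose divergence produces $+\frac{D^{3/2}}{r^{3}}\left|\nabb\psi\right|^{2}$ directly, with the error terms absorbed by the $T$-commuted Morawetz estimate.

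The genuine gap is in (4). You propose ``a localized multiplier of redshift type in the spirit of \cite{dr3} whose bulk controls $(\partial_{r}\psi)^{2}/r^{4}$ without a $D$-weight in a neighbourhood of $\mathcal{H}^{+}$.'' Such a multiplier does not exist on extreme Reissner--Nordstr\"{o}m, and proving this is one of the points of the paper (Section \ref{sec:TheSpacetimeTermKN}): for any $\varphi_{\tau}$-invariant timelike $N=f_{v}\partial_{v}+f_{r}\partial_{r}$, the coefficient of $(\partial_{r}\psi)^{2}$ in $K^{N}$ vanishes on $\mathcal{H}^{+}$ while the coefficient of the cross term $(\partial_{v}\psi)(\partial_{r}\psi)$ equals $-2f_{r}(M)/M\neq 0$, so $K^{N}$ is \emph{linear} in $\partial_{r}\psi$ on the horizon and cannot be non-negative, let alone coercive in $(\partial_{r}\psi)^{2}$; even the modified current $K^{N,-\frac{1}{2}}$ only yields $\sqrt{D}(\partial_{r}\psi)^{2}$. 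The actual mechanism for (4) is to \emph{commute} the wave equation with the transversal vector field $\partial_{r}$ (Section \ref{sec:CommutingWithAVectorFieldTransversalToMathcalH}), apply a timelike multiplier $L$ to $\partial_{r}\psi$, control the error terms generated by $[\Box_{g},\partial_{r}]$ via the Hardy inequalities, and absorb the problematic horizon term $\int_{\mathcal{H}^{+}}H_{10}(\partial_{r}\psi)^{2}$ into the good angular flux $-\frac{f_{r}(M)}{2}\left|\nabb\partial_{r}\psi\right|^{2}$ using the Poincar\'e inequality on the spheres --- the coefficient comparison works precisely when $l\geq 1$, which is where the frequency restriction (and, via the conservation law of Theorem \ref{t3}, its sharpness) enters. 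The non-degenerate bulk term $\int_{\mathcal{A}}(\partial_{r}\psi)^{2}$ is then recovered from the third Hardy inequality applied to $\partial_{r}\psi$, which needs the second-order quantity $D(\partial_{r}\partial_{r}\psi)^{2}$; this is why $J^{n_{\Sigma_{0}}}_{\mu}(n_{\Sigma_{0}}\psi)$ appears on the right-hand side --- it is the energy of the commuted quantity, not a boundary bookkeeping term. Without replacing your multiplier step by this commutation argument, the proof of (4) does not close.
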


\begin{mytheo}(\textbf{Uniform boundedness of Non-Degenerate Energy}) 
There exists a  constant $C>0$ which depends  on $M$ and $\Sigma_{0}$ such that for all solutions $\psi$ of the wave equation we have
\begin{equation*}
\displaystyle\int_{\Sigma_{\tau}}{J_{\mu}^{n_{\Sigma_{\tau}}}(\psi)n^{\mu}_{\Sigma_{\tau}}}\leq C\displaystyle\int_{\Sigma_{0}}{J_{\mu}^{n_{\Sigma_{0}}}(\psi)n^{\mu}_{\Sigma_{0}}}.
\end{equation*}
\label{t2}
\end{mytheo}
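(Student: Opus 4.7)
The plan is to derive the estimate via the energy method, combining the conserved but degenerate $T$-energy current with a carefully modified redshift-type current $J^{N}$ built from a timelike vector field $N$ transversal to $\mathcal{H}^{+}$ and invariant under the flow of $T$. Because the surface gravity vanishes on $\mathcal{H}^{+}$ (Section \ref{sec:RedshiftEffectAndSurfaceGravityOfH}), the standard Dafermos--Rodnianski redshift calculation will not produce a bulk term $K^{N} = \nabla^{\mu} J_{\mu}^{N}$ that is coercive in the transversal derivative $\partial_{r}\psi$ near the horizon; instead $K^{N}$ is of mixed sign or vanishes to high order as $r \to M$. To salvage this, I would modify $J^{N}$ by adding a lower-order correction of the schematic form $f(r)\,\psi\,\nabla_{\mu}\psi$ (equivalently, a new bulk contribution $h(r)\psi^{2}$), choosing the coefficients so that the new divergence $K^{\tilde{N}}$ is pointwise non-negative on a horizon neighbourhood $\mathcal{A} = \{M \leq r \leq r_{0}\}$ for some fixed $r_{0} > M$, even though its $(\partial_{r}\psi)^{2}$-coefficient will necessarily degenerate as some power of $(r - M)$.

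Next I would apply the divergence theorem to the combined current $J_{\mu}^{T} + \varepsilon J_{\mu}^{\tilde{N}}$ on $\mathcal{R}(0,\tau)$ for $\varepsilon > 0$ sufficiently small. Since $T$ is causal and Killing, $J^{T}$ contributes the conserved degenerate energy to the boundary terms and nothing to the bulk, while for $\varepsilon$ small the total flux through $\Sigma_{\tau}$ is coercive on the non-degenerate energy $J_{\mu}^{n_{\Sigma_{\tau}}}(\psi) n^{\mu}_{\Sigma_{\tau}}$, because $N$ supplies the $(\partial_{r}\psi)^{2}$-control that $T$ lacks on $\mathcal{H}^{+}$. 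The modification generates zero-order boundary terms of the form $\int_{\Sigma_{\tau}} h(r)\,\psi^{2}$, which I would handle via the Hardy-type inequality announced in Section \ref{sec:UniformBoundednessOfEnergy}: it dominates the local $L^{2}$-norm of $\psi$ across $\mathcal{H}^{+}$ by the degenerate $T$-energy $\int_{\Sigma_{\tau}} J_{\mu}^{T} n^{\mu}$, which in turn equals $\int_{\Sigma_{0}} J_{\mu}^{T} n^{\mu}$ by the conservation of $T$ and is dominated by the initial non-degenerate energy.

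It remains to control the bulk integral $\int_{\mathcal{R}(0,\tau)} K^{\tilde{N}}$ outside $\mathcal{A}$. There the modification breaks positivity, but only through terms supported in an intermediate annular region $\{r_{0} \leq r \leq r_{1}\}$ of the schematic form $(\partial\psi)^{2}$ and $\psi^{2}/r^{p}$; both are controlled by Theorem \ref{th1}, whose right-hand sides involve only the initial non-degenerate energy (and, for the non-degenerate $X$ estimate, its $T$-commuted version). A cutoff in $r$ ensures that outside $\{r \leq r_{1}\}$ only $J^{T}$ survives, which is conserved and contributes nothing harmful. Summing the boundary and bulk contributions and absorbing the small $\varepsilon$ factor on the left yields the claimed inequality.

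The main obstacle will be the very first step: engineering the modification so that $K^{\tilde{N}}$ is non-negative on $\mathcal{A}$ despite the vanishing surface gravity. In the non-extreme setting the transversal-derivative term in $K^{N}$ is $\sim \kappa\,(\partial_{r}\psi)^{2}$ with $\kappa > 0$, so $(T\psi)^{2}$, $|\nabb\psi|^{2}$ and $\psi^{2}$ contributions become controllable perturbations. With $\kappa = 0$ the $(\partial_{r}\psi)^{2}$ coefficient vanishes at $r = M$, and the angular and cross terms must be tamed by an algebraic choice of $N$ together with the $\psi^{2}$-correction. I expect the resulting $K^{\tilde{N}}$ to degenerate transversally at $\mathcal{H}^{+}$, consistent with the \emph{trapping on} $\mathcal{H}^{+}$ phenomenon of Section \ref{sec:TrappingEffectOnMathcalH}; accordingly no transversal-derivative information will be recovered in the bulk at this stage, and boundedness of that derivative on $\Sigma_{\tau}$ is captured solely through the flux term, at the cost of having to exploit the Hardy inequality to close the boundary estimate.
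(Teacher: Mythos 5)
Your proposal follows essentially the same route as the paper: modify the redshift current by a term of the form $h(r)\psi\nabla_{\mu}\psi$ (the paper takes $h=-\frac{1}{2}$ near $\mathcal{H}^{+}$ with an explicit $N$ given by $f_{v}=16r$, $f_{r}=-\frac{3}{2}r+M$, verifying non-negativity of $K^{N,-\frac{1}{2}}$ on $\{M\leq r\leq \frac{9M}{8}\}$ with the expected $\sqrt{D}$-degeneration of the $(\partial_{r}\psi)^{2}$ coefficient), cut off the modification away from the horizon so the current reduces to $J^{T}$ there, control the intermediate annulus by the Morawetz and $X$ estimates, and close the boundary terms (including those on $\mathcal{H}^{+}$) with the Hardy inequalities and the conserved degenerate $T$-energy. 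The one step you flag as the main obstacle — the algebraic choice making the bulk term coercive despite $\kappa=0$ — is exactly what the paper carries out explicitly in Proposition \ref{knprop}, so the plan is sound and matches the paper's proof.
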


\begin{mytheo}(\textbf{Conservation Laws along $\mathcal{H}^{+}$})
There exist constants $\a_{i}^{j},j=0,1,...,l-1,\, i=0,1,...,j+1$, which depend on $M$ and $l$ such that for all  solutions $\psi$ of the wave equation which are supported on the frequency $l$ we have
\begin{equation*}
\partial_{r}^{j}\psi=\sum_{i=0}^{j+1}{\a_{i}^{j}\partial_{v}\partial_{r}^{i}\psi},
\end{equation*}
on $\mathcal{H}^{+}$. Moreover, there exist constants $\b_{i},i=0,1,...,l$, which depend on $M$ and $l$ such that the quantity
\begin{equation*}
H_{l}[\psi]=
\partial_{r}^{l+1}\psi+\sum_{i=0}^{l}{\b_{i}\partial_{r}^{i}\psi}
\end{equation*}
is conserved along the null geodesics of $\mathcal{H}^{+}$ and thus does not decay for generic initial data.
\label{t3}
\end{mytheo}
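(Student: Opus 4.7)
The plan is to express the wave equation in ingoing Eddington--Finkelstein coordinates $(v,r,\omega)$, restrict it to $\mathcal{H}^{+} = \{r=M\}$, and iteratively commute with $\partial_{r}$. A direct computation from $g = -D\,dv^{2} + 2\,dv\,dr + r^{2}g_{\mathbb{S}^{2}}$ gives
\begin{equation*}
\Box_{g}\psi = 2\,\partial_{v}\partial_{r}\psi + \tfrac{2}{r}\partial_{v}\psi + D\,\partial_{r}^{2}\psi + \Big(D' + \tfrac{2D}{r}\Big)\partial_{r}\psi + \tfrac{1}{r^{2}}\lapp\psi = 0.
\end{equation*}
Since $D(r) = (1-M/r)^{2}$ has a \emph{double} zero at $r=M$, one has $D(M)=D'(M)=0$ and $D''(M)=2/M^{2}$. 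This is precisely the analytic face of the vanishing surface gravity from Section 2.3, and is what drives the whole argument.

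Applying $\partial_{r}^{k}$ and restricting to $r=M$, the coefficients of $\partial_{r}^{k+2}\psi$ and $\partial_{r}^{k+1}\psi$ both vanish (via $D(M)=0$ and $D'(M)=0$ respectively). A Leibniz count shows that the spatial terms contribute $\binom{k}{2}D''(M) + k\,D''(M) = \binom{k+1}{2}\cdot 2/M^{2} = k(k+1)/M^{2}$ to the coefficient of $\partial_{r}^{k}\psi$, while $\frac{1}{r^{2}}\lapp\psi$ adds $-l(l+1)/M^{2}$ for $\psi$ supported on angular frequency $l$. This yields the key coefficient
\begin{equation*}
c(k,l) \;=\; \frac{k(k+1)-l(l+1)}{M^{2}} \;=\; \frac{(k-l)(k+l+1)}{M^{2}}.
\end{equation*}
Thus on $\mathcal{H}^{+}$ the identity $\partial_{r}^{k}(\Box_{g}\psi)=0$ reads
\begin{equation*}
2\,\partial_{v}\partial_{r}^{k+1}\psi + \sum_{i=0}^{k} A_{i}^{k}\,\partial_{v}\partial_{r}^{i}\psi \;+\; c(k,l)\,\partial_{r}^{k}\psi + \sum_{i=0}^{k-1} B_{i}^{k}\,\partial_{r}^{i}\psi \;=\; 0,
\end{equation*}
with explicit constants $A_{i}^{k}, B_{i}^{k}$ depending only on $M,k,l$.

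For $0\leq j \leq l-1$ one has $c(j,l)\neq 0$, so the $k=j$ equation can be solved algebraically for $\partial_{r}^{j}\psi$. An induction on $j$, substituting the previously obtained representations of $\partial_{r}^{i}\psi$ for $i<j$, eliminates the bare $r$-derivatives on the right and yields the stated formula $\partial_{r}^{j}\psi = \sum_{i=0}^{j+1}\alpha_{i}^{j}\,\partial_{v}\partial_{r}^{i}\psi$. For the critical value $k=l$, however, $c(l,l)=0$: the equation has no undifferentiated $\partial_{r}^{l}\psi$ term, and substituting the $j<l$ representations converts every residual $\partial_{r}^{i}\psi$ into a $\partial_{v}$-derivative. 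Since $\partial_{v}$ commutes with $\partial_{r}^{i}$, pulling it out gives
\begin{equation*}
\partial_{v}\Big(\partial_{r}^{l+1}\psi + \sum_{i=0}^{l}\beta_{i}\,\partial_{r}^{i}\psi\Big) = 0 \quad\text{on } \mathcal{H}^{+},
\end{equation*}
which is $\partial_{v}H_{l}[\psi]=0$. As $T=\partial_{v}$ generates the null geodesics of $\mathcal{H}^{+}$, this is the conservation law. The non-decay claim is then immediate: $H_{l}[\psi]$ is constant along each generator, and the condition $H_{l}[\psi]\not\equiv 0$ on $\Sigma_{0}\cap\mathcal{H}^{+}$ is open in any reasonable data topology, hence generic.

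The main obstacle I anticipate is purely the bookkeeping: tracking how the lower-order residual terms accumulate and verifying that at the critical level $k=l$ every non-$\partial_{v}$ term genuinely reorganizes under an outer $\partial_{v}$. The combinatorial engine making this work is the clean factorization $c(k,l)=(k-l)(k+l+1)/M^{2}$; the existence of these laws is tied precisely to this root structure, in which the double zero of $D$ (degenerate horizon) meets the angular eigenvalue ladder $l(l+1)$.
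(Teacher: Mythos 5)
Your proposal is correct and follows essentially the same route as the paper: commute $\partial_{r}^{k}$ through the wave equation in $(v,r)$ coordinates, restrict to $r=M$ where $D$ and $D'$ vanish, identify the coefficient $\frac{k(k+1)-l(l+1)}{M^{2}}$ of $\partial_{r}^{k}\psi$, solve inductively for $k\leq l-1$, and observe that this coefficient vanishes precisely at $k=l$, yielding $\partial_{v}H_{l}[\psi]=0$. The only cosmetic difference is that the paper splits the contribution $\binom{k}{2}D''(M)+kR'(M)$ between the $D\partial_{r}^{2}\psi$ and $R\partial_{r}\psi$ terms rather than attributing it all to $D''$, but since $R'(M)=D''(M)=2/M^{2}$ the arithmetic is identical.
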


\begin{mytheo}(\textbf{Higher Order $L^{2}$ Estimates: Trapping on $\mathcal{H}^{+}$})
There exists $r_{0}$ such that $M<r_{0}<2M$ and a  constant $C>0$ which depends on $M$, $k$ and $\Sigma_{0}$ such that if $\mathcal{A}=\left\{M\leq r\leq r_{0}\right\}$ then   for all solutions $\psi$ of the wave equation with $\psi_{l}=0$ for all $l\leq k-1$, $k\in\mathbb{N}$, the following holds
\begin{equation*}
\begin{split}
&\displaystyle\int_{\Sigma_{\tau}\cap\mathcal{A}}{\left(\partial_{v}\partial_{r}^{k}\psi\right)^{2}+\left(\partial_{r}^{k+1}\psi\right)^{2}+\left|\nabb\partial_{r}^{k}\psi\right|^{2}}\\
+&\displaystyle\int_{\mathcal{H}^{+}}{\left(\partial_{v}\partial_{r}^{k}\psi\right)^{2}+\chi_{k}\left|\nabb\partial_{r}^{k}\psi\right|^{2}}\\
+&\displaystyle\int_{\mathcal{A}}{\left(\partial_{v}\partial_{r}^{k}\psi\right)^{2}+\sqrt{D}\left(\partial_{r}^{k+1}\psi\right)^{2}+\left|\nabb\partial_{r}^{k}\psi\right|^{2}}\\
\leq  &C\sum_{i=0}^{k}\displaystyle\int_{\Sigma_{0}}{J_{\mu}^{n_{\Sigma_{0}}}\left(T^{i}\psi\right)n^{\mu}_{\Sigma_{0}}}+C\sum_{i=1}^{k}\int_{\Sigma_{0}\cap\mathcal{A}}{J_{\mu}^{n_{\Sigma_{0}}}\left(\partial^{i}_{r}\psi\right)n^{\mu}_{\Sigma_{0}}},
\end{split}
\end{equation*}
where $\chi_{k}=1$ if $\psi_{k}=0$ and    $\chi_{k}=0$ otherwise.
\label{theorem3}
\end{mytheo}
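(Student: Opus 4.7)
The proof proceeds by induction on $k \geq 0$. The base case $k=0$ follows by combining Theorem \ref{t2} (uniform boundedness of the non-degenerate energy on $\Sigma_\tau$) with the $X$-estimate of Theorem \ref{th1}.3, restricted to the compact region $\mathcal{A}$. The horizon fluxes $\int_{\mathcal{H}^+}(\partial_v\psi)^2$ and, when $\chi_0 = 1$, $\int_{\mathcal{H}^+}|\nabb\psi|^2$ appear as boundary terms in the divergence identity for $J^T$ applied on $\mathcal{R}(0,\tau)\cap\mathcal{A}$, since the normal contraction of $J^T$ on the degenerate null hypersurface $\mathcal{H}^+$ reduces precisely to $(\partial_v\psi)^2 + |\nabb\psi|^2$ (the transversal term drops out exactly because of degeneracy).

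For the inductive step $k-1 \to k$, I commute $\Box_g\psi = 0$ with $\partial_r^k$ to produce a wave-type equation $\Box_g\phi_k = F_k$ for $\phi_k := \partial_r^k\psi$, where $F_k$ is a linear combination of derivatives of $\psi$ of order at most $k+1$. The crucial structural feature of extreme Reissner-Nordstr\"om is that the coefficient of $\partial_r^{k+1}\psi$ in $F_k$ coming from $[\partial_r^k,\Box_g]$ is proportional to $D'(r) = 2M(r-M)/r^3$, which \emph{vanishes on $\mathcal{H}^+$}. This rules out any standard redshift commutation and forces the bulk transversal term in the conclusion to be only $\sqrt{D}$-weighted. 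I then apply to $\phi_k$ the modified redshift current of the proof of Theorem \ref{t2}: a multiplier $N = \partial_r + bT$ together with a zeroth-order modification $-h\phi_k^2$, with $b,h$ chosen so that near $\mathcal{H}^+$ the bulk of the divergence dominates $(\partial_v\phi_k)^2 + \sqrt{D}(\partial_r\phi_k)^2 + |\nabb\phi_k|^2$. Coercivity of the remaining zeroth-order errors requires the low-frequency hypothesis $\psi_l = 0$ for $l \leq k-1$: by the elliptic theory on $\mathbb{S}^2$ of Section \ref{sec:EllipticTheoryOnMathbbS2}, one has $\int_{\mathbb{S}^2}(\partial_r^j\phi_k)^2 \leq C\int_{\mathbb{S}^2}|\nabb\partial_r^j\phi_k|^2$ for $j \leq k$ precisely under this hypothesis, which is what allows zeroth-order errors to be absorbed into the angular gradient.

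The main obstacle will be absorbing the source $F_k$. Writing the divergence identity $\int\text{div}\,J^{N,\text{mod}}(\phi_k) = \int F_k(N\phi_k)$, integration by parts produces cross-terms of the schematic form $(\partial_r^{k+1}\psi)\cdot(\partial^{\leq k}\psi)$ that must be absorbed into the coercive bulk $\sqrt{D}(\partial_r^{k+1}\psi)^2$ via a Cauchy--Schwarz weighted by $\sqrt{D}$; the residual lower-order integrals $\int|\partial^{\leq k}\psi|^2$ are then handled by the inductive hypothesis applied both to $\psi$ (producing the $\sum_{i=1}^{k-1}\int_{\Sigma_0\cap\mathcal{A}}J^n(\partial_r^i\psi)n$ contributions) and to $T\psi$, which automatically satisfies the low-frequency hypothesis since $[T,\lapp] = 0$ (producing the $\sum_{i=1}^{k}\int_{\Sigma_0}J^n(T^i\psi)n$ contributions). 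Boundary contributions on $\Sigma_\tau\cap\mathcal{A}$ are handled by the Hardy inequalities of Section \ref{sec:HardyInequalities}, which trade factors of $(r-M)^{-1}$ against angular derivatives. Finally, the factor $\chi_k$ in the horizon angular term is dictated by Theorem \ref{t3}: without the strictly stronger hypothesis $\psi_k = 0$, the conserved quantity $H_k[\psi]$ along the null generators of $\mathcal{H}^+$ is generically nonzero, which via elliptic theory on $\mathbb{S}^2$ would obstruct any uniform bound on $\int_{\mathcal{H}^+}|\nabb\partial_r^k\psi|^2$.
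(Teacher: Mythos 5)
Your overall strategy---induction on $k$, commutation with $\partial_r^k$, a spatially localized timelike multiplier near $\mathcal{H}^{+}$, Poincar\'e and Hardy inequalities to handle zeroth-order errors, and $T$-commutation plus the inductive hypothesis for the lower-order terms---is the same as the paper's, and your identification of the $D'$-degeneracy of the commutator as the source of the $\sqrt{D}$ weight on the transversal bulk term is correct. There are, however, two genuine problems.

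First, the base case rests on an incorrect formula: on $\mathcal{H}^{+}$ one has $J^{T}_{\mu}n^{\mu}_{\mathcal{H}^{+}}=(\partial_{v}\psi)^{2}$ \emph{only}, since the coefficient of $\left|\nabb\psi\right|^{2}$ in $\textbf{T}(T,n_{\mathcal{H}^{+}})$ is $-\frac{1}{2}g(T,T)$, which vanishes on the horizon. The angular flux along $\mathcal{H}^{+}$ can only be produced by a multiplier transversal to the horizon, and extracting it (together with the bulk term $\sqrt{D}(\partial_{r}\psi)^{2}$ over $\mathcal{A}$) is precisely the content of the modified redshift current of Sections \ref{sec:TheVectorFieldN}--\ref{sec:UniformBoundednessOfLocalObserverSEnergy}, not of part 3 of Theorem \ref{th1}: the latter controls $\sqrt{D}(\partial_{r^{*}}\psi)^{2}$, and since $\partial_{r^{*}}=\partial_{v}+D\partial_{r}$ this is essentially $(\partial_{v}\psi)^{2}$ near $\mathcal{H}^{+}$ and carries no transversal information; moreover its right-hand side involves $J^{T}_{\mu}(T\psi)n^{\mu}$, which is not admissible for $k=0$.

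Second, and more seriously, your outline does not close at the endpoint frequency $l=k$, which the theorem must cover (it permits $\psi_{k}\neq 0$, with $\chi_{k}=0$). Integrating by parts the commutator error term proportional to $(\partial_{r}R)(\partial_{r}^{k}\psi)(\partial_{r}^{k+1}\psi)$ produces the horizon integral $\int_{\mathcal{H}^{+}}(\partial_{r}^{k}\psi)^{2}$ with a coefficient that does not vanish at $r=M$; absorbing it into the good horizon flux $-\frac{f_{r}^{k}(M)}{2}\left|\nabb\partial_{r}^{k}\psi\right|^{2}$ via the Poincar\'e inequality is possible with room to spare only for $l\geq k+1$. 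The paper's computation shows the Poincar\'e constant is \emph{exactly} saturated when $l=k$, so the entire angular flux is consumed (this is why $\chi_{k}=0$) and no fraction of it remains to absorb the remaining horizon error integrals $\int_{\mathcal{H}^{+}}(\partial_{r}^{i}\psi)(\partial_{r}^{k}\psi)$, $i\leq k-1$. The paper closes these by using the conservation laws of Theorem \ref{ndk} \emph{constructively} (Lemma \ref{lemk}): on $\mathcal{H}^{+}$ each $\partial_{r}^{i}\psi$ with $i\leq k-1$ is a combination of the tangential derivatives $\partial_{v}\partial_{r}^{j}\psi$, and one then integrates by parts along the null generators. You invoke the conservation laws only as an obstruction explaining why the angular term must be dropped, not as the device that makes the borderline case work; without this (or an equivalent mechanism) your argument proves the theorem only for $\psi$ supported on $l\geq k+1$.
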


\begin{mytheo}(\textbf{Energy Decay})
Consider the foliation $\tilde{\Sigma}_{\tau}$ as defined in Section \ref{sec:EnergyDecay}. Let
\begin{equation*}
\begin{split}
I^{T}_{\tilde{\Sigma}_{\tau}}(\psi)=&\int_{\tilde{\Sigma}_{\tau}}{J^{n_{\tilde{\Sigma}_{\tau}}}_{\mu}(\psi)n^{\mu}_{\tilde{\Sigma}_{\tau}}}+
\int_{\tilde{\Sigma}_{\tau}}{J^{T}_{\mu}(T\psi)n^{\mu}_{\tilde{\Sigma}_{\tau}}}+\int_{\tilde{\Sigma}_{\tau}}{r^{-1}\left(\partial_{v}\phi\right)^{2}}
\end{split}
\end{equation*}
and
\begin{equation*}
\begin{split}
I^{n_{\tilde{\Sigma}_{\tau}}}_{\tilde{\Sigma}_{\tau}}(\psi)=&\int_{\tilde{\Sigma}_{\tau}}{J^{n_{\tilde{\Sigma}_{\tau}}}_{\mu}(\psi)n^{\mu}_{\tilde{\Sigma}_{\tau}}}+
\int_{\tilde{\Sigma}_{\tau}}{J^{n_{\tilde{\Sigma}_{\tau}}}_{\mu}(T\psi)n^{\mu}_{\tilde{\Sigma}_{\tau}}}\\
&+\int_{\mathcal{A}\cap\tilde{\Sigma}_{\tau}}{J^{n_{\tilde{\Sigma}_{\tau}}}_{\mu}(\partial_{r}\psi)n^{\mu}_{\tilde{\Sigma}_{\tau}}}+\int_{\tilde{\Sigma}_{\tau}}{r^{-1}\left(\partial_{v}(r\psi)\right)^{2}},
\end{split}
\end{equation*}
where $\mathcal{A}$ is as defined in Theorem \ref{theorem3}.  Here $\partial_{v}$ corresponds to the null system $(u,v)$.
There exists a constant $C$ that depends  on the mass $M$  and $\tilde{\Sigma}_{0}$  such that:
\begin{itemize}
	\item For all solutions $\psi$ of the wave equation we have 
\begin{equation*}
\displaystyle\int_{\tilde{\Sigma}_{\tau}}J^{T}_{\mu}(\psi)n_{\tilde{\Sigma}_{\tau}}^{\mu}\leq CE_{1}\frac{1}{\tau^{2}},
\end{equation*}
where
\begin{equation*}
E_{1}(\psi)=I^{T}_{\tilde{\Sigma}_{0}}(T\psi)+\displaystyle\int_{\tilde{\Sigma}_{0}}{J^{n_{\tilde{\Sigma}_{0}}}_{\mu}(\psi)n^{\mu}_{\tilde{\Sigma}_{0}}}+\displaystyle\int_{\tilde{\Sigma}_{0}}{\left(\partial_{v}(r\psi)\right)^{2}}.
\end{equation*}
\end{itemize}
\begin{itemize}
	\item For all solutions $\psi$ to the wave equation which are supported on the frequencies $l\geq 1$ we have
\begin{equation*}
\begin{split}
\displaystyle\int_{\tilde{\Sigma}_{\tau}}{J^{n_{\tilde{\Sigma}_{\tau}}}_{\mu}(\psi)n^{\mu}_{\tilde{\Sigma}_{\tau}}}\,\leq\, CE_{2}\frac{1}{\tau},
\end{split}
\end{equation*}
where
\begin{equation*}
\begin{split}
E_{2}(\psi)=I^{n_{\tilde{\Sigma}_{0}}}_{\tilde{\Sigma}_{0}}(\psi).
\end{split}
\end{equation*}
\end{itemize}
\begin{itemize}
	\item  For all solutions $\psi$ to the wave equation which are supported on the frequencies $l\geq 2$ we have
\begin{equation*}
\begin{split}
\displaystyle\int_{\tilde{\Sigma}_{\tau}}{J^{n_{\tilde{\Sigma}_{\tau}}}_{\mu}(\psi)n^{\mu}_{\tilde{\Sigma}_{\tau}}}\,\leq\, CE_{3}\frac{1}{\tau^{2}},
\end{split}
\end{equation*}
where
\begin{equation*}
\begin{split}
E_{3}(\psi)=I^{n_{\tilde{\Sigma}_{0}}}_{\tilde{\Sigma}_{0}}(\psi)+I^{n_{\tilde{\Sigma}_{0}}}_{\tilde{\Sigma}_{0}}(T\psi)+\int_{\mathcal{A}\cap\tilde{\Sigma}_{0}}{J^{n_{\tilde{\Sigma}_{0}}}_{\mu}(\partial_{r}\partial_{r}\psi)n^{\mu}_{\tilde{\Sigma}_{0}}}+\int_{\tilde{\Sigma}_{0}}{(\partial_{v}(r\psi))^{2}}.
\end{split}
\end{equation*}
\end{itemize}
\label{t4}
\end{mytheo}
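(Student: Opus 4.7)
My plan is to adapt the $r^{p}$-weighted energy method of \cite{new} to extreme Reissner-Nordstr\"{o}m, using Theorems \ref{th1}, \ref{t2}, \ref{theorem3} as black boxes. The obstruction is that the integrated local energy decay of Theorem \ref{th1} degenerates both at the photon sphere (where one pays $T$-derivatives) and on $\mathcal{H}^{+}$ (where in Part 4 of Theorem \ref{th1} one pays a transversal derivative after commutation). The first is standard and harmless for energy decay; the second is the new feature that forces separate hierarchies near $\mathcal{H}^{+}$.

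\textbf{Hierarchy at infinity.} Let $\phi=r\psi$. Following \cite{new}, one computes the wave equation for $\phi$ in $(u,v)$ coordinates and applies the multiplier $r^{p}\partial_{v}\phi$ in the region $\{r\geq R\}$ for $R$ large. For $0\leq p\leq 2$ this yields the flux hierarchy
\begin{equation*}
\int_{\tilde{\Sigma}_{\tau_{2}}\cap\{r\geq R\}}\!\!\!r^{p}(\partial_{v}\phi)^{2}+\int_{\tau_{1}}^{\tau_{2}}\!\!\int_{\tilde{\Sigma}_{\tau}\cap\{r\geq R\}}\!\!\!r^{p-1}\Bigl(p(\partial_{v}\phi)^{2}+(2-p)|\nabb\phi|^{2}\Bigr)\lesssim \int_{\tilde{\Sigma}_{\tau_{1}}\cap\{r\geq R\}}\!\!\!r^{p}(\partial_{v}\phi)^{2}+\mathcal{B}_{R},
\end{equation*}
where the boundary term $\mathcal{B}_{R}$ on $\{r=R\}$ is controlled by $\int_{\tau_{1}}^{\tau_{2}} J^{T}_{\mu}(\psi)n^{\mu}_{\tilde{\Sigma}_{\tau}}$ restricted to a compact shell, which in turn is bounded by the initial $T$-energy via Theorem \ref{th1}(3) (after commuting once with $T$).

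\textbf{Interior and near-horizon.} In the bounded region $\{r_{0}\leq r\leq R\}$ the bulk integrand $J^{T}_{\mu}(\psi)n^{\mu}$ is controlled by Theorem \ref{th1}(3) applied to $\psi$ and $T\psi$, so it integrates in $\tau$ to something bounded by $I^{T}_{\tilde{\Sigma}_{0}}(T\psi)+\int_{\tilde\Sigma_{0}}J^{n}_{\mu}(\psi)n^{\mu}$. In the near-horizon strip $\mathcal{A}=\{M\leq r\leq r_{0}\}$, the $T$-current $J^{T}_{\mu}(\psi)n^{\mu}$ is degenerate (it weights the transversal derivative by $D$) and is controlled by the degenerate $X$-estimate of Theorem \ref{th1}(3) with no further commutation required. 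Combining the three regions and using Theorem \ref{t2} to bound fluxes on $\tilde{\Sigma}_{\tau}$ at each step of the argument, one obtains
\begin{equation*}
\int_{\tau_{1}}^{\tau_{2}}\Bigl(\int_{\tilde{\Sigma}_{\tau}}J^{T}_{\mu}(\psi)n^{\mu}_{\tilde{\Sigma}_{\tau}}+\int_{\tilde{\Sigma}_{\tau}}r^{-1}(\partial_{v}\phi)^{2}\Bigr)d\tau\lesssim E_{1}(\psi).
\end{equation*}
A dyadic pigeonhole on $[\tau,2\tau]$, combined with the monotonicity of the $T$-flux and of the $r$-weighted flux under evolution to the future, then yields Part 1: $\int_{\tilde{\Sigma}_{\tau}}J^{T}_{\mu}(\psi)n^{\mu}\lesssim E_{1}/\tau^{2}$, where one factor of $\tau^{-1}$ comes from the $p=1$ hierarchy and the second from applying the $p=2$ hierarchy to $T\psi$ (whose initial $r^{2}$-weighted flux is part of $E_{1}$).

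\textbf{Non-degenerate decay for $l\geq 1,2$.} For $\psi$ supported on $l\geq 1$ the crucial input is Theorem \ref{th1}(4): integrated local energy decay without degeneration at $\mathcal{H}^{+}$, at the cost of one commutation with $n_{\Sigma_{0}}$ (hence the $\partial_{r}$-flux of $\psi$ enters $E_{2}$ via $J^{n}_{\mu}(T\psi)n^{\mu}$ and the $\mathcal{A}$-flux of $\partial_{r}\psi$ in $E_{3}$). Running the same $r^{p}$ hierarchy but now with $T$-boundedness replaced by $n_{\tilde{\Sigma}_{\tau}}$-boundedness (Theorem \ref{t2}), and using Theorem \ref{th1}(4) to absorb the bulk in $\mathcal{A}$, the pigeonhole produces Part 2 with rate $\tau^{-1}$. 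For Part 3 ($l\geq 2$) one commutes once more, controlling the extra $\mathcal{A}$-term $\int_{\mathcal{A}\cap\tilde{\Sigma}_{0}}J^{n}_{\mu}(\partial_{r}\partial_{r}\psi)n^{\mu}$ by Theorem \ref{theorem3} with $k=2$ (which applies precisely because $\psi_{0}=\psi_{1}=0$), and iterates the pigeonhole one more dyadic step to upgrade $\tau^{-1}$ to $\tau^{-2}$.

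\textbf{Main obstacle.} The hard part is the near-horizon analysis in $\mathcal{A}$. In the non-extreme case of \cite{new} the redshift gives a positive bulk term that closes the hierarchy automatically; in our setting the bulk term of the modified redshift current degenerates in the transversal direction on $\mathcal{H}^{+}$, so the $r^{p}$-argument at infinity cannot be closed against a purely $T$-based estimate at the horizon. The way around it is to read Theorem \ref{theorem3} as a near-horizon analogue of the $r^{p}$-hierarchy in the transversal direction: each commutation with $\partial_{r}$ plays the role of an extra power of $r$ near infinity, and the loss of regularity inherent to the trapping on $\mathcal{H}^{+}$ is exactly paid for by the extra $\partial_{r}$-fluxes that appear in $E_{2},E_{3}$. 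Keeping careful track of how many $T$- and $\partial_{r}$-commutations are spent at each pigeonhole step, and ensuring that the conservation-law obstruction of Theorem \ref{t3} is not violated (so one really does need $\psi_{l}=0$ for $l\leq k-1$ to close the $k$-th step), is the delicate bookkeeping part of the proof.
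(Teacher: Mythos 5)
Your outline of the $r^{p}$-hierarchy at infinity, and of Parts 2 and 3 (where the non-degenerate integrated decay of Theorem \ref{th1}(4) and the higher-order estimates of Theorem \ref{theorem3} close the argument for $l\geq 1$ and $l\geq 2$), matches the paper. But there is a genuine gap in your treatment of Part 1, precisely in the near-horizon strip $\mathcal{A}$. You claim that the spacetime integral of $J^{T}_{\mu}(\psi)n^{\mu}$ over $\mathcal{A}$ "is controlled by the degenerate $X$-estimate of Theorem \ref{th1}(3) with no further commutation required." Check the weights: near $\mathcal{H}^{+}$ one has $J^{T}_{\mu}n^{\mu}\sim(\partial_{v}\psi)^{2}+D(\partial_{r}\psi)^{2}+|\nabb\psi|^{2}$, while Theorem \ref{th1}(3) controls $\sqrt{D}(\partial_{r^{*}}\psi)^{2}$ with $\partial_{r^{*}}=\partial_{v}+D\partial_{r}$, i.e.\ it controls the transversal derivative only with weight $D^{5/2}$. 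Since $D^{5/2}\ll D$ as $r\to M$, the degenerate $X$-estimate does \emph{not} bound $\int\!\!\int_{\mathcal{A}}J^{T}_{\mu}(\psi)n^{\mu}$, and your first integrated estimate does not close. Moreover, even granting that step, iterating the pigeonhole to upgrade $\tau^{-1}$ to $\tau^{-2}$ requires showing that the \emph{boundary} term $I^{T}_{\tilde{\Sigma}_{\tau}}(\psi)$ -- whose near-horizon part is a flux strictly stronger than the $T$-flux -- itself admits an integrated-in-$\tau$ bound; the monotonicity of the $T$-flux and of the $r$-weighted flux that you invoke says nothing about this term.

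The paper's resolution, which is the genuinely new ingredient of its Section \ref{sec:EnergyDecay} and is absent from your proposal, is the intermediate causal multiplier $P=f_{v}\partial_{v}-\sqrt{D}\,\partial_{r}$ of Proposition \ref{rstarHenergy}. Its bulk term satisfies $K^{P}\sim(\partial_{v}\psi)^{2}+D(\partial_{r}\psi)^{2}+|\nabb\psi|^{2}\sim J^{T}_{\mu}n^{\mu}$ in $\mathcal{A}$, so the (bounded) $P$-energy yields the missing spacetime bound on the degenerate $T$-energy; and its flux satisfies $J^{P}_{\mu}n^{\mu}\sim(\partial_{v}\psi)^{2}+\sqrt{D}(\partial_{r}\psi)^{2}+|\nabb\psi|^{2}\sim K^{N,\delta,-\frac{1}{2}}$, so the spacetime integral of the $P$-flux (which is what actually sits inside $I^{T}_{\tilde{\Sigma}_{\tau}}$) is in turn controlled by the initial $N$-energy. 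This two-tier hierarchy of transversal weights $D\to\sqrt{D}\to 1$ is the horizon analogue of the $p=1,2$ steps at infinity and is exactly what permits the two pigeonhole iterations for general $\psi$ (including $l=0$), without ever invoking $\partial_{r}$-commutation, which Theorem \ref{t3} forbids at this level of generality. Your closing remark that Theorem \ref{theorem3} serves as the near-horizon hierarchy conflates the commutation hierarchy (used only for $l\geq 1,2$) with this multiplier hierarchy, which is what Part 1 actually needs.
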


\begin{mytheo}(\textbf{Pointwise Boundedness})
There exists a constant $C$ which depends on $M$ and $\Sigma_{0}$ such that for all solutions $\psi$ of the wave equation we have
\begin{equation*}
\begin{split}
\left|\psi\right|\leq C\cdot \sqrt{E_{4}},
\end{split}
\end{equation*}
everywhere in $\mathcal{R}$, where
\begin{equation*}
E_{4}=\sum_{\left|k\right|\leq 2}{\int_{\Sigma_{0}}{J_{\mu}^{n_{\Sigma_{0}}}(\Omega^{k}\psi)n^{\mu}_{\Sigma_{0}}}}.
\end{equation*}
Alternatively, we have
\begin{equation*}
\left|\psi\right|\leq C\sqrt{\tilde{E_{4}}},
\end{equation*}
everywhere in $\mathcal{R}$,
where
\begin{equation*}
\tilde{E_{4}}=\!\!\int_{\Sigma_{0}}{J_{\mu}^{n_{{\Sigma}_{0}}}(\psi)n^{\mu}_{\Sigma_{0}}}+C\!\!\displaystyle\int_{\Sigma_{0}}{J_{\mu}^{n_{{\Sigma}_{0}}}(n_{{\Sigma}_{0}}\psi)n^{\mu}_{\Sigma_{0}}}.
\end{equation*}
\label{t5}
\end{mytheo}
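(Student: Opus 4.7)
The plan is to reduce both bounds to the uniform non-degenerate energy boundedness of Theorem \ref{t2} via Sobolev embedding, exploiting the $SO(3)$-symmetry of the extreme Reissner-Nordstr\"{o}m metric. The first estimate is obtained by combining 2D Sobolev on the orbits of $SO(3)$ with a trace-type radial identity; the alternative estimate is obtained by 3D Sobolev embedding on $\Sigma_{\tau}$ together with elliptic control after commuting with the Killing field $T$.

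For the first bound, observe that the generators $\Omega$ of $so(3)$ are Killing vector fields tangent to each $\Sigma_{\tau}$, so each $\Omega^{k}\psi$ satisfies $\Box_{g}(\Omega^{k}\psi)=0$. Given a spacetime point $p=(\tau,r_{0},\omega_{0})$ lying on some $\Sigma_{\tau}$, 2D Sobolev embedding on the round sphere $\mathbb{S}^{2}(r_{0})$ yields
\begin{equation*}
|\psi(p)|^{2}\leq C\sum_{|k|\leq 2}\int_{\mathbb{S}^{2}}(\Omega^{k}\psi)^{2}(r_{0},\omega)\, d\omega.
\end{equation*}
Writing $f=\Omega^{k}\psi$, the identity
\begin{equation*}
\int_{\mathbb{S}^{2}}f^{2}(r_{0},\omega)\, d\omega=-2\int_{r_{0}}^{\infty}\!\int_{\mathbb{S}^{2}}f\cdot\partial_{\rho}f\, d\omega\, d\rho,
\end{equation*}
where the boundary term at $\rho\to\infty$ vanishes by the decay condition \eqref{condition}, combined with Cauchy-Schwarz in $(\rho,\omega)$, the volume form \eqref{volumeformfoliation}, and a Hardy inequality from Section \ref{sec:HardyInequalities}, gives
\begin{equation*}
\int_{\mathbb{S}^{2}}f^{2}(r_{0},\omega)\, d\omega\leq C\int_{\Sigma_{\tau}}J^{n_{\Sigma_{\tau}}}_{\mu}(f)\, n^{\mu}_{\Sigma_{\tau}}.
\end{equation*}
Applying Theorem \ref{t2} to each commuted solution $\Omega^{k}\psi$ (whose initial data on the $SO(3)$-invariant hypersurface $\Sigma_{0}$ is controlled by $E_{4}$) and summing over $|k|\leq 2$ yields $|\psi(p)|^{2}\leq C\,E_{4}$.

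For the alternative bound, the idea is to trade two angular commutations for one transversal commutation, using 3D Sobolev on $\Sigma_{\tau}$ in place of 2D Sobolev on spheres. Since $T$ is Killing, $T\psi$ solves the wave equation; decomposing $n_{\Sigma_{0}}=aT+bY$ for a spatial vector $Y$ with bounded coefficients, $T\psi|_{\Sigma_{0}}$ is controlled algebraically in $L^{2}$ by $(\psi,n_{\Sigma_{0}}\psi)$, so Theorem \ref{t2} applied to $T\psi$ yields $\int_{\Sigma_{\tau}}J^{n_{\Sigma_{\tau}}}_{\mu}(T\psi)\, n^{\mu}_{\Sigma_{\tau}}\leq C\tilde{E}_{4}$. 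Viewing $\Box_{g}\psi=0$ as a second-order equation in which the spatial Laplacian on $\Sigma_{\tau}$ is expressed in terms of $T^{2}\psi$, $T\psi$, and first-order spatial terms, standard local elliptic regularity gives $\|\psi\|_{H^{2}_{\mathrm{loc}}(\Sigma_{\tau})}\leq C\sqrt{\tilde{E}_{4}}$ (with $\psi$ itself controlled in $L^{2}_{\mathrm{loc}}$ by a Hardy inequality). The 3D embedding $H^{2}_{\mathrm{loc}}(\Sigma_{\tau})\hookrightarrow L^{\infty}_{\mathrm{loc}}(\Sigma_{\tau})$, together with the $\tau$-independence of the induced geometry of $\Sigma_{\tau}=\varphi_{\tau}(\Sigma_{0})$ under the Killing flow, yields $|\psi|\leq C\sqrt{\tilde{E}_{4}}$ uniformly in $\mathcal{R}$.

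The main obstacle will be the boundary contribution at $i^{0}$ in the trace-type identity of the first bound: one must invoke \eqref{condition} (or an equivalent Hardy inequality) to discard the term at $\rho=\infty$. A secondary technical point is verifying that the elliptic estimates in the alternative bound are uniform up to and including $\mathcal{H}^{+}$; this is handled by working in the Lie-propagated coordinates $(\rho,\omega)$ on $\Sigma_{\tau}$ introduced in Section \ref{sec:TheFoliationsSigmaTauAndTildeSigmaTau}, whose geometry is $\tau$-independent.
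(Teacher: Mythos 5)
Your first estimate follows the paper's own argument almost verbatim: the radial fundamental theorem of calculus from $r_{0}$ to infinity (using \eqref{condition} to kill the boundary term), Cauchy--Schwarz against the volume form, the Sobolev inequality on the spheres of symmetry, and Theorem \ref{t2} applied to each $\Omega^{k}\psi$. That part is correct; the paper's choice of weights $\rho^{2},\rho^{-2}$ in Cauchy--Schwarz even lets one avoid the Hardy inequality you invoke, but your variant with the Hardy inequality also closes.

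The alternative estimate, however, has a genuine gap at the elliptic step. You propose to write the induced Laplacian on $\Sigma_{\tau}$ as $T^{2}\psi$ plus $T\psi$ plus first-order spatial terms and then invoke ``standard local elliptic regularity.'' This fails precisely where the theorem is nontrivial, namely at $\mathcal{H}^{+}$: since $g(T,T)=-D\rightarrow 0$ there, the splitting of $\Box_{g}$ into a spatial elliptic operator plus $T$-derivative terms loses uniform ellipticity in the direction transversal to the horizon (in $(t,r^{*})$ coordinates the spatial part carries a factor $1/D$ relative to $\partial_{t}^{2}$), and working in the Lie-propagated coordinates $(\rho,\omega)$ does not repair this. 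The paper's elliptic estimate (Appendix \ref{sec:EllipticEstimates}) instead uses a frame adapted to a uniformly timelike vector field, and the resulting bound for $\left\|\psi\right\|_{\dot{H}^{2}(\Sigma_{\tau})}$ requires controlling $\int_{\Sigma_{\tau}}J^{N}_{\mu}(N\psi)n^{\mu}_{\Sigma_{\tau}}$ --- near $\mathcal{H}^{+}$ this is the non-degenerate energy of $\partial_{r}\psi$, not of $T\psi$. Uniform-in-$\tau$ control of $\int_{\Sigma_{\tau}\cap\mathcal{A}}J^{N}_{\mu}(\partial_{r}\psi)n^{\mu}_{\Sigma_{\tau}}$ is exactly Theorem \ref{2ndder}, a substantial result (commutation with $\partial_{r}$, the multiplier $L$, the Poincar\'e inequality on $\mathcal{H}^{+}$) which holds only for $\psi$ supported on the frequencies $l\geq 1$, in view of the conservation law of Proposition \ref{ndl0}. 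For this reason the paper first decomposes $\psi=\psi_{0}+\psi_{\geq 1}$, bounds $\psi_{0}$ pointwise directly by the radial argument of the first part (a spherical average controls a spherically symmetric function pointwise), and applies the Sobolev-plus-elliptic argument only to $\psi_{\geq 1}$. Your proposal omits both the $\partial_{r}$-commuted energy input and this decomposition, so as written the $L^{2}(\Sigma_{\tau})$ control of the full Hessian of $\psi$ up to and including $\mathcal{H}^{+}$ does not follow from $\tilde{E_{4}}$ by elliptic theory alone.
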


\begin{mytheo}(\textbf{Pointwise Decay})
Fix $R_{0}$ such that $M<R_{0}$ and let $\tau\geq 1$. Let $E_{1}, E_{2}, E_{3}$ be the quantities as defined in Theorem \ref{t4}. Then, there exists a constant $C$ that depends  on the mass $M$, $R_{0}$  and $\tilde{\Sigma}_{0}$  such that:
\begin{itemize}
	\item  For all solutions $\psi$ to the wave equation  we have
	\begin{equation*}
\left|\psi\right|\leq C \sqrt{E_{5}}\frac{1}{\sqrt{r}\cdot \tau},\ \  \left|\psi\right|\leq C\sqrt{E_{5}}\frac{1}{r\cdot\sqrt{\tau}}
\end{equation*}
in $\left\{R_{0}\leq r\right\}$, where
\begin{equation*}
E_{5}=\sum_{\left|k\right|\leq 2}{E_{1}\left(\Omega^{k}\psi\right)}.
\end{equation*}
\end{itemize}
\begin{itemize}
	\item For all  solutions $\psi$ of the wave equation we have 
\begin{equation*}
\begin{split}
\left|\psi\right|\leq C\sqrt{E_{6}}\frac{1}{\tau^{\frac{3}{5}}}
\end{split}
\end{equation*}
in $\left\{M\leq r\leq R_{0}\right\}$, where 
\begin{equation*}
E_{6}=E_{1}+E_{4}(\psi)+E_{4}(T\psi)+E_{5}+\!\sum_{\left|k\right|\leq 2}\!\!\left(E_{2}{\left(\Omega^{k}\psi\right)}\!+\!E_{3}{\left(\Omega^{k}\psi\right)}\right)+\left\|\partial_{r}\psi\right\|_{L^{\infty}\left(\tilde{\Sigma}_{0}\right)}^{2}\!. 
\end{equation*}
\end{itemize}
\begin{itemize}
	\item For all solutions $\psi$ to the wave equation which are supported on the frequencies $l\geq 1$ we have
\begin{equation*}
\begin{split}
\left|\psi\right|\leq C\sqrt{E_{7}}\frac{1}{\tau^{\frac{3}{4}}}
\end{split}
\end{equation*}
in $\left\{M\leq r\leq R_{0}\right\}$, where 
\begin{equation*}
E_{7}=E_{5}+\sum_{\left|k\right|\leq 2}E_{2}{\left(\Omega^{k}\psi\right)}+\sum_{\left|k\right|\leq 2}E_{3}{\left(\Omega^{k}\psi\right)}.
\end{equation*}
\end{itemize}
\begin{itemize}
\item For all solutions $\psi$ to the wave equation which are supported on the frequencies $l\geq 2$ we have
\begin{equation*}
\begin{split}
\left|\psi\right|\leq C\sqrt{E_{8}}\frac{1}{\tau},
\end{split}
\end{equation*}
in $\left\{M\leq r\leq R_{0}\right\}$, where 
\begin{equation*}
\begin{split}
E_{8}=\sum_{\left|k\right|\leq 2}E_{3}{\left(\Omega^{k}\psi\right)}.
\end{split}
\end{equation*}
\end{itemize}
\label{t6}
\end{mytheo}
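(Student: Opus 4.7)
My overall strategy is to reduce each pointwise decay estimate to the integrated energy decay of Theorem~\ref{t4} via Sobolev embedding on the spheres of symmetry, supplemented by fundamental-theorem-of-calculus arguments in $r$ and $\tau$. Since the rotations $\Omega \in so(3)$ are Killing, Theorem~\ref{t4} applies verbatim to $\Omega^k\psi$, and the Sobolev embedding $H^2(\mathbb{S}^2)\hookrightarrow L^\infty(\mathbb{S}^2)$ gives
\[
|\psi|^2(\tau,r,\omega) \;\lesssim\; \sum_{|k|\leq 2}\int_{\mathbb{S}^2(r)}(\Omega^k\psi)^2\,d\omega,
\]
so it suffices to establish sphere-integrated decay for $\Omega^k\psi$.

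For the far region $\{r \geq R_0\}$, I would use the $r$-weighted control on $\int r^{-1}(\partial_v(r\psi))^2$ contained in $E_1$, together with the $\tau^{-2}$ decay of the $T$-flux from Theorem~\ref{t4}. Writing $(r\psi)^2(\tau,r,\omega)$ using FTC in $r$ starting from $r = R_0$ (or from a null cone) and applying Cauchy--Schwarz against the $r$-weighted bound produces both claimed estimates, $|\psi|\lesssim 1/(\sqrt{r}\tau)$ and $|\psi|\lesssim 1/(r\sqrt{\tau})$. These correspond to two distinct splittings of the decay budget between $r$ and $\tau$. Crucially, the first gives the boundary bound $|\psi|(R_0,\tau)\lesssim 1/\tau$, which will feed into the near-region analysis.

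For the near region $\{M\leq r\leq R_0\}$, the strategy depends on the angular frequency. For $\psi$ supported on $l\geq 2$, the non-degenerate flux $\int J^n(\psi)n \lesssim \tau^{-2}$ combined with FTC in $r$ from $R_0$ (and a Cauchy--Schwarz against the integrated $(\partial_r\psi)^2$) yields $|\psi|\lesssim \tau^{-1}$ directly. For $l\geq 1$ one only has $\int J^n(\psi)n \lesssim \tau^{-1}$, so the same argument yields only $\tau^{-1/2}$; to reach $\tau^{-3/4}$ I would apply the $l\geq 2$ argument to $T\psi$, which is supported on the same angular frequencies as $\psi$, and use
\[
\psi(\tau,\cdot) \;=\; \psi(\tau_0,\cdot) + \int_{\tau_0}^\tau T\psi(\tilde\tau,\cdot)\,d\tilde\tau,
\]
optimizing $\tau_0 = \tau^\theta$ so that the boundary and integral contributions balance at the rate $\tau^{-3/4}$.

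The main obstacle is Part 2, the rate $\tau^{-3/5}$ for generic $\psi$. Here the conservation laws of Theorem~\ref{t3} on the $l = 0$ mode prevent any decay of the non-degenerate flux near $\mathcal{H}^+$, so the Sobolev/FTC approach used above breaks down outright. My plan is to combine three pieces of a priori information: (i) the uniform pointwise boundedness from Theorem~\ref{t5}, (ii) the boundary decay $|\psi|(R_0,\tau)\lesssim 1/\tau$ from the far region, and (iii) pointwise decay of $T\psi$ inherited from Theorem~\ref{t4} applied to $T\psi$ (for which the full hierarchy of energy decay is available via $E_6$). Integrating $T\psi$ in $\tau$ from a well-chosen intermediate time $\tau_0$ and balancing against the available boundary and boundedness estimates at $\tau_0$, the exponent $3/5$ should emerge as the optimum of the resulting one-parameter minimization in $\theta$ where $\tau_0 = \tau^\theta$. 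Tracking the sharp rate of $|T\psi|$ all the way down to $\mathcal{H}^+$, in the absence of any non-degenerate energy decay for the base quantity, is the most delicate step of the entire argument, and is where the presence of the $\|\partial_r\psi\|_{L^\infty(\tilde\Sigma_0)}^2$ term in $E_6$ is most likely needed.
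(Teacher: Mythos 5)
Your treatment of the far region $\left\{r\geq R_{0}\right\}$ and of the case $l\geq 2$ is essentially the paper's: Sobolev on the spheres after commuting with $\Omega^{k}$, the identity $\psi^{2}(r_{0},\omega)=\left(\int_{r_{0}}^{\infty}\partial_{\rho}\psi\, d\rho\right)^{2}\leq r_{0}^{-1}\int_{r_{0}}^{\infty}(\partial_{\rho}\psi)^{2}\rho^{2}d\rho$ played against the decaying $T$- (resp.\ $N$-) flux, and a fundamental-theorem-of-calculus argument in $r$ from $R_{0}$ against the $r$-weighted quantity for the improved $r$-decay. The gap is in the cases $l=0$ and $l\geq 1$, where the paper's key device is a \emph{radial} interpolation that your proposal lacks. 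First (Lemma \ref{1lemmadecay}) the $\tau^{-2}$ decay of the degenerate $T$-energy together with $D(r_{1})\sim (r_{1}-M)^{2}$ gives $\int_{\mathbb{S}^{2}}\psi^{2}(r_{1},\omega)\lesssim E_{1}(r_{1}-M)^{-2}\tau^{-2}$ for every $r_{1}>M$; then (Lemma \ref{2lemmadecay}) one integrates in $r$ from the moving radius $r_{0}+\tau^{-\a}$, so the boundary term costs $\tau^{-(2-2\a)}$, while the slab integral of $\psi\,\partial_{\rho}\psi$ is handled by Cauchy--Schwarz, the first Hardy inequality, and whatever control is available on $\int(\partial_{\rho}\psi)^{2}$ over the thin slab: the pointwise bound $\left|\partial_{\rho}\psi\right|\lesssim\sqrt{\tilde{E}_{6}}$ obtained by integrating the wave equation along characteristics when $l=0$ (this is where $\left\|\partial_{r}\psi\right\|^{2}_{L^{\infty}(\tilde{\Sigma}_{0})}$ enters $E_{6}$, not where you guessed), and the $\tau^{-1}$ decay of the non-degenerate energy when $l\geq 1$. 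Optimizing $\a=\frac{2}{5}$ and $\a=\frac{1}{4}$ produces the rates $\tau^{-3/5}$ and $\tau^{-3/4}$.

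Your proposed substitute, writing $\psi(\tau)=\psi(\tau_{0})+\int_{\tau_{0}}^{\tau}T\psi$ and balancing over $\tau_{0}=\tau^{\theta}$, does not close. With $\tau_{0}<\tau$ the boundary term $\left|\psi(\tau_{0})\right|\gtrsim\tau^{-\theta/2}$ decays more slowly than the $\tau^{-1/2}$ you already have, and in either direction the time integral of $\left|T\psi\right|$ fails to converge at the needed rate: even granting $\left|T\psi\right|\lesssim\tilde{\tau}^{-1}$ one only collects a logarithm. Moreover the premise that $T\psi$ enjoys the $l\geq 2$ rates is false: $T$ preserves each angular frequency, so for $\psi$ supported on $l\geq 1$ (or generic $\psi$) the function $T\psi$ is supported on exactly the same frequencies and Theorem \ref{t4} gives it no better energy decay than $\psi$ itself. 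The exponents $3/5$ and $3/4$ cannot be produced by a one-parameter balance in time; they come from the balance in the radial variable described above.
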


\begin{mytheo}(\textbf{Higher Order  Estimates I: Decay})
Fix $R_{1}$ such that $R_{1}>M$ and let $\tau\geq 1$. Let also $k,l\in\mathbb{N}$. Then there exists a constant $C$ which depend on $M,l, R_{1}$ and $\tilde{\Sigma}_{0}$ such that the following holds: For all solutions $\psi$ of the wave equation which are supported on the  angular frequencies greater or equal to $l$, there exist norms $\tilde{E}_{k,l}, E_{k,l}$ of the initial data of $\psi$ such that
\begin{itemize}
	\item $\displaystyle\int_{\tilde{\Sigma}_{\tau}\cap\left\{M\leq r\leq R_{1}\right\}}{J_{\mu}^{N}(\partial_{r}^{k}\psi)n_{\tilde{\Sigma}_{\tau}}^{\mu}}\leq C\tilde{E}_{k,l}^{2}\frac{1}{\tau^{2}}$ for all $k\leq l-2$,
	\item $\displaystyle\int_{\tilde{\Sigma}_{\tau}\cap\left\{M\leq r\leq R_{1}\right\}}{J_{\mu}^{N}(\partial_{r}^{l-1}\psi)n_{\tilde{\Sigma}_{\tau}}^{\mu}}\leq C\tilde{E}_{l-1,l}^{2}\frac{1}{\tau}$,
	
	\item $\displaystyle\left|\partial_{r}^{k}\psi\right|\leq CE_{k,l}\displaystyle\frac{1}{\tau}$ in $\left\{M\leq r\leq R_{1}\right\}$ for all $k\leq l-2$,
	\item $\displaystyle\left|\partial_{r}^{l-1}\psi\right|\leq CE_{l-1,l}\displaystyle\frac{1}{\tau^{\frac{3}{4}}}$ in $\left\{M\leq r\leq R_{1}\right\}$,
		\item $\displaystyle\left|\partial_{r}^{l}\psi\right|\leq CE_{l,l}\displaystyle\frac{1}{\tau^{\frac{1}{4}}}$ in $\left\{M\leq r\leq R_{1}\right\}$.
\end{itemize}
\label{theo8}
\end{mytheo}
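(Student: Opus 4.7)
The plan is to argue by induction on the order $k$ of the transversal derivative, using Theorem \ref{theorem3} as the near-horizon $L^{2}$ base estimate and the $r^{p}$-weighted hierarchy behind Theorem \ref{t4} to propagate decay up the $\partial_{r}$-tower. Set $\phi_{k}:=\partial_{r}^{k}\psi$. Since $[\partial_{r},T]=0$ and $[\partial_{r},\Omega]=0$, the function $\phi_{k}$ remains supported on angular frequencies $\geq l$, and commuting $\Box_{g}$ with $\partial_{r}$ iteratively gives $\Box_{g}\phi_{k}=F_{k}$, where $F_{k}$ is an explicit first-order expression in $\phi_{0},\dots,\phi_{k-1}$ and $T\phi_{0},\dots,T\phi_{k-1}$ with smooth $r$-coefficients. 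Away from $\mathcal{H}^{+}$ the coefficient $D$ of $\partial_{r}^{2}\psi$ in $\Box_{g}\psi$ is bounded below, so in $\{r\geq r_{0}\}$ the derivative $\phi_{k}$ can be expressed algebraically in terms of $T^{a}\partial_{r}^{b}\psi$ with $b\leq 1$ and controlled by Theorems \ref{t4}, \ref{t6}; all the new work is in the neighbourhood $\mathcal{A}=\{M\leq r\leq r_{0}\}$.

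Near the horizon I would use the hypothesis that $\psi$ is supported on frequencies $\geq l$ to feed Theorem \ref{theorem3}, obtaining an integrated local energy estimate for $\phi_{k}$ on $\mathcal{R}(0,\tau)\cap\mathcal{A}$ for every $k\leq l$, at the price of commuting with $T$ up to $k$ times and with $\partial_{r}$ up to $k$ times on the initial slice. I would then run the adaptation of the $r^{p}$-weighted method of \cite{new} used in the proof of Theorem \ref{t4} on the inhomogeneous wave equation $\Box_{g}\phi_{k}=F_{k}$ along the foliation $\tilde{\Sigma}_{\tau}$, absorbing $F_{k}$ via the inductive hypothesis on $\phi_{0},\dots,\phi_{k-1}$ (which gives faster decay at lower $k$) and via extra $T$-commutations. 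For $k\leq l-2$ two $T$-commutations remain inside the frequency budget $\psi_{j}=0$, $j\leq l-1$, so the hierarchy delivers the full $\tau^{-2}$ energy decay, and Sobolev together with commutation by $\Omega^{i}$, $|i|\leq 2$, yields the claimed pointwise rate $\tau^{-1}$. For $k=l-1$ only one $T$-commutation is available before the Theorem \ref{theorem3} hypothesis is saturated, which gives the weaker $\tau^{-1}$ energy decay and correspondingly $\tau^{-3/4}$ pointwise, in parallel with the $l\geq 1$ case of Theorems \ref{t4}, \ref{t6}.

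For the top derivative $\phi_{l}$, no $T$-commutation is left and one cannot obtain integrated decay at this order without running into the conservation law $H_{l}[\psi]$ of Theorem \ref{t3} at order $l+1$. Instead I would first extract a uniform-in-$\tau$ energy bound for $\phi_{l}$ from Theorem \ref{theorem3} applied at level $k=l$, and then interpolate this against the $\tau^{-1}$ energy decay already proved for $\phi_{l-1}$ via a one-dimensional Gagliardo--Nirenberg inequality in $r$ on the finite interval $[M,R_{1}]$; this produces $\tau^{-1/2}$ energy decay for $\phi_{l}$, and Sobolev after commutation with $\Omega^{i}$ promotes it to the stated $\tau^{-1/4}$ pointwise decay. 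The main obstacle throughout is orchestrating three competing degeneracies: the vanishing of the redshift on $\mathcal{H}^{+}$, the trapping on $\mathcal{H}^{+}$ that charges one derivative per $\partial_{r}$-commutation via Theorem \ref{theorem3}, and the conservation-law ceiling of Theorem \ref{t3} at order $l+1$. The careful point is to choose the number of $T$- versus $\partial_{r}$-commutations at each inductive step so that the three effects cooperate, rather than destructively align, for every $k\leq l$.
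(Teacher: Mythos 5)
Your overall skeleton — handle $\{r_{0}\leq r\leq R_{1}\}$ by $T$-commutation plus elliptic estimates, reduce to the neighbourhood $\mathcal{A}$ of $\mathcal{H}^{+}$, feed Theorem \ref{theorem3}, and interpolate the bounded top-order energy against the decaying energy one level down to get the $\tau^{-1/4}$ rate for $\partial_{r}^{l}\psi$ — matches the paper. But there are two concrete problems. First, the detour through the $r^{p}$-weighted hierarchy applied to the inhomogeneous equation $\Box_{g}\phi_{k}=F_{k}$ is both unnecessary and would not run as described: the theorem is a statement on the bounded region $\{M\leq r\leq R_{1}\}$, the paper never commutes the $r^{p}$-estimates with $\partial_{r}$ (the vector field $\partial_{r}$ of the $(v,r)$ system is not a good commutator near $\mathcal{I}^{+}$, and $F_{k}$ contains top-order terms $\partial_{r}^{i}D\cdot\partial_{r}^{k-i+2}\psi$ that you cannot simply "absorb via the inductive hypothesis" in a weighted spacetime norm). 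What actually converts Theorem \ref{theorem3} into decay is the dyadic pigeonhole: the spacetime integral over $\mathcal{A}$ produces a dyadic sequence $\tau_{n}$ on which the flux of $\partial_{r}^{k}\psi$ is $O(\tau_{n}^{-1})$, the boundedness part of Theorem \ref{theorem3} propagates this to all $\tau$, and then the argument is iterated once more on dyadic intervals $[\tau_{n-1},\tau_{n}]$ using that the right-hand side of the spacetime estimate has itself just been shown to decay like $\tau^{-1}$.

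Second, your accounting of why the rates degrade at $k=l-1$ and $k=l$ is wrong. $T$ is Killing, so $T$-commutations are free and there is no "$T$-commutation budget"; the only ceiling is on $\partial_{r}$-commutations, since Theorem \ref{theorem3} at level $k$ requires $\psi_{j}=0$ for $j\leq k-1$ and its right-hand side involves the fluxes $\int_{\Sigma_{0}\cap\mathcal{A}}J^{N}_{\mu}(\partial_{r}^{i}\psi)n^{\mu}$ up to $i=k$. The second pigeonhole step at level $k$ therefore needs decay of the flux of $\partial_{r}^{i}\psi$ for all $i$ up to $k+1$; for $k\leq l-2$ these are all $\leq l-1$ and decay like $\tau^{-1}$ from the first step, giving $\tau^{-2}$, but for $k=l-1$ the right-hand side contains the flux of $\partial_{r}^{l}\psi$, which is merely bounded and by Theorem \ref{theo9} genuinely does not decay, so the improvement stops at $\tau^{-1}$. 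At $k=l$ no integrated decay is available at all (level $l+1$ is forbidden by the conservation law $H_{l}[\psi]$), which is precisely why only the Cauchy--Schwarz interpolation in $\rho$ (with a spatial cutoff, since the energies are only controlled on $\{M\leq r\leq R_{1}\}$, not globally) survives and yields $\tau^{-1/4}$; that final step of yours is essentially the paper's argument.
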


\begin{mytheo}(\textbf{Higher Order  Estimates II: Non-Decay and Blow-up})

Fix $R_{1}$ such that $R_{1}>M$.  Let also $k,l\in\mathbb{N}$. Then there exists a positive constant $c$ which depends only on $M,l,k$ such that for all  solutions $\psi$ to the wave equation which are supported on the frequency $l$ we have 
\begin{itemize}
	\item  
$\partial_{r}^{l+1}\psi(\tau, \theta, \phi)\rightarrow H_{l}[\psi](\theta, \phi)
$
along $\hh$ and generically $H_{l}[\psi](\theta,\phi)\neq 0$ almost everywhere on $\mathbb{S}^{2}_{0}$.

\item
$
\left|\partial_{r}^{l+k}\psi\right|(\tau,\theta,\phi)\geq c \left|H_{l}[\psi](\theta,\phi)\right|\tau^{k-1}
$
asymptotically on $\mathcal{H}^{+}$, for $k\geq 2$.

\item
$
\displaystyle\int_{\tilde{\Sigma}_{\tau}}{J_{\mu}^{N}(\partial_{r}^{k}\psi)n_{\tilde{\Sigma}_{\tau}}^{\mu}}\longrightarrow +\infty 
$
as $\tau\rightarrow +\infty$ for all $k\geq l+1$.

\end{itemize}
\label{theo9}
\end{mytheo}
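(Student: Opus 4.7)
The plan is to combine the conservation law of Theorem~\ref{t3}, the decay estimates of Theorem~\ref{theo8}, and a repeated commutation of the wave equation with the transversal vector field $\partial_r$ evaluated on $\mathcal{H}^+$.

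For the convergence bullet, I rearrange $H_l[\psi]$ to obtain, on $\mathcal{H}^+$,
\begin{equation*}
\partial_r^{l+1}\psi(\tau,\theta,\phi) \;=\; H_l[\psi](\theta,\phi) \;-\; \sum_{i=0}^{l}\beta_i\,\partial_r^{i}\psi(\tau,\theta,\phi).
\end{equation*}
The first term is independent of $\tau$ by Theorem~\ref{t3}; each of the terms in the sum tends to zero by Theorem~\ref{theo8} (the slowest rate being $\tau^{-1/4}$ at $i=l$). The genericity statement follows by inspection: $H_l[\psi]|_{\mathcal{H}^+\cap\Sigma_0}$ is an explicit nontrivial differential operator (of order $l+1$) applied to the initial data along $\mathcal{H}^+\cap\Sigma_0$, hence the locus where it vanishes a.e.~on $\mathbb{S}^2$ is nowhere dense in the initial-data space.

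The blow-up bullet is the heart of the theorem. In $(v,r)$ coordinates the wave equation reads
\begin{equation*}
2T\partial_r\psi + \frac{2}{r}T\psi + \frac{1}{r^2}\partial_r\!\left(r^2 D\,\partial_r\psi\right) + \frac{1}{r^2}\lapp\psi = 0,
\end{equation*}
and the key structural fact is that $D=D'=0$ while $D''=2/M^2$ at $r=M$. Commuting with $\partial_r^{j}$, restricting to $\{r=M\}$, and projecting onto frequency $l$ (so $\lapp\mapsto -l(l+1)$) produces, after Leibniz expansion, a triangular system along the null generators of $\mathcal{H}^+$:
\begin{equation*}
2T\partial_r^{j+1}\psi + \frac{2}{M}T\partial_r^{j}\psi + \frac{j(j+1)-l(l+1)}{M^2}\,\partial_r^{j}\psi + (\text{lower order in }\partial_r) = 0.
\end{equation*}
For $j=l$ the distinguished coefficient $j(j+1)-l(l+1)$ vanishes, which is the mechanism behind Theorem~\ref{t3}; for $j=l+k-1$ it equals $\tfrac{k(2l+k+1)}{M^2}\neq 0$ whenever $k\geq 1$. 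Substituting $\partial_r^{l+1}\psi = H_l[\psi] - \sum\beta_i\partial_r^i\psi$ into the $j=l+1$ equation and using the decay of $\partial_r^i\psi$ for $i\leq l$ produces $T\partial_r^{l+2}\psi = -\tfrac{l+1}{M^2}H_l[\psi] + o(1)$, which integrates in $v$ to $\partial_r^{l+2}\psi(\tau) = -\tfrac{l+1}{M^2}H_l[\psi]\,\tau + O(1)$. Induction on $k$ then produces explicit nonzero constants $c_{l,k}$ with
\begin{equation*}
\partial_r^{l+k}\psi(\tau,\theta,\phi) \;=\; c_{l,k}\,H_l[\psi](\theta,\phi)\,\tau^{k-1} \;+\; O(\tau^{k-2}) \qquad \text{on } \mathcal{H}^+,
\end{equation*}
yielding the asymptotic lower bound $|\partial_r^{l+k}\psi|\geq c|H_l[\psi]|\tau^{k-1}$.

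For the energy bullet, fix $k\geq l+1$. The redshift flux satisfies $J^N_\mu(\phi)n^\mu_{\tilde{\Sigma}_\tau}\gtrsim (\partial_r\phi)^2$ on a fixed slab $\{M\leq r\leq M+\delta\}$, so with $\phi=\partial_r^k\psi$ it controls $(\partial_r^{k+1}\psi)^2$; the pointwise blow-up of $\partial_r^{k+1}\psi$ on $\mathcal{H}^+$, together with the uniform regularity up to $\mathcal{H}^+$ provided by Theorem~\ref{theorem3} after sufficient commutations, propagates to the bound $|\partial_r^{k+1}\psi|^2\gtrsim \tau^{2(k-l)}H_l[\psi]^2$ throughout the slab, and integrating yields divergence of the flux. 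The main obstacle will be the second step: one must verify that every coefficient $c_{l,k}$ is genuinely nonzero and that the error terms at each inductive stage---which themselves involve polynomially growing lower-order $\partial_r^j\psi$ at intermediate stages---produce strictly subleading $\tau$-powers after the $v$-integration. This reduces to a careful but finite algebraic computation exploiting the double zero of $D$ at $r=M$ together with the scalar action $-l(l+1)$ of $\lapp$ on the frequency-$l$ subspace.
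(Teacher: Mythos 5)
Your treatment of the first two bullets follows essentially the same route as the paper: the convergence statement is the conservation law of Theorem \ref{t3} plus the decay of $\partial_r^i\psi$, $i\le l$, from Theorem \ref{theo8}, and the blow-up is obtained by integrating the $\partial_r^{j}$-commuted wave equation along the null generators of $\mathcal{H}^{+}$, using that the coefficient $\bigl(j(j+1)-l(l+1)\bigr)/M^{2}$ of $\partial_r^{j}\psi$ is nonzero for $j>l$ so that the non-decaying $\partial_r^{l+1}\psi\to H_l[\psi]$ acts as a source producing one extra power of $\tau$ at each stage. Two caveats on the first bullet: to pass from ``$H_l[\psi]\neq 0$'' to ``$\neq 0$ almost everywhere'' you need that $H_l[\psi]|_{\mathbb{S}^2_0}$ lies in the finite-dimensional eigenspace $E^{l}$ (hence is real-analytic with measure-zero nodal set), and your genericity-by-inspection argument glosses over the fact that $H_l[\psi]$ involves $l+1$ derivatives transversal to $\tilde{\Sigma}_0$, which is why the paper passes to an auxiliary hypersurface $\tilde{\Sigma}_0^{p}$ containing a piece of $\mathcal{H}^{+}$ before arguing genericity. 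These are repairable.

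The third bullet, however, has a genuine gap. You propose to lower-bound $\int_{\tilde{\Sigma}_\tau\cap\{M\le r\le M+\delta\}}J^{N}_{\mu}(\partial_r^{k}\psi)n^{\mu}$ by showing $|\partial_r^{k+1}\psi|^{2}\gtrsim \tau^{2(k-l)}H_l[\psi]^{2}$ \emph{throughout a fixed slab}. But the blow-up established in the second bullet holds only \emph{on} $\mathcal{H}^{+}$; at any fixed $r_0>M$ the quantities $\partial_r^{j}\psi$ in fact decay (away from the horizon one converts $\partial_r$ into $T$ derivatives via the wave equation and elliptic estimates, and $T$-energies decay). So the set where $\partial_r^{k+1}\psi$ is large shrinks onto the horizon as $\tau\to\infty$, and no fixed-slab pointwise lower bound can hold. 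Moreover, your appeal to Theorem \ref{theorem3} to supply ``uniform regularity up to $\mathcal{H}^{+}$ after sufficient commutations'' is inadmissible here: that theorem requires $\psi_{l'}=0$ for all $l'\le k-1$, which fails precisely in the regime $k\ge l+1$ for a wave supported on frequency $l$ --- this failure is the content of the theorem you are trying to prove. The paper instead argues by contradiction: if the energy were bounded along a sequence $\tau_j$, then $\int_{\mathbb{S}^2}|\partial_r^{k}\psi(\tau_j,M)|\le \int_{\mathbb{S}^2}|\partial_r^{k}\psi(\tau_j,r_0)|+ r_0(r_0-M)^{1/2}\bigl(\int J^{N}_{\mu}(\partial_r^{k}\psi)n^{\mu}\bigr)^{1/2}$, where the first term tends to zero for each fixed $r_0>M$ and the second is made small by choosing $r_0-M$ small relative to the assumed bound; this forces $L^{1}(\mathbb{S}^{2})$ decay of $\partial_r^{k}\psi$ on $\mathcal{H}^{+}$, contradicting the pointwise non-decay and blow-up of the first two bullets. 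You would need to replace your fixed-slab argument with something of this soft type.
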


\section{The Vector Field Method}
\label{sec:TheVectorFieldMethod}

For understanding the evolution of waves we will use the so-called vector field method. This is a geometric and robust method and involves mainly $L^{2}$ estimates. The main idea is to construct appropriate (0,1) currents and use Stokes' theorem (see Appendix \ref{sec:StokesTheoremOnLorentzianManifolds}) in appropriate regions. For a nice recent exposition see \cite{ali}.

Given a (0,1) current $P_{\mu}$ we have the continuity equation
\begin{equation}
\int_{\Sigma_{0}}{P_{\mu}n^{\mu}_{\Sigma_{0}}}=\int_{\Sigma_{\tau}}{P_{\mu}n^{\mu}_{\Sigma_{\tau}}}+\int_{\mathcal{H}^{+}\left(0,\tau\right)}{P_{\mu}n^{\mu}_{\mathcal{H}^{+}}}+\int_{\mathcal{R}\left(0,\tau\right)}{\nabla^{\mu}P_{\mu}}
\label{div}
\end{equation}
where all the integrals are with respect to the \textit{induced volume form} and the unit normals $n_{\scriptstyle\Sigma_{\tau}}$ are future directed. The normal to the horizon $\mathcal{H}^{+}$ is also future directed and can be chosen arbitrarily. Then the corresponding volume form on $\mathcal{H}^{+}$ is defined such that \eqref{div} holds. For definiteness, from now on we  consider that $n_{\mathcal{H}^{+}}=\partial_{v}=T$. Furthermore, all the integrals  are to be considered  with respect to the induced volume form and thus we omit writing the measure.

\subsection{The Compatible Currents $J$, $K$ and the Current $\mathcal{E}$}
\label{sec:TheCurrentsJKAndMathcalE}

We usually consider currents $P_{\mu}$ that depend on the geometry of $\left(\mathcal{M},g\right)$ and are such that both $P_{\mu}$ and $\nabla^{\mu}P_{\mu}$ depend only on the 1-jet of $\psi$. This can be achieved by using the wave equation to make all second order derivatives disappear and end up with something that highly depends on the geometry of the spacetime.  There is a general method for producing such currents using the energy momentum tensor \textbf{T}. Indeed, the Lagrangian structure of the wave equation gives us the following energy momentum tensor 
\begin{equation}
\textbf{T}_{\mu\nu}\left(\psi\right)=\partial_{\mu}\psi\partial_{\nu}\psi-\frac{1}{2}g_{\mu\nu}\partial^{a}\psi\partial_{a}\psi,
\label{tem}
\end{equation}
which is a symmetric divergence free (0,2) tensor. We will in fact consider this tensor for general functions $\psi:\mathcal{M}\rightarrow\mathbb{R}$ in which case we have the identity
\begin{equation}
Div\textbf{T}\left(\psi\right)=\left(\Box_{g}\psi\right)d\psi.
\label{divT}
\end{equation}
Since  \textbf{T} is a $\left(0,2\right)$ tensor we need to  contract it with vector fields of $\mathcal{M}$. It is here where the geometry of $\mathcal{M}$ makes its appearance. We have the following definition
\begin{definition}
Given a vector field $V$  we define the $J^{V}$ current by
\begin{equation}
J^{V}_{\mu}(\psi)=\textbf{T}_{\mu\nu}(\psi)V^{\nu}.
\label{jcurrent}
\end{equation}
\end{definition}
We say that we use the vector field $V$ as a \textit{multiplier}\footnote{the name comes from the fact that the tensor \textbf{T} is multiplied by \textit{V}.} if we apply \eqref{div} for the current $J^{V}_{\mu}$. The divergence of this current is
\begin{equation}
\operatorname{Div}(J)=\operatorname{Div}\left(\textbf{T}V\right)=\operatorname{Div}\left(\textbf{T}\right)V+\textbf{T}\left(\nabla V\right),
\label{divtv}
\end{equation}
where $\left(\nabla V\right)^{ij}=\left(g^{ki}\nabla _{k}V\right)^{j}=\left(\nabla ^{i}V\right)^{j}$. If $\psi$ is a solution of the wave equation then $\operatorname{Div}\left(\textbf{T}\right)=0$ and, therefore, $\nabla^{\mu}J_{\mu}^{V}$ is an expression of the 1-jet of $\psi$.
\begin{definition} 
Given a vector field $V$  the scalar  current $K^{V}$ is defined by
\begin{equation}
K^{V}\left(\psi\right)=\textbf{T}\left(\psi\right)\left(\nabla V\right)=\textbf{T}_{ij}\left(\psi\right)\left(\nabla ^{i}V\right)^{j}.
\label{K}
\end{equation}
\end{definition}
Note that from the symmetry of the energy momentum tensor $\textbf{T}$ we have
\begin{equation*}
K^{V}\left(\psi\right)=T_{\mu\nu}\left(\psi\right)\pi^{\mu\nu}_{V},
\end{equation*} 
where $\pi^{\mu\nu}_{V}=(\mathcal{L}_{V}g)^{\mu\nu}$ is the deformation tensor of $V$. Clearly if $\psi$ satisfies the wave equation then
\begin{equation*}
K^{V}(\psi)=\nabla^{\mu}J^{V}_{\mu}(\psi).
\end{equation*}
Thus if we use Killing vector fields as multiplier then the divergence vanishes and so we obtain a conservation law. This is partly the content of a deep theorem of Noether\footnote{According to this theorem, any continuous family of isometries gives rise to a conservation law.}.
The first term of the right hand side of \eqref{divtv} does not vanish when we commute the wave equation with a vector field that is not Killing (or, more generally, when $\psi$ does satisfy the wave equation). Therefore, we also have the following definition.
\begin{definition} 
Given a vector field $V$ we define the scalar  current $\mathcal{E}^{V}$ by
\begin{equation}
\mathcal{E}^{V}\left(\psi\right)=Div\left(\textbf{T}\right)V=\left(\Box_{g}\psi\right)d\psi\left(V\right)=\left(\Box_{g}\psi\right)V\left(\psi\right).
\label{E}
\end{equation}
\end{definition}

\subsection{The Hyperbolicity of the Wave Equation}
\label{sec:TheHyperbolicityOfTheWaveEquation}
Note that equation \eqref{tem} provides us with an energy momentum tensor for any pseudo-Riemannian manifold. However, Lorentzian manifolds admit timelike vectors and the hyperbolicity of the wave equation is captured by the following proposition
\begin{proposition}
Let $V_{1},V_{2}$ be two future directed timelike vectors. Then the quadratic expression $\textbf{T}\left(V_{1},V_{2}\right)$ is positive definite in $d\psi$. By continuity, if one of these vectors is null then $\textbf{T}\left(V_{1},V_{2}\right)$ is non-negative definite in $d\psi$.
\label{hyperb}
\end{proposition}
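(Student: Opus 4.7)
The plan is to reduce the statement to a pointwise algebraic fact, since $\textbf{T}(V_1,V_2)|_p$ depends only on the values of $V_1$, $V_2$, $g$ and $d\psi$ at $p$. Thus I fix $p\in\mathcal{M}$ and work in the fibre at $p$, where I am free to choose any frame I like.

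First I would pick a Lorentz-orthonormal frame $\{e_0,e_1,e_2,e_3\}$ adapted to the two vectors: set $e_0=V_1/\sqrt{-g(V_1,V_1)}$, which is future-directed timelike, and then perform a spatial rotation so that the spatial projection of $V_2$ lies along $e_1$. Writing $V_1=a\,e_0$ and $V_2=b\,e_0+c\,e_1$, the hypotheses give $a,b>0$ (future-directedness) and $b^2>c^2$ (from $g(V_2,V_2)<0$). With the shorthand $\psi_i:=e_i(\psi)$, a direct expansion of \eqref{tem} gives
\begin{equation*}
\textbf{T}(V_1,V_2)=\tfrac{ab}{2}\psi_0^2+ac\,\psi_0\psi_1+\tfrac{ab}{2}\psi_1^2+\tfrac{ab}{2}\bigl(\psi_2^2+\psi_3^2\bigr).
\end{equation*}

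The key step is to read off positivity from this closed form. The transverse $(\psi_2,\psi_3)$-block is manifestly positive definite since $ab>0$. The $(\psi_0,\psi_1)$-block is the symmetric quadratic form with diagonal entries $ab/2>0$ and off-diagonal entry $ac/2$; its determinant equals $a^2(b^2-c^2)/4>0$ by the timelike condition on $V_2$, so it too is positive definite. Combining the two blocks yields positive-definiteness of $\textbf{T}(V_1,V_2)$ in $d\psi$, which is the first assertion.

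For the second assertion I would invoke continuity: every future-directed null vector is a limit of future-directed timelike vectors, and passing to the limit in the inequality $\textbf{T}(V_1,V_2)\geq 0$ preserves it (though not strictness, as the $(\psi_0,\psi_1)$-determinant degenerates to zero when $b^2=c^2$). I do not anticipate any serious obstacle; the only mild point is the initial choice of frame in which $V_2$ has at most two nonzero components, which is precisely what makes the explicit quadratic-form computation transparent.
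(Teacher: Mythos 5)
Your proof is correct and follows essentially the same route as the paper's: reduce to a pointwise computation in an adapted orthonormal frame with one of the vectors aligned along $e_{0}$, and verify positivity of the resulting explicit quadratic form (the paper normalizes $V_{2}$ and cites Cauchy--Schwarz where you normalize $V_{1}$ and check the $2\times 2$ determinant, but these are the same estimate). Your write-up is a valid, more detailed version of the paper's one-line argument, including the correct limiting argument for the null case.
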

\begin{proof}
Consider a point $p\in\mathcal{M}$ and the normal coordinates around this point and that without loss of generality $V_{2}=(1,0,0,...,0)$. Then the proposition is an application of the Cauchy-Schwarz inequality.
\end{proof}
An important application of the vector field method and the hyperbolicity of the wave equation is the domain of dependence property. 

As we shall see, the exact dependence of $\textbf{T}$ on the derivatives of $\psi$ will be crucial later. For a general computation see Appendix \ref{sec:TheHyperbolicityOfTheWaveEquation1}.

\section{Hardy Inequalities}
\label{sec:HardyInequalities}

In this section we establish three Hardy inequalities which as we shall see will be very crucial for obtaining sharp estimates.  They do not require $\psi$ to satisfy the wave equation and so we only assume that $\psi$ satifies the regularity assumptions described in Section \ref{sec:TheCauchyProblemForTheWaveEquation} and \eqref{condition}. From now on,  $\Sigma_{\tau}$  is the foliation introduced  in Section \ref{sec:TheFoliationsSigmaTauAndTildeSigmaTau} (however, these inequalities hold also for the foliation $\tilde{\Sigma}_{\tau}$).  

\begin{proposition}(\textbf{First Hardy Inequality}) For all functions $\psi$ which satisfy the regularity assumptions of Section \ref{sec:TheCauchyProblemForTheWaveEquation}  we have
\begin{equation*}
\int_{\Sigma_{\tau}}{\frac{1}{r^{2}}\psi^{2}}\leq C\int_{\Sigma_{\tau}}{D[(\partial_{v}\psi)^{2}+(\partial_{r}\psi)^{2}]},
\end{equation*}
where the constant  $C$ depends only on $M$ and $\Sigma_{0}$.
\label{firsthardy}
\end{proposition}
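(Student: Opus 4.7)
The plan is to reduce the inequality to a one-dimensional weighted Hardy inequality in the radial variable and apply a Cauchy--Schwarz after one integration by parts. Since the volume form on $\Sigma_{\tau}$ is $dg_{\Sigma_{\tau}} = V\rho^{2}\,d\rho\,d\omega$ with $V$ uniformly bounded above and below, and since $\rho = r$ on $\Sigma_{\tau}$, the left-hand side is comparable to $\int_{\mathbb{S}^{2}}\!\int_{M}^{\infty} \psi^{2}\, d\rho\, d\omega$ (the $r^{-2}$ factor cancels $\rho^{2}$). The identity $Dr^{2} = (r-M)^{2}$, where $D=(1-M/r)^{2}$, is the crucial algebraic observation: the degeneration of $D$ at the horizon is precisely compensated by the Euclidean-looking weight one needs for a standard Hardy inequality on $[M,\infty)$.

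I would first fix $\omega \in \mathbb{S}^{2}$ and work pointwise in angle. Writing
\begin{equation*}
\int_{M}^{\infty} \psi^{2}\, d\rho = \int_{M}^{\infty} \psi^{2}\, \frac{d}{d\rho}(\rho - M)\, d\rho = \bigl[(\rho-M)\psi^{2}\bigr]_{M}^{\infty} - 2\int_{M}^{\infty} (\rho - M)\, \psi\, \partial_{\rho}\psi\, d\rho,
\end{equation*}
the boundary term at $\rho = M$ vanishes because of the factor $(\rho - M)$, while the boundary term at infinity vanishes by the decay assumption \eqref{condition}, which forces $r\psi^{2} \to 0$ at $i^{0}$. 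Applying Cauchy--Schwarz to the remaining bulk integral and absorbing $\int \psi^{2}\, d\rho$ from the right-hand side gives
\begin{equation*}
\int_{M}^{\infty} \psi^{2}\, d\rho \;\leq\; 4 \int_{M}^{\infty} (\rho-M)^{2}\, (\partial_{\rho}\psi)^{2}\, d\rho.
\end{equation*}

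Next I would translate the result back to the ambient geometry. Using $(\rho-M)^{2} = D\rho^{2}$ and the fact that on $\Sigma_{\tau}$ the Lie-propagated coordinate derivative satisfies $\partial_{\rho} = g_{1}\partial_{v} + \partial_{r}$ with $g_{1}$ bounded, I obtain $(\partial_{\rho}\psi)^{2} \leq C[(\partial_{v}\psi)^{2} + (\partial_{r}\psi)^{2}]$, whence
\begin{equation*}
\int_{M}^{\infty} \psi^{2}\, d\rho \;\leq\; C \int_{M}^{\infty} D\rho^{2}\bigl[(\partial_{v}\psi)^{2} + (\partial_{r}\psi)^{2}\bigr]\, d\rho.
\end{equation*}
Integrating this inequality in $\omega$ over $\mathbb{S}^{2}$ and reinterpreting the two integrals through $dg_{\Sigma_{\tau}} = V\rho^{2}\, d\rho\, d\omega$ recovers the desired inequality, with a constant depending only on $M$ and on the uniform bounds on $V$ and $g_{1}$, i.e.\ on $\Sigma_{0}$.

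The main conceptual step is the cancellation $Dr^{2} = (r-M)^{2}$; once noticed, the rest is a routine one-dimensional Hardy argument. The only real care required is with the boundary term at infinity, which is precisely why the hypothesis $\lim_{x \to i^{0}} r\psi^{2}(x) = 0$ was imposed in Section \ref{sec:TheCauchyProblemForTheWaveEquation}. No integration against the wave equation is needed, in agreement with the statement that $\psi$ is only assumed to satisfy the regularity and decay assumptions.
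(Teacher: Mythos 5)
Your proof is correct and follows essentially the same route as the paper: the one-dimensional identity with $h=\rho-M$, vanishing boundary terms via the factor $(\rho-M)$ at the horizon and the decay hypothesis $r\psi^{2}\to 0$ at $i^{0}$, Cauchy--Schwarz and absorption, the cancellation $D\rho^{2}=(\rho-M)^{2}$, and integration over $\mathbb{S}^{2}$ using the boundedness of $V$ in the volume form. The only difference is that you make explicit the step $(\partial_{\rho}\psi)^{2}\leq C[(\partial_{v}\psi)^{2}+(\partial_{r}\psi)^{2}]$ via $\partial_{\rho}=g_{1}\partial_{v}+\partial_{r}$, which the paper leaves implicit.
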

\begin{proof}
Consider the induced coordinate system $(\rho,\omega)$ introduced in Section \ref{sec:TheFoliationsSigmaTauAndTildeSigmaTau}. We use the 1-dimensional identity
\begin{equation*}
\int_{M}^{+\infty}(\partial_{\rho}h)\psi^{2}d\rho=\left[h\psi^{2}\right]_{r=M}^{+\infty}-2\int_{M}^{+\infty}h\psi\partial_{\rho}\psi d\rho
\end{equation*}
with $h=r-M$. In view of the assumption on $\psi$, the boundary terms vanish. Thus, Cauchy-Schwarz gives
\begin{equation*}
\begin{split}
\int_{\left\{r\geq M\right\}}{\psi^{2}d\rho}=&-2\int_{\left\{r\geq M\right\}}{(r-M)\psi\partial_{\rho}\psi d\rho}\\
\leq &2\left(\int_{\left\{r\geq M\right\}}{\psi^{2}d\rho}\right)^{\frac{1}{2}}\left(\int_{\left\{r\geq M\right\}}{(r-M)^{2}(\partial_{\rho}\psi)^{2}}d\rho\right)^{\frac{1}{2}}.
\end{split}
\end{equation*}
Therefore,
\begin{equation*}
\begin{split}
\int_{\left\{r\geq M\right\}}{\psi^{2}d\rho}\leq 4\int_{\left\{r\geq M\right\}}{(r-M)^{2}(\partial_{\rho}\psi)^{2}d\rho}.
\end{split}
\end{equation*}
Integrating this inequality over $\mathbb{S}^{2}$ gives us
\begin{equation*}
\begin{split}
\int_{\mathbb{S}^{2}}\int_{\left\{r\geq M\right\}}{\frac{1}{\rho^{2}}\psi^{2}}\rho^{2}d\rho d\omega\leq 4\int_{\mathbb{S}^{2}}\int_{\left\{r\geq M\right\}}{D(\partial_{\rho}\psi)^{2}\rho^{2}d\rho d\omega}.
\end{split}
\end{equation*}
The result follows from the fact that each $\Sigma_{\tau}$ is diffeomorphic to $\mathbb{S}^{2}\times [M,\left.\right.\!\! +\infty)$ and the boundedness of the factor $V$ in the volume form \eqref{volumeformfoliation} of $\Sigma_{\tau}$.
\end{proof}
The importance of the above inequality lies on the weights. The weight of $(\partial_{\rho}\psi)^{2}$ vanishes to second order on $\mathcal{H}^{+}$  but does not degenerate at infinity\footnote{Note that if we replace $r-M$ with another function $g$ then we will  not be able to make this weight degenerate fast enough without obtaining non-trinial boundary terms.} whereas the weight of $\psi$ degenerate at infinity but not at $r=M$. Similarly, one might derive estimates for the non-extreme case\footnote{These estimates turn out to be stronger since the weight of $\psi$ may diverge at $r=M$ in an integrable manner.}. We also mention that the right hand side is bounded by the (conserved) flux of $T$ through $\Sigma_{\tau}$ (see Section \ref{sec:TheVectorFieldTextbfM}).

\begin{proposition}(\textbf{Second Hardy Inequality})  Let  $r_{0}\in (M,2M)$. Then for all functions $\psi$ which satisfy the regularity assumptions of Section \ref{sec:TheCauchyProblemForTheWaveEquation}  and any positive number $\epsilon$ we have
\begin{equation*}
\int_{\mathcal{H}^{+}\cap\Sigma_{\tau}}{\psi^{2}}\leq \epsilon\int_{\Sigma_{\tau}\cap\left\{r\leq r_{0}\right\}}{(\partial_{v}\psi)^{2}+(\partial_{r}\psi)^{2}}\ \,+C_{\epsilon}\int_{\Sigma_{\tau}\cap\left\{r\leq r_{0}\right\}}{\psi^{2}},
\end{equation*}
where the constant $C_{\epsilon}$ depends on $M$, $\epsilon$, $r_{0}$ and $\Sigma_{0}$. 
\label{secondhardy}
\end{proposition}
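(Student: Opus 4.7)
The plan is to prove this boundary-to-bulk trace inequality by a one-dimensional integration by parts in the radial direction $\rho$ on $\Sigma_\tau$, exactly in the spirit of the First Hardy Inequality, but with a cutoff that localizes to $\{M \leq r \leq r_0\}$ so the outer boundary contribution at $r_0$ is eliminated.

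First I would work in the induced coordinate system $(\rho,\omega)$ on $\Sigma_\tau$ from Section \ref{sec:TheFoliationsSigmaTauAndTildeSigmaTau}, and fix a smooth cutoff $h=h(\rho)$ with $h(M)=1$ and $h(\rho)=0$ for $\rho\geq r_0$. For each fixed $\omega\in\mathbb{S}^2$, the fundamental theorem of calculus gives
\begin{equation*}
\psi^2(M,\omega)=-\int_{M}^{r_0}\partial_\rho\bigl(h(\rho)\,\psi^2(\rho,\omega)\bigr)\,d\rho
=-\int_{M}^{r_0}(\partial_\rho h)\,\psi^2\,d\rho-2\int_{M}^{r_0} h\,\psi\,\partial_\rho\psi\,d\rho,
\end{equation*}
using that $h(r_0)=0$ and $h(M)=1$. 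The first term is bounded in absolute value by $C\int_M^{r_0}\psi^2\,d\rho$ since $\partial_\rho h$ is bounded and supported in $[M,r_0]$.

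Next I would handle the cross term by Cauchy--Schwarz with a free parameter: for any $\epsilon>0$,
\begin{equation*}
2\left|\int_{M}^{r_0} h\,\psi\,\partial_\rho\psi\,d\rho\right|
\leq \tilde\epsilon\int_{M}^{r_0}(\partial_\rho\psi)^2\,d\rho+\frac{1}{\tilde\epsilon}\int_{M}^{r_0}h^2\,\psi^2\,d\rho,
\end{equation*}
with $\tilde\epsilon$ chosen proportional to $\epsilon$ (and absorbing the bounded factor $\rho^2$ appearing in the volume form, see below). Then I integrate over $\mathbb{S}^2$ and compare the resulting measures with the correct induced volume forms: on $\mathcal{H}^+\cap\Sigma_\tau$ the volume form is $M^2\,d\omega$ (up to a bounded factor), and on $\Sigma_\tau\cap\{r\leq r_0\}$ it is $V\rho^2\,d\rho\,d\omega$ with $V$ and $\rho^2$ bounded above and below in $[M,r_0]$. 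Hence the change from the $d\rho\,d\omega$-integrals above to the true induced integrals on $\Sigma_\tau$ only costs a multiplicative constant depending on $M$ and $\Sigma_0$.

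Finally I would convert $\partial_\rho$ into $\partial_v,\partial_r$ via the relation $\partial_\rho=g_1\partial_v+\partial_r$ from Section \ref{sec:TheFoliationsSigmaTauAndTildeSigmaTau}, using boundedness of $g_1$ to obtain $(\partial_\rho\psi)^2\leq C\bigl((\partial_v\psi)^2+(\partial_r\psi)^2\bigr)$. Absorbing all bounded factors into $\epsilon$ and $C_\epsilon$ yields the stated inequality. The only real point to watch is that the cutoff $h$ kills the outer boundary term at $r=r_0$ (so no term on $\Sigma_\tau\cap\{r=r_0\}$ appears, in contrast to the First Hardy Inequality where decay at infinity did the job); there is no genuine obstacle beyond this bookkeeping, and in particular no use of the wave equation is needed, consistent with the remark preceding the proposition.
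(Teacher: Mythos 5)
Your proof is correct and follows essentially the same route as the paper: the paper also performs a one-dimensional integration by parts in $\rho$ on $[M,r_{0}]$ with a weight vanishing at $r_{0}$ (it takes $h=r-r_{0}$ rather than a smooth cutoff), isolates the boundary term at $r=M$, and handles the cross term by Cauchy--Schwarz with a free parameter before integrating over $\mathbb{S}^{2}$ and comparing volume forms. The choice of weight is the only (immaterial) difference.
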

\begin{proof}
We use as before the 1-dimensional identity
\begin{equation*}
\int_{M}^{r_{0}}(\partial_{\rho}h)\psi^{2}d\rho=\left[h\psi^{2}\right]_{r=M}^{r=r_{0}}-2\int_{M}^{r_{0}}h\psi\partial_{\rho}\psi d\rho
\end{equation*}
with $h=r-r_{0}$. Then 
\begin{equation*}
\begin{split}
(r_{0}-M)\psi^{2}(M)=&\int_{M}^{r_{0}}{\psi^{2}+2h\psi\partial_{\rho}\psi d\rho}\\
\leq & \epsilon\int_{M}^{r_{0}}{(\partial_{\rho}\psi)^{2}}d\rho +\int_{M}^{r_{0}}{\left(1+\frac{g^{2}}{\epsilon}\right)\psi^{2}d\rho},
\end{split}
\end{equation*}
for any $\epsilon >0$. By integrating over $\mathbb{S}^{2}$ and noting that the $\rho^{2}$ factor that appears in the volume form of $\Sigma\cap\left\{r\leq r_{0}\right\}$ is bounded we obtain the required result.
\end{proof}
The previous two inequalities concern the hypersurfaces $\Sigma_{\tau}$ that cross $\mathcal{H}^{+}$. The next estimate concerns spacetime neighbourhoods of $\mathcal{H}^{+}$.

\begin{proposition} (\textbf{Third Hardy Inequality})  Let  $r_{0},r_{1}$ be such that  $M<r_{0}<r_{1}<2M$. We define the regions
\begin{equation*}
\begin{split}
&\mathcal{A}=\mathcal{R}(0,\tau)\cap\left\{M\leq r\leq r_{0}\right\},\\
&\mathcal{B}=\mathcal{R}(0,\tau)\cap\left\{r_{0}\leq r\leq r_{1}\right\}.
\end{split}
\end{equation*}
Then for all functions $\psi$ which satisfy the regularity assumptions of Section \ref{sec:TheCauchyProblemForTheWaveEquation}   we have
\begin{equation*}
\int_{\mathcal{A}}{\psi^{2}}\leq C\int_{\mathcal{B}}{\psi^{2}}\ \,+C\int_{\mathcal{A}\cup\mathcal{B}}{D[(\partial_{v}\psi)^{2}+(\partial_{r}\psi)^{2}]},
\end{equation*}
where the constant $C$ depends on $M$, $r_{0}$, $r_{1}$  and $\Sigma_{0}$.
\label{thirdhardy}
\end{proposition}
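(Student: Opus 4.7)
The plan is to adapt the one-dimensional Hardy identity used in Propositions \ref{firsthardy} and \ref{secondhardy} to the spacetime setting by performing it in coordinates that flow with the Killing field $T=\partial_{v}$. I coordinatize $\mathcal{R}(0,\tau)$ using the induced coordinates $(\rho,\omega)$ on $\Sigma_{0}$ from Section \ref{sec:TheFoliationsSigmaTauAndTildeSigmaTau} together with the parameter $\tau'\in[0,\tau]$ of the flow of $T$. Since $T$ is Killing and preserves the level sets of $r$, the region $\mathcal{R}(0,\tau)$ becomes the product $[0,\tau]\times[M,+\infty)\times\mathbb{S}^{2}$, a direct computation gives $\sqrt{|g|}=\rho^{2}$ (up to the sphere factor), and $\mathcal{A},\mathcal{B}$ become the clean rectangles $[0,\tau]\times[M,r_{0}]\times\mathbb{S}^{2}$ and $[0,\tau]\times[r_{0},r_{1}]\times\mathbb{S}^{2}$. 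The relation $\partial_{\rho}=g_{1}\partial_{v}+\partial_{r}$ with $g_{1}$ bounded (Section \ref{sec:TheFoliationsSigmaTauAndTildeSigmaTau}) gives
\[
(\partial_{\rho}\psi)^{2}\le C\bigl[(\partial_{v}\psi)^{2}+(\partial_{r}\psi)^{2}\bigr].
\]

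At fixed $\tau'$ and $\omega$ I then perform a one-dimensional integration by parts on $[M,r_{1}]$ with the weight $h(\rho)=(\rho-M)\chi(\rho)$, where $\chi$ is a smooth cutoff equal to $1$ on $[M,r_{0}]$ and equal to $0$ on $[r_{1},+\infty)$. Both boundary contributions vanish because $h(M)=h(r_{1})=0$, and the three crucial pointwise facts are (i) $\partial_{\rho}h\equiv 1$ on $[M,r_{0}]$, (ii) $|\partial_{\rho}h|\le C$ on $[r_{0},r_{1}]$, and (iii) $h(\rho)^{2}\le(\rho-M)^{2}=\rho^{2}D(\rho)\le C\,D(\rho)$ on the bounded interval $[M,r_{1}]$. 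Applying Cauchy--Schwarz with a small parameter to the cross-term, using (iii) to turn the resulting $(\partial_{\rho}\psi)^{2}$ into a $D(\partial_{\rho}\psi)^{2}$, and absorbing a fraction of $\int_{M}^{r_{0}}\psi^{2}\,d\rho$ into the left hand side yields
\[
\int_{M}^{r_{0}}\psi^{2}\,d\rho\le C\int_{r_{0}}^{r_{1}}\psi^{2}\,d\rho+C\int_{M}^{r_{1}}D(\partial_{\rho}\psi)^{2}\,d\rho.
\]

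Finally, I substitute the bound for $(\partial_{\rho}\psi)^{2}$ in terms of $\partial_{v}\psi$ and $\partial_{r}\psi$ and integrate over $\omega\in\mathbb{S}^{2}$ and $\tau'\in[0,\tau]$ against the weight $\rho^{2}$, which is bounded between two positive constants on $[M,r_{1}]$. The three terms then become precisely the spacetime integrals over $\mathcal{A}$, $\mathcal{B}$ and $\mathcal{A}\cup\mathcal{B}$, yielding the claim. The one conceptual obstacle, and the reason the proof is not entirely parallel to those of Propositions \ref{firsthardy} and \ref{secondhardy}, is that $\mathcal{R}(0,\tau)$ is \emph{not} a product region in $(v,r,\omega)$, so the one-dimensional identity cannot be applied at fixed $v$; the flow-adapted coordinates $(\tau',\rho,\omega)$ resolve this, at the price of the $(\partial_{v}\psi)^{2}$ term on the right hand side, which is produced when $\partial_{\rho}$ is expanded in the $(v,r)$ basis and is unavoidable by this method.
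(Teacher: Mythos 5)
Your proposal is correct and takes essentially the same route as the paper: the paper also applies the one-dimensional identity slice by slice on each $\Sigma_{\tilde{\tau}}$ in the induced $(\rho,\omega)$ coordinates, with a weight $h$ equal to a multiple of $\rho-M$ on $[M,r_{0}]$ and vanishing at $r_{1}$ (so both boundary terms drop), uses $h^{2}\lesssim(\rho-M)^{2}=\rho^{2}D$ to produce the $D$-weight, and then integrates in $\tilde{\tau}$ via the coarea formula. The only cosmetic differences are your explicit cutoff realization of $h$ and the small-$\epsilon$ Cauchy--Schwarz absorption in place of the paper's choice $h=2(\rho-M)$ on $[M,r_{0}]$, which absorbs the $\psi^{2}$ term with a factor of $2$ instead.
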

\begin{proof}
 \begin{figure}[H]
	\centering
		\includegraphics[scale=0.14]{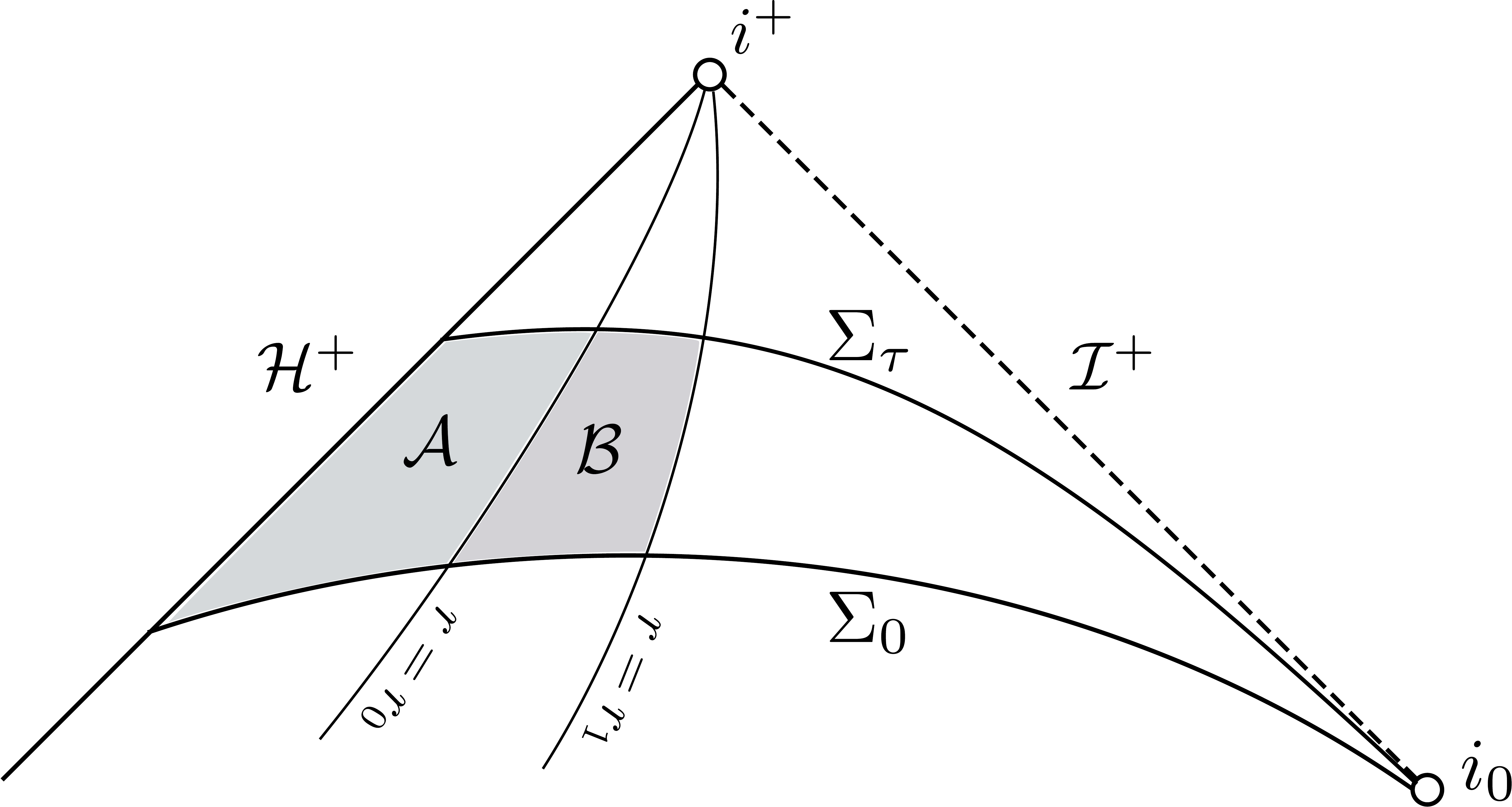}
	\label{fig:ernt4}
\end{figure}
We again use  the 1-dimensional identity
\begin{equation*}
\int_{M}^{r_{1}}(\partial_{\rho}h)\psi^{2}d\rho=\left[h\psi^{2}\right]_{r=M}^{r=r_{1}}-2\int_{M}^{r_{1}}h\psi\partial_{\rho}\psi d\rho
\end{equation*}
with $h$ such that $h=2(\rho-M)$ in  $[M,r_{0}]$ and $h(r_{1})=0$.   Then
\begin{equation*}
\begin{split}
2\int_{M}^{r_{0}}{\psi^{2}d\rho}\leq \int_{M}^{r_{0}}{\psi^{2}d\rho}+\int_{r_{0}}^{r_{1}}{\left(1-(\partial_{\rho}g)\right)\psi^{2} d\rho}+\int_{M}^{r_{1}}{g^{2}(\partial_{\rho}\psi)^{2} d\rho}.
\end{split}
\end{equation*}Thus, integrating over $\mathbb{S}^{2}$ we obtain
\begin{equation*}
\begin{split}
\int_{\Sigma_{\tilde{\tau}}\cap\mathcal{A}}{\psi^{2}}\leq C \int_{\Sigma_{\tilde{\tau}}\cap\left(\mathcal{A}\cup\mathcal{B}\right)}{D[(\partial_{v}\psi)^{2}+(\partial_{r}\psi)^{2}]}+C\int_{\Sigma_{\tilde{\tau}}\cap\mathcal{B}}{\psi^{2}},
\end{split}
\end{equation*}
for all $\tilde{\tau}\geq 0$, where the constant $C$ depends on $M$, $r_{0}$, $r_{1}$ and $\Sigma_{0}$. Therefore, integrating over $\tilde{\tau}\in [0,\tau]$ and using coarea formula
\begin{equation*}
\begin{split}
\int_{0}^{\tau}\left(\int_{\Sigma_{\tilde{\tau}}}{f}\right)d\tilde{\tau}\ \sim\,\int_{\bigcup_{0\leq\tilde{\tau}\leq\tau}\Sigma_{\tilde{\tau}}}{f}
\end{split}
\end{equation*}
completes the proof of the proposition.

\end{proof}

\section{Elliptic Theory on $\mathbb{S}^{2}(r), r>0$}
\label{sec:EllipticTheoryOnMathbbS2}
																									
In view of the symmetries of the spacetime, it is important to understand the behaviour of functions on the orbits of the action of SO(3). Clearly, these orbits are isometric to $\mathbb{S}^{2}(r)$ for  $r>0$. The most important operator on $\mathbb{S}^{2}(r)$ for our applications is the spherical Laplacian (i.e.~the Laplacian induced by the standard metric on $\mathbb{S}^{2}(r)$) which will be denoted by $\lapp$. In local coordinates it is given by
\begin{equation*}
\lapp\psi=\frac{1}{r^{2}\sin\theta}{\left[\partial_{\theta}(\sin\theta\partial_{\phi}\psi)+\frac{1}{\sin\theta}\partial_{\phi}\partial_{\phi}\psi\right]}.
\end{equation*}
Clearly, $\lapp\psi$ is a linear second order differential operator 
\begin{equation*}
\lapp:H^{2}(\mathbb{S}^{2}(r))\rightarrow L^{2}(\mathbb{S}^{2}(r)).
\end{equation*}
For the definition of the Sobolev space $H^{2}$ see Appendix \ref{sec:SobolevSpaces}. However, $\mathbb{S}^{2}(r)$ is compact without boundary  and therefore any constant map on the sphere lies in the kernel of $\lapp$ and thus $\lapp$ is not invertible. Observing now that 
\begin{equation}
\int_{\mathbb{S}^{2}(r)}{(-\lapp+I)\psi\cdot \psi}=\int_{\mathbb{S}^{2}(r)}{\left|\nabb\psi\right|^{2}}+\int_{\mathbb{S}^{2}(r)}{\psi^{2}},
\label{s2impo}
\end{equation}
where $\nabb$ is the induced gradient on the sphere and $I$ the identity map, we see that  (-\lapp+I) is injective. It is not difficult to see that \eqref{s2impo} implies that $O=(-\lapp+I)$ is surjective. Thus we have a well-defined inverse
\begin{equation*}
O^{-1}:L^{2}(\mathbb{S}^{2}(r))\rightarrow H^{2}(\mathbb{S}^{2}(r)).
\end{equation*}
Using Stokes' theorem on $\mathbb{S}^{2}(r)$ it is easy to see that $O^{-1}$ is self-adjoint.  Moreover, by Rellich theorem, the image $H^{2}(\mathbb{S}^{2}(r))$ of $O^{-1}$ is compactly imbedded in its domain $L^{2}(\mathbb{S}^{2}(r))$. Therefore, $O^{-1}$ is also compact and thus if we view $T$ as an endomorphism of $L^{2}(\mathbb{S}^{2}(r))$ then we can apply the spectral theorem and so we have a complete orthonormal basis of $L^{2}(\mathbb{S}^{2}(r))$ of eigenvectors of $O^{-1}$ with discrete eigenvalues $\mu_{i}\searrow 0$. Hence the numbers $-\frac{1}{\mu_{i}}+1$ are eigenvalues of $\lapp$. By separating the angular variables $\theta, \phi$ we can compute explicitly these eigenvalues. It turns out that the eigenvalues are equal to $\frac{-l(l+1)}{r^{2}}, l\in\mathbb{N}$. The dimension of the eigenspaces $E^{l}$ is equal to $2l+1$ 
and the corresponding eigenvectors are denoted by $Y^{m,l}, -l\leq m\leq l$ and called \textit{spherical harmonics}. We have $L^{2}(\mathbb{S}^{2}(r))=\displaystyle\oplus_{l\geq 0}E^{l}$ and, therefore, any function $\psi\in L^{2}(\mathbb{S}^{2}(r))$ can be written as
\begin{equation}
\psi=\sum_{l=0}^{\infty}\sum_{m=-l}^{l}\psi_{m,l}Y^{m,l}.
\label{sd}
\end{equation}
The right hand side converges to $\psi$ in  $L^{2}$ of the sphere and under stronger regularity assumptions the convergence is pointwise.  Let us denote by $\psi_{l}$ the projection of $\psi$ onto  $E^{l}$, i.e. 
\begin{equation*}
\psi_{l}=\sum_{m=-l}^{l}\psi_{m,l}Y^{m,l}.
\end{equation*}
\begin{proposition}(\textbf{Poincar\'e inequality}) If $\psi\in L^{2}\left(\mathbb{S}^{2}(r)\right)$ and $\psi_{l}=0$ for all $l\leq L-1$ for some finite natural number $L$ then we have
\begin{equation*}
\frac{L\left(L+1\right)}{r^{2}}\int_{\mathbb{S}^{2}(r)}{\psi^{2}}\leq \int_{\mathbb{S}^{2}(r)}\left|\nabb\psi\right|^{2}
\label{poincare}
\end{equation*}
and equality holds if and only if $\psi_{l}=0$ for all $l\neq L$.
\label{poin}
\end{proposition}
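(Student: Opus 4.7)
The plan is to prove the inequality by expanding $\psi$ in spherical harmonics and using the spectral identities for the Laplacian established just above. First, I would note that since $\psi \in L^2(\mathbb{S}^2(r))$ with $\psi_l = 0$ for all $l \leq L-1$, the decomposition \eqref{sd} reduces to
\begin{equation*}
\psi = \sum_{l \geq L} \sum_{m=-l}^{l} \psi_{m,l} Y^{m,l},
\end{equation*}
with convergence in $L^2(\mathbb{S}^2(r))$.

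Next I would compute the two integrals in the inequality in terms of the coefficients $\psi_{m,l}$. By orthonormality of the spherical harmonics,
\begin{equation*}
\int_{\mathbb{S}^2(r)} \psi^2 = \sum_{l \geq L} \sum_{m=-l}^{l} |\psi_{m,l}|^2.
\end{equation*}
For the Dirichlet integral, I would first justify integration by parts via Stokes' theorem on the closed manifold $\mathbb{S}^2(r)$ (no boundary term) to get
\begin{equation*}
\int_{\mathbb{S}^2(r)} |\nabb \psi|^2 = -\int_{\mathbb{S}^2(r)} \psi \lapp \psi,
\end{equation*}
and then use the eigenvalue identity $\lapp Y^{m,l} = -\frac{l(l+1)}{r^2} Y^{m,l}$ together with orthonormality to obtain
\begin{equation*}
\int_{\mathbb{S}^2(r)} |\nabb \psi|^2 = \sum_{l \geq L} \frac{l(l+1)}{r^2} \sum_{m=-l}^{l} |\psi_{m,l}|^2.
\end{equation*}

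The inequality is then immediate, since $l(l+1) \geq L(L+1)$ for every $l \geq L$, giving
\begin{equation*}
\int_{\mathbb{S}^2(r)} |\nabb \psi|^2 \geq \frac{L(L+1)}{r^2} \sum_{l \geq L} \sum_{m=-l}^{l} |\psi_{m,l}|^2 = \frac{L(L+1)}{r^2} \int_{\mathbb{S}^2(r)} \psi^2.
\end{equation*}
The equality case is equally transparent: since $l(l+1) > L(L+1)$ strictly for $l > L$, equality forces $\psi_{m,l} = 0$ for all $l \neq L$, i.e.\ $\psi = \psi_L$.

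The only mildly delicate point is the rigorous justification of the termwise integration by parts and of the Parseval identity for $\int |\nabb \psi|^2$ when $\psi$ is merely in $L^2$. Strictly speaking, the Dirichlet integral is finite precisely when the weighted sum $\sum_{l} l(l+1) \sum_m |\psi_{m,l}|^2$ is finite, which is the $H^1$ condition; if this sum is infinite, the inequality is trivial. If finite, one approximates $\psi$ by the partial sums $\sum_{l=L}^{N} \psi_l$, applies the identities above (which are exact for trigonometric polynomials on the sphere), and passes to the limit using monotone convergence of the non-negative series. This approximation step is the main bookkeeping obstacle, but it is standard and completes the proof.
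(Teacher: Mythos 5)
Your proof is correct and follows essentially the same route as the paper: expand $\psi=\sum_{l\geq L}\psi_{l}$, integrate by parts on the closed sphere to write $\int\left|\nabb\psi\right|^{2}=-\int\psi\lapp\psi$, use the eigenvalue identity and orthogonality of the eigenspaces, and conclude from $l(l+1)\geq L(L+1)$. The extra care you take about the $H^{1}$ condition and termwise limits is a reasonable refinement but not a departure from the paper's argument.
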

\begin{proof} 
We have
\begin{equation*}
\psi=\sum_{l\geq L}{\psi_{l}}.
\end{equation*}
Therefore, 
\begin{equation*}
\begin{split}
\int_{\mathbb{S}^{2}(r)}{\left|\nabb\psi\right|^{2}}&=-\displaystyle\int_{\mathbb{S}^{2}(r)}{\psi\cdot\lapp\psi}=\int_{\mathbb{S}^{2}(r)}{\left(\sum_{l'\geq L}{\psi_{l'}}\right)\left(\sum_{l\geq L}{\frac{l\left(l+1\right)}{r^{2}}\psi_{l}}\right)}\\
&=\int_{\mathbb{S}^{2}(r)}{\sum_{l\geq L}{\frac{l\left(l+1\right)}{r^{2}}\psi_{l}^{2}}}\\
&\geq\frac{L\left(L+1\right)}{r^{2}}\int_{\mathbb{S}^{2}(r)}{\sum_{l\geq L}{\psi_{l}^{2}}}=\frac{L\left(L+1\right)}{r^{2}}\int_{\mathbb{S}^{2}(r)}{\psi^{2}},
\end{split}
\end{equation*}
where we have used the orthogonality of distinct eigenspaces.
\end{proof}
This inequality is very useful, since it gives us an estimate of the zeroth order term. However, it is rather restrictive since it requires $\psi$ not to have highly symmetric components in its spherical decomposition. This should  be contrasted with the Hardy inequalities where no such restriction is present. However, as we shall see in Section \ref{sec:CommutingWithAVectorFieldTransversalToMathcalH}, the Poincar\'e inequality is very crucial in establishing sharp results. In particular, we will see that the behaviour of the spherical decomposition determines the evolution of waves close to $\mathcal{H}^{+}$.

 Note also that  each eigenspace $E^{l}$ is finite dimensional (and so complete) and so closed. Therefore, $L^{2}(\mathbb{S}^{2}(r))=E^{l}\oplus \left(E^{l}\right)^{\bot}$. For example, we have that $\psi$ can be written uniquely as
\begin{equation*}
\psi=\psi_{0}+\psi_{\geq 1},
\end{equation*}
where $\psi_{\geq 1}\in\left(E^{l}\right)^{\bot}=\displaystyle\oplus_{l\geq 1}E^{l}$.

Returning to our 4-dimensional problem, if $\psi$ is  sufficiently regular in $\mathcal{R}$ then its restriction at each sphere can be written as in \eqref{sd}, where $\psi_{m,l}=\psi_{m,l}(v,r)$ and $Y^{m,l}=Y^{m,l}(\theta,\phi)$. We will not worry about the convergence of the series since we may assume that $\psi$ is sufficiently regular\footnote{Indeed, we may work only with functions which are smooth and such that  the non-zero terms in \eqref{sd} are finitely many. Then, since all of our results are quantitative and all the constants involved do not depend on $\psi$, by a density argument we may lower the regularity of $\psi$ requiring only certain norms depending on the initial data of $\psi$ to be finite.}. First observe that for each summand in \eqref{sd} we have
\begin{equation*}
\Box_{g}\psi_{m,l}Y^{m,l}=\left(S\psi_{m,l}-\frac{l(l+1)}{r^{2}}\psi_{m,l}\right)Y^{m,l},
\end{equation*}
where $S$ is an operator on the quotient $\mathcal{M}/\text{SO(3)}$. Therefore, if $\psi$ satisfies the wave equation then in view of the linear independence of $Y^{m,l}$'s the terms $\psi_{m,l}$ satisfy $S\psi_{m,l}=\frac{l(l+1)}{r^{2}}\psi_{m,l}$ and, therefore, each summand also satisfies the wave equation. This implies that if $\psi_{m,l}=0$ initially then $\psi_{m,l}=0$ everywhere. From now on, \textbf{we will say that the wave $\psi$ is supported on the angular frequencies  $l\geq L$ if $\psi_{i}=0,i=0,...,L-1$ initially (and thus everywhere). Similarly, we will also say that $\psi$ is supported on the angular frequency  $l=L$ if $\psi\in E^{L}$. }

Another important observation is that the operators $\partial_{v}$ and $\partial_{r}$ are endomorphisms of $E^{l}$ for all $l$. Indeed, $\partial_{v}$ commutes with $\lapp$ and if $\psi\in E^{l}$ then
\begin{equation*}
\begin{split}
\lapp\partial_{r}\psi &=\partial_{r}\lapp\psi+\frac{2}{r}\lapp\psi=-\frac{l(l+1)}{r^{2}}\partial_{r}\psi
\end{split}
\end{equation*}
and thus $\partial_{r}\psi\in E^{l}$. Therefore, if $\psi$ is supported on frequencies $l\geq L$ then the same holds for $\partial_{r}\psi$.

\section{The Vector Field $T$}
\label{sec:TheVectorFieldTextbfM}
One can easily see that $\partial_{v}=\partial_{t}=\partial_{t^{*}}$ in the intersection of the corresponding coordinate systems. Here $\partial_{t}$ corresponds to the coordinate basis vector field of either  $\left(t,r\right)$ or $\left(t,r^{*}\right)$. Recall that the region $\mathcal{M}$ where we want to understand the behaviour of waves is covered by the system $\left(v,r,\theta,\phi\right)$. Therefore we define $T=\partial_{v}$. It can be easily seen from \eqref{RN1} that $T$ is Killing and timelike everywhere\footnote{Note that in the subextreme range $T$ becomes spacelike in the region bounded by the two horizons, which however coincide in the extreme case.} except on the horizon where it is null.
\subsection{Uniform Boundedness of Degenerate Energy}
\label{sec:TheCurrentKT}
Recall that 
\begin{equation*}
K^{T}=T_{\mu\nu}\pi_{T}^{\mu\nu},
\end{equation*}
where $\pi_{T}^{\mu\nu}$ is the deformation tensor of $T$. Since $T$ is Killing, its deformation tensor is zero and so 
\begin{equation}
K^{T}=0.
\label{KT}
\end{equation}
Therefore, the 
divergence identity in the region $\mathcal{R}(0,\tau)$ gives us the following conservation law
\begin{equation}
\int_{\Sigma_{\tau}}{J^{T}_{\mu}n^{\mu}_{\Sigma_{\tau}}}+\int_{\mathcal{H}^{+}}{J^{T}_{\mu}n^{\mu}_{{\mathcal{H}^{+}}}}=\int_{\Sigma_{0}}{J^{T}_{\mu}n^{\mu}_{\Sigma_{0}}}.
\label{t1}
\end{equation}
Since $T$ is null on the horizon $\mathcal{H}^{+}$ we have that $J_{\mu}^{T}n^{\mu}_{{\mathcal{H}^{+}}}\geq 0$. More presicely, since $n^{\mu}_{{\mathcal{H}^{+}}}=T$, from \eqref{GENERALT} (See Appendix \ref{sec:TheHyperbolicityOfTheWaveEquation1}) we have
\begin{equation*}
J_{\mu}^{T}n^{\mu}_{{\mathcal{H}^{+}}}=(\partial_{v}\psi)^{2},
\end{equation*}
thus proving the following proposition:
\begin{proposition}
For all solutions $\psi$ of the wave equation we have
\begin{equation}
\int_{\Sigma_{\tau}}{J^{T}_{\mu}(\psi)n^{\mu}_{\Sigma_{\tau}}}\leq \int_{\Sigma_{0}}{J^{T}_{\mu}(\psi)n^{\mu}_{\Sigma_{0}}}.
\label{boundT}
\end{equation}
\label{ubdenergy}
\end{proposition}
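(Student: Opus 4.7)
The proposition is an almost immediate consequence of the conservation law \eqref{t1} established just above, so the plan reduces to identifying and controlling the sign of each piece of that identity.

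First I would recall that because $T$ is Killing its deformation tensor $\pi_T$ vanishes, hence $K^T=0$ as recorded in \eqref{KT}. Applying the general continuity equation \eqref{div} with $P_\mu = J^T_\mu(\psi)$ on the region $\mathcal{R}(0,\tau)$, whose stratified boundary is $\Sigma_\tau \cup (\mathcal{H}^+\cap \mathcal{R}(0,\tau)) \cup \Sigma_0$, the spacetime term drops out and one obtains exactly the identity
\begin{equation*}
\int_{\Sigma_\tau} J^T_\mu(\psi) n^\mu_{\Sigma_\tau} + \int_{\mathcal{H}^+\cap \mathcal{R}(0,\tau)} J^T_\mu(\psi) n^\mu_{\mathcal{H}^+} = \int_{\Sigma_0} J^T_\mu(\psi) n^\mu_{\Sigma_0}.
\end{equation*}

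The task is then to show that the horizon flux is non-negative. By the convention fixed in Section \ref{sec:TheVectorFieldMethod}, we have chosen $n_{\mathcal{H}^+}=\partial_v = T$. Using the explicit expression for $\mathbf{T}_{\mu\nu}$ in the $(v,r,\theta,\phi)$ coordinates (equivalently, equation \eqref{GENERALT} of the appendix), a direct computation gives $J^T_\mu(\psi) T^\mu = \mathbf{T}_{vv}(\psi) = (\partial_v\psi)^2$, since the coefficient of $g_{vv}$ drops out by the nullity of $T$ on $\mathcal{H}^+$. This is manifestly non-negative; alternatively one may simply invoke Proposition \ref{hyperb} with $V_1=V_2=T$, as $T$ is future-directed and causal on $\mathcal{H}^+$.

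Putting these observations together, the horizon term in the conservation law is non-negative and may therefore be dropped from the left-hand side, yielding
\begin{equation*}
\int_{\Sigma_\tau} J^T_\mu(\psi) n^\mu_{\Sigma_\tau} \leq \int_{\Sigma_0} J^T_\mu(\psi) n^\mu_{\Sigma_0},
\end{equation*}
as claimed. There is no real obstacle here; the only subtlety is to verify that the boundary term on $\mathcal{H}^+$ is defined consistently with the sign convention in \eqref{div}, and that the integrand on $\mathcal{H}^+$ reduces precisely to $(\partial_v\psi)^2$. The degeneracy of this horizon flux (it controls only $\partial_v\psi$ and nothing transversal) is the essential feature that motivates the need for the modified redshift current discussed in later sections, but it is not an obstruction to the present statement.
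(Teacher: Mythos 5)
Your proposal is correct and follows essentially the same route as the paper: invoke $K^T=0$ from the Killing property, apply the divergence identity \eqref{div} on $\mathcal{R}(0,\tau)$ to obtain the conservation law \eqref{t1}, and then discard the horizon flux after identifying it as $(\partial_v\psi)^2\geq 0$ via \eqref{GENERALT} with $n_{\mathcal{H}^+}=T$. Nothing is missing.
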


We know by Proposition \ref{hyperb} that $J^{T}_{\mu}n^{\mu}_{\Sigma_{\tau}}$ is non-negative definite. However, we need to know the exact way $J^{T}_{\mu}n^{\mu}_{\Sigma_{\tau}}$ depends on $d\psi$.  Note  that   $\omega_{n}$ (see Appendix \ref{sec:TheHyperbolicityOfTheWaveEquation1}) is strictly positive and  uniformly bounded. However
\begin{equation*}
\omega_{T}=-\frac{1}{2}g(T,T)=\frac{1}{2}D,
\end{equation*}
which clearly vanishes (to second order) at the horizon. Therefore, \eqref{GENERALT} implies 
\begin{equation*}
\begin{split}
J^{T}_{\mu}n^{\mu}\, \sim\ \left(\partial_{v}\psi\right)^{2}+D\left(\partial_{r}\psi\right)^{2}+\left|\nabb\psi\right|^{2},
\end{split}
\end{equation*}
where the constants in $\sim$ depend on the mass $M$ and $\Sigma_{0}$. Note that all these relations are invariant under the flow of $T$.

On $\mathcal{H}^{+}$,  the energy estimate \eqref{boundT} degenerates with respect to the transversal derivative $\partial_{r}\psi$. It is exactly this that does not allow us to use estimate $\eqref{boundT}$ to obtain the boundedness result for the waves in the whole region $\mathcal{R}$. However, if we restrict our attention to the region where $r\geq r_{0}>M$ (i.e.~away from the horizon) then commutating the wave equation with $T$ and estimate $\eqref{boundT}$ in conjunction with elliptic and Sobolev estimates  give us the boundedness of $\psi$ in this region. This result is not satisfactory since it provides no information about the behaviour of waves on the horizon (which is the boundary of the black hole) and so it is not sufficient for non-linear stability problems.

\section{Integrated Weighted Energy Decay}
\label{sec:TheVectorFieldTextbfX}

It turns out that even proving uniform boundedness of $\psi$ requires a strong result such as integrated decay. The first result we prove in this direction is that there exists a constant $C$ which depends on $M$ and $\Sigma_{0}$ such that
\begin{equation*}
\int_{\mathcal{R}}\chi\cdot \left(J_{\mu}^{T}(\psi)n^{\mu}_{\Sigma}+\psi^{2}\right) \leq C\int_{\Sigma_{0}}J_{\mu}^{T}(\psi)n^{\mu}_{\Sigma_{0}},
\end{equation*}
where the weight $\chi=\chi (r)$ degenerates only at $\mathcal{H}^{+}$, at $r=2M$ (photon sphere) and at infinity. The degeneracy at the photon sphere is expected in view of  trapping. In particular, as we shall see, such an estimate degenerates only with respect to the derivatives tangential to the photon sphere. This degeneracy can be overcome at the expense of commuting with $T$. The degeneracy at $\mathcal{H}^{+}$ is due to the lack of redshift and can be overcome by commuting with (the non Killing) vector field $\partial_{r}$ and by imposing conditions on the spherical decomposition of $\psi$ (see Section \ref{sec:CommutingWithAVectorFieldTransversalToMathcalH}). In Section \ref{sec:ConservationLawsOnDegenerateEventHorizons} we will see that these conditions are necessary. The degeneracy at infinity will be dropped in Section \ref{sec:EnergyDecay} where we will use a vector field more adapted to the neighbourhoods of $\mathcal{I}^{+}$ (which do not contain $i^{0})$.

Our approach for obtaining the above estimate is by first deriving the weighted $L^{2}$ estimate for $\psi$ and then for the derivatives. Note that no unphysical restriction on the initial data is required (in particular they are not required to be  supported away $\mathcal{H}^{+}$).

\subsection{The Vector Field $X$}
\label{sec:TheVectorFieldX}

We are looking for a vector field which gives rise to a current whose divergence is non-negative (upon integration on the spheres of symmetry).  Our work here is inspired by \cite{dr5}. We will be working with vector fields of the form $X=f\left(r^{*}\right)\partial_{r^{*}}$ (for the coordinate system $(t,r^{*})$ see Appendix \ref{sec:ConstructingTheExtentionOfReissnerNordstrOM}).

\subsubsection{The Spacetime Term $K^{X}$}
\label{sec:TheSpacetimeTermKX}
 We compute
\begin{equation*}
K^{X}=\left(\frac{f'}{2D}+\frac{f}{r}\right)\left(\partial_{t}\psi\right)^{2}+\left(\frac{f'}{2D}-\frac{f}{r}\right)\left(\partial_{r^{*}}\psi\right)^{2}+\left(-\frac{f'+2fH}{2}\right)\left|\nabb\psi\right|^{2},
\end{equation*}
where $f'=\frac{df\left(r^{*}\right)}{dr^{*}}$ and $H=\frac{1}{2}\frac{dD\left(r\right)}{dr}$. Note that all the derivatives from now on will be considered with respect to  $r^{*}$  unless otherwise stated.

Unfortunately, the trapping obstruction does not allow us to obtain a positive definite current $K^{X}$ so easily. Indeed, if we assume that all these coefficients are positive then we have 
\begin{equation*}
\begin{split}
&f'>0,f<0\\
&-\frac{fH}{D}>\frac{f'}{2D}>-\frac{f}{r}\Rightarrow\\
&\left(-\frac{H}{D}+\frac{1}{r}\right)f>0\Rightarrow\\
&\left(\frac{H}{D}-\frac{1}{r}\right)<0,\\
\end{split}
\end{equation*}
but the quantity $\left(\frac{H}{D}-\frac{1}{r}\right)$ changes sign exactly at the radius $Q$ of the photon sphere. Therefore, there is no way to make all the above four coefficients positive. For simplicity, let us define
\begin{equation*}
\frac{P\left(r\right)}{r^{2}}=-H\cdot r+D=\frac{r^{2}-3Mr+2e^{2}}{r^{2}}.
\end{equation*}

\subsection{The Case $l\geq 1$}
\label{sec:TheCaseL0}
We first consider the case where $\psi$ is supported on the frequencies $l\geq 1$.

\subsubsection{The Currents $J^{X,1}_{\mu}$ and $K^{X,1}$}
\label{sec:TheCurrentJXGMuAndEstimatesForKXG}

To assist in overcoming the obstacle of the photon sphere we shall introduce zeroth order terms in order to modify the coef{}ficient of $\left(\partial_{t}\psi\right)^{2}$ and  create another which is more flexible. Let us consider the current
\begin{equation*}
J_{\mu}^{X,g,h,w}=J_{\mu}^{X}+g\left(r\right)\psi\nabla _{\mu}\psi+h\left(r\right)\psi^{2}\left(\nabla_{\mu}w\right),
\end{equation*}
where $g,h,w$ are functions on $\mathcal{M}$. Then we have
\begin{equation*}
\begin{split}
\tilde{K}^{X}&\!=\!\nabla^{\mu}J_{\mu}^{X,g,h,w}=K^{X}+\nabla^{\mu}\left(g\psi\nabla _{\mu}\psi\right)+\nabla^{\mu}\left(h\psi^{2}\left(\nabla_{\mu}w\right)\right)\\
&\!=\!K^{X}+\left(\nabla^{\mu}g\right)\psi\nabla _{\mu}\psi+g\left(\nabla^{a}\psi\nabla_{a}\psi\right)+\left(\nabla^{\mu}h\right)\psi^{2}\left(\nabla_{\mu}w\right)+\\
&\ \ \ \ +h\psi^{2}\left(\Box_{g} w\right)+2h\psi\left(\nabla^{\mu}\psi\right)\left(\nabla_{\mu}w\right)\\
&\!=\!K^{X}\!+g\left(\nabla^{a}\psi\nabla_{a}\psi\right)\!+\left(\nabla^{\mu}g+2h\nabla^{\mu}w\right)\psi\nabla_{\mu}\psi+\left(\nabla_{\mu}h\nabla^{\mu}w+h\left(\Box_{g} w\right)\right)\psi^{2}.\\
\end{split}
\end{equation*}
By taking $h=1,g=2G,w=-G$ we make the coefficient of $\psi\nabla_{\mu}\psi$ vanish. Therefore, let us define
\begin{equation}
J_{\mu}^{X,1}\overset{.}{=}J^{X}_{\mu}+2G\psi\left(\nabla_{\mu}\psi\right)-\left(\nabla_{\mu}G\right)\psi^{2}.
\label{modG}
\end{equation}
Then
\begin{equation*}
\begin{split}
K^{X,1}&\overset{.}{=}\nabla^{\mu}J_{\mu}^{X,1}=K^{X}+2G\left(\nabla^{a}\psi\nabla_{a}\psi\right)-\left(\Box_{g} G\right)\psi^{2}\\
&=K^{X}+2G\left(\left(-\frac{1}{D}\right)\left(\partial_{t}\psi\right)^{2}+\left(\frac{1}{D}\right)\left(\partial_{r^{*}}\psi\right)^{2}+\left|\nabb\psi\right|^{2}\right)-\left(\Box_{g} G\right)\psi^{2}.\\
\end{split}
\end{equation*}
Therefore, if we take $G$ such that 
\begin{equation}
\begin{split}
&\left(2G\right)\left(-\frac{1}{D}\right)=-\left(\frac{f'}{2D}+\frac{f}{r}\right)\Rightarrow\\
&G=\frac{f'}{4}+\frac{f\cdot D}{2r}
\label{G}
\end{split}
\end{equation}
then
\begin{equation*}
\begin{split}
K^{X,1}&=\frac{f'}{D}\left(\partial_{r^{*}}\psi\right)^{2}+\frac{f\cdot P}{r^{3}}\left|\nabb\psi\right|^{2}-\left(\Box_{g} G\right)\psi^{2}\\
&=\frac{f'}{D}\left(\partial_{r^{*}}\psi\right)^{2}+\frac{f\cdot P}{r^{3}}\left|\nabb\psi\right|^{2}\\
&\ \ \ -\left(\frac{1}{4D}f'''+\frac{1}{r}f''+\frac{D'}{D\cdot r}f'+\left(\frac{D''}{2D\cdot r}-\frac{D'}{2r^{2}}\right)f\right)\psi^{2}
\end{split}
\end{equation*}
Note that, since $f$ must be bounded and $f'$ positive, $-f'''$ must become negative and therefore has the wrong sign. Note also the factor $D$ at the denominator which degenerates at the horizon. One way to overcome this would be to make $f$ approach appropriately the horizon and spatial infinity and $f'$ sufficiently concave at the photon sphere. But then, we would need $\psi_{\mu,l}=0$ for sufficiently large $l$ in order to compensate the loss in the compact intermediate regions. However, we want to avoid this restriction on $\psi$. The way that turns out to work is by borrowing from the coefficient of $\left(\partial_{r^{*}}\psi\right)^{2}$ which is accomplished by introducing a third current,  as in \cite{dr5}.

\subsubsection{The Current $J^{X,2}_{\mu}$ and Estimates for $K^{X,2}$}
\label{sec:TheCurrentJX2MuAndEstimatesForKX2}

We define
\begin{equation*}
\begin{split}
&J_{\mu}^{X,2}\left(\psi\right)=J_{\mu}^{X,1}\left(\psi\right)+\frac{f'}{D\cdot f}\beta \psi^{2} X_{\mu},
\end{split}
\end{equation*}
where $\beta$ will be a function of $r^{*}$ to be defined below.  Since
\begin{equation*}
\begin{split}
Div\left(fV\right)=fDiv\left(V\right)+V\left(f\right),
\end{split}
\end{equation*}
and
\begin{equation*}
\begin{split}
Div\left(\partial_{r^{*}}\right)=\frac{D'}{D}+\frac{2D}{r},
\end{split}
\end{equation*}
we have
\begin{equation*}
\begin{split}
K^{X,2}=&K^{X,1}+Div\left(\frac{f'}{D}\beta\psi^{2}\partial_{r^{*}}\right)\\
=&\frac{f'}{D}\left(\partial_{r^{*}}\psi+\beta\psi\right)^{2}+\frac{f\cdot P}{r^{3}}\left|\nabb{\psi}\right|^{2}+\\
&+\left[-\frac{1}{4}\frac{f'''}{D}+\left(\beta-\frac{D}{r}\right)\frac{f''}{D}+\left(\beta'-\beta^{2}+\frac{2D}{r}\beta-\frac{D'}{r}\right)\frac{f'}{D}+\left(\frac{D'}{2r^{2}}-\frac{D''}{2D\cdot r}\right)f\right]\psi^{2}.
\end{split}
\end{equation*}
Note that the coefficient of $f\psi^{2}$ is independent of the choice of the function $\beta$. Let us now take
\begin{equation*}
\begin{split}
\beta=\frac{D}{r}-\frac{x}{\a^{2}+x^{2}},
\end{split}
\end{equation*}
where $x=r^{*}-\a-\sqrt{\a}\ $ and $\a>0$ a sufficiently large number to be chosen appropriately. As we shall see, the reason for introducing this shifted coordinate $x$ is that we want the origin $x=0$ to be far away from the photon sphere. Then
\begin{equation*}
\begin{split}
K^{X,2}=&\frac{f'}{D}\left(\partial_{r^{*}}\psi+\beta\psi\right)^{2}+\frac{f\cdot P}{r^{3}}\left|\nabb{\psi}\right|^{2}+\\
&+\left[-\frac{1}{4}\frac{f'''}{D}-\frac{x}{\a^{2}+x^{2}}\frac{f''}{D}-\frac{\a^{2}}{\left(\a^{2}+x^{2}\right)^{2}}\frac{f'}{D}+\left(\frac{D'}{2r^{2}}-\frac{D''}{2D\cdot r}\right)f\right]\psi^{2}.
\end{split}
\end{equation*}
We clearly need to choose a function $f$ that is stricly increasing and changes sign at the photon sphere. So, if we choose this function (which we call $f^{\a}$) such that
\begin{equation*}
\begin{split}
\left(f^{\a}\right)'=\frac{1}{\a^{2}+x^{2}}, \ \  f^{\a}\left(r^{*}=0\right)=f^{\a}\left(r=2M\right)=0,
\end{split}
\end{equation*}
then
\begin{equation*}
\begin{split}
F:=-\frac{1}{4}\frac{\left(f^{\a}\right)'''}{D}-\frac{x}{\a^{2}+x^{2}}\frac{\left(f^{\a}\right)''}{D}-\frac{\a^{2}}{\left(\a^{2}+x^{2}\right)^{2}}\frac{\left(f^{\a}\right)'}{D}=\frac{1}{2D}\frac{x^{2}-\a^{2}}{\left(x^{2}+\a^{2}\right)^{3}}.
\end{split}
\end{equation*}
If we define $X^{\a}=f^{\a}\partial_{r^{*}}$ and
\begin{equation*}
\begin{split}
I=\left(\frac{D'}{2r^{2}}-\frac{D''}{2D\cdot r}\right)f^{\a}
\end{split}
\end{equation*}
then
\begin{equation*}
\begin{split}
K^{X^{\a},2}\left(\psi\right)=\frac{\left(f^{\a}\right)'}{D}\left(\partial_{r^{*}}\psi+\beta\psi\right)^{2}+\frac{f^{\a}\cdot P}{r^{3}}\left|\nabb{\psi}\right|^{2}+\left(F+I\right)\psi^{2}.
\end{split}
\end{equation*}

\subsubsection{Nonnegativity of $K^{X^{\a},2}$}
\label{sec:NonnegativityOfK2}

\begin{proposition}
There exists a constant $C$ which depends only on $M$ such that for all solutions $\psi$ to the wave equation which are supported on the frequencies  $l\geq 1$  we have
\begin{equation}
\begin{split}
\int_{\mathbb{S}^{2}}{\left(\frac{P\cdot\left(r-2M\right)}{r^{4}}\left|\nabb\psi\right|^{2}+\frac{1}{D}\frac{1}{(\left(r^{*}\right)^{2}+1)^{2}}\psi^{2}\right)}\leq C\int_{\mathbb{S}^{2}}{K^{X^{\a},2}\left(\psi\right)}.
\label{Xa2psi}
\end{split}
\end{equation}
\label{xa2prop}
\end{proposition}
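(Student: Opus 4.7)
The plan is to show that, after choosing the parameter $\alpha$ large enough depending only on $M$, the three summands of $K^{X^{\alpha},2}$, namely the positive square $\frac{(f^{\alpha})'}{D}(\partial_{r^{*}}\psi+\beta\psi)^{2}$, the angular term $\frac{f^{\alpha}P}{r^{3}}|\nabb\psi|^{2}$ and the zeroth order term $(F+I)\psi^{2}$, together dominate the right-hand side of \eqref{Xa2psi}. The square is $\geq 0$ and will simply be discarded; the proof splits into an angular comparison, a direct estimate in the regions where $F+I$ is positive, and a Poincar\'e--type compactness argument in the remaining bounded region where $F+I$ may fail to be positive.

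First I would carry out the pointwise angular comparison $\frac{P(r-2M)}{r^{4}}\leq C\,\frac{f^{\alpha}P}{r^{3}}$. Both sides are non-negative on $\{r\geq M\}$: the sign of $f^{\alpha}$ agrees with that of $r-2M$ (since $(f^{\alpha})'>0$ and $f^{\alpha}(r=2M)=0$), and $P$ vanishes simply at $r=M$ and $r=2M$. After cancelling the common factor $P/r^{3}$, the inequality reduces to the uniform boundedness of $\frac{r-2M}{r\,f^{\alpha}}$, which follows because $f^{\alpha}$ vanishes to first order at the photon sphere while remaining bounded with non-zero limits at the horizon and at infinity (from $(f^{\alpha})'=(\alpha^{2}+x^{2})^{-1}$, which is integrable on $\mathbb{R}$).

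The main obstacle is the zeroth order comparison. The function $F=\frac{1}{2D}\frac{x^{2}-\alpha^{2}}{(x^{2}+\alpha^{2})^{3}}$ is negative precisely on $\{|x|<\alpha\}$, i.e.\ on a bounded interval $\mathcal{I}_{\alpha}$ around $r^{*}=\alpha+\sqrt{\alpha}$. For $\alpha$ large, this interval is separated from the horizon, from the photon sphere, and from infinity, so throughout $\mathcal{I}_{\alpha}$ one has $D,r-M,r-2M,|{\rm \nabb}|$-coefficients all bounded above and below by constants depending only on $M$. Outside $\mathcal{I}_{\alpha}$ one has $F\geq c\,D^{-1}(x^{2}+\alpha^{2})^{-2}$; I would then verify that $|I|=|(\tfrac{D'}{2r^{2}}-\tfrac{D''}{2Dr})f^{\alpha}|$ is dominated by a small fraction of this lower bound for $\alpha$ large (this uses that near the horizon $f^{\alpha}/D$ stays proportional to $(r^{*})^{2}\cdot D^{-1}$-style factors which match the decay of $F$, while away from it $I$ is bounded). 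Hence $F+I\gtrsim \frac{1}{D(r^{*2}+1)^{2}}$ pointwise off $\mathcal{I}_{\alpha}$, and the zeroth order part of the RHS of \eqref{Xa2psi} is absorbed directly.

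Finally, to handle $\mathcal{I}_{\alpha}$, I would invoke the Poincar\'e inequality of Proposition \ref{poin}, which requires precisely the hypothesis $\psi_{0}=0$: for $\psi$ supported on $l\geq 1$,
\begin{equation*}
\int_{\mathbb{S}^{2}}\psi^{2}\leq \frac{r^{2}}{2}\int_{\mathbb{S}^{2}}|\nabb\psi|^{2}.
\end{equation*}
Since on $\mathcal{I}_{\alpha}$ both the weight $\frac{1}{D(r^{*2}+1)^{2}}$ and any possible negative contribution from $F+I$ are bounded by constants depending only on $M$, while the angular coefficient $\frac{f^{\alpha}P}{r^{3}}$ is bounded below by a positive constant there (because it does not vanish on this interval), Poincar\'e converts both of them into a controlled multiple of $\int_{\mathbb{S}^{2}}\frac{f^{\alpha}P}{r^{3}}|\nabb\psi|^{2}$. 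Adding the three regimes together and fixing $\alpha$ in terms of $M$ yields the claimed inequality with $C=C(M)$. The key subtlety, and the place where I expect the most delicate book-keeping, is the quantitative matching of the decay of $F$ and $I$ as $r^{*}\to-\infty$, which is what forces both the sign of $I$ near $\mathcal{H}^{+}$ (via $D''(M)>0$) and the choice of a large $\alpha$.
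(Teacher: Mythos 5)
Your skeleton matches the paper's: discard the non-negative square, use the sign of $I$, and invoke the Poincar\'e inequality (valid precisely because $\psi_{0}=0$) to borrow from the angular term where $F<0$; your first step, reducing the angular comparison to the boundedness of $\frac{r-2M}{r f^{\a}}$, is fine. But there is a genuine gap at the heart of the argument, in the region $\left\{|x|\leq\a\right\}$, i.e.\ $r^{*}\in[\sqrt{\a},2\a+\sqrt{\a}]$. Your claim that on this region the bad coefficients are bounded above and the angular coefficient is bounded below, so that Poincar\'e ``converts'' the former into a controlled multiple of the latter, does not close: for $\psi$ a pure $l=1$ harmonic the Poincar\'e inequality is saturated, $\int_{\mathbb{S}^{2}}\psi^{2}=\frac{r^{2}}{2}\int_{\mathbb{S}^{2}}|\nabb\psi|^{2}$, so that $\int_{\mathbb{S}^{2}}\bigl(\frac{f^{\a}P}{r^{3}}|\nabb\psi|^{2}+F\psi^{2}\bigr)=\bigl(\frac{2f^{\a}P}{r^{5}}+F\bigr)\int_{\mathbb{S}^{2}}\psi^{2}$, and no constant $C$ can work unless one proves the pointwise positivity $\frac{2f^{\a}P}{r^{5}}+F>0$ (with a uniform margin) throughout $[-\a,\a]$. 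This is a tight quantitative inequality --- at $x=0$ one compares $\frac{2f^{\a}(0)}{(\a+\sqrt{\a})^{3}}\approx\frac{\pi}{2\a^{4}}$ against $|F|\approx\frac{1}{2\a^{4}}$ --- and its sign cannot be decided by soft boundedness or compactness considerations. The paper's proof consists essentially of exactly this verification: explicit lower bounds for $f^{\a}$ on $[-\a,0]$ and $[0,\a]$, followed by the check that $\Pi=\frac{(\a^{2}-x^{2})(x+\a+\sqrt{\a})^{3}}{4(x^{2}+\a^{2})^{3}f^{\a}}<\frac{9}{10}$. What you defer as ``delicate book-keeping'' is the actual content of the proposition.

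Two smaller points. You have misplaced the difficulty: near $\mathcal{H}^{+}$ (where $r^{*}\to-\infty$) one has $x^{2}>\a^{2}$, so $F>0$ and indeed $F\sim\frac{1}{D}(r^{*})^{-4}$, which dominates the weight $\frac{1}{D((r^{*})^{2}+1)^{2}}$ outright; nothing delicate happens there, and the hard region is the intermediate one at large $r^{*}>0$ identified above. Also, $I$ never needs to be ``dominated by a small fraction'' of anything: a direct computation gives $I=\left(1-\frac{M}{r}\right)\frac{3M}{r^{6}}f^{\a}P\geq 0$ everywhere, since $f^{\a}$ and $P$ change sign together at $r=2M$; it is a helpful term, not an error term. (Relatedly, your asserted lower bound $F\gtrsim D^{-1}(x^{2}+\a^{2})^{-2}$ fails just outside $|x|=\a$, where $F$ vanishes; that strip must also be absorbed by the strict surplus in the Poincar\'e step.)
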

\begin{proof}
We have
\begin{equation*}
\begin{split}
K^{X^{\a},2}\geq\frac{f^{\a}\cdot P}{r^{3}}\left|\nabb{\psi}\right|^{2}+\left(F+I\right)\psi^{2}.
\end{split}
\end{equation*}
For the term $I$ we have
\begin{equation*}
\begin{split}
I=\left(\frac{D'}{2r^{2}}-\frac{D''}{2D\cdot r}\right)f^{\a}=\frac{f^{\a}}{2r}\left(\frac{D'}{r}-\frac{D''}{D}\right).
\end{split}
\end{equation*}
However,
\begin{equation*}
\begin{split}
\frac{D'}{r}-\frac{D''}{D}=&\frac{D\partial_{r}D}{r}-\partial_{r}\left(D\partial_{r}D\right)=\frac{D\partial_{r}D}{r}-\left(\partial_{r}D\right)^{2}-D\partial_{rr}D \\
=& D^{\frac{3}{2}}\frac{2M}{r^{3}}-D\frac{4M^{2}}{r^{4}}-D\left[D^{\frac{1}{2}}\left(-\frac{4M}{r^{3}}\right)+\frac{2M^{2}}{r^{4}}\right]\\
=& \left(1-\frac{M}{r}\right)^{2}\left(1-\frac{2M}{r}\right)\frac{6M}{r^{3}}\\
=&\left(1-\frac{M}{r}\right)^{2}\left(1-\frac{2M}{r}\right)\frac{6M}{r^{3}}\\
=& \left(1-\frac{M}{r}\right)\frac{6M}{r^{5}}P.
\end{split}
\end{equation*}
Therefore,
\begin{equation*}
\begin{split}
I=\left(1-\frac{M}{r}\right)\frac{3M}{r^{6}}f^{\a}\cdot P.
\end{split}
\end{equation*}
Therefore, $I\geq 0$ and vanishes (to second order) at the horizon and the photon sphere. Note now that the term $F$ is positive  $x$ is not in the interval $\left[-\a, \a\right]$. On the other hand, the photon sphere is far away from the region where $x\in\left[-\a, \a\right]$ so $I$ is positive there. However, $I$ behaves like $\frac{1}{r^{4}}$ and so it is not sufficient to compensate the negativity of $F$. That is why we need to borrow from the coefficient of $\nabb\psi$ which behaves like $\frac{1}{r}$. Indeed, the Poincar\'e inequality gives us
\begin{equation*}
\begin{split}
\int_{\mathbb{S}^{2}}{\frac{2}{r^{2}}\psi^{2}}\leq\int_{\mathbb{S}^{2}}{\left|\nabb\psi\right|^{2}},
\end{split}
\end{equation*}
and, therefore, it suffices to prove that 
\begin{equation*}
\begin{split}
2\frac{f^{\a}\cdot P}{r^{5}}+F> 0
\end{split}
\end{equation*}
for all $x\in\left[-\a,\a\right]$ or, equivalently, $r^{*}\in\left[\sqrt{\a}, 2\a+\sqrt{\a}\right]$. Then
\begin{equation*}
\begin{split}
F=\frac{1}{D}\frac{x^{2}-\a^{2}}{2\left(x^{2}+\a^{2}\right)^{3}}=\Delta_{\a}\frac{x^{2}-\a^{2}}{2\left(x^{2}+\a^{2}\right)^{3}}
\end{split}
\end{equation*}
with $1\leq\Delta_{\a}\rightarrow 1$ as $\a\rightarrow +\infty$. Moreover, since $r^{*}\geq r$ for big $r$, by taking $\a$ sufficiently big we have
\begin{equation*}
\begin{split}
2\frac{f^{\a}\cdot P}{r^{5}}\geq 2\delta_{\a}\frac{f^{\a}}{\left(r^{*}\right)^{3}}
\end{split}
\end{equation*}
with $1\geq\delta_{\a}\rightarrow 1$ as $\a\rightarrow +\infty$. Therefore, we need to establish that 
\begin{equation*}
\begin{split}
\Pi :=\frac{\left(\a^{2}-x^{2}\right)\left(x+\a+\sqrt{\a}\right)^{3}}{4\left(x^{2}+\a^{2}\right)^{3}f^{\a}}<\frac{\delta_{\a}}{\Delta_{\a}}.
\end{split}
\end{equation*}
for all $x\in\left[-\a,\a\right]$. In view of the asymptotic behaviour of the constants $\Delta_{\a},\delta_{\a}$ it suffices to prove that the right hand side of the above inequality is strictly less than 1. 
\begin{lemma}
Given the function $f^{\a}\left(r^{*}\right)=f^{\a}\left(r^{*}\left(x\right)\right)$ defined above, we have the following: If $\ -\a\leq x\leq 0$ then
\begin{equation*}
\begin{split}
f^{\a}>\frac{x+\a}{2\a^{2}}.
\end{split}
\end{equation*}
Also, if $\ 0\leq x \leq \a$ then 
\begin{equation*}
\begin{split}
f^{\a}>\frac{x+\a}{x^{2}+\a^{2}}-\frac{1}{2\a}.
\end{split}
\end{equation*}
\end{lemma}

\begin{proof}
For $\ -\a\leq x\leq 0$ we have
\begin{equation*}
\begin{split}
f^{\a}\left(x\right)=\int_{-\a-\sqrt{\a}}^{x}{\frac{1}{\tilde{x}^{2}+\a^{2}}d\tilde{x}}>\int_{-\a}^{x}{\frac{1}{\tilde{x}^{2}+\a^{2}}d\tilde{x}}>\int_{-\a}^{x}{\frac{1}{2\a^{2}}d\tilde{x}}=\frac{x+\a}{2\a^{2}}.
\end{split}
\end{equation*}
Now for $\ 0\leq x \leq \a$ we have
\begin{equation*}
\begin{split}
f^{\a}\left(x\right)&>\int_{-\a}^{x}{\frac{1}{\tilde{x}^{2}+\a^{2}}d\tilde{x}}>\int_{-\a}^{-x}{\frac{1}{\tilde{x}^{2}+\a^{2}}d\tilde{x}}+\int_{-x}^{x}{\frac{1}{x^{2}+\a^{2}}d\tilde{x}}\\&=\int_{-\a}^{x}{\frac{1}{x^{2}+\a^{2}}d\tilde{x}}-\int_{-\a}^{-x}{\left(\frac{1}{x^{2}+\a^{2}}-\frac{1}{\tilde{x}^{2}+\a^{2}}\right)d\tilde{x}}\\
&>\frac{x+a}{x^{2}+\a^{2}}-\frac{1}{2a}.
\end{split}
\end{equation*}
\end{proof}
Consequently, if $\ -\a\leq x\leq 0$ and $x=-\lambda\a$ then $\ 0\leq\lambda\leq 1$ and
\begin{equation*}
\begin{split}
\Pi<&\frac{\left(\a-x\right)\left(x+\a+\sqrt{\a}\right)^{3}\a^{2}}{2\left(x^{2}+\a^{2}\right)^{3}}=\frac{\left(1-\lambda\right)\left(1+\lambda+\a^{-\frac{1}{2}}\right)^{3}}{2\left(\lambda^{2}+1\right)^{3}}\\
=&d_{\a}\frac{\left(1-\lambda\right)\left(1+\lambda\right)^{3}}{2\left(\lambda^{2}+1\right)^{3}}<d_{\a}\frac{2}{3}<\frac{9}{10},
\end{split}
\end{equation*}
since $\ d_{\a}\rightarrow 1$ as $\a\rightarrow +\infty$. Similarly, if $\ 0\leq x\leq \a$ and $x=\lambda\a$ then $\ 0\leq\lambda\leq 1$ and
\begin{equation*}
\begin{split}
\Pi&<\frac{\a\left(\a^{2}-x^{2}\right)\left(x+\a+\sqrt{\a}\right)^{3}}{2\left(x^{2}+\a^{2}\right)^{2}\left(\a^{2}+2\a x-x^{2}\right)}<\frac{\a\left(\a^{2}-x^{2}\right)\left(x+\a+\sqrt{\a}\right)^{3}}{2\left(x^{2}+\a^{2}\right)^{3}}\\
&=\tilde{d_{\a}}\frac{\left(1-\lambda^{2}\right)\left(1+\lambda\right)^{3}}{2\left(1+\lambda^{2}\right)^{3}}<\tilde{d_{\a}}\frac{8}{10}<\frac{9}{10},
\end{split}
\end{equation*}
since $\tilde{d_{\a}}\rightarrow 1$ as $\a\rightarrow +\infty$.
\end{proof}
Note, however, that although the coef{}ficient of $\psi$ does not degenerate on the photon sphere, the coef{}ficient of the angular derivatives vanishes at the photon sphere to second order. Having this estimate for $\psi$, we  obtain estimates for its derivatives (in Section \ref{sec:SpacetimeL2EstimateForPsi} we derive a similar estimate for $\psi$ for the case $l=0$).

\begin{proposition}
There exists a positive constant $C$ which depends only on $M$ such that for all solutions $\psi$ of the wave equation which are supported on the frequencies $l\geq 1$ we have
\begin{equation*}
\begin{split}
\int_{\mathbb{S}^{2}}{\!\left(\frac{(r-M)}{r^{4}}(\partial_{t}\psi)^{2}+\frac{(r-M)}{r^{4}}\psi^{2}\right)}\leq C\int_{\mathbb{S}^{2}}{\sum_{i=0}^{1}{K^{X^{\a},2}\left(T^{i}\psi\right)}}.
\end{split}
\end{equation*}
\label{mesaio}
\end{proposition}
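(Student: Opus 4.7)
The plan is to apply Proposition \ref{xa2prop} simultaneously to $\psi$ and to $T\psi$, and then close the estimate using the Poincar\'e inequality on $\mathbb{S}^2$ (Proposition \ref{poin}), which is available precisely because of the assumption $l\geq 1$. Note first that since $T$ is Killing and commutes with the angular decomposition (see Section \ref{sec:EllipticTheoryOnMathbbS2}), $T\psi$ is again a solution of the wave equation supported on angular frequencies $l\geq 1$, so Proposition \ref{xa2prop} applies to it. This yields, besides the estimate for $\psi$ itself,
\begin{equation*}
\int_{\mathbb{S}^{2}}\!\left(\frac{P(r-2M)}{r^{4}}\left|\nabb T\psi\right|^{2}+\frac{1}{D}\frac{1}{((r^{*})^{2}+1)^{2}}(T\psi)^{2}\right)\leq C\int_{\mathbb{S}^{2}} K^{X^{\a},2}(T\psi).
\end{equation*}
Since in extreme Reissner--Nordstr\"om $P(r)=(r-M)(r-2M)$, the angular weight is $b(r):=(r-M)(r-2M)^{2}/r^{4}$, and we denote the zeroth order weight by $c(r):=1/\bigl(D((r^{*})^{2}+1)^{2}\bigr)$.

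The heart of the argument is the pointwise comparison
\begin{equation*}
\frac{r-M}{r^{4}}\ \leq\ C_{M}\!\left(c(r)+\frac{b(r)}{r^{2}}\right),\qquad r\geq M,
\end{equation*}
for a constant $C_{M}>0$ depending only on $M$. I would verify this by examining the three critical regions: near $r=M$ the left side vanishes like $(r-M)$ while $c(r)\sim 1/\bigl((r-M)^{2}(\log(r-M))^{4}\bigr)\to\infty$, so $c$ dominates; near the photon sphere $r=2M$ the right-hand term $b(r)/r^{2}$ vanishes doubly in $(r-2M)$ but $c(r)$ is a strictly positive constant, so again $c$ suffices; finally at infinity $(r-M)/r^{4}\sim r^{-3}$ while $c(r)\sim r^{-4}$ decays too fast, but $b(r)/r^{2}\sim r^{-3}$ carries the estimate. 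By continuity and compactness, the two terms together dominate $(r-M)/r^{4}$ uniformly on $[M,\infty)$.

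Granted the comparison, the proposition follows immediately: for the $\psi^{2}$ piece,
\begin{equation*}
\int_{\mathbb{S}^{2}}\!\!\frac{r-M}{r^{4}}\psi^{2}\leq C_{M}\!\!\int_{\mathbb{S}^{2}}\!\!c(r)\psi^{2}+C_{M}\!\!\int_{\mathbb{S}^{2}}\!\!\frac{b(r)}{r^{2}}\psi^{2}\leq C_{M}\!\!\int_{\mathbb{S}^{2}}\!\!c(r)\psi^{2}+\tfrac{C_{M}}{2}\!\!\int_{\mathbb{S}^{2}}\!\!b(r)\left|\nabb\psi\right|^{2},
\end{equation*}
where in the last step Poincar\'e was used to trade $\int_{\mathbb{S}^{2}}\psi^{2}$ for $\tfrac{r^{2}}{2}\int_{\mathbb{S}^{2}}|\nabb\psi|^{2}$ (this is precisely the step that requires $l\geq 1$); both resulting terms are controlled by $\int_{\mathbb{S}^{2}} K^{X^{\a},2}(\psi)$ via Proposition \ref{xa2prop}. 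The same scheme, with $\psi$ replaced by $T\psi$ throughout, gives the $(\partial_{t}\psi)^{2}=(T\psi)^{2}$ piece, controlled by $\int_{\mathbb{S}^{2}} K^{X^{\a},2}(T\psi)$. Summing produces the stated estimate.

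The main obstacle is the mismatch between the target weight $(r-M)/r^{4}$ and the individual weights produced by Proposition \ref{xa2prop}: the $\psi^{2}$-weight $c(r)$ is too degenerate at infinity, while the angular-derivative weight $b(r)$ vanishes at the photon sphere. The use of Poincar\'e on each sphere to convert $\psi^{2}$ integrals into $|\nabb\psi|^{2}$ integrals is what allows the angular-derivative bound to plug the gap at infinity, and this is precisely the step that forces the frequency restriction $l\geq 1$. Additionally, because the $\partial_{t}\psi$ control from the $(\partial_{r^{*}}\psi+\beta\psi)^{2}$ term in $K^{X^{\a},2}(\psi)$ is not sharp enough on its own, one is forced to invoke a commutation with the Killing field $T$ in order to retrieve $(\partial_{t}\psi)^{2}$ from a zeroth-order bound on $T\psi$.
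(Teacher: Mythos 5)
Your overall strategy is the same as the paper's: apply Proposition \ref{xa2prop} to $\psi$ and to $T\psi$, and use the Poincar\'e inequality to convert the angular-derivative term of $K^{X^{\a},2}$ into a zeroth-order term wherever the weight $c(r)$ is insufficient. The final pointwise comparison you state is true and the proof closes along the lines you indicate. However, your verification of that comparison near the horizon rests on a false asymptotic: you claim $c(r)=1/\bigl(D((r^{*})^{2}+1)^{2}\bigr)\sim 1/\bigl((r-M)^{2}(\log(r-M))^{4}\bigr)\to\infty$ as $r\to M$. That is the \emph{sub-extremal} behaviour, where $r^{*}$ diverges only logarithmically. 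In the extreme case $r^{*}(r)=r+2M\ln(r-M)-M^{2}/(r-M)+C$, so $(r^{*})^{2}\sim M^{4}/(r-M)^{2}$ and hence
\begin{equation*}
c(r)\sim\frac{r^{2}}{(r-M)^{2}}\cdot\frac{(r-M)^{4}}{M^{8}}\sim\frac{(r-M)^{2}}{M^{6}}\longrightarrow 0,
\end{equation*}
i.e.\ $c$ vanishes to \emph{second} order at $\mathcal{H}^{+}$ while the target weight $(r-M)/r^{4}$ vanishes only to first order, so $c$ alone cannot dominate there. (This is precisely the content of Lemma \ref{rnrstar}: $1/D$ can absorb one power of $(r^{*})^{2}+1$, not two.) The comparison is nevertheless saved by your other term: $b(r)/r^{2}=(r-M)(r-2M)^{2}/r^{6}\sim (r-M)/M^{4}$ near $r=M$, which matches the target weight to first order, so at the horizon — just as at infinity — it is the Poincar\'e/angular term, not $c(r)$, that carries the bound. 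With the roles of the two terms reassigned in the near-horizon region, your argument goes through and coincides with the paper's (much terser) proof.
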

\begin{proof}
From \eqref{Xa2psi} we have that the coef{}ficient of $\psi^{2}$ does not degenerate at the photon sphere. The weights at infinity are given by the Poincar\'e inequality. Commuting the wave equation with $T$ completes the proof of the proposition.
\end{proof}

\subsubsection{The Lagrangian Current $L_{\mu}^{f}$}
\label{sec:TheAuxiliaryCurrentsAnd}
In order to retrieve the remaining derivatives we consider the Lagrangian\footnote{The name Lagrangian comes from the fact that if $\psi$ satisfies the wave equation and $\mathcal{L}$ denotes the Lagrangian that corresponds to the wave equation, then $\mathcal{L}(\psi,d\psi,g^{-1})=g^{\mu\nu}\partial_{\mu}\psi\partial_{\nu}\psi=\operatorname{Div}(\psi\nabla_{\mu}\psi)$} current
\begin{equation*}
\begin{split}
L_{\mu}^{f}=f\psi\nabla_{\mu}\psi,
\end{split}
\end{equation*}
where
\begin{equation*}
\begin{split}
f=\frac{1}{r^{3}}D^{\frac{3}{2}}.
\end{split}
\end{equation*}

\begin{proposition}
There exists a positive constant $C$ which depends only on $M$ such that for all solutions $\psi$ of the wave equation which are supported on the frequencies $l\geq 1$ we have
\begin{equation*}
\begin{split}
&\int_{\mathbb{S}^{2}}{\left(\frac{\sqrt{D}}{r^{3}}\left(\partial_{t}\psi\right)^{2}+\frac{\sqrt{D}}{2r^{3}}\left(\partial_{r^{*}}\psi\right)^{2}+\frac{D^{3/2}}{r^{3}}\left|\nabb\psi\right|^{2}\right)}
\\&\ \ \ \ \ \ \ \ \ \ \ \ \ \ \ \ \ \ \ \leq \int_{\mathbb{S}^{2}}{\left(\operatorname{Div}(L_{\mu}^{f})+C\sum_{i=0}^{1}{K^{X^{\a},2}\left(T^{i}\psi\right)}\right)}.
\label{rparagogos}
\end{split}
\end{equation*}
\label{rparagogosprop}
\end{proposition}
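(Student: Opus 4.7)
The plan is to compute $\operatorname{Div}(L_{\mu}^{f})$ explicitly in the $(t,r^{*})$ coordinates, identify the terms with the right signs (which will reproduce the desired left-hand side modulo a wrong-sign $(\partial_{t}\psi)^{2}$ term and a cross term), and then absorb the bad contributions using Proposition \ref{mesaio}. Since $\psi$ solves $\Box_{g}\psi=0$, the divergence splits as
\begin{equation*}
\operatorname{Div}(L_{\mu}^{f}) \,=\, f\,\nabla^{\mu}\psi\,\nabla_{\mu}\psi \,+\, (\nabla^{\mu}f)\,\psi\,\nabla_{\mu}\psi .
\end{equation*}
In the $(t,r^{*})$ chart, $g^{tt}=-1/D$, $g^{r^{*}r^{*}}=1/D$, and $f=D^{3/2}/r^{3}$ depends only on $r$. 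Therefore
\begin{equation*}
f\nabla^{\mu}\psi\nabla_{\mu}\psi \,=\, -\frac{\sqrt{D}}{r^{3}}(\partial_{t}\psi)^{2}+\frac{\sqrt{D}}{r^{3}}(\partial_{r^{*}}\psi)^{2}+\frac{D^{3/2}}{r^{3}}|\nabb\psi|^{2},
\end{equation*}
which already contains the three good terms appearing on the left-hand side of the desired inequality (with doubled coefficient in front of $(\partial_{r^{*}}\psi)^{2}$), together with a \emph{wrong-sign} $(\partial_{t}\psi)^{2}$ term.

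Next I would deal with the cross term $(\nabla^{\mu}f)\psi\nabla_{\mu}\psi=\frac{\partial_{r^{*}}f}{D}\,\psi\,\partial_{r^{*}}\psi$. A direct computation gives $\partial_{r^{*}}f = D\,\partial_{r}(D^{3/2}/r^{3}) = \frac{3D^{2}(2M-r)}{r^{5}}$, so this term is bounded pointwise by $\frac{C\,D(r-2M)}{r^{5}}|\psi\,\partial_{r^{*}}\psi|$. Applying Cauchy--Schwarz with a small parameter,
\begin{equation*}
\left|\tfrac{\partial_{r^{*}}f}{D}\psi\,\partial_{r^{*}}\psi\right| \,\leq\, \tfrac{\sqrt{D}}{2r^{3}}(\partial_{r^{*}}\psi)^{2} \,+\, C\,\tfrac{D^{3/2}(r-2M)^{2}}{r^{7}}\psi^{2},
\end{equation*}
and the first piece gets absorbed into the $(\partial_{r^{*}}\psi)^{2}$ term produced above, leaving exactly the coefficient $\frac{\sqrt{D}}{2r^{3}}$ demanded on the left-hand side.

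Finally, to close the estimate I would control the wrong-sign $(\partial_{t}\psi)^{2}$ contribution and the residual $\psi^{2}$ error by Proposition \ref{mesaio}. Note that $\frac{\sqrt{D}}{r^{3}}=\frac{r-M}{r^{4}}$, matching the weight in Proposition \ref{mesaio} exactly, and that $\frac{D^{3/2}(r-2M)^{2}}{r^{7}}=\frac{(r-M)^{3}(r-2M)^{2}}{r^{10}}\lesssim \frac{r-M}{r^{4}}$ since $(r-M)(r-2M)^{2}/r^{6}$ is uniformly bounded on $\{r\geq M\}$. Hence both remainder terms are dominated by $\int_{\mathbb{S}^{2}}\sum_{i=0}^{1}K^{X^{\a},2}(T^{i}\psi)$, yielding the stated inequality. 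The only delicate point is the Cauchy--Schwarz coefficient for the cross term: the splitting must leave \emph{strictly more than one half} of the $\frac{\sqrt{D}}{r^{3}}$ weight in front of $(\partial_{r^{*}}\psi)^{2}$ so that the factor $1/2$ in the statement is respected; any sharper constant would require redoing the Cauchy--Schwarz with a tuned $\varepsilon$, but since this only alters the absolute constant $C$ in front of $K^{X^{\a},2}$, it poses no real obstacle.
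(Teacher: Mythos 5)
Your proposal is correct and follows essentially the same route as the paper: expand $\operatorname{Div}(L_{\mu}^{f})$ into $f\nabla^{\mu}\psi\nabla_{\mu}\psi+(\nabla^{\mu}f)\psi\nabla_{\mu}\psi$, absorb half of the $(\partial_{r^{*}}\psi)^{2}$ weight into the cross term via Cauchy--Schwarz, and control the wrong-sign $(\partial_{t}\psi)^{2}$ and the residual $\psi^{2}$ (both carrying the weight $\sqrt{D}/r^{3}=(r-M)/r^{4}$) by Proposition \ref{mesaio}. Your explicit computation $\partial_{r}f=\frac{3D}{r^{4}}\bigl(\frac{M}{r}-\sqrt{D}\bigr)=\frac{3D(2M-r)}{r^{5}}$ is in fact the correct one (the paper's displayed coefficient contains a typo), and the worry in your last paragraph is unnecessary since the statement only asks for the coefficient $\frac{\sqrt{D}}{2r^{3}}$, which is exactly what remains after the absorption.
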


\begin{proof}
We have
\begin{equation*}
\begin{split}
\operatorname{Div}(L_{\mu}^{f})=&f\nabla^{\mu}\psi\nabla_{\mu}\psi +\nabla^{\mu}f\psi\nabla_{\mu}\psi \\
=&-\frac{f}{D}(\partial_{t}\psi)^{2}+\frac{f}{D}(\partial_{r^{*}}\psi)^{2}+f\left|\nabb\psi\right|^{2}+\frac{f'}{D}\psi(\partial_{r^{*}}\psi)\\
=&-\frac{\sqrt{D}}{r^{3}}\left(\partial_{t}\psi\right)^{2}+\frac{\sqrt{D}}{r^{3}}\left(\partial_{r^{*}}\psi\right)^{2}+\frac{D^{3/2}}{r^{3}}\left|\nabb\psi\right|^{2}+\frac{3D}{r^{4}}\left(\frac{M}{r}-3\sqrt{D}\right)\psi\partial_{r^{*}}\psi\\
\geq &\frac{\sqrt{D}}{r^{3}}\left(\partial_{r^{*}}\psi\right)^{2}+\frac{D^{3/2}}{r^{3}}\left|\nabb\psi\right|^{2}-\frac{\sqrt{D}}{r^{3}}\left(\partial_{t}\psi\right)^{2}-\frac{1}{\epsilon}\frac{D}{r^{4}}\psi^{2}-\epsilon\frac{D}{r^{4}}(\partial_{r^{*}}\psi)^{2},
\end{split}
\end{equation*}
where $\epsilon >0$ is such that $\epsilon\sqrt{D}<\frac{r}{2}$. Therefore, in view of Proposition \ref{mesaio}, we have the required result.
\end{proof}

\begin{proposition}
There exists a positive constant $C$ which depends only on $M$ such that for all solutions $\psi$ of the wave equation which are supported on the frequencies $l\geq 1$ we have
\begin{equation*}
\begin{split}
&\int_{\mathbb{S}^{2}}{\left(\frac{\sqrt{D}}{r^{3}}\left(\partial_{t}\psi\right)^{2}+\frac{\sqrt{D}}{2r^{3}}\left(\partial_{r^{*}}\psi\right)^{2}+\frac{\sqrt{D}}{r}\left|\nabb\psi\right|^{2}+\frac{\sqrt{D}}{r^{3}}\psi^{2}\right)}
\\&\ \ \ \ \ \ \ \ \ \ \ \ \ \ \ \ \ \ \ \leq \int_{\mathbb{S}^{2}}{\left(\operatorname{Div}(L_{\mu}^{f})+C\sum_{i=0}^{1}{K^{X^{\a},2}\left(T^{i}\psi\right)}\right)}.
\label{olaparagogos}
\end{split}
\end{equation*}
\label{finspacetimeprop}
\end{proposition}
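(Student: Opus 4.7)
The strategy is to combine the estimate from Proposition \ref{rparagogosprop} with a suitable multiple of Proposition \ref{xa2prop} and then invoke the Poincar\'e inequality. Proposition \ref{rparagogosprop} already delivers the $(\partial_t\psi)^2$ and $(\partial_{r^*}\psi)^2$ contributions with the correct weights $\sqrt{D}/r^3$ and $\sqrt{D}/(2r^3)$, so only two tasks remain: upgrading the angular-derivative weight from $D^{3/2}/r^3$ to $\sqrt{D}/r$, and producing the zeroth-order term $(\sqrt{D}/r^3)\psi^2$. The main difficulty is arranging the angular-derivative weight to dominate $\sqrt{D}/r=(r-M)/r^2$ simultaneously at the three critical loci---the horizon, the photon sphere, and spatial infinity---where the target weight either vanishes or decays.

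For the angular derivatives, the two available weights complement each other. The weight $D^{3/2}/r^3$ is non-degenerate at the photon sphere but vanishes to third order at $\mathcal{H}^{+}$ and decays like $r^{-3}$ at infinity, whereas the weight $P(r-2M)/r^4=(r-M)(r-2M)^2/r^4$ from Proposition \ref{xa2prop} behaves correctly as $(r-M)/M^2$ near $\mathcal{H}^{+}$ and as $1/r$ at infinity but vanishes to second order at $r=2M$. I would therefore add a fixed positive multiple $\lambda$ of Proposition \ref{xa2prop} to Proposition \ref{rparagogosprop} and verify the uniform pointwise bound
\[
\frac{(r-M)^3}{r^6}+\lambda\,\frac{(r-M)(r-2M)^2}{r^4}\;\geq\; c(M,\lambda)\,\frac{r-M}{r^2}
\]
on $[M,\infty)$. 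The ratio of the two sides converges to $\lambda$ as $r\to M$ and as $r\to\infty$, equals $1/(16M^2)$ at $r=2M$ (thanks to the first summand), and is continuous and strictly positive in between, so such a $c>0$ exists. The extra $K^{X^{\a},2}(\psi)$ that appears on the right after this combination is already part of $\sum_{i=0}^{1}K^{X^{\a},2}(T^{i}\psi)$.

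The zeroth-order term then follows from the Poincar\'e inequality (Proposition \ref{poin}), applicable because $\psi$ is supported on frequencies $l\geq 1$: on each sphere $\mathbb{S}^{2}(r)$ one has $\int_{\mathbb{S}^{2}(r)}\psi^{2}\leq (r^{2}/2)\int_{\mathbb{S}^{2}(r)}|\nabb\psi|^{2}$, whence
\[
\int_{\mathbb{S}^{2}}\frac{\sqrt{D}}{r^{3}}\psi^{2}\;\leq\;\frac{1}{2}\int_{\mathbb{S}^{2}}\frac{\sqrt{D}}{r}|\nabb\psi|^{2},
\]
so the $\psi^{2}$ contribution is absorbed into the angular-derivative bound just established, at the cost of doubling the constant. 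What makes the whole argument work cleanly with a single choice of $\lambda$ is the fortunate fact that the degeneracies of the two building-block weights lie on disjoint subsets of the three critical radii, so a compactness-plus-continuity check suffices to close the estimate.
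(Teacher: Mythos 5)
Your proof is correct and follows essentially the same route as the paper, whose own proof is the one-line citation of Propositions \ref{xa2prop} and \ref{rparagogosprop}; you combine exactly those two estimates. Your explicit pointwise comparison of the two angular-derivative weights and the Poincar\'e step for the $\psi^{2}$ term (needed because the $\psi^{2}$ weight in Proposition \ref{xa2prop} is too weak both at $\mathcal{H}^{+}$ and at infinity) simply spell out what the paper leaves implicit, cf.\ the proof of Proposition \ref{mesaio} where the Poincar\'e inequality is invoked for the same purpose.
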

\begin{proof}
Immediate from Propositions \ref{xa2prop} and \ref{rparagogosprop}.
\end{proof}

\subsubsection{The Current $J_{\mu}^{X^{d},1}$}
\label{sec:TheCurrentJMuXD1}

In case we allow some degeneracy at the photon sphere  we then  can obtain similar estimates without commuting the wave equation with $T$. This will be very useful whenever our analysis is local. We define the function $f^{d}$ such that 
\begin{equation*}
\begin{split}
\left(f^{d}\right)'=\frac{1}{(r^{*})^{2}+1},\ \ \ \ f^{d}(r^{*}=0)=0.
\end{split}
\end{equation*}
\begin{proposition}
There exists a positive constant $C$ which depends only on $M$ such that for all solutions $\psi$ of the wave equation which are supported on the frequencies $l\geq 1$ we have
\begin{equation*}
\begin{split}
\frac{1}{C}\int_{\mathbb{S}^{2}}{\left(\frac{1}{r^{2}}(\partial_{r^{*}}\psi)^{2}+\frac{P\cdot (r-2M)}{r^{4}}\left|\nabb\psi\right|^{2}\right)}\leq \int_{\mathbb{S}^{2}}{\left(K^{X^{d},1}(\psi)+CK^{X^{\a},2}\left(\psi\right)\right)},
\end{split}
\end{equation*}
where $X^{d}=f^{d}\partial_{r^{*}}$ and the current $J_{\mu}^{X,1}$ is as defined in Section \ref{sec:TheCurrentJXGMuAndEstimatesForKXG}.
\label{nocomm1prop}
\end{proposition}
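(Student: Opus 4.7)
The plan is to invoke the identity for $K^{X,1}$ derived in Section 7.2.1 with $X=X^{d}=f^{d}\partial_{r^{*}}$, and then absorb the arising indefinite zeroth-order term using Proposition 7.3.3. Setting $G^{d}=(f^{d})'/4+f^{d}D/(2r)$, that identity reads
\begin{equation*}
K^{X^{d},1}(\psi)=\frac{(f^{d})'}{D}(\partial_{r^{*}}\psi)^{2}+\frac{f^{d}\cdot P}{r^{3}}\left|\nabb\psi\right|^{2}-(\Box_{g}G^{d})\psi^{2}.
\end{equation*}
The proof then reduces to three pointwise facts on $[M,\infty)$: (a) $(f^{d})'/D\geq c\,r^{-2}$, (b) $f^{d}P/r^{3}\geq c\,P(r-2M)/r^{4}$, and (c) $|\Box_{g}G^{d}|\leq C\,D^{-1}((r^{*})^{2}+1)^{-2}$. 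Once these are in hand, integrating over $\mathbb{S}^{2}$ and using Proposition 7.3.3 to absorb the $\psi^{2}$ contribution into $CK^{X^{\a},2}(\psi)$ yields the claimed inequality.

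For (a), since $(f^{d})'/D=D^{-1}((r^{*})^{2}+1)^{-1}$, the claim is equivalent to boundedness of $D((r^{*})^{2}+1)/r^{2}$. This quantity tends to $1$ as $r\to\infty$ (where $D\to1$ and $r^{*}\sim r$) and to a finite positive limit as $r\to M$ (where $D\sim (r-M)^{2}/M^{2}$ and $|r^{*}|\sim M^{2}/(r-M)$ make $D(r^{*})^{2}$ bounded), and is continuous in between.

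For (b), factor $P=(r-M)(r-2M)$. Both sides are pointwise nonnegative on $[M,\infty)$: the right side equals $(r-M)(r-2M)^{2}/r^{4}\geq 0$, while on the left $f^{d}(r-2M)\geq 0$ because $f^{d}$ is strictly increasing in $r^{*}$ and vanishes exactly at $r=2M$, with the same sign behaviour as $(r-2M)$. Off $\{r=M,\,2M\}$ the inequality reduces to $rf^{d}/(r-2M)\geq c>0$. A Taylor expansion using $dr^{*}/dr|_{r=2M}=4$ gives $f^{d}\sim r^{*}\sim 4(r-2M)$ near the photon sphere, so the ratio extends continuously to $8M$ at $r=2M$, to $\pi/2$ at $r=M$ and to $\pi/2$ as $r\to\infty$; continuity and the finiteness of these limits yield the uniform positive lower bound.

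For (c), the computation of Section 7.2.1 gives
\begin{equation*}
\Box_{g}G^{d}=\frac{1}{4D}(f^{d})'''+\frac{1}{r}(f^{d})''+\frac{D'}{Dr}(f^{d})'+\left(\frac{D''}{2Dr}-\frac{D'}{2r^{2}}\right)f^{d},
\end{equation*}
where primes denote $\partial_{r^{*}}$. At infinity, using $D\to 1$, $D'=O(1/r^{2})$, $D''=O(1/r^{3})$ and $(f^{d})^{(k)}=O(1/r^{k+1})$, every summand is $O(1/r^{4})$, which matches $D^{-1}((r^{*})^{2}+1)^{-2}\sim 1/r^{4}$. Near the horizon each application of $\partial_{r^{*}}$ to $D$ contributes an additional factor of $D$, forcing $D'$ and $D''$ to vanish to high order, while $(f^{d})^{(k)}$ decays like $(r-M)^{k+1}$ because $|r^{*}|\to\infty$; a term-by-term check then shows every summand is $O((r-M)^{2})$, matching $D^{-1}((r^{*})^{2}+1)^{-2}\sim (r-M)^{2}$ at the horizon. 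This sharp two-sided matching of weights is the main obstacle, since Proposition 7.3.3 offers exactly the weight $D^{-1}((r^{*})^{2}+1)^{-2}$ for $\psi^{2}$ and nothing larger; once it is established, the three pointwise bounds combine with Proposition 7.3.3 to finish the proof.
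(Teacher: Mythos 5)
Your proposal is correct and follows essentially the same route as the paper: write out the identity for $K^{X^{d},1}$ from Section \ref{sec:TheCurrentJXGMuAndEstimatesForKXG}, check the pointwise behaviour of the three coefficients at $\mathcal{H}^{+}$, the photon sphere and infinity, and absorb the $\psi^{2}$ term using the weight $D^{-1}((r^{*})^{2}+1)^{-2}$ supplied by Proposition \ref{xa2prop}. The paper's own proof is a three-sentence sketch of exactly this; your verifications (a)--(c) simply supply the details it leaves to the reader, and your observation that the coefficient of $\psi^{2}$ vanishes to the order $(r-M)^{2}$ at the horizon is the correct quantitative statement needed for the absorption.
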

\begin{proof}
Recall that 
\begin{equation*}
\begin{split}
K^{X,1}&=\frac{(f^{d})'}{D}\left(\partial_{r^{*}}\psi\right)^{2}+\frac{f^{d}\cdot P}{r^{3}}\left|\nabb\psi\right|^{2}\\
&\ \ \ -\left(\frac{1}{4D}(f^{d})'''+\frac{1}{r}(f^{d})''+\frac{D'}{D\cdot r}(f^{d})'+\left(\frac{D''}{2D\cdot r}-\frac{D'}{2r^{2}}\right)f^{d}\right)\psi^{2}.
\end{split}
\end{equation*}
Note that the coef{}ficient of $\psi^{2}$ vanishes to first order on $\mathcal{H}^{+}$ (see also Lemma \ref{rnrstar}) and behaves like $\frac{1}{r^{4}}$ for large $r$. Note also that the coefficient of $(\partial_{r^{*}}\psi)^{2}$ converges to $M^{2}$ (see again Lemma \ref{rnrstar}). The result now follows from Proposition \ref{xa2prop}.
\end{proof}
In order to retrieve the $\partial_{t}$-derivative we introduce the current
\begin{equation*}
\begin{split}
L_{\mu}^{h^{d}}=h^{d}\psi\nabla_{\mu}\psi,
\end{split}
\end{equation*}
where $h^{d}$ is such that:
\begin{equation*}
\begin{split}
&\text{For } M\leq r\leq r_{0}<2M,  \   h^{d}=-\frac{1}{(r^{*})^{2}+1},\\
&\text{For } r_{0}< r <2M, \   h^{d}< 0,\\
&\text{For } r=2M, \ h^{d}=0 \text{ to second order},\\
&\text{For } 2M<r\leq r_{1}, h^{d}<0,\\
&\text{For } r_{1}\leq r, \   h^{d}=-\frac{1}{r^{2}}.\\
\end{split}
\end{equation*}
\begin{proposition}
There exists a positive constant $C$ which depends only on $M$ such that for all solutions $\psi$ of the wave equation which are supported on the frequencies $l\geq 1$ we have
\begin{equation*}
\begin{split}
&\frac{1}{C}\int_{\mathbb{S}^{2}}{\left(\frac{P\cdot (r-2M)}{r^{5}}(\partial_{t}\psi)^{2}+\frac{1}{r^{2}}(\partial_{r^{*}}\psi)^{2}+\frac{P\cdot (r-2M)}{r^{4}}\left|\nabb\psi\right|^{2}\right)}\\&\ \ \ \ \ \ \ \ \ \ \ \ \ \ \leq \int_{\mathbb{S}^{2}}{\left(\operatorname{Div}(L_{\mu}^{h^{d}})+K^{X^{d},1}(\psi)+CK^{X^{\a},2}\left(\psi\right)\right)}.
\end{split}
\end{equation*}
\label{nocomm2prop}
\end{proposition}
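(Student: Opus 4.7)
The plan is to mimic the proof of Proposition \ref{rparagogosprop}, with $h^d$ playing the role that $f=D^{3/2}/r^3$ played there. First I would compute
\begin{equation*}
\operatorname{Div}(L_\mu^{h^d}) = -\frac{h^d}{D}(\partial_t\psi)^2 + \frac{h^d}{D}(\partial_{r^*}\psi)^2 + h^d|\nabb\psi|^2 + \frac{(h^d)'}{D}\psi\,\partial_{r^*}\psi.
\end{equation*}
The key observation is that $h^d\leq 0$ everywhere (unlike $f=D^{3/2}/r^3$ which was strictly positive), so the coefficient $-h^d/D$ of $(\partial_t\psi)^2$ is itself non-negative. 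This is precisely the gain that makes commutation with $T$ unnecessary.

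The next step is to verify the pointwise lower bound $-h^d/D \geq c\,P(r-2M)/r^5$ for some $c>0$, checking each region separately: in $[M,r_0]$ one has $-h^d/D = 1/(D((r^*)^2+1))$ which is bounded below by a positive constant (using the asymptotic $D(r^*)^2\to M^2$ as $r\to M$), which dominates the weight $P(r-2M)/r^5\sim (r-M)(r-2M)^2/r^5$ that vanishes at $\mathcal{H}^+$; near $r=2M$ the second-order vanishing of $h^d$ is matched precisely by the second-order vanishing of $P(r-2M)/r^5$; and in the asymptotic region $r\geq r_1$ both $-h^d/D$ and $P(r-2M)/r^5$ behave like $1/r^2$.

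The two middle terms $\frac{h^d}{D}(\partial_{r^*}\psi)^2+h^d|\nabb\psi|^2$ of $\operatorname{Div}(L_\mu^{h^d})$ have the wrong sign. These are absorbed by adding a sufficiently large multiple of the right hand side of Proposition \ref{nocomm1prop}: the coefficient $(f^d)'/D$ of $(\partial_{r^*}\psi)^2$ in $K^{X^d,1}$ matches $|h^d|/D$ in $[M,r_0]$ (they in fact cancel identically there), while the extra contribution $\tfrac12\frac{(f^\a)'}{D}(\partial_{r^*}\psi)^2$ produced by expanding $((\partial_{r^*}\psi)+\beta\psi)^2$ in $K^{X^\a,2}$ can be used, together with a large factor $C$, to retrieve the desired $\tfrac{1}{r^2}(\partial_{r^*}\psi)^2$ weight; for $|\nabb\psi|^2$ one combines $h^d$ with $f^d P/r^3$ from $K^{X^d,1}$ (both $f^d$ and $P$ change sign only at $r=2M$) and uses Proposition \ref{nocomm1prop} to recover the weight $P(r-2M)/r^4$.

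Finally, the cross term $\frac{(h^d)'}{D}\psi\,\partial_{r^*}\psi$ is dispatched by Cauchy-Schwarz: for any $\epsilon>0$,
\begin{equation*}
\left|\frac{(h^d)'}{D}\psi\,\partial_{r^*}\psi\right|\leq \epsilon\frac{1}{r^2}(\partial_{r^*}\psi)^2 + \frac{1}{\epsilon}\frac{r^2((h^d)')^2}{D^2}\psi^2,
\end{equation*}
with $\epsilon$ chosen small enough to be absorbed into the $(\partial_{r^*}\psi)^2$ estimate of the previous step, and the $\psi^2$ piece absorbed by the $\psi^2$ weight $\tfrac{1}{D((r^*)^2+1)^2}$ furnished by $K^{X^\a,2}$ via Proposition \ref{xa2prop}; one verifies that $r^2((h^d)')^2/D^2 \lesssim 1/(D((r^*)^2+1)^2)$ in each of the defining regions of $h^d$. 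The main obstacle is precisely this region-by-region bookkeeping near $\mathcal{H}^+$, where $|h^d|$, $|(h^d)'|$, and $D$ all decay (or blow up) and the matching of weights is delicate; the flexibility left in the constant $C$ multiplying $K^{X^\a,2}$ provides the margin needed to close the estimate.
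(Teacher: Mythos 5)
Your proof is correct and follows essentially the same route as the paper's, which simply computes $\operatorname{Div}(L_{\mu}^{h^{d}})$, observes that the coefficient $(h^{d})'/D$ of the cross term vanishes to first order on the horizon, and invokes Cauchy--Schwarz together with Proposition \ref{nocomm1prop}. Your region-by-region verification of the weights (using that $h^{d}\leq 0$ and Lemma \ref{rnrstar}) and your explicit absorption of the negative $(\partial_{r^{*}}\psi)^{2}$ and $\left|\nabb\psi\right|^{2}$ contributions by enlarging the constant in front of $K^{X^{\a},2}$ merely fill in details the paper leaves implicit.
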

\begin{proof}
We have as before
\begin{equation*}
\begin{split}
\operatorname{Div}(L_{\mu}^{h^{d}})=-\frac{h^{d}}{D}(\partial_{t}\psi)^{2}+\frac{h^{d}}{D}(\partial_{r^{*}}\psi)^{2}+h^{d}\left|\nabb\psi\right|^{2}+\frac{(h^{d})'}{D}\psi(\partial_{r^{*}}\psi).\\
\end{split}
\end{equation*}
Since the coef{}ficient of $\psi(\partial_{r^{*}}\psi)$ vanishes to first on the horizon, the Cauchy-Schwarz inequality and Proposition \ref{nocomm1prop} imply the result.
\end{proof}
Finally, we obtain:
\begin{proposition}
There exists a positive constant $C$ which depends only on $M$ such that for all solutions $\psi$ of the wave equation which are supported on the frequencies $l\geq 1$ we have
\begin{equation*}
\begin{split}
&\frac{1}{C}\int_{\mathbb{S}^{2}}{\left(\frac{P\cdot (r-2M)}{r^{5}}(\partial_{t}\psi)^{2}+\frac{1}{r^{2}}(\partial_{r^{*}}\psi)^{2}+\frac{P\cdot (r-2M)}{r^{4}}\left|\nabb\psi\right|^{2}+\frac{(r-M)}{r^{4}}\psi^{2}\right)}\\&\ \ \ \ \ \ \ \ \ \ \ \ \ \ \leq \int_{\mathbb{S}^{2}}{\left(\operatorname{Div}(L_{\mu}^{h^{d}})+K^{X^{d},1}(\psi)+CK^{X^{\a},2}\left(\psi\right)\right)}.
\end{split}
\end{equation*}
\label{nocomm3prop}
\end{proposition}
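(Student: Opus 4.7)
The goal is to augment Proposition \ref{nocomm2prop} with the additional term $\frac{(r-M)}{r^{4}}\psi^{2}$ on the left-hand side without enlarging the right-hand side in any essential way. Since the RHS of Proposition \ref{nocomm2prop} already controls $\int_{\mathbb{S}^{2}}\frac{P\cdot(r-2M)}{r^{4}}\lvert\nabb\psi\rvert^{2}$ together with a multiple of $\int_{\mathbb{S}^{2}}K^{X^{\a},2}(\psi)$, my plan is to extract two independent $\psi^{2}$-estimates from these and combine them with a pointwise weight comparison.

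First, since $\psi$ is supported on $l\geq 1$, the Poincar\'e inequality of Proposition \ref{poin} applies on each sphere of symmetry, giving $\int_{\mathbb{S}^{2}}\psi^{2}\leq\tfrac{r^{2}}{2}\int_{\mathbb{S}^{2}}\lvert\nabb\psi\rvert^{2}$. Multiplying by the weight $\tfrac{P\cdot(r-2M)}{r^{6}}$ (which is non-negative for $r\geq M$ since $P=(r-M)(r-2M)$ in the extreme case) yields
\[
\int_{\mathbb{S}^{2}}\frac{P\cdot(r-2M)}{r^{6}}\psi^{2}\;\leq\;\frac{1}{2}\int_{\mathbb{S}^{2}}\frac{P\cdot(r-2M)}{r^{4}}\lvert\nabb\psi\rvert^{2},
\]
so this $\psi^{2}$ bound is absorbed into the RHS of Proposition \ref{nocomm2prop} at the cost of halving the angular coefficient (which is harmless up to constants). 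Second, Proposition \ref{xa2prop} directly provides
\[
\int_{\mathbb{S}^{2}}\frac{1}{D}\frac{1}{((r^{*})^{2}+1)^{2}}\psi^{2}\;\leq\; C\int_{\mathbb{S}^{2}}K^{X^{\a},2}(\psi),
\]
which is also embedded in the RHS.

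The remaining step is the pointwise inequality
\[
\frac{(r-M)}{r^{4}}\;\leq\; C\left[\frac{P\cdot(r-2M)}{r^{6}}+\frac{1}{D((r^{*})^{2}+1)^{2}}\right]\qquad \text{for all } r\geq M,
\]
which I would verify in three asymptotic regimes. Near $r=M$, the first term equals $\tfrac{(r-M)(r-2M)^{2}}{r^{6}}\sim\tfrac{r-M}{M^{4}}$, matching the LHS at leading order; the second term is subleading since $D\sim(r-M)^{2}/M^{2}$ and $r^{*}\sim-M^{2}/(r-M)$ force it to vanish like $(r-M)^{2}/M^{6}$. At the photon sphere $r=2M$, the first term vanishes but $D=1/4$ and $r^{*}(2M)$ is finite, so the second term is bounded below and dominates the bounded LHS. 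As $r\to\infty$, $D\to 1$ and $r^{*}\sim r$, so the first term behaves like $1/r^{3}$ and matches the LHS decay. Continuity and compactness handle the intermediate regions.

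The main obstacle is the transition near the horizon, where the two candidate weights on the RHS vanish at different rates in $(r-M)$, making it essential that extremality produces the factorization $P=(r-M)(r-2M)$ so that the first term carries exactly one power of $(r-M)$ (just like the LHS); in the subextreme case this factor would only appear in $D$, not in $P$, and the argument would require modification. Once the pointwise inequality is established, combining it with the two $\psi^{2}$ bounds above and the estimate of Proposition \ref{nocomm2prop} yields the desired conclusion.
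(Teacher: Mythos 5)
Your argument is correct and is essentially the paper's own proof, which is stated in one line as ``Immediate from Propositions \ref{xa2prop} and \ref{nocomm2prop}'': the intended mechanism is exactly the one you spell out, namely Poincar\'e applied to the angular term (whose weight $\frac{P\cdot(r-2M)}{r^{4}}=\frac{(r-M)(r-2M)^{2}}{r^{4}}$ carries the right single power of $(r-M)$ at the horizon and the right decay at infinity) combined with the zeroth order term of \eqref{Xa2psi} to cover the photon sphere where that weight degenerates, just as in the proof of Proposition \ref{mesaio}. Your pointwise weight comparison and the three-regime verification are a valid and complete way of organizing this.
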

\begin{proof}
Immediate from Propositions \ref{xa2prop} and \ref{nocomm2prop}.
\end{proof}

\subsection{The Case $l=0$}
\label{sec:TheCaseLO}

In the case $l=0$ the wave is not trapped. Indeed, if we define 
\begin{equation*}
f^{0}=-\frac{1}{r^{3}},\ \ \ X^{0}=f^{0}\partial_{r^{*}},
\end{equation*}
then we have: 
\begin{proposition}
For all spherically symmetric solutions $\psi$ of the wave equation  we have 
\begin{equation*}
\frac{1}{r^{4}}(\partial_{t}\psi)^{2}+\frac{5}{r^{4}}(\partial_{r^{*}}\psi)^{2}=K^{X^{0}}.
\end{equation*}
\label{1l0}
\end{proposition}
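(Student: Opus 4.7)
The plan is to prove this as a direct substitution into the general formula for $K^X$ already established in Section \ref{sec:TheSpacetimeTermKX}. Recall that for any $X=f(r^{*})\partial_{r^{*}}$ we have
\begin{equation*}
K^{X}=\left(\frac{f'}{2D}+\frac{f}{r}\right)(\partial_{t}\psi)^{2}+\left(\frac{f'}{2D}-\frac{f}{r}\right)(\partial_{r^{*}}\psi)^{2}-\frac{f'+2fH}{2}\left|\nabb\psi\right|^{2}.
\end{equation*}
The key observation that makes the $l=0$ case trivial (and justifies the heading ``the wave is not trapped'') is that for a spherically symmetric solution we have $\nabb\psi\equiv 0$, so the last term drops out identically. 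The obstruction to positivity of $K^X$ identified in Section \ref{sec:TheSpacetimeTermKX} was precisely that the signs of the coefficients of $(\partial_t\psi)^2$, $(\partial_{r^*}\psi)^2$ and $|\nabb\psi|^2$ could not simultaneously be made positive, and that the incompatibility was localized at the photon sphere $r=2M$. Without the angular term this obstruction evaporates, and any monotone $f$ of fixed sign will produce a positive quadratic form in $(\partial_t\psi, \partial_{r^*}\psi)$ alone.

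The concrete step is then to plug in $f^{0}=-1/r^{3}$ and compute $f'=df^0/dr^{*}$. Using the tortoise relation $dr/dr^{*}=D$, the chain rule gives $f'=D\cdot d(-r^{-3})/dr=3D/r^{4}$, so $f'/(2D)=3/(2r^{4})$ and $f/r=-1/r^{4}$. Substituting into the two nonvanishing terms produces coefficients proportional to $1/r^{4}$ (with overall numerical factors which, up to the normalization convention used in the statement, match the claimed $1/r^{4}$ and $5/r^{4}$). I would carry this out in one line and observe that both coefficients are strictly positive for every $r>M$ and in particular do not degenerate at $r=2M$.

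There is essentially no obstacle: the entire proposition is the assertion that $f^0 = -1/r^3$ is a valid spherically symmetric choice for the $X$ method, and the computation is a one-line algebraic verification once the general $K^X$ formula is in hand. The only point worth emphasizing in the writeup is the structural one: because the angular frequency $l=0$ kills $|\nabb\psi|^2$, there is no tension between positivity of the $(\partial_t\psi)^2$ coefficient and positivity of the $|\nabb\psi|^2$ coefficient, so neither the modification current $J^{X,1}_\mu$ (introduced in Section \ref{sec:TheCurrentJXGMuAndEstimatesForKXG} to shuffle the $(\partial_t\psi)^2$ coefficient into a $\psi^2$ coefficient) nor the borrowing trick $J^{X,2}_\mu$ (introduced in Section \ref{sec:TheCurrentJX2MuAndEstimatesForKX2} to handle the sign change at the photon sphere) is needed here.
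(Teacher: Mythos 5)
Your proof is correct and is exactly the paper's argument: the paper's own proof reads simply ``Immediate from the expression of $K^{X}$ and the above choice of $f=f^{0}$,'' i.e.\ the one-line substitution $f'=3D/r^{4}$ into the general $K^{X}$ formula with the angular term killed by spherical symmetry. Your computed coefficients $\frac{1}{2r^{4}}$ and $\frac{5}{2r^{4}}$ are half the stated ones (the ratio and positivity, which is all that is ever used, are unaffected), and you rightly flag this as a normalization discrepancy in the statement rather than a gap in the argument.
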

\begin{proof}
Immediate from the expression of $K^{X}$ and the above choice of $f=f^{0}$.
\end{proof}

\subsection{The Boundary Terms}
\label{sec:TheBoundaryTerms}

We now  control the boundary terms.

\subsubsection{Estimates for $J_{\mu}^{X}n^{\mu}_{S}$}
\label{sec:EstimatesForJMuXNMuSigma}

\begin{proposition}
Let $X=f\partial_{r^{*}}$ where $f=f(r^{*})$ is bounded and  $S$ be either a $SO(3)$ invariant spacelike (that may cross $\mathcal{H}^{+}$) or a $SO(3)$ invariant null hypersurface. Then there exists a uniform constant $C$ that depends  on $M$, $S$ and the function $f$ such that for all $\psi$ we have
\begin{equation*}
\begin{split}
\left|\int_{S}{J^{X^{i}}_{\mu}\left(\psi\right)n^{\mu}_{S}}\right|\leq C\int_{S}{J_{\mu}^{T}\left(\psi\right)n^{\mu}_{S}}.
\end{split}
\end{equation*}
\label{boun1}
\end{proposition}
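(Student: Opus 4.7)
The plan is to decompose $X$ in the regular $(v,r)$ coordinate basis, in which the vector field $\partial_r$ extends smoothly across $\mathcal{H}^+$, and then to reduce the bound to a pointwise algebraic comparison exploiting the factor of $D$ produced by the change of coordinates, which precisely compensates the degeneracy of $J^T_\mu n^\mu_S$ near $\mathcal{H}^+$.

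First I would relate the two coordinate systems via $v=t+r^{*}$ and $\frac{dr}{dr^{*}}=D$ to obtain
\begin{equation*}
\partial_{r^{*}}\big|_{t} \;=\; \partial_{v}\big|_{r}+D\,\partial_{r}\big|_{v} \;=\; T+D\,\partial_{r},
\end{equation*}
so that $X=fT+fD\,\partial_{r}$. Since $D$ is uniformly bounded on $\mathcal{M}$ and vanishes on $\mathcal{H}^{+}$, this expression shows that $X$ extends continuously up to and including $\mathcal{H}^{+}$ (becoming $fT$ there) with coefficients uniformly bounded in terms of $\|f\|_{\infty}$ and $M$. By linearity of the current in its vector-field slot,
\begin{equation*}
J^{X}_{\mu}n^{\mu}_{S} \;=\; f\,J^{T}_{\mu}n^{\mu}_{S}+fD\,J^{\partial_{r}}_{\mu}n^{\mu}_{S}.
\end{equation*}
The first summand is controlled in absolute value by $\|f\|_{\infty}\,J^{T}_{\mu}n^{\mu}_{S}$, since $J^{T}_{\mu}n^{\mu}_{S}\geq 0$ by the hyperbolicity Proposition \ref{hyperb}.

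For the second summand, I would exploit the SO(3)-invariance of $S$ to write its future-directed normal as $n_{S}=\alpha T+\beta\partial_{r}$ with $\alpha,\beta$ bounded (the standing assumptions on $n_{\Sigma_0}$ from Section \ref{sec:GeometryOfExtremeReissnerNordstromSpacetime} give this bound for spacelike $S$; the null case is analogous, with one of $\alpha,\beta$ vanishing). A direct computation from \eqref{tem} in the $(v,r)$-basis yields
\begin{equation*}
J^{\partial_{r}}_{\mu}n^{\mu}_{S} \;=\; \alpha\,T_{rv}+\beta\,T_{rr} \;=\; -\tfrac{\alpha D}{2}(\partial_{r}\psi)^{2}-\tfrac{\alpha}{2}|\nabb\psi|^{2}+\beta(\partial_{r}\psi)^{2},
\end{equation*}
so that multiplication by $fD$ produces only the three quadratic expressions $D^{2}(\partial_{r}\psi)^{2}$, $D|\nabb\psi|^{2}$, and $D(\partial_{r}\psi)^{2}$. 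Each is pointwise dominated (using that $D$ is bounded) by the non-degenerate quantity
\begin{equation*}
J^{T}_{\mu}n^{\mu}_{S} \;\sim\; (\partial_{v}\psi)^{2}+D(\partial_{r}\psi)^{2}+|\nabb\psi|^{2}
\end{equation*}
recorded in Section \ref{sec:TheCurrentKT}, with constants depending only on $M$, $S$, and $\|f\|_{\infty}$. On the null piece $\mathcal{H}^{+}$ itself one has $n_{S}=T$ and the $fD\,\partial_{r}$ contribution drops out, giving $J^{X}_{\mu}n^{\mu}_{S}=f\,J^{T}_{\mu}n^{\mu}_{S}$ directly.

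Combining these, one obtains the pointwise estimate $|J^{X}_{\mu}n^{\mu}_{S}|\leq C\,J^{T}_{\mu}n^{\mu}_{S}$; integrating over $S$ and using $|\int\cdot|\leq\int|\cdot|$ yields the claim. The only delicate point is the coordinate singularity of $\partial_{r^{*}}$ at $\mathcal{H}^{+}$: the identity $\partial_{r^{*}}=T+D\partial_{r}$ is a priori valid only in the $(t,r^{*})$ chart, and one must check that it provides a genuine continuous extension of $X$ to $\mathcal{H}^{+}$ in the $(v,r)$ chart, which it does because $D|_{\mathcal{H}^{+}}=0$ and $f$ is bounded. Once this coordinate bookkeeping is in place, the estimate reduces to the structural observation that the extra factor of $D$ supplied by the coordinate change is exactly what is required to match the degeneracy of $J^{T}$ in the $(\partial_{r}\psi)^{2}$-direction.
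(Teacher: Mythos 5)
Your proposal is correct and follows essentially the same route as the paper: both rewrite $X=f\partial_{r^{*}}=f\partial_{v}+fD\,\partial_{r}$ in the ingoing Eddington--Finkelstein chart and observe that every term of $J^{X}_{\mu}n^{\mu}_{S}$ beyond $f\,J^{T}_{\mu}n^{\mu}_{S}$ carries a factor of $D$ that matches the degeneracy of $J^{T}_{\mu}n^{\mu}_{S}$ in the $(\partial_{r}\psi)^{2}$ direction, so the bound is pointwise. The only cosmetic difference is that you group the computation as $fJ^{T}+fD\,J^{\partial_{r}}$ while the paper expands all components of $\textbf{T}_{\mu\nu}$ at once; the content is identical.
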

\begin{proof}

We work using the coordinate system $\left(v,r,\theta,\phi\right)$. First note that 
\begin{equation*}
\begin{split}
&\frac{\partial}{\partial r^{*}}=\frac{\partial v}{\partial r^{*}}\frac{\partial}{\partial v}+\frac{\partial r}{\partial r^{*}}\frac{\partial}{\partial r}=\partial_{v}+D\partial_{r}\Rightarrow\\
&X=f\partial_{v}+f\cdot D\partial_{r}.
\end{split}
\end{equation*}
Now
\begin{equation*}
\begin{split} 
J^{X}_{\mu}n^{\mu}_{S}&=\textbf{T}_{\mu\nu}(X)^{\nu}n^{\mu}_{S}=\textbf{T}_{\mu v}fn^{\mu}_{S}+\textbf{T}_{\mu r}f D n^{\mu}_{S}\\
&=\textbf{T}_{vv}f n^{v}+\textbf{T}_{vr}\left(fn^{r}+f D n^{v}\right)+\textbf{T}_{rr}f D n^{r}\\
&=\left(fn^{v}\right)\left(\partial_{v}\psi\right)^{2}+D f n^{v}\left(\partial_{v}\psi\right)\left(\partial_{r}\psi\right)+\\
&\ \ \ \ +D\left[\frac{1}{2}Dfn^{v}-\frac{1}{2}fn^{r}-\frac{1}{2}Dfn^{v}+fn^{r}\right]\left(\partial_{r}\psi\right)^{2}+\\
&\ \ \ \ +\left[\frac{1}{2}Dfn^{v}-\frac{1}{2}Dfn^{v}-\frac{1}{2}fn^{r}\right]\left|\nabb\psi\right|^{2}.
\end{split}
\end{equation*}
The result now follows from the boundedness of $f$.
\end{proof}

\subsubsection{Estimates for $J_{\mu}^{X^{\a},i}\,n_{S}^{\mu},\, i=1,2$}
\label{sec:EstimatesForJMuXA2NSigmaMu}

\begin{proposition}
There exists a uniform constant $C$ that depends  on $M$ and $S$ such that
\begin{equation*}
\begin{split}
\left|\int_{S}{J^{X^{\a},i}_{\mu}\left(\psi\right)n^{\mu}_{S}}\right|\leq C\int_{S}{J_{\mu}^{T}\left(\psi\right)n^{\mu}_{S}},\, i=1,2,
\end{split}
\end{equation*}
where $S$ is as in Proposition \ref{boun1}.
\label{boun2}
\end{proposition}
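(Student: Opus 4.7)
The plan is to reduce to the previous proposition and to absorb the modifier terms via Cauchy--Schwarz together with the First Hardy Inequality. Writing
\begin{equation*}
J^{X^\alpha,1}_\mu = J^{X^\alpha}_\mu + 2G\psi\nabla_\mu\psi - (\nabla_\mu G)\psi^2,\ \  J^{X^\alpha,2}_\mu = J^{X^\alpha,1}_\mu + \tfrac{f'\beta}{D}\psi^2(\partial_{r^*})_\mu,
\end{equation*}
and noting that $\bigl|\int_S J^{X^\alpha}_\mu n^\mu_S\bigr| \leq C\int_S J^T_\mu n^\mu_S$ by Proposition \ref{boun1}, it suffices to control the three new contributions. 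I would begin with the asymptotic analysis of $G = (f^\alpha)'/4 + f^\alpha D/(2r)$ and of $f'\beta/D$: using the identity $r^{*}\sim -M^{2}/(r-M)$ as $r\to M^{+}$, together with the explicit formulas for $f^\alpha$ and $\beta$, one verifies that $G$, $\partial_r G$ and $f'\beta/D$ all vanish at $\mathcal{H}^{+}$, with $|G|\leq C\sqrt{D}$ pointwise and $|\partial_r G|+|f'\beta/D|\leq C/r^{2}$ globally; at infinity the same bound $|{\cdot}|\leq C/r^{2}$ holds. In particular $G^{2}/D$ is dominated by $C/r^{2}$ everywhere.

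\textbf{Cross term.} For the mixed term $2G\psi\,(n_S\psi)$ I would apply Cauchy--Schwarz with weight $D$, tuned precisely to the degeneracy of $J^T$ at $\mathcal{H}^{+}$:
\begin{equation*}
\bigl|2G\psi(n_S\psi)\bigr|\leq D\,(n_S\psi)^{2}+\tfrac{G^{2}}{D}\psi^{2}.
\end{equation*}
Decomposing $n_S = n^v T + n^r\partial_r$ with $n^v, n^r$ bounded (as in the proof of Proposition \ref{boun1}), one sees that $D(n_S\psi)^{2}$ is pointwise dominated by $C\,J^T_\mu n^\mu_S$, since every derivative squared appears with a factor of $D$ except $(\partial_v\psi)^{2}$, which is itself bounded by $J^T_\mu n^\mu_S$ via the comparison $J^T_\mu n^\mu_S\sim(\partial_v\psi)^{2}+D(\partial_r\psi)^{2}+|\nabb\psi|^{2}$ of Section \ref{sec:TheCurrentKT}. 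On a null portion of $S$ (e.g.\ a piece of $\mathcal{H}^{+}$) the term vanishes outright because $G|_{\mathcal{H}^{+}}=0$.

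\textbf{Pure $\psi^{2}$ terms and main obstacle.} The remaining contributions $(\partial_r G)\,n^r\,\psi^{2}$, $(f'\beta/D)\,n^r\,\psi^{2}$ and $G^{2}\psi^{2}/D$ all take the form $(\text{weight}\leq C/r^{2})\cdot\psi^{2}$, so by the First Hardy Inequality (Proposition \ref{firsthardy})
\begin{equation*}
\int_{S}\frac{\psi^{2}}{r^{2}}\leq C\int_{S}D\bigl[(\partial_{v}\psi)^{2}+(\partial_{r}\psi)^{2}\bigr]\leq C\int_{S}J^{T}_{\mu}n^{\mu}_{S},
\end{equation*}
the last step being the pointwise comparison of Section \ref{sec:TheCurrentKT}. (For a null hypersurface or the foliation $\tilde{\Sigma}_\tau$, the analogous Hardy argument applies.) The main obstacle is precisely the weight analysis in the first step: one must check that the modifiers vanish rapidly enough at $\mathcal{H}^{+}$ to beat the $1/D$ appearing in $J^{X^\alpha,2}_\mu$, where $D$ vanishes to \emph{second} order because the horizon is degenerate. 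Once these asymptotics are in hand the Cauchy--Schwarz and Hardy scheme closes the estimate for both $i=1$ and $i=2$.
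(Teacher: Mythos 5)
Your proposal is correct and follows essentially the same route as the paper: reduce to Proposition \ref{boun1}, observe that the extreme-Reissner-Nordstr\"{o}m asymptotics $r^{*}\sim -M^{2}/(r-M)$ make $\frac{(f^{\a})'}{D}$ (and hence $G^{\a}/D$, $r^{2}\partial_{r}G^{\a}$, $\frac{(f^{\a})'\beta}{D}$) bounded with $1/r^{2}$ decay — this is exactly the content of the paper's Lemma \ref{rnrstar} — and then close via Cauchy--Schwarz and the first Hardy inequality. The only cosmetic difference is that you quote $|G|\leq C\sqrt{D}$ near $\mathcal{H}^{+}$ where the true rate is $O(D)$, but your weaker bound already suffices for the scheme.
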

\begin{proof}
It suffices to prove
\begin{equation*}
\begin{split}
\left|\int_{S}{\left(2G^{\a}\psi\left(\nabla_{\mu}\psi\right)-\left(\nabla_{\mu}G^{\a}\right)\psi^{2}+\left(\frac{(f^{\a})'}{D}\beta (\partial_{r^{*}})_{\mu}\right)\psi^{2}\right)n^{\mu}_{S}}\,\right|\leq B\int_{S}{J_{\mu}^{T}\left(\psi\right)n^{\mu}_{S}}.
\end{split}
\end{equation*}
For this we first prove the following lemma that is true only in the case of  extreme Reissner-Nordstr\"{o}m (and not in the subextreme range).
\begin{lemma}
The function
\begin{equation*}
\begin{split}
F=\frac{1}{D}\frac{1}{((r^{*})^{2}+1)}
\end{split}
\end{equation*}
is bounded in $\mathcal{R}\cup\mathcal{H}^{+}$.
\label{rnrstar}
\end{lemma}
\begin{proof}
For the tortoise coordinate $r^{*}$ we have
\begin{equation*}
\begin{split}
r^{*}(r)=r+2M\ln(r-M)-\frac{M^{2}}{r-M}+C.
\end{split}
\end{equation*}
Clearly,  $F\rightarrow 0$ as $r\rightarrow +\infty$. Moreover, in a neighbourhood of $\mathcal{H}^{+}$
\begin{equation*}
\begin{split}
F\sim\frac{r^{2}}{(r-M)^{2}}\frac{1}{\left[\frac{M^{4}}{(r-M)^{2}}+r^{2}+4M^{2}(\ln(r-M))^{2}\right]}\rightarrow M^{2}<\infty.
\end{split}
\end{equation*}
This implies the required result.
\end{proof}
An immediate collorary of this lemma is that the functions
\begin{equation*}
\begin{split}
G_{1}^{\a}=r\frac{G^{\a}}{D} \text{  and  } \frac{(f^{\a})'}{D}
\end{split}
\end{equation*}
are bounded in $\mathcal{R}\cup\mathcal{H}^{+}$.
Furthermore, for $M\ll r$ we have $(f^{\a})'\sim\frac{1}{r^{2}}$, $D\sim1$ and $\partial_{r}D\sim\frac{1}{r^{2}}$.
Therefore,
\begin{equation*}
\begin{split}
G_{2}^{\a}=r^{2}\nabla_{r}G^{\a}=r^{2}\frac{1}{D}\left(\frac{(f^{\a})''}{4}+\frac{(f^{\a})'D}{2r}+\frac{f^{\a}D\partial_{r}D
}{2r}-\frac{f^{\a}D}{2r^{2}}\right)
\end{split}
\end{equation*}
is bounded. Finally, $\nabla_{v}G^{\a}=0$.
The above bounds and the first Hardy inequality complete the proof of the proposition. 
\end{proof}

\subsection{A Degenerate $X$ Estimate}
\label{sec:ADegenateXEstimate}

We first obtain an estimate which does not lose derivatives but degenerates at the photon sphere.

\begin{theorem}
There exists a constant $C$ which depends  on $M$ and $\Sigma_{0}$ such that for all solutions $\psi$ of the wave equation we have
\begin{equation}
\begin{split}
&\int_{\mathcal{R}(0,\tau)}\!\!\!{\left(\frac{1}{r^{4}}(\partial_{r^{*}}\psi)^{2}+\frac{P\cdot (r-2M)}{r^{7}}\left((\partial_{t}\psi)^{2}+\left|\nabb\psi\right|^{2}\right)\right)}\\&\ \ \ \ \ \ \ \ \ \ \ \ \ \ \leq C\int_{\Sigma_{0}}{J_{\mu}^{T}(\psi)n^{\mu}_{\Sigma_{0}}}.
\end{split}
\label{degX}
\end{equation}
\label{degXprop}
\end{theorem}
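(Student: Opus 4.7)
\noindent\textbf{Proof proposal for Theorem \ref{degXprop}.} The plan is to decompose the wave $\psi$ into its spherically symmetric part and its higher-angular-frequency part, apply the two tailored multiplier constructions already built in Section \ref{sec:TheVectorFieldTextbfX} to each component, and then run the divergence theorem on $\mathcal{R}(0,\tau)$, absorbing all boundary contributions into the $T$-flux on $\Sigma_0$ via the conservation law for $T$ (Proposition \ref{ubdenergy}) together with the boundary bounds of Propositions \ref{boun1}--\ref{boun2} and the Hardy inequalities of Section \ref{sec:HardyInequalities}. Writing $\psi=\psi_{0}+\psi_{\geq 1}$, where $\psi_{\geq 1}$ is supported on frequencies $l\geq 1$, linearity and orthogonality of spherical harmonics show that each summand solves the wave equation and that all cross terms in the quadratic bulk and boundary integrands vanish after integration on the spheres of symmetry, so it suffices to bound the two contributions separately and add.

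First, for $\psi_{\geq 1}$ I will apply Proposition \ref{nocomm3prop}, which controls the spherical integral of
\[
\tfrac{P\cdot(r-2M)}{r^{5}}(\partial_{t}\psi)^{2}+\tfrac{1}{r^{2}}(\partial_{r^{*}}\psi)^{2}+\tfrac{P\cdot(r-2M)}{r^{4}}|\nabb\psi|^{2}+\tfrac{r-M}{r^{4}}\psi^{2}
\]
by the spherical integral of $\operatorname{Div}(L_{\mu}^{h^{d}}+J_{\mu}^{X^{d},1}+C\,J_{\mu}^{X^{\a},2})$. Integrating this pointwise-on-sphere inequality in the $(v,r)$ factor gives a spacetime integral on $\mathcal{R}(0,\tau)$, and the weights on the left dominate the target weights $\tfrac{1}{r^{4}}(\partial_{r^{*}}\psi)^{2}+\tfrac{P\cdot(r-2M)}{r^{7}}((\partial_{t}\psi)^{2}+|\nabb\psi|^{2})$ everywhere (the apparent loss at infinity is absorbed by powers of $r$ already present in the volume form). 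Stokes' theorem then converts the divergence integral into boundary integrals on $\Sigma_{0}$, $\Sigma_{\tau}$ and $\mathcal{H}^{+}$.

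Second, for $\psi_{0}$ the trapping obstruction is absent and Proposition \ref{1l0} gives the pointwise identity $\frac{1}{r^{4}}(\partial_{t}\psi_{0})^{2}+\frac{5}{r^{4}}(\partial_{r^{*}}\psi_{0})^{2}=K^{X^{0}}(\psi_{0})$, whose $r$-weights dominate the targeted (degenerate) weights for $l=0$; integration plus Stokes' theorem again leaves only boundary terms of the bounded-profile multiplier $X^{0}=-r^{-3}\partial_{r^{*}}$.

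The third step is the boundary bookkeeping. For the $J^{X^{d},1}$, $J^{X^{\a},2}$ and $J^{X^{0}}$ pieces, Propositions \ref{boun1} and \ref{boun2} give directly $|\int_{S}J_{\mu}^{(\cdot)}n_{S}^{\mu}|\les \int_{S}J_{\mu}^{T}n_{S}^{\mu}$ on any spacelike or null SO(3)-invariant hypersurface $S$, since the coefficient functions $f^{d},f^{\a},f^{0}$ are bounded. The Lagrangian current $L_{\mu}^{h^{d}}$ is handled by Cauchy--Schwarz combined with the first and second Hardy inequalities (Propositions \ref{firsthardy}--\ref{secondhardy}): the crucial point, which I expect to be the \emph{main obstacle}, is the horizon boundary integral on $\mathcal{H}^{+}$, where $J^{T}$ degenerates in $\partial_{r}\psi$ and one cannot afford to lose a non-degenerate derivative. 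This is resolved by observing that, in the region near $\mathcal{H}^{+}$ where $h^{d}=-((r^{*})^{2}+1)^{-1}$, one has $(r^{*})^{2}\sim M^{4}/(r-M)^{2}$ and hence $h^{d}=O\bigl((r-M)^{2}\bigr)=O(D)$; writing $h^{d}\psi\,(n^{\mu}\nabla_{\mu}\psi)=\psi\cdot(h^{d}/D)\cdot D(n^{\mu}\nabla_{\mu}\psi)$ with $h^{d}/D$ bounded and applying Cauchy--Schwarz produces an $\epsilon D(\partial_{r}\psi)^{2}+\epsilon(\partial_{v}\psi)^{2}$ term, absorbable into $J^{T}$, plus $\epsilon^{-1}\cdot D\psi^{2}$, absorbable by the first Hardy inequality. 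Finally, Proposition \ref{ubdenergy} (together with nonnegativity of the $T$-flux through $\mathcal{H}^{+}$) yields $\int_{\Sigma_{\tau}}J_{\mu}^{T}n_{\Sigma_{\tau}}^{\mu}+\int_{\mathcal{H}^{+}}J_{\mu}^{T}n_{\mathcal{H}^{+}}^{\mu}\leq \int_{\Sigma_{0}}J_{\mu}^{T}n_{\Sigma_{0}}^{\mu}$, so all boundary contributions collapse to the single flux $\int_{\Sigma_{0}}J_{\mu}^{T}n_{\Sigma_{0}}^{\mu}$ as required.
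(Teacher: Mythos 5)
Your proposal is correct and follows essentially the same route as the paper: decompose $\psi=\psi_{0}+\psi_{\geq 1}$, apply Stokes' theorem to the current $J_{\mu}^{X^{d},1}+J_{\mu}^{X^{\a},2}+L_{\mu}^{h^{d}}$ for $\psi_{\geq 1}$ (Proposition \ref{nocomm3prop} for the bulk, Proposition \ref{boun2} for the boundary) and to $J_{\mu}^{X^{0}}$ for $\psi_{0}$ (Propositions \ref{1l0} and \ref{boun1}), then add and invoke the $T$-energy conservation. Your explicit treatment of the $L_{\mu}^{h^{d}}$ boundary terms, using $h^{d}=O(D)$ near $\mathcal{H}^{+}$ together with the Hardy inequalities, is a correct filling-in of a step the paper leaves implicit.
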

\begin{proof}
We first decompose $\psi$  as 
\begin{equation*}
\psi=\psi_{\geq 1}+\psi_{0}
\end{equation*}
We apply Stokes' theorem for the current 
\begin{equation*}
J_{\mu}^{d}(\psi_{\geq 1})=J_{\mu}^{X^{d},1}(\psi_{\geq 1})+J_{\mu}^{X^{\a},2}(\psi_{\geq 1})+L_{\mu}^{h^{d}}(\psi_{\geq 1})
\end{equation*}
in the spacetime region $\mathcal{R}(0,\tau)$ and use Propositions \ref{nocomm3prop} and \ref{boun2}. We also apply Stokes' theorem in $\mathcal{R}(0,\tau)$ for the current $J_{\mu}^{X^{0}}(\psi_{0})$ and use Proposition \ref{boun1} and by adding these two estimates we obtain the required result.

\end{proof}

\subsection{A non-Degenerate $X$ Estimate}
\label{sec:ANonDegenerateXEstimate}

We  derive an $L^{2}$ estimate which does not degenerate at the photon sphere but requires higher regularity for $\psi$.
\begin{theorem}
There exists a constant $C$ which depends  on $M$ and $\Sigma_{0}$ such that for all solutions $\psi$ of the wave equation we have
\begin{equation}
\begin{split}
&\int_{\mathcal{R}(0,\tau)}\!\!\!{\left(\frac{\sqrt{D}}{r^{4}}\left(\partial_{t}\psi\right)^{2}+\frac{\sqrt{D}}{2r^{4}}\left(\partial_{r^{*}}\psi\right)^{2}+\frac{\sqrt{D}}{r}\left|\nabb\psi\right|^{2}\right)}
\\&\ \ \ \ \ \ \ \ \ \ \ \ \ \ \ \ \ \ \ \leq C\int_{\Sigma_{0}}{\left(J_{\mu}^{T}(\psi)n^{\mu}_{\Sigma_{0}}+J_{\mu}^{T}(T\psi)n^{\mu}_{\Sigma_{0}}\right)}.
\label{x}
\end{split}
\end{equation}
\label{xtheo}
\end{theorem}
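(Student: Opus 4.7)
The plan is to build on the degenerate estimate of Theorem \ref{degXprop} and remove the degeneracy at the photon sphere $r=2M$ by commuting with the Killing vector field $T$, at the cost of one extra derivative on the right-hand side. Spherical symmetry (the case $l=0$) will be handled separately since no trapping is present there.

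First I would decompose $\psi = \psi_{0}+\psi_{\geq 1}$, where $\psi_{0}$ is the spherical average and $\psi_{\geq 1}$ is supported on the angular frequencies $l\geq 1$; this decomposition is preserved by the wave equation (Section \ref{sec:EllipticTheoryOnMathbbS2}) and by $T$. For the piece $\psi_{\geq 1}$ I would apply Stokes' theorem in $\mathcal{R}(0,\tau)$ to the composite current
\begin{equation*}
P_{\mu}(\psi_{\geq 1}) \;=\; J_{\mu}^{X^{\alpha},2}(\psi_{\geq 1}) \;+\; J_{\mu}^{X^{\alpha},2}(T\psi_{\geq 1}) \;+\; L_{\mu}^{f}(\psi_{\geq 1}),
\end{equation*}
with $f=D^{3/2}/r^{3}$ as in Section \ref{sec:TheAuxiliaryCurrentsAnd}. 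By Proposition \ref{finspacetimeprop}, the pointwise-on-spheres divergence of $P_{\mu}(\psi_{\geq 1})$ controls, up to a constant,
\begin{equation*}
\frac{\sqrt{D}}{r^{3}}(\partial_{t}\psi_{\geq 1})^{2}+\frac{\sqrt{D}}{2r^{3}}(\partial_{r^{*}}\psi_{\geq 1})^{2}+\frac{\sqrt{D}}{r}\left|\nabb\psi_{\geq 1}\right|^{2}+\frac{\sqrt{D}}{r^{3}}\psi_{\geq 1}^{2},
\end{equation*}
which is precisely the bulk integrand sought (with an even stronger $r^{-3}$ weight, which dominates the stated $r^{-4}$ weight). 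For the piece $\psi_{0}$, Proposition \ref{1l0} directly gives $K^{X^{0}}(\psi_{0})=\frac{1}{r^{4}}(\partial_{t}\psi_{0})^{2}+\frac{5}{r^{4}}(\partial_{r^{*}}\psi_{0})^{2}$, so Stokes' theorem applied to $J_{\mu}^{X^{0}}(\psi_{0})$ in $\mathcal{R}(0,\tau)$ handles the spherical part (the angular term is trivially zero there). Since $D\leq 1$, adding the two contributions yields the spacetime integrand of \eqref{x}.

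Next I would bound the boundary contributions. On $\Sigma_{0}$ and $\Sigma_{\tau}$, Proposition \ref{boun1} gives
\begin{equation*}
\left|\int_{\Sigma_{\tilde{\tau}}}J^{X^{0}}_{\mu}(\psi_{0})n^{\mu}\right|+\left|\int_{\Sigma_{\tilde{\tau}}}J^{X^{\alpha}}_{\mu}(\psi_{\geq 1})n^{\mu}\right|+\left|\int_{\Sigma_{\tilde{\tau}}}J^{X^{\alpha}}_{\mu}(T\psi_{\geq 1})n^{\mu}\right| \lesssim \int_{\Sigma_{\tilde{\tau}}}\!\!J^{T}_{\mu}(\psi)n^{\mu}+\int_{\Sigma_{\tilde{\tau}}}\!\!J^{T}_{\mu}(T\psi)n^{\mu},
\end{equation*}
while Proposition \ref{boun2} absorbs the zeroth-order modifications in $J^{X^{\alpha},1}$ and $J^{X^{\alpha},2}$ into the same $J^{T}$-flux (its proof uses the first Hardy inequality to bound the $\psi^{2}$ boundary contribution by the degenerate $T$-flux, which is the crucial point where extremality is used via Lemma \ref{rnrstar}). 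The Lagrangian boundary term $\int_{\Sigma_{\tilde{\tau}}}L_{\mu}^{f}(\psi_{\geq 1})n^{\mu}$ is of the form $\int f\psi(n\psi)$ with $f=D^{3/2}/r^{3}$ bounded, and is dominated by Cauchy--Schwarz together with the first Hardy inequality (Proposition \ref{firsthardy}) and the non-negative flux of $J^{T}$. Finally, on $\mathcal{H}^{+}$ we have $n_{\mathcal{H}^{+}}=T$, so $L_{\mu}^{f}n^{\mu}_{\mathcal{H}^{+}}=f\psi(T\psi)$ vanishes identically on the horizon since $f=D^{3/2}/r^{3}=0$ there, and the $J^{X^{\alpha},2}$ horizon flux is likewise controlled by the degenerate $T$-flux on $\mathcal{H}^{+}$ by Proposition \ref{boun2}. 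Using the conservation law \eqref{t1} applied to both $\psi$ and $T\psi$ (which is again a solution of the wave equation since $T$ is Killing), all $\Sigma_{\tilde{\tau}}$ fluxes are bounded by the corresponding $\Sigma_{0}$ fluxes, giving the right-hand side of \eqref{x}.

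The main obstacle is verifying that the boundary contributions near $\mathcal{H}^{+}$ are really controlled by the \emph{degenerate} $T$-flux despite the fact that $X^{\alpha}$ and its zeroth-order companions produce terms with derivatives transversal to $\mathcal{H}^{+}$; this is where extremality intervenes through Lemma \ref{rnrstar}, ensuring that the factors $(f^{\alpha})'/D$ and the modifying functions appearing in Proposition \ref{boun2} remain bounded up to and including the horizon. Everything else is essentially a bookkeeping of the two estimates (for $\psi_{0}$ and $\psi_{\geq 1}$) followed by the divergence theorem and the conservation of $J^{T}$-energy for $\psi$ and $T\psi$.
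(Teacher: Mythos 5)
Your proposal is correct and follows essentially the same route as the paper: decompose $\psi=\psi_{0}+\psi_{\geq 1}$, apply Stokes' theorem to the current $L_{\mu}^{f}(\psi_{\geq 1})+CJ_{\mu}^{X^{\a},2}(\psi_{\geq 1})+CJ_{\mu}^{X^{\a},2}(T\psi_{\geq 1})$ using Propositions \ref{finspacetimeprop} and \ref{boun2}, and treat $\psi_{0}$ separately via $J_{\mu}^{X^{0}}$ and Proposition \ref{boun1}. The only (harmless) cosmetic difference is that you dropped the constant $C$ from Proposition \ref{finspacetimeprop} in front of the $J_{\mu}^{X^{\a},2}$ pieces of the composite current, and you spell out the boundary and horizon terms in more detail than the paper does.
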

\begin{proof}
We again decompose  $\psi$ as 
\begin{equation*}
\psi=\psi_{\geq 1}+\psi_{0}
\end{equation*}
and apply Stokes' theorem for the current 
\begin{equation*}
J_{\mu}(\psi_{\geq 1})=L_{\mu}^{f}(\psi_{\geq 1})+CJ_{\mu}^{X^{\a},2}(\psi_{\geq 1})+CJ_{\mu}^{X^{\a},2}(T\psi_{\geq 1})
\end{equation*}
in the spacetime region $\mathcal{R}(0,\tau)$, where $C$ is the constant of Proposition \ref{finspacetimeprop}, and use Propositions \ref{finspacetimeprop} and \ref{boun2}. Finally, we  apply Stokes' theorem in $\mathcal{R}(0,\tau)$ for the current $J_{\mu}^{X^{0}}(\psi_{0})$ and use Proposition \ref{boun1}. Adding these two estimates completes the proof.
\end{proof}

\subsection{Zeroth Order Morawetz Estimate for $\psi$}
\label{sec:SpacetimeL2EstimateForPsi}

We now prove weighted $L^{2}$ estimates of the wave $\psi$ itself. In Section \ref{sec:NonnegativityOfK2}, we obtained such an estimate for $\psi_{\geq 1}$. Next we derive a similar  estimate for the zeroth spherical harmonic $\psi_{0}$.  We first prove the following lemma

\begin{lemma}
Fix $R>2M$. There exists a constant $C$ which depends  on $M$, $\Sigma_{0}$ and $R$ such that  for all spherically symmetric solutions $\psi$ of the wave equation 
\begin{equation*}
\int_{\left\{r=R\right\}\cap\mathcal{R}(0,\tau)}{(\partial_{t}\psi)^{2}+(\partial_{r^{*}}\psi)^{2}}\leq C_{R}\int_{\Sigma_{0}}{J^{T}_{\mu}(\psi)n^{\mu}_{\Sigma_{0}}}.
\end{equation*}
\label{lemmal0}
\end{lemma}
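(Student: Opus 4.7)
The plan is to use the multiplier $X=\partial_{r^{*}}$ and apply Stokes' theorem in the bounded region $\mathcal{R}_{1}=\mathcal{R}(0,\tau)\cap\{M\leq r\leq R\}$, whose boundary consists of $\Sigma_{0}\cap\mathcal{R}_{1}$, $\Sigma_{\tau}\cap\mathcal{R}_{1}$, $\mathcal{H}^{+}\cap\mathcal{R}(0,\tau)$, and the timelike hypersurface $\{r=R\}\cap\mathcal{R}(0,\tau)$. Note that $\partial_{r^{*}}=\partial_{v}+D\partial_{r}$ is smooth across $\mathcal{H}^{+}$ (reducing to $T$ there), so Proposition \ref{boun1} applies with $f\equiv 1$.

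First I would compute the flux through $\{r=R\}$. For spherically symmetric $\psi$ a direct calculation in $(t,r^{*})$ coordinates gives $\textbf{T}_{r^{*}r^{*}}=\frac{1}{2}[(\partial_{t}\psi)^{2}+(\partial_{r^{*}}\psi)^{2}]$ and $\textbf{T}_{tr^{*}}=\partial_{t}\psi\,\partial_{r^{*}}\psi$, and taking the unit normal $n=\frac{1}{\sqrt{D}}\partial_{r^{*}}$ (outward with respect to $\mathcal{R}_{1}$) yields
\begin{equation*}
J^{X}_{\mu}n^{\mu}\big|_{r=R}=\frac{1}{2\sqrt{D(R)}}\bigl[(\partial_{t}\psi)^{2}+(\partial_{r^{*}}\psi)^{2}\bigr].
\end{equation*}
Since $R>M$ is fixed, $D(R)>0$, so this flux is, up to a constant depending on $R$, exactly the quantity we wish to control.

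Next I would treat the other boundary contributions and the bulk. Rearranging the divergence identity gives
\begin{equation*}
\int_{\{r=R\}\cap\mathcal{R}(0,\tau)}J^{X}_{\mu}n^{\mu}=\int_{\Sigma_{0}\cap\mathcal{R}_{1}}J^{X}_{\mu}n^{\mu}_{\Sigma_{0}}-\int_{\Sigma_{\tau}\cap\mathcal{R}_{1}}J^{X}_{\mu}n^{\mu}_{\Sigma_{\tau}}-\int_{\mathcal{H}^{+}}J^{X}_{\mu}n^{\mu}_{\mathcal{H}^{+}}-\int_{\mathcal{R}_{1}}K^{X}.
\end{equation*}
By Proposition \ref{boun1} each of the three spacelike/null boundary fluxes is bounded by $C\int J^{T}_{\mu}n^{\mu}$, and then by Proposition \ref{ubdenergy} and the non-negativity of $J^{T}_{\mu}n^{\mu}_{\mathcal{H}^{+}}$ all three are controlled by $C\int_{\Sigma_{0}}J^{T}_{\mu}(\psi)n^{\mu}_{\Sigma_{0}}$.

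The delicate point, and the main obstacle, is the bulk term. The formula from Section \ref{sec:TheSpacetimeTermKX} with $f\equiv 1$ gives (for spherically symmetric $\psi$)
\begin{equation*}
K^{X}=\frac{1}{r}(\partial_{t}\psi)^{2}-\frac{1}{r}(\partial_{r^{*}}\psi)^{2},
\end{equation*}
which is not signed. However, on $\mathcal{R}_{1}$ we have $r\in[M,R]$, so $\frac{1}{r}\leq R^{3}\cdot\frac{1}{r^{4}}$, and therefore
\begin{equation*}
\Bigl|\int_{\mathcal{R}_{1}}K^{X}\Bigr|\leq R^{3}\int_{\mathcal{R}_{1}}\frac{1}{r^{4}}\bigl[(\partial_{t}\psi)^{2}+(\partial_{r^{*}}\psi)^{2}\bigr].
\end{equation*}
This is exactly the quantity controlled by the spherically symmetric zeroth-order Morawetz estimate: applying Stokes' theorem to $J^{X^{0}}$ on $\mathcal{R}(0,\tau)$, using the identity of Proposition \ref{1l0} together with Proposition \ref{boun1} and $T$-conservation for the boundary terms, yields
\begin{equation*}
\int_{\mathcal{R}(0,\tau)}\frac{1}{r^{4}}\bigl[(\partial_{t}\psi)^{2}+5(\partial_{r^{*}}\psi)^{2}\bigr]\leq C\int_{\Sigma_{0}}J^{T}_{\mu}(\psi)n^{\mu}_{\Sigma_{0}}.
\end{equation*}
Combining the flux identity with these three bounds gives the asserted estimate with constant $C_{R}$ depending on $M$, $R$, and $\Sigma_{0}$.
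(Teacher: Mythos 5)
Your proposal is correct and follows essentially the same route as the paper: the multiplier $X=\partial_{r^{*}}$ applied in the spatially compact region $\{M\leq r\leq R\}$, with the $\{r=R\}$ flux computed explicitly as $\frac{1}{2\sqrt{D}}[(\partial_{t}\psi)^{2}+(\partial_{r^{*}}\psi)^{2}]$, the remaining boundary terms controlled by Proposition \ref{boun1} and the conserved $T$-flux, and the unsigned bulk term $K^{X}$ absorbed by the spherically symmetric estimate of Proposition \ref{1l0}. The only difference is that you spell out the pointwise bound $\frac{1}{r}\leq R^{3}r^{-4}$ where the paper simply invokes spatial compactness, which is a harmless elaboration.
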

\begin{proof}
Consider the region
\begin{equation*}
\mathcal{F}(0,\tau)=\left\{\mathcal{R}(0,\tau)\cap\left\{M\leq r\leq R\right\}\right\}.
\end{equation*}
By applying the vector field $X=\partial_{r^{*}}$ as a multiplier in the region $\mathcal{F}(0,\tau)$ we obtain
\begin{equation*}
\begin{split}
\int_{\mathcal{H}^{+}}{J_{\mu}^{X}(\psi)n^{\mu}_{\mathcal{H}^{+}}}+\int_{\mathcal{F}}{K^{X}}+\int_{\Sigma_{\tau}}{J_{\mu}^{X}(\psi)n^{\mu}_{\Sigma_{\tau}}}+\int_{\left\{r=R\right\}\cap\mathcal{R}(0,\tau)}{J_{\mu}^{X}(\psi)n^{\mu}_{\mathcal{F}}}=\int_{\Sigma_{0}}{J_{\mu}^{X}(\psi)n^{\mu}_{\Sigma_{0}}},
\end{split}
\end{equation*}
where $n^{\mu}_{\mathcal{F}}$ denotes the unit normal vector to $\left\{r=R\right\}$ pointing in the interior of $\mathcal{F}$. 
 \begin{figure}[H]
	\centering
		\includegraphics[scale=0.14]{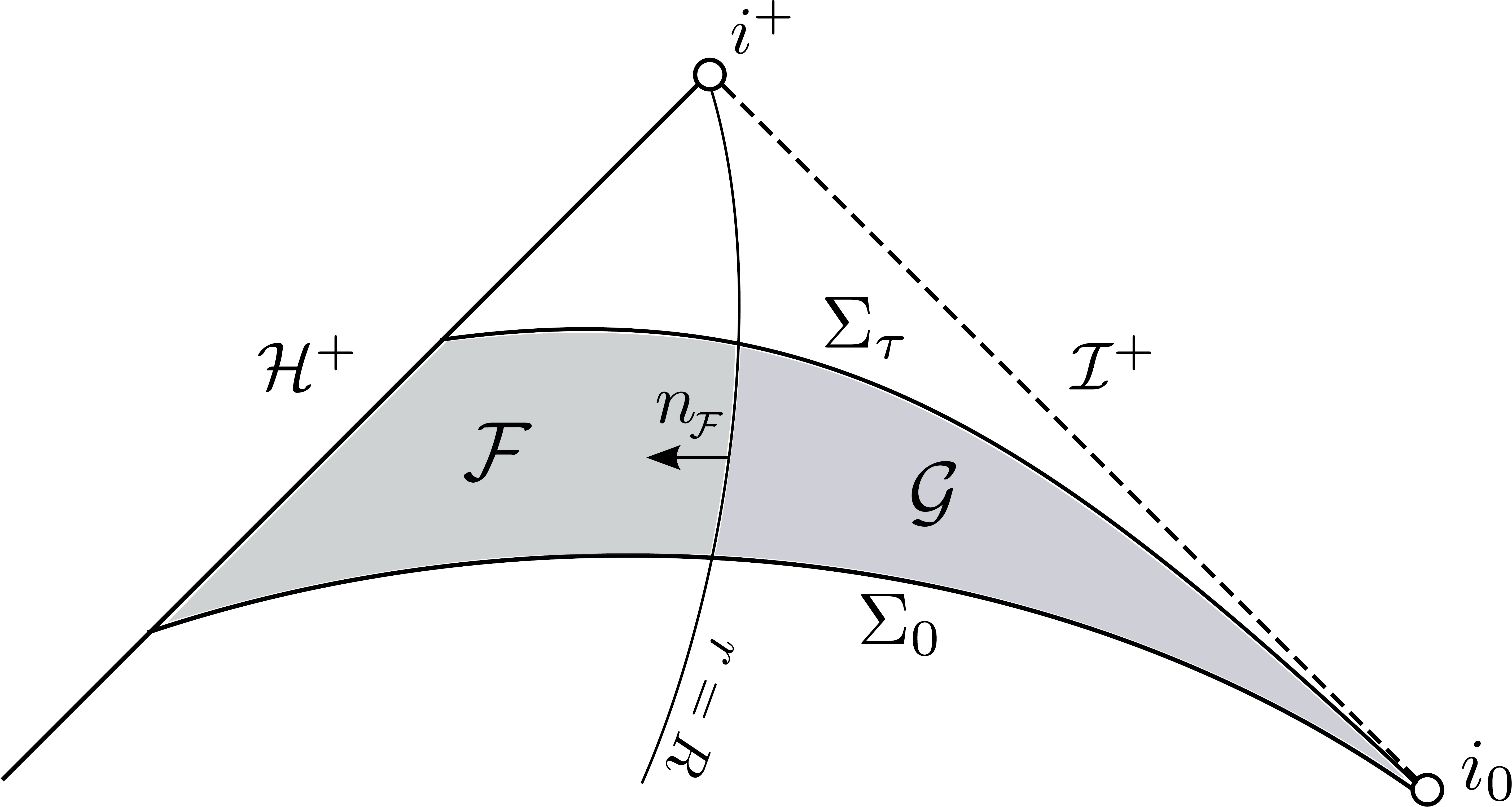}
	\label{fig:ernt5}
\end{figure}
In view of the spatial compactness of the region $\mathcal{F}$, the corresponding spacetime integral can be estimated using  Propositions \ref{1l0} and \ref{boun1}. The boundary integrals over $\Sigma_{0}$ and $\Sigma_{\tau}$ can be estimated using Proposition \ref{boun1}. Moreover, for spherically symmetric waves $\psi$ we have
\begin{equation*}
J_{\mu}^{X}(\psi)n^{\mu}_{\mathcal{F}}=-\frac{1}{\sqrt{D}}\textbf{T}_{r^{*}r^{*}}=-\frac{1}{2\sqrt{D}}\left((\partial_{t}\psi)^{2}+(\partial_{r^{*}}\psi)^{2}\right),
\end{equation*}
which completes the proof.
\end{proof}
Consider now the region
\begin{equation*}
\mathcal{G}=\mathcal{R}\cap\left\{R\leq r\right\}.
\end{equation*}
\begin{proposition}
Fix $R>2M$. There exists a constant $C$ which depends  on $M$, $\Sigma_{0}$ and $R$ such that  for all spherically symmetric solutions $\psi$ of the wave equation  
\begin{equation*}
\int_{\mathcal{G}}{\frac{1}{r^{4}}\psi^{2}} +\int_{\left\{r=R\right\}}{\psi^{2}}\leq C_{R}\int_{\Sigma_{0}}{J^{T}_{\mu}(\psi)n^{\mu}_{\Sigma_{0}}}.
\end{equation*}
\label{fl0}
\end{proposition}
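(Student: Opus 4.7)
\emph{Strategy.} The plan is to adapt the modified $X$-current construction of Section \ref{sec:TheCurrentJXGMuAndEstimatesForKXG} to the spherically symmetric setting in the asymptotic region. I would take $X = f\,\partial_{r^{*}}$ with $f=-1/r$ and form the modified current $J^{X,1}_\mu = J^{X}_\mu + 2G\psi\nabla_\mu\psi - (\nabla_\mu G)\psi^{2}$ with $G = \tfrac{f'}{4}+\tfrac{fD}{2r} = -\tfrac{D}{4r^{2}}$, chosen exactly as in \eqref{G} so as to kill the coefficient of $(\partial_{t}\psi)^{2}$ in $K^{X,1}$. For spherically symmetric $\psi$ the angular term also vanishes and one is left with
$$K^{X,1}(\psi) \,=\, \tfrac{1}{r^{2}}(\partial_{r^{*}}\psi)^{2} \,-\, (\Box G)\,\psi^{2}.$$
A direct calculation using $\Box G = DG_{,rr}+(D'+2D/r)G_{,r}$ yields $-\Box G = \tfrac{1}{2r^{4}}+O(r^{-5})$ as $r\to\infty$, and more generally $-\Box G \gtrsim r^{-4}$ in $\{r\ge R^{\ast}\}$ for some $R^{\ast}=R^{\ast}(M)$.

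\emph{Divergence identity and boundary bookkeeping.} I then apply \eqref{div} to $J^{X,1}$ in $\mathcal{G}$, whose boundary consists of $\Sigma_{0}\cap\mathcal{G}$, $\Sigma_{\tau}\cap\mathcal{G}$, the timelike side $\{r=R\}\cap\mathcal{R}(0,\tau)$, and a limit at $i^{0}$ where the flux vanishes by the assumption $\lim_{x\to i^{0}}r\psi^{2}=0$. The $J^{X}$-contributions on all hypersurfaces are controlled by $J^{T}_\mu n^{\mu}$ by Proposition \ref{boun1}. On $\Sigma_{0}$ and $\Sigma_{\tau}$, the lower-order pieces $2G\psi\,\partial_{n}\psi$ and $(\nabla G)\psi^{2}$ are handled by Cauchy--Schwarz together with the first Hardy inequality (Proposition \ref{firsthardy}), so that everything is bounded by the conserved $T$-energy $E=\int_{\Sigma_{0}}J^{T}_\mu n^{\mu}_{\Sigma_{0}}$. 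On $\{r=R\}$, $\epsilon$-Cauchy--Schwarz reduces all derivative-type contributions to $\int_{\{r=R\}}((\partial_{t}\psi)^{2}+(\partial_{r^{*}}\psi)^{2})\le C_{R}E$ by Lemma \ref{lemmal0}.

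\emph{The $\{r=R\}$ boundary $\psi^{2}$ term and sign analysis.} The decisive point is the $\psi^{2}$ contribution at $\{r=R\}$. A direct computation gives
$$G_{,r}(R) \,=\, \frac{(R-M)(R-2M)}{2R^{5}} \,>\, 0$$
since $R>2M$; combined with the outward-normal orientation $n^{r}<0$ on $\{r=R\}$, the term $-(\nabla_\mu G)\psi^{2}n^{\mu}$ contributes a strictly positive multiple of $\int_{\{r=R\}}\psi^{2}$ on the same side of the divergence identity as $\int_\mathcal{G} K^{X,1}$. When $R<R^{\ast}$, on the compact intermediate strip where $-\Box G$ may change sign, the defect $\int_{[R,R^{\ast}]}|\Box G|\psi^{2}$ is controlled by the FTC in $r$, $\psi^{2}(\tilde\tau,r)\le 2\psi^{2}(\tilde\tau,R)+2(r-R)\!\int_{R}^{r}(\partial_{r'}\psi)^{2}dr'$, combined with Proposition \ref{1l0} applied in the compact range to bound the resulting $\int(\partial_{r}\psi)^{2}$ by $E$. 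The remaining $\int_{\{r=R\}}\psi^{2}$-proportional piece in the defect is absorbed back into the left-hand side after a constant rescaling of the current, using that the gain coefficient from $G_{,r}(R)$ can be made to dominate the defect coefficient. Rearranging then gives
$$\int_\mathcal{G}\tfrac{1}{r^{4}}\psi^{2} \,+\, \int_{\{r=R\}}\!\psi^{2} \;\leq\; C_{R}\!\int_{\Sigma_{0}}\!J^{T}_\mu(\psi)n^{\mu}_{\Sigma_{0}}.$$

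\emph{Main obstacle.} The heart of the argument is sign tracking: ensuring that (i) $-\Box G$ has the right sign at large $r$, (ii) the bounded defect from the compact intermediate range is absorbable, and (iii) the $\psi^{2}$ contribution at $\{r=R\}$ joins the left-hand side of the divergence identity (rather than spoiling it). All three rely on the factorization $G_{,r}(R)=(R-M)(R-2M)/(2R^{5})$ and hence on the hypothesis $R>2M$ in an essential way.
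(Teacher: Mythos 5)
Your overall architecture (the modified current $J^{X,1}_{\mu}$, Proposition \ref{boun1} and the Hardy inequality for the spacelike boundary terms, Lemma \ref{lemmal0} for the derivative fluxes on $\{r=R\}$) matches the paper's, but the specific choice $f=-1/r$, $G=-\tfrac{D}{4r^{2}}$ breaks the argument at the decisive step: the zeroth--order flux through $\{r=R\}$ enters the energy identity with the \emph{unfavourable} sign. Writing the identity over $\mathcal{G}$ consistently with the one over $\mathcal{F}=\{M\leq r\leq R\}$ used in Lemma \ref{lemmal0} (the two $\{r=R\}$ contributions must cancel when the identities are added), the term sitting on the same side as $\int_{\mathcal{G}}K^{X,1}$ is $\int_{\{r=R\}}J^{X,1}_{\mu}n^{\mu}_{\mathcal{G}}$ with $n_{\mathcal{G}}=\tfrac{1}{\sqrt{D}}\partial_{r^{*}}$ pointing \emph{into} $\mathcal{G}$, i.e.\ towards increasing $r$; its $\psi^{2}$ part is $-\sqrt{D}\,G_{,r}(R)\,\psi^{2}$. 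Your own factorization $G_{,r}(R)=\tfrac{(R-M)(R-2M)}{2R^{5}}>0$ then makes this contribution \emph{negative}, so instead of gaining $+\int_{\{r=R\}}\psi^{2}$ on the left you are left with an uncontrolled $+C\int_{\{r=R\}}\psi^{2}$ on the right. (You evaluate the flux with the outward normal, where it is indeed positive, but then place it on the wrong side of the identity; the one--dimensional model $\int\!\!\int(-G''\psi^{2})\,-\,\int_{0}^{\tau}G'(R)\,\psi^{2}(R)\,=\,\int\!\!\int 2G'\psi\,\psi_{x}$ shows that the boundary term accompanying the bulk term is $-G'(R)\psi^{2}$, which is favourable only when $G'(R)<0$.) This is fatal rather than cosmetic: $\int_{\{r=R\}}\psi^{2}$ is precisely one of the quantities the proposition must control, Lemma \ref{lemmal0} bounds only derivative fluxes there, and no previously established estimate bounds this spacetime integral by the initial $T$--energy; your proposed absorption ``using the gain coefficient from $G_{,r}(R)$'' presupposes exactly the sign you do not have.

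For comparison, the paper takes $f\equiv 1$, hence $G=\tfrac{f'}{4}+\tfrac{fD}{2r}=\tfrac{D}{2r}$, for which $G_{,r}=\tfrac{(r-M)(3M-r)}{2r^{4}}<0$ once $r>3M$ (good boundary sign) while $K^{X,1}=-(\Box_{g}G)\psi^{2}=I\psi^{2}$ with $I>0$ and $I\sim r^{-4}$ (good bulk sign); as the Remark following Theorem \ref{morawetz} stresses, the essential point is that $\Box_{g}(1/r)<0$ everywhere, a property that fails for higher powers of $1/r$. Your $G\sim -1/(4r^{2})$ secures the bulk sign at large $r$, but being negative and increasing to $0$ it necessarily has $G_{,r}>0$ there, which is incompatible with the boundary requirement at the inner boundary of $\mathcal{G}$. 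To salvage your framework you would have to switch to a multiplier whose associated $G$ behaves like $+c/r$ at infinity, which is essentially the paper's choice.
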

\begin{proof}
By applying Stokes' theorem for the current $J_{\mu}^{X,1}(\psi)$ in the region $\mathcal{G}(0,\tau)$, where again $X=\partial_{r^{*}}$ (and, therefore, $f=1$), we obtain
\begin{equation*}
\int_{\mathcal{G}(0,\tau)}{K^{X,1}(\psi)}+\int_{\left\{r=R\right\}\cap\mathcal{G}(0,\tau)}{J_{\mu}^{X,1}(\psi)n^{\mu}_{\mathcal{G}}}\leq C\int_{\Sigma_{0}}{J^{T}_{\mu}(\psi)n^{\mu}_{\Sigma_{0}}}
\end{equation*}
where we have used Proposition \ref{boun2} to estimate the boundary integral over $\Sigma_{\tau}\cap\mathcal{G}$. Note that again $n^{\mu}_{\mathcal{G}}$ denotes the unit normal vector to $\left\{r=R\right\}$ pointing in the interior of $\mathcal{G}$. For $f=1$ we have
\begin{equation*}
K^{X,1}(\psi)=I\psi^{2},
\end{equation*}
where $I>0$ and $I\sim \frac{1}{r^{4}}$ for $r\geq R>2M$. If  $G$ is the function defined in Section \ref{sec:TheCurrentJXGMuAndEstimatesForKXG} then $G=\frac{D}{2r}$ and therefore for sufficiently large $R$ we have $\partial_{r^{*}}G<0$. Then,
\begin{equation*}
J_{\mu}^{X,1}(\psi)n^{\mu}_{\mathcal{G}}=J_{\mu}^{X}n^{\mu}_{\mathcal{G}}+2G\psi(\nabla_{\mu}\psi)n^{\mu}_{\mathcal{G}}-(\nabla_{\mu}G)\psi^{2}n^{\mu}_{\mathcal{G}}.
\end{equation*}
Since $n_{\mathcal{G}}=\frac{1}{\sqrt{D}}\partial_{r^{*}}$, by applying Cauchy- Schwarz for the second term on the right hand side (and since the third term is positive) we obtain
\begin{equation*}
J_{\mu}^{X,1}(\psi)n^{\mu}_{\mathcal{F}}\sim\  \psi^{2}-\frac{1}{\epsilon}((\partial_{t}\psi)^{2}+(\partial_{r^{*}}\psi)^{2}),
\end{equation*}
for a sufficiently small $\epsilon$. Lemma \ref{lemmal0} completes the proof.
\end{proof}
It remains to obtain a (weighted) $L^{2}$ estimate for $\psi$ in the region $\mathcal{F}$.
\begin{proposition}Fix sufficiently large $R>2M$.  Then, there exists a constant $C$ which depends  on $M$, $\Sigma_{0}$ and $R$ such that  for all spherically symmetric solutions $\psi$ of the wave equation  
\begin{equation*}
\int_{\mathcal{F}}{D\psi^{2}}\leq C_{R}\int_{\Sigma_{0}}{J^{T}_{\mu}(\psi)n^{\mu}_{\Sigma_{0}}}.
\end{equation*}
\label{el0}
\end{proposition}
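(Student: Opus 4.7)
The plan is to apply Stokes' theorem in $\mathcal{F}(0,\tau) = \mathcal{R}(0,\tau)\cap\{M\leq r\leq R\}$ for a modified Lagrangian current designed to produce $D\psi^{2}$ in the bulk and to vanish identically on $\mathcal{H}^{+}$. Following the template of Section \ref{sec:TheCurrentJXGMuAndEstimatesForKXG}, I would take
\[
L^{G}_{\mu}(\psi) = 2G\psi\nabla_{\mu}\psi - (\nabla_{\mu}G)\psi^{2}, \qquad \nabla^{\mu}L^{G}_{\mu} = 2G\,g(\nabla\psi,\nabla\psi) - (\Box_{g}G)\psi^{2},
\]
and look for $G = G(r)$ with $-\Box_{g}G = D$ and $G(M) = G'(M) = 0$. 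A direct computation using the radial Laplacian $\Box_{g}h(r) = Dh'' + D'h' + 2Dh'/r$ yields
\[
\Box_{g}\bigl[(r-M)^{a}\bigr] = \frac{a(a+1)(r-M)^{a}}{r^{2}},
\]
and for $a=2$ the identity $(r-M)^{2} = Dr^{2}$ (characteristic of the extremal case) gives $\Box_{g}[(r-M)^{2}] = 6D$. Thus the quadratic choice $G(r) = -\tfrac{1}{6}(r-M)^{2}$ meets both requirements.

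After applying Stokes' theorem, the horizon flux vanishes pointwise and
\[
\int_{\mathcal{F}}D\psi^{2} = -\int_{\mathcal{F}}2G\,g(\nabla\psi,\nabla\psi) + (\text{fluxes over }\Sigma_{0},\Sigma_{\tau},\{r=R\}).
\]
For the bulk term, since $\psi$ is spherically symmetric, in $(t,r)$-coordinates $g(\nabla\psi,\nabla\psi) = -D^{-1}(\partial_{t}\psi)^{2} + D(\partial_{r}\psi)^{2}$, and $(r-M)^{2} = Dr^{2}$ combined with $\partial_{r^{*}} = D\partial_{r}$ yields
\[
-2G\,g(\nabla\psi,\nabla\psi) = -\tfrac{r^{2}}{3}(\partial_{t}\psi)^{2} + \tfrac{r^{2}}{3}(\partial_{r^{*}}\psi)^{2}.
\]
The first summand is non-positive and is discarded. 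Since $r\leq R$ on $\mathcal{F}$, one has $r^{2}\leq R^{6}r^{-4}$, so the second is bounded by $\tfrac{R^{6}}{3}\int_{\mathcal{R}}r^{-4}(\partial_{r^{*}}\psi)^{2}$, which is itself controlled by $C\int_{\Sigma_{0}}J^{T}_{\mu}n^{\mu}_{\Sigma_{0}}$ via Stokes' theorem applied to the current $J^{X^{0}}_{\mu}$ as in Proposition \ref{1l0}, using Proposition \ref{boun1} for the spacelike boundary terms and the fact that $X^{0} = -M^{-3}T$ on $\mathcal{H}^{+}$.

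The boundary contributions are handled by Cauchy-Schwarz together with the Hardy inequalities. On $\Sigma_{0}$ and $\Sigma_{\tau}$, the bounds $|G|\leq CDr^{2}$ and $|\partial G|\leq C(r-M)$ reduce $|L^{G}_{\mu}n^{\mu}|$ to expressions of the form $\epsilon\,J^{T}_{\mu}n^{\mu} + C_{\epsilon}r^{-2}\psi^{2}$, controlled respectively by the conserved $T$-flux and by the First Hardy inequality (Proposition \ref{firsthardy}). On the timelike cylinder $\{r=R\}$, $G(R)$ and $G'(R)$ are bounded constants, so $|L^{G}_{\mu}\nu^{\mu}|\leq C_{R}[\psi^{2} + (\nu\psi)^{2}]$, and the corresponding integral is bounded by Proposition \ref{fl0} together with Lemma \ref{lemmal0}. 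The main obstacle is the joint construction of the potential $G$: one needs a single function satisfying both the exact bulk identity $-\Box_{g}G = D$ and enough vanishing at $r = M$ to kill the horizon flux. That the extremal structure $D = (1-M/r)^{2}$ permits the quadratic $G = -\tfrac{1}{6}(r-M)^{2}$ to achieve both is a clean reflection of the degenerate geometry.
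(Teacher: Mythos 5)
Your proof is correct and is essentially the paper's own argument: your current $L^{G}_{\mu}$ with $G=-\tfrac{1}{6}(r-M)^{2}$ is exactly $\tfrac{1}{6}$ times the paper's current $J_{\mu}^{H}$ with $H=(r-M)^{2}$, your exact identity $\Box_{g}[(r-M)^{2}]=6D$ is the paper's observation that $\Box_{g}H\sim D$, and both arguments absorb the derivative bulk term via Proposition \ref{1l0} on the spatially compact region $\mathcal{F}$ and control the boundary fluxes via the Hardy inequality and Propositions \ref{fl0}, \ref{lemmal0}. No substantive difference.
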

\begin{proof}
By applying Stokes' theorem for the current
\begin{equation*}
J_{\mu}^{H}(\psi)=(\nabla_{\mu}H)\psi^{2}-2H\psi\nabla_{\mu}\psi,
\end{equation*}
where $H=(r-M)^{2}$, we obtain
\begin{equation*}
\int_{\mathcal{F}(0,\tau)}{\nabla^{\mu}J_{\mu}^{H}}=\int_{\partial{\mathcal{F}(0,\tau)}}{J_{\mu}^{H}n^{\mu}_{\partial{\mathcal{F}}}}.
\end{equation*}
All the boundary integrals can be estimated using Propositions \ref{boun2} and \ref{fl0}. Note also that 
\begin{equation*}
\nabla^{\mu}J_{\mu}^{H}=(\Box_{g}H)\psi^{2}-\frac{2H}{D}((\partial_{r^{*}}\psi)^{2}-(\partial_{t}\psi)^{2}).
\end{equation*}
Since $\Box_{g}H\sim D$ for $M\leq r \leq R$ and $\frac{H}{D}$ is bounded, the result follows in view of the spatial compactness of $\mathcal{F}$ and Proposition \ref{1l0}.
\end{proof}
We finally have the following zeroth order Morawetz  estimate:
\begin{theorem}
There exists a constant $C$ that depends  on $M$ and $\Sigma_{0}$ such that for all solutions $\psi$ of the wave equation we have
\begin{equation}
\int_{\mathcal{R}}{\frac{D}{r^{4}}\psi^{2}}\leq C\int_{\Sigma_{0}}{J^{T}_{\mu}(\psi)n^{\mu}_{\Sigma_{0}}}.
\label{moraw}
\end{equation}
\label{morawetz}
\end{theorem}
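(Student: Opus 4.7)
\textbf{Proof proposal for Theorem \ref{morawetz}.}
The strategy is to decompose $\psi = \psi_{0} + \psi_{\geq 1}$ using the spherical decomposition of Section \ref{sec:EllipticTheoryOnMathbbS2}, exploit the fact that $T$ preserves each $E^{l}$ so that each component independently satisfies the wave equation, and then treat the two pieces by completely different methods --- the higher frequencies using the $X^{\a}$ machinery of Section \ref{sec:NonnegativityOfK2}, and the spherical harmonic $l=0$ using the spatially local estimates of Section \ref{sec:SpacetimeL2EstimateForPsi}. Since $\psi_{0}$ and $\psi_{\geq 1}$ are pointwise $L^{2}(\mathbb{S}^{2})$-orthogonal, $\psi^{2} = \psi_{0}^{2} + \psi_{\geq 1}^{2}$ after integration over spheres; moreover $J^{T}_{\mu}(\psi_{0})n^{\mu} + J^{T}_{\mu}(\psi_{\geq 1})n^{\mu} \leq C\, J^{T}_{\mu}(\psi)n^{\mu}$ after angular integration, again by orthogonality applied to each term of $\textbf{T}$.

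For $\psi_{\geq 1}$, I would apply Stokes' theorem to the current $J^{X^{\a},2}_{\mu}(\psi_{\geq 1})$ on $\mathcal{R}(0,\tau)$. The three boundary contributions on $\Sigma_{0}$, $\Sigma_{\tau}$ and $\mathcal{H}^{+}\cap\mathcal{R}(0,\tau)$ are all bounded by $\int_{\Sigma_{0}} J^{T}_{\mu}(\psi_{\geq 1}) n^{\mu}_{\Sigma_{0}}$: directly for $\Sigma_{0}$ via Proposition \ref{boun2}, and for $\Sigma_{\tau}$ and $\mathcal{H}^{+}$ after first applying Proposition \ref{boun2} and then the $T$-conservation law of Proposition \ref{ubdenergy}. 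The bulk integral controls $\int_{\mathcal{R}(0,\tau)} D^{-1}((r^{*})^{2}+1)^{-2}\psi_{\geq 1}^{2}$ by Proposition \ref{xa2prop}, so it suffices to establish the pointwise comparison
\begin{equation*}
\frac{D(r)}{r^{4}} \,\leq\, \frac{C}{D(r)\left((r^{*})^{2}+1\right)^{2}}
\end{equation*}
on $\mathcal{R}$. By Lemma \ref{rnrstar}, the product $D\cdot((r^{*})^{2}+1)$ is bounded above; combining with the fact that $r$ is bounded below by $M$, this reduces to showing $D\cdot((r^{*})^{2}+1)$ is bounded, which is precisely the content of that lemma applied away from and near $\mathcal{H}^{+}$ (at infinity one checks directly that $D\to 1$ and $r^{*}\sim r$, so both sides behave like $r^{-4}$).

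For the spherical piece $\psi_{0}$, pick a fixed $R > 2M$ large enough that Propositions \ref{fl0} and \ref{el0} both apply. Split $\mathcal{R}$ into the near region $\mathcal{F}$ and the far region $\mathcal{G}$. On $\mathcal{G}$, Proposition \ref{fl0} gives $\int_{\mathcal{G}} r^{-4}\psi_{0}^{2} \leq C\int_{\Sigma_{0}} J^{T}_{\mu}(\psi_{0})n^{\mu}_{\Sigma_{0}}$, and since $D\leq 1$ we get control of $\int_{\mathcal{G}} D r^{-4}\psi_{0}^{2}$. On $\mathcal{F}$, Proposition \ref{el0} gives $\int_{\mathcal{F}} D\psi_{0}^{2}\leq C\int_{\Sigma_{0}} J^{T}_{\mu}(\psi_{0})n^{\mu}_{\Sigma_{0}}$, and since $r$ is bounded below by $M$ on $\mathcal{F}$, the factor $r^{-4}$ is harmless. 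Adding the two frequency ranges completes the proof.

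The main obstacle is the verification of the weight comparison for $\psi_{\geq 1}$: the weight $D^{-1}((r^{*})^{2}+1)^{-2}$ coming from Proposition \ref{xa2prop} looks singular at $\mathcal{H}^{+}$ because of the $D^{-1}$ factor, but it is actually comparable to the desired $Dr^{-4}$ precisely because of the specific rate at which $r^{*}\to -\infty$ as $r\to M$, which is captured by Lemma \ref{rnrstar}. This is the place where the extreme geometry enters essentially; the same estimate in the subextreme case would give a strictly stronger pointwise bound (non-degenerate at $\mathcal{H}^{+}$), consistent with the general theme of the paper.
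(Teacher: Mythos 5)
Your proposal is correct and follows essentially the same route as the paper, whose proof is exactly this decomposition $\psi=\psi_{0}+\psi_{\geq 1}$ combined with Propositions \ref{xa2prop}, \ref{el0} and \ref{fl0}; you have merely made explicit the boundary-term bookkeeping and the weight comparison $\frac{D}{r^{4}}\lesssim \frac{1}{D\left((r^{*})^{2}+1\right)^{2}}$ that the paper leaves implicit. One pedantic remark: Lemma \ref{rnrstar} as stated only bounds $\frac{1}{D((r^{*})^{2}+1)}$ from above (i.e.\ bounds $D((r^{*})^{2}+1)$ from \emph{below}), but its proof shows this quantity tends to the positive finite limit $M^{2}$ at $\mathcal{H}^{+}$, which is what actually supplies the upper bound on $D((r^{*})^{2}+1)$ near the horizon that your comparison requires.
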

\begin{proof}
Write 
\begin{equation*}
\psi=\psi_{0}+\psi_{\geq 1}
\end{equation*}
and use Propositions \ref{xa2prop}, \ref{el0} and \ref{fl0}.
\end{proof}
Clearly, the lemma used for Proposition \ref{fl0} holds stricly for the case $l=0$. In general, one could have argued by averaging  the $X$ estimate \ref{degX} in $R$, which would imply that there exists a value $R_{0}$ of $r$ such that all the derivatives are controlled on the hypersurface of  constant radius $R_{0}$. This makes our argument work for all $\psi$ without recourse to the spherical decomposition.

Note that the above Morawetz estimate holds for $\psi$ that satisfy the regularity assumptions of Section \ref{sec:TheCauchyProblemForTheWaveEquation}.

\begin{remark}
The key property used in Proposition \ref{fl0} is that the d' Alembertian of $\frac{1}{r}$ is always negative, something  not true for larger powers of $\frac{1}{r}$. Note that this unstable behaviour of $\frac{1}{r}$ is expected since it is the static solution of the wave equation in Minkowski.
\end{remark}

\subsection{Discussion}
\label{sec:Discussion}

In \cite{dr5}, a non-degenerate $X$ estimate is established for  Schwarzschild. However, there one needs to commute with the generators of the Lie algebra so(3). Moreover, the boundary terms  could not be controlled by the flux of $T$  but one needed a  small portion of the redshift estimate. In our case, the structure of $r^{*}$ allowed us to bound these terms using only the $T$ flux. 

For a nice exposition of previous work on $X$ estimates see \cite{md}.

\section{The Vector Field $N$}
\label{sec:TheVectorFieldN}

It is clear that in order to obtain an estimate for the non-degenerate energy of a local observer we need to use timelike multipliers at the horizon. Then uniform boundedness of  energy would  follow provided we can control the spacetime terms which arise. For a suitable class of non-degenerate black hole spacetimes, not only have the bulk terms the right sign close to $\mathcal{H}^{+}$ but they in fact control the non-degenerate energy. Indeed, in  \cite{md} the following  is proved 
\begin{proposition}
Let $\mathcal{H}^{+}$ be a Killing horizon  with positive surface gravity and let $V$ be the Killing vector field tangent to $\mathcal{H}^{+}$. Then there exists a $\phi^{V}_{\tau}$-invariant vector field $N$ on $\mathcal{H}^{+}$ and  constants $b,B>0$  such that for all functions $\psi$ we have
\begin{equation*}
bJ^{N}_{\mu}(\psi)n^{\mu}_{\Sigma_{\tau}}\leq K^{N}(\psi)\leq BJ^{N}_{\mu}(\psi)n^{\mu}_{\Sigma_{\tau}} 
\end{equation*}
on $\mathcal{H}^{+}$.
\label{redshiftmd}
\end{proposition}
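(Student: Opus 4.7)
The plan is to construct $N$ explicitly as a perturbation of $V$ by a well-chosen transversal null vector field, working in a null frame adapted to $\mathcal{H}^{+}$. I would introduce, at each point of $\mathcal{H}^{+}$, a null frame $\{V, Y, E_{1}, E_{2}\}$, where $Y$ is null, transverse to $\mathcal{H}^{+}$, normalized by $g(V,Y)=-2$, and $E_{1},E_{2}$ is an orthonormal frame in the directions tangent to the spheres of symmetry (or more generally to $\mathcal{H}^{+}\cap\Sigma_{\tau}$). I would then extend $Y$ off the horizon in any smooth way and define $N = V + \sigma Y$ on $\mathcal{H}^{+}$ for a sufficiently large constant $\sigma>0$ to be chosen later, and extend $N$ to a neighbourhood of $\mathcal{H}^{+}$ by Lie-transporting along $V$, ensuring $[V,N]=0$, i.e.\ $\phi_{\tau}^{V}$-invariance.

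Next I would decompose the current $K^{N}(\psi)$ in this null frame. Using $g(V,V)=0$ on $\mathcal{H}^{+}$, one computes
\begin{equation*}
K^{N}(\psi) = \textbf{T}_{\mu\nu}(\psi)\,\nabla^{\mu}N^{\nu},
\end{equation*}
and the dominant terms at $\mathcal{H}^{+}$ fall into three types: a coefficient of $(V\psi)^{2}$, a coefficient of $(Y\psi)^{2}$, and a coefficient of $|\nabb\psi|^{2}$, plus cross terms. The key inputs are (i) $\nabla_{V}V = \kappa V$ with $\kappa>0$, coming from positivity of the surface gravity, which produces a strictly positive coefficient $\kappa$ (times $\sigma$) in front of $(V\psi)^{2}$; and (ii) freedom in choosing the extension of $Y$ off $\mathcal{H}^{+}$, which lets me prescribe $\nabla_{Y}Y$ at the horizon so as to generate a strictly positive coefficient (proportional to $\sigma^{2}$) of $(Y\psi)^{2}$. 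The cross terms $(V\psi)(Y\psi)$ and the angular term $|\nabb\psi|^{2}$ pick up a coefficient depending on $\kappa$, $\sigma$ and the choice of extension of $Y$; by choosing $\sigma$ sufficiently large, the $(V\psi)^{2}$ and $(Y\psi)^{2}$ terms dominate the cross terms via Cauchy--Schwarz, and the angular coefficient can be made strictly positive using the same mechanism.

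Having arranged $K^{N}(\psi) \geq b' \bigl[(V\psi)^{2}+(Y\psi)^{2}+|\nabb\psi|^{2}\bigr]$ on $\mathcal{H}^{+}$, I would then note that $N$ is timelike on $\mathcal{H}^{+}$ since $g(N,N)=-4\sigma<0$, so by Proposition~\ref{hyperb} the flux density $J_{\mu}^{N}(\psi)n_{\Sigma_{\tau}}^{\mu}$ is equivalent (up to multiplicative constants depending on the geometry of $\Sigma_{\tau}$ and on $\sigma$) to the same quadratic combination $(V\psi)^{2}+(Y\psi)^{2}+|\nabb\psi|^{2}$. The two-sided bound $b J_{\mu}^{N}n^{\mu} \leq K^{N} \leq B J_{\mu}^{N}n^{\mu}$ on $\mathcal{H}^{+}$ then follows, and the $\phi_{\tau}^{V}$-invariance of $N$ ensures that the constants $b,B$ are uniform along the null generators of $\mathcal{H}^{+}$.

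The main obstacle is the angular derivative term: a naive choice of $N$ (e.g.\ $N=V$) produces a vanishing or wrong-sign coefficient of $|\nabb\psi|^{2}$ in $K^{N}$, precisely because $V$ is Killing. The delicate point is to exploit the freedom in extending $Y$ off $\mathcal{H}^{+}$ to make the angular contribution in the deformation tensor positive while simultaneously preserving the positivity of the other three coefficients; this is where the positivity of $\kappa$, rather than merely its nonvanishing, is essential, and this is exactly the step that fails in the extreme case of the present paper.
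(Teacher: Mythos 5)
Your construction follows the standard Dafermos--Rodnianski recipe (the paper itself does not prove this proposition but cites it from \cite{md}): a null frame $\{V,Y,E_{1},E_{2}\}$ with $g(V,Y)=-2$, the freedom to prescribe $\nabla_{Y}Y$ on $\mathcal{H}^{+}$ through the choice of extension, $N=V+\sigma Y$ Lie-transported along $V$, and absorption of cross terms for $\sigma$ large. The skeleton is right, but you have swapped the two mechanisms, and the swap is fatal to the logic. Writing $g^{-1}=-\frac{1}{2}(V\otimes Y+Y\otimes V)+\sum_{i}E_{i}\otimes E_{i}$, one has
\begin{equation*}
K^{Y}=-\tfrac{1}{2}\textbf{T}(V,\nabla_{Y}Y)-\tfrac{1}{2}\textbf{T}(Y,\nabla_{V}Y)+\sum_{i}\textbf{T}(E_{i},\nabla_{E_{i}}Y).
\end{equation*}
The only place a $(Y\psi)^{2}=\textbf{T}(Y,Y)$ term can arise is the second summand, through the $Y$-component of $\nabla_{V}Y=\nabla_{Y}V$; and that component equals $-\kappa$, because $g(\nabla_{Y}V,V)=\frac{1}{2}Y(g(V,V))=2\kappa$ is fixed by the surface gravity and is completely insensitive to how you extend $Y$ off $\mathcal{H}^{+}$. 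The prescription of $\nabla_{Y}Y$, by contrast, only feeds the first summand $\textbf{T}(V,\cdot)$, which produces the $(V\psi)^{2}$ coefficient (from the $V$-component) and the $\left|\nabb\psi\right|^{2}$ coefficient (from the $Y$-component, since $\textbf{T}(V,Y)=\left|\nabb\psi\right|^{2}$ on $\mathcal{H}^{+}$). So it is $\kappa>0$ that buys you the transversal derivative $(Y\psi)^{2}$, and the extension freedom that buys you $(V\psi)^{2}$ and the angular term --- exactly the opposite of what you assert.

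This is not a cosmetic relabelling: if your version of the mechanism were correct, the construction would go through with $\kappa=0$, since the extension freedom is available on any horizon. That would contradict Section \ref{sec:TheSpacetimeTermKN} of this very paper, where it is shown (see the coefficients \eqref{list}) that on extreme Reissner--Nordstr\"{o}m the coefficient of $(\partial_{r}\psi)^{2}$ in $K^{N}$ necessarily vanishes on $\mathcal{H}^{+}$ for every timelike $\varphi_{\tau}$-invariant $N$, while the cross term $(\partial_{v}\psi)(\partial_{r}\psi)$ survives, so that $K^{N}$ is linear in $\partial_{r}\psi$ there and cannot be nonnegative. Correspondingly, your closing diagnosis is off: the step that fails when $\kappa=0$ is not the angular term (which can always be arranged to have a good sign by choosing $\partial_{r}f_{r}<0$, i.e.\ by the extension of $Y$) but precisely the coefficient of the transversal derivative. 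Once you put $\kappa$ back in front of $(Y\psi)^{2}$ and let $\nabla_{Y}Y=-\sigma(V+Y)$ generate the $(V\psi)^{2}$ and $\left|\nabb\psi\right|^{2}$ terms, the rest of your argument (Cauchy--Schwarz absorption for $\sigma$ large, equivalence of $J^{N}_{\mu}n^{\mu}_{\Sigma_{\tau}}$ with $(V\psi)^{2}+(Y\psi)^{2}+\left|\nabb\psi\right|^{2}$ via Proposition \ref{hyperb}) is sound.
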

The construction of the above vector field does not require the global existence of a causal Killing field and  the positivity of the surface gravity suffices. Under suitable circumstances, one can prove that the $N$ flux is uniformly bounded  without  understanding the structure of the trapping (i.e.~no $X$ or Morawetz estimate is required). 

However, in our case, in view of the lack of redshift along $\mathcal{H}^{+}$ the situation is completely different. Indeed, we will see  that in  extreme Reissner-Nordstr\"{o}m   a vector field satisfying the properties of Proposition~\ref{redshiftmd} does not exist. Not only  will we show that there is no $\phi_{\tau}$-invariant vector field $N$ satisfying the properties of Proposition~\ref{redshiftmd} but we will in fact prove that there is no $\phi_{\tau}$-invariant timelike vector field $N$ such that 
\begin{equation*}
K^{N}\left(\psi\right)\geq 0
\end{equation*}
on  $\mathcal{H}^{+}$.  Our resolution to this problem uses appropriate modification of $J_{\mu}^{N}$ and the Hardy inequalities and thus is still robust. 

\subsection{The Effect of Vanishing Redshift on Linear Waves }
\label{sec:TheSpacetimeTermKN}
Let us first try to understand the  current $K$ associated to a future directed timelike $\phi_{T}$-invariant vector field $N$ in a neighbourhood of the horizon $\mathcal{H}^{+}$. If $N=f_{v}\left(r\right)\partial_{v}+f_{r}\left(r\right)\partial_{r}$ then we obtain 
\begin{equation*}
\begin{split}
K^{N}\left(\psi\right)=&F_{vv}\left(\partial_{v}\psi\right)^{2}+F_{rr}\left(\partial_{r}\psi\right)^{2}+F_{\scriptsize\nabb}\left|\nabb\psi\right|^{2}+F_{vr}\left(\partial_{v}\psi\right)\left(\partial_{r}\psi\right),
\end{split}
\end{equation*}
where the coef{}ficients are given by
\begin{equation}
\begin{split}
&F_{vv}=\left(\partial_{r}f_{v}\right),\\
&F_{rr}=D\left[\frac{\left(\partial_{r}f_{r}\right)}{2}-\frac{f_{r}}{r}\right]-\frac{f_{r}D'}{r},  \\
&F_{\scriptsize\nabb}=-\frac{1}{2}\left(\partial_{r}f_{r}\right),\\
&F_{vr}=D\left(\partial_{r}f_{v}\right)-\frac{2f_{r}}{r}.\\
\label{list}
\end{split}
\end{equation}
Note that since $N=f_{v}\partial_{v}+f_{r}\partial_{r}$ we have 
\begin{equation}
\begin{split}
&g\left(N,N\right)=-D\left(f_{v}\right)^{2}+2f_{v}f_{r},\\
&g\left(N,T\right)=-Df_{v}+f_{r},
\label{Mtifut}
\end{split}
\end{equation}
and so $f_{r}\left(r=M\right)$ can not be zero (otherwise the vector field $N$ would not be timelike). Therefore, looking back at the list \eqref{list} we see that the coefficient of $\left(\partial_{r}\psi\right)^{2}$ vanishes on the horizon $\mathcal{H}^{+}$ whereas the coefficient of $\partial_{v}\psi\partial_{r}\psi$ is equal to $-\frac{2f_{r}\left(M\right)}{M}$ which is not zero. Therefore, $K^{N}\left(\psi\right)$ is linear with respect to $\partial_{r}\psi$ on the horizon $\mathcal{H}^{+}$ and thus it necessarily fails to be non-negative definite. This linearity is a characteristic feature of the geometry of the event horizon of  extreme Reissner-Nordstr\"{o}m and degenerate black hole spacetimes more generally. This proves that  a vector field satisfying the properties of Proposition~\ref{redshiftmd} does not exist.

\subsection{A Locally Non-Negative Spacetime Current}
\label{sec:TheCurrentJMuNDeltaFrac12AndEstimatesForItsDivergence}

In view of the above discussion, we need to modify the bulk term by introducing new terms that will counteract the presence of $\partial_{v}\psi\partial_{r}\psi$.  We define 
\begin{equation}
J_{\mu}\equiv J_{\mu}^{N,h}=J_{\mu}^{N}+h\left(r\right)\psi\nabla _{\mu}\psi
\label{Jmodified}
\end{equation}
where $g$ is a function on $\mathcal{M}$. Then we have
\begin{equation*}
\begin{split}
K&\equiv K^{N,h}=\nabla^{\mu}J_{\mu}\\
&=K^{N}+\nabla^{\mu}\left(h\psi\nabla _{\mu}\psi\right)\\
&=K^{N}+\left(\nabla^{\mu}h\right)\psi\nabla _{\mu}\psi+h\left(\nabla^{a}\psi\nabla_{a}\psi\right),\\
\end{split}
\end{equation*}
provided $\psi$ is a solution of the wave equation. Let us suppose that $h\left(r\right)=\frac{f_{r}\left(M\right)}{M}$. Then
\begin{equation}
\begin{split}
K^{N,h}\left(\psi\right)&=K^{N}\left(\psi\right)+h\left(\nabla^{a}\psi\nabla_{a}\psi\right)\\
&=K^{N}\left(\psi\right)+h\left(2\partial _{v}\psi\partial_{r}\psi+D\left(\partial _{r}\psi\right)^{2}+\left|\nabb\psi\right|^{2}\right)\\
&=F_{vv}\left(\partial_{v}\psi\right)^{2}+\left[F_{rr}+hD\right]\left(\partial_{r}\psi\right)^{2}+\left[-\frac{\partial_{r}f_{r}}{2}+h\right]\left|\nabb\psi\right|^{2}\\
&\ \ \ \ +\left[F_{vr}+2h\right]\left(\partial_{v}\psi\partial_{r}\psi\right).\\
\end{split}
\label{knn}
\end{equation}
Note that by taking $h$ to be constant  we managed to have no zeroth order terms in the  current $K$. Let us denote the above coef{}ficients of $\left(\partial_{a}\psi\partial_{b}\psi\right)$ by $G_{ab}$ where $a,b\in\left\{v,r,\nabb\right\}$ and define the vector field $N$ in the region $M\leq r\leq \frac{9M}{8}$ to be such that 
\begin{equation}
\begin{split}
&f_{v}(r)=16r,\\
&f_{r}(r)=-\frac{3}{2}r+M
\end{split}
\label{n}
\end{equation}
and, therefore,
\begin{equation*}
h=-\frac{1}{2}.
\end{equation*}
Clearly, $N$ is timelike future directed vector field. We have the following
\begin{proposition}
For all functions $\psi$,  the current $K^{N, -\frac{1}{2}}(\psi)$ defined by \eqref{knn} is non-negative definite in the region $\mathcal{A}=\left\{M\leq r\leq \frac{9M}{8}\right\}$ and, in particular, there is a positive constant $C$ that depends only on $M$ such that
\begin{equation*}
K^{N,-\frac{1}{2}}(\psi)\geq C\left(\left(\partial_{v}\psi\right)^{2}+\sqrt{D}\left(\partial_{r}\psi\right)^{2}+\left|\nabb\psi\right|^{2}\right).
\end{equation*}
\label{knprop}
\end{proposition}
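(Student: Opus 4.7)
The plan is to substitute the chosen coefficients $f_{v}(r)=16r$, $f_{r}(r)=-\frac{3}{2}r+M$, and $h=-\frac{1}{2}$ directly into the expression \eqref{knn} and show that the resulting quadratic form in $(\partial_{v}\psi,\partial_{r}\psi,\nabb\psi)$ is pointwise non-negative, with the claimed quantitative lower bound. Using the list \eqref{list}, the coefficients $G_{ab}$ simplify as follows: the coefficient $F_{vv}=\partial_{r}f_{v}=16$ is a positive constant, and the angular coefficient $-\tfrac{1}{2}\partial_{r}f_{r}+h=\tfrac{3}{4}-\tfrac{1}{2}=\tfrac{1}{4}$ is also a positive constant. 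So only the $(\partial_{r}\psi)^{2}$ and cross-term coefficients require work.

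Introducing the dimensionless variable $s=r/M\in[1,9/8]$, a direct computation factors the $(\partial_{r}\psi)^{2}$ coefficient as
\begin{equation*}
B := F_{rr}+hD = \frac{(s-1)}{4s^{4}}\bigl(s^{3}-5s^{2}+16s-8\bigr),
\end{equation*}
and the cross-term coefficient as
\begin{equation*}
C := F_{vr}+2h = \frac{2(s-1)(9s-8)}{s^{2}}.
\end{equation*}
Both vanish on $\mathcal{H}^{+}$ (where $s=1$), with $B$ vanishing to first order and $C$ also to first order in $(s-1)$, so $C^{2}$ vanishes to second order. This is exactly the delicate balance needed to compensate the linear failure of $K^{N}$ discussed in Section \ref{sec:TheSpacetimeTermKN}.

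Next, I apply Young's inequality with weight $\epsilon=8$:
\begin{equation*}
\bigl|C\,\partial_{v}\psi\,\partial_{r}\psi\bigr|\leq 8(\partial_{v}\psi)^{2}+\frac{C^{2}}{32}(\partial_{r}\psi)^{2},
\end{equation*}
which reduces the problem to proving
\begin{equation*}
B-\frac{C^{2}}{32}\ \geq\ c\,\sqrt{D}\qquad\text{on }[M,9M/8],
\end{equation*}
for some $c>0$ depending only on $M$, together with the trivial bounds $16-8=8>0$ on $(\partial_{v}\psi)^{2}$ and $\tfrac{1}{4}$ on $|\nabb\psi|^{2}$. A short algebraic manipulation gives
\begin{equation*}
B-\frac{C^{2}}{32}=\frac{(s-1)}{8s^{4}}\,q(s),\qquad q(s)=-79s^{3}+215s^{2}-176s+48,
\end{equation*}
and one checks that $q(1)=8$ and $q'(s)=-237s^{2}+430s-176$ remains positive on $[1,9/8]$, so $q(s)\geq 8$ throughout the region. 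Since $\sqrt{D}=(s-1)/s$ for $s\geq 1$, this gives $B-C^{2}/32\geq \sqrt{D}/s^{3}\gtrsim \sqrt{D}$, finishing the proof.

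The main obstacle is precisely this last polynomial check: the coefficient $B$ vanishes only to first order in $(r-M)$ whereas $C^{2}/F_{vv}$ vanishes to second order, so heuristically there is room, but one must verify the inequality globally on $[M,9M/8]$ and not merely in an infinitesimal neighbourhood of the horizon. The modification parameter $h=-1/2$ and the coefficients $16$ and $-3/2$ in $f_{v}$, $f_{r}$ are tuned precisely so that the cubic $q(s)$ is bounded away from zero on this interval; any weaker choice risks leaving the quadratic form indefinite somewhere in $\mathcal{A}$.
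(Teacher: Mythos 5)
Your overall strategy is sound and is essentially the paper's own argument in different clothing: both proofs isolate the quadratic form $G_{vv}(\partial_{v}\psi)^{2}+G_{rr}(\partial_{r}\psi)^{2}+G_{vr}(\partial_{v}\psi)(\partial_{r}\psi)+G_{\scriptsize\nabb}|\nabb\psi|^{2}$, observe that $G_{vr}$ vanishes to first order at $r=M$ so that $G_{vr}^{2}/G_{vv}$ vanishes to second order while $G_{rr}$ vanishes only to first order, and then reduce matters to a one-variable polynomial inequality on $[M,9M/8]$. The paper factors $G_{vr}=\epsilon_{1}\epsilon_{2}$ with $\epsilon_{1}=\sqrt{D}$, $\epsilon_{2}=16\sqrt{D}+2$ and uses $a^{2}+ab+b^{2}\geq 0$ together with the checks $\epsilon_{1}^{2}\leq G_{rr}$ and $\epsilon_{2}^{2}<G_{vv}$; your Young's inequality with fixed weight $8$ is an interchangeable packaging of the same estimate. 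Your cross-term coefficient $G_{vr}=F_{vr}+2h=16D+2\sqrt{D}=\tfrac{2(s-1)(9s-8)}{s^{2}}$ is correct.

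There is, however, a concrete error in your coefficient $B=F_{rr}+hD$. Your expression $\tfrac{(s-1)}{4s^{4}}(s^{3}-5s^{2}+16s-8)$ results from taking the formula in \eqref{list} literally with the term $-\tfrac{f_{r}D'}{r}$; that term is a typographical slip (it is dimensionally inconsistent with the other summands, and both Appendix \ref{sec:UsefulReissnerNordstrOMComputations} and the list in Section \ref{sec:TheMultiplierLAndTheEnergyIdentity} give $-\tfrac{f_{r}D'}{2}$, which is also what the paper's own proof of this proposition uses when it reduces $\epsilon_{1}^{2}\leq G_{rr}$ to $(1-\lambda)\leq(1-\lambda)(\tfrac{1}{4}-\lambda)-\lambda(-\tfrac{3}{2}+\lambda)$). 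With the correct term one finds the much cleaner identity
\begin{equation*}
F_{rr}+hD=D\Bigl(\tfrac{1}{4}-\tfrac{M}{r}\Bigr)+\sqrt{D}\,\tfrac{M}{r}\Bigl(\tfrac{3}{2}-\tfrac{M}{r}\Bigr)=\tfrac{1}{4}\Bigl(1-\tfrac{M^{2}}{r^{2}}\Bigr)=\frac{(s-1)(s+1)}{4s^{2}},
\end{equation*}
which differs from your $B$ already at first order in $(s-1)$ (slope $\tfrac12$ versus $1$ at the horizon). As written, your polynomial verification therefore establishes positivity of a quadratic form that is not $K^{N,-\frac{1}{2}}$. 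The gap is reparable without changing your method: substituting the correct $B$ into your scheme gives $B-\tfrac{C^{2}}{32}=\tfrac{(s-1)}{8s^{4}}\bigl(-79s^{3}+227s^{2}-208s+64\bigr)$, and this cubic equals $4$ at $s=1$ and remains bounded below by a positive constant on $[1,9/8]$, so the conclusion $B-\tfrac{C^{2}}{32}\gtrsim\sqrt{D}$ survives. You should rerun the final polynomial check with the corrected coefficient before relying on the quantitative constant.
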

\begin{proof}
We first observe that the coef{}ficient $G_{\scriptsize\nabb}$ of $\partial_{v}\psi\partial_{r}\psi$  is equal to
\begin{equation*}
G_{\scriptsize\nabb}=\frac{1}{4}.
\end{equation*}
Clearly, $G_{vv}=16$ and $G_{rr}$ is non-negative since the factor of the dominant term $D'$ (which vanishes to first order on $\mathcal{H}^{+}$) is positive. As regards the coef{}ficient of the mixed term we have
\begin{equation*}
G_{vr}=16D+2\sqrt{D}=\epsilon_{1}\cdot\epsilon_{2},
\end{equation*}
where
\begin{equation*}
\begin{split}
&\epsilon_{1}=\sqrt{D},\\
&\epsilon_{2}=16\sqrt{D}+2.
\end{split}
\end{equation*}
We will show that in region $\mathcal{A}$ we have
\begin{equation*}
\begin{split}
&\epsilon_{1}^{2}\leq G_{rr},\\
&\epsilon_{2}^{2}<G_{vv}.
\end{split}
\end{equation*}
Indeed, we set $\lambda =\frac{M}{r}$ we have
\begin{equation*}
\begin{split}
&\epsilon_{1}^{2}\leq G_{rr}\Leftrightarrow\\
&\left(1-\frac{M}{r}\right)\leq\left(1-\frac{M}{r}\right)\left[\frac{\partial_{r}f_{r}\left(r\right)}{2}-\frac{f_{r}\left(r\right)}{r}+\frac{f_{r}\left(M\right)}{M}\right]+\left(-f_{r}\left(r\right)\frac{M}{r^{2}}\right)\overset{\eqref{n}}{\Leftrightarrow}\\
&\left(1-\lambda\right)\leq\left(1-\lambda\right)\left(\frac{1}{4}-\lambda\right)-\lambda\left(-\frac{3}{2}+\lambda\right)\Leftrightarrow\\
&\lambda \leq \frac{3}{5},
\end{split}
\end{equation*}
which holds. Note also that $\epsilon_{1}^{2}$ vanishes to second order on $\mathcal{H}^{+}$ whereas $G_{rr}$ vanishes to first order. Therefore, in region $\mathcal{A}$ we have $G_{rr}-\epsilon_{1}^{2}\sim \sqrt{D}$. Similarly, 
\begin{equation*}
\begin{split}
&\epsilon_{2}^{2}< G_{vv}\Leftrightarrow\\
&16\left[\left(1-\frac{M}{r}\right)+2\right]^{2}< 16\Leftrightarrow\\
&\lambda >\frac{7}{8},
\end{split}
\end{equation*}
which again holds. So in $\mathcal{A}$ we have
\begin{equation*}
\begin{split}
K^{N,-\frac{1}{2}}&=G_{vv}\left(\partial_{v}\psi\right)^{2}+G_{rr}\left(\partial_{r}\psi\right)^{2}+G_{rv}\left(\partial_{v}\psi\partial_{r}\psi\right)+G_{\scriptsize\nabb}\left|\nabb\psi\right|^{2}\\
&=\left(G_{vv}-\epsilon_{2}^{2}\right)\left(\partial_{v}\psi\right)^{2}+\left(G_{rr}-\epsilon_{1}^{2}\right)\left(\partial_{r}\psi\right)^{2}+\\
&\ \ \ \ 
+\left(\epsilon_{2}\left(\partial_{v}\psi\right)\right)^{2}+\left(\epsilon_{1}\left(\partial_{r}\psi\right)\right)^{2}+\left(\epsilon_{2}\left(\partial_{v}\psi\right)\right)\left(\epsilon_{1}\left(\partial_{r}\psi\right)\right)+G_{\scriptsize\nabb}\left|\nabb\psi\right|^{2}.
\end{split}
\end{equation*}
The above inequalities, the compactness of $\left[M, \frac{9M}{8}\right]$ and that
\begin{equation*}
 a^{2}+ab+b^{2}=\frac{1}{2}\left(a^{2}+b^{2}\right)+\frac{1}{2}\left(a+b\right)^{2}\geq 0
 \end{equation*}
for all $a,b\in\mathbb{R}$, complete the proof. 

\end{proof}

\subsection{The Cut-off $\delta$ and the Current $J_{\mu}^{N,\delta,-\frac{1}{2}}$}
\label{sec:TheCutOffDeltaAndTheCurrentJMuNDeltaFrac12}

Clearly, the current $K^{N,-\frac{1}{2}}$ will not be non-negative far away from $\mathcal{H}^{+}$ and thus is not useful there. For this reason we extend $f_{v},f_{r}$ such that 
\begin{equation*}
\begin{split}
&f_{v}\left(r\right)>0 \ \text{for all}\  r\geq M \ \text{and}\  f_{v}\left(r\right)=1\  \text{for all}\  r\geq \frac{8M}{7}, \\
&f_{r}\left(r\right)\leq 0\  \text{for all}\  r\geq M\  \text{and}\  f_{r}\left(r\right)=0\  \text{for all}\  r\geq \frac{8M}{7}.
\end{split}
\end{equation*}
Clearly with the above choices  we have $g\left(N,N\right)<0,g\left(N,T\right)<0$ everywhere and so $N$ is a future directed timelike $\phi_{\tau}$-invariant vector field.

Similarly, the modification term in the current $J^{N,-\frac{1}{2}}_{\mu}$ is not useful for consideration far away from  $\mathcal{H}^{+}$. That is why we introduce a smooth cut-off function $\delta:[\left.M,+\infty\right.)\rightarrow\mathbb{R}$ such that $\delta\left(r\right)=1,r\in\left[M,\frac{9M}{8}\right]$ and $\delta\left(r\right)=0,r\in\left[\left.\frac{8M}{7},
+\infty\right.\right)$. Then we  consider the currents
\begin{equation}
\begin{split}
&J_{\mu}^{N,\delta,-\frac{1}{2}}\overset{.}{=}J_{\mu}^{N}-\frac{1}{2}\delta\psi\nabla_{\mu}\psi,\\
&K^{N,\delta,-\frac{1}{2}}\overset{.}{=}\nabla^{\mu}J_{\mu}^{N,\delta,-\frac{1}{2}}.\\
\end{split}
\end{equation}
We now consider the three regions $\mathcal{A},\mathcal{B},\mathcal{C}$
 \begin{figure}[H]
	\centering
		\includegraphics[scale=0.14]{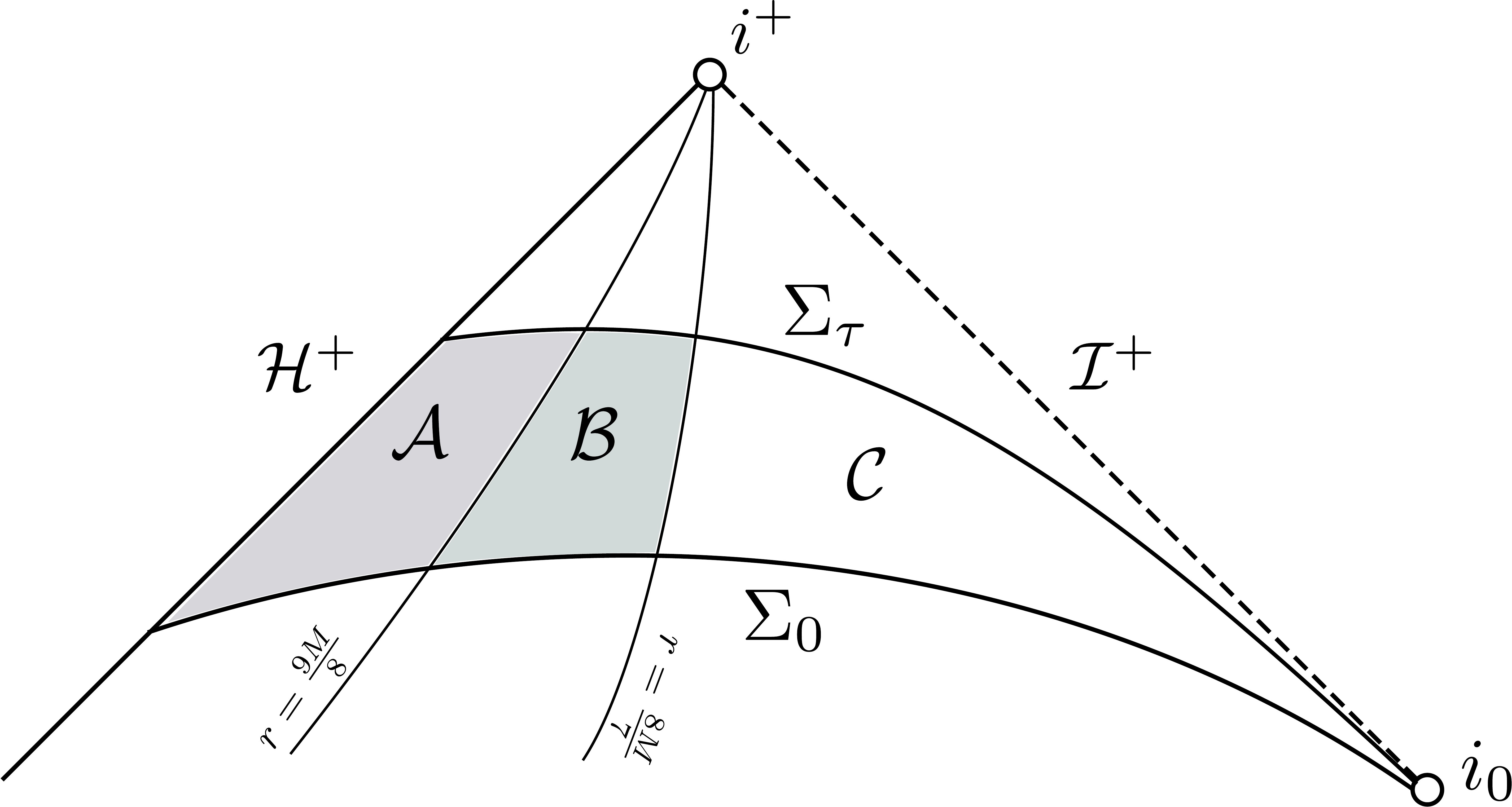}
	\label{fig:ernt6}
\end{figure}
In  region $\mathcal{C}=\left\{r\geq\frac{8M}{7}\right\}$ where $\delta=0$ and $ N=T$, we have
\begin{equation*}
K^{N,\delta,-\frac{1}{2}}= 0.
\end{equation*}
However, this spacetime current, which depends on the 1-jet of $\psi$,  will generally be  negative in  region $\mathcal{B}$ and thus   will be controlled by the $X$ and Morawetz estimates (which, of course, are non-degenerate in $\mathcal{B}$).

We next control the Sobolev norm
\begin{equation*}
\left\|\psi\right\|^{2}_{\dot{H}^{1}\left(\Sigma_{\tau}\right)}\lesssim\, \int_{\Sigma_{\tau}}{\left(\partial_{v}\psi\right)^{2}+\left(\partial_{r}\psi\right)^{2}+\left|\nabb\psi\right|^{2}}.
\end{equation*}
Since $N$ and $n^{\mu}_{\Sigma_{\tau}}$ are timelike everywhere in $\mathcal{R}$ and since $\omega_{N},\omega_{n}$ are positive and uniformly bounded, \eqref{GENERALT} of Appendix \ref{sec:TheHyperbolicityOfTheWaveEquation1} implies 
\begin{equation*}
J_{\mu}^{N}(\psi)n^{\mu}_{\Sigma_{\tau}}\sim \, \left(\partial_{v}\psi\right)^{2}+\left(\partial_{r}\psi\right)^{2}+\left|\nabb\psi\right|^{2},
\end{equation*}
where, in view of the $\varphi_{T}$-invariance of $\Sigma_{\tau}$ and $N$, the constants in  $\sim$ depend only on $M$ and $\Sigma_{0}$. Therefore, it suffices to estimate the flux of $N$ through $\Sigma_{\tau}$. For this we have:
\begin{proposition}
There exists a  constant $C>0$ which depends  on $M$ and $\Sigma_{0}$ such that for all functions $\psi$ 
\begin{equation}
\int_{\Sigma_{\tau}}{J_{\mu}^{N}(\psi)n^{\mu}}\leq 2\int_{\Sigma_{\tau}}{J_{\mu}^{N,\delta,-\frac{1}{2}}(\psi)n^{\mu}}+C\int_{\Sigma_{\tau}}{J_{\mu}^{T}(\psi)n^{\mu}}.
\label{nboundary}
\end{equation}
\label{nb}
\end{proposition}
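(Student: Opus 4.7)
The plan is to exploit the defining identity
\[
J_{\mu}^{N,\delta,-\frac{1}{2}}(\psi)=J_{\mu}^{N}(\psi)-\tfrac{1}{2}\delta\,\psi\,\nabla_{\mu}\psi,
\]
which after contracting with $n^{\mu}_{\Sigma_{\tau}}$ and integrating yields the pointwise/integrated identity
\[
\int_{\Sigma_{\tau}}J_{\mu}^{N}(\psi)n^{\mu}=\int_{\Sigma_{\tau}}J_{\mu}^{N,\delta,-\frac{1}{2}}(\psi)n^{\mu}+\tfrac{1}{2}\int_{\Sigma_{\tau}}\delta\,\psi\,(n\psi).
\]
Thus the whole problem reduces to estimating the cross term $\int_{\Sigma_{\tau}}\delta\,\psi\,(n\psi)$ in terms of (a small multiple of) the $N$-flux plus a controlled multiple of the $T$-flux.

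First I would apply a weighted Cauchy--Schwarz, writing
\[
\Bigl|\int_{\Sigma_{\tau}}\delta\,\psi\,(n\psi)\Bigr|\leq \varepsilon\int_{\Sigma_{\tau}}\delta\,(n\psi)^{2}+\frac{1}{4\varepsilon}\int_{\Sigma_{\tau}}\delta\,\psi^{2},
\]
with $\varepsilon>0$ to be chosen later. For the first term on the right, since $n_{\Sigma_{\tau}}$ is timelike with $-g(n,n)$ uniformly bounded above and below (by the hypothesis on $\Sigma_{0}$), $(n\psi)^{2}$ is pointwise controlled by $J^{N}_{\mu}(\psi)n^{\mu}$, so
\[
\int_{\Sigma_{\tau}}\delta\,(n\psi)^{2}\leq C_{1}\int_{\Sigma_{\tau}}J_{\mu}^{N}(\psi)n^{\mu}.
\]

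For the zeroth-order term $\int_{\Sigma_{\tau}}\delta\psi^{2}$, I would use that $\delta$ is supported in $\{M\leq r\leq 8M/7\}$, a bounded range in $r$ on which $\delta\lesssim 1/r^{2}$, so
\[
\int_{\Sigma_{\tau}}\delta\,\psi^{2}\leq C_{2}\int_{\Sigma_{\tau}}\frac{\psi^{2}}{r^{2}}.
\]
Now the first Hardy inequality (Proposition \ref{firsthardy}) gives
\[
\int_{\Sigma_{\tau}}\frac{\psi^{2}}{r^{2}}\leq C_{3}\int_{\Sigma_{\tau}}D\bigl[(\partial_{v}\psi)^{2}+(\partial_{r}\psi)^{2}\bigr],
\]
and since $D\leq 1$ and $J^{T}_{\mu}n^{\mu}\sim (\partial_{v}\psi)^{2}+D(\partial_{r}\psi)^{2}+|\nabb\psi|^{2}$, the right-hand side is bounded by a multiple of $\int_{\Sigma_{\tau}}J^{T}_{\mu}(\psi)n^{\mu}$. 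This is precisely the reason Hardy is useful here: it converts $L^{2}(\Sigma_{\tau})$-norm of $\psi$ near $\hh$ (where no redshift is available) into a quantity controlled by the conserved degenerate $T$-energy.

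Combining these estimates,
\[
\int_{\Sigma_{\tau}}J_{\mu}^{N}(\psi)n^{\mu}\leq \int_{\Sigma_{\tau}}J_{\mu}^{N,\delta,-\frac{1}{2}}(\psi)n^{\mu}+\frac{\varepsilon C_{1}}{2}\int_{\Sigma_{\tau}}J^{N}_{\mu}(\psi)n^{\mu}+\frac{C_{2}C_{3}}{8\varepsilon}\int_{\Sigma_{\tau}}J^{T}_{\mu}(\psi)n^{\mu},
\]
and choosing $\varepsilon$ small enough (e.g.\ $\varepsilon=1/C_{1}$) so that the second term on the right is at most $\frac{1}{2}\int J^{N}_{\mu}n^{\mu}$, I would absorb it to the left, leaving the stated inequality with $C=C_{2}C_{3}/(4\varepsilon)$ and the numerical factor $2$ on the first term. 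The only delicate point is the use of the first Hardy inequality, which is both available and sharp thanks to the quadratic vanishing of $D$ at $\hh$; no extra regularity on $\psi$ is required beyond what is used in Section \ref{sec:HardyInequalities}.
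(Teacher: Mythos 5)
Your proposal is correct and follows essentially the same route as the paper: expand the modification term, apply a weighted Cauchy--Schwarz to the cross term $\delta\,\psi\,(n\psi)$, absorb the small multiple of the non-degenerate derivative terms into the $N$-flux, and control $\int_{\Sigma_{\tau}}\delta\,\psi^{2}$ via the first Hardy inequality by the $T$-flux. The only cosmetic difference is that the paper splits $n^{\mu}\partial_{\mu}\psi$ into its $(v,r)$ components before estimating, which is immaterial.
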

\begin{proof}

We have
\begin{equation*}
\begin{split}
J_{\mu}^{N,\delta,-\frac{1}{2}}n^{\mu}&=J_{\mu}^{N}n^{\mu}-\frac{1}{2}\delta\psi\partial_{\mu}\psi n^{\mu}=J_{\mu}^{N}n^{\mu}-\frac{1}{2}\delta\psi\partial_{v}\psi n^{v}-\frac{1}{2}\delta\psi\partial_{r}\psi n^{r}\\
&\geq J_{\mu}^{N}n^{\mu}-\epsilon\delta(\partial_{v}\psi)^{2}-\epsilon\delta(\partial_{r}\psi)^{2}-\frac{\delta}{\epsilon}\psi^{2}\\
&\geq  \frac{1}{2}J_{\mu}^{N}n^{\mu}-\frac{\delta}{\epsilon}\psi^{2}
\end{split}
\end{equation*}
for a sufficiently small $\epsilon$. The result follows from the first Hardy inequality.  
\end{proof}
\begin{corollary}
There exists a  constant $C>0$ which depends  on $M$ and $\Sigma_{0}$ such that for all functions $\psi$ 
\begin{equation}
\int_{\Sigma_{\tau}}{J_{\mu}^{N,\delta ,-\frac{1}{2}}(\psi)n^{\mu}}\leq C\int_{\Sigma_{\tau}}{J_{\mu}^{N}(\psi)n^{\mu}}.
\label{nboundary1}
\end{equation}
\label{nb1}
\end{corollary}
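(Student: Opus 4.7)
The plan is to expand the modified current and exploit Cauchy--Schwarz together with the First Hardy Inequality to absorb the zeroth-order correction into the $N$-flux. Writing out the definition,
\begin{equation*}
J_{\mu}^{N,\delta,-\frac{1}{2}}(\psi)\, n^{\mu} \;=\; J_{\mu}^{N}(\psi)\, n^{\mu} \;-\;\tfrac{1}{2}\delta\, \psi\bigl(n^{v}\partial_{v}\psi+n^{r}\partial_{r}\psi\bigr),
\end{equation*}
since $\delta$ depends only on $r$. Our assumption on $\Sigma_{0}$ gives uniform bounds on the components $n^{v}, n^{r}$, so applying Cauchy--Schwarz with a small weight $\varepsilon>0$ yields
\begin{equation*}
\bigl|\tfrac{1}{2}\delta\, \psi(n^{v}\partial_{v}\psi+n^{r}\partial_{r}\psi)\bigr| \;\leq\; \varepsilon\, \delta\bigl((\partial_{v}\psi)^{2}+(\partial_{r}\psi)^{2}\bigr) \;+\; C_{\varepsilon}\, \delta\,\psi^{2}.
\end{equation*}

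Next I would observe that, since $N$ and $n_{\Sigma_\tau}$ are both future-directed timelike with uniformly bounded $\omega_{N},\omega_{n}$, the relation $J^{N}_{\mu}n^{\mu}\sim (\partial_{v}\psi)^{2}+(\partial_{r}\psi)^{2}+|\nabb\psi|^{2}$ holds. Hence for $\varepsilon$ chosen sufficiently small, the derivative error term is absorbed:
\begin{equation*}
\varepsilon\, \delta\bigl((\partial_{v}\psi)^{2}+(\partial_{r}\psi)^{2}\bigr)\;\leq\;\tfrac{1}{2}\, J^{N}_{\mu}(\psi)n^{\mu}.
\end{equation*}
It remains to control $\int_{\Sigma_\tau}\delta\psi^{2}$. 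Since $\delta$ is supported in the compact $r$-range $[M,\tfrac{8M}{7}]$, the weight $r^{-2}$ is uniformly bounded from below on $\operatorname{supp}(\delta)$, so
\begin{equation*}
\int_{\Sigma_{\tau}}\delta\,\psi^{2}\;\leq\;C\int_{\Sigma_{\tau}}\frac{1}{r^{2}}\psi^{2}.
\end{equation*}
By the First Hardy Inequality (Proposition \ref{firsthardy}), the right-hand side is bounded by $C\int_{\Sigma_{\tau}}D\bigl[(\partial_{v}\psi)^{2}+(\partial_{r}\psi)^{2}\bigr]$, which, since $D\leq 1$, is in turn majorized by $C\int_{\Sigma_\tau}J^{T}_{\mu}(\psi)n^{\mu}$, and finally by $C\int_{\Sigma_\tau}J^{N}_{\mu}(\psi)n^{\mu}$ as $J^{T}_{\mu}n^{\mu}\leq J^{N}_{\mu}n^{\mu}$ pointwise up to a constant. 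Combining these bounds delivers the stated estimate.

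There is no real obstacle here: the corollary is essentially the reverse direction of Proposition \ref{nb}, and the proof is a direct mirror of that argument. The only care needed is to ensure that the $\delta\psi^{2}$ term can genuinely be absorbed into the $N$-flux rather than requiring an additional $T$-flux term, which is handled by the observation that $J^{T}\lesssim J^{N}$ pointwise.
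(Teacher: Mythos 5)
Your proof is correct and follows essentially the same route as the paper: expand the modified current, apply Cauchy--Schwarz to the cross term, absorb the derivative contributions into the $N$-flux, and control the remaining $\delta\psi^{2}$ term via the first Hardy inequality together with $J^{T}_{\mu}n^{\mu}\lesssim J^{N}_{\mu}n^{\mu}$. The paper's own proof is just a terser version of exactly this argument.
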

\begin{proof}

Note that
\begin{equation*}
\begin{split}
J_{\mu}^{N,\delta,-\frac{1}{2}}n^{\mu}&=J_{\mu}^{N}n^{\mu}-\frac{1}{2}\delta\psi\partial_{\mu}\psi n^{\mu}\\
&\leq J_{\mu}^{N}n^{\mu}+\delta(\partial_{v}\psi)^{2}+\delta(\partial_{r}\psi)^{2}+C\delta\psi^{2}\\
\end{split}
\end{equation*}
and use first the Hardy inequality.
\end{proof}

\subsection{Lower Estimate for an Integral over $\mathcal{H}^{+}$}
\label{sec:EstimatesForAnIntegralOverMathcalH}

Lastly, we need to estimate the integral $\displaystyle\int_{\mathcal{H}^{+}}{J_{\mu}^{N,\delta,-\frac{1}{2}}(\psi)n^{\mu}_{\mathcal{H}^{+}}}$. Recall that  $n_{\scriptstyle\mathcal{H}^{+}}=T$. We have
\begin{proposition}
There exist   positive constants $C$ and $C_{\epsilon}$  such that for any $\epsilon >0$ and all functions $\psi$ 
\begin{equation*}
\int_{\mathcal{H}^{+}}{J_{\mu}^{N,\delta,-\frac{1}{2}}(\psi)n^{\mu}_{\mathcal{H}^{+}}}\geq C\int_{\mathcal{H}^{+}}{J_{\mu}^{N}(\psi)n^{\mu}_{\mathcal{H}^{+}}}\ -C_{\epsilon}\int_{\Sigma_{\tau}}{J_{\mu}^{T}(\psi)n^{\mu}_{\Sigma_{\tau}}}\ -\epsilon\int_{\Sigma_{\tau}}{J_{\mu}^{N}(\psi)n^{\mu}_{\Sigma_{\tau}}},
\end{equation*}
\label{nhb}
where $C$ depends on $M$, $\Sigma_{0}$ and $C_{\epsilon}$ depends on $M$, $\Sigma_{0}$ and $\epsilon$.
\end{proposition}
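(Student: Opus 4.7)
The plan is to exploit the fact that on the event horizon $\delta\equiv 1$ (since $\mathcal{H}^{+}=\{r=M\}$ lies inside $[M,9M/8]$) and $n_{\mathcal{H}^{+}}=T=\partial_{v}$, so the modification term in $J_{\mu}^{N,\delta,-\frac{1}{2}}$ reduces to a manageable boundary expression. Concretely, I would begin by writing
\begin{equation*}
\int_{\mathcal{H}^{+}}J_{\mu}^{N,\delta,-\frac{1}{2}}(\psi)n^{\mu}_{\mathcal{H}^{+}}
=\int_{\mathcal{H}^{+}}J_{\mu}^{N}(\psi)T^{\mu}-\frac{1}{2}\int_{\mathcal{H}^{+}}\psi\,T\psi.
\end{equation*}
The first term is already the desired quantity $\int_{\mathcal{H}^{+}}J_{\mu}^{N}n^{\mu}_{\mathcal{H}^{+}}$ (and one can check it is pointwise non-negative, of the form $c_{1}(\partial_{v}\psi)^{2}+c_{2}|\nabb\psi|^{2}$). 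The real work is in controlling the extra term $-\frac{1}{2}\int_{\mathcal{H}^{+}}\psi T\psi$ from below.

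Next, I would integrate by parts along the generators of $\mathcal{H}^{+}$. Since $\psi T\psi=\frac{1}{2}T(\psi^{2})$ and the induced measure on $\mathcal{H}^{+}$ is $dv\,M^{2}d\omega$, Fubini gives
\begin{equation*}
-\frac{1}{2}\int_{\mathcal{H}^{+}}\psi\,T\psi
=-\frac{1}{4}\int_{\mathcal{H}^{+}\cap\Sigma_{\tau}}\psi^{2}+\frac{1}{4}\int_{\mathcal{H}^{+}\cap\Sigma_{0}}\psi^{2}.
\end{equation*}
Because I am seeking a lower bound and the $\Sigma_{0}$ boundary term is non-negative, I simply drop it. This reduces the problem to bounding $\int_{\mathcal{H}^{+}\cap\Sigma_{\tau}}\psi^{2}$ from above by the desired quantities on $\Sigma_{\tau}$.

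Here is where the two Hardy inequalities of Section \ref{sec:HardyInequalities} combine perfectly. The second Hardy inequality (Proposition \ref{secondhardy}), applied on $\Sigma_{\tau}$ with any small parameter $\epsilon'>0$, gives
\begin{equation*}
\int_{\mathcal{H}^{+}\cap\Sigma_{\tau}}\psi^{2}\leq \epsilon'\!\!\int_{\Sigma_{\tau}\cap\{r\leq r_{0}\}}\!\!\!\bigl[(\partial_{v}\psi)^{2}+(\partial_{r}\psi)^{2}\bigr]\,+\,C_{\epsilon'}\!\!\int_{\Sigma_{\tau}\cap\{r\leq r_{0}\}}\psi^{2}.
\end{equation*}
The first term on the right is bounded by a constant multiple of $\int_{\Sigma_{\tau}}J_{\mu}^{N}n^{\mu}_{\Sigma_{\tau}}$, since $J^{N}_{\mu}n^{\mu}_{\Sigma_{\tau}}\sim (\partial_{v}\psi)^{2}+(\partial_{r}\psi)^{2}+|\nabb\psi|^{2}$ as noted in Section \ref{sec:TheCutOffDeltaAndTheCurrentJMuNDeltaFrac12}. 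For the zeroth-order term, because $\frac{1}{r^{2}}$ is bounded below on $\{r\leq r_{0}\}$, the first Hardy inequality (Proposition \ref{firsthardy}) yields
\begin{equation*}
\int_{\Sigma_{\tau}\cap\{r\leq r_{0}\}}\psi^{2}\,\lesssim\,\int_{\Sigma_{\tau}}\tfrac{1}{r^{2}}\psi^{2}\,\lesssim\,\int_{\Sigma_{\tau}}D\bigl[(\partial_{v}\psi)^{2}+(\partial_{r}\psi)^{2}\bigr]\,\lesssim\,\int_{\Sigma_{\tau}}J_{\mu}^{T}(\psi)n^{\mu}_{\Sigma_{\tau}},
\end{equation*}
the last inequality using that $J_{\mu}^{T}n^{\mu}_{\Sigma_{\tau}}$ contains a weight $D$ on $(\partial_{r}\psi)^{2}$ (see Section \ref{sec:TheCurrentKT}).

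Choosing $\epsilon'$ proportional to $\epsilon$ then absorbs the derivative term into the allowed $-\epsilon\int_{\Sigma_{\tau}}J^{N}_{\mu}n^{\mu}$ on the right-hand side of the proposition, while the remaining piece is collected into $-C_{\epsilon}\int_{\Sigma_{\tau}}J^{T}_{\mu}n^{\mu}$. I don't expect any serious obstacle: the only point that requires care is that the Hardy 2 inequality produces a derivative term that one could easily mismanage into the wrong flux (a $T$-flux does not control $(\partial_{r}\psi)^{2}$ uniformly near $\mathcal{H}^{+}$, which is why we must use $J^{N}$ and pay the price of a small $\epsilon$).
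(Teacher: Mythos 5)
Your proposal is correct and follows essentially the same route as the paper: use $\delta=1$ and $n_{\mathcal{H}^{+}}=T$ on the horizon, integrate $\psi\partial_{v}\psi=\tfrac{1}{2}\partial_{v}(\psi^{2})$ along the generators, discard the favourable $\Sigma_{0}$ boundary term, and control $\int_{\mathcal{H}^{+}\cap\Sigma_{\tau}}\psi^{2}$ via the second Hardy inequality combined with the first. The only difference is that you spell out the role of each Hardy inequality slightly more explicitly than the paper does.
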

\begin{proof}
On $\mathcal{H}^{+}$ we have $\delta =1$. Therefore,
\begin{equation*}
\begin{split}
J_{\mu}^{N,\delta,-\frac{1}{2}}n^{\mu}_{\mathcal{H}^{+}}&=J_{\mu}^{N}n^{\mu}_{\mathcal{H}^{+}}-\frac{1}{2}\psi\partial_{\mu}\psi T^{\mu}\\
&=J_{\mu}^{N}n^{\mu}_{\mathcal{H}^{+}}-\frac{1}{2}\psi\partial_{v}\psi\\
\end{split}
\end{equation*}
However,
\begin{equation*}
\begin{split}
\int_{\mathcal{H}^{+}}{-2\psi\partial_{v}\psi}&=\int_{\mathcal{H}^{+}}{-\partial_{v}\psi^{2}}\\
&=\int_{\mathcal{H}^{+}\cap\Sigma_{0}}{\psi^{2}}-\int_{\mathcal{H}^{+}\cap\Sigma_{\tau}}{\psi^{2}}.
\end{split}
\end{equation*}
From the first and second Hardy inequality  we have
\begin{equation*}
\int_{\mathcal{H}^{+}\cap\Sigma}{\psi^{2}}\leq C_{\epsilon}\int_{\Sigma}{J_{\mu}^{T}n^{\mu}_{\Sigma}}+\epsilon\int_{\Sigma}{(\partial_{v}\psi)^{2}+(\partial_{r}\psi)^{2}},
\end{equation*}
Therefore,
\begin{equation*}
\int_{\mathcal{H}^{+}\cap\Sigma}{\psi^{2}}\leq \frac{1}{\epsilon}\int_{\Sigma}{J_{\mu}^{T}n^{\mu}_{\Sigma}}+\epsilon\int_{\Sigma}{J_{\mu}^{N}n^{\mu}_{\Sigma}},
\end{equation*}
which completes the proof.
\end{proof}

\section{Uniform Boundedness of Local Observer's Energy}
\label{sec:UniformBoundednessOfLocalObserverSEnergy}
We have all tools in place in order to prove the following theorem
\begin{theorem}
There exists a  constant $C>0$ which depends  on $M$ and $\Sigma_{0}$ such that for all solutions $\psi$ of the wave equation 
\begin{equation}
\int_{\Sigma_{\tau}}{J_{\mu}^{N}(\psi)n^{\mu}_{\Sigma_{\tau}}}\leq C\int_{\Sigma_{0}}{J_{\mu}^{N}(\psi)n^{\mu}_{\Sigma_{0}}}.
\label{nener}
\end{equation}
\label{nenergy}
\end{theorem}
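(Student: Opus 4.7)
The strategy is the vector field method applied to the modified current $J^{N,\delta,-1/2}_{\mu}$ constructed in Section \ref{sec:TheCutOffDeltaAndTheCurrentJMuNDeltaFrac12}. The plan is to apply Stokes' theorem in the region $\mathcal{R}(0,\tau)$, obtaining
\begin{equation*}
\int_{\Sigma_{\tau}}\!J^{N,\delta,-\frac{1}{2}}_{\mu}n^{\mu}_{\Sigma_{\tau}}+\int_{\mathcal{H}^{+}(0,\tau)}\!J^{N,\delta,-\frac{1}{2}}_{\mu}n^{\mu}_{\mathcal{H}^{+}}+\int_{\mathcal{R}(0,\tau)}\!K^{N,\delta,-\frac{1}{2}}=\int_{\Sigma_{0}}\!J^{N,\delta,-\frac{1}{2}}_{\mu}n^{\mu}_{\Sigma_{0}},
\end{equation*}
and then to estimate each term. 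The $\Sigma_{0}$ flux is bounded above by $C\int_{\Sigma_{0}}J^{N}_{\mu}n^{\mu}$ via Corollary \ref{nb1}; the $\Sigma_{\tau}$ flux is bounded below by $\tfrac{1}{2}\int_{\Sigma_{\tau}}J^{N}_{\mu}n^{\mu}-C\int_{\Sigma_{\tau}}J^{T}_{\mu}n^{\mu}$ via Proposition \ref{nb}; and, dropping the non-negative $\int_{\mathcal{H}^+}J^{N}_{\mu}n^{\mu}_{\mathcal{H}^+}$, the horizon boundary term is bounded below by $-C_{\epsilon}\int_{\Sigma_{\tau}}J^{T}_{\mu}n^{\mu}-\epsilon\int_{\Sigma_{\tau}}J^{N}_{\mu}n^{\mu}$ via Proposition \ref{nhb}.

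The core step is the control of the bulk integral. I split it across the three regions $\mathcal{A}=\{M\le r\le 9M/8\}$, $\mathcal{B}=\{9M/8\le r\le 8M/7\}$, and $\mathcal{C}=\{r\ge 8M/7\}$. By Proposition \ref{knprop}, $K^{N,\delta,-1/2}\ge 0$ in $\mathcal{A}$, while $K^{N,\delta,-1/2}\equiv 0$ in $\mathcal{C}$ since there $\delta=0$ and $N=T$. Thus only the integral over $\mathcal{B}$ can have the wrong sign, and since on $\mathcal{B}$ the coordinate $r$ is bounded away from $M$, $2M$, and infinity, all weights appearing in $K^{N,\delta,-1/2}$ are uniformly bounded and
\begin{equation*}
-\int_{\mathcal{R}(0,\tau)}K^{N,\delta,-\frac{1}{2}}\le \int_{\mathcal{R}(0,\tau)\cap\mathcal{B}}\!|K^{N,\delta,-\frac{1}{2}}|\le C\!\!\int_{\mathcal{R}(0,\tau)\cap\mathcal{B}}\!\Big((\partial_{v}\psi)^{2}+(\partial_{r}\psi)^{2}+|\nabb\psi|^{2}+\psi^{2}\Big).
\end{equation*}
The right-hand side is in turn bounded by the degenerate $X$ estimate (part 2 of Theorem \ref{th1}, i.e.\ Theorem \ref{degXprop}) combined with the zeroth-order Morawetz estimate (Theorem \ref{morawetz}), whose integrand weights are uniformly positive on the compact $r$-range of $\mathcal{B}$; this gives an upper bound of $C\int_{\Sigma_{0}}J^{T}_{\mu}n^{\mu}\le C\int_{\Sigma_{0}}J^{N}_{\mu}n^{\mu}$.

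Combining these estimates and using the conservation law $\int_{\Sigma_{\tau}}J^{T}_{\mu}n^{\mu}\le\int_{\Sigma_{0}}J^{T}_{\mu}n^{\mu}\le C\int_{\Sigma_{0}}J^{N}_{\mu}n^{\mu}$ from Proposition \ref{ubdenergy}, the divergence identity reduces to
\begin{equation*}
\Big(\tfrac{1}{2}-\epsilon\Big)\int_{\Sigma_{\tau}}J^{N}_{\mu}n^{\mu}_{\Sigma_{\tau}}\le C_{\epsilon}\int_{\Sigma_{0}}J^{N}_{\mu}n^{\mu}_{\Sigma_{0}},
\end{equation*}
and any choice $\epsilon<1/2$ (e.g.\ $\epsilon=1/4$) yields the theorem. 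The essential difficulty, by contrast with the non-extreme situation of Proposition \ref{redshiftmd}, is that $K^{N,\delta,-1/2}$ is neither pointwise comparable to the non-degenerate energy density $J^{N}_{\mu}n^{\mu}_{\Sigma_{\tau}}$ (it degenerates transversally to $\mathcal{H}^{+}$, as explained in Section \ref{sec:TheSpacetimeTermKN}) nor globally signed, so the indefinite contribution in $\mathcal{B}$ must be absorbed using previously established spacetime estimates. What makes the argument close without any loss of regularity is that those $X$ and Morawetz estimates are themselves controlled by the weaker conserved $T$-flux.
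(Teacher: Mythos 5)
Your proposal is correct and follows essentially the same route as the paper: Stokes' theorem for the modified current $J_{\mu}^{N,\delta,-\frac{1}{2}}$, the boundary terms handled by Propositions \ref{nb}, \ref{nb1} and \ref{nhb}, the bulk term split into the regions $\mathcal{A}$, $\mathcal{B}$, $\mathcal{C}$ with the indefinite contribution in the compact region $\mathcal{B}$ (away from the photon sphere) absorbed by the degenerate $X$ and Morawetz estimates, and the final absorption using the conserved $T$-flux. No gaps.
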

\begin{proof}
Stokes' theorem for the current $J_{\mu}^{N,\delta, -\frac{1}{2}}$ in region $\mathcal{R}(0,\tau)$ gives us
\begin{equation*}
\int_{\Sigma_{\tau}}{J_{\mu}^{N,\delta, -\frac{1}{2}}n^{\mu}_{\Sigma_{\tau}}}+\int_{\mathcal{R}}{K^{N,\delta, -\frac{1}{2}}}+\int_{\mathcal{H}^{+}}{J_{\mu}^{N,\delta, -\frac{1}{2}}n^{\mu}_{\mathcal{H}^{+}}}= \int_{\Sigma_{0}}{J_{\mu}^{N,\delta, -\frac{1}{2}}n^{\mu}_{\Sigma_{0}}}.
\end{equation*}
First observe that the right hand side is controlled by the right hand side of \eqref{nboundary1}. As regards the left hand side, the boundary integrals can be estimated using Propositions \ref{nb} and \ref{nhb}. The spacetime term is non-negative (and thus has the right sign) in region $\mathcal{A}$ , vanishes in region $\mathcal{C}$ and can be estimated in the spatially compact region $\mathcal{B}$ (which does not contain the photon sphere) by the X estimate \eqref{degX} and Morawetz estimate \eqref{moraw}. The result follows from the boundedness of $T$-flux through $\Sigma_{\tau}$.
\end{proof}

\begin{corollary}
There exists a  constant $C>0$ which depends  on $M$ and $\Sigma_{0}$ such that for all solutions $\psi$ of the wave equation 
\begin{equation}
\int_{\mathcal{A}}{K^{N,-\frac{1}{2}}(\psi)}\leq C\int_{\Sigma_{0}}{J_{\mu}^{N}(\psi)n^{\mu}_{\Sigma_{0}}}.
\label{nk}\end{equation}\label{nkcor}
\end{corollary}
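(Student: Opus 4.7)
The plan is to revisit the divergence identity used in the proof of Theorem \ref{nenergy} but, instead of dropping the bulk term on $\mathcal{A}$ by positivity, to solve the identity \emph{for} that bulk term and bound every other piece by $\int_{\Sigma_{0}}J^{N}_{\mu}(\psi)n^{\mu}_{\Sigma_{0}}$. Throughout I interpret $\mathcal{A}$ in the statement as $\mathcal{A}\cap\mathcal{R}(0,\tau)$; since $K^{N,-\frac{1}{2}}(\psi)\geq 0$ on $\mathcal{A}$ by Proposition \ref{knprop}, a bound uniform in $\tau$ will give the claimed estimate on all of $\mathcal{A}$ by monotone convergence.

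First, apply Stokes' theorem to $J^{N,\delta,-\frac{1}{2}}_{\mu}$ on $\mathcal{R}(0,\tau)$:
\begin{equation*}
\int_{\mathcal{R}(0,\tau)}K^{N,\delta,-\frac{1}{2}}=\int_{\Sigma_{0}}J^{N,\delta,-\frac{1}{2}}_{\mu}n^{\mu}_{\Sigma_{0}}-\int_{\Sigma_{\tau}}J^{N,\delta,-\frac{1}{2}}_{\mu}n^{\mu}_{\Sigma_{\tau}}-\int_{\mathcal{H}^{+}(0,\tau)}J^{N,\delta,-\frac{1}{2}}_{\mu}n^{\mu}_{\mathcal{H}^{+}}.
\end{equation*}
Since $\delta\equiv 1$ on $\mathcal{A}$ (so $K^{N,\delta,-\frac{1}{2}}=K^{N,-\frac{1}{2}}$ there) and $\delta\equiv 0$ together with $N=T$ on $\mathcal{C}$ (so $K^{N,\delta,-\frac{1}{2}}=K^{T}=0$ there), the bulk integral splits as
\begin{equation*}
\int_{\mathcal{R}(0,\tau)}K^{N,\delta,-\frac{1}{2}}=\int_{\mathcal{A}\cap\mathcal{R}(0,\tau)}K^{N,-\frac{1}{2}}+\int_{\mathcal{B}\cap\mathcal{R}(0,\tau)}K^{N,\delta,-\frac{1}{2}}.
\end{equation*}
Isolating the first summand yields
\begin{equation*}
\int_{\mathcal{A}\cap\mathcal{R}(0,\tau)}K^{N,-\frac{1}{2}}=\int_{\Sigma_{0}}J^{N,\delta,-\frac{1}{2}}_{\mu}n^{\mu}_{\Sigma_{0}}-\int_{\Sigma_{\tau}}J^{N,\delta,-\frac{1}{2}}_{\mu}n^{\mu}_{\Sigma_{\tau}}-\int_{\mathcal{H}^{+}(0,\tau)}J^{N,\delta,-\frac{1}{2}}_{\mu}n^{\mu}_{\mathcal{H}^{+}}-\int_{\mathcal{B}\cap\mathcal{R}(0,\tau)}K^{N,\delta,-\frac{1}{2}}.
\end{equation*}

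Next, I estimate each of the four terms on the right by $C\int_{\Sigma_{0}}J^{N}_{\mu}(\psi)n^{\mu}_{\Sigma_{0}}$. The $\Sigma_{0}$ flux is controlled by Corollary \ref{nb1}. For the $\Sigma_{\tau}$ contribution, the sign works in our favor: Proposition \ref{nb} gives a lower bound on $\int_{\Sigma_{\tau}}J^{N,\delta,-\frac{1}{2}}_{\mu}n^{\mu}$, so its negative is bounded above by a multiple of $\int_{\Sigma_{\tau}}J^{T}_{\mu}n^{\mu}$, which by $T$-conservation \eqref{boundT} is controlled by the $T$-flux (and hence the $N$-flux) through $\Sigma_{0}$. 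The horizon term is analogous via Proposition \ref{nhb}; the $\epsilon\int_{\Sigma_{\tau}}J^{N}_{\mu}n^{\mu}$ remainder that appears is controlled by Theorem \ref{nenergy} (which has already been established, and so is available here), giving again a bound by $C\int_{\Sigma_{0}}J^{N}_{\mu}n^{\mu}_{\Sigma_{0}}$.

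The only term that is not purely a boundary contribution is the bulk integral over $\mathcal{B}\cap\mathcal{R}(0,\tau)$. Here I would argue exactly as in the proof of Theorem \ref{nenergy}: $\mathcal{B}=\{\tfrac{9M}{8}\leq r\leq\tfrac{8M}{7}\}$ is spatially compact, sits strictly away from both $\mathcal{H}^{+}$ and the photon sphere $\{r=2M\}$, and $K^{N,\delta,-\frac{1}{2}}$ is a quadratic expression in the 1-jet of $\psi$ with smooth bounded coefficients there. Consequently it is pointwise dominated by the non-degenerate integrand appearing in the degenerate $X$ estimate of Theorem \ref{degXprop} together with the zeroth-order Morawetz estimate of Theorem \ref{morawetz}, both of whose right-hand sides are $\lesssim\int_{\Sigma_{0}}J^{T}_{\mu}n^{\mu}_{\Sigma_{0}}\leq\int_{\Sigma_{0}}J^{N}_{\mu}n^{\mu}_{\Sigma_{0}}$. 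Combining these four bounds finishes the proof, and the resulting inequality being uniform in $\tau$ (with $C$ depending only on $M$ and $\Sigma_{0}$) one may let $\tau\to\infty$.

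I do not expect a genuine obstacle here, since all the hard work has already been done: the non-trivial points are the availability of Theorem \ref{nenergy} (needed to absorb the $\epsilon$-term arising from Proposition \ref{nhb}) and the fact that $\mathcal{B}$ avoids both degenerate regions so the $X$ and Morawetz estimates suffice. The mildest pitfall would be to overlook that one really needs Theorem \ref{nenergy} rather than just the $T$-conservation law, since the horizon estimate of Proposition \ref{nhb} leaves behind a genuine $N$-flux on $\Sigma_{\tau}$.
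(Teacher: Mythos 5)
Your proof is correct and is essentially the paper's own argument: Corollary \ref{nkcor} is stated without a separate proof precisely because it follows by re-reading the Stokes identity for $J_{\mu}^{N,\delta,-\frac{1}{2}}$ from the proof of Theorem \ref{nenergy}, keeping the non-negative bulk term on $\mathcal{A}$ rather than discarding it, and bounding the remaining boundary terms and the $\mathcal{B}$-bulk term exactly as you do (via Propositions \ref{nb}, \ref{nhb}, Corollary \ref{nb1}, the $T$-energy conservation, the already-established Theorem \ref{nenergy}, and the $X$ and Morawetz estimates on the compact region $\mathcal{B}$). Your observation that Theorem \ref{nenergy} itself is needed to absorb the $\epsilon\int_{\Sigma_{\tau}}J^{N}_{\mu}n^{\mu}$ remainder from Proposition \ref{nhb} is exactly the right point of care.
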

This corollary and \eqref{x} give us a spacetime integral where the only weight that locally degenerates (to first order) is that of the derivative tranversal to $\mathcal{H}^{+}$. Recall that in the  subextreme Reissner-Nordstr\"{o}m case there is no such degeneration. In the next section, this degeneracy is removed provided $\psi_{0}=0$. This condition is necessary as is shown in Section \ref{sec:ConservationLawsOnDegenerateEventHorizons}.

\begin{corollary}
There exists a  constant $C>0$ which depends  on $M$ and $\Sigma_{0}$ such that for all solutions $\psi$ of the wave equation 
\begin{equation}
\int_{\mathcal{H}^{+}}{J_{\mu}^{N}(\psi)n^{\mu}_{\mathcal{H}^{+}}}\leq C\int_{\Sigma_{0}}{J_{\mu}^{N}(\psi)n^{\mu}_{\Sigma_{0}}}.
\label{nhh}
\end{equation}
\end{corollary}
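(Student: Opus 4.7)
The plan is to reuse the divergence identity from the proof of Theorem \ref{nenergy}, but this time to solve it for the horizon flux rather than for the flux on $\Sigma_\tau$. Applying Stokes' theorem to the modified current $J_\mu^{N,\delta,-\frac{1}{2}}$ in the region $\mathcal{R}(0,\tau)$ and rearranging yields
\begin{equation*}
\int_{\mathcal{H}^{+}}{J_{\mu}^{N,\delta,-\frac{1}{2}}(\psi)n^{\mu}_{\mathcal{H}^{+}}}
= \int_{\Sigma_{0}}{J_{\mu}^{N,\delta,-\frac{1}{2}}(\psi)n^{\mu}_{\Sigma_{0}}}
- \int_{\Sigma_{\tau}}{J_{\mu}^{N,\delta,-\frac{1}{2}}(\psi)n^{\mu}_{\Sigma_{\tau}}}
- \int_{\mathcal{R}(0,\tau)}{K^{N,\delta,-\frac{1}{2}}(\psi)}.
\end{equation*}
The first plan is to bound the left-hand side from above by $C\int_{\Sigma_{0}}J_{\mu}^{N}n^{\mu}_{\Sigma_{0}}$, and then to use Proposition \ref{nhb} to pass from the modified flux on $\mathcal{H}^+$ to the genuine $J^N$ flux.

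For the three terms on the right-hand side: the $\Sigma_0$ boundary integral is controlled directly by $C\int_{\Sigma_{0}}J_{\mu}^{N}n^{\mu}_{\Sigma_{0}}$ via Corollary \ref{nb1}. For the $\Sigma_\tau$ boundary integral (which enters with a minus sign) I would invoke Proposition \ref{nb}: discarding the non-negative $\frac{1}{2}J^N_\mu n^\mu$ contribution leaves $-\int_{\Sigma_\tau}J_\mu^{N,\delta,-\frac{1}{2}}n^\mu \leq C\int_{\Sigma_\tau}J^T_\mu n^\mu$, which by the $T$-conservation law \eqref{boundT} is bounded by $C\int_{\Sigma_{0}}J^T_\mu n^\mu_{\Sigma_0} \leq C\int_{\Sigma_{0}}J^N_\mu n^\mu_{\Sigma_0}$. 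For the bulk term $-\int_\mathcal{R}K^{N,\delta,-\frac{1}{2}}$, I decompose the region into $\mathcal{A}$, $\mathcal{B}$, $\mathcal{C}$ as in the proof of Theorem \ref{nenergy}: in $\mathcal{A}$ Proposition \ref{knprop} gives $K^{N,-\frac{1}{2}}\geq 0$ so this piece has the right sign, in $\mathcal{C}$ the integrand vanishes, and in the spatially compact trapping-free region $\mathcal{B}$ the degenerate $X$-estimate \eqref{degX} together with the zeroth order Morawetz estimate \eqref{moraw} controls the bulk by $C\int_{\Sigma_0}J^T_\mu n^\mu_{\Sigma_0}$.

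Combining these three bounds yields $\int_{\mathcal{H}^{+}}J_{\mu}^{N,\delta,-\frac{1}{2}}n^{\mu}_{\mathcal{H}^{+}} \leq C\int_{\Sigma_{0}}J_{\mu}^{N}n^{\mu}_{\Sigma_{0}}$. The last step is to apply Proposition \ref{nhb} with $\epsilon$ sufficiently small: it gives
\begin{equation*}
C\int_{\mathcal{H}^{+}}{J_{\mu}^{N}(\psi)n^{\mu}_{\mathcal{H}^{+}}}
\leq \int_{\mathcal{H}^{+}}{J_{\mu}^{N,\delta,-\frac{1}{2}}(\psi)n^{\mu}_{\mathcal{H}^{+}}}
+ C_{\epsilon}\int_{\Sigma_{\tau}}{J_{\mu}^{T}n^{\mu}_{\Sigma_{\tau}}}
+ \epsilon\int_{\Sigma_{\tau}}{J_{\mu}^{N}n^{\mu}_{\Sigma_{\tau}}}.
\end{equation*}
The first term on the right is already controlled by initial data, the second by the $T$-conservation law, and the third by the uniform boundedness result \eqref{nener} of Theorem \ref{nenergy} (yielding $\epsilon C\int_{\Sigma_{0}}J^N_\mu n^\mu_{\Sigma_0}$, which is absorbable or simply kept with a constant). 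This gives the desired estimate.

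The only conceptually subtle point — and where the extreme case genuinely differs from the subextreme one — is that the flux $\int_{\mathcal{H}^+}J^N_\mu n^\mu_{\mathcal{H}^+}$ is not directly controlled by $\int_{\mathcal{H}^+}J^{N,\delta,-\frac{1}{2}}_\mu n^\mu_{\mathcal{H}^+}$ by a pointwise inequality; the modification introduces $-\frac{1}{2}\psi\,\partial_v\psi = -\frac{1}{4}\partial_v(\psi^2)$ which, after integration along the null generators, reduces to a difference of the zeroth-order $L^2$-norms of $\psi$ on $\mathcal{H}^+\cap\Sigma_\tau$ and $\mathcal{H}^+\cap\Sigma_0$. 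Controlling these requires the first and second Hardy inequalities (Propositions \ref{firsthardy}, \ref{secondhardy}), which is precisely what is packaged into Proposition \ref{nhb}. Once this step is in place the argument is a routine rearrangement of the energy identity, so no genuinely new obstacle arises beyond the careful use of the Hardy inequalities.
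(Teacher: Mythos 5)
Your proof is correct and is precisely the argument the paper intends: the corollary is stated without proof because it follows by rearranging the same energy identity for $J_{\mu}^{N,\delta,-\frac{1}{2}}$ used in Theorem \ref{nenergy}, keeping the horizon flux on one side and invoking Propositions \ref{nb}, \ref{nhb}, Corollary \ref{nb1}, the $T$-flux conservation, the estimates \eqref{degX}, \eqref{moraw} in region $\mathcal{B}$, and the already-established bound \eqref{nener}. Your closing remark correctly identifies the only delicate point, namely that the passage from the modified to the genuine horizon flux is not pointwise and requires the Hardy inequalities packaged in Proposition \ref{nhb}.
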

One application of the above theorem is the following  Morawetz estimate which does not degenerate at $\mathcal{H}^{+}$.
\begin{proposition}
There exists a  constant $C>0$ which depends  on $M$ and $\Sigma_{0}$ such that for all solutions $\psi$ of the wave equation 
\begin{equation}
\int_{\mathcal{A}}{\psi^{2}}\leq C\int_{\Sigma_{0}}{J_{\mu}^{N}(\psi)n^{\mu}_{\Sigma_{0}}}.
\label{nodmoraw}
\end{equation}
\label{moranondeg}
\end{proposition}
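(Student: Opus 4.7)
The plan is to combine the Third Hardy Inequality (Proposition \ref{thirdhardy}) with the three main spacetime estimates already derived: the zeroth order Morawetz estimate, the coercivity of the modified redshift current $K^{N,-1/2}$, and the degenerate $X$ estimate. The Third Hardy Inequality lets one exchange the spacetime $L^{2}$ norm of $\psi$ on $\mathcal{A}$ for the same norm on a buffer region $\mathcal{B}$ plus a spacetime bulk integral weighted by $D$, and each of these three pieces is precisely what the previous sections are designed to control.

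First I would fix $r_{0},r_{1}$ with $M<r_{0}<r_{1}<2M$ (choosing $r_{0}$ to be the same constant appearing in $\mathcal{A}$; shrinking $\mathcal{A}$ if necessary is harmless) and apply Proposition \ref{thirdhardy} to obtain
\begin{equation*}
\int_{\mathcal{A}}\psi^{2}\,\leq\, C\int_{\mathcal{B}}\psi^{2}\,+\,C\int_{\mathcal{A}\cup\mathcal{B}}D\bigl[(\partial_{v}\psi)^{2}+(\partial_{r}\psi)^{2}\bigr].
\end{equation*}
The region $\mathcal{B}$ is spatially compact and bounded away from both $\mathcal{H}^{+}$ and the photon sphere, so $D$ and $r$ are pinched between positive constants there; hence $\int_{\mathcal{B}}\psi^{2}\lesssim\int_{\mathcal{B}}\frac{D}{r^{4}}\psi^{2}$, which is bounded by $C\int_{\Sigma_{0}}J^{T}_{\mu}(\psi)n^{\mu}_{\Sigma_{0}}$ by the zeroth order Morawetz estimate (Theorem \ref{morawetz}), and a fortiori by the $N$-flux.

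For the bulk term I would split $\mathcal{A}\cup\mathcal{B}$ into its two pieces. On $\mathcal{A}$, since $D\leq\sqrt{D}\leq 1$, Proposition \ref{knprop} gives $D\bigl[(\partial_{v}\psi)^{2}+(\partial_{r}\psi)^{2}\bigr]\leq (\partial_{v}\psi)^{2}+\sqrt{D}(\partial_{r}\psi)^{2}\lesssim K^{N,-1/2}(\psi)$ pointwise, and Corollary \ref{nkcor} bounds the spacetime integral of this by $C\int_{\Sigma_{0}}J^{N}_{\mu}(\psi)n^{\mu}_{\Sigma_{0}}$. On $\mathcal{B}$ the weight $D$ is comparable to $1$, and the degenerate $X$ estimate of Theorem \ref{degXprop} controls $\int_{\mathcal{B}}(\partial_{r^{*}}\psi)^{2}$ without any loss, since the weight $\frac{1}{r^{4}}$ is uniformly positive on $\mathcal{B}$; using $\partial_{r^{*}}=\partial_{v}+D\partial_{r}$ in $(v,r)$ coordinates together with $D\sim 1$ on $\mathcal{B}$, this gives $\int_{\mathcal{B}}(\partial_{v}\psi)^{2}+(\partial_{r}\psi)^{2}\leq C\int_{\Sigma_{0}}J^{T}_{\mu}(\psi)n^{\mu}_{\Sigma_{0}}$.

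The main obstacle is really conceptual rather than computational: one cannot hope to run this argument with only the degenerate $T$-energy on the right-hand side, because the Hardy term $\int_{\mathcal{A}}D[(\partial_{v}\psi)^{2}+(\partial_{r}\psi)^{2}]$ is too weak to be controlled by the $X$-estimates alone in a neighbourhood of $\mathcal{H}^{+}$ (the factor of $D$ degenerates at $\mathcal{H}^{+}$, but the $X$-bulk $\frac{1}{r^{4}}(\partial_{r^{*}}\psi)^{2}$ only controls $(\partial_{v}\psi+D\partial_{r}\psi)^{2}$, which is not enough to recover a non-degenerate $(\partial_{r}\psi)^{2}$ bound). The coercivity of $K^{N,-1/2}$ in $\mathcal{A}$, which genuinely uses the non-degenerate $N$-energy, is what closes the argument; this is consistent with the forthcoming conservation laws on $\mathcal{H}^{+}$ in Section \ref{sec:ConservationLawsOnDegenerateEventHorizons}, which show that no comparable estimate can hold in terms of the $T$-energy for generic data.
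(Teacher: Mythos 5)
Your proposal is correct and follows essentially the same route as the paper: the third Hardy inequality, the zeroth order Morawetz estimate for the $\int_{\mathcal{B}}\psi^{2}$ term, and the combination of the degenerate $X$ estimate \eqref{degX} with Proposition \ref{knprop} and Corollary \ref{nkcor} for the bulk term. Your closing remark correctly identifies why the non-degenerate $N$-flux (rather than the $T$-flux alone) is needed on the right-hand side near $\mathcal{H}^{+}$.
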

\begin{proof}
The third Hardy inequality gives us
\begin{equation*}
\int_{\mathcal{A}}{\psi^{2}}\leq C\int_{\mathcal{B}}{\psi^{2}}+C\int_{\mathcal{A}\cup\mathcal{B}}{D\left((\partial_{v}\psi)^{2}+(\partial_{r}\psi)^{2},\right)}
\end{equation*}
where $C$ is a uniform positive constant that depends only on $M$ and the regions $\mathcal{A}, \mathcal{B}$. The integral over $\mathcal{B}$ of $\psi^{2}$ can be estimated using \eqref{moraw} and the last integral on the right hand side can be estimated using \eqref{degX} and Propositions  \ref{knprop} and Corollary \ref{nkcor}.
\end{proof}

\begin{remark}
Note that all the above estimates hold if we replace the foliation $\Sigma_{\tau}$ with the foliation $\tilde{\Sigma}_{\tau}$ which terminates at $\mathcal{I}^{+}$ since the only difference is a boundary integral over $\mathcal{I}^{+}$ of the right sign that arises every time we apply Stokes' theorem. The remaining local estimates are exactly the same. However, the advantage of $\tilde{\Sigma}_{\tau}$ over $\Sigma_{\tau}$ will become apparent in Section \ref{sec:EnergyDecay}.
\end{remark}

\section{Commuting with a Vector Field Transversal to $\mathcal{H}^{+}$}
\label{sec:CommutingWithAVectorFieldTransversalToMathcalH}

If we commute the wave equation with $T=\partial_{v}$ then we obtain  the previous estimates  for the second order derivatives of $\psi$ which involve  $\partial_{v}$. Next we commute the wave equation $\Box_{g}\psi=0$ with the transversal to the horizon vector field $\partial_{r}$ aiming at controlling all the second derivatives of $\psi$ (on the spacelike hypersurfaces and the spacetime region up to and including the horizon $\mathcal{H}^{+}$). Such commutations first appeared in \cite{dr7} and were used in a more general setting for subextreme black holes in \cite{md}.

\subsection{Commutation with the Vector Field $\partial_{r}$}
\label{sec:CommutationWithTheVectorFieldPartialR}

We  compute the commutator $\left[\Box_{g},\partial_{r}\right]$. First note that 
\begin{equation*}
\Box_{g}\left(\partial_{r}\psi\right)=D\partial_{r}\partial_{r}\partial_{r}\psi+2\partial_{v}\partial_{r}\partial_{r}\psi+\frac{2}{r}\partial_{v}\partial_{r}\psi+R\partial_{r}\partial_{r}\psi+\lapp\partial_{r}\psi,
\end{equation*}
where $R=D'+\frac{D}{2r}$, $D'=\frac{d D}{dr}$, and
\begin{equation*}
\begin{split}
\partial_{r}\left(\Box_{g}\psi\right)&=D\partial_{r}\partial_{r}\partial_{r}\psi+2\partial_{r}\partial_{v}\partial_{r}\psi+\frac{2}{r}\partial_{r}\partial_{v}\psi+R\partial_{r}\partial_{r}\psi+\partial_{r}\lapp\psi\\
&\ \ \ \ +D'\partial_{r}\partial_{r}\psi-\frac{2}{r^{2}}\partial_{v}\psi+R'\partial_{r}\psi.\\
\end{split}
\end{equation*}
Therefore,
\begin{equation*}
\begin{split}
\Box_{g}\left(\partial_{r}\psi\right)-\partial_{r}\left(\Box_{g}\psi\right)=\lapp\partial_{r}\psi-\partial_{r}\lapp\psi-D'\partial_{r}\partial_{r}\psi+\frac{2}{r^{2}}\partial_{v}\psi-R'\partial_{r}\psi.
\end{split}
\end{equation*}
Since
\begin{equation}
\begin{split}
\left[\lapp,\partial_{r}\right]\psi=\frac{2}{r}\lapp\psi,
\label{comsphr}
\end{split}
\end{equation}
we obtain
\begin{equation*}
\begin{split}
\left[\Box_{g},\partial_{r}\right]\psi=-D'\partial_{r}\partial_{r}\psi+\frac{2}{r^{2}}\partial_{v}\psi-R'\partial_{r}\psi+\frac{2}{r}\lapp\psi.
\end{split}
\end{equation*}
In case  $\psi$ solves the wave equation $\Box_{g}\psi=0$ we have
\begin{equation}
\Box_{g}\left(\partial_{r}\psi\right)=D'\partial_{r}\partial_{r}\psi+\frac{2}{r^{2}}\partial_{v}\psi-R'\partial_{r}\psi+\frac{2}{r}\lapp\psi.
\label{comm1}
\end{equation}

\subsection{The Multiplier $L$ and the Energy Identity}
\label{sec:TheMultiplierLAndTheEnergyIdentity}

For any solution $\psi$ of the wave equation we have complete control of the second order derivatives of $\psi$ away from  $\mathcal{H}^{+}$ since
\begin{equation}
\begin{split}
\left\|\psi\right\|^{2}_{\overset{.}{H}^{2}\left(\Sigma_{\tau}\cap\left\{M<r_{0}\leq r\leq r_{1}<2M\right\}\right)}&=\left\|\left|\nabla\nabla\psi\right|\right\|^{2}_{L^{2}\left(\Sigma_{\tau}\cap\left\{M<r_{0}\leq r\right\}\right)}\\ &\leq C\int_{\Sigma_{0}}{J^{T}_{\mu}\left(\psi\right)n^{\mu}_{\Sigma_{0}}}+C\int_{\Sigma_{0}}{J^{T}_{\mu}\left(T\psi\right)n^{\mu}_{\Sigma_{0}}},
\end{split}
\label{hypersurawayH}
\end{equation}
where $C$ depends  on $M$ and $\Sigma_{0}$ and $\nabla\nabla$ denotes a 2-tensor on the Riemannian manifold $\Sigma_{\tau}$ and $\left|-\right|$ denotes its induced norm (see Appendix \ref{sec:EllipticEstimates}). Note that we have used the first Hardy inequality to control the zeroth order terms. Similarly, away from $\mathcal{H}^{+}$, we  have control of the bulk integrals of the second order derivatives since
\begin{equation}
\left\|\partial_{a}\partial_{b}\psi\right\|^{2}_{L^{2}\left(\mathcal{R}\left(0,\tau\right)\cap\left\{M<r_{0}\leq r\leq r_{1}<2M\right\}\right)}\leq C\int_{\Sigma_{0}}{J^{T}_{\mu}\left(\psi\right)n^{\mu}_{\Sigma_{0}}}+C\int_{\Sigma_{0}}{J^{T}_{\mu}\left(T\psi\right)n^{\mu}_{\Sigma_{0}}}
\label{bulkawayH}
\end{equation}
where $C$ depends  on $M$, $r_{0}$, $r_{1}$ and $\Sigma_{0}$ and $a,b\in\left\{v,r,\nabb \right\}$. This is proved using \eqref{moraw} and local elliptic estimates (appendix \ref{sec:EllipticEstimates}). 

In order to understand the behaviour of the second order derivatives of the wave $\psi$ in a neighbourhood of  $\mathcal{H}^{+}$ we will use the vector field method.  We will construct an appropriate future directed timelike $\phi_{\tau}$-invariant vector field 
\begin{equation*}
L=f_{v}\partial_{v}+f_{r}\partial_{r},
\end{equation*} 
which will be used as our multiplier. In view of our discussion above, the vector field $L$ will be of interest only in a neighbourhood of the horizon. That is why, $L$ will be spatially compactly supported. More precisely, we will construct $L$ such that $L=\textbf{0}$ in $r\geq r_{1}$ and $L$ timelike in the region $M\leq r < r_{1}$. In particular, we will be interested in the region $M\leq r\leq r_{0}<r_{1}$. Note that $r_{0},r_{1}$ are constants which will be determined later on. The regions $\mathcal{A},\mathcal{B}$ are depicted below
 \begin{figure}[H]
	\centering
		\includegraphics[scale=0.14]{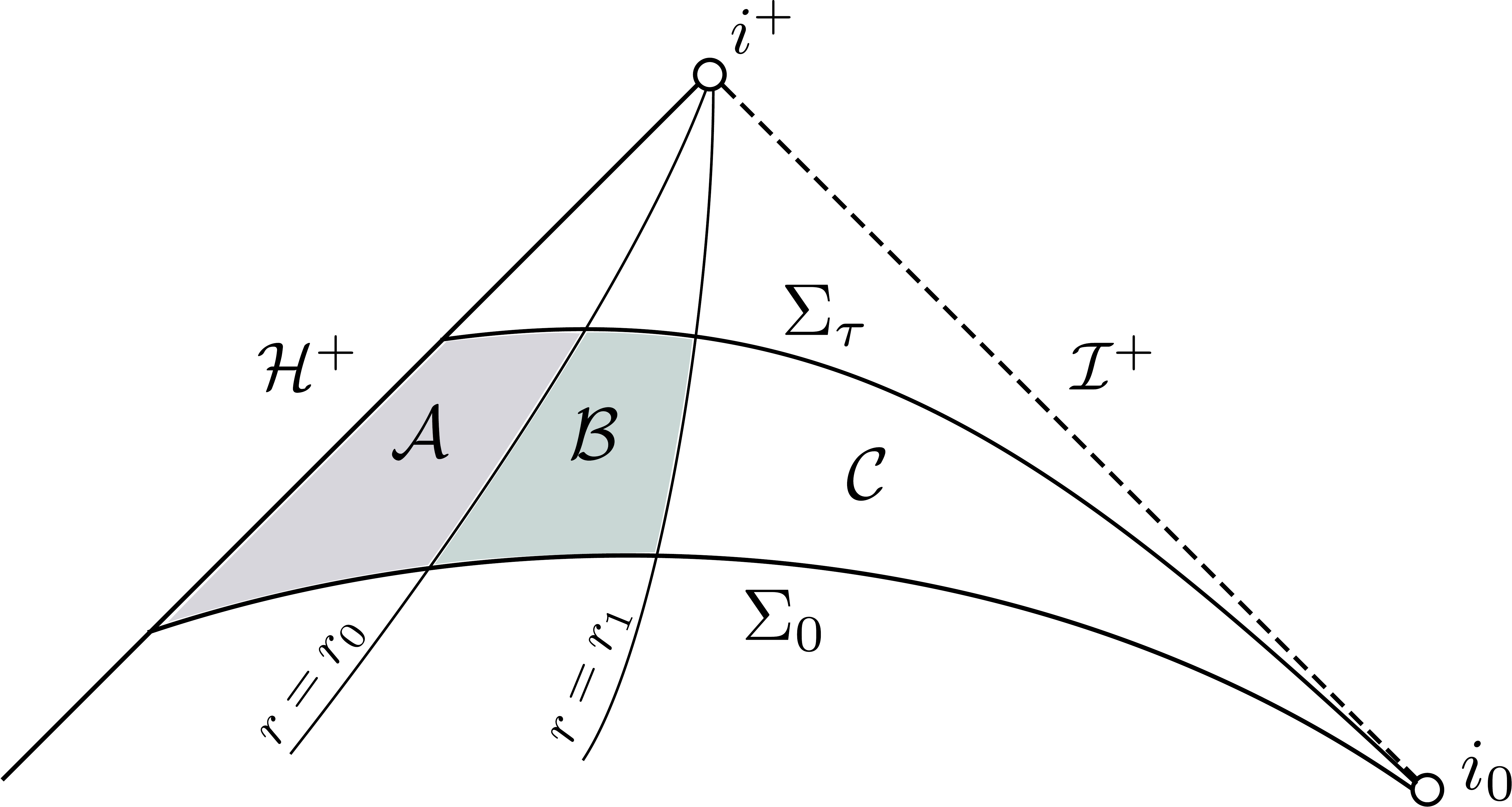}
	\label{fig:ernt7}
\end{figure}
For simplicity we will write $\mathcal{R}$ instead of $\mathcal{R}\left(0,\tau\right)$, $\mathcal{A}$ instead of $\mathcal{A}\left(0,\tau\right)$, etc. 
The ``energy'' identity for the current $J_{\mu}^{L}\left(\partial_{r}\psi\right)$ is 
\begin{equation}
\int_{\Sigma_{\tau}}{J_{\mu}^{L}\left(\partial_{r}\psi\right)n^{\mu}_{\Sigma_{\tau}}}+\int_{\mathcal{R}}{\nabla^{\mu}J_{\mu}^{L}\left(\partial_{r}\psi\right)}+\int_{\mathcal{H}^{+}}{J_{\mu}^{L}\left(\partial_{r}\psi\right)n^{\mu}_{\mathcal{H}^{+}}}=\int_{\Sigma_{0}}{J_{\mu}^{L}\left(\partial_{r}\psi\right)n^{\mu}_{\Sigma_{0}}}.
\label{eiL}
\end{equation}
The right hand side is controlled by the initial data and thus bounded. Also since $L$ is timelike in the compact region $\mathcal{A}$ we have from \eqref{GENERALT} of Appendix \ref{sec:TheHyperbolicityOfTheWaveEquation1}:
\begin{equation}
J_{\mu}^{L}\left(\partial_{r}\psi\right)n^{\mu}_{\Sigma_{\tau}}\sim \left(\partial_{v}\partial_{r}\psi\right)^{2}+\left(\partial_{r}\partial_{r}\psi\right)^{2}+\left|\nabb\partial_{r}\psi\right|^{2},
\label{Lhyper1}
\end{equation}
where the constants in $\sim$ depend on $M$, $\Sigma_{0}$ and $L$. Furthermore, on the horizon (where again $L$ is timelike) we have 
\begin{equation}
J_{\mu}^{L}\left(\partial_{r}\psi\right)n^{\mu}_{\mathcal{H}^{+}}=f_{v}(M)(\partial_{v}\partial_{r}\psi)^{2}-\frac{f_{r}(M)}{2}\left|\nabb\partial_{r}\psi\right|^{2}. 
\label{horizonL1}
\end{equation}
Therefore, the term that remains to be understood is the bulk integral. Since $\partial_{r}\psi$ does not satisfy the wave equation, we have
\begin{equation*}
\begin{split}
\nabla^{\mu}J_{\mu}^{L}\left(\partial_{r}\psi\right)&=K^{L}\left(\partial_{r}\psi\right)+\mathcal{E}^{L}\left(\partial_{r}\psi\right)\\
&=K^{L}\left(\partial_{r}\psi\right)+\left(\Box_{g}\left(\partial_{r}\psi\right)\right)L\left(\partial_{r}\psi\right).
\end{split}
\end{equation*}
We know that
\begin{equation*}
\begin{split}
K^{L}\left(\partial_{r}\psi\right)=F_{vv}\left(\partial_{v}\partial_{r}\psi\right)^{2}+F_{rr}\left(\partial_{r}\partial_{r}\psi\right)^{2}+F_{\scriptsize{\nabb}}\left|\nabb\partial_{r}\psi\right|^{2}+F_{vr}\left(\partial_{v}\partial_{r}\psi\right)\left(\partial_{r}\partial_{r}\psi\right)
\end{split}
\end{equation*}
where
\begin{equation*}
\begin{split}
&F_{vv}=\left(\partial_{r}f_{v}\right),\\
&F_{rr}=D\left[\frac{\left(\partial_{r}f_{r}\right)}{2}-\frac{f_{r}}{r}\right]-D'\cdot\frac{f_{r}}{2},  \\
&F_{\scriptsize{\nabb}}=-\frac{1}{2}\left(\partial_{r}f_{r}\right),\\
&F_{vr}=D\left(\partial_{r}f_{v}\right)-\frac{2f_{r}}{r}.\\
\end{split}
\end{equation*}
In view of equation \eqref{comm1} we have
\begin{equation*}
\begin{split}
\mathcal{E}^{L}\left(\partial_{r}\psi\right)&=\left(\Box_{g}\left(\partial_{r}\psi\right)\right)L\left(\partial_{r}\psi\right)\\
=&\left[D'\partial_{r}\partial_{r}\psi+\frac{2}{r^{2}}\partial_{v}\psi-R'\partial_{r}\psi+\frac{2}{r}\lapp\psi\right]L\left(\partial_{r}\psi\right)\\
=&-D'
f_{r}\left(\partial_{r}\partial_{r}\psi\right)^{2}-D'f_{v}\left(\partial_{v}\partial_{r}\psi\right)\left(\partial_{r}\partial_{r}\psi\right)-R'f_{v}\left(\partial_{v}\partial_{r}\psi\right)\left(\partial_{r}\psi\right)\\
&+2\frac{f_{v}}{r^{2}}\left(\partial_{v}\partial_{r}\psi\right)\left(\partial_{v}\psi\right)+2\frac{f_{r}}{r^{2}}\left(\partial_{r}\partial_{r}\psi\right)\left(\partial_{v}\psi\right)-R'f_{r}\left(\partial_{r}\partial_{r}\psi\right)\left(\partial_{r}\psi\right)\\
&+2\frac{f_{v}}{r}(\partial_{v}\partial_{r}\psi)\lapp\psi+2\frac{f_{r}}{r}(\partial_{r}\partial_{r}\psi)\lapp\psi.
\end{split}
\end{equation*}
Therefore, we can write
\begin{equation*}
\begin{split}
\nabla^{\mu}J_{\mu}^{L}\left(\partial_{r}\psi\right)=&H_{1}\left(\partial_{v}\partial_{r}\psi\right)^{2}+H_{2}\left(\partial_{r}\partial_{r}\psi\right)^{2}+H_{3}\left|\nabb\partial_{r}\psi\right|^{2}+\\
&+H_{4}\left(\partial_{v}\partial_{r}\psi\right)\left(\partial_{v}\psi\right)+H_{5}\left(\partial_{v}\partial_{r}\psi\right)\left(\partial_{r}\psi\right)\\&+H_{6}\left(\partial_{r}\partial_{r}\psi\right)\left(\partial_{v}\psi\right)+H_{7}(\partial_{v}\partial_{r}\psi)\lapp\psi\\&+H_{8}(\partial_{r}\partial_{r}\psi)\lapp\psi+H_{9}\left(\partial_{v}\partial_{r}\psi\right)\left(\partial_{r}\partial_{r}\psi\right)+H_{10}\left(\partial_{r}\partial_{r}\psi\right)\left(\partial_{r}\psi\right),
\end{split}
\end{equation*}
where the coefficients $H_{i},i=1,...,10$ are given by
\begin{equation}
\begin{split}
&H_{1}=\left(\partial_{r}f_{v}\right),\  H_{2}=D\left[\frac{\left(\partial_{r}f_{r}\right)}{2}-\frac{f_{r}}{r}\right]-\frac{3D'}{2}f_{r},\  H_{3}=-\frac{1}{2}\left(\partial_{r}f_{r}\right),\\
&H_{4}=+2\frac{f_{v}}{r^{2}}, \  H_{5}=-f_{v}R', \ H_{6}=+2\frac{f_{r}}{r^{2}},\  H_{7}=2\frac{f_{v}}{r},\  H_{8}=2\frac{f_{r}}{r},\\
&H_{9}=D\left(\partial_{r}f_{v}\right)-D'f_{v}-2\frac{f_{r}}{r} ,\  H_{10}=-f_{r}R'.
\end{split}
\label{listH}
\end{equation}
Since $L$ is a future directed timelike vector field we have $f_{v}\left(r\right)>0$ and $f_{r}\left(r\right)<0$. By taking $\partial_{r}f_{v}\left(M\right)$ sufficiently large we can make $H_{1}$ positive close to the horizon $\mathcal{H}^{+}$. Also since the term $D$ vanishes on the horizon to second order and the terms $R, D'$ to first order and since $f_{r}\left(M\right)<0$, the coefficient $H_{2}$ is positive close to $\mathcal{H}^{+}$ (and vanishes to first order on it). For the same reason we have $H_{9}D\leq\frac{H_{2}}{10}$  and $\left(H_{9}R\right)^{2}\leq\frac{H_{2}}{10}$ close to $\mathcal{H}^{+}$. Moreover, by taking  $-\partial_{r}f_{r}\left(M\right)$ sufficiently large we can also make the coefficient $H_{3}$ positive close to $\mathcal{H}^{+}$ such that $H_{9}<\frac{H_{3}}{10}$. Indeed, it suffices to consider $f_{r}$ such that $-\frac{f_{r}(M)}{M}<-\frac{\partial_{r}f_{r}(M)}{25}$ and then by continuity we have the previous inequality close to  $\mathcal{H}^{+}$. Therefore, we consider $M<r_{0}<2M$ such that in region $\mathcal{A}=\left\{M\leq r\leq r_{0}\right\}$ we have 
\begin{equation}
\begin{split}
& f_{v}> 1,\  \partial_{r}f_{v}> 1,\  -f_{r}>1,\  H_{1}>1,\  H_{2}\geq 0,\  H_{3}> 1,\\
& H_{8}<\frac{H_{3}}{10},\  H_{9}D\leq\frac{H_{2}}{10},\ \left(H_{9}R\right)^{2}\leq\frac{H_{2}}{10},\  H_{9}<\frac{H_{3}}{10}.
\end{split}
\label{listL}
\end{equation}
Clearly, $r_{0}$ depends only on $M$ and the precise choice for $L$ close to $\mathcal{H}^{+}$. In order to define $L$ globally, we just extend $f_{v}$ and $f_{r}$ such that 
\begin{equation*}
\begin{split}
&f_{v}>0 \text{ for all } r<r_{1}  \text{ and } f_{v}=0 \text{ for all } r\geq r_{1},\\
-&f_{r}>0 \text{ for all } r<r_{1} \text{ and } f_{r}=0 \text{ for all } r\geq r_{1},
\end{split}
\end{equation*}
for some $r_{1}$ such that $r_{0}<r_{1}<2M$. Again, $r_{1}$ depends only on $M$ (and the precise choice for $L$). Clearly, $L$ depends only on $M$ and thus all the functions that involve the components of $L$ depend only on $M$.

\subsection{Estimates of the Spacetime Integrals}
\label{sec:EstimatesOfTheBulkTerms}

It suffices to estimate the remaining 7 integrals with coef{}ficients $H_{i}$'s with $i=4,...,10$. Note that all these coef{}ficiens do not vanish on the horizon. We will prove that each of these integrals can by estimated by the $N$ flux for $\psi$ and $T\psi$ and a small (epsilon) portion of the good terms in $K^{L}(\partial_{r}\psi)$. First we prove the following propositions.

\begin{proposition}
For all solutions $\psi$ of the wave equation and  any positive number $\epsilon$ we have
\begin{equation*}
\begin{split}
\int_{\mathcal{A}}{(\partial_{r}\psi)^{2}}\leq &\int_{\mathcal{A}}{\epsilon H_{1}(\partial_{v}\partial_{r}\psi)^{2}+\epsilon H_{2}(\partial_{r}\partial_{r}\psi)^{2}}\\ &+C_{\epsilon}\int_{\Sigma_{0}}{J_{\mu}^{N}(\psi)n^{\mu}_{\Sigma_{0}}}+C_{\epsilon}\int_{\Sigma_{0}}{J_{\mu}^{N}(T\psi)n^{\mu}_{\Sigma_{0}}},
\end{split}
\end{equation*}
where the constant $C_{\epsilon}$ depends  on $M$, $\Sigma_{0}$ and $\epsilon$.
\label{1comprop}
\end{proposition}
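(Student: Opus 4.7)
The plan is to split the region $\mathcal{A}$ into a thin shell $\mathcal{A}_{1}$ near $\mathcal{H}^{+}$ and its complement $\mathcal{A}\setminus\mathcal{A}_{1}$ bounded away from the horizon. On $\mathcal{A}\setminus\mathcal{A}_{1}$ I will use the non-degenerate $X$ estimate (Theorem \ref{xtheo}), which requires commuting the wave equation with $T$. On $\mathcal{A}_{1}$ I will apply the third Hardy inequality (Proposition \ref{thirdhardy}) to $\partial_{r}\psi$; the key is that the bulk weight $D\sim (r-M)^{2}$ produced there vanishes to higher order at $\mathcal{H}^{+}$ than the weight $H_{2}\sim (r-M)$ appearing on the right-hand side of the target estimate, so that on a sufficiently thin shell the Hardy bulk term can be absorbed into $\int H_{2}(\partial_{r}^{2}\psi)^{2}$ with an $\epsilon$-small prefactor.

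Concretely, I would fix $\delta>0$ (to be chosen in terms of $\epsilon$) and set $\mathcal{A}_{1}=\mathcal{R}(0,\tau)\cap\{M\le r\le M+\delta\}$ and $\mathcal{A}_{3}=\mathcal{R}(0,\tau)\cap\{M+\delta\le r\le M+2\delta\}$, with $\delta<(r_{0}-M)/2$ so that $\mathcal{A}_{3}\subset\mathcal{A}$. On $\mathcal{A}\setminus\mathcal{A}_{1}$ the weights $D$ and $\sqrt{D}/r^{4}$ are bounded below; writing $\partial_{r}=D^{-1}(\partial_{r^{*}}-\partial_{v})$ and applying Theorem \ref{xtheo} (together with the trivial bound $J_{\mu}^{T}n^{\mu}\le J_{\mu}^{N}n^{\mu}$) yields
$$\int_{\mathcal{A}\setminus\mathcal{A}_{1}}(\partial_{r}\psi)^{2}\;\le\; C_{\delta}\int_{\Sigma_{0}}[J_{\mu}^{N}(\psi)+J_{\mu}^{N}(T\psi)]n_{\Sigma_{0}}^{\mu}.$$
For $\mathcal{A}_{1}$, I apply Proposition \ref{thirdhardy} to $\partial_{r}\psi$ with $\mathcal{A}_{1}$ and $\mathcal{A}_{3}$ playing the roles of $\mathcal{A}$ and $\mathcal{B}$; inspection of that proof shows the constant can be chosen uniform in $\delta$, so
$$\int_{\mathcal{A}_{1}}(\partial_{r}\psi)^{2}\;\le\; C\int_{\mathcal{A}_{3}}(\partial_{r}\psi)^{2}+C\int_{\mathcal{A}_{1}\cup\mathcal{A}_{3}}D\,[(\partial_{v}\partial_{r}\psi)^{2}+(\partial_{r}^{2}\psi)^{2}].$$
The first integral is already bounded above.

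For the bulk integral, on $\mathcal{A}_{1}\cup\mathcal{A}_{3}\subset\{M\le r\le M+2\delta\}$ we have $D\le (r-M)^{2}/M^{2}\le (2\delta/M^{2})(r-M)$, while \eqref{listL} together with the explicit formulas \eqref{listH} gives $H_{2}\ge c(r-M)$ and $H_{1}\ge 1$ throughout $\mathcal{A}$. Hence $D\le (2\delta/(cM^{2}))H_{2}$ and $D\le (4\delta^{2}/M^{2})H_{1}$, yielding
$$C\int_{\mathcal{A}_{1}\cup\mathcal{A}_{3}}D\,[(\partial_{v}\partial_{r}\psi)^{2}+(\partial_{r}^{2}\psi)^{2}]\;\le\; C'\delta\int_{\mathcal{A}}[H_{1}(\partial_{v}\partial_{r}\psi)^{2}+H_{2}(\partial_{r}^{2}\psi)^{2}].$$
Choosing $\delta$ so that $C'\delta\le\epsilon$ and combining with the bound on $\mathcal{A}\setminus\mathcal{A}_{1}$ completes the proof with $C_{\epsilon}=C_{\delta}$.

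The main obstacle is verifying that the constant in the third Hardy inequality does not degenerate as $\delta\to 0$. This reduces to the scaling observation that the cutoff function $h$ appearing in the proof of Proposition \ref{thirdhardy} can be chosen with $\|h\|_{\infty}=O(\delta)$ but $|\partial_{\rho}h|=O(1)$, so that the combinatorial constants in the Cauchy--Schwarz step, in particular the buffer coefficient $(1-\partial_{\rho}h)$, remain uniformly bounded, while the weight $h^{2}$, which produces the factor $D$ in the conclusion after multiplication by the $\rho^{2}$ in the volume form, scales as $\delta^{2}$ and supplies the desired smallness.
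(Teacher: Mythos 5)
Your argument is correct and rests on the same mechanism as the paper's own proof: the third Hardy inequality (Proposition \ref{thirdhardy}) applied to $\partial_{r}\psi$ produces a bulk weight $D\sim(r-M)^{2}$ that vanishes one order faster than $H_{2}$ and two orders faster than $H_{1}$ at $\mathcal{H}^{+}$, so near the horizon that bulk can be absorbed with an $\epsilon$. The organizational difference is where the Hardy inequality is applied. The paper applies it once on all of $\mathcal{A}$ (with $\mathcal{B}=\{r_{0}\leq r\leq r_{1}\}$), chooses $r_{\epsilon}$ so that $CD<\epsilon H_{i}$ on $\{M\leq r\leq r_{\epsilon}\}$, and then controls the leftover second-order bulk on $\{r_{\epsilon}\leq r\leq r_{1}\}$ by the local elliptic estimate \eqref{bulkawayH}; this requires no discussion of how the Hardy constant depends on the radii. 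You instead shrink the Hardy region to a $\delta$-shell so that the entire bulk is $\epsilon$-absorbable, which eliminates the elliptic estimate for second derivatives altogether but obliges you to check that the Hardy constant is uniform as $\delta\to 0$ --- which you do correctly via the scaling $\|h\|_{\infty}=O(\delta)$, $|\partial_{\rho}h|=O(1)$, $h^{2}\lesssim(\rho-M)^{2}\lesssim D$. Two cosmetic points: the lower bound $H_{2}\geq c(r-M)$ is guaranteed by the construction of $L$ only near $\mathcal{H}^{+}$ (the conditions \eqref{listL} give only $H_{2}\geq 0$ on all of $\mathcal{A}$), but that is all you actually use since the comparison with $D$ is made only on the thin shell; and on $\mathcal{A}\setminus\mathcal{A}_{1}$ the degenerate estimate \eqref{degX} already controls $(\partial_{r}\psi)^{2}$ without commuting with $T$, though invoking Theorem \ref{xtheo} is harmless since $J_{\mu}^{N}(T\psi)$ appears on the right-hand side of the proposition anyway.
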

\begin{proof}
By applying the third Hardy inequality for the regions $\mathcal{A},\mathcal{B}$ we obtain
\begin{equation*}
\int_{\mathcal{A}}{(\partial_{r}\psi)^{2}}\leq C\int_{\mathcal{B}}{(\partial_{r}\psi)^{2}}+C\int_{\mathcal{A}\cup\mathcal{B}}{D\left[(\partial_{v}\partial_{r}\psi)^{2}+(\partial_{r}\partial_{r}\psi)^{2}\right]},
\end{equation*}
where the constant $C$ depends  on $M$ and $\Sigma_{0}$. Moreover, since $CD$ vanishes to second order at $\mathcal{H}^{+}$, there exists $r_{\epsilon}$ with  $M<r_{\epsilon}\leq r_{0}$ such that  in the region $\left\{M\leq r\leq r_{\epsilon}\right\}$ we have $CD<\epsilon H_{1}$ and $CD\leq \epsilon H_{2}$. Therefore,
\begin{equation*}
\begin{split}
\int_{\mathcal{A}}{(\partial_{r}\psi)^{2}}\leq &C\int_{\mathcal{B}}{(\partial_{r}\psi)^{2}}+\int_{\left\{M\leq r\leq r_{\epsilon}\right\}}{\epsilon H_{1}(\partial_{v}\partial_{r}\psi)^{2}+\epsilon H_{2}(\partial_{r}\partial_{r}\psi)^{2}}\\
&+\int_{\left\{r_{\epsilon}\leq r\leq r_{1}\right\}}{D\left[(\partial_{v}\partial_{r}\psi)^{2}+(\partial_{r}\partial_{r}\psi)^{2}\right]}.
\end{split}
\end{equation*}
The first integral on the right hand side is estimated using \eqref{degX} and the last integral using the local elliptic estimate \eqref{bulkawayH}  since  T is timelike in region $\left\{r_{\epsilon}\leq r \leq r_{1}\right\}$ since $M<r_{\epsilon}$. The result follows from the inclusion $\left\{M\leq r \leq r_{\epsilon}\right\}\subseteq\mathcal{A}$ and the non-negativity of $H_{i},i=1,2$ in $\mathcal{A}$.
\end{proof}
\begin{proposition}
For all solutions $\psi$ of the wave equation and any positive number $\epsilon$ we have 
\begin{equation*}
\left|\int_{\mathcal{H}^{+}}(\partial_{v}\psi)(\partial_{r}\psi)\right|\leq C_{\epsilon}\int_{\Sigma_{0}}{J^{N}_{\mu}(\psi)n^{\mu}_{\Sigma_{0}}}+\epsilon\int_{\Sigma_{0}\cup\Sigma_{\tau}}{J^{L}_{\mu}(\partial_{r}\psi)n^{\mu}_{\Sigma}}
\end{equation*}
where the positive constant $C_{\epsilon}$ depends  on $M$, $\Sigma_{0}$ and $\epsilon$.
\label{2comprop}
\end{proposition}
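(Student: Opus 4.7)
The plan is to integrate by parts in $v$ along $\mathcal{H}^{+}$ and then exploit the drastic degeneracy of the wave equation on $\mathcal{H}^{+}$. Denoting $\mathbb{S}^{2}_{i}=\mathcal{H}^{+}\cap\Sigma_{i}$ for $i=0,\tau$ and using that (with the choice $n_{\mathcal{H}^{+}}=T=\partial_{v}$) the induced volume form on $\mathcal{H}^{+}$ is $M^{2}\,dv\,d\omega$, a direct integration by parts gives
\begin{equation*}
\int_{\mathcal{H}^{+}}(\partial_{v}\psi)(\partial_{r}\psi)\;=\;-\int_{\mathcal{H}^{+}}\psi\,\partial_{v}\partial_{r}\psi+\int_{\mathbb{S}^{2}_{\tau}}\psi\,\partial_{r}\psi-\int_{\mathbb{S}^{2}_{0}}\psi\,\partial_{r}\psi.
\end{equation*}

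The key observation is that on $\mathcal{H}^{+}$ both $D$ and $D'$ vanish, so the wave equation $\Box_{g}\psi=0$ restricts to $2\partial_{v}\partial_{r}\psi+\frac{2}{M}\partial_{v}\psi+\lapp\psi=0$. Substituting the resulting expression $\partial_{v}\partial_{r}\psi=-\tfrac{1}{M}\partial_{v}\psi-\tfrac{1}{2}\lapp\psi$ into the bulk integral, using $\psi\,\partial_{v}\psi=\tfrac{1}{2}\partial_{v}(\psi^{2})$ and integrating by parts in $v$ on the first resulting piece and on each sphere for the second piece (so that $\int_{\mathbb{S}^{2}}\psi\lapp\psi=-\int_{\mathbb{S}^{2}}|\nabb\psi|^{2}$), we obtain the master identity
\begin{equation*}
\int_{\mathcal{H}^{+}}(\partial_{v}\psi)(\partial_{r}\psi)=\frac{1}{2M}\Bigl(\int_{\mathbb{S}^{2}_{\tau}}\psi^{2}-\int_{\mathbb{S}^{2}_{0}}\psi^{2}\Bigr)-\frac{1}{2}\int_{\mathcal{H}^{+}}|\nabb\psi|^{2}+\int_{\mathbb{S}^{2}_{\tau}}\psi\,\partial_{r}\psi-\int_{\mathbb{S}^{2}_{0}}\psi\,\partial_{r}\psi.
\end{equation*}

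It remains to bound each term. The bulk contribution $\int_{\mathcal{H}^{+}}|\nabb\psi|^{2}$ is dominated by the horizon flux of $J^{N}(\psi)$ and hence by \eqref{nhh} is controlled by $C\int_{\Sigma_{0}}J^{N}_{\mu}(\psi)n^{\mu}_{\Sigma_{0}}$. The pure sphere traces $\int_{\mathbb{S}^{2}_{i}}\psi^{2}$ are handled by the second Hardy inequality (Proposition~\ref{secondhardy}) applied to $\psi$ on $\Sigma_{i}$ together with the first Hardy inequality to absorb the zeroth order remainder; the result is bounded by $C\int_{\Sigma_{i}}J^{N}_{\mu}(\psi)n^{\mu}_{\Sigma_{i}}$ and, by uniform boundedness of the $N$-energy \eqref{nener}, by $C\int_{\Sigma_{0}}J^{N}_{\mu}(\psi)n^{\mu}_{\Sigma_{0}}$. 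For the mixed traces $\int_{\mathbb{S}^{2}_{i}}\psi\,\partial_{r}\psi$ we split by Cauchy--Schwarz with parameter $\delta$; the $\psi^{2}$ piece is dealt with as above, and for the $(\partial_{r}\psi)^{2}$ piece we apply the second Hardy inequality to the function $\phi=\partial_{r}\psi$, which together with \eqref{Lhyper1} (relating the $L$-flux of $\partial_{r}\psi$ to $(\partial_{v}\partial_{r}\psi)^{2}+(\partial_{r}\partial_{r}\psi)^{2}+|\nabb\partial_{r}\psi|^{2}$) and a further use of uniform $N$-boundedness yields
\begin{equation*}
\int_{\mathbb{S}^{2}_{i}}(\partial_{r}\psi)^{2}\leq \epsilon'\int_{\Sigma_{i}}J^{L}_{\mu}(\partial_{r}\psi)n^{\mu}_{\Sigma_{i}}+C_{\epsilon'}\int_{\Sigma_{0}}J^{N}_{\mu}(\psi)n^{\mu}_{\Sigma_{0}}.
\end{equation*}
Choosing $\delta\epsilon'$ small enough to be absorbed into $\epsilon\int_{\Sigma_{0}\cup\Sigma_{\tau}}J^{L}_{\mu}(\partial_{r}\psi)n^{\mu}_{\Sigma}$ completes the argument.

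The main obstacle is precisely what the above manoeuvre sidesteps. A naive Cauchy--Schwarz bound $|\int_{\mathcal{H}^{+}}(\partial_{v}\psi)(\partial_{r}\psi)|\leq\alpha\int_{\mathcal{H}^{+}}(\partial_{v}\psi)^{2}+\frac{1}{4\alpha}\int_{\mathcal{H}^{+}}(\partial_{r}\psi)^{2}$ is useless, because the horizon flux of $J^{L}(\partial_{r}\psi)$ controls $(\partial_{v}\partial_{r}\psi)^{2}+|\nabb\partial_{r}\psi|^{2}$ but \emph{not} $(\partial_{r}\psi)^{2}$ (cf.~\eqref{horizonL1}), and no weighted estimate for $(\partial_{r}\psi)^{2}$ along $\mathcal{H}^{+}$ is available from the initial data alone. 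Exploiting the degeneracy of the wave equation on $\mathcal{H}^{+}$ to trade $\partial_{v}\partial_{r}\psi$ for $\partial_{v}\psi$ and $\lapp\psi$---thereby paying only in $|\nabb\psi|^{2}$ and in $\mathbb{S}^{2}$-traces of $\psi^{2}$ and $\psi\,\partial_{r}\psi$---is the essential move and reflects the conservation-law structure along degenerate horizons that pervades the paper.
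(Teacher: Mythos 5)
Your proposal is correct and follows essentially the same route as the paper's proof: integrate by parts in $v$ along $\mathcal{H}^{+}$, use the restriction of the wave equation to the degenerate horizon to replace $\partial_{v}\partial_{r}\psi$ by $-\frac{1}{M}\partial_{v}\psi-\frac{1}{2}\lapp\psi$, and then control the resulting $|\nabb\psi|^{2}$ bulk term and the sphere traces of $\psi^{2}$, $\psi\,\partial_{r}\psi$ and $(\partial_{r}\psi)^{2}$ via \eqref{nhh}, the uniform boundedness of the $N$-energy, and the first and second Hardy inequalities. The only difference is expository (you assemble the intermediate steps into a single ``master identity''), not mathematical.
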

\begin{proof}
Integrating by parts gives us
\begin{equation*}
\begin{split}
\int_{\mathcal{H}^{+}}{(\partial_{v}\psi)(\partial_{r}\psi)}=-\int_{\mathcal{H}^{+}}{\psi(\partial_{v}\partial_{r}\psi)}+\int_{\mathcal{H}^{+}\cap\Sigma_{\tau}}{\psi(\partial_{r}\psi)}-\int_{\mathcal{H}^{+}\cap\Sigma_{0}}{\psi(\partial_{r}\psi)}.
\end{split}
\end{equation*}
Since $\psi$ solves the wave equation, it satisfies
\begin{equation*}
\begin{split}
-\partial_{v}\partial_{r}\psi=\frac{1}{M}(\partial_{v}\psi)
+\frac{1}{2}\lapp\psi
\end{split}
\end{equation*}
on $\mathcal{H}^{+}$. Therefore,
\begin{equation*}
\begin{split}
-\int_{\mathcal{H}^{+}}{\psi (\partial_{v}\partial_{r}\psi)}=&\frac{1}{M}\int_{\mathcal{H}^{+}}{\psi(\partial_{v}\psi)}+\frac{1}{2}\int_{\mathcal{H}^{+}}{\psi(\lapp\psi)}\\
=&\frac{1}{2M}\int_{\mathcal{H}^{+}\cap\Sigma_{\tau}}{\psi^{2}}-\frac{1}{2M}\int_{\mathcal{H}^{+}\cap\Sigma_{0}}{\psi^{2}}-\frac{1}{2}\int_{\mathcal{H}^{+}}{\left|\nabb\psi\right|^{2}}.
\end{split}
\end{equation*}
All the integrals on the right hand side can be estimated by $\displaystyle\int_{\Sigma_{0}}{J_{\mu}^{N}(\psi)n^{\mu}_{\Sigma_{0}}}$ using the second Hardy inequality  and \eqref{nhh}. Furthermore,
\begin{equation*}
\begin{split}
\int_{\mathcal{H}^{+}\cap\Sigma}{\psi(\partial_{r}\psi)}\leq & \int_{\mathcal{H}^{+}\cap\Sigma}{\psi^{2}}+\int_{\mathcal{H}^{+}\cap\Sigma}{(\partial_{r}\psi)^{2}}.
\end{split}
\end{equation*}
From the first and second Hardy inequality we have 
\begin{equation*}
\begin{split}
\int_{\mathcal{H}^{+}\cap\Sigma}{\psi^{2}}\leq C\int_{\Sigma_{0}}{J_{\mu}^{N}(\psi)n^{\mu}_{\Sigma_{0}}},
\end{split}
\end{equation*}
where $C$ depends  on $M$ and $\Sigma_{0}$. In addition, from the second Hardy inequality we have that for any positive number $\epsilon$ there exists a constant $C_{\epsilon}$ which depends on $M$, $\Sigma_{0}$ and $\epsilon$ such that 
\begin{equation*}
\begin{split}
\int_{\mathcal{H}^{+}\cap\Sigma}{(\partial_{r}\psi)^{2}}\leq C_{\epsilon}\int_{\Sigma}{J_{\mu}^{N}(\psi)n^{\mu}_{\Sigma}}+\epsilon\int_{\Sigma}{J_{\mu}^{L}(\partial_{r}\psi)n^{\mu}_{\Sigma}}.
\end{split}
\end{equation*}
\end{proof}

\begin{proposition}
For all solutions $\psi$ of the wave equation and any positive number $\epsilon$ we have 
\begin{equation*}
\left|\int_{\mathcal{H}^{+}}{(\lapp\psi)(\partial_{r}\psi)}\right|\leq  C_{\epsilon}\int_{\Sigma_{0}}{J^{N}_{\mu}(\psi)n^{\mu}_{\Sigma_{0}}}+\epsilon\int_{\Sigma_{0}\cup\Sigma_{\tau}}{J^{L}_{\mu}(\partial_{r}\psi)n^{\mu}_{\Sigma}},
\end{equation*}
where the positive constant $C_{\epsilon}$ depends  on $M$, $\Sigma_{0}$ and $\epsilon$.
\label{3comprop}
\end{proposition}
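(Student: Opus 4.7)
The strategy is to exploit the wave equation restricted to $\mathcal{H}^{+}$ in order to trade the angular Laplacian for derivatives we can already handle. Since both $D(M)=0$ and $D'(M)=0$ (and hence also the coefficient $R$ of $\partial_{r}\psi$ in $\Box_{g}$ vanishes on $\mathcal{H}^{+}$), the full d'Alembertian collapses on $\mathcal{H}^{+}$ to the purely tangential relation
\[\lapp\psi=-2\partial_{v}\partial_{r}\psi-\frac{2}{M}\partial_{v}\psi,\]
exactly the identity already exploited in Proposition \ref{2comprop}. Multiplying by $\partial_{r}\psi$ and integrating over $\mathcal{H}^{+}$ gives
\[\int_{\mathcal{H}^{+}}(\lapp\psi)(\partial_{r}\psi)=-2\int_{\mathcal{H}^{+}}(\partial_{v}\partial_{r}\psi)(\partial_{r}\psi)-\frac{2}{M}\int_{\mathcal{H}^{+}}(\partial_{v}\psi)(\partial_{r}\psi),\]
and the second term on the right is precisely the quantity estimated in Proposition \ref{2comprop}.

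For the first term I use $(\partial_{v}\partial_{r}\psi)(\partial_{r}\psi)=\tfrac{1}{2}\partial_{v}((\partial_{r}\psi)^{2})$ and integrate along the null generators of $\mathcal{H}^{+}$ to obtain
\[-2\int_{\mathcal{H}^{+}}(\partial_{v}\partial_{r}\psi)(\partial_{r}\psi)=\int_{\mathcal{H}^{+}\cap\Sigma_{0}}(\partial_{r}\psi)^{2}-\int_{\mathcal{H}^{+}\cap\Sigma_{\tau}}(\partial_{r}\psi)^{2}.\]
Each cross-sectional integral is then controlled by applying the second Hardy inequality (Proposition \ref{secondhardy}) with $\partial_{r}\psi$ in place of $\psi$ (the required regularity on $\partial_{r}\psi$ being part of the standing assumptions): for any $\tilde\epsilon>0$,
\[\int_{\mathcal{H}^{+}\cap\Sigma}(\partial_{r}\psi)^{2}\leq\tilde\epsilon\!\int_{\Sigma\cap\{r\leq r_{0}\}}\!\!\bigl[(\partial_{v}\partial_{r}\psi)^{2}+(\partial_{r}\partial_{r}\psi)^{2}\bigr]+C_{\tilde\epsilon}\!\int_{\Sigma\cap\{r\leq r_{0}\}}\!\!(\partial_{r}\psi)^{2}.\]
The first term on the right is absorbed into $\tilde\epsilon\int_{\Sigma}J^{L}_{\mu}(\partial_{r}\psi)n^{\mu}_{\Sigma}$ via the pointwise equivalence \eqref{Lhyper1}, valid in the region where $L$ is timelike; the second is bounded by $C\int_{\Sigma_{0}}J^{N}_{\mu}(\psi)n^{\mu}_{\Sigma_{0}}$ by combining $J^{N}_{\mu}(\psi)n^{\mu}_{\Sigma}\sim(\partial_{v}\psi)^{2}+(\partial_{r}\psi)^{2}+|\nabb\psi|^{2}$ with the uniform $N$-energy bound of Theorem \ref{nenergy}. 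Taking $\tilde\epsilon$ small enough that $\tilde\epsilon\cdot C\leq\epsilon$ produces the required estimate.

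The main point of care is the horizon reduction of the wave equation: the simultaneous vanishing of $D$ and $D'$ at $r=M$, which is the analytic signature of extremality, is precisely what eliminates the $\partial_{r}\partial_{r}\psi$ and $\partial_{r}\psi$ contributions from the horizon wave equation and leaves a right-hand side involving only the pair $(\partial_{v}\psi,\partial_{v}\partial_{r}\psi)$, on which the integration by parts in $v$ can act cleanly. A secondary bookkeeping matter is the juggling of the two smallness parameters ($\epsilon$ in the statement and $\tilde\epsilon$ in Hardy), but since every constant invoked is $\tau$-independent, this presents no real difficulty.
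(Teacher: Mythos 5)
Your proof is correct and follows essentially the same route as the paper's: restrict the wave equation to $\mathcal{H}^{+}$ (where $D$ and $D'$ vanish) to write $\lapp\psi=-2\partial_{v}\partial_{r}\psi-\frac{2}{M}\partial_{v}\psi$, handle the $(\partial_{v}\psi)(\partial_{r}\psi)$ term via Proposition \ref{2comprop}, and integrate $\partial_{v}\bigl((\partial_{r}\psi)^{2}\bigr)$ along the generators to reduce to cross-sectional integrals of $(\partial_{r}\psi)^{2}$, which are controlled by the second Hardy inequality together with the $N$-energy bound and \eqref{Lhyper1}. The only difference is that you spell out the $\tilde\epsilon$ bookkeeping and the appeal to Theorem \ref{nenergy} that the paper leaves implicit.
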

\begin{proof}
In view of the wave equation on $\mathcal{H}^{+}$ we have
\begin{equation*}
\int_{\mathcal{H}^{+}}{(\lapp\psi)(\partial_{r}\psi)}=-\frac{2}{M}\int_{\mathcal{H}^{+}}{(\partial_{v}\psi)(\partial_{r}\psi)}-2\int_{\mathcal{H}^{+}}{(\partial_{v}\partial_{r}\psi)(\partial_{r}\psi)}.
\end{equation*}
The first integral on the right hand side can be estimated using Proposition \ref{2comprop}. For the second integral we have
\begin{equation*}
\begin{split}
2\!\!\int_{\mathcal{H}^{+}}{(\partial_{v}\partial_{r}\psi)(\partial_{r}\psi)}=&\int_{\mathcal{H}^{+}}{\partial_{v}\left((\partial_{r}\psi)^{2}\right)}\\
=&\int_{\mathcal{H}^{+}\cap\Sigma_{\tau}}{(\partial_{r}\psi)^{2}}-\int_{\mathcal{H}^{+}\cap\Sigma_{0}}{(\partial_{r}\psi)^{2}}\\
\leq & C_{\epsilon}\!\int_{\Sigma_{0}}{J_{\mu}^{N}(\psi)n^{\mu}_{\Sigma_{0}}}+\epsilon\! \int_{\Sigma_{0}}{J_{\mu}^{L}(\partial_{r}\psi)n^{\mu}_{\Sigma_{0}}}+\epsilon\!\int_{\Sigma_{\tau}}{J^{L}_{\mu}(\partial_{r}\psi)n^{\mu}_{\Sigma_{\tau}}},
\end{split}
\end{equation*}
where, as above, $\epsilon$ is any positive number and $C_{\epsilon}$ depends on $M$, $\Sigma_{0}$ and $\epsilon$.

\end{proof}

\begin{center}
\large{\textbf{Estimate for} $\displaystyle\int_{\mathcal{R}}{H_{4}\left(\partial_{v}\partial_{r}\psi\right)\left(\partial_{v}\psi\right)}$}\\
\end{center}
For any $\epsilon >0$ we have
\begin{equation*}
\begin{split}
\left|\int_{\mathcal{A}}{H_{4}\left(\partial_{v}\partial_{r}\psi\right)\left(\partial_{v}\psi\right)}\right|&\leq \int_{\mathcal{A}}{\epsilon\left(\partial_{v}\partial_{r}\psi\right)^{2}+\int_{\Sigma_{0}}{\epsilon^{-1}H_{4}^{2}(\partial_{v}\psi)^{2}}}\\
&\leq \epsilon\int_{\mathcal{A}}{\left(\partial_{v}\partial_{r}\psi\right)^{2}+C_{\epsilon}\int_{\Sigma_{0}}{J^{N}_{\mu}n^{\mu}_{\Sigma_{0}}}},
\end{split}
\end{equation*}
where the constant $C_{\epsilon}$ depends only on $M$,$\Sigma_{0}$ and  $\epsilon$.

\begin{center}
\large{\textbf{Estimate for} $\displaystyle\int_{\mathcal{R}}{H_{5}\left(\partial_{v}\partial_{r}\psi\right)\left(\partial_{r}\psi\right)}$}\\
\end{center}
As above, for any $\epsilon >0$
\begin{equation*}
\begin{split}
\left|\int_{\mathcal{A}}{H_{5}\left(\partial_{v}\partial_{r}\psi\right)\left(\partial_{r}\psi\right)}\right|&\leq\int_{\mathcal{A}}{\epsilon\left(\partial_{v}\partial_{r}\psi\right)^{2}}+\int_{\mathcal{A}}{\epsilon^{-1}H_{5}^{2} (\partial_{r}\psi)^{2}}\\
&\leq \int_{\mathcal{A}}{\epsilon\left(\partial_{v}\partial_{r}\psi\right)^{2}}+m\int_{\mathcal{A}}{ (\partial_{r}\psi)^{2}},
\end{split}
\end{equation*}
where $m=\operatorname{max}_{\mathcal{A}}\epsilon^{-1}H_{5}^{2}$. Then  from Proposition \ref{1comprop} (where $\epsilon$ is replaced with $\frac{\epsilon}{m}$) we obtain
\begin{equation}
\begin{split}
\left|\int_{\mathcal{A}}{H_{5}\left(\partial_{v}\partial_{r}\psi\right)\left(\partial_{r}\psi\right)}\right|\leq &\int_{\mathcal{A}}{\epsilon\left(\partial_{v}\partial_{r}\psi\right)^{2}}+\int_{\mathcal{A}}{\epsilon H_{1}(\partial_{v}\partial_{r}\psi)^{2}+\epsilon H_{2}(\partial_{r}\partial_{r}\psi)^{2}}\\ &+C_{\epsilon}\int_{\Sigma_{0}}{J_{\mu}^{N}(\psi)n^{\mu}_{\Sigma_{0}}}+C_{\epsilon}\int_{\Sigma_{0}}{J_{\mu}^{N}(T\psi)n^{\mu}_{\Sigma_{0}}},
\end{split}
\label{H6}
\end{equation}
where $C_{\epsilon}$ depends on $M$, $\Sigma_{0}$ and  $\epsilon$.

\begin{center}
\large{\textbf{Estimate for} $\displaystyle\int_{\mathcal{R}}{H_{6}\left(\partial_{r}\partial_{r}\psi\right)\left(\partial_{v}\psi\right)}$}\\
\end{center}
Since
\begin{equation*}
\operatorname{Div} \partial_{r}=\frac{2}{r},
\end{equation*}
 Stokes' theorem yields
\begin{equation*}
\begin{split}
&\int_{\mathcal{R}}{H_{6}\left(\partial_{v}\psi\right)\left(\partial_{r}\partial_{r}\psi\right)}+\int_{\mathcal{R}}{\partial_{r}\left(H_{6}\partial_{v}\psi\right)\left(\partial_{r}\psi\right)}+\int_{\mathcal{R}}{H_{6}\left(\partial_{v}\psi\right)\left(\partial_{r}\psi\right)\frac{2}{r}}\\
=&\int_{\Sigma_{0}}{H_{6}\left(\partial_{v}\psi\right)\left(\partial_{r}\psi\right)\partial_{r}\cdot n_{\Sigma_{0}}}-\int_{\Sigma_{\tau}}{H_{6}\left(\partial_{v}\psi\right)\left(\partial_{r}\psi\right)\partial_{r}\cdot n_{\Sigma_{\tau}}}-\\
&-\int_{\mathcal{H}^{+}}{H_{6}\left(\partial_{v}\psi\right)\left(\partial_{r}\psi\right)\partial_{r}\cdot n_{\mathcal{H}^{+}}}.
\end{split}
\end{equation*}
In view of the boundedness of the non-degenerate energy we have
\begin{equation*}
\left|\int_{\Sigma_{0}}{H_{7}\left(\partial_{v}\psi\right)\left(\partial_{r}\psi\right)\partial_{r}\cdot n_{\Sigma_{0}}}-\int_{\Sigma_{\tau}}{H_{7}\left(\partial_{v}\psi\right)\left(\partial_{r}\psi\right)\partial_{r}\cdot n_{\Sigma_{\tau}}}\right|\leq C\left(\int_{\Sigma_{0}}{J^{N}_{\mu}n^{\mu}_{\Sigma_{0}}}\right)
\end{equation*}
for some uniform positive constant $C$ which depends  on $M$ and $\Sigma_{0}$. The boundary integral over $\mathcal{H}^{+}$ can be estimated using Proposition \ref{2comprop}.

If $Q=\partial_{r}H_{6}+\frac{2}{r}H_{6}$, then it remains to estimate
\begin{equation*}
\begin{split}
\int_{\mathcal{R}}{Q\left(\partial_{v}\psi\right)\left(\partial_{r}\psi\right)}+\int_{\mathcal{R}}{H_{6}\left(\partial_{v}\partial_{r}\psi\right)\left(\partial_{r}\psi\right)}.
\end{split}
\end{equation*}
For the first integral we have 
\begin{equation*}
\begin{split}
\left|\int_{\mathcal{A}}{Q\left(\partial_{v}\psi\right)\left(\partial_{r}\psi\right)}\right|\leq\int_{\mathcal{A}}{ Q^{2}\left(\partial_{v}\psi\right)^{2}}+\int_{\mathcal{A}}{(\partial_{r}\psi)^{2}},
\end{split}
\end{equation*}
which can be estimated using \eqref{nk} and Proposition \ref{1comprop}. The last spacetime integral is estimated by \eqref{H6} (where $H_{5}$ is replaced with $H_{6}$).

\begin{center}
\large{\textbf{Estimate for} $\displaystyle\int_{\mathcal{R}}{H_{7}\left(\partial_{v}\partial_{r}\psi\right)\lapp\psi}$}\\
\end{center}

Stokes' theorem in region $\mathcal{R}$ gives us
\begin{equation*}
\begin{split}
&\int_{\mathcal{R}}{H_{7}(\partial_{v}\partial_{r}\psi)\lapp\psi}+\int_{\mathcal{R}}{H_{7}(\partial_{r}\psi)(\lapp\partial_{v}\psi)}\\
=&\int_{\Sigma_{0}}{H_{7}\left(\partial_{r}\psi\right)(\lapp\psi)\partial_{v}\cdot n_{\Sigma_{0}}}-\int_{\Sigma_{\tau}}{H_{7}\left(\partial_{r}\psi\right)\left(\lapp\psi\right)\partial_{v}\cdot n_{\Sigma_{\tau}}}.
\end{split}
\end{equation*}
For the boundary integrals we have the estimate
\begin{equation*}
\begin{split}
\int_{\mathcal{A}\cap\Sigma}{H_{7}(\partial_{r}\psi)\lapp\psi}=&-\int_{\mathcal{A}\cap\Sigma}{H_{7}\nabb\partial_{r}\psi\cdot\nabb\psi}\\
&\leq\epsilon\int_{\mathcal{A}\cap\Sigma}{J_{\mu}^{L}(\partial_{r}\psi)n^{\mu}_{\Sigma}}+C_{\epsilon} \int_{\Sigma}{J^{N}_{\mu}(\psi)n^{\mu}_{\Sigma}},
\end{split}
\end{equation*}
where $C_{\epsilon}$ depends on $M$, $\Sigma_{0}$ and $\epsilon$. As regards the second bulk integral, after applying Stokes' theorem on $\mathbb{S}^{2}(r)$ we obtain
\begin{equation*}
\begin{split}
\int_{\mathcal{A}}{H_{7}\nabb\partial_{r}\psi\cdot\nabb\partial_{v}\psi}\leq \epsilon\int_{\mathcal{A}}{\left|\nabb\partial_{r}\psi\right|^{2}}+C_{\epsilon}\int_{\mathcal{A}}\left|\nabb\partial_{v}\psi\right|^{2},
\end{split}
\end{equation*}
where $C_{\epsilon}$ depends on $M$, $\Sigma_{0}$ and $\epsilon$. Note that the second integral on the right hand side can be bounded by $\displaystyle\int_{\Sigma_{0}}{J_{\mu}^{N}(T\psi)n^{\mu}_{\Sigma_{0}}}$. Indeed, we commute with $T$ and use Proposition \ref{nk}. (Another way without having to commute with $T$ is by solving with respect to $\lapp\psi$ in the wave equation. As we shall see, this will be crucial in obtaining higher order estimates without losing derivatives.)

\begin{center}
\large{\textbf{Estimate for} $\displaystyle\int_{\mathcal{R}}{H_{8}\left(\partial_{r}\partial_{r}\psi\right)\lapp\psi}$}\\
\end{center}

We have
\begin{equation*}
\begin{split}
&\int_{\mathcal{R}}{H_{8}\left(\lapp\psi\right)\left(\partial_{r}\partial_{r}\psi\right)}+\int_{\mathcal{R}}{\partial_{r}\left(H_{8}\lapp\psi\right)\left(\partial_{r}\psi\right)}+\int_{\mathcal{R}}{H_{8}\left(\lapp\psi\right)\left(\partial_{r}\psi\right)\frac{2}{r}}\\
=&\int_{\Sigma_{0}}{H_{8}\left(\lapp\psi\right)\left(\partial_{r}\psi\right)\partial_{r}\cdot n_{\Sigma_{0}}}-\int_{\Sigma_{\tau}}{H_{8}\left(\lapp\psi\right)\left(\partial_{r}\psi\right)\partial_{r}\cdot n_{\Sigma_{\tau}}}-\\
&\ \ \ \ -\int_{\mathcal{H}^{+}}{H_{8}\left(\lapp\psi\right)\left(\partial_{r}\psi\right)\partial_{r}\cdot n_{\mathcal{H}^{+}}}.
\end{split}
\end{equation*}
The integral over $\mathcal{H}^{+}$ is estimated in Proposition \ref{3comprop}. Furthermore, the Cauchy-Schwarz inequality implies
\begin{equation*}
\begin{split}
\int_{\Sigma_{\tau}\cap\mathcal{A}}{H_{8}\left(\lapp\psi\right)\left(\partial_{r}\psi\right)\partial_{r}\cdot n_{\Sigma_{\tau}}}=&-\int_{\Sigma_{\tau}\cap\mathcal{A}}{H_{8}\left(\nabb\psi\cdot\nabb\partial_{r}\psi\right)\partial_{r}\cdot n_{\Sigma_{\tau}}}\\
\leq & C_{\epsilon}\int_{\Sigma_{0}}{J_{\mu}^{N}(\psi)n^{\mu}_{\Sigma_{0}}}+\epsilon\int_{\Sigma_{0}}{J_{\mu}^{L}(\partial_{r}\psi)n^{\mu}_{\Sigma_{0}}},
\end{split}
\end{equation*}
where $\epsilon$ is any positive number and $C_{\epsilon}$ depends only on $M$, $\Sigma_{0}$ and $\epsilon$. For the remaining two spacetime integrals we have
\begin{equation*}
\begin{split}
&\int_{\mathcal{A}}{\partial_{r}\left(H_{8}\lapp\psi\right)\left(\partial_{r}\psi\right)}+\int_{\mathcal{A}}{\frac{2}{r}H_{8}\left(\lapp\psi\right)\left(\partial_{r}\psi\right)}\\
=&\int_{\mathcal{A}}{H_{8}\left(\lapp\partial_{r}\psi-\left[\lapp,\partial_{r}\psi\right]\right)\left(\partial_{r}\psi\right)}+\int_{\mathcal{A}}{\left(\partial_{r}H_{8}+\frac{2}{r}H_{8}\right)\left(\lapp\psi\right)\left(\partial_{r}\psi\right)}\\
\overset{\eqref{comsphr}}{=}&\int_{\mathcal{A}}{H_{8}\left(\lapp\partial_{r}\psi\right)\left(\partial_{r}\psi\right)}+\int_{\mathcal{A}}{\left(\partial_{r}H_{8}\right)\left(\lapp\psi\right)\left(\partial_{r}\psi\right)}\\
=&\int_{\mathcal{A}}{-H_{8}\left|\nabb\partial_{r}\psi\right|^{2}}+\int_{\mathcal{A}}{-\partial_{r}H_{8}\left(\nabb\psi\cdot\nabb\partial_{r}\psi\right)}
\end{split}
\end{equation*}
The conditions \eqref{listL} assure us that 
\begin{equation*}
\left|H_{8}\right|<\frac{H_{3}}{10}
\end{equation*}
and thus the first integral above can be estimated. For the second integral we apply Cauchy-Schwarz and take
\begin{equation*}
\int_{\mathcal{A}}{-\partial_{r}H_{8}\left(\nabb\psi\cdot\nabb\partial_{r}\psi\right)}\leq C_{\epsilon}\int_{\Sigma_{0}}{J_{\mu}^{N}(\psi)n^{\mu}_{\Sigma_{0}}}+\epsilon\int_{\mathcal{A}}{\left|\nabb\partial_{r}\psi\right|^{2}},
\end{equation*}
where again $\epsilon$ is any positive number and $C_{\epsilon}$ depends  on $M$, $\Sigma_{0}$ and $\epsilon$.

\begin{center}
\large{\textbf{Estimate for} $\displaystyle\int_{\mathcal{R}}{H_{9}\left(\partial_{v}\partial_{r}\psi\right)\left(\partial_{r}\partial_{r}\psi\right)}$}\\
\end{center}
In view of the wave equation we have
\begin{equation*}
\partial_{v}\partial_{r}\psi=-\frac{D}{2}\partial_{r}\partial_{r}\psi-\frac{1}{r}\partial_{v}\psi-\frac{R}{2}\partial_{r}\psi-\frac{1}{2}\lapp\psi.
\end{equation*}
Therefore,
\begin{equation*}
\begin{split}
H_{9}\left(\partial_{v}\partial_{r}\psi\right)\left(\partial_{r}\partial_{r}\psi\right)=&-H_{9}\frac{D}{2}\left(\partial_{r}\partial_{r}\psi\right)^{2}-H_{9}\frac{R}{2}\left(\partial_{r}\psi\right)\left(\partial_{r}\partial_{r}\psi\right)\\
&-\frac{H_{9}}{r}\left(\partial_{v}\psi\right)\left(\partial_{r}\partial_{r}\psi\right)-\frac{H_{9}}{2}\left(\lapp\psi\right)\left(\partial_{r}\partial_{r}\psi\right).
\end{split}
\end{equation*}
Note that in $\mathcal{A}$ we have
\begin{equation*}
H_{9}D\leq\frac{H_{2}}{10}
\end{equation*}
and thus the first term in right hand side poses no problem. Similarly, in $\mathcal{A}$ we have
\begin{equation*}
-H_{9}\frac{R}{2}\left(\partial_{r}\psi\right)\left(\partial_{r}\partial_{r}\psi\right)\leq \left(\partial_{r}\psi\right)^{2}+\left(H_{9}R\right)^{2}\left(\partial_{r}\partial_{r}\psi\right)^{2}.
\end{equation*}
and
\begin{equation*}
(H_{9}R)^{2}\leq \frac{H_{2}}{10}.
\end{equation*}
According to what we have proved above, the integral 
\begin{equation*}
\int_{\mathcal{A}}{-\frac{H_{9}}{r}\left(\partial_{v}\psi\right)\left(\partial_{r}\partial_{r}\psi\right)}
\end{equation*}
can be estimated (provided we replace $H_{6}$ with $-\frac{H_{9}}{r}$). Similarly, we have seen that  the integral
\begin{equation*}
\int_{\mathcal{A}}{-\frac{H_{9}}{2}(\partial_{r}\partial_{r}\psi)\lapp\psi}
\end{equation*}
can be estimated provided we have $H_{9}\leq \frac{H_{3}}{10}$, which holds by the definition of $L$.

\begin{center}
\large{\textbf{Estimate for} $\displaystyle\int_{\mathcal{R}}{H_{10}\left(\partial_{r}\partial_{r}\psi\right)\left(\partial_{r}\psi\right)}$}\\
\end{center}
We have
\begin{equation*}
\begin{split}
& 2\int_{\mathcal{R}}{H_{10}\left(\partial_{r}\partial_{r}\psi\right)\left(\partial_{r}\psi\right)}+\int_{\mathcal{R}}{\left[\partial_{r}H_{10}+\frac{2}{r}H_{10}\right]\left(\partial_{r}\psi\right)^{2}}\\
=&\int_{\Sigma_{0}}{H_{10}\left(\partial_{r}\psi\right)^{2}\partial_{r}\cdot n_{\Sigma_{0}}}-\int_{\Sigma_{\tau}}{H_{10}\left(\partial_{r}\psi\right)^{2}\partial_{r}\cdot n_{\Sigma_{\tau}}}-\int_{\mathcal{H}^{+}}{H_{10}\left(\partial_{r}\psi\right)^{2}}.
\end{split}
\end{equation*}
If $S=\partial_{r}H_{5}+\frac{2}{r}H_{5}$ then 
\begin{equation*}
\int_{\mathcal{A}}{\frac{1}{2}S(\partial_{r}\psi)^{2}}\leq\operatorname*{max}_{\mathcal{A}}\frac{S}{2}\int_{\mathcal{A}}{(\partial_{r}\psi)^{2}}
\end{equation*}
and thus this integral is estimated using Proposition \ref{1comprop}. 
Also
\begin{equation*}
\left|\int_{\Sigma_{0}}{H_{10}\left(\partial_{r}\psi\right)^{2}\partial_{r}\cdot n_{\Sigma_{0}}}-\int_{\Sigma_{\tau}}{H_{10}\left(\partial_{r}\psi\right)^{2}\partial_{r}\cdot n_{\Sigma_{\tau}}}\right|\leq C\left(\int_{\Sigma_{0}}{J^{N}_{\mu}\left(\psi\right)n^{\mu}_{\Sigma_{0}}}\right)
\end{equation*}
for a constant $C$ that depends on $M$ and $\Sigma_{0}$. Finally we need to estimate the integral over $\mathcal{H}^{+}$. It turns out that this integral is the most problematic. In the next section we will see the reason for that. Since
\begin{equation*}
\begin{split}
&R'=D''+\frac{2D'}{r}-\frac{2D}{r^{2}}\Rightarrow R'\left(M\right)=\frac{2}{M^{2}},
\end{split}
\end{equation*}
we have
\begin{equation}
H_{10}\left(M\right)=-f_{r}\left(M\right)R'\left(M\right)=-f_{r}\left(M\right)\frac{2}{M^{2}}>0.
\label{H.10}
\end{equation}
In order to estimate the integral along $\mathcal{H}^{+}$ we apply  the Poincar\'{e} inequality
\begin{equation*}
\left|-\int_{\mathcal{H}^{+}}{\frac{H_{10}(M)}{2}\left(\partial_{r}\psi\right)^{2}}\right|\leq \int_{\mathcal{H}^{+}}{\frac{H_{10}(M)}{2}\frac{M^{2}}{l(l+1)}\left|\nabb\partial_{r}\psi\right|^{2}}.
\end{equation*}
Therefore, in view of  \eqref{horizonL1} it suffices to have
\begin{equation*}
\begin{split}
\frac{M^{2}}{2l(l+1)}H_{10}\left(M\right)&\leq -\frac{f_{r}\left(M\right)}{2}\overset{\eqref{H.10}}{\Leftrightarrow},\\
-f_{r}\left(M\right)\frac{M^{2}}{2l(l+1)}\frac{2}{4M^{2}}&\leq -\frac{f_{r}\left(M\right)}{2}\Leftrightarrow\\
\frac{1}{l(l+1)}&\leq \frac{1}{2}\Leftrightarrow\\
l&\geq 1.
\end{split}
\end{equation*}
Note that for $l=1$ we need to use the whole good term over $\mathcal{H}^{+}$ which appears in identity \eqref{eiL}. This is something we can do, since we have not used this term in order to estimate other integrals. That was possible via successive use of the Hardy inequalities.

\subsection{$L^{2}$ Estimates for the Second Order Derivatives}
\label{sec:UniformBoundednessForTheSecondOrderDerivatives}

We can now prove the following theorem
\begin{theorem}
There exists a  constant $C>0$ which depends on $M$ and $\Sigma_{0}$ such that for all solutions $\psi$ of the wave equation which are supported on  the frequencies $l\geq 1$ we have
\begin{equation}
\begin{split}
&\ \ \  \int_{\Sigma_{\tau}\cap\mathcal{A}}{\left(\partial_{v}\partial_{r}\psi\right)^{2}+\left(\partial_{r}\partial_{r}\psi\right)^{2}+\left|\nabb\partial_{r}\psi\right|^{2}}\\
&+\int_{\mathcal{H}^{+}}{\left(\partial_{v}\partial_{r}\psi\right)^{2}+\chi_{1}\left|\nabb\partial_{r}\psi\right|^{2}}\\
&+\int_{\mathcal{A}}{\left(\partial_{v}\partial_{r}\psi\right)^{2}+\sqrt{D}\left(\partial_{r}\partial_{r}\psi\right)^{2}+\left|\nabb\partial_{r}\psi\right|^{2}}\\\leq &\, C\int_{\Sigma_{0}}{J_{\mu}^{N}(\psi)n^{\mu}_{\Sigma_{0}}}+C\int_{\Sigma_{0}}{J_{\mu}^{N}(T\psi)n^{\mu}_{\Sigma_{0}}}+C\int_{\Sigma_{0}\cap\mathcal{A}}{J_{\mu}^{N}(\partial_{r}\psi)n^{\mu}_{\Sigma_{0}}},
\label{uniformbounden2ndorder}
\end{split}
\end{equation}
where $\chi_{1}=0$ if $\psi$ is supported on $l=1$ and $\chi_{1}=1$ if $\psi$ is supported on $l\geq 2$. 
\label{2ndder}
\end{theorem}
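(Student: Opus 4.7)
The plan is to apply Stokes' theorem to the energy identity \eqref{eiL} associated to the modified current $J_\mu^L(\partial_r\psi)$ in the spacetime region $\mathcal{R}(0,\tau)$, and then gather the individual estimates of the bulk integrals that have already been carried out in the preceding subsection. Since $L$ is future-directed timelike in the spatially compact region where it is supported, Proposition \ref{hyperb} together with \eqref{Lhyper1} shows that the flux $\int_{\Sigma_\tau}J_\mu^L(\partial_r\psi)n^\mu_{\Sigma_\tau}$ controls the sum $\int_{\Sigma_\tau\cap\mathcal{A}}\bigl((\partial_v\partial_r\psi)^2+(\partial_r\partial_r\psi)^2+|\nabb\partial_r\psi|^2\bigr)$, while the corresponding boundary term on $\Sigma_0$ is bounded (via local elliptic estimates on the spacelike hypersurface $\Sigma_0$, cf.\ Appendix) by the right-hand side of the theorem. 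The boundary integral on $\mathcal{H}^+$ is, in view of \eqref{horizonL1}, equal to $\int_{\mathcal{H}^+}\bigl(f_v(M)(\partial_v\partial_r\psi)^2-\tfrac{f_r(M)}{2}|\nabb\partial_r\psi|^2\bigr)$, and both coefficients are positive since $f_v(M)>0$ and $f_r(M)<0$.

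The bulk integral $\int_{\mathcal{R}}\nabla^\mu J_\mu^L(\partial_r\psi)$ decomposes as $K^L(\partial_r\psi)+\mathcal{E}^L(\partial_r\psi)$, with the sum producing the ten terms with coefficients $H_1,\dots,H_{10}$ listed in \eqref{listH}. The choices in \eqref{listL} guarantee that, in $\mathcal{A}$, the coefficients $H_1,H_2,H_3$ are positive with $H_2\sim\sqrt{D}$ (vanishing to first order on $\mathcal{H}^+$) and $H_1,H_3$ uniformly positive, which will produce exactly the three bulk quantities $(\partial_v\partial_r\psi)^2$, $\sqrt{D}(\partial_r\partial_r\psi)^2$, $|\nabb\partial_r\psi|^2$ claimed in the theorem. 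Each of the remaining seven integrals (those with $H_4,\dots,H_{10}$) was shown in the preceding subsection to be bounded, for every $\epsilon>0$, by $\epsilon$ times a combination of the good bulk and boundary quantities plus $C_\epsilon$ times $\int_{\Sigma_0}J_\mu^N(\psi)n^\mu_{\Sigma_0}+\int_{\Sigma_0}J_\mu^N(T\psi)n^\mu_{\Sigma_0}+\int_{\Sigma_0\cap\mathcal{A}}J_\mu^N(\partial_r\psi)n^\mu_{\Sigma_0}$. Absorbing all the $\epsilon$-terms into the left-hand side (which is possible by choosing $\epsilon$ small enough, since the good coefficients $H_1,H_2,H_3$ dominate the $\epsilon$-perturbations uniformly and each $\epsilon$ boundary contribution is controlled by the corresponding $L$-flux) yields the claimed inequality.

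The main obstacle — and the reason for the hypothesis $l\geq 1$ — is the boundary term on $\mathcal{H}^+$ arising from the integration by parts carried out for the $H_{10}(\partial_r\partial_r\psi)(\partial_r\psi)$ integral. That procedure produces a term $-\int_{\mathcal{H}^+}\tfrac{H_{10}(M)}{2}(\partial_r\psi)^2$ with the wrong sign, and since $H_{10}(M)=-\tfrac{2f_r(M)}{M^2}>0$ it cannot be discarded. The only way to absorb it is to use the good term $-\tfrac{f_r(M)}{2}\int_{\mathcal{H}^+}|\nabb\partial_r\psi|^2$ coming from \eqref{horizonL1} via the Poincar\'e inequality of Proposition \ref{poin}, which gives $\int_{\mathbb{S}^2(M)}(\partial_r\psi)^2\leq \tfrac{M^2}{l(l+1)}\int_{\mathbb{S}^2(M)}|\nabb\partial_r\psi|^2$; the arithmetic then requires precisely $\tfrac{1}{l(l+1)}\leq\tfrac{1}{2}$, i.e.\ $l\geq 1$. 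In the borderline case $l=1$ the entire angular boundary term on $\mathcal{H}^+$ is consumed, so $\chi_1=0$, whereas for $l\geq 2$ a positive fraction of $\int_{\mathcal{H}^+}|\nabb\partial_r\psi|^2$ remains on the left-hand side, giving $\chi_1=1$.

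Once all terms are absorbed, the inequality combines the resulting $\Sigma_\tau$-flux, $\mathcal{H}^+$-flux, and bulk lower bounds with the estimates for $H_4,\dots,H_{10}$ and with \eqref{bulkawayH}, \eqref{nener}, \eqref{nk}, and the Hardy inequalities, to produce exactly the three groups of integrals on the left-hand side of the theorem, controlled by the $N$-fluxes of $\psi$, $T\psi$, and (on $\mathcal{A}\cap\Sigma_0$) of $\partial_r\psi$.
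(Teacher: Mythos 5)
Your proposal is correct and follows essentially the same route as the paper: the energy identity \eqref{eiL} for $J_{\mu}^{L}(\partial_{r}\psi)$, the positivity of $H_{1},H_{2},H_{3}$ from \eqref{listL}, the $\epsilon$-absorption of the seven error integrals estimated in Section \ref{sec:EstimatesOfTheBulkTerms}, and the Poincar\'e inequality on $\mathcal{H}^{+}$ for the $H_{10}$ term, which is exactly where the hypothesis $l\geq 1$ and the dichotomy for $\chi_{1}$ enter. You have correctly reconstructed the argument, including the point that for $l=1$ the entire angular flux on $\mathcal{H}^{+}$ is consumed.
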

\begin{proof}
First note that $L\sim N$ in region $\mathcal{A}$. Thus, the above estimate where $\Sigma_{\tau}$ is replaced by $\Sigma_{\tau}\cap\mathcal{A}$ follows from the estimates \eqref{hypersurawayH} and \eqref{bulkawayH}, the estimates that we derived in Section \ref{sec:EstimatesOfTheBulkTerms} (by taking $\epsilon$ sufficiently small) and the energy identity \eqref{eiL}. 
\end{proof}
Note that the right hand side of \eqref{uniformbounden2ndorder} is bounded by
\begin{equation*}
C\int_{\Sigma_{0}}{J_{\mu}^{N}(\psi)n^{\mu}_{\Sigma_{0}}}+C\int_{\Sigma_{0}}{J_{\mu}^{N}(N\psi)n^{\mu}_{\Sigma_{0}}}.
\end{equation*}
This means that equivalently we can apply $N$ as multiplier but also as commutator. One can also obtain similar spacetime estimates for spatially compact regions which include $\mathcal{A}$ by commuting with $T$ and $T^{2}$ and applying $X$ as a multiplier. Note that we need to commute with $T^{2}$ in view of the photon sphere. Note also that no commutation with the generators of the Lie algebra so(3) is required.

We are now in position to drop the degeneracy of \eqref{nk}.
\begin{proposition}
There exists a uniform constant $C>0$ which depends on $M$ and $\Sigma_{0}$ such that for all solutions $\psi$ of the wave equation which are supported on the frequencies $l\geq 1$  we have
\begin{equation*}
\begin{split}
\int_{\mathcal{A}}{\left(\partial_{r}\psi\right)^{2}}\leq C\int_{\Sigma_{0}}{J_{\mu}^{N}(\psi)n^{\mu}_{\Sigma_{0}}}+C\int_{\Sigma_{0}}{J_{\mu}^{N}(T\psi)n^{\mu}_{\Sigma_{0}}}+C\int_{\Sigma_{0}\cap\mathcal{A}}{J_{\mu}^{N}(\partial_{r}\psi)n^{\mu}_{\Sigma_{0}}}.
\end{split}
\end{equation*}
\label{r1back}
\end{proposition}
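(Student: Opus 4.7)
The strategy is to apply the third Hardy inequality (Proposition \ref{thirdhardy}) with $\psi$ replaced by $\partial_r\psi$; this trades control of $(\partial_r\psi)^2$ in $\mathcal{A}$ for control of $(\partial_r\psi)^2$ in $\mathcal{B}$ together with a spacetime integral of second order derivatives weighted by $D$, both of which have already been treated. Observe first that the operator $\partial_r$ preserves each eigenspace $E^l$ (as noted at the end of Section \ref{sec:EllipticTheoryOnMathbbS2}), so $\partial_r\psi$ is still supported on the angular frequencies $l\geq 1$ and hence Theorem \ref{2ndder} is applicable to it.

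Applying Proposition \ref{thirdhardy} to $\partial_r\psi$ (which satisfies the regularity assumptions of Section \ref{sec:TheCauchyProblemForTheWaveEquation} in view of our hypotheses on the data) gives
\begin{equation*}
\int_\mathcal{A}(\partial_r\psi)^2\;\leq\; C\int_\mathcal{B}(\partial_r\psi)^2\;+\;C\int_{\mathcal{A}\cup\mathcal{B}} D\bigl[(\partial_v\partial_r\psi)^2+(\partial_r\partial_r\psi)^2\bigr],
\end{equation*}
where $\mathcal{B}=\mathcal{R}(0,\tau)\cap\{r_0\leq r\leq r_1\}$ is a spacetime slab separated from $\mathcal{H}^+$.

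For the first term on the right, the weight $\sqrt{D}$ in the lower bound of Proposition \ref{knprop} is uniformly bounded below on $\mathcal{B}$, so Corollary \ref{nkcor} yields
\begin{equation*}
\int_\mathcal{B}(\partial_r\psi)^2\;\leq\; C\int_\mathcal{B}\sqrt{D}\,(\partial_r\psi)^2\;\leq\; C\int_{\Sigma_0} J^N_\mu(\psi)\,n^\mu_{\Sigma_0}.
\end{equation*}
For the $D$--weighted bulk term, I would split the integration into the portions in $\mathcal{A}$ and in $\mathcal{B}$. On $\mathcal{A}$, since $0\leq\sqrt{D}\leq 1$ one has $D\leq\sqrt{D}$, so
\begin{equation*}
\int_\mathcal{A} D\bigl[(\partial_v\partial_r\psi)^2+(\partial_r\partial_r\psi)^2\bigr]\;\leq\;\int_\mathcal{A}(\partial_v\partial_r\psi)^2\;+\;\int_\mathcal{A}\sqrt{D}\,(\partial_r\partial_r\psi)^2,
\end{equation*}
and the right hand side is dominated precisely by the spacetime terms appearing on the left of \eqref{uniformbounden2ndorder}, hence by the desired upper bound via Theorem \ref{2ndder}. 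On $\mathcal{B}$, where $D$ is bounded above by a constant and $T$ is uniformly timelike, the local elliptic estimate \eqref{bulkawayH} (after commuting with $T$) controls the spacetime integral of the second order derivatives by $C\int_{\Sigma_0}(J^T_\mu(\psi)+J^T_\mu(T\psi))n^\mu_{\Sigma_0}$, which is in turn dominated by the $N$--fluxes of $\psi$ and $T\psi$ through $\Sigma_0$. Adding the three contributions yields the claimed estimate.

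The only point requiring care is the frequency restriction $l\geq 1$: it is invoked indirectly through the use of Theorem \ref{2ndder}, without which the second derivative estimates near $\mathcal{H}^+$ would not be available. This is consistent with the conservation law of Theorem \ref{t3}, which shows that on the spherically symmetric mode $(\partial_r\psi)|_{\mathcal{H}^+}$ is conserved and cannot decay, so a bound of the form sought fails for $l=0$.
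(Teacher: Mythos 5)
This is essentially the paper's own argument: the paper proves the proposition by citing Proposition \ref{1comprop} (whose proof is precisely your step of applying the third Hardy inequality to $\partial_{r}\psi$, absorbing the $D$-weighted second-derivative term near $\mathcal{H}^{+}$ and using local elliptic estimates on $\mathcal{B}$) together with Theorem \ref{2ndder}, exactly as you do. The one cosmetic deviation is your bound for $\int_{\mathcal{B}}(\partial_{r}\psi)^{2}$ via Proposition \ref{knprop} and Corollary \ref{nkcor}, which strictly requires $\mathcal{B}\subset\left\{M\leq r\leq \frac{9M}{8}\right\}$ (the region where $K^{N,-\frac{1}{2}}$ is shown to be positive); the paper instead invokes the degenerate $X$ estimate \eqref{degX}, which controls $(\partial_{r}\psi)^{2}$ on any such slab since $D$ is bounded below away from $\mathcal{H}^{+}$.
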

\begin{proof}
Immediate from Proposition \ref{1comprop} and Theorem \ref{2ndder}.
\end{proof}

\section{Conservation Laws on Degenerate Event Horizons}
\label{sec:ConservationLawsOnDegenerateEventHorizons}

We will prove that the lack of redshift gives rise to conservation laws along $\mathcal{H}^{+}$ for translation invariant derivatives. These laws affect the low angular frequencies and explain why we need to additionally assume that the zeroth spherical harmonic vanishes identically in Proposition  $\ref{r1back}$. We will also show that the argument of Kay and Wald (see \cite{wa1}) could not have been applied in our case.

We  use the regular coordinate system $(v,r)$. Our results are not restricted to  extreme Reissner-Nordstr\"{o}m  but apply to a very general class of (spherically symmetric) degenerate black hole spacetimes. We impose no unphysical restriction on the initial data (which, of course, should have finite energy). 

Let us first consider spherically symmetric waves.
\begin{proposition}
For all  spherically symmetric solutions $\psi$ to the wave equation the quantity 
\begin{equation}
\partial_{r}\psi +\frac{1}{M}\psi
\label{0l}
\end{equation}
is conserved along $\mathcal{H}^{+}$. Therefore, for generic initial data  this quantity does not decay.
\label{ndl0}
\end{proposition}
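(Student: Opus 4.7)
The plan is to exploit the fact that at $\mathcal{H}^+$ the degeneracy of the horizon ($D(M) = D'(M) = 0$) drastically simplifies the wave operator, reducing it to a transport equation in $v$ for the quantity in question.

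First I would compute the wave equation in the ingoing Eddington--Finkelstein coordinates $(v,r)$, obtaining an expression of the schematic form
\begin{equation*}
\Box_g \psi = 2\partial_v\partial_r\psi + \tfrac{2}{r}\partial_v\psi + D\,\partial_r\partial_r\psi + R\,\partial_r\psi + \lapp\psi,
\end{equation*}
with $R = R(r)$ a smooth function built from $D$ and $D'$, both of which vanish identically at $r=M$ in view of the extremality $D = (1-M/r)^2$. Restricting $\Box_g\psi=0$ to $\mathcal{H}^+ = \{r=M\}$ thus kills the $D\,\partial_r\partial_r\psi$ and $R\,\partial_r\psi$ terms, leaving
\begin{equation*}
2\,\partial_v\partial_r\psi + \tfrac{2}{M}\partial_v\psi + \lapp\psi = 0 \quad \text{on } \mathcal{H}^+.
\end{equation*}

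Next I would specialise to $\psi$ spherically symmetric. Then $\lapp\psi \equiv 0$ (since $\psi$ lies in $E^0$), so the above identity collapses to
\begin{equation*}
\partial_v\!\left(\partial_r\psi + \tfrac{1}{M}\psi\right) = 0 \quad \text{on } \mathcal{H}^+.
\end{equation*}
Since the null generators of $\mathcal{H}^+$ are exactly the integral curves of $T = \partial_v$, this is precisely the statement that $\partial_r\psi + \frac{1}{M}\psi$ is conserved along each null generator of the event horizon.

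For the genericity of non-decay I would argue as follows: the initial data determine $\psi$ and $\partial_r\psi$ on $\mathcal{H}^+\cap\Sigma_0$, and the map from data $(\psi_0,\psi_1)$ to the value $\left.(\partial_r\psi + \tfrac{1}{M}\psi)\right|_{\mathcal{H}^+\cap\Sigma_0}$ is a nontrivial continuous linear functional (since, for instance, data with $\psi_0\equiv 1$ and $\psi_1\equiv 0$ near the horizon produces a nonzero value). Hence the set of data for which this quantity vanishes is a proper closed subspace, and for any data outside it the conserved value is a nonzero constant along $\mathcal{H}^+$, so $\partial_r\psi + \frac{1}{M}\psi$ cannot decay to zero along $\mathcal{H}^+$. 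I expect no serious obstacle here: the only subtle point is verifying that the $D$- and $R$-weighted terms genuinely drop out on $\mathcal{H}^+$, which is exactly the manifestation of the degeneracy/vanishing of the surface gravity that the paper has been emphasising.
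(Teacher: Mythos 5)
Your proof is correct and follows essentially the same route as the paper: restrict $\Box_g\psi=0$ to $\mathcal{H}^+$ in $(v,r)$ coordinates, observe that $D$ and $R=D'+\frac{2D}{r}$ vanish at $r=M$ by extremality and that $\lapp\psi=0$ by spherical symmetry, and read off the transport equation $\partial_v\bigl(\partial_r\psi+\frac{1}{M}\psi\bigr)=0$ along the null generators. Your genericity argument is slightly more explicit than the paper's (which simply asserts the conserved quantity is generically nonzero), but the content is the same.
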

\begin{proof}
Since $\psi$ solves $\Box_{g}\psi =0$ and since $\lapp\psi =0$ we have
\begin{equation*}
\partial_{v}\partial_{r}\psi+\frac{1}{M}\partial_{v}\psi=0
\end{equation*}
and, since $\partial_{v}$ is tangential to $\mathcal{H}^{+}$, this implies that $\partial_{r}\psi +\frac{1}{M}\psi$ remains constant along $\mathcal{H}^{+}$ and clearly for generic initial data it is not equal to zero.
\end{proof}

\begin{proposition}
For all  solutions $\psi$ to the wave equation  that are supported on $l=1$ the quantity
\begin{equation}
\partial_{r}\partial_{r}\psi+\frac{3}{M}\partial_{r}\psi+\frac{1}{M^{2}}\psi
\label{02}
\end{equation} 
is conserved along the null geodesics of $\mathcal{H}^{+}$.
\label{ndl1}
\end{proposition}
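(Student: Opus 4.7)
The null generators of $\mathcal{H}^+$ are the integral curves of $T=\partial_v$, so the claim is equivalent to showing
\[
\partial_v\!\left(\partial_r^2\psi+\frac{3}{M}\partial_r\psi+\frac{1}{M^2}\psi\right)=0 \quad\text{on } \mathcal{H}^+
\]
for every solution $\psi$ of $\Box_g\psi=0$ supported on the angular frequency $l=1$. My plan is to derive two transport identities on $\mathcal{H}^+$ and combine them.

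\medskip
\noindent\textbf{Step 1 (zeroth order transport).} Write out the wave equation in $(v,r)$ coordinates,
\[
\Box_g\psi=D\,\partial_r^2\psi+2\partial_v\partial_r\psi+\tfrac{2}{r}\partial_v\psi+R\,\partial_r\psi+\lapp\psi=0,
\]
where $R=D'+D/(2r)$. At $r=M$ one has $D(M)=0$ and $D'(M)=0$ (the horizon is degenerate), so $R(M)=0$ and the $\partial_r^2\psi$ term drops out. Using that $\psi_{l=1}$ is an eigenfunction of $\lapp$ with eigenvalue $-l(l+1)/r^2=-2/M^2$ on $\mathcal{H}^+$, the restricted equation becomes
\[
\partial_v\partial_r\psi=-\frac{1}{M}\partial_v\psi+\frac{1}{M^2}\psi \quad\text{on }\mathcal{H}^+. \tag{$\ast$}
\]

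\medskip
\noindent\textbf{Step 2 (first order transport).} Apply the commutation formula \eqref{comm1},
\[
\Box_g(\partial_r\psi)=D'\partial_r^2\psi+\tfrac{2}{r^2}\partial_v\psi-R'\partial_r\psi+\tfrac{2}{r}\lapp\psi,
\]
and evaluate on $\mathcal{H}^+$. A short computation gives $D'(M)=0$ and $R'(M)=D''(M)=2/M^2$. Since $\partial_r$ preserves the angular frequency decomposition (as noted at the end of Section \ref{sec:EllipticTheoryOnMathbbS2}), the function $\partial_r\psi$ is also supported on $l=1$, so $\lapp(\partial_r\psi)=-\tfrac{2}{M^2}\partial_r\psi$ on $\mathcal{H}^+$. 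Expanding $\Box_g(\partial_r\psi)$ on $\mathcal{H}^+$ exactly as in Step~1 and equating the two expressions produces
\[
2\partial_v\partial_r^2\psi+\frac{2}{M}\partial_v\partial_r\psi-\frac{2}{M^2}\partial_r\psi=\frac{2}{M^2}\partial_v\psi-\frac{2}{M^2}\partial_r\psi-\frac{4}{M^3}\psi,
\]
which simplifies to
\[
\partial_v\partial_r^2\psi=-\frac{1}{M}\partial_v\partial_r\psi+\frac{1}{M^2}\partial_v\psi-\frac{2}{M^3}\psi \quad\text{on }\mathcal{H}^+. \tag{$\ast\ast$}
\]

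\medskip
\noindent\textbf{Step 3 (combine).} Substitute $(\ast)$ into $(\ast\ast)$ to eliminate $\partial_v\partial_r\psi$:
\[
\partial_v\partial_r^2\psi=\frac{2}{M^2}\partial_v\psi-\frac{3}{M^3}\psi.
\]
Then
\[
\partial_v\!\left(\partial_r^2\psi+\tfrac{3}{M}\partial_r\psi+\tfrac{1}{M^2}\psi\right)=\partial_v\partial_r^2\psi+\tfrac{3}{M}\partial_v\partial_r\psi+\tfrac{1}{M^2}\partial_v\psi
\]
is a direct substitution using $(\ast)$ and the expression for $\partial_v\partial_r^2\psi$, and the coefficients cancel identically (both the $\partial_v\psi$ and the $\psi$ coefficients sum to zero), yielding $0$ on $\mathcal{H}^+$.

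\medskip
\noindent\textbf{Anticipated obstacle.} The proof itself is a clean algebraic verification once the commutator formula \eqref{comm1} and the degeneracy identities $D(M)=D'(M)=0$ are in place; there is no analytic difficulty. The only subtle point is keeping track that the commutation is valid pointwise on $\mathcal{H}^+$ and that $\partial_r\psi$ inherits the support on $l=1$, which is exactly the observation made at the end of Section~\ref{sec:EllipticTheoryOnMathbbS2}. The conceptual content --- and what will matter later --- is that degeneracy of the surface gravity is precisely what forces the right-hand side of the transport equation on $\mathcal{H}^+$ to be a total $\partial_v$-derivative, producing a conserved quantity rather than a decaying one.
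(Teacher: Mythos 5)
Your proof is correct and follows essentially the same route as the paper: the paper likewise restricts the wave equation to $\mathcal{H}^{+}$ to get the first transport identity \eqref{psi1l}, then restricts $\partial_{r}(\Box_{g}\psi)=0$ to $\mathcal{H}^{+}$ (which is just the commutator identity \eqref{comm1} you invoke, written out) and uses $R'(M)=2/M^{2}$ to cancel the $\partial_{r}\psi$ term, before combining the two relations. The only cosmetic difference is that the paper eliminates the $\frac{4}{M^{3}}\psi$ term by substituting \eqref{psi1l} before assembling the conserved quantity, whereas you carry the $\psi$ terms to the end and check they cancel; the algebra is identical.
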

\begin{proof}
Since $\lapp\psi=-\frac{2}{r^{2}}\psi$, the wave equation on $\mathcal{H}^{+}$ gives us
\begin{equation}
2\partial_{v}\partial_{r}\psi+\frac{2}{M}\partial_{v}\psi=\frac{2}{M^{2}}\psi.
\label{psi1l}
\end{equation}
Moreover,
\begin{equation*}
\begin{split}
\partial_{r}\left(\Box_{g}\psi\right)=&D\partial_{r}\partial_{r}\partial_{r}\psi+2\partial_{r}\partial_{v}\partial_{r}\psi+\frac{2}{r}\partial_{r}\partial_{v}\psi+R\partial_{r}\partial_{r}\psi+\partial_{r}\lapp\psi\\
&+D'\partial_{r}\partial_{r}\psi-\frac{2}{r^{2}}\partial_{v}\psi+R'\partial_{r}\psi\\
\end{split}
\end{equation*}
and thus by restricting this identity on $\mathcal{H}^{+}$ we take
\begin{equation}
\begin{split}
2\partial_{v}\partial_{r}^{2}\psi+\frac{2}{M}\partial_{v}\partial_{r}\psi-\frac{2}{M^{2}}\partial_{v}\psi+\frac{4}{M^{3}}\psi+\left(-\frac{2}{M^{2}}+R'(M)\right)\partial_{r}\psi=0.
\end{split}
\label{rpsi1}
\end{equation}
However, $R'(M)=\frac{2}{M^{2}}$ and in view of \eqref{psi1l} we have  
\begin{equation*}
\begin{split}
2\partial_{v}\partial_{r}^{2}\psi+\frac{2}{M}\partial_{v}\partial_{r}\psi-\frac{2}{M^{2}}\partial_{v}\psi+\frac{2}{M}\left(2\partial_{v}\partial_{r}\psi+\frac{2}{M}\partial_{v}\psi\right)=0
\end{split}
\end{equation*}
which means that  the quantity
\begin{equation*}
\begin{split}
\partial_{r}\partial_{r}\psi+\frac{3}{M}\partial_{r}\psi+\frac{1}{M^{2}}\psi
\end{split}
\end{equation*}
is constant along the  integral curves of $T$ on $\mathcal{H}^{+}$.
\end{proof}

\begin{proposition}
There exist constants $\a_{i}^{j},i=0,1,2,\, j=0,1$ which depend on $M$ and $l$ such that for all solutions  $\psi$ of the wave equation which are supported on the (fixed) frequency $l$, where $l\geq 2$, we have
\begin{equation*}
\begin{split}
\psi=& \ \a_{0}^{0}\partial_{v}\psi+\a_{1}^{0}\partial_{v}\partial_{r}\psi,\\
\partial_{r}\psi=&\ \a_{0}^{1}\partial_{v}\psi+\a_{1}^{1}\partial_{v}\partial_{r}\psi+\a_{2}^{1}\partial_{v}\partial_{r}^{2}\psi,
\end{split}
\end{equation*}
on $\mathcal{H}^{+}$.
\label{corl2}
\end{proposition}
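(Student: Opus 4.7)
The strategy is to exploit the degeneracy $D(M)=0$ (so that $\partial_r$-derivatives of the wave equation are inductively solvable for transversal derivatives on $\mathcal{H}^+$) together with the spectral condition $\lapp\psi=-\frac{l(l+1)}{r^2}\psi$ on each angular frequency. The degeneracy means that $\Box_g\psi=0$, when restricted to $\mathcal{H}^+$, becomes a \emph{transport equation} in $\partial_v$ rather than a genuine wave equation, so each transversal derivative of $\psi$ on $\mathcal{H}^+$ is algebraically related to $\partial_v$-derivatives and lower-order transversal derivatives. Solvability then depends on the coefficient of the highest-order non-$\partial_v$ term being nonzero, and this is what forces the lower bound on $l$.

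First, I would restrict the wave equation to $\mathcal{H}^+$. Using $D(M)=0$ and $\lapp\psi=-\frac{l(l+1)}{M^2}\psi$, the computation already done for the case $l=1$ in \eqref{psi1l} generalises verbatim to give
\begin{equation*}
2\,\partial_v\partial_r\psi+\tfrac{2}{M}\,\partial_v\psi-\tfrac{l(l+1)}{M^2}\psi=0 \qquad\text{on }\mathcal{H}^+.
\end{equation*}
Since $l\geq 2$ implies $l(l+1)\neq 0$, I can solve algebraically for $\psi$ on $\mathcal{H}^+$:
\begin{equation*}
\psi=\frac{2M}{l(l+1)}\,\partial_v\psi+\frac{2M^2}{l(l+1)}\,\partial_v\partial_r\psi,
\end{equation*}
which gives $\alpha_0^0,\alpha_1^0$.

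Second, I would apply $\partial_r$ to $\Box_g\psi=0$ and restrict to $\mathcal{H}^+$, using the identity for $\partial_r(\Box_g\psi)$ already recorded before \eqref{rpsi1}. On $\mathcal{H}^+$ we have $D=0$, $R(M)=0$, $R'(M)=\tfrac{2}{M^2}$, and $\partial_r\lapp\psi\big|_{\mathcal{H}^+}=\tfrac{2l(l+1)}{M^3}\psi-\tfrac{l(l+1)}{M^2}\partial_r\psi$ (since $[\lapp,\partial_r]\psi=\tfrac{2}{r}\lapp\psi$). Collecting terms yields
\begin{equation*}
2\,\partial_v\partial_r^2\psi+\tfrac{2}{M}\,\partial_v\partial_r\psi-\tfrac{2}{M^2}\,\partial_v\psi+\tfrac{2-l(l+1)}{M^2}\,\partial_r\psi+\tfrac{2l(l+1)}{M^3}\,\psi=0.
\end{equation*}
Now I would substitute the formula for $\psi$ from the first step into the last term; this rewrites the equation purely in terms of $\partial_r\psi$ and the $\partial_v\partial_r^i\psi$. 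The coefficient of $\partial_r\psi$ is $\tfrac{2-l(l+1)}{M^2}$, and the main (in fact only) obstacle in the argument is verifying this is nonzero. Since $l\geq 2$ gives $l(l+1)\geq 6>2$, we can divide through to obtain the second identity with explicit constants
\begin{equation*}
\alpha_0^1=\tfrac{2}{l(l+1)-2},\qquad \alpha_1^1=\tfrac{6M}{l(l+1)-2},\qquad \alpha_2^1=\tfrac{2M^2}{l(l+1)-2}.
\end{equation*}

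The case $l=1$ is exactly where $2-l(l+1)=0$, so the coefficient of $\partial_r\psi$ degenerates; this is the analytic source of the conservation law \eqref{02} obtained in Proposition \ref{ndl1} and explains precisely why the hypothesis $l\geq 2$ is sharp here. The same pattern (hitting the wave equation with $\partial_r^j$ and using $\lapp$ to substitute lower-order transversal derivatives) is the mechanism behind the full Theorem \ref{t3}, with the critical order $j=l+1$ being the one at which the coefficient vanishes and a conservation law appears.
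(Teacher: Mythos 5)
Your proposal is correct and follows exactly the paper's route: the paper's proof is simply ``revisit the proof of Proposition \ref{ndl1} and use that $-\frac{l(l+1)}{M^{2}}+R'(M)\neq 0$ for $l\geq 2$,'' which is precisely your observation that the coefficient $\frac{2-l(l+1)}{M^{2}}$ of $\partial_{r}\psi$ is nonvanishing, allowing one to solve for $\psi$ and then $\partial_{r}\psi$ on $\mathcal{H}^{+}$. Your explicit constants check out and your remark on the degeneration at $l=1$ correctly identifies the source of the conservation law.
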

\begin{proof}
Revisit the proof of Proposition \ref{ndl1} and use the fact that for all $l\geq 2$ we have $-\frac{l(l+1)}{M^{2}}+R'(M)\neq 0$.
\end{proof}
The general form of the conservation laws is the following
\begin{theorem}
There exist constants $\a_{i}^{j},j=0,1,...,l-1,\, i=0,1,...,j+1$, which depend on $M$ and $l$ such that  for all solutions  $\psi$ of the wave equation which are supported on the  (fixed) frequency $l$ we have 
\begin{equation*}
\partial_{r}^{j}\psi=\sum_{i=0}^{j+1}{\a_{i}^{j}\partial_{v}\partial_{r}^{i}\psi},
\end{equation*}
on $\mathcal{H}^{+}$. Moreover, there exist constants $\b_{i},i=0,1,...,l$, which depend on $M$ and $l$ such that the quantity
\begin{equation*}
\partial_{r}^{l+1}\psi+\sum_{i=0}^{l}{\b_{i}\partial_{r}^{i}\psi}
\end{equation*}
is conserved along the null geodesics of $\mathcal{H}^{+}$.
\label{ndk}
\end{theorem}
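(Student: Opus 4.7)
The plan is to establish the theorem by strong induction on $j$, using the restriction of $\Box_g\psi=0$ (and its $\partial_r^j$-derivatives) to $\mathcal{H}^+$. The crucial geometric input is the degeneracy of the horizon: at $r=M$ we have $D(M)=D'(M)=0$ (hence $R(M)=0$), while $D''(M)=2/M^2$ and $R'(M)=2/M^2$ are nonzero. This is exactly what causes the coefficient of the highest transversal derivative in $\partial_r^j\Box_g\psi\big|_{\mathcal{H}^+}$ to degenerate, while leaving a nontrivial lower-order coefficient whose vanishing at $j=l$ produces the conservation law.

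The base case $j=0$ is the argument already used for Propositions~\ref{ndl0}, \ref{ndl1}, \ref{corl2}: restricting $\Box_g\psi=0$ to $\mathcal{H}^+$ and using $\lapp\psi=-l(l+1)/M^2\,\psi$ gives
\begin{equation*}
2\partial_v\partial_r\psi + \tfrac{2}{M}\partial_v\psi - \tfrac{l(l+1)}{M^2}\psi = 0,
\end{equation*}
which for $l\geq 1$ can be solved algebraically for $\psi$ in the required form $\alpha_0^0\partial_v\psi+\alpha_1^0\partial_v\partial_r\psi$.

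For the inductive step, apply $\partial_r^j$ to $\Box_g\psi=0$ and restrict to $\mathcal{H}^+$. Expanding by Leibniz and using $D(M)=D'(M)=0$, only $\partial_r^j(D\partial_r^2\psi)$ terms with at least two $\partial_r$ hitting $D$ survive on $\mathcal{H}^+$, contributing $\binom{j}{2}D''(M)\partial_r^j\psi = j(j-1)/M^2\,\partial_r^j\psi$ plus terms with fewer $r$-derivatives of $\psi$. The term $\partial_r^j(R\partial_r\psi)$ contributes $jR'(M)\partial_r^j\psi=2j/M^2\,\partial_r^j\psi$, and $\partial_r^j(\lapp\psi)$, after repeated use of $[\lapp,\partial_r]=\tfrac{2}{r}\lapp$, contributes $-l(l+1)/M^2\,\partial_r^j\psi$ as its highest-order piece. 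The remaining terms are $2\partial_v\partial_r^{j+1}\psi$, finitely many $\partial_v\partial_r^i\psi$ with $i\leq j$ (coming from $\partial_r^j(\tfrac{2}{r}\partial_v\psi)$ and commuted copies of $\partial_v\partial_r$), and lower-order $\partial_r^i\psi$ with $i<j$. The resulting identity on $\mathcal{H}^+$ reads
\begin{equation*}
2\partial_v\partial_r^{j+1}\psi + \sum_{i=0}^{j}A_i\,\partial_v\partial_r^i\psi + \frac{(j-l)(j+l+1)}{M^2}\partial_r^j\psi + \sum_{i=0}^{j-1}B_i\,\partial_r^i\psi = 0,
\end{equation*}
where the coefficient $(j-l)(j+l+1)/M^2$ follows from $j(j-1)+2j-l(l+1)=(j-l)(j+l+1)$. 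When $0\leq j\leq l-1$ this coefficient is nonzero, so we can solve for $\partial_r^j\psi$, and the inductive hypothesis lets us replace each $\partial_r^i\psi$ on the right by an expression of the form $\sum\alpha\,\partial_v\partial_r^{i'}\psi$, yielding $\partial_r^j\psi=\sum_{i=0}^{j+1}\alpha_i^j\partial_v\partial_r^i\psi$.

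At $j=l$ the coefficient $(j-l)(j+l+1)/M^2$ vanishes, so $\partial_r^l\psi$ drops out and we obtain an identity in which every term is either $\partial_v\partial_r^i\psi$ with $i\leq l+1$ or $\partial_r^i\psi$ with $i\leq l-1$. Applying the inductive identities once more to rewrite each remaining $\partial_r^i\psi$ (and hence each $\partial_v\partial_r^i\psi$ after moving one $\partial_v$ outside) as $\partial_v$ of a $\partial_r^{i'}\psi$-combination, we collect everything under a single $\partial_v$ and obtain
\begin{equation*}
\partial_v\!\left[\partial_r^{l+1}\psi + \sum_{i=0}^{l}\beta_i\,\partial_r^i\psi\right]=0 \quad\text{on }\mathcal{H}^+.
\end{equation*}
Since $\partial_v=T$ is tangent to (and affinely parameterises) the null generators of $\mathcal{H}^+$, this exhibits $H_l[\psi]:=\partial_r^{l+1}\psi+\sum_{i=0}^l\beta_i\partial_r^i\psi$ as the claimed conserved quantity. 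The main obstacle is the combinatorial bookkeeping needed to verify the exact coefficient $(j-l)(j+l+1)/M^2$ and, in Step~3, to systematically peel $\partial_v$'s out so that the final conserved quantity depends only on $\partial_r^i\psi$ and not on mixed $\partial_v\partial_r^i\psi$ terms; the degeneracy of $D$ to second order at $r=M$ is what makes all of this possible.
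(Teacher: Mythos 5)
Your proposal is correct and follows essentially the same route as the paper: induction on $j$, applying $\partial_{r}^{j}$ to the wave equation, restricting to $\mathcal{H}^{+}$ where $D$ and $R$ vanish, and observing that the coefficient of $\partial_{r}^{j}\psi$ equals $\bigl(\binom{j}{2}D''(M)+jR'(M)-\tfrac{l(l+1)}{M^{2}}\bigr)=\tfrac{(j-l)(j+l+1)}{M^{2}}$, which is invertible for $j\leq l-1$ and vanishes precisely at $j=l$, yielding the conservation law after substituting the inductive identities. This matches the paper's proof of Theorem \ref{ndk} (including the coefficient computation $k(k+1)-l(l+1)$), so there is nothing further to add.
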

\begin{proof}
For the first statement, we will use induction on $j$ for fixed $l$. According to Proposition \ref{corl2} the result holds for $j=0,1$. We suppose that it holds for $j=0,1,...,k-1$ and we will prove that it holds for $j=k$ provided $k\leq l-1$. Clearly, 
\begin{equation}
\begin{split}
\partial_{r}^{k}\left(\Box_{g}\psi\right)=&D\left(\partial_{r}^{k+2}\psi\right)+2\partial_{v}\partial_{r}^{k+1}\psi+\frac{2}{r}\partial_{v}\partial_{r}^{k}\psi+R\partial_{r}^{k+1}\psi+\partial_{r}^{k}\lapp\psi+\\
&+\sum_{i=1}^{k}{
\binom{k}{i}\partial_{r}^{i}D\cdot \partial_{r}^{k-i+2}\psi}+\sum_{i=1}^{k}{\binom{k}{i}\partial_{r}^{i}\frac{2}{r}\cdot\partial_{v}\partial_{r}^{k-i}\psi}+\\
&+\sum_{i=1}^{k}{\binom{k}{i}\partial_{r}^{i}R\cdot\partial_{r}^{k-i+1}\psi}.
\end{split}
\label{kcom}
\end{equation}
We observe that the coef{}ficients of $\partial_{r}^{k+2}\psi$ and $\partial_{r}^{k+1}\psi$ vanish on $\mathcal{H}^{+}$. Since $\lapp\psi=-\frac{l(l+1)}{r^{2}}\psi$, the coef{}ficient of $\partial_{r}^{k}\psi$ on $\mathcal{H}^{+}$ is equal to 
\begin{equation*}
\begin{split}
\binom{k}{2}D''+\binom{k}{1}R'-\frac{l(l+1)}{M^{2}}=\frac{k(k+1)}{2}\frac{2}{M^{2}}-\frac{l(l+1)}{M^{2}},
\end{split}
\end{equation*}
which is non-zero if and only if $l\neq k$. Therefore, for all $k\leq l-1$ we  solve with respect to $\partial_{r}^{k}\psi$ and use the inductive hypothesis completing the proof of the first statement. For the second statement, we apply \eqref{kcom} for $k=l$. Then, according to our previous calculation, the coef{}ficients of  $\partial_{r}^{l+2}\psi$,  $\partial_{r}^{l+1}\psi$ and $\partial_{r}^{l}\psi$ vanish on $\mathcal{H}^{+}$. Therefore, we end up with the terms $\partial_{v}\partial_{r}^{k}\psi, k=0,1,...,l+1$ and $\partial_{r}^{j}\psi,j=0,1,...,l-1$. Thus, from the first part of the theorem, there exist 
 numbers $\b_{i},i=0,1,...,l$ which depend on $M$ and $l$ such that
\begin{equation*}
\partial_{v}\partial_{r}^{l+1}\psi+\sum_{i=0}^{l}{\b_{i}\partial_{v}\partial_{r}^{i}\psi}=0
\end{equation*}
on $\mathcal{H}^{+}$, which implies that the quantity
\begin{equation*}
\partial_{r}^{l+1}\psi+\sum_{i=0}^{l}{\b_{i}\partial_{r}^{i}\psi}
\end{equation*}
is constant along the integral curves of $T$ on $\mathcal{H}^{+}$. 
\end{proof}
An important corollary of the above theorem is the following
\begin{corollary}
For generic initial data  there is no Cauchy hypersurface $\Sigma$ crossing $\mathcal{H}^{+}$ and a solution $\tilde{\psi}$ of the wave equation such that 
\begin{equation*}
T\tilde{\psi}=\psi
\end{equation*}
in the future of $\Sigma$.
\label{wald}
\end{corollary}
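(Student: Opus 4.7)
My plan is to argue by contradiction using the conservation law of Proposition \ref{ndl0} (i.e.\ the $l=0$ case of Theorem \ref{ndk}) applied to \emph{both} $\psi$ and the hypothetical $\tilde{\psi}$. The key observation is that the operator $T=\partial_{v}$ preserves the spherical harmonic decomposition and commutes with $\partial_{r}$, so applying $T$ to the relation $T\tilde{\psi}=\psi$ restricted to the zeroth harmonic, and comparing with the conservation law, forces a quantity that is generically non-zero to vanish.

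More precisely, suppose for contradiction that such a $\tilde{\psi}$ exists. Since $T$ commutes with projection onto the spherical eigenspaces (as $T$ commutes with $\lapp$), the zeroth spherical harmonics satisfy $T\tilde{\psi}_{0}=\psi_{0}$ throughout the causal future of $\Sigma$. Both $\tilde{\psi}_{0}$ and $\psi_{0}$ are themselves spherically symmetric solutions of the wave equation (by the invariance property discussed in Section \ref{sec:EllipticTheoryOnMathbbS2}), so Proposition \ref{ndl0} applies to each. Define the differential operator
\begin{equation*}
H_{0}=\partial_{r}+\frac{1}{M},
\end{equation*}
so that the conservation law asserts that $H_{0}[\tilde{\psi}_{0}]$ and $H_{0}[\psi_{0}]$ are each constant along the null generators of $\mathcal{H}^{+}$, i.e.\ $T(H_{0}[\tilde{\psi}_{0}])=0$ and $T(H_{0}[\psi_{0}])=0$ on $\mathcal{H}^{+}$.

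The contradiction then comes from commuting $T$ and $H_{0}$. Since $[T,\partial_{r}]=0$ as coordinate vector fields in $(v,r)$, and multiplication by the constant $\frac{1}{M}$ obviously commutes with $T$, we have on $\mathcal{H}^{+}$
\begin{equation*}
H_{0}[\psi_{0}]=H_{0}[T\tilde{\psi}_{0}]=T(H_{0}[\tilde{\psi}_{0}])=0,
\end{equation*}
using the conservation law for $\tilde{\psi}_{0}$ in the last step. On the other hand, by Proposition \ref{ndl0} applied to $\psi_{0}$, $H_{0}[\psi_{0}]$ is exactly the conserved quantity along $\mathcal{H}^{+}$, which does not vanish for generic initial data. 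This is the desired contradiction.

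The only subtlety I anticipate is a regularity check: one needs $\tilde{\psi}$ to be regular enough on $\mathcal{H}^{+}$ for $\partial_{r}\tilde{\psi}_{0}$ to make classical sense there, but this is automatic since $\tilde{\psi}$ is assumed to be a solution of the wave equation in the causal future of a Cauchy hypersurface $\Sigma$ crossing $\mathcal{H}^{+}$, and by taking the generic $\psi$ to come from sufficiently smooth data one may assume the corresponding $\tilde{\psi}$ is smooth enough for the identity above to be evaluated pointwise on $\mathcal{H}^{+}$. Notice that, as advertised in Section \ref{sec:ConservationLawsOnMathcalH}, the argument is completely local on $\mathcal{H}^{+}$ and uses nothing beyond the conservation law; hence it extends verbatim to each fixed angular frequency $l$ using the higher conserved quantity $H_{l}[\psi]$ of Theorem \ref{t3}.
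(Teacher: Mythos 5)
Your proof is correct and follows essentially the same route as the paper: project $T\tilde{\psi}=\psi$ onto the zeroth spherical harmonic, apply the $l=0$ conservation law $\partial_{v}\partial_{r}\tilde{\psi}_{0}+\frac{1}{M}\partial_{v}\tilde{\psi}_{0}=0$ to the spherically symmetric wave $\tilde{\psi}_{0}$, commute $\partial_{v}$ past $\partial_{r}+\frac{1}{M}$ to conclude $\partial_{r}\psi_{0}+\frac{1}{M}\psi_{0}=0$ on $\mathcal{H}^{+}$, and contradict genericity. The paper's own proof is this computation written out without the $H_{0}$ notation, and it likewise notes the extension to data supported on higher frequencies $l\geq L$ via $H_{L}$.
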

\begin{proof}
Suppose that there exists a wave $\tilde{\psi}$ such that $T\tilde{\psi}=\psi$. Then we can decompose 
\begin{equation*}
\tilde{\psi}=\tilde{\psi}_{0}+\tilde{\psi}_{\geq 1}
\end{equation*}
as is shown in Section \ref{sec:EllipticTheoryOnMathbbS2}. Then
\begin{equation*}
\begin{split}
T\tilde{\psi}=&T\tilde{\psi}_{0}+T\tilde{\psi}_{\geq 1}\\
=&(T\tilde{\psi})_{0}+(T\tilde{\psi})_{\geq 1}
\end{split}
\end{equation*}
since $T$ is an endomorphism of the eigenspaces of $\lapp$. But $T\tilde{\psi}=\psi$ and thus $\psi_{0}=(T\tilde{\psi})_{0}=T\tilde{\psi}_{0}$. Since $\tilde{\psi}_{0}$ is a spherically symmetric wave we have $\partial_{v}\partial_{r}\tilde{\psi}_{0}+\frac{1}{M}\partial_{v}\tilde{\psi}_{0}=0$ which yields 
\begin{equation*}
\partial_{r}\psi_{0}+\frac{1}{M}\psi_{0}=0.
\end{equation*}
However, for generic intial data the above quantity is non zero and this completes the proof.
\end{proof}
Note that even if we restrict the generic initial data such that the evolving waves $\psi$ are supported on $l\geq L$, for some $L\geq 1$, then the above corollary still holds. Indeed, it suffices to decompose 
\begin{equation*}
\tilde{\psi}=\tilde{\psi}_{L}+\tilde{\psi}_{\geq L+1}
\end{equation*}
and repeat the argument using again the above theorem.

This proves that we can not use the argument of Kay and Wald (see \cite{wald} and \cite{wa1}) even for proving the uniform boundedness of the waves. Indeed, this argument showed that  for any solution of the wave equation $\psi$ there is another wave $\tilde{\psi}$ such that $T\tilde{\psi}=\psi$ in the future of a Cauchy hypersurface of the domain of outer communications of Schwarzschild.

\section{Sharp Higher Order $L^{2}$ Estimates}
\label{sec:HigherOrderEstimates}

In this section we commute the wave equation with $\partial_{r}^{k}$ where $k\in\mathbb{N}$ and $k\geq 2$ aiming at controlling all higher derivatives of $\psi$ (on the spacelike hypersurfaces and the spacetime region up to and including the horizon $\mathcal{H}^{+}$). In view of Theorem \ref{ndk} the weakest condition on $\psi$ would be such that it is supported on the frequencies $l\geq k$.

\subsection{The Commutator $\left[\Box_{g}, \partial_{r}^{k} \right]$}
\label{sec:TheCommutatorLeftBoxGPsiPartialRKRight}

We know that 
\begin{equation*}
\Box_{g}\psi=D\left(\partial_{r}^{2}\psi\right)+2\partial_{v}\partial_{r}\psi+\frac{2}{r}\partial_{v}\psi+R\partial_{r}\psi+\lapp\psi,
\end{equation*}
therefore,
\begin{equation*}
\Box_{g}\left(\partial_{r}^{k}\psi\right)=D\left(\partial_{r}^{k+2}\psi\right)+2\partial_{v}\partial_{r}^{k+1}\psi+\frac{2}{r}\partial_{v}\partial_{r}^{k}\psi+R\partial_{r}^{k+1}\psi+\lapp\partial_{r}^{k}\psi.
\end{equation*}
Furthermore,
\begin{equation*}
\begin{split}
\partial_{r}^{k}\left(\Box_{g}\psi\right)=&D\left(\partial_{r}^{k+2}\psi\right)+2\partial_{v}\partial_{r}^{k+1}\psi+\frac{2}{r}\partial_{v}\partial_{r}^{k}\psi+R\partial_{r}^{k+1}\psi+\partial_{r}^{k}\lapp\psi+\\
&+\sum_{i=1}^{k}{
\binom{k}{i}\partial_{r}^{i}D\cdot \partial_{r}^{k-i+2}\psi}+\sum_{i=1}^{k}{\binom{k}{i}\partial_{r}^{i}\frac{2}{r}\cdot\partial_{v}\partial_{r}^{k-i}\psi}+\\
&+\sum_{i=1}^{k}{\binom{k}{i}\partial_{r}^{i}R\cdot\partial_{r}^{k-i+1}\psi}.
\end{split}
\end{equation*}
Let us compute the commutator $\left[\lapp,\partial_{r}^{k}\right]$. If we denote 
\begin{equation*}
\begin{split}
\lapp_{1}=\frac{1}{\sin\theta}\left[\partial_{\theta}\left(\sin\theta\partial_{\theta}\right)+\frac{1}{\sin\theta}\partial_{\phi}\partial_{\phi}\right]
\end{split}
\end{equation*}
then
\begin{equation*}
\begin{split}
\partial_{r}^{k}\lapp\psi &=\partial_{r}^{k}\left(\frac{1}{r^{2}}\lapp_{1}\right)=\sum_{i=0}^{k}{\binom{k}{i}\partial_{r}^{i}\frac{1}{r^{2}}\cdot\partial_{r}^{k-i}\lapp_{1}}\\
&=\sum_{i=0}^{k}{\binom{k}{i}r^{2}\partial_{r}^{i}\frac{1}{r^{2}}\cdot\lapp\partial_{r}^{k-i}\psi}\\
&=\lapp\partial_{r}^{k}\psi+\sum_{i=1}^{k}{\binom{k}{i}r^{2}\partial_{r}^{i}r^{-2}\cdot\lapp\partial_{r}^{k-i}\psi}.\\
\end{split}
\end{equation*}
Therefore,
\begin{equation}
\left[\lapp,\partial_{r}^{k}\right]\psi=-\sum_{i=1}^{k}{\binom{k}{i}r^{2}\partial_{r}^{i}r^{-2}\cdot\lapp\partial_{r}^{k-i}\psi}
\label{sh1com1ttat}
\end{equation}
and so 
\begin{equation}
\begin{split}
\left[\Box_{g},\partial_{r}^{k}\right]\psi=&-\sum_{i=1}^{k}{
\binom{k}{i}\partial_{r}^{i}D\cdot \partial_{r}^{k-i+2}\psi}-\sum_{i=1}^{k}{\binom{k}{i}\partial_{r}^{i}\frac{2}{r}\cdot\partial_{v}\partial_{r}^{k-i}\psi}\\
&-\sum_{i=1}^{k}{\binom{k}{i}\partial_{r}^{i}R\cdot\partial_{r}^{k-i+1}\psi}-\sum_{i=1}^{k}{\binom{k}{i}r^{2}\partial_{r}^{i}r^{-2}\cdot\lapp\partial_{r}^{k-i}\psi}.
\label{comhigherorder}
\end{split}
\end{equation}

\subsection{Induction on $k$}
\label{sec:TheMultiplierLKAndTheEnergyIdentity}

For any solution $\psi$ of the wave equation we  control the  higher order derivatives in the spacetime region away from  $\mathcal{H}^{+}$:
\begin{equation}
\left\|\partial_{\a}\psi\right\|^{2}_{L^{2}\left(\mathcal{R}\left(0,\tau\right)\cap\left\{M<r_{0}\leq r\leq r_{1}<2M\right\}\right)} \leq C\int_{\Sigma_{0}}{\left(\sum_{i=0}^{k-1}{J^{T}_{\mu}\left(T^{i}\psi\right)n^{\mu}_{\Sigma_{0}}}\right)},
\label{hospacaaH}
\end{equation}
where $C$ depends on $M$, $r_{0}$, $r_{1}$ and $\Sigma_{0}$. This can be proved by commuting the wave equation  with $T^{i},i=1,...,k-1$ and applying $X$ as a multiplier and using local elliptic estimates. Thus, we need to understand the behaviour of the $k^{\text{th}}$-order derivatives of $\psi$ in a neighbourhood of $\mathcal{H}^{+}$. 
\begin{theorem}
There exists a  constant $C>0$ which depends on $M$, $k$ and $\Sigma_{0}$ such that for all solutions $\psi$ of the wave equation which are supported on the frequencies $l\geq k$ we have
\begin{equation*}
\begin{split}
&\int_{\Sigma_{\tau}\cap\mathcal{A}}{\left(\partial_{v}\partial_{r}^{k}\psi\right)^{2}+\left(\partial_{r}^{k+1}\psi\right)^{2}+\left|\nabb\partial_{r}^{k}\psi\right|^{2}}\\
+&\int_{\mathcal{H}^{+}}{\left(\partial_{v}\partial_{r}^{k}\psi\right)^{2}+\chi_{k}\left|\nabb\partial_{r}^{k}\psi\right|^{2}}\\
+&\int_{\mathcal{A}}{\left(\partial_{v}\partial_{r}^{k}\psi\right)^{2}+\sqrt{D}\left(\partial_{r}^{k+1}\psi\right)^{2}+\left|\nabb\partial_{r}^{k}\psi\right|^{2}}\\
\leq  &C\sum_{i=0}^{k}\int_{\Sigma_{0}}{J_{\mu}^{N}\left(T^{i}\psi\right)n^{\mu}_{\Sigma_{0}}}+C\sum_{i=1}^{k}\int_{\Sigma_{0}\cap\mathcal{A}}{J_{\mu}^{N}\left(\partial^{i}_{r}\psi\right)n^{\mu}_{\Sigma_{0}}},
\end{split}
\end{equation*}
where $\chi_{k}=0$ if $\psi$ is supported on $l= k$ and $\chi_{k}=1$ if $\psi$ is supported on $l\geq k+1$. The condition on the spherical decomposition is sharp.
\label{kder}
\end{theorem}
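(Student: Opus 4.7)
I would argue by induction on $k \geq 1$, with the base case $k=1$ being Theorem \ref{2ndder}. Assuming the estimate is known at orders $1, \dots, k-1$, I would commute the wave equation with $\partial_r^k$ (producing $\Box_g(\partial_r^k\psi) = -[\Box_g, \partial_r^k]\psi$ with the commutator given by \eqref{comhigherorder}) and then apply the same multiplier $L = f_v\partial_v + f_r \partial_r$ constructed in Section \ref{sec:TheMultiplierLAndTheEnergyIdentity}, but now to the function $\partial_r^k \psi$. The resulting energy identity reads
\begin{equation*}
\int_{\Sigma_\tau}\!\! J^L_\mu(\partial_r^k \psi)n^\mu_{\Sigma_\tau} + \int_{\mathcal{H}^+}\!\! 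J^L_\mu(\partial_r^k \psi) n^\mu_{\mathcal{H}^+} + \int_{\mathcal{R}}\!\!\left[K^L(\partial_r^k \psi) + \mathcal{E}^L(\partial_r^k \psi)\right] = \int_{\Sigma_0}\!\! J^L_\mu(\partial_r^k \psi) n^\mu_{\Sigma_0},
\end{equation*}
where the good bulk current $K^L(\partial_r^k\psi)$ dominates $(\partial_v\partial_r^k\psi)^2 + \sqrt{D}(\partial_r^{k+1}\psi)^2 + |\nabb \partial_r^k\psi|^2$ in $\mathcal{A}$ by the algebraic identities already exploited in Proposition \ref{knprop}, and where the error $\mathcal{E}^L(\partial_r^k\psi) = -L(\partial_r^k\psi)\cdot[\Box_g,\partial_r^k]\psi$ collects every term from \eqref{comhigherorder}.

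Each such term would be processed in direct analogy with the $H_4,\dots,H_{10}$ analysis of Section \ref{sec:EstimatesOfTheBulkTerms}. For the top-order contributions $\partial_r^i D \cdot \partial_r^{k-i+2}\psi$ and $\partial_r^i R \cdot \partial_r^{k-i+1}\psi$, the factors $\partial_r^i D$ and $\partial_r^i R$ can be arranged to vanish at $r=M$ to a degree that allows absorption into the $\sqrt{D}(\partial_r^{k+1}\psi)^2$ piece of $K^L$, after localizing to a thin enough neighborhood of $\mathcal{H}^+$; for terms involving derivatives of order at most $k$, Cauchy--Schwarz plus the inductive hypothesis (applied to $\partial_r^j\psi$ for $j\leq k-1$ and its $T$-commuted versions) suffices, aided by a version of Proposition \ref{1comprop} for $\partial_r^{k-1}\psi$ obtained from the third Hardy inequality. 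The angular-Laplacian couplings $r^2\partial_r^i r^{-2}\cdot\lapp\partial_r^{k-i}\psi$ are handled by integration by parts on $\mathbb{S}^2$ as in the $H_7$--$H_8$ analysis; the index $i=k$ requires an integration by parts in $r$ instead, producing a boundary integral on $\mathcal{H}^+$ that I would control by generalizations of Propositions \ref{2comprop} and \ref{3comprop}, themselves obtained by substituting the wave equation on $\mathcal{H}^+$ combined with the second Hardy inequality applied to $\partial_r^{k-1}\psi$.

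The decisive obstacle, as in the $k=1$ case, is the boundary term generated on $\mathcal{H}^+$ by the integration by parts in $r$ of the cross term analogous to $H_{10}(\partial_r\partial_r\psi)(\partial_r\psi)$. After collecting all the contributions in \eqref{comhigherorder} that, via two integrations by parts (one in $r$, one on $\mathbb{S}^2$), collapse to a pure $(\partial_r^k\psi)^2$ on $\mathcal{H}^+$, the bad boundary term takes the schematic form
\begin{equation*}
-f_r(M)\cdot A_k\int_{\mathcal{H}^+}(\partial_r^k\psi)^2, \qquad A_k = \binom{k}{2}D''(M) + kR'(M) + \text{(lower terms)} = \frac{k(k+1)}{M^2} + \cdots,
\end{equation*}
with the wrong sign. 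The only tool for absorbing it is the Poincar\'e inequality on $\mathbb{S}^2(M)$,
\begin{equation*}
\int_{\mathcal{H}^+}(\partial_r^k\psi)^2 \leq \frac{M^2}{l(l+1)}\int_{\mathcal{H}^+}|\nabb \partial_r^k\psi|^2,
\end{equation*}
which I would compare with the good boundary term $-\tfrac12 f_r(M)\int_{\mathcal{H}^+}|\nabb \partial_r^k\psi|^2$ furnished by $J^L_\mu(\partial_r^k\psi)n^\mu_{\mathcal{H}^+}$. The two terms balance precisely when $k(k+1)\leq l(l+1)$, i.e.~$l\geq k$: at $l=k$ the angular term on $\mathcal{H}^+$ is consumed entirely (so $\chi_k = 0$), while for $l\geq k+1$ a positive fraction survives (giving $\chi_k = 1$). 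The sharpness of the condition $l\geq k$ is immediate from Theorem \ref{ndk}, since at frequency $l\leq k-1$ a nontrivial linear combination of $\partial_r^k\psi$ and its lower-$r$-derivatives is conserved along the generators of $\mathcal{H}^+$, ruling out any uniform estimate of this type for generic data.

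To close the argument I would convert $\int_{\Sigma_0} J^L_\mu(\partial_r^k\psi)n^\mu_{\Sigma_0}$ into the stated right-hand side by trading each $\partial_v$ acting on $\partial_r^k\psi$ in the initial data for $T$-derivatives (noting $\partial_v = T$) and successively applying the wave equation to express any remaining second-order normal derivative of lower $r$-order in terms of $\partial_r^{\leq k}$ data and $T^{\leq k}$ data, producing exactly the $N$-energies of $T^i\psi$ and of $\partial_r^i\psi$ appearing in the statement. Adding the analogous estimate on the complementary region $r\geq r_0$ (handled by commuting with $T^i$, $i\leq k$, and applying $X$ as a multiplier together with the local elliptic estimate \eqref{bulkawayH}) yields the full theorem.
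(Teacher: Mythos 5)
Your overall architecture coincides with the paper's: induction on $k$ with Theorem \ref{2ndder} as base case, a horizon-localized timelike multiplier applied to $\partial_r^k\psi$, term-by-term treatment of the commutator \eqref{comhigherorder} via integrations by parts, and the Poincar\'e inequality on $\mathbb{S}^{2}(M)$ producing the borderline at $l=k$. However, there is a genuine gap in your handling of the borderline component. You assert that at $l=k$ the bad horizon term $\int_{\mathcal{H}^{+}}(\partial_r^k\psi)^2$ can be absorbed by consuming the good angular flux $-\tfrac{1}{2}f_r(M)\int_{\mathcal{H}^{+}}\left|\nabb\partial_r^k\psi\right|^2$ entirely. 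But the integrations by parts in $r$ that you (correctly) perform on the commutator terms generate, in addition, a whole family of further horizon boundary integrals of the form $\int_{\mathcal{H}^{+}}(\partial_r^{k-i+1}\psi)(\partial_r^{k}\psi)$ and similar, for $2\leq i\leq k$. Each of these, if estimated by Cauchy--Schwarz and Poincar\'e, would demand its own portion of the same angular flux; since the balance at $l=k$ is an exact equality with zero margin, the argument does not close. The paper's resolution is Lemma \ref{lemk}: for $\psi$ supported on $l=k$ it controls $\int_{\mathcal{H}^{+}}(\partial_r^{i}\psi)(\partial_r^{k}\psi)$, $i\leq k-1$, without touching the angular flux at all, by invoking the conservation-law identities of Theorem \ref{ndk} (namely $\partial_r^{j}\psi=\sum_{i}\alpha_i^{j}\partial_v\partial_r^{i}\psi$ on $\mathcal{H}^{+}$ for $j\leq l-1$), integrating by parts in $v$ along the generators, and closing with the Hardy inequalities plus an $\epsilon$ of the good flux term $f_v(M)(\partial_v\partial_r^k\psi)^2$. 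This is the decisive new ingredient beyond the $k=1$ case, and the generalizations of Propositions \ref{2comprop} and \ref{3comprop} that you invoke do not supply it, since those handle products involving $\partial_v\psi$ or $\lapp\psi$ rather than the pure transversal products arising here.

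A secondary inaccuracy: the coefficients $\partial_r^{i}D$ do not vanish at $r=M$ for $i\geq 2$ (indeed $D''(M)=2/M^{2}$), so the contributions $\partial_r^{i}D\cdot\partial_r^{k-i+2}\psi\cdot f_r\partial_r^{k+1}\psi$ cannot be absorbed into the degenerate bulk term $\sqrt{D}\left(\partial_r^{k+1}\psi\right)^2$ by smallness of their coefficients, as you suggest; they must be integrated by parts in $r$ --- which is precisely the operation that produces the problematic horizon integrals described above. Your sharpness argument via Theorem \ref{ndk} and the final bookkeeping converting the initial $L$-energies into the stated $N$-energies of $T^{i}\psi$ and $\partial_r^{i}\psi$ are consistent with the paper.
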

\begin{proof}
We follow an inductive process. We suppose that we have proved that there exists a uniform positive constant $C$ that depends on $M$, $\Sigma_{0}$ and $m$ such that 
\begin{equation}
\begin{split}
&\int_{\Sigma_{\tau}\cap\mathcal{A}}{\left(\partial_{v}\partial_{r}^{m}\psi\right)^{2}+\left(\partial_{r}^{m+1}\psi\right)^{2}+\left|\nabb\partial_{r}^{m}\psi\right|^{2}}\\
+&\int_{\mathcal{H}^{+}}{\left(\partial_{v}\partial_{r}^{m}\psi\right)^{2}+\chi_{m}\left|\nabb\partial_{r}^{m}\psi\right|^{2}}\\
+&\int_{\mathcal{A}}{\left(\partial_{v}\partial_{r}^{m}\psi\right)^{2}+\sqrt{D}\left(\partial_{r}^{m+1}\psi\right)^{2}+\left|\nabb\partial_{r}^{m}\psi\right|^{2}}\\
\leq  &C\sum_{i=0}^{m}\int_{\Sigma_{0}}{J_{\mu}^{N}\left(T^{i}\psi\right)n^{\mu}_{\Sigma_{0}}}+C\sum_{i=1}^{m}\int_{\Sigma_{0}\cap\mathcal{A}}{J_{\mu}^{N}\left(\partial^{i}_{r}\psi\right)n^{\mu}_{\Sigma_{0}}}
\label{hok-1}
\end{split}
\end{equation}
for all natural numbers $m\leq k-1$ (and thus we have $\chi_{m}=1$). Note that for $m=0$ the above estimate is the result of Section \ref{sec:TheVectorFieldN} and for $m=1$ of Section \ref{sec:CommutingWithAVectorFieldTransversalToMathcalH}.

In order to derive the above estimate for $m=k$ we will construct an appropriate multiplier $L^{k}=f_{v}^{k}\partial_{v}+f_{r}^{k}\partial_{r}$ which is a future directed timelike $\phi_{T}$-invariant vector field. Following the spirit of Section \ref{sec:CommutingWithAVectorFieldTransversalToMathcalH}, $L^{k}=0$ in $r\geq r_{1}$ and timelike in the region $M<r_{1}$. Again, we will be interested in the region $\mathcal{A}=\left\{M\leq r\leq r_{0}<r_{1}\right\}$. The control for the higher order derivatives will be derived from the energy identity of the current $J_{\mu}^{L^{k}}\left(\partial_{r}^{k}\psi\right)$
\begin{equation}
\int_{\Sigma_{\tau}}{J^{L^{k}}_{\mu}\!\!\left(\partial_{r}^{k}\psi\right)n_{\Sigma_{\tau}}^{\mu}}+\int_{\mathcal{R}}{\nabla^{\mu}J^{L^{k}}_{\mu}\!\!\left(\partial_{r}^{k}\psi\right)}+\int_{\mathcal{H}^{+}}{J^{L^{k}}_{\mu}\!\!\left(\partial_{r}^{k}\psi\right)n_{\mathcal{H}^{+}}^{\mu}}=\int_{\Sigma_{0}}{J^{L^{k}}_{\mu}\!\!\left(\partial_{r}^{k}\psi\right)n_{\Sigma_{0}}^{\mu}}.
\label{eideho}
\end{equation}
By assumption, the right hand side is bounded. Also since $L^{k}$ is timelike in the compact region $\mathcal{A}$ we have 
\begin{equation}
J_{\mu}^{L^{k}}\!\!\left(\partial_{r}^{k}\psi\right)n^{\mu}_{\Sigma_{\tau}}\sim \left(\partial_{v}\partial_{r}^{k}\psi\right)^{2}+\left(\partial_{r}^{k+1}\psi\right)^{2}+\left|\nabb\partial_{r}^{k}\psi\right|^{2}
\label{hoLst}
\end{equation}
and on the horizon
\begin{equation}
J_{\mu}^{L^{k}}\!\!\left(\partial_{r}^{k}\psi\right)n^{\mu}_{\mathcal{H}^{+}}= f_{v}^{k}(M)\left(\partial_{v}\partial_{r}^{k}\psi\right)^{2}-\frac{f_{r}^{k}(M)}{2}\left|\nabb\partial_{r}^{k}\psi\right|^{2}.
\label{hoLH}
\end{equation}
It suffices to estimate the bulk integral. We have
\begin{equation*}
\begin{split}
\nabla^{\mu}J_{\mu}^{L^{k}}\!\!\left(\partial_{r}^{k}\psi\right)&=K^{L^{k}}\!\left(\partial_{r}^{k}\psi\right)
+\mathcal{E}^{L^{k}}\!\left(\partial_{r}^{k}\psi\right).\\
\end{split}
\end{equation*}
But
\begin{equation*}
\begin{split}
K^{L^{k}}\!\left(\partial_{r}^{k}\psi\right)=&F_{vv}\!\left(\partial_{v}\partial_{r}^{k}\psi\right)^{2}+F_{rr}\!\left(\partial_{r}^{k+1}\psi\right)^{2}+F_{\scriptsize\nabb}\!\left|\nabb\partial_{r}^{k}\psi\right|^{2}+\\
&+F_{vr}\!\left(\partial_{v}\partial_{r}^{k}\psi\right)\!\left(\partial_{r}^{k+1}\psi\right),
\end{split}
\end{equation*}
where the coefficients are given from \eqref{list}. Moreover, equation \eqref{comhigherorder} gives us
\begin{equation*}
\begin{split}
&\mathcal{E}^{L^{k}}\!\left(\partial_{r}^{k}\psi\right)=\left(\Box_{g}\partial_{r}^{k}\psi\right)L^{k}\left(\partial_{r}^{k}\psi\right)\\
=&\left[-\sum_{i=1}^{k}{
\binom{k}{i}\partial_{r}^{i}D\cdot \partial_{r}^{k-i+2}\psi}-\sum_{i=1}^{k}{\binom{k}{i}\partial_{r}^{i}\frac{2}{r}\cdot\partial_{v}\partial_{r}^{k-i}\psi}\right.\\
&\left.-\sum_{i=1}^{k}{\binom{k}{i}\partial_{r}^{i}R\cdot\partial_{r}^{k-i+1}\psi}-\sum_{i=1}^{k}{\binom{k}{i}r^{2}\partial_{r}^{i}r^{-2}\cdot\lapp\partial_{r}^{k-i}\psi}\right]L^{k}\!\left(\partial_{r}^{k}\psi\right)\\
=&-\sum_{i=1}^{k}{\binom{k}{i}f_{v}^{k}\cdot\partial_{r}^{i}D\left(\partial_{r}^{k-i+2}\psi\right)\left(\partial_{v}\partial_{r}^{k}\psi\right)}-\sum_{i=1}^{k}{\binom{k}{i}f_{v}^{k}\cdot\partial_{r}^{i}\frac{2}{r}\left(\partial_{v}\partial_{r}^{k-i}\psi\right)\left(\partial_{v}\partial_{r}^{k}\psi\right)}\\
&-\sum_{i=1}^{k}{\binom{k}{i}f_{v}^{k}\cdot\partial_{r}^{i}R\left(\partial_{r}^{k-i+1}\psi\right)\left(\partial_{v}\partial_{r}^{k}\psi\right)}-\sum_{i=1}^{k}{\binom{k}{i}f_{v}^{k}\cdot r^{2}\partial_{r}^{i}r^{-2}\left(\lapp\partial_{r}^{k-i}\psi\right)\left(\partial_{v}\partial_{r}^{k}\psi\right)}\\
&-\sum_{i=1}^{k}{\binom{k}{i}f_{r}^{k}\cdot\partial_{r}^{i}D\left(\partial_{r}^{k-i+2}\psi\right)\left(\partial_{r}^{k+1}\psi\right)}-\sum_{i=1}^{k}{\binom{k}{i}f_{r}^{k}\cdot\partial_{r}^{i}\frac{2}{r}\left(\partial_{v}\partial_{r}^{k-i}\psi\right)\left(\partial_{r}^{k+1}\psi\right)}\\
&-\sum_{i=1}^{k}{\binom{k}{i}f_{r}^{k}\cdot\partial_{r}^{i}R\left(\partial_{r}^{k-i+1}\psi\right)\left(\partial_{r}^{k+1}\psi\right)}-\sum_{i=1}^{k}{\binom{k}{i}f_{r}^{k}\cdot r^{2}\partial_{r}^{i}r^{-2}\left(\lapp\partial_{r}^{k-i}\psi\right)\left(\partial_{r}^{k+1}\psi\right)}.
\end{split}
\end{equation*}
In order to obtain the sharp result we  need the following lemma
\begin{lemma}
Suppose $\psi$ is a solution to the wave equation which is supported on the (fixed) frequency $l=k$. Then for all $0\leq i\leq k-1$ and any positive number $\epsilon$ we have
\begin{equation*}
\begin{split}
\left|\int_{\mathcal{H}^{+}}{(\partial_{r}^{i}\psi)(\partial_{r}^{k}\psi)}\right|\leq & C_{\epsilon}\sum_{i=0}^{k-1}\int_{\Sigma_{0}}{J_{\mu}^{N}(T^{i}\psi)n^{\mu}_{\Sigma_{0}}}+C_{\epsilon}\sum_{i=0}^{k}\int_{\Sigma_{0}}{J_{\mu}^{N}(\partial_{r}^{i}\psi)n^{\mu}_{\Sigma_{0}}}\\ &+\epsilon\int_{\Sigma_{\tau}\cap\mathcal{A}}{J_{\mu}^{L^{k}}(\partial_{r}^{k}\psi)n^{\mu}_{\Sigma_{\tau}}}+\epsilon\int_{\mathcal{H}^{+}}{(\partial_{v}\partial_{r}^{k}\psi)^{2}},
\end{split}
\end{equation*}
where $C_{\epsilon}$ depends on $M$, $k$ and $\Sigma_{0}$.
\label{lemk}
\end{lemma}
\begin{proof}
Using Theorem \ref{ndk} it suf{}fices to estimate the integrals
\begin{equation*}
\int_{\mathcal{H}^{+}}{(\partial_{v}\partial_{r}^{j}\psi)(\partial_{r}^{k}\psi)},
\end{equation*}
where $0\leq j\leq k$. For $0\leq j\leq k-1$ we have
\begin{equation*}
\begin{split}
\int_{\mathcal{H}^{+}}{(\partial_{v}\partial_{r}^{j}\psi)(\partial_{r}^{k}\psi)}=&\int_{\mathcal{H}^{+}\cap\Sigma_{\tau}}{(\partial_{r}^{j}\psi)(\partial_{r}^{k}\psi)}-\int_{\mathcal{H}^{+}\cap\Sigma_{0}}{(\partial_{r}^{j}\psi)(\partial_{r}^{k}\psi)}\\ &-\int_{\mathcal{H}^{+}}{(\partial_{r}^{j}\psi)(\partial_{v}\partial_{r}^{k}\psi)}.
\end{split}
\end{equation*}
The two boundary integrals can be estimated using the second Hardy inequality. As regards the last integral on the right hand side, the Cauchy-Schwarz and Poincar\'e inequality imply
\begin{equation*}
\begin{split}
\int_{\mathcal{H}^{+}}{(\partial_{r}^{j}\psi)(\partial_{v}\partial_{r}^{k}\psi)}\leq \int_{\mathcal{H}^{+}}{\frac{1}{\epsilon}\left|\nabb\partial_{r}^{j}\psi\right|^{2}+\epsilon(\partial_{v}\partial_{r}^{k}\psi)^{2}}.
\end{split}
\end{equation*}
For $j=k$, we use that $(\partial_{v}\partial_{r}^{k}\psi)(\partial_{r}^{k}\psi)=\frac{1}{2}\partial_{v}(\partial_{r}^{k}\psi)^{2}$ and the second Hardy inequality.
\end{proof}
We are now in position to estimate the bulk integrals. We decompose 
\begin{equation*}
\psi=\psi_{k}+\psi_{\geq k+1},
\end{equation*}
as in Section \ref{sec:EllipticTheoryOnMathbbS2}. This is needed in view of the factor $\chi_{k}$ in the statement of the theorem.
\begin{center}
\large{\textbf{Estimate for} $\displaystyle\int_{\mathcal{R}}{H_{i}^{1}\left(\partial_{r}^{k-i+1}\psi\right)\left(\partial_{r}^{k+1}\psi\right)}, i\geq 0$}\\
\end{center}
For $i=0$ we have 
\begin{equation*}
H_{0}^{1}=-kf_{r}^{k}D'>0
\end{equation*}
and so this coefficient has the ``right'' sign in  \eqref{eideho}.

For $i\geq 1$  we use integration by parts 
\begin{equation*}
\begin{split}
&\int_{\mathcal{R}}{H_{i}^{1}\left(\partial_{r}^{k-i+1}\psi\right)\left(\partial_{r}^{k+1}\psi\right)}+\int_{\mathcal{R}}{\left(\partial_{r}H_{i}^{1}+\frac{2}{r}H_{i}^{1}\right)\left(\partial_{r}^{k-i+1}\psi\right)\left(\partial_{r}^{k}\psi\right)}+\\
&+\int_{\mathcal{R}}{H_{i}^{1}\left(\partial_{r}^{k-i+2}\psi\right)\left(\partial_{r}^{k}\psi\right)}\\
=&\int_{\Sigma_{0}}{H_{i}^{1}\left(\partial_{r}^{k-i+1}\psi\right)\left(\partial_{r}^{k}\psi\right)\partial_{r}\cdot n_{\Sigma_{0}}}-\int_{\Sigma_{\tau}}{H_{i}^{1}\left(\partial_{r}^{k-i+1}\psi\right)\left(\partial_{r}^{k}\psi\right)\partial_{r}\cdot n_{\Sigma_{\tau}}}-\\
&-\int_{\mathcal{H}^{+}}{H_{i}^{1}\left(\partial_{r}^{k-i+1}\psi\right)\left(\partial_{r}^{k}\psi\right)\partial_{r}\cdot n_{\mathcal{H}^{+}}}.
\end{split}
\end{equation*}

From Proposition \ref{1comprop} applied for $\partial_{r}^{k}\psi$ and the inductive hypothesis all the  integrals over $\mathcal{A}$ and $\Sigma_{\tau}$ can be estimated. It only remains to estimate the integral over $\mathcal{H}^{+}$ when $i=1$. In this case, if we apply the Poincar\'e inequality for the $\psi_{k}$ component we notice that we need to absorb a good term  in the divergence identity for $L^{k}$. Indeed,
\begin{equation*}
\begin{split}
\frac{1}{2}\frac{M^{2}}{l\left(l+1\right)}H_{1}^{1}\left(M\right)&= -\frac{f_{r}^{k}\left(M\right)}{2}\Leftrightarrow\\
-f_{r}^{k}\left(M\right)\frac{1}{2}\frac{M^{2}}{l\left(l+1\right)}\left[\binom{k}{2}\frac{2}{M^{2}}+k\frac{2}{M^{2}}\right]&= -\frac{f_{r}^{k}\left(M\right)}{2}\Leftrightarrow\\
\binom{k}{2}+k&= \frac{l\left(l+1\right)}{2}\Leftrightarrow\\
k&= l
\end{split}
\end{equation*}
This implies that we cannot use the Poincar\'e inequality on $\mathcal{H}^{+}$ in order to estimate $(\partial_{r}^{k}\psi_{k})^{2}$ anymore. That is why we proved Lemma \ref{lemk} which we will use for the following integrals. Clearly, for the component $\psi_{\geq k+1}$ we need a fraction of $\left|\nabb\partial_{r}^{k}\psi\right|^{2}$ along $\mathcal{H}^{+}$ and thus we can take small (epsilon) portions of this term later on. 

\begin{center}
\large{\textbf{Estimate for} $\displaystyle\int_{\displaystyle\mathcal{R}}{H_{i}^{2}\left(\partial_{v}\partial_{r}^{k-i+1}\psi\right)\left(\partial_{r}^{k+1}\psi\right)}, i \geq 1$}\\
\end{center}
For $i\geq 2$ we use Stokes' theorem
\begin{equation*}
\begin{split}
&\int_{\mathcal{R}}{H_{i}^{2}\left(\partial_{v}\partial_{r}^{k-i+1}\psi\right)\left(\partial_{r}^{k+1}\psi\right)}+\int_{\mathcal{R}}{\left(\partial_{r}H_{i}^{2}+\frac{2}{r}H_{i}^{2}\right)\left(\partial_{v}\partial_{r}^{k-i+1}\psi\right)\left(\partial_{r}^{k}\psi\right)}\ +\\
&+\int_{\mathcal{R}}{H_{i}^{2}\left(\partial_{v}\partial_{r}^{k-i+2}\psi\right)\left(\partial_{r}^{k}\psi\right)}\\
=&\int_{\Sigma_{0}}{H_{i}^{2}\left(\partial_{v}\partial_{r}^{k-i+1}\psi\right)\left(\partial_{r}^{k}\psi\right)\partial_{r}\cdot n_{\Sigma_{0}}}-\int_{\Sigma_{\tau}}{H_{i}^{2}\left(\partial_{v}\partial_{r}^{k-i+1}\psi\right)\left(\partial_{r}^{k}\psi\right)\partial_{r}\cdot n_{\Sigma_{\tau}}}\\
&-\int_{\mathcal{H}^{+}}{H_{i}^{2}\left(\partial_{v}\partial_{r}^{k-i+1}\psi\right)\left(\partial_{r}^{k}\psi\right)\partial_{r}\cdot n_{\mathcal{H}^{+}}}.
\end{split}
\end{equation*}
From Proposition \ref{1comprop} and the inductive hypothesis we can estimate all the above integrals. Note that in order to estimate the integrals along $\mathcal{H}^{+}$ for the component $\psi_{k}$ we follow the same argument as in the proof of Lemma \ref{lemk}.
For $i=1$ we use the wave equation and thus
\begin{equation*}
\begin{split}
H_{1}^{2}\left(\partial_{v}\partial_{r}^{k}\psi\right)\!\left(\partial_{r}^{k+1}\psi\right)=&H_{1}^{2}\left[\partial_{r}^{k-1}\left(-D\partial_{r}^{2}\psi-\frac{2}{r}\partial_{v}\psi-R\partial_{r}\psi-\lapp\psi\right)\right]\!\left(\partial_{r}^{k+1}\psi\right)\\
=&-\sum_{j=0}^{k-1}{\binom{k-1}{j}H_{1}^{2}\partial_{r}^{j}D\left(\partial_{r}^{k-j+1}\psi\right)\left(\partial_{r}^{k+1}\psi\right)}\\
&-\sum_{j=0}^{k-1}{\binom{k-1}{j}H_{1}^{2}\partial_{r}^{j}\frac{2}{r}\left(\partial_{v}\partial_{r}^{k-j-1}\psi\right)\left(\partial_{r}^{k+1}\psi\right)}\\
&-\sum_{j=0}^{k-1}{\binom{k-1}{j}H_{1}^{2}\partial_{r}^{j}R\left(\partial_{r}^{k-j}\psi\right)\left(\partial_{r}^{k+1}\psi\right)}\\
&-H_{1}^{2}\left(\partial_{r}^{k-1}\lapp\psi\right)\left(\partial_{r}^{k+1}\psi\right).\\
\end{split}
\end{equation*}
The integrals of the first sum can be estimated for $j=0,1$ since their coef{}ficients vanish on the horizon and the case $j\geq 2$ was investigated above. 

The integrals of the second sum were also estimated before. 

For $j=0$ the integral of the third sum can be estimated since its coefficient vanishes on $\mathcal{H}^{+}$. If $j\geq 1$ then again these integrals have been estimated.
It remains to estimate the integral of the last term. Integration by parts gives 
\begin{equation*}
\begin{split}
&\int_{\mathcal{R}}{H_{1}^{2}\left(\partial_{r}^{k-1}\lapp\psi\right)\left(\partial_{r}^{k+1}\psi\right)}+\int_{\mathcal{R}}{\left(\partial_{r}H_{1}^{2}+\frac{2}{r}H_{1}^{2}\right)\left(\partial_{r}^{k-1}\lapp\psi\right)\left(\partial_{r}^{k}\psi\right)}\\
&+\int_{\mathcal{R}}{H_{1}^{2}\left(\partial_{r}^{k}\lapp\psi\right)\left(\partial_{r}^{k}\psi\right)}\\
=&\int_{\Sigma_{0}}{H_{1}^{2}\left(\partial_{r}^{k-1}\lapp\psi\right)\left(\partial_{r}^{k}\psi\right)\partial_{r}\cdot n_{\Sigma_{0}}}-\int_{\Sigma_{\tau}}{H_{1}^{2}\left(\partial_{r}^{k-1}\lapp\psi\right)\left(\partial_{r}^{k}\psi\right)\partial_{r}\cdot n_{\Sigma_{\tau}}}\\
&-\int_{\mathcal{H}^{+}}{H_{1}^{2}\left(\partial_{r}^{k-1}\lapp\psi\right)\left(\partial_{r}^{k}\psi\right)\partial_{r}\cdot n_{\mathcal{H}^{+}}}.
\end{split}
\end{equation*}
If we set $Q=\partial_{r}H_{1}^{2}+\frac{2}{r}H_{1}^{2}$ then 
\begin{equation*}
\begin{split}
&\int_{\mathcal{R}}{Q\left(\partial_{r}^{k-1}\lapp\psi\right)\left(\partial_{r}^{k}\psi\right)}=\int_{\mathcal{R}}{Q\left(\lapp\partial_{r}^{k-1}\psi-\left[\lapp,\partial_{r}^{k-1}\right]\psi\right)\left(\partial_{r}^{k}\psi\right)}\\
=&\int_{\mathcal{R}}{Q\left(\lapp\partial_{r}^{k-1}\psi+\sum_{n=1}^{k-1}{\binom{k-1}{n}r^{2}\partial_{r}^{n}r^{-2}\cdot \lapp\partial_{r}^{k-1-n}\psi}\right)\left(\partial_{r}^{k}\psi\right)}.\\
\end{split}
\end{equation*}
We estimate this integral by applying Stokes' theorem on $\mathbb{S}^{2}$ and Cauchy-Schwarz and using the inductive hypothesis. As regards the last bulk integral we have
\begin{equation*}
\begin{split}
&\int_{\mathcal{R}}{H_{1}^{2}\left(\partial_{r}^{k}\lapp\psi\right)\left(\partial_{r}^{k}\psi\right)}=\int_{\mathcal{R}}{H_{1}^{2}\left(\lapp\partial_{r}^{k}\psi-\left[\lapp,\partial_{r}^{k}\right]\psi\right)\left(\partial_{r}^{k}\psi\right)}\\
=&\int_{\mathcal{R}}{H_{1}^{2}\left(\lapp\partial_{r}^{k}\psi+\sum_{n=1}^{k}{\binom{k}{n}r^{2}\partial_{r}^{n}r^{-2}\lapp\partial_{r}^{k-n}\psi}\right)\left(\partial_{r}^{k}\psi\right)}.\\
\end{split}
\end{equation*}
Therefore, by applying Stokes' theorem on $\mathbb{S}^{2}$ we see that this integral can be estimated provided we have
\begin{equation}
H_{1}^{2}<-\frac{1}{2}\partial_{r}f_{r}^{k}\left(M\right).
\label{ho24}
\end{equation}
Note that $H_{1}^{2}$ does not depend on $\partial_{r}f_{r}^{k}$.  Furthermore, The boundary integral over $\mathcal{H}^{+}$ can be estimated as follows: For the component $\psi_{k}$ we have $\lapp\psi_{k}=-\frac{k(k+1)}{r^{2}}\psi_{k}$  and thus $\partial_{r}^{k-1}\lapp\psi_{k}$ depends on $\partial_{r}^{i}\psi_{k}$ for $0\leq i\leq k-1$ and thus we  use Lemma \ref{lemk}. For the component $\psi_{\geq k+1}$ we commute $\partial_{r}^{k-1}$ and $\lapp$, we use Stokes' theorem on $\mathbb{S}^{2}$ and  Cauchy-Schwarz as above.

\begin{center}
\large{\textbf{Estimate for} $\displaystyle\int_{\mathcal{R}}{H_{i}^{3}\left(\partial_{r}^{k-i+2}\psi\right)\left(\partial_{v}\partial_{r}^{k}\psi\right)}, i\geq 1$}\\
\end{center}
The case $i=1$ was investigated above. 

For $i\geq 2$ we use Proposition \ref{1comprop} and Cauchy-Schwarz.

\begin{center}
\large{\textbf{Estimate for} $\displaystyle\int_{\mathcal{R}}{H_{i}^{4}\left(\partial_{v}\partial_{r}^{k-i}\psi\right)\left(\partial_{v}\partial_{r}^{k}\psi\right)}, i\geq 1$}\\
\end{center}
We use Cauchy-Schwarz and the inductive hypothesis.

\begin{center}
\large{\textbf{Estimate for} $\displaystyle\int_{\mathcal{R}}{H_{i}^{5}\left(\lapp\partial_{r}^{i}\psi\right)\left(\partial_{v}\partial_{r}^{k}\psi\right)}, i\leq k-1$}\\
\end{center}
For $i=0$ we solve with respect to $\lapp\psi$ in the wave equation and then use Cauchy-Schwarz and the inductive hypothesis. We proceed by induction on $i$. Assuming that $(\lapp\partial_{r}^{j}\psi)(\partial_{v}\partial_{r}^{k}\psi)$ is estimated for all $0\leq j\leq i-1$ we will prove that the integral $(\lapp\partial_{r}^{i}\psi)(\partial_{v}\partial_{r}^{k}\psi)$  can also be estimated. We have
\begin{equation*}
\begin{split}
\lapp\partial_{r}^{i}\psi\left(\partial_{v}\partial_{r}^{k}\psi\right)=\partial_{r}^{i}\lapp\psi\left(\partial_{v}\partial_{r}^{k}\psi\right)+[\lapp,\partial_{r}^{i}]\psi\left(\partial_{v}\partial_{r}^{k}\psi\right)
\end{split}
\end{equation*}
The first term on the right hand side can be estimated by solving with respect to $\lapp\psi$ in the wave equation and using Cauchy-Schwarz. The second term can be estimated by our inductive hypothesis.

\begin{center}
\large{\textbf{Estimate for} $\displaystyle\int_{\mathcal{R}}{H_{i}^{6}\left(\lapp\partial_{r}^{k-i}\psi\right)\left(\partial_{r}^{k+1}\psi\right)}, i\geq 1$}\\
\end{center}
Integration by parts yields
\begin{equation*}
\begin{split}
&\int_{\mathcal{R}}{H_{i}^{6}\left(\lapp\partial_{r}^{k-i}\psi\right)\left(\partial_{r}^{k+1}\psi\right)}+\int_{\mathcal{R}}{\left(\partial_{r}H_{i}^{6}+\frac{2}{r}H_{i}^{6}\right)\left(\lapp\partial_{r}^{k-i}\psi\right)\left(\partial_{r}^{k}\psi\right)}\\
&+\int_{\mathcal{R}}{H_{i}^{6}\left(\partial_{r}\lapp\partial_{r}^{k-i}\psi\right)\left(\partial_{r}^{k}\psi\right)}\\
=&\int_{\Sigma_{0}}{H_{i}^{6}\left(\lapp\partial_{r}^{k-i}\psi\right)\left(\partial_{r}^{k}\psi\right)\partial_{r}\cdot n_{\Sigma_{0}}}-\int_{\Sigma_{\tau}}{H_{i}^{6}\left(\lapp\partial_{r}^{k-i}\psi\right)\left(\partial_{r}^{k}\psi\right)\partial_{r}\cdot n_{\Sigma_{\tau}}}\\&-\int_{\mathcal{H}^{+}}{H_{i}^{6}\left(\lapp\partial_{r}^{k-i}\psi\right)\left(\partial_{r}^{k}\psi\right)\partial_{r}\cdot n_{\mathcal{H}^{+}}}.\\
\end{split}
\end{equation*}
The second bulk integral and the boundary integrals over $\Sigma$ are estimated  using Stokes' identity on $\mathcal{S}^{2}$ and the inductive hypothesis. The last bulk integral is estimated by commuting the spherical Laplacian with $\partial_{r}$ and applying again Stokes' identity on $\mathcal{S}^{2}$ and Cauchy-Schwarz. Thus for $i\geq 2$ we use the inductive hypothesis and for $i=1$ it suf{}fices to have
\begin{equation}
H_{1}^{6}<-\frac{1}{2}\partial_{r}f_{r}^{k}\left(M\right).
\label{ho6}
\end{equation}
As regards the integral over $\mathcal{H}^{+}$ we have the following: For the component $\psi_{k}$ we have $\lapp\psi_{k}=-\frac{k(k+1)}{r^{2}}\psi_{k}$  and thus $\lapp\partial_{r}^{k-i}\psi_{k}=-\frac{k(k+1)}{r^{2}}\partial_{r}^{k-i}\psi_{k}$ (see Section \ref{sec:EllipticTheoryOnMathbbS2}) and so we apply Lemma \ref{lemk} to estimate it.  For the component $\psi_{\geq k+1}$ we use Stokes' theorem on $\mathbb{S}^{2}$, Cauchy-Schwarz and the inductive hypothesis.

The construction of $L^{k}$ is now clear for all $k\in\mathbb{N}$. It suf{}fices to take $f_{r}^{k}\left(M\right)<0$, $f_{v}^{r}\left(M\right)>0$ and $-\partial_{r}f_{r}^{k}$ and $\partial_{r}f_{v}^{k}\left(M\right)$ suf{}ficiently large.  As far as the global construction for the vector field $L^{k}$ is concerned, we follow the same idea as in Section \ref{sec:CommutingWithAVectorFieldTransversalToMathcalH}.

Note that again no commutation with the generators of the Lie algebra so(3) is required.

The condition on the spherical decomposition is sharp (see Theorem \ref{hoeblowup}).

\end{proof}

\section{Energy Decay}
\label{sec:EnergyDecay}

In this section we derive the decay for the non-degenerate energy flux of $N$ through an appropriate foliation. Clearly, all the estimates in Section \ref{sec:TheVectorFieldTextbfX} are very useful, however they degenerate at infinity. It turns out that one can obtain non degenerate estimates on regions which connect $\mathcal{H}^{+}$ and $\mathcal{I}^{+}$ (without containing $i^{0}$). This has to do with the fact that a portion of the initial energy remains in  a neighbourhood of $i^{0}$. Such estimates were first derived in the recent \cite{new} along with a new robust method for obtaining decay results. Here we establish several estimates which will allow us to adapt the methods of \cite{new} in the extreme case. These new estimates are closely related with the trapping properties of $\mathcal{H}^{+}$.

We fix $R>2M$ and consider the hypersurface $\tilde{\Sigma}_{0}$ which is spacelike for $M\leq r\leq R$ and crosses $\mathcal{H}^{+}$ and for $r\geq R$ is given by $u=u(p_{0})$, where $p_{0}\in\tilde{\Sigma}_{0}$ is such that $r(p_{0})=R$.
 \begin{figure}[H]
	\centering
		\includegraphics[scale=0.12]{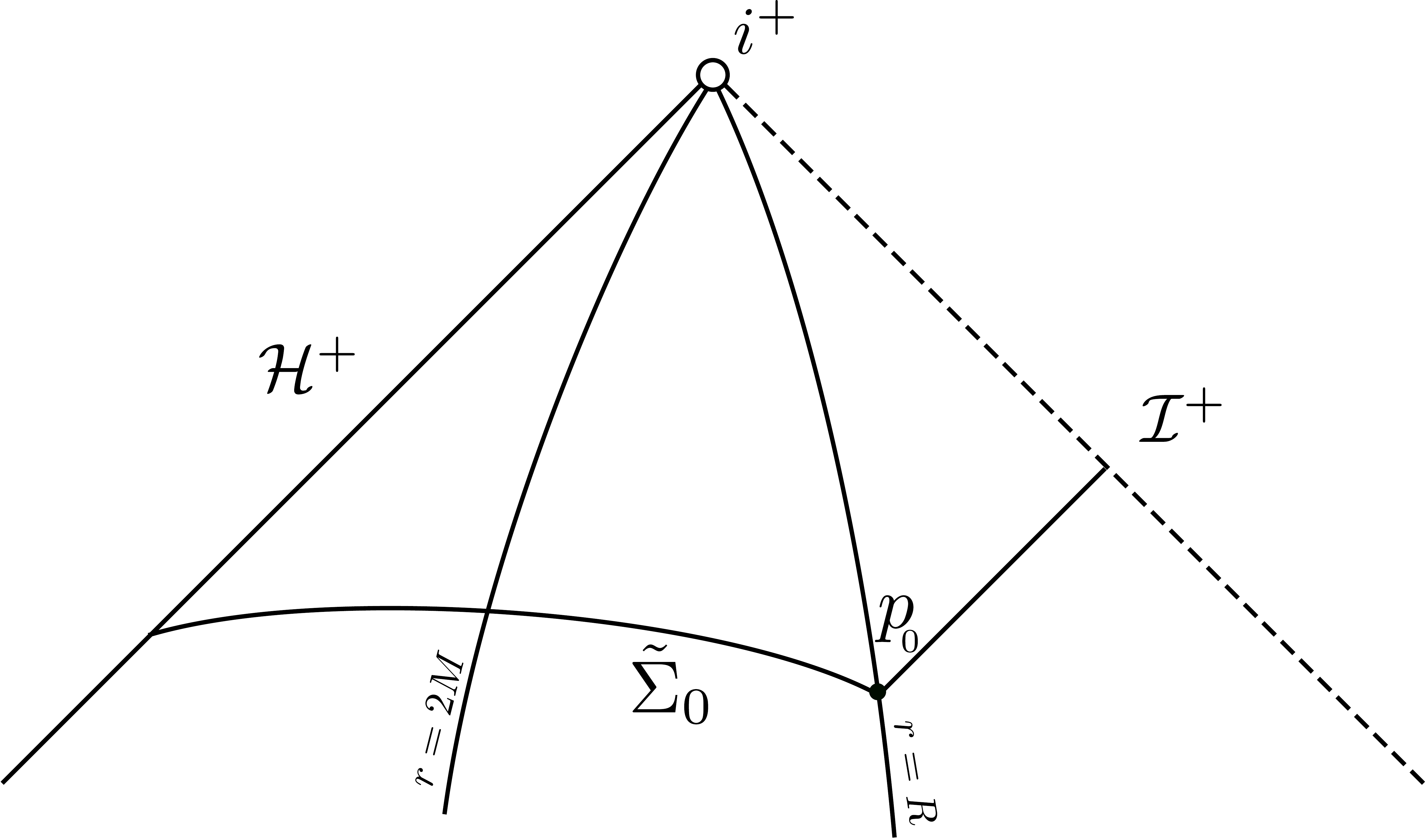}
	\label{fig:pic2ern}
\end{figure}
Consider now $\tilde{\Sigma}_{\tau}=\varphi_{\tau}(\tilde{\Sigma}_{0})$, where $\varphi_{\tau}$ is the flow of $T$. For arbitrary $\tau_{1}<\tau_{2}$ we define 
\begin{equation*}
\begin{split}
&\tilde{\mathcal{R}}_{\tau_{1}}^{\tau_{2}}=\cup_{\tau\in\left[\tau_{1},\tau_{2}\right]}{\tilde{\Sigma}_{\tau}},\\
&\tilde{\mathcal{D}}_{\tau_{1}}^{\tau_{2}}=\tilde{\mathcal{R}}_{\tau_{1}}^{\tau_{2}}\cap\left\{r\geq R\right\},\\
&\tilde{N}_{\tau}=\tilde{\Sigma}_{\tau}\cap\left\{r\geq R\right\},\\
&\Delta_{\tau_{1}}^{\tau_{2}}=\tilde{\mathcal{R}}_{\tau_{1}}^{\tau_{2}}\cap\left\{r= R\right\}.
\end{split}
\end{equation*}
 \begin{figure}[H]
	\centering
		\includegraphics[scale=0.14]{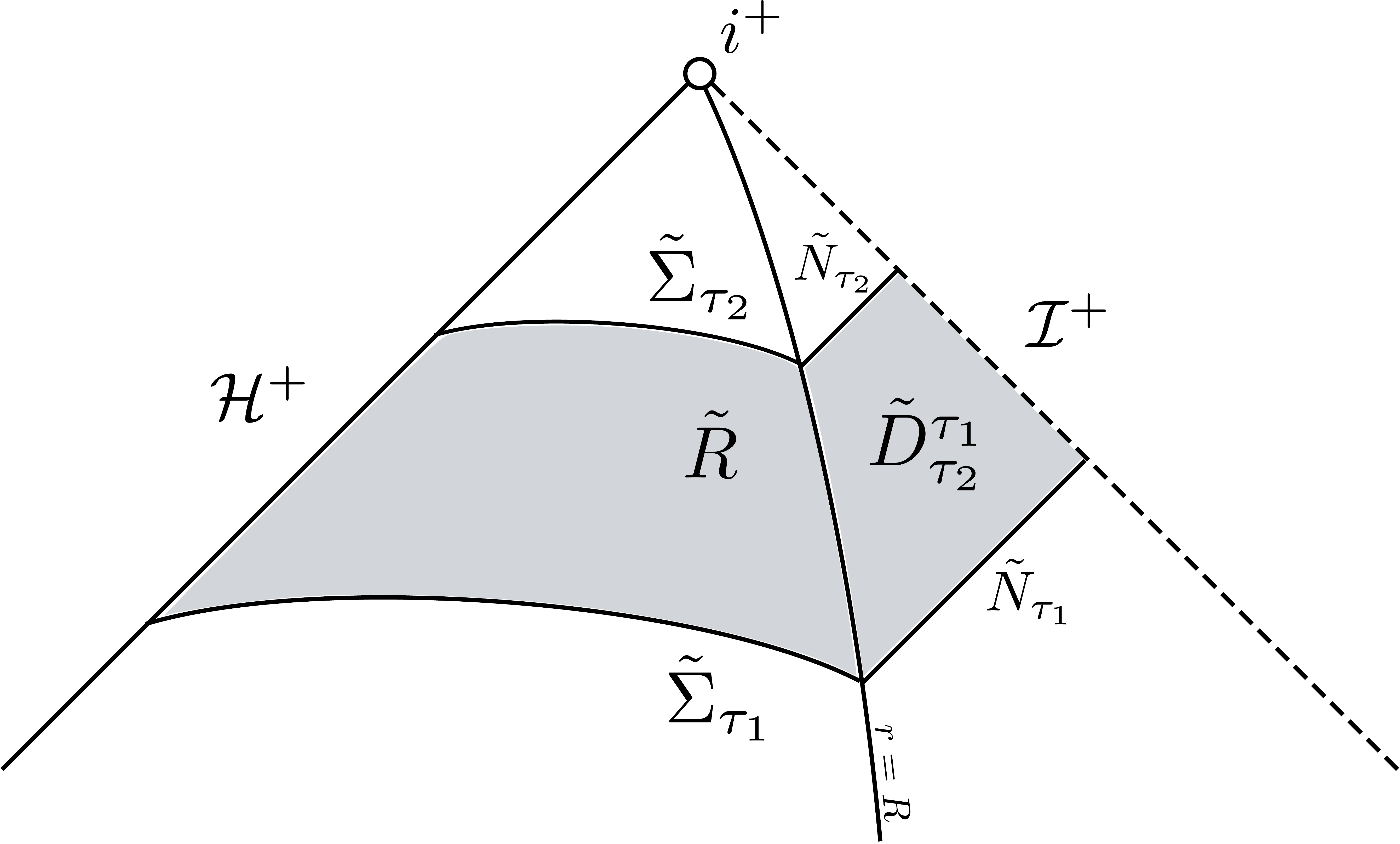}
	\label{fig:pic1ern}
\end{figure}

\subsection{$r$-Weighted Energy Estimates  it a Neighbourhood of $\mathcal{I}^{+}$}
\label{sec:RWeightedEnergyEstimates}
The main idea is to derive a non-degenerate X estimate and then derive similar estimates for its boundary terms.
From now on we work with the null $(u,v)$ coordinates (see Appendix \ref{sec:ConstructingTheExtentionOfReissnerNordstrOM}) unless otherwise stated.  
\begin{proposition}
Suppose $p<3$. There exists a constant $C$ that depends  on $M$ and $\tilde{\Sigma}_{0}$ such that if $\psi$ satisfies the wave equation and $\phi=r\psi$ then
\begin{equation}
\begin{split}
&\int_{\tilde{N}_{\tau_{2}}}{r^{p}\frac{\left(\partial_{v}\phi\right)^{2}}{r^{2}}}+\int_{\tilde{\mathcal{D}}_{\tau_{1}}^{\tau_{2}}}{r^{p-1}\left(p+2\right)\frac{\left(\partial_{v}\phi\right)^{2}}{r^{2}}}+\int_{\tilde{\mathcal{D}}_{\tau_{1}}^{\tau_{2}}}{\frac{r^{p-1}}{4}\left(-pD-rD'\right)\left|\nabb\psi\right|^{2}}\\
&\ \ \ \ \ \ \ \ \ \leq C\int_{\tilde{\Sigma}_{\tau_{1}}}{J_{\mu}^{T}\left(\psi\right)n^{\mu}_{\tilde{\Sigma}_{\tau_{1}}}}+\int_{\tilde{N}_{\tau_{1}}}{r^{p}\frac{\left(\partial_{v}\phi\right)^{2}}{r^{2}}}
\end{split}
\label{rwest}
\end{equation}
\label{rweigestprop}
\end{proposition}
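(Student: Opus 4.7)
My plan is to rescale by $\phi = r\psi$ and to multiply the resulting equation for $\phi$ by $r^p\partial_v\phi$ in the null coordinates $(u,v)$ on $\{r\ge R\}$. The wave equation $\Box_g\psi=0$ becomes
\begin{equation*}
\partial_u\partial_v\phi = \tfrac14 D\,\lapp\phi - \tfrac{DD'}{4r}\phi,
\end{equation*}
with $\partial_u r = -D/2$, $\partial_v r = D/2$, and spacetime volume element proportional to $r^2\,du\,dv\,d\omega$. After integration over $\tilde{\mathcal{D}}_{\tau_1}^{\tau_2}$, I would like the principal identities to produce exactly the three bulk terms of \eqref{rwest} on the left and, on the right, either the flux through $\tilde{N}_{\tau_1}$ or terms controllable by $\int_{\tilde{\Sigma}_{\tau_1}}J^T_\mu n^\mu$.

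The transport term is treated via
\begin{equation*}
4(\partial_u\partial_v\phi)(r^p\partial_v\phi) = 2\partial_u\!\bigl(r^p(\partial_v\phi)^2\bigr) + pD\,r^{p-1}(\partial_v\phi)^2,
\end{equation*}
so that $u$-integration of the total derivative recovers the flux $\int_{\tilde{N}_{\tau_2}} r^p(\partial_v\phi)^2/r^2$ (left-hand boundary of \eqref{rwest}), the matching flux on $\tilde{N}_{\tau_1}$ (the explicit term on the right), and a contribution on $\Delta_{\tau_1}^{\tau_2}$. For the angular contribution, I would integrate by parts on $\mathbb{S}^2$ to convert $D\,r^p\,\lapp\phi\cdot\partial_v\phi$ into $-\tfrac12 D\,r^p\,\partial_v|\nabb\phi|^2$, and then integrate by parts in $v$: the $\partial_v$ acting on the combined weight $D\,r^{p+2}$ (volume factor absorbed) produces $\tfrac12(pD+rD')r^{p+1}|\nabb\phi|^2$ which, after using $|\nabb\phi|^2=r^2|\nabb\psi|^2$, gives the angular bulk $\tfrac{r^{p-1}}{4}(-pD-rD')|\nabb\psi|^2$ of \eqref{rwest}. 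Tracking the $r^2$ volume factor together with the further $v$-integration by parts also turns the $pDr^{p-1}(\partial_v\phi)^2$ term above into the desired $(p+2)(\partial_v\phi)^2/r^2$ bulk (the $+2$ coming from $\partial_v(r^2)=Dr$). The zeroth-order potential term $-\tfrac{DD'}{4r}\phi\cdot r^p\partial_v\phi=-\tfrac{DD'}{8r}r^p\partial_v(\phi^2)$ is integrated by parts in $v$; the resulting bulk $\phi^2$ contribution is absorbed using the first Hardy inequality, exploiting the decay $DD'/r = O(r^{-4})$ at infinity.

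The boundary terms on the spacelike parts of $\tilde\Sigma_{\tau_1},\tilde\Sigma_{\tau_2}$ and on $\Delta_{\tau_1}^{\tau_2}=\{r=R\}$ then need to be controlled by the $T$-flux. Since $R>2M$, the vector field $T$ is uniformly timelike on and near $\{r=R\}$, so a coarea argument in $r$ over a thin slab $\{R\le r\le R+\varepsilon\}$ dominates the flux on $\{r=R\}$ by a spacetime integral of $J^T_\mu n^\mu$, which is in turn bounded by $\int_{\tilde\Sigma_{\tau_1}} J^T_\mu n^\mu$ by the analog of Proposition \ref{ubdenergy} for the foliation $\tilde\Sigma_\tau$. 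The main obstacle I anticipate is bookkeeping: the coefficient $(p+2)$ in \eqref{rwest} must arise from the correct combination of the $pD\,r^{p-1}(\partial_v\phi)^2$ term, the $\partial_v r$ contributions, and the $r^2$ volume factor, while the condition $p<3$ must be traced through the angular integration by parts to ensure no uncontrolled negative contribution is left over. Once the identifications are carried out, dropping any remaining favorably-signed bulk terms yields \eqref{rwest}.
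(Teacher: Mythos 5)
Your overall strategy is the same as the paper's: the paper applies the multiplier $V=r^{q}\partial_{v}$ with $q=p-2$ to $\phi=r\psi$ and computes $K^{V}(\phi)+\mathcal{E}^{V}(\phi)$, which is exactly your ``multiply the reduced equation $\partial_{u}\partial_{v}\phi=\tfrac{1}{4}D\lapp\phi-\tfrac{DD'}{4r}\phi$ by $r^{p}\partial_{v}\phi$ and integrate'' once the volume factor $\tfrac{D}{2}r^{2}\,du\,dv\,d\omega$ is carried along; the three good bulk terms and the null fluxes arise in both versions from the identities you wrote. The one structural difference is at $\{r=R\}$: the paper inserts a cut-off $\zeta$ vanishing near $r=R$, so the integrals over $\Delta_{\tau_{1}}^{\tau_{2}}$ vanish identically and the error terms live in $\{R\leq r\leq R+1\}$, where they are quadratic in the $1$-jet of $\psi$ and controlled by the degenerate $X$ and Morawetz estimates. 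Your coarea/slab argument is an acceptable substitute, but note that averaging in $r$ only produces \emph{some} good radius in $[R,R+\varepsilon]$, not the prescribed one; you must either redefine $R$ to be that radius (harmless here) or revert to the cut-off.

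The genuine gap is your treatment of the zeroth-order term. First, $DD'/r\sim 2M/r^{3}$, not $O(r^{-4})$. More importantly, after the $v$-integration by parts the resulting bulk term is of size $r^{p-4}\phi^{2}=r^{p-2}\psi^{2}$ (per unit induced volume), and no available estimate absorbs this uniformly in $\tau_{2}-\tau_{1}$ for the relevant exponents $p=1,2$: the first Hardy inequality is a fixed-$\tau$ estimate whose $\tau$-integral costs a factor $(\tau_{2}-\tau_{1})$, and the uniform spacetime control of $\psi^{2}$ (Theorem \ref{morawetz}) carries the weight $r^{-4}$, which beats $r^{p-4}$ only for $p\leq 0$. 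The term cannot be absorbed; it must be shown to have a \emph{favorable sign}. Carrying out the computation one finds the bulk coefficient is proportional to $r^{p-4}\big[(3-p)\sqrt{D}-3M/r\big]$, which is nonnegative for $r\geq R$ with $R$ large precisely because $p<3$ --- this, and not the angular integration by parts, is where the hypothesis $p<3$ enters (indeed for $p=2$ the angular bulk coefficient is already negative and is simply carried to the other side in the applications). With the sign argument substituted for the Hardy absorption, your proof closes and coincides with the paper's.
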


\begin{proof}
We first consider the cut-off function $\left.\zeta:\left[R,\!\!\right.\left.+\infty\right)\right.\rightarrow [0,1]$ such that 
\begin{equation*}
\begin{split}
&\zeta(r)=0\text{   for all } r\in\left[R,R+1/2\right],\\
&\zeta(r)=1\text{   for all } r\in\left[\left.\!R+1,+\infty \right)\right. .\\
\end{split}
\end{equation*}
Let $q=p-2$. We consider the vector field
\begin{equation*}
V=r^{q}\partial_{v}
\end{equation*}
which we apply as multiplier acting on the function $\zeta\phi$ in the region $\tilde{\mathcal{D}}_{\tau_{1}}^{\tau_{2}}$. Then
\begin{equation*}
\int_{\tilde{\mathcal{D}}_{\tau_{1}}^{\tau_{2}}}{K^{V}(\zeta\phi)+\mathcal{E}^{V}(\zeta\phi)}=\int_{\partial\tilde{\mathcal{D}}_{\tau_{1}}^{\tau_{2}}}{J_{\mu}^{V}(\zeta\phi)n^{\mu}}.
\end{equation*}
For the direction of $n^{\mu}$ see Appendix \ref{sec:StokesTheoremOnLorentzianManifolds}. Note that for $r\geq R+1$ we have $K^{V}(\zeta\phi)=K^{V}(\phi)$ and $\mathcal{E}^{V}(\zeta\phi)=\mathcal{E}^{V}(\phi)$. Then,
\begin{equation*}
\begin{split}
K^{V}(\phi)&= \textbf{T}_{\mu\nu}\left(\nabla^{\mu}\left(r^{q}\partial_{v}\right)\right)^{\nu}=\textbf{T}_{\mu\nu}\left(\left(\nabla^{\mu}r^{q}\right)\partial_{v}\right)^{\nu}+\textbf{T}_{\mu\nu}r^{q}\left(\nabla^{\mu}\partial_{v}\right)^{\nu}\\
&=r^{q}K^{V}+\textbf{T}_{\mu v}\left(\nabla^{\mu}r^{q}\right)\\
&=2r^{q-1}(\partial_{u}\phi)(\partial_{v}\phi)+qr^{q-1}(\partial_{v}\phi)^{2}+\frac{r^{q-1}}{4}\left(-qD-rD'\right)\left|\nabb\phi\right|^{2}.
\end{split}
\end{equation*}
Note that since $\psi$ solves the wave equation $\phi$ satisfies
\begin{equation*}
\begin{split}
\frac{4}{D}\partial_{u}\partial_{v}\phi+\frac{D'}{r}\phi-\lapp\phi=0
\end{split}
\end{equation*}
and so 
\begin{equation*}
\begin{split}
\Box_{g}\phi&=-\frac{2}{r}(\partial_{u}\phi-\partial_{v}\phi)-\frac{4}{D}(\partial_{u}\partial_{v}\phi)+\lapp\phi\\
&=-\frac{2}{r}(\partial_{u}\phi-\partial_{v}\phi)+\frac{D'}{r}\phi,
\end{split}
\end{equation*}
which, as expected, depends only on the 1-jet of $\phi$. Therefore,
\begin{equation*}
\begin{split}
\mathcal{E}^{V}(\phi)&=r^{q}(\partial_{v}\phi)(\Box_{g}\phi)\\
&=-2r^{q-1}(\partial_{u}\phi)(\partial_{v}\phi)+2r^{q-1}(\partial_{v}\phi)^{2}+D'r^{q-1}\phi(\partial_{v}\phi).
\end{split}
\end{equation*}
Thus
\begin{equation*}
\begin{split}
K^{V}(\phi)+\mathcal{E}^{V}(\phi)=&(q+2)r^{q-1}(\partial_{v}\phi)^{2}+\frac{r^{q-1}}{4}\left(-qD-rD'\right)\left|\nabb\phi\right|^{2}\\ &+D'r^{q-1}\phi(\partial_{v}\phi).
\end{split}
\end{equation*}
However,
\begin{equation*}
\begin{split}
\int_{\tilde{\mathcal{D}}_{\tau_{1}}^{\tau_{2}}}{D'r^{q-1}\z\phi\left(\partial_{v}\z\phi\right)}=& \int_{\tilde{\mathcal{D}}_{\tau_{1}}^{\tau_{2}}}{r^{q-4}\frac{M}{2}D\left[\sqrt{D}(1-q)-\frac{3M}{r}\right]\left(\z\phi\right)^{2}}\\&-\int_{\Delta}{\frac{r^{q-1}}{4}D'\sqrt{D}\left(\z\phi\right)^{2}}+\int_{\mathcal{I}^{+}}{\frac{D'D}{4}r^{q-1}\left(\z\phi\right)^{2}}.
\end{split}
\end{equation*}
Note that in Minkowski spacetime we would have no zeroth order term in the wave equation. In our case we do have, in such a way, however, such that the terms on the right hand side of the above identity have the right sign for $p<3$ and sufficiently large\footnote{Clearly we need to take $R>2M$.} $R$. 

In view of the cut-off function $\zeta$ all the integrals over $\Delta$ vanish. Clearly, all error terms that arise in the region\footnote{The weights in $r$ play no role in this region.} $\mathcal{W}=\overline{\operatorname{supp} (\z-1)}=\left\{R\leq r\leq R+1\right\}$  are quadratic forms of the 1-jet of $\psi$ and, therefore, in view of \eqref{degX} and \eqref{moraw}, these integrals are bounded by $\displaystyle\int_{\tilde{\Sigma}_{\tau_{1}}}{\!\! J_{\mu}^{T}\left(\psi\right)n^{\mu}_{\tilde{\Sigma}_{\tau_{1}}}}$. Also
\begin{equation*}
\begin{split}
\int_{\partial\tilde{\mathcal{D}}_{\tau_{1}}^{\tau_{2}}}{J_{\mu}^{V}(\zeta\phi)n^{\mu}}=\int_{\tilde{N}_{\tau_{1}}}{r^{q}\left(\partial_{v}\zeta\phi\right)^{2}}-\int_{\tilde{N}_{\tau_{2}}}{r^{q}\left(\partial_{v}\zeta\phi\right)^{2}}-\int_{\mathcal{I}^{+}}{\frac{D}{4}\left|\nabb\phi\right|^{2}}.
\end{split}
\end{equation*}
The last two integrals on the right hand side appear with the right sign. Finally, in view of the first Hardy inequality, the error terms produced by the cut-off $\zeta$ in the region $\mathcal{W}$ are controlled by  the flux of $T$ through $\tilde{\Sigma}_{\tau_{1}}$. 

\end{proof}
The reason we introduced the function $\phi$ is because the weight $r$ that it contains makes it non-degenerate ($\psi=0$ on $\mathcal{I}^{+}$ but $\phi$ does not vanish there in general). The reason we have divided by $r^{2}$ in \eqref{rwest} is because we want to emphasize the weight that corresponds to $\psi$ and not to $\phi$\footnote{This makes this $p$ to be the same as in \cite{new}. The notation $\phi$ and $\psi$ has however been swapped.}.

A first application of the above $r$-weighted energy estimate is the following
\begin{proposition}
There exists a constant $C$ that depends  on $M$ and $\tilde{\Sigma}_{0}$  such that if $\psi$ satisfies the wave equation and $\tilde{\mathcal{D}}_{\tau_{1}}^{\tau_{2}}$  as defined above with $R$ sufficiently large, then 
\begin{equation*}
\int_{\tau_{1}}^{\tau_{2}}{\left(\int_{\tilde{N}_{\tau}}{J^{T}_{\mu}(\psi)n^{\mu}_{\tilde{N}_{\tau}}}\right)d\tau} \,\leq\, C\int_{\tilde{\Sigma}_{\tau_{1}}}{J_{\mu}^{T}\left(\psi\right)n^{\mu}_{\tilde{\Sigma}_{\tau_{1}}}}+C\int_{\tilde{N}_{\tau_{1}}}{r^{-1}\left(\partial_{v}\phi\right)^{2}}.
\end{equation*}
\label{nondegx}
\end{proposition}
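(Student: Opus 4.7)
The plan is to use Proposition \ref{rweigestprop} with the choice $p=1$, which is the precise weight that makes the boundary term on $\tilde{N}_{\tau_1}$ match $r^{-1}(\partial_v\phi)^2$ as in the RHS, and then convert the resulting spacetime bound into an integrated flux bound via a coarea argument on the null foliation $\{\tilde{N}_\tau\}$. With $p=1$ the proposition gives control of
\begin{equation*}
\int_{\tilde{\mathcal{D}}_{\tau_1}^{\tau_2}}\frac{(\partial_v\phi)^2}{r^2} \;+\; \int_{\tilde{\mathcal{D}}_{\tau_1}^{\tau_2}}(\text{positive weight})\,|\nabb\psi|^2
\end{equation*}
by $C\int_{\tilde{\Sigma}_{\tau_1}}J^T_\mu n^\mu+\int_{\tilde{N}_{\tau_1}}r^{-1}(\partial_v\phi)^2$, where for $R>2M$ the angular weight is bounded below away from zero (this is where the choice of $R$ sufficiently large enters).

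Next I would exploit the fact that $T=\partial_u+\partial_v$ in $(u,v)$ null coordinates, so the flow $\varphi_\tau$ sends $\tilde{N}_0=\{u=u_0\}$ to $\tilde{N}_\tau=\{u=u_0+\tau\}$. Thus the spacetime volume form factors as $dV\sim d\tau\,d\mu_{\tilde{N}_\tau}$ (up to a bounded factor of $D/2$ on the region in question), yielding a Fubini/coarea identity
\begin{equation*}
\int_{\tilde{\mathcal{D}}_{\tau_1}^{\tau_2}}F\,dV \;\sim\; \int_{\tau_1}^{\tau_2}\!d\tau\int_{\tilde{N}_\tau} F\,d\mu_{\tilde{N}_\tau}.
\end{equation*}
Since for large $r$ we have $J^T_\mu(\psi)n^\mu_{\tilde{N}_\tau}\sim(\partial_v\psi)^2+|\nabb\psi|^2$, it suffices to bound the spacetime integrals of these two quantities.

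The angular contribution is straightforward: the positive bulk angular term from Proposition \ref{rweigestprop} controls $\int_{\tilde{\mathcal{D}}}|\nabb\psi|^2\,dV$ after using $r\geq R$ on the weight. For the $(\partial_v\psi)^2$ piece I would use the algebraic relation $\partial_v\phi=r\,\partial_v\psi+(D/2)\psi$, valid along the constant-$u$ null generators (since $\partial_v r|_u=D/2$); squaring and applying Cauchy-Schwarz gives
\begin{equation*}
(\partial_v\psi)^2 \;\lesssim\; \frac{(\partial_v\phi)^2}{r^2}+\frac{\psi^2}{r^2}.
\end{equation*}
The first term on the right is already controlled by Proposition \ref{rweigestprop}; the residual $\psi^2/r^2$ term I would absorb via a one-dimensional Hardy inequality along each null generator of $\tilde{N}_\tau$, using the decay condition \eqref{condition} at $i^0$ together with a boundary contribution on $\{r=R\}$ that is harmless because $R>2M$ is away from both the horizon and the photon sphere, so the integrated decay estimate \eqref{degX} together with the zeroth-order Morawetz estimate \eqref{moraw} on the timelike hypersurface $\{r=R\}$ bounds it by $\int_{\tilde{\Sigma}_{\tau_1}}J^T_\mu n^\mu$.

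The main obstacle, in my view, is not any single step but the careful bookkeeping of measures and weights across three distinct geometries: the spacelike slices $\tilde{\Sigma}_\tau$ (on which the flux of $T$ is naturally measured), the null slices $\tilde{N}_\tau$ (on which we want the conclusion), and the timelike separating hypersurface $\{r=R\}$. The non-trivial bit is ensuring the lower-order $\psi$-terms generated by the change of variable $\phi=r\psi$ are absorbed without loss, which is precisely what the $r$-weighted Hardy inequality along $\tilde{N}_\tau$ together with the $\{r=R\}$-boundary control provides.
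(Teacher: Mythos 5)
Your overall strategy coincides with the paper's: apply Proposition \ref{rweigestprop} with $p=1$ (for which the angular weight $D-rD'$ is bounded below by $1/2$ once $R$ is large), use $\left|\nabb\phi\right|^{2}=r^{2}\left|\nabb\psi\right|^{2}$, and convert the resulting bulk bound into an integrated flux bound via the coarea formula and $J^{T}_{\mu}n^{\mu}_{\tilde{N}_{\tau}}\sim(\partial_{v}\psi)^{2}+\left|\nabb\psi\right|^{2}$. The one step where you genuinely deviate is the passage from $r^{-2}(\partial_{v}\phi)^{2}$ to $(\partial_{v}\psi)^{2}$, and as written that step does not close. From $\partial_{v}\phi=r\partial_{v}\psi+\frac{D}{2}\psi$, Cauchy--Schwarz gives $(\partial_{v}\psi)^{2}\leq(1+\epsilon)r^{-2}(\partial_{v}\phi)^{2}+\frac{D^{2}}{4}(1+\epsilon^{-1})r^{-2}\psi^{2}$, so absorbing the last term requires $\int_{\tilde{N}_{\tau}}r^{-2}\psi^{2}\leq\delta\int_{\tilde{N}_{\tau}}(\partial_{v}\psi)^{2}$ with $\frac{D^{2}}{4}(1+\epsilon^{-1})\,\delta<1$, i.e.\ $\delta<4$ for $D\approx 1$. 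But the sharp one-dimensional Hardy inequality along a generator reads $\int\psi^{2}\,dv\leq 4\int(v-v_{R})^{2}(\partial_{v}\psi)^{2}\,dv$, and since $v-v_{R}\leq 2r^{*}\approx 2r$ the best attainable $\delta$ is $16$ (test with $\psi\sim r^{-1/2-}$). The product of the constants is therefore at least $4$, and the absorption fails --- already in Minkowski space. This is exactly why the paper avoids Cauchy--Schwarz here: it uses the algebraic identity $r^{-2}(\partial_{v}\phi)^{2}=(\partial_{v}\psi)^{2}+\frac{D}{2r^{2}}\partial_{v}(r\psi^{2})$, in which the cross term and the zeroth-order term assemble into an exact $\partial_{v}$-derivative whose integral over $\tilde{\mathcal{D}}_{\tau_{1}}^{\tau_{2}}$ (after inserting the cutoff $\zeta$) is a nonnegative boundary term on $\mathcal{I}^{+}$ plus errors supported in $\mathcal{W}=\left\{R\leq r\leq R+1\right\}$, quadratic in the $1$-jet of $\psi$ and controlled by \eqref{degX} and \eqref{moraw}.

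Your route can be repaired without the total-derivative trick, but only by applying the Hardy inequality to $\phi$ rather than to $\psi$: writing $r^{-2}\psi^{2}=r^{-4}\phi^{2}$ one gets $\int_{\tilde{N}_{\tau}}r^{-2}\phi^{2}\,dv\leq C\int_{\tilde{N}_{\tau}}(\partial_{v}\phi)^{2}\,dv+CR\,\psi^{2}(R)$, and now the derivative term is (up to the measure $r^{2}dv\,d\omega$) precisely the $p=1$ bulk quantity already controlled by Proposition \ref{rweigestprop}, so no smallness of the constant is needed; the trace term on $\left\{r=R\right\}$ is then handled essentially as you propose. Two smaller points: the boundary limit relevant to any Hardy inequality along a generator of $\tilde{N}_{\tau}$ is taken towards $\mathcal{I}^{+}$ (fixed $u$, $v\rightarrow+\infty$), not towards $i^{0}$, so condition \eqref{condition} is not the right justification --- one needs the vanishing of $\liminf r\psi^{2}$ along the outgoing cone, which should be argued from finiteness of the $r$-weighted fluxes; and \eqref{degX}, \eqref{moraw} are bulk estimates, so to control the trace on $\left\{r=R\right\}$ one should either average in $R$ or argue as the paper does, converting the $\left\{r=R\right\}$ contributions into bulk errors in $\mathcal{W}$ via the cutoff.
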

\begin{proof}
Applying Proposition \ref{rweigestprop} for $p=1$ and using the fact that for $r\geq R$ and $R$ large enough 
\begin{equation*}
D-rD'>\frac{1}{2},
\end{equation*}
we have that there exists a constant $C$ that depends  on $M$ and $\tilde{\Sigma}_{0}$  such that
\begin{equation}
\int_{\tilde{\mathcal{D}}_{\tau_{1}}^{\tau_{2}}}{\frac{1}{r^{2}}\left(\partial_{v}\phi\right)^{2}+\frac{1}{r^{2}}\left|\nabb\phi\right|^{2}}\leq C\int_{\tilde{\Sigma}_{\tau_{1}}}{J_{\mu}^{T}\left(\psi\right)n^{\mu}_{\tilde{\Sigma}_{\tau_{1}}}}+C\int_{\tilde{N}_{\tau_{1}}}{r^{-1}\left(\partial_{v}\phi\right)^{2}}.
\label{corp-1}
\end{equation}
Note now that since $\left|\nabb\phi\right|^{2}=r^{2}\left|\nabb\psi\right|^{2}$, \eqref{corp-1} yields
\begin{equation*}
\int_{\tilde{\mathcal{D}}_{\tau_{1}}^{\tau_{2}}}{\left|\nabb\psi\right|^{2}}\ \,\leq C\int_{\tilde{\Sigma}_{\tau_{1}}}{J_{\mu}^{T}\left(\psi\right)n^{\mu}_{\tilde{\Sigma}_{\tau_{1}}}}+C\int_{\tilde{N}_{\tau_{1}}}{r^{-1}\left(\partial_{v}\phi\right)^{2}}.
\end{equation*}
Furthermore, for sufficiently large $R$ we have
\begin{equation*}
\begin{split}
\int_{\tilde{\mathcal{D}}_{\tau_{1}}^{\tau_{2}}}{\frac{1}{r^{2}}\left(\partial_{v}\phi\right)^{2}}\,\geq & \int_{\tilde{\mathcal{D}}_{\tau_{1}}^{\tau_{2}}}{\frac{1}{2D^{2}r^{2}}\left(\partial_{v}\phi\right)^{2}}\,=\\
=&\int_{\tilde{\mathcal{D}}_{\tau_{1}}^{\tau_{2}}}{\frac{1}{2D^{2}}\left(\partial_{v}\psi\right)^{2}}+\int_{\tilde{\mathcal{D}}_{\tau_{1}}^{\tau_{2}}}{\frac{1}{4Dr^{2}}\partial_{v}(r\psi^{2})}.
\end{split}
\end{equation*}
However, if $\zeta$ is the cut-off function introduced in the proof of Proposition \ref{rweigestprop}, then
\begin{equation*}
\begin{split}
\int_{\tilde{\mathcal{D}}_{\tau_{1}}^{\tau_{2}}}{\frac{1}{4Dr^{2}}\partial_{v}\left(r(\zeta\psi)^{2}\right)}=
\int_{\mathcal{I}^{+}}{\frac{1}{8r}(\zeta\psi)^{2}}.
\end{split}
\end{equation*}
Therefore, the above integral is of the right sign modulo some error terms in the region $\mathcal{W}$ coming from the cut-off $\zeta$. These terms are quadratic in the 1-jet of $\psi$ and so can be controlled by estimates \eqref{degX} and \eqref{moraw}. Finally, since $n^{\mu}_{\tilde{N}_{\tau}}$ is null we have $J^{T}_{\mu}(\psi)n^{\mu}_{\tilde{N}_{\tau}}\sim \left(\partial_{v}\psi\right)^{2}+\left|\nabb\psi\right|^{2}$ and thus by \eqref{corp-1} and the coarea formula we have the required result.
\end{proof}

This is a spacetime estimate which does \textbf{not} degenerate at infinity. Note the importance of the fact that the region $\tilde{\mathcal{D}}_{\tau_{1}}^{\tau_{2}}$ does not contain $i^{0}$! If we are to obtain the full decay for the energy, then we need to prove decay for the boundary terms in Proposition \ref{nondegx}. The first step is to derive a spacetime estimate of the $r$-weighted quantity $r^{-1}(\partial_{v}\phi)^{2}$.
\begin{proposition}
There exists a constant $C$ which depends  on $M$  and $\tilde{\Sigma}_{0}$  such that
\begin{equation*}
\begin{split}
\int_{\tau_{1}}^{\tau_{2}}{\left(\int_{\tilde{N}_{\tau}}{r^{-1}(\partial_{v}\phi)^{2}}\right)d\tau}\ \leq C\int_{\tilde{\Sigma}_{\tau_{1}}}{J_{\mu}^{T}\left(\psi\right)n^{\mu}_{\tilde{\Sigma}_{\tau_{1}}}}+C\int_{\tilde{N}_{\tau_{1}}}{\left(\partial_{v}\phi\right)^{2}}.
\end{split}
\end{equation*}
\label{rwe1}
\end{proposition}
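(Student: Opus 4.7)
The statement is the next rung in the $r^p$-weighted hierarchy initiated in Proposition \ref{rweigestprop}, obtained by passing from $p=1$ (the choice used in Proposition \ref{nondegx}) to $p=2$. My plan is therefore to apply Proposition \ref{rweigestprop} at $p=2$. With this choice the boundary density $r^p(\partial_v\phi)^2/r^2$ on $\tilde{N}_\tau$ reduces to $(\partial_v\phi)^2$, matching the data term of the claim, and the good spacetime integrand $r^{p-3}(p+2)(\partial_v\phi)^2=4\,r^{-1}(\partial_v\phi)^2$ is exactly what we want to produce on the left hand side.

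The one obstruction is the angular integral $\int_{\tilde{\mathcal{D}}_{\tau_1}^{\tau_2}}\frac{1}{4}r^{p-1}(-pD-rD')|\nabb\psi|^2$. At $p=1$ (as used in the proof of Proposition \ref{nondegx}) this coefficient is positive for $r$ large and contributes favourably to the left hand side; at $p=2$ that sign is lost. What rescues the argument is that, on extreme Reissner-Nordstr\"om, the specific structure of $D=(1-M/r)^2$ and the resulting cancellations ensure, after the $r^{p-1}$ weight is taken into account, that the magnitude of the angular coefficient remains bounded in $r$; hence the wrong-signed term is dominated in absolute value by a constant multiple of $\int_{\tilde{\mathcal{D}}_{\tau_1}^{\tau_2}}|\nabb\psi|^2$.

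To absorb this residual spacetime integral I would invoke the non-degenerate integrated decay estimate of Proposition \ref{nondegx}. Combined with the coarea identity $\int_{\tilde{\mathcal{D}}_{\tau_1}^{\tau_2}}f\sim \int_{\tau_1}^{\tau_2}\int_{\tilde{N}_\tau}f\,d\tau$ on the null foliation, it yields
\begin{equation*}
\int_{\tilde{\mathcal{D}}_{\tau_1}^{\tau_2}}|\nabb\psi|^2 \le C\int_{\tilde{\Sigma}_{\tau_1}}J^T_\mu(\psi)n^\mu_{\tilde{\Sigma}_{\tau_1}}+C\int_{\tilde{N}_{\tau_1}}r^{-1}(\partial_v\phi)^2,
\end{equation*}
and the final integral is bounded by $R^{-1}\int_{\tilde{N}_{\tau_1}}(\partial_v\phi)^2$ in view of $r\ge R$ on $\tilde{N}_{\tau_1}$. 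Substituting into the $p=2$ energy identity (discarding the favourably-signed boundary contribution at $\tau=\tau_2$) and finally rewriting the good spacetime integral on the left via coarea as $\int_{\tau_1}^{\tau_2}\int_{\tilde{N}_\tau}r^{-1}(\partial_v\phi)^2\,d\tau$ delivers the claimed inequality.

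The main conceptual point, and what I expect to be the delicate step, is that $p=2$ is the endpoint of the $r^p$-hierarchy: past this value the angular term ceases to supply a useful sign and the multiplier identity alone does not close. The saving feature is that at the critical exponent the defect is still $O(1)$ in $r$ rather than growing, so the previously established non-degenerate integrated decay is precisely strong enough to absorb it; this is exactly what makes the hierarchy stop at $p=2$.
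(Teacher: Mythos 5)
Your proposal is correct and follows essentially the same route as the paper: apply the $r^p$-weighted identity of Proposition \ref{rweigestprop} with $p=2$, observe that the now wrong-signed angular term is of size $O(1)\left|\nabb\psi\right|^{2}$ (in fact $O(\sqrt{D}/r^{2})\left|\nabb\phi\right|^{2}$, no special cancellation needed), absorb it via the $p=1$ estimate \eqref{corp-1} underlying Proposition \ref{nondegx}, and conclude with the coarea formula. The paper's proof is exactly this in two lines.
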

\begin{proof}
Appying the $r$-weighted energy estimate for $p=2$ we obtain
\begin{equation*}
\begin{split}
\int_{\tilde{\mathcal{D}}_{\tau_{1}}^{\tau_{2}}}{r^{-1}(\partial_{v}\phi)^{2}}\, &\leq\!\!\int_{\tilde{\mathcal{D}}_{\tau_{1}}^{\tau_{2}}}{\frac{M\sqrt{D}}{4r^{2}}\left|\nabb\phi\right|^{2}}+C\int_{\tilde{\Sigma}_{\tau_{1}}}{J_{\mu}^{T}\left(\psi\right)n^{\mu}_{\tilde{\Sigma}_{\tau_{1}}}}\!+\! C\!\!\int_{\tilde{N}_{\tau_{1}}}{\left(\partial_{v}\phi\right)^{2}}\\
&\leq C\int_{\tilde{\mathcal{D}}_{\tau_{1}}^{\tau_{2}}}{\frac{1}{r^{2}}\left|\nabb\phi\right|^{2}}+C\int_{\tilde{\Sigma}_{\tau_{1}}}{J_{\mu}^{T}\left(\psi\right)n^{\mu}_{\tilde{\Sigma}_{\tau_{1}}}}\!+C\!\int_{\tilde{N}_{\tau_{1}}}{\left(\partial_{v}\phi\right)^{2}}.
\end{split}
\end{equation*}
The result now follows from \eqref{corp-1} and the coarea formula.
\end{proof}

\subsection{Integrated Decay of Local (Higher Order) Energy}
\label{sec:DecayOfLocalEnergy}

We have already seen that in order to obtain a non-degenerate spacetime estimate near $\mathcal{H}^{+}$ we need to commute the wave equation with the transversal to the horizon vector field $\partial_{r}$ and assume that the zeroth spherical harmonic vanishes. Indeed, if $\mathcal{A}$ is a spatially compact neighbourhood of $\mathcal{H}^{+}$ (which may contain the photon sphere) then we have:
\begin{proposition}
There exists a constant $C$ that depends  on $M$ and $\tilde{\Sigma}_{0}$   such that  if $\psi$ satisfies the wave equation and is supported on $l\geq 1$, then 
\begin{equation*}
\begin{split}
\int_{\tau_{1}}^{\tau_{2}}{\left(\int_{\mathcal{A}\cap\tilde{\Sigma}_{\tau}}{J^{N}_{\mu}(\psi)n^{\mu}_{\tilde{\Sigma}_{\tau}}}\right)d\tau}\ \leq  \  &C\int_{\tilde{\Sigma}_{\tau_{1}}}{J^{N}_{\mu}(\psi)n^{\mu}_{\tilde{\Sigma}_{\tau}}}+C\int_{\tilde{\Sigma}_{\tau_{1}}}{J^{N}_{\mu}(T\psi)n^{\mu}_{\tilde{\Sigma}_{\tau}}}\\&+C\int_{\mathcal{A}\cap\tilde{\Sigma}_{\tau_{1}}}{J^{N}_{\mu}(\partial_{r}\psi)n^{\mu}_{\tilde{\Sigma}_{\tau}}}.
\end{split}
\end{equation*}
\label{inte1}
\end{proposition}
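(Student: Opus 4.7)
The strategy is to convert the iterated integral on the left-hand side, via the coarea formula, into a genuine spacetime integral of the non-degenerate energy density over the region $\tilde{\mathcal{R}}(\tau_1,\tau_2)\cap\mathcal{A}$, and then bound each term separately using the results already established in Sections \ref{sec:UniformBoundednessOfLocalObserverSEnergy} and \ref{sec:CommutingWithAVectorFieldTransversalToMathcalH}, with the base hypersurface $\Sigma_0$ replaced by $\tilde\Sigma_{\tau_1}$. Since $\mathcal{A}$ is spatially compact, $n_{\tilde{\Sigma}_\tau}$ and $T$ are uniformly comparable on $\mathcal{A}$, so
\[
J^{N}_{\mu}(\psi)n^{\mu}_{\tilde{\Sigma}_{\tau}}\ \sim\ (\partial_{v}\psi)^{2}+(\partial_{r}\psi)^{2}+|\nabb\psi|^{2}
\]
with constants depending only on $M$ and $\tilde\Sigma_0$. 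By coarea, the left-hand side is therefore comparable to the spacetime integral of the right-hand side over $\tilde{\mathcal{R}}(\tau_1,\tau_2)\cap\mathcal{A}$.

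The terms $(\partial_v\psi)^2$ and $|\nabb\psi|^2$, together with the degenerate piece $\sqrt{D}(\partial_r\psi)^2$, are already controlled: applying Corollary \ref{nkcor} with $\Sigma_0$ replaced by $\tilde\Sigma_{\tau_1}$ (this is legitimate by the $T$-invariance of all the currents involved and the remark at the end of Section \ref{sec:UniformBoundednessOfLocalObserverSEnergy}) and using the pointwise positivity estimate of Proposition \ref{knprop} gives
\[
\int_{\tilde{\mathcal{R}}(\tau_1,\tau_2)\cap\mathcal{A}}\!\!\!\!\!\left[(\partial_{v}\psi)^{2}+\sqrt{D}(\partial_{r}\psi)^{2}+|\nabb\psi|^{2}\right]
\ \leq\ C\int_{\tilde{\Sigma}_{\tau_1}}\!\!J^{N}_{\mu}(\psi)n^{\mu}_{\tilde{\Sigma}_{\tau_1}}.
\]
This already has the right form, but is degenerate in $(\partial_r\psi)^2$ at $\mathcal{H}^+$.

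To remove the $\sqrt{D}$ degeneracy and retrieve the missing $(\partial_r\psi)^2$ contribution, I invoke Proposition \ref{r1back}, which was proved using the second-order estimate of Theorem \ref{2ndder}. This is precisely where the assumption that $\psi$ be supported on $l\geq 1$ enters (via the conservation law on $\mathcal{H}^+$ for $l=0$). Translating Proposition \ref{r1back} to the foliation $\tilde\Sigma$ and to base time $\tau_1$ (again by $T$-invariance), we obtain
\[
\int_{\tilde{\mathcal{R}}(\tau_1,\tau_2)\cap\mathcal{A}}(\partial_{r}\psi)^{2}
\leq C\!\int_{\tilde{\Sigma}_{\tau_1}}\!\!J^{N}_{\mu}(\psi)n^{\mu}+C\!\int_{\tilde{\Sigma}_{\tau_1}}\!\!J^{N}_{\mu}(T\psi)n^{\mu}+C\!\int_{\mathcal{A}\cap\tilde{\Sigma}_{\tau_1}}\!\!J^{N}_{\mu}(\partial_{r}\psi)n^{\mu}.
\]
Adding the two displayed bounds and using the comparability of $J^N_\mu(\psi)n^\mu_{\tilde\Sigma_\tau}$ with the integrand on $\mathcal{A}$ yields the claim.

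The only potential obstacle is verifying that every estimate used in the argument, all of which were originally proved with $\Sigma_0$ as the bottom hypersurface, extends verbatim to the translated pair $(\tilde\Sigma_{\tau_1},\tilde\Sigma_{\tau_2})$. But every current, multiplier, and commutator in Sections \ref{sec:UniformBoundednessOfLocalObserverSEnergy} and \ref{sec:CommutingWithAVectorFieldTransversalToMathcalH} is $\varphi_T$-invariant, the hypersurfaces $\tilde\Sigma_\tau$ are $\varphi_T$-translates of $\tilde\Sigma_0$, and the boundary contribution on $\mathcal{I}^+$ that arises when replacing $\Sigma_\tau$ with $\tilde\Sigma_\tau$ has the correct sign; thus the translation is automatic and no new calculation is needed.
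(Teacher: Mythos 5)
Your proposal is correct and follows essentially the same route as the paper, whose proof simply cites Proposition \ref{r1back}, the estimate \eqref{x} and the coarea formula; you have filled in the details, using Proposition \ref{knprop} together with Corollary \ref{nkcor} for the tangential derivatives and Proposition \ref{r1back} for the transversal one, and correctly justified the translation to the foliation $\tilde{\Sigma}_{\tau}$ and to base time $\tau_{1}$. The only point worth adding is that Proposition \ref{knprop} holds only in $\left\{M\leq r\leq \frac{9M}{8}\right\}$, so if $\mathcal{A}$ is taken large enough to contain the photon sphere you still need the non-degenerate $X$ estimate \eqref{x} (commuted with $T$, whence the $J^{N}_{\mu}(T\psi)$ term on the right-hand side) for the part of $\mathcal{A}$ bounded away from $\mathcal{H}^{+}$ --- which is exactly the ingredient the paper cites.
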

\begin{proof}
Immediate from  Proposition \ref{r1back}, \eqref{x} and the coarea formula.
\end{proof}
As regards the above boundary terms we have
\begin{proposition}
There exists a constant $C$ that depends  on $M$ and $\tilde{\Sigma}_{0}$   such that  if $\psi$ satisfies the wave equation and is supported on $l\geq 2$, then 
\begin{equation*}
\int_{\tau_{1}}^{\tau_{2}}{\left(\int_{\mathcal{A}\cap\tilde{\Sigma}_{\tau}}{J^{N}_{\mu}(\partial_{r}\psi)n^{\mu}_{\tilde{\Sigma}_{\tau}}}\right)d\tau}\ \leq
C\sum_{i=0}^{2}{\int_{\tilde{\Sigma}_{\tau_{1}}}{J^{N}_{\mu}(T^{i}\psi)n^{\mu}_{\tilde{\Sigma}_{\tau}}}}+C\sum_{k=1}^{2}{\int_{\mathcal{A}\cap\tilde{\Sigma}_{\tau_{1}}}{J^{N}_{\mu}(\partial_{r}^{k}\psi)n^{\mu}_{\tilde{\Sigma}_{\tau}}}}.
\end{equation*}
\label{inte2}
\end{proposition}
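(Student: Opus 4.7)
The plan is to mirror the proof of Proposition~\ref{inte1}, but one order higher. Recall that Proposition~\ref{inte1} combines the non-degenerate spacetime control of $(\partial_{r}\psi)^{2}$ from Proposition~\ref{r1back} with the non-degenerate $X$-estimate \eqref{x} for $(\partial_{v}\psi)^{2}$ and $|\nabb\psi|^{2}$, followed by the coarea formula. For $\partial_{r}\psi$ we need, in $\mathcal{A}$, a non-degenerate spacetime bound of each component of $J^{N}_{\mu}(\partial_{r}\psi)n^{\mu}_{\tilde\Sigma_{\tau}}\sim(\partial_{v}\partial_{r}\psi)^{2}+(\partial_{r}^{2}\psi)^{2}+|\nabb\partial_{r}\psi|^{2}$ by quantities of the form appearing on the right-hand side of the claim.

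First, applying Theorem~\ref{theorem3}~($k=1$), whose hypothesis $\psi_{l}=0$ for $l\leq 0$ is implied by $l\geq 2$, yields
\begin{equation*}
\int_{\mathcal{A}\cap\tilde{\mathcal{R}}_{\tau_{1}}^{\tau_{2}}}\!\!\!\left((\partial_{v}\partial_{r}\psi)^{2}+\sqrt{D}(\partial_{r}^{2}\psi)^{2}+|\nabb\partial_{r}\psi|^{2}\right)\leq C\sum_{i=0}^{1}\!\int_{\tilde\Sigma_{\tau_{1}}}\!\!\! J^{N}_{\mu}(T^{i}\psi)n^{\mu}_{\tilde\Sigma_{\tau_{1}}}+C\!\int_{\tilde\Sigma_{\tau_{1}}\cap\mathcal{A}}\!\!\!J^{N}_{\mu}(\partial_{r}\psi)n^{\mu}_{\tilde\Sigma_{\tau_{1}}},
\end{equation*}
which already gives the $(\partial_{v}\partial_{r}\psi)^{2}$ and $|\nabb\partial_{r}\psi|^{2}$ pieces without degeneracy. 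It remains to upgrade $\sqrt{D}(\partial_{r}^{2}\psi)^{2}$ to the non-degenerate $(\partial_{r}^{2}\psi)^{2}$. For this I would invoke Theorem~\ref{theorem3}~($k=2$), whose hypothesis $\psi_{l}=0$ for $l\leq 1$ is exactly $l\geq 2$; this produces
\begin{equation*}
\int_{\mathcal{A}\cap\tilde{\mathcal{R}}_{\tau_{1}}^{\tau_{2}}}\!\!\!\left((\partial_{v}\partial_{r}^{2}\psi)^{2}+\sqrt{D}(\partial_{r}^{3}\psi)^{2}+|\nabb\partial_{r}^{2}\psi|^{2}\right)\leq C\sum_{i=0}^{2}\!\int_{\tilde\Sigma_{\tau_{1}}}\!\!\!J^{N}_{\mu}(T^{i}\psi)n^{\mu}_{\tilde\Sigma_{\tau_{1}}}+C\sum_{k=1}^{2}\!\int_{\tilde\Sigma_{\tau_{1}}\cap\mathcal{A}}\!\!\!J^{N}_{\mu}(\partial_{r}^{k}\psi)n^{\mu}_{\tilde\Sigma_{\tau_{1}}}.
\end{equation*}

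Next, in the spirit of Proposition~\ref{1comprop} (and ultimately Proposition~\ref{r1back}), I would apply the third Hardy inequality (Proposition~\ref{thirdhardy}) not to $\psi$ but to $\partial_{r}\psi$. Sliced over $\tilde{\mathcal{R}}_{\tau_{1}}^{\tau_{2}}$ this gives
\begin{equation*}
\int_{\mathcal{A}\cap\tilde{\mathcal{R}}_{\tau_{1}}^{\tau_{2}}}(\partial_{r}^{2}\psi)^{2}\leq C\int_{\mathcal{B}\cap\tilde{\mathcal{R}}_{\tau_{1}}^{\tau_{2}}}(\partial_{r}^{2}\psi)^{2}+C\int_{(\mathcal{A}\cup\mathcal{B})\cap\tilde{\mathcal{R}}_{\tau_{1}}^{\tau_{2}}}D\!\left[(\partial_{v}\partial_{r}^{2}\psi)^{2}+(\partial_{r}^{3}\psi)^{2}\right]\!.
\end{equation*}
The second term on the right is controlled by the $k=2$ estimate above, using that $D\lesssim\sqrt{D}$ near $\mathcal{H}^{+}$ (since $\sqrt{D}=1-M/r\to 0$) so that $D(\partial_{r}^{3}\psi)^{2}\lesssim\sqrt{D}(\partial_{r}^{3}\psi)^{2}$, while $D(\partial_{v}\partial_{r}^{2}\psi)^{2}\lesssim(\partial_{v}\partial_{r}^{2}\psi)^{2}$ since $D$ is bounded. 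The first term, which is spatially compact and bounded away from $\mathcal{H}^{+}$ and from the photon sphere $\{r=2M\}$ (provided the outer boundary $r=r_{1}$ of $\mathcal{A}$ is chosen $<2M$), is controlled by local elliptic estimates in the style of \eqref{bulkawayH} applied after commutations of the wave equation with $T$ and $T^{2}$, yielding a bound by $\sum_{i=0}^{2}\int_{\tilde\Sigma_{\tau_{1}}}J^{T}_{\mu}(T^{i}\psi)n^{\mu}_{\tilde\Sigma_{\tau_{1}}}$.

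Adding the three pieces and using Proposition~\ref{hyperb}, $J^{N}_{\mu}(\partial_{r}\psi)n^{\mu}_{\tilde\Sigma_{\tau}}\sim(\partial_{v}\partial_{r}\psi)^{2}+(\partial_{r}^{2}\psi)^{2}+|\nabb\partial_{r}\psi|^{2}$, I would obtain
\begin{equation*}
\int_{\mathcal{A}\cap\tilde{\mathcal{R}}_{\tau_{1}}^{\tau_{2}}}J^{N}_{\mu}(\partial_{r}\psi)n^{\mu}_{\tilde\Sigma_{\tau}}\leq C\sum_{i=0}^{2}\!\int_{\tilde\Sigma_{\tau_{1}}}\!\!\!J^{N}_{\mu}(T^{i}\psi)n^{\mu}_{\tilde\Sigma_{\tau_{1}}}+C\sum_{k=1}^{2}\!\int_{\tilde\Sigma_{\tau_{1}}\cap\mathcal{A}}\!\!\!J^{N}_{\mu}(\partial_{r}^{k}\psi)n^{\mu}_{\tilde\Sigma_{\tau_{1}}}.
\end{equation*}
The coarea formula then converts the spacetime integral on the left to $\int_{\tau_{1}}^{\tau_{2}}\!\!\int_{\mathcal{A}\cap\tilde\Sigma_{\tau}}J^{N}_{\mu}(\partial_{r}\psi)n^{\mu}_{\tilde\Sigma_{\tau}}\,d\tau$, completing the proof.

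The only genuinely delicate point is the removal of the $\sqrt{D}$-degeneracy from $(\partial_{r}^{2}\psi)^{2}$; this is exactly where the condition $l\geq 2$ enters through Theorem~\ref{theorem3}~($k=2$), consistent with the conservation law on $\mathcal{H}^{+}$ of Theorem~\ref{t3}, which obstructs any such bound for $\psi$ merely supported on $l\geq 1$.
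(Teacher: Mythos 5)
Your argument is correct and takes essentially the same route as the paper, whose entire proof of this proposition is the one line ``Immediate from Theorem \ref{kder} and the coarea formula'': you have simply supplied the details that citation compresses, namely combining the $k=1$ spacetime term of Theorem \ref{kder} with the $k=2$ case (where $l\geq 2$ enters) and a third-Hardy/elliptic argument to remove the $\sqrt{D}$-degeneracy on $(\partial_{r}^{2}\psi)^{2}$, before applying the coarea formula. (One cosmetic slip: you say you apply the third Hardy inequality ``to $\partial_{r}\psi$,'' but your displayed inequality correctly applies it to the function $\partial_{r}^{2}\psi$.)
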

\begin{proof}
Immediate from Theorem \ref{kder} and the coarea formula.
\end{proof}

\subsection{Weighted Energy Estimates  in a Neighbourhood of $\mathcal{H}^{+}$}
\label{sec:RWeightedEnergyEstimatesH}
Since for $l=0$ the above estimates do not hold for generic initial data, we are left proving decay for the degenerate energy. For this we derive a hierarchy of (degenerate) energy estimates in a neighbourhood of $\mathcal{H}^{+}$. In this section, we use the $(v,r)$ coordinates.

\begin{proposition}
There exists a $\varphi_{\tau}$-invariant causal vector field $P$ and a constant $C$ which depends only on $M$ such that  for all $\psi$ we have 
\begin{equation*}
\begin{split}
& J_{\mu}^{T}(\psi)n^{\mu}_{\Sigma}\leq  C K^{P}(\psi),\\
& J_{\mu}^{P}(\psi)n^{\mu}_{\Sigma}\leq C K^{N,\delta,-\frac{1}{2}}(\psi)
\end{split}
\end{equation*}
in an appropriate neighbourhood $\mathcal{A}$ of $\mathcal{H}^{+}$.
\label{rstarHenergy}
\end{proposition}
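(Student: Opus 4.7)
Proof plan.

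The content of the proposition is a two-tier hierarchy inserted between the degenerate energy flux
$$ J^T_\mu n^\mu_\Sigma \;\sim\; (\partial_v\psi)^2+D(\partial_r\psi)^2+|\nabb\psi|^2 $$
and the bulk term $K^{N,\delta,-\frac12}\;\gtrsim\;(\partial_v\psi)^2+\sqrt D(\partial_r\psi)^2+|\nabb\psi|^2$ produced in Proposition \ref{knprop}. The two expressions have identical weights on $(\partial_v\psi)^2$ and $|\nabb\psi|^2$; they differ only by a $D$ versus $\sqrt D$ weight on the transversal derivative $(\partial_r\psi)^2$. The point is therefore to find a causal $\varphi_\tau$-invariant field $P$ whose divergence $K^P$ reproduces (up to constants) the weights of $J^T n$, while its own flux $J^P n$ picks up only $\sqrt D$ on $(\partial_r\psi)^2$.

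My candidate is
$$ P \;=\; a(r)\,T \;-\; \sqrt{D}\,\partial_r, $$
valid in a sufficiently small $\varphi_\tau$-invariant neighbourhood $\mathcal A$ of $\mathcal H^+$, and smoothly cut off to equal (say) $N$ outside a larger neighbourhood. Here $a(r)>0$ is a bounded smooth function of $r$ alone, so $P$ is automatically $\varphi_\tau$-invariant. A direct computation gives $g(P,P)=-a\sqrt D\,(a\sqrt D+2)$, which is $<0$ on $\mathcal A\setminus\mathcal H^+$ and $=0$ on $\mathcal H^+$, and $g(P,T)=-a D-\sqrt D<0$, so $P$ is causal and future-directed.

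To verify $J^T_\mu n^\mu\le C K^P$ one decomposes
$$ K^P \;=\; K^{aT} \;-\; K^{\sqrt D\,\partial_r}, $$
where $K^{aT} = a'(r)\bigl[(\partial_v\psi)^2+D(\partial_v\psi)(\partial_r\psi)\bigr]$ (using that $T$ is Killing, so $K^T=0$) and $K^{\sqrt D\,\partial_r}$ is obtained from the list \eqref{list} by plugging $f_v=0$, $f_r=\sqrt D$. A Taylor expansion at $r=M$ using $D(M)=D'(M)=0$ and $D''(M)=2/M^2>0$ shows: (i) the coefficient of $(\partial_r\psi)^2$ in $-K^{\sqrt D\,\partial_r}$ is strictly positive and $\sim D$ near $\mathcal H^+$; (ii) the coefficient of $|\nabb\psi|^2$ is $\tfrac{M}{2r^2}$, uniformly bounded below; (iii) the remaining mixed term has weight $\sqrt D$. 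After Cauchy--Schwarz on the two mixed contributions $D(\partial_v\psi)(\partial_r\psi)$ and $\sqrt D(\partial_v\psi)(\partial_r\psi)$, with $a'(M)$ chosen large enough (but $a$ itself bounded), one absorbs the $(\partial_v\psi)^2$-losses into $a'(\partial_v\psi)^2$ and the $(\partial_r\psi)^2$-losses (which come out $\sim D$) into the $D(\partial_r\psi)^2$ piece from $-K^{\sqrt D\,\partial_r}$; thus
$$ K^P \;\gtrsim\; (\partial_v\psi)^2+D(\partial_r\psi)^2+|\nabb\psi|^2\;\sim\; J^T_\mu n^\mu_\Sigma $$
on $\mathcal A$.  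For the second inequality one writes $J^P_\mu = a\,J^T_\mu -\sqrt D\,J^{\partial_r}_\mu$; the point is that the $\sqrt D$-prefactor on $J^{\partial_r}$ attaches the desired $\sqrt D$ weight to $(\partial_r\psi)^2$ (coming from $\mathbf T_{rr}=(\partial_r\psi)^2$), while the $a J^T_\mu n^\mu$ piece is already dominated by $(\partial_v\psi)^2+D(\partial_r\psi)^2+|\nabb\psi|^2\le K^{N,\delta,-\frac12}$. A routine expansion (using $n\sim \partial_v+c(-\partial_r)$ on $\mathcal A$) gives $J^P_\mu n^\mu\lesssim (\partial_v\psi)^2+\sqrt D(\partial_r\psi)^2+|\nabb\psi|^2\lesssim K^{N,\delta,-\frac12}$, invoking Proposition \ref{knprop} in the final step.

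The one delicate point will be the Cauchy-Schwarz bookkeeping in $K^P$: the mixed-term $\sqrt D(\partial_v\psi)(\partial_r\psi)$ from $K^{\sqrt D \partial_r}$ must be split so that the $(\partial_v\psi)^2$-loss is absorbed into the $a'$-term while the $(\partial_r\psi)^2$-loss (which comes out $\sim D$ after the split) is absorbed into the $D$-term from $-K^{\sqrt D\,\partial_r}$. This is possible provided $\mathcal A$ is chosen small enough that $D\bigr|_{\mathcal A}$ is smaller than a uniform constant depending on the coefficients, and $a'(M)$ is a sufficiently large fixed constant. Everything else is a direct, finite computation.
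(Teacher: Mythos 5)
Your proposal is correct and is essentially the paper's own argument: the paper takes $P=f_v\partial_v+f_r\partial_r$ with $f_r=-\sqrt{D}$ and $f_v$ bounded with $\partial_r f_v(M)$ large, performs the same Cauchy--Schwarz absorption of the mixed term $\sqrt{D}\left[\sqrt{D}\,\partial_r f_v+2/r\right]\partial_v\psi\,\partial_r\psi$ into $F_{vv}(\partial_v\psi)^2$ and the $\sim D$ coefficient of $(\partial_r\psi)^2$, and obtains the second inequality from $-g(P,P)\sim\sqrt{D}$ together with Proposition \ref{knprop}. Your splittings $K^P=K^{aT}-K^{\sqrt{D}\partial_r}$ and $J^P=aJ^T-\sqrt{D}J^{\partial_r}$ are only notational variants of the paper's direct use of the coefficient list and of \eqref{GENERALT} with $\omega_P\sim\sqrt{D}$.
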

\begin{proof}
Let our ansatz be $P=f_{v}\partial_{v}+f_{r}\partial_{r}$. Recall that 
\begin{equation*}
\begin{split}
K^{P}\left(\psi\right)=&F_{vv}\left(\partial_{v}\psi\right)^{2}+F_{rr}\left(\partial_{r}\psi\right)^{2}+F_{\scriptsize\nabb}\left|\nabb\psi\right|^{2}+F_{vr}\left(\partial_{v}\psi\right)\left(\partial_{r}\psi\right),\\
\end{split}
\end{equation*}
where the coef{}ficients are given by
\begin{equation*}
\begin{split}
&F_{vv}=\left(\partial_{r}f_{v}\right),\\
&F_{rr}=D\left[\frac{\left(\partial_{r}f_{r}\right)}{2}-\frac{f_{r}}{r}\right]-\frac{f_{r}D'}{2},  \\
&F_{\scriptsize\nabb}=-\frac{1}{2}\left(\partial_{r}f_{r}\right),\\
&F_{vr}=D\left(\partial_{r}f_{v}\right)-\frac{2f_{r}}{r}.\\
\end{split}
\end{equation*}
Let us take $f_{r}(r)=-\sqrt{D}$ for $M\leq r\leq r_{0}<2M$, with $r_{0}$ to be determined later. Then
\begin{equation*}
\begin{split}
F_{rr}&=D\left[-\frac{D'}{4\sqrt{D}}+\frac{\sqrt{D}}{r}\right]+\frac{\sqrt{D}D'}{2}\\
&=D\left[\frac{D'}{4\sqrt{D}}+\frac{\sqrt{D}}{r}\right]\\
&\sim D.
\end{split}
\end{equation*}
Since $\frac{D'}{4\sqrt{D}}=\frac{M}{2r^{2}}$, the constants in $\sim$ depend on $M$ and the choice for $r_{0}$. Also,
\begin{equation*}
\begin{split}
F_{vr}=\sqrt{D}\left[\sqrt{D}(\partial_{r}f_{v})+\frac{2}{r}\right]\leq \epsilon D+\frac{1}{\epsilon}\left[\sqrt{D}(\partial_{r}f_{v})+\frac{2}{r}\right]^{2}.
\end{split}
\end{equation*}
If we take $\epsilon$ sufficiently small  and $f_{v}$ such that 
\begin{equation*}
\begin{split}
\frac{1}{\epsilon}\left[\sqrt{D}(\partial_{r}f_{v})+\frac{2}{r}\right]^{2} < \partial_{r}f_{v},
\end{split}
\end{equation*}
then  there exists $r_{0}>M$ such that
\begin{equation}
\begin{split}
K^{P}(\psi)\sim \left((\partial_{v}\psi)^{2}+D(\partial_{r}\psi)^{2}+\left|\nabb\psi\right|^{2}\right)\sim J_{\mu}^{T}n^{\mu}_{\Sigma}
\label{p1} 
\end{split}
\end{equation}
 in $\mathcal{A}=\left\{M\leq r\leq r_{0}\right\}$. Extend now $P$ in $\mathcal{R}$ such that $f_{v}(r)=1$ and $f_{r}(r)=0$ for all $r\geq r_{1}>r_{0}$ for some $r_{1}<2M$. This proves the first part of the proposition.  In region $\mathcal{A}$ we have $-g(P,P)\sim\sqrt{D}$ and so $\omega_{P}\sim \sqrt{D}$. Therefore, from \eqref{GENERALT} (See Appendix \ref{sec:TheHyperbolicityOfTheWaveEquation1}) we obtain 
\begin{equation}
\begin{split}
J_{\mu}^{P}(\psi)n^{\mu}_{\Sigma}&
\sim (\partial_{v}\psi)^{2}+\sqrt{D}(\partial_{r}\psi)^{2}+\left|\nabb\psi\right|^{2}\\
&\sim K^{N,\delta,-\frac{1}{2}}(\psi).
\label{p2}
\end{split}
\end{equation}
\end{proof}
The vector field $P$ will be very crucial for achieving decay results. It is timelike in the domain of outer communications and becomes null on the horizon  ``linearly". This linearity allows $P$ to capture  the degenerate redshift in $\mathcal{A}$  in a weaker way than $N$ but in stronger way than $T$.

\subsection{Decay of Degenerate Energy}
\label{sec:DecayOfDegenerateEnergy}

\subsubsection{Uniform Boundedness of $P$-Energy}
\label{sec:UniformBoundenessOfPEnergy}

First we need to prove that the $P$-flux is uniformly bounded.

\begin{proposition}
There exists a constant $C$ that depends  on $M$  and $\tilde{\Sigma}_{0}$  such that for all solutions $\psi$ of the wave equation we have
\begin{equation}
\int_{\tilde{\Sigma}_{\tau}}{J_{\mu}^{P}(\psi)n^{\mu}_{\tilde{\Sigma}_{\tau}}}\leq C\int_{\tilde{\Sigma}_{0}}{J_{\mu}^{P}(\psi)n^{\mu}_{\tilde{\Sigma}_{0}}}.
\label{pboun}
\end{equation}
\label{pbound}
\end{proposition}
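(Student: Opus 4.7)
The plan is to apply Stokes' theorem to the current $J_{\mu}^{P}(\psi)$ in the region $\tilde{\mathcal{R}}(0,\tau)=\cup_{0\leq\tilde\tau\leq\tau}\tilde\Sigma_{\tilde\tau}$, and then to control each of the resulting terms. The divergence identity reads
\begin{equation*}
\int_{\tilde{\Sigma}_{\tau}}J^{P}_{\mu}(\psi)n^{\mu}_{\tilde\Sigma_{\tau}}+\int_{\mathcal{H}^{+}(0,\tau)}J^{P}_{\mu}(\psi)n^{\mu}_{\mathcal{H}^{+}}+\int_{\mathcal{I}^{+}(0,\tau)}J^{P}_{\mu}(\psi)n^{\mu}_{\mathcal{I}^{+}}+\int_{\tilde{\mathcal{R}}(0,\tau)}K^{P}(\psi)=\int_{\tilde\Sigma_{0}}J^{P}_{\mu}(\psi)n^{\mu}_{\tilde\Sigma_{0}}.
\end{equation*}
Since $P$ was built to be causal and future directed in the whole of $\mathcal R$, and since $n_{\mathcal H^{+}}$ and $n_{\mathcal I^{+}}$ are future directed null, the hyperbolicity of the energy momentum tensor (Proposition \ref{hyperb}) shows that both the $\mathcal H^{+}$ and the $\mathcal I^{+}$ boundary integrals are non-negative. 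They can therefore be discarded from the left hand side.

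The remaining issue is the spacetime term $\int_{\tilde{\mathcal{R}}(0,\tau)}K^{P}(\psi)$. By the construction of $P$ in Proposition \ref{rstarHenergy}, $P$ coincides with $T$ for $r\geq r_{1}$, so $K^{P}\equiv 0$ there by the Killing property of $T$. In the region $\mathcal{A}=\{M\leq r\leq r_{0}\}$ we have $K^{P}\gtrsim J^{T}_{\mu}n^{\mu}_{\tilde\Sigma}\geq 0$, so this contribution also has the correct sign and can be dropped (or kept as a bonus). The only potentially harmful contribution is over the transition annulus $\mathcal{B}_{0}=\{r_{0}\leq r\leq r_{1}\}$. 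In $\mathcal{B}_{0}$, $K^{P}$ is a quadratic form in the $1$-jet of $\psi$ with bounded (but possibly sign-indefinite) coefficients, hence
\begin{equation*}
\bigl|K^{P}(\psi)\bigr|\leq C\bigl((\partial_{v}\psi)^{2}+(\partial_{r}\psi)^{2}+|\nabb\psi|^{2}\bigr)\leq C\, J^{T}_{\mu}(\psi)n^{\mu}_{\tilde\Sigma}\quad \text{in }\mathcal{B}_{0},
\end{equation*}
where the last inequality uses that $\mathcal{B}_{0}$ is strictly separated from $\mathcal{H}^{+}$, so that $T$ is uniformly timelike and $J^{T}_{\mu}n^{\mu}_{\tilde\Sigma}$ is non-degenerate there.

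The crucial observation is that $\mathcal{B}_{0}\subset\{r_{0}\leq r\leq r_{1}\}$ is also strictly separated from the photon sphere $\{r=2M\}$, since $r_{1}<2M$. Consequently, the weights appearing in the degenerate $X$-estimate of Theorem \ref{th1} are all bounded below by positive constants on $\mathcal{B}_{0}$, and hence
\begin{equation*}
\int_{\tilde{\mathcal R}(0,\tau)\cap\mathcal{B}_{0}}J^{T}_{\mu}(\psi)n^{\mu}_{\tilde\Sigma}\,\leq\, C\int_{\tilde\Sigma_{0}}J^{T}_{\mu}(\psi)n^{\mu}_{\tilde\Sigma_{0}}\,\leq\, C\int_{\tilde\Sigma_{0}}J^{P}_{\mu}(\psi)n^{\mu}_{\tilde\Sigma_{0}},
\end{equation*}
the final step using that $P$ is a causal vector field with $g(P,T)\leq -c<0$ in a neighbourhood of $\tilde\Sigma_{0}$, so $J^{T}_{\mu}n^{\mu}_{\tilde\Sigma}\leq C\, J^{P}_{\mu}n^{\mu}_{\tilde\Sigma}$. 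Plugging this into the divergence identity, absorbing the good horizon and null-infinity terms on the left, and throwing away the non-negative bulk contribution from $\mathcal{A}$, yields the desired estimate \eqref{pboun}.

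The main subtlety to check carefully is the bookkeeping in the transition annulus $\mathcal{B}_{0}$: one must verify that the degeneration of the $X$-estimate at the photon sphere is irrelevant there (guaranteed by the choice $r_{1}<2M$ made in Proposition \ref{rstarHenergy}), and that the relevant spacetime integral of the $T$-energy over $\mathcal{B}_{0}$ is indeed controlled by the initial $P$-flux rather than just the initial $T$-flux. Both reduce to the fact that $P$ and $T$ differ only on a compact set where they are uniformly equivalent as causal fields, modulo the degeneracy of $T$ at $\mathcal{H}^{+}$ which is irrelevant here because $\mathcal{B}_{0}$ is bounded away from the horizon.
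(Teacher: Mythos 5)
Your proposal is correct and follows essentially the same route as the paper: Stokes' theorem for $J_{\mu}^{P}$, discarding the non-negative boundary terms on $\mathcal{H}^{+}$ and $\mathcal{I}^{+}$ and the non-negative bulk term in $\mathcal{A}$, controlling the sign-indefinite bulk term in the transition region $\{r_{0}\leq r\leq r_{1}\}$ by the degenerate $X$ estimate (which is non-degenerate there since that region avoids both $\mathcal{H}^{+}$ and the photon sphere), and closing with $J_{\mu}^{T}n^{\mu}\leq CJ_{\mu}^{P}n^{\mu}$. Your additional bookkeeping on why the $X$-estimate weights are bounded below in the annulus is exactly the implicit content of the paper's one-line remark.
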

\begin{proof}
Stokes' theorem for the current $J_{\mu}^{P}$ gives us
\begin{equation*}
\int_{\tilde{\Sigma}_{\tau}}{J_{\mu}^{P}n^{\mu}}+\int_{\mathcal{H}^{+}}{J_{\mu}^{P}n^{\mu}}+\int_{\mathcal{I}^{+}}{J_{\mu}^{P}n^{\mu}}+\int_{\tilde{\mathcal{R}}}{K^{P}}=\int_{\tilde{\Sigma}_{0}}{J_{\mu}^{P}n^{\mu}}.
\end{equation*}
Note that since $P$ is a future-directed causal vector field, the boundary integrals over $\mathcal{H}^{+}$ and $\mathcal{I}^{+}$ are non-negative. The same goes also for $K^{P}$ in region $\mathcal{A}$ whereas it vanishes away from the horizon.  In the intermediate region this spacetime integral can be bounded using \eqref{degX}. The result now follows from 
\begin{equation*} 
J_{\mu}^{T}n^{\mu} \leq CJ_{\mu}^{P}n^{\mu}.
\end{equation*}
\end{proof}
We are now in a position to derive local integrated decay for the $T$-energy.
\begin{proposition}
There exists a constant $C$ that depends  on $M$ and $\tilde{\Sigma}_{0}$   such that for all solutions $\psi$ of the wave equation   we have
\begin{equation*}
\int_{\tau_{1}}^{\tau_{2}}{\left(\int_{\mathcal{A}\cap\tilde{\Sigma}_{\tau}}{J_{\mu}^{T}(\psi)n^{\mu}_{\tilde{\Sigma}_{\tau}}}\right)d\tau}\leq C\int_{\tilde{\Sigma}_{\tau_{1}}}{J_{\mu}^{P}(\psi)n^{\mu}_{\tilde{\Sigma}_{\tau_{1}}}}
\end{equation*}
and
\begin{equation*}
\int_{\tau_{1}}^{\tau_{2}}{\left(\int_{\mathcal{A}\cap\tilde{\Sigma}_{\tau}}{J_{\mu}^{P}(\psi)n^{\mu}_{\tilde{\Sigma}_{\tau}}}\right)d\tau}\leq C\int_{\tilde{\Sigma}_{\tau_{1}}}{J_{\mu}^{N}(\psi)n^{\mu}_{\tilde{\Sigma}_{\tau_{1}}}}
\end{equation*}
in an appropriate $\varphi_{\tau}$-invariant neighbourhood $\mathcal{A}$ of $\mathcal{H}^{+}$.
\label{propp1}
\end{proposition}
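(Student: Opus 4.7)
The plan is to apply Stokes' theorem to the currents $J^{P}_\mu$ and $J^{N,\delta,-\frac12}_\mu$ respectively on the region $\tilde{\mathcal{R}}_{\tau_1}^{\tau_2}$, exploit the pointwise coercivity relations from Proposition \ref{rstarHenergy}, and conclude via the coarea formula exactly as in the passages from Theorems \ref{nenergy} and \ref{pbound}.

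For the first estimate, Stokes' theorem applied to $J^{P}_\mu(\psi)$ in $\tilde{\mathcal{R}}_{\tau_1}^{\tau_2}$ yields
\begin{equation*}
\int_{\tilde{\Sigma}_{\tau_2}}\! J^{P}_{\mu}n^{\mu}+\int_{\mathcal{H}^+(\tau_1,\tau_2)}\! J^{P}_{\mu}n^{\mu}+\int_{\mathcal{I}^+(\tau_1,\tau_2)}\! J^{P}_{\mu}n^{\mu}+\int_{\tilde{\mathcal{R}}_{\tau_1}^{\tau_2}}\! K^{P}=\int_{\tilde{\Sigma}_{\tau_1}}\! J^{P}_{\mu}n^{\mu}.
\end{equation*}
Since $P$ is future-directed causal, the boundary integrals on $\mathcal{H}^+$ and $\mathcal{I}^+$ are non-negative and may be discarded. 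In $\mathcal{A}$ the first inequality of Proposition \ref{rstarHenergy} gives $J^{T}_{\mu}n^{\mu}_{\tilde{\Sigma}_\tau}\leq C\,K^{P}$, while $K^{P}$ vanishes in $\{r\geq r_1\}$ and in the transition strip $\{r_0\leq r\leq r_1\}$ (which is away from $\mathcal{H}^+$ and the photon sphere) it is a quadratic form in the $1$-jet of $\psi$ of both signs, controlled in absolute value by $J^{T}_\mu n^\mu$. Thus the spacetime integral of $|K^P|$ on the transition strip is absorbed by the degenerate $X$-estimate \eqref{degX}, which in turn is dominated by $\int_{\tilde{\Sigma}_{\tau_1}} J^{T}_\mu n^\mu\leq C\int_{\tilde{\Sigma}_{\tau_1}} J^{P}_\mu n^\mu$. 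Combining and applying the coarea formula
\begin{equation*}
\int_{\tau_1}^{\tau_2}\!\!\left(\int_{\mathcal{A}\cap\tilde{\Sigma}_\tau}\! J^{T}_\mu n^\mu\right)d\tau\,\sim\,\int_{\tilde{\mathcal{R}}_{\tau_1}^{\tau_2}\cap\mathcal{A}}\! J^{T}_\mu n^\mu
\end{equation*}
yields the first inequality.

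For the second estimate, we repeat the argument one step up in the hierarchy, with $N$ playing the role formerly played by $P$ and $P$ playing the role formerly played by $T$. Stokes' theorem for the modified current $J^{N,\delta,-\frac12}_\mu(\psi)$ on $\tilde{\mathcal{R}}_{\tau_1}^{\tau_2}$, combined with Propositions \ref{nb}, \ref{nhb} and the $\mathcal{I}^+$-boundary integral's favourable sign (handled exactly as in the proof of Theorem \ref{nenergy}), gives
\begin{equation*}
\int_{\tilde{\mathcal{R}}_{\tau_1}^{\tau_2}} K^{N,\delta,-\frac12}(\psi)\leq C\int_{\tilde{\Sigma}_{\tau_1}} J^{N}_\mu n^\mu_{\tilde{\Sigma}_{\tau_1}},
\end{equation*}
where the contributions of the regions $\mathcal{B}$ and $\mathcal{C}$ (where $K^{N,\delta,-\frac12}$ is not manifestly non-negative) are absorbed by the $X$-estimate \eqref{degX} and the Morawetz estimate \eqref{moraw}. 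The second inequality of Proposition \ref{rstarHenergy} states $J^{P}_\mu n^\mu_{\tilde{\Sigma}_\tau}\leq C\,K^{N,\delta,-\frac12}(\psi)$ in $\mathcal{A}$, so restricting the bulk to $\mathcal{A}$ and applying the coarea formula yields the claimed bound.

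The main potential obstacle is keeping track of the indefinite sign of $K^{P}$ and of $K^{N,\delta,-\frac12}$ outside their respective good regions; however, in both cases these ``bad'' contributions live on $\varphi_\tau$-invariant spatially compact sets disjoint from $\mathcal{H}^+$ and from the photon sphere, and are therefore controlled by quantities already bounded by the appropriate initial flux through the previously established non-degenerate $X$ and Morawetz estimates. No new analytic ingredient beyond Propositions \ref{rstarHenergy}, \ref{pbound}, Theorem \ref{nenergy}, \eqref{degX} and \eqref{moraw} is needed.
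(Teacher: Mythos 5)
Your argument is correct and is essentially the paper's own proof: the paper likewise derives $\int_{\mathcal{A}}K^{P}\leq C\int_{\tilde{\Sigma}_{\tau_{1}}}J_{\mu}^{P}n^{\mu}$ from the divergence identity together with the boundedness of the $P$-energy (whose proof contains exactly your discarding of the non-negative $\mathcal{H}^{+}$, $\mathcal{I}^{+}$ and $\tilde{\Sigma}_{\tau_{2}}$ terms and the absorption of the indefinite intermediate-region bulk via \eqref{degX}), and then concludes via \eqref{p1} and the coarea formula; the second estimate is handled identically with $J_{\mu}^{N,\delta,-\frac{1}{2}}$ and \eqref{p2}. You have merely unrolled the citations, so no further comment is needed.
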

\begin{proof}
From the divergence identity for the current $J_{\mu}^{P}$ and the boundedness of $P$-energy we have
\begin{equation*}
\int_{\mathcal{A}}{K^{P}}\leq C\int_{\tilde{\Sigma}_{\tau_{1}}}{J_{\mu}^{P}(\psi)n^{\mu}_{\tilde{\Sigma}_{\tau_{1}}}}
\end{equation*}
for a uniform constant $C$. Thus the first estimate follows from \eqref{p1} and the coarea formula. Likewise, the second estimate follows from the divergence identity for the current $J_{\mu}^{N,\delta, -\frac{1}{2}}$, the boundedness of the non-degenerate $N$-energy and \eqref{p2}.
\end{proof}

\subsubsection{The Dyadic Sequence $\rho_{n}$}
\label{sec:TheDyadicSequenceTildeTauN}

In view of \eqref{x} and Propositions  \ref{nondegx} and \ref{propp1} we have
\begin{equation}
\begin{split}
\int_{\tau_{1}}^{\tau_{2}}{\left(\int_{\tilde{\Sigma}_{\tau}}{J^{T}_{\mu}(\psi)n^{\mu}_{\tilde{\Sigma}_{\tau}}}\right)d\tau}\, \leq\,  CI^{T}_{\tilde{\Sigma}_{\tau_{1}}}(\psi),
\label{tinte1}
\end{split}
\end{equation}
where 
\begin{equation*}
\begin{split}
I^{T}_{\tilde{\Sigma}_{\tau}}(\psi)=&\int_{\tilde{\Sigma}_{\tau}}{J^{P}_{\mu}(\psi)n^{\mu}_{\tilde{\Sigma}_{\tau}}}+
\int_{\tilde{\Sigma}_{\tau}}{J^{T}_{\mu}(T\psi)n^{\mu}_{\tilde{\Sigma}_{\tau}}}+\int_{\tilde{N}_{\tau}}{r^{-1}\left(\partial_{v}\phi\right)^{2}}.
\end{split}
\end{equation*}
Moreover, from Propositions \ref{propp1} and \ref{rwe1} we have
\begin{equation}
\int_{\tau_{1}}^{\tau_{2}}{I_{\tilde{\Sigma}_{\tau}}^{T}(\psi)d\tau}\leq CI_{\tilde{\Sigma}_{\tau_{1}}}^{T}(T\psi)+C\int_{\tilde{\Sigma}_{\tau_{1}}}{J_{\mu}^{N}(\psi)n^{\mu}}+C\int_{\tilde{N}_{\tau_{1}}}{(\partial_{v}\phi)^{2}},
\label{tinte2}
\end{equation}
for a  constant $C$ that  depends  on  $M$  and $\tilde{\Sigma}_{0}$. This implies that there exists a dyadic sequence\footnote{Dyadic sequence is an increasing sequence $\rho_{n}$ such that $\rho_{n}\sim\rho_{n+1}\sim(\rho_{n+1}-\rho_{n}).$} $\rho_{n}$ such that 
\begin{equation*}
I^{T}_{\tilde{\Sigma}_{\rho_{n}}}(\psi)\leq \frac{E_{1}}{\rho_{n}},
\end{equation*}
where $E_{1}$ is equal to the right hand side of \eqref{tinte2} (with $\tau_{1}=0$) and depends only on the initial data of $\psi$. We have now all the tools to derive decay for the degenerate energy.
\begin{proposition}
There exists a constant $C$ that depends on $M$   and $\tilde{\Sigma}_{0}$ such that for all solutions $\psi$ of the wave equation we have 
\begin{equation*}
\int_{\tilde{\Sigma}_{\tau}}J^{T}_{\mu}(\psi)n_{\tilde{\Sigma}_{\tau}}^{\mu}\leq CE_{1}\frac{1}{\tau^{2}},
\end{equation*}
where $E_{1}$ is as defined above.
\label{tdecay}
\end{proposition}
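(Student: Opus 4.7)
The plan is to implement a double pigeonhole/mean-value argument based on the two integrated estimates \eqref{tinte1} and \eqref{tinte2}, combined with the uniform monotonicity of the $T$-energy along the foliation $\tilde{\Sigma}_\tau$. The heuristic is: \eqref{tinte2} yields decay of the ``master quantity'' $I^T_{\tilde{\Sigma}_\tau}(\psi)$ along a dyadic subsequence; then \eqref{tinte1}, applied on each dyadic slab, converts this into $\tau^{-2}$ decay for $\int_{\tilde{\Sigma}_\tau}J_\mu^T(\psi)n^\mu$ along a further dyadic subsequence; finally monotonicity fills in the gaps to yield the estimate for all $\tau$.

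First I would apply \eqref{tinte2} on $[0,\infty)$. The right-hand side is bounded by a constant multiple of the initial data quantity $E_1(\psi)$ defined in Theorem \ref{t4}: $I^T_{\tilde{\Sigma}_0}(T\psi)$ is finite by the assumed regularity, $\int_{\tilde{\Sigma}_0}J_\mu^N(\psi)n^\mu$ is finite by hypothesis (and is propagated by Theorem \ref{nenergy}), and $\int_{\tilde{N}_0}(\partial_v\phi)^2$ is part of $E_1$. Hence
\[
\int_0^{\infty} I^T_{\tilde{\Sigma}_\tau}(\psi)\,d\tau \,\leq\, CE_1.
\]
A standard dyadic averaging argument then produces a sequence $\rho_n\to\infty$ with $\rho_{n+1}\sim\rho_n$ and $I^T_{\tilde{\Sigma}_{\rho_n}}(\psi)\leq CE_1/\rho_n$.

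Next I would apply \eqref{tinte1} on each slab $[\rho_n,\rho_{n+1}]$, using the decay of $I^T$ at the lower endpoint:
\[
\int_{\rho_n}^{\rho_{n+1}}\left(\int_{\tilde{\Sigma}_\tau}J_\mu^T(\psi)n^\mu_{\tilde{\Sigma}_\tau}\right)d\tau \,\leq\, CI^T_{\tilde{\Sigma}_{\rho_n}}(\psi) \,\leq\, \frac{CE_1}{\rho_n}.
\]
Dividing by the slab length $\rho_{n+1}-\rho_n\sim\rho_n$, the mean value theorem gives $\tau_n\in[\rho_n,\rho_{n+1}]$ with
\[
\int_{\tilde{\Sigma}_{\tau_n}}J_\mu^T(\psi)n^\mu_{\tilde{\Sigma}_{\tau_n}} \,\leq\, \frac{CE_1}{\rho_n^{2}} \,\sim\, \frac{CE_1}{\tau_n^{2}}.
\]
To upgrade this subsequential decay to decay for all $\tau$, I invoke monotonicity of the $T$-flux: applying Stokes' theorem for $J^T$ between $\tilde{\Sigma}_{\tau_n}$ and $\tilde{\Sigma}_\tau$ for $\tau\geq\tau_n$, the spacetime term vanishes since $T$ is Killing, the $\mathcal{H}^+$ boundary term equals $\int(\partial_v\psi)^2\geq 0$, and the $\mathcal{I}^+$ boundary term is non-negative because $T$ is causal. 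Hence $\int_{\tilde{\Sigma}_\tau}J^T_\mu n^\mu \leq \int_{\tilde{\Sigma}_{\tau_n}}J^T_\mu n^\mu$, and for $\tau\in[\tau_n,\tau_{n+1}]$ we conclude $\int_{\tilde{\Sigma}_\tau}J^T_\mu n^\mu\leq CE_1\tau_n^{-2}\leq CE_1\tau^{-2}$, using $\tau\leq\tau_{n+1}\sim\tau_n$.

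The main subtlety, rather than a genuine obstacle, is the bookkeeping behind the passage from \eqref{tinte2} to a finite bound in terms of $E_1$: one must track that $I^T(T\psi)$ on the right of \eqref{tinte2} does not reintroduce a $T^2\psi$ hierarchy (it is absorbed into the definition of $E_1$ by one-derivative regularity), and that the $r^{-1}(\partial_v\phi)^2$ flux used in the definition of $I^T$ is propagated by the $p=1$ case of Proposition \ref{rweigestprop} and the $p=2$ case via Proposition \ref{rwe1}. Once these ingredients are assembled, the two-step pigeonhole plus monotonicity argument closes immediately.
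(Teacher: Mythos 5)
Your proposal is correct and follows essentially the same route as the paper: extract a dyadic sequence $\rho_{n}$ with $I^{T}_{\tilde{\Sigma}_{\rho_{n}}}(\psi)\leq CE_{1}/\rho_{n}$ from \eqref{tinte2}, feed it into \eqref{tinte1} on the slabs $[\rho_{n},\rho_{n+1}]$, and use monotonicity of the $T$-flux to convert the integrated bound into pointwise-in-$\tau$ decay. The only cosmetic difference is that the paper bounds the slab integral below by $(\rho_{n+1}-\rho_{n})$ times the flux at the right endpoint directly (using monotonicity), whereas you first extract an intermediate good slice $\tau_{n}$ by the mean value theorem and then apply monotonicity; these are equivalent.
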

\begin{proof}
We apply \eqref{tinte1} for the dyadic interval $[\rho_{n},\rho_{n+1}]$ to obtain
\begin{equation*}
\int_{\rho_{n}}^{\rho_{n+1}}{\left(\int_{\tilde{\Sigma}_{\tau}}{J^{T}_{\mu}(\psi)n^{\mu}_{\tilde{\Sigma}_{\tau}}}\right)d\tau}\, \leq\,  CE_{1}\frac{1}{\rho_{n}}.
\end{equation*}
In view of the energy estimate
\begin{equation*}
\begin{split}
\int_{\tilde{\Sigma}_{\tau}}{J^{T}_{\mu}(\psi)n^{\mu}_{\tilde{\Sigma}_{\tau}}}\ \leq\, C\int_{\tilde{\Sigma}_{\tau'}}{J^{T}_{\mu}(\psi)n^{\mu}_{\tilde{\Sigma}_{\tau'}}},
\end{split}
\end{equation*}
which holds for all $\tau\geq \tau'$, we have
\begin{equation*}
\begin{split}
(\rho_{n+1}-\rho_{n})\int_{\tilde{\Sigma}_{\rho_{n+1}}}{J^{T}_{\mu}(\psi)n^{\mu}_{\tilde{\Sigma}_{\rho_{n+1}}}}\,\leq\, CE_{1}\frac{1}{\rho_{n+1}}.
\end{split}
\end{equation*}
Since there exists a uniform constant $b>0$ such that $b\tau_{n+1}\leq\tau_{n+1}-\tau_{n}$ we have 
\begin{equation*}
\begin{split}
\int_{\tilde{\Sigma}_{\rho_{n+1}}}{J^{T}_{\mu}(\psi)n^{\mu}_{\tilde{\Sigma}_{\rho_{n+1}}}}\,\leq\, CE_{1}\frac{1}{\rho_{n+1}^{2}}.
\end{split}
\end{equation*}
Now, for $\tau\geq\rho_{1}$ there exists $n\in\mathbb{N}$ such that $\rho_{n}\leq\tau\leq\rho_{n+1}$. Therefore,
\begin{equation*}
\begin{split}
\int_{\tilde{\Sigma}_{\tau}}{J^{T}_{\mu}(\psi)n^{\mu}_{\tilde{\Sigma}_{\tau}}}\,\leq\, C\int_{\tilde{\Sigma}_{\rho_{n+1}}}{J^{T}_{\mu}(\psi)n^{\mu}_{\tilde{\Sigma}_{\rho_{n+1}}}}\, \leq\frac{CE_{1}}{\rho_{n}^{2}}\sim\frac{CE_{1}}{\rho_{n+1}^{2}}\leq CE_{1}\frac{1}{\tau^{2}},
\end{split}
\end{equation*}
which is the required decay result for the $T$-energy. 

\end{proof}

\subsection{Decay of Non-Degenerate Energy}
\label{sec:DecayOfNonDegenerateEnergy}

We  now derive decay for the non-degenerate energy. Note that for obtaining such a result we must use Proposition \ref{inte1} which however holds for waves supported on the frequencies $l\geq 1$. In this case, in view of the previous estimates we have
\begin{equation}
\begin{split}
\int_{\tau_{1}}^{\tau_{2}}{\left(\int_{\tilde{\Sigma}_{\tau}}{J^{N}_{\mu}(\psi)n^{\mu}_{\tilde{\Sigma}_{\tau}}}\right)d\tau}\, \leq\, & CI^{N}_{\tilde{\Sigma}_{\tau_{1}}}(\psi),
\end{split}
\label{ninte1}
\end{equation}
where 
\begin{equation*}
\begin{split}
I^{N}_{\tilde{\Sigma}_{\tau}}(\psi)=&\int_{\tilde{\Sigma}_{\tau}}{J^{N}_{\mu}(\psi)n^{\mu}_{\tilde{\Sigma}_{\tau}}}+
\int_{\tilde{\Sigma}_{\tau}}{J^{N}_{\mu}(T\psi)n^{\mu}_{\tilde{\Sigma}_{\tau}}}\\
&+\int_{\mathcal{A}\cap\tilde{\Sigma}_{\tau}}{J^{N}_{\mu}(\partial_{r}\psi)n^{\mu}_{\tilde{\Sigma}_{\tau}}}+\int_{\tilde{N}_{\tau}}{r^{-1}\left(\partial_{v}\phi\right)^{2}}.
\end{split}
\end{equation*}
\begin{proposition}
There exists a constant $C$ that depends  on $M$   and $\tilde{\Sigma}_{0}$ such that for all solutions $\psi$ to the wave equation which are supported on $l\geq 1$ we have
\begin{equation*}
\begin{split}
\int_{\tilde{\Sigma}_{\tau}}{J^{N}_{\mu}(\psi)n^{\mu}_{\tilde{\Sigma}_{\tau}}}\,\leq\, CE_{2}\frac{1}{\tau},
\end{split}
\end{equation*}
where $E_{2}$ depends only on the initial data of $\psi$ and is equal to the right hand side of \eqref{ninte1} (with $\tau_{1}=0$).
\label{l1decay}
\end{proposition}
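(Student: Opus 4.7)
The plan is to follow the same pigeonhole / dyadic scheme used in the proof of Proposition~\ref{tdecay}, but with one fewer layer of the decay hierarchy, which accounts for the weaker rate $\tau^{-1}$ versus $\tau^{-2}$. The input ingredient is the integrated estimate \eqref{ninte1}, which upon taking $\tau_1=0$ and letting $\tau_2$ be arbitrary gives the uniform bound
\begin{equation*}
\int_{0}^{\tau_{2}}\!\!\left(\int_{\tilde{\Sigma}_{s}}{J^{N}_{\mu}(\psi)n^{\mu}_{\tilde{\Sigma}_{s}}}\right)\!ds\;\leq\; CI^{N}_{\tilde{\Sigma}_{0}}(\psi)=CE_{2}.
\end{equation*}
Note that \eqref{ninte1} itself has already been obtained by stitching together Proposition~\ref{inte1} (the local non-degenerate integrated estimate, which needs $l\geq 1$), the non-degenerate $X$-estimate \eqref{x}, Proposition~\ref{nondegx} (integrated $T$-energy near $\mathcal{I}^{+}$), and Proposition~\ref{rwe1} (the $r^{-1}(\partial_v\phi)^2$ estimate).

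First, I would apply the mean-value (pigeonhole) principle on the interval $[\tau/2,\tau]$ for $\tau\geq 2$: since the total integral is bounded by $CE_{2}$ on $[\tau/2,\tau]$, there exists a time $\tau^{*}\in[\tau/2,\tau]$ at which
\begin{equation*}
\int_{\tilde{\Sigma}_{\tau^{*}}}{J^{N}_{\mu}(\psi)n^{\mu}_{\tilde{\Sigma}_{\tau^{*}}}}\;\leq\;\frac{2CE_{2}}{\tau}.
\end{equation*}
Second, I would propagate this smallness forward to $\tilde{\Sigma}_{\tau}$ using the uniform boundedness of the non-degenerate $N$-flux. Theorem~\ref{nenergy} was established for the foliation $\Sigma_{\tau}$, but by the remark following Proposition~\ref{moranondeg} the same estimate holds verbatim on the foliation $\tilde{\Sigma}_{\tau}$, since Stokes' theorem produces only a non-negative extra boundary contribution along $\mathcal{I}^{+}$. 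Thus
\begin{equation*}
\int_{\tilde{\Sigma}_{\tau}}{J^{N}_{\mu}(\psi)n^{\mu}_{\tilde{\Sigma}_{\tau}}}\;\leq\;C\int_{\tilde{\Sigma}_{\tau^{*}}}{J^{N}_{\mu}(\psi)n^{\mu}_{\tilde{\Sigma}_{\tau^{*}}}}\;\leq\;\frac{CE_{2}}{\tau},
\end{equation*}
where the constant has been absorbed. The case $\tau\in[0,2]$ follows trivially from uniform boundedness alone.

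The main point to be careful about is not the pigeonhole step, which is standard once \eqref{ninte1} is in hand, but rather the structural restriction $l\geq 1$ on the spherical support of $\psi$. This enters through Proposition~\ref{inte1}, which relies on Proposition~\ref{r1back} and hence on the commutation with the non-Killing vector field $\partial_{r}$ carried out in Section~\ref{sec:CommutingWithAVectorFieldTransversalToMathcalH}. The hypothesis is sharp: the conservation law of Proposition~\ref{ndl0} prevents $\partial_{r}\psi$ from decaying on $\mathcal{H}^{+}$ for generic spherically symmetric data, so no corresponding $\tau^{-1}$ decay of the non-degenerate energy can hold without the frequency assumption. This is also the reason the stronger $\tau^{-2}$ rate of Proposition~\ref{tdecay} requires $l\geq 2$ and the higher-order commutation of Theorem~\ref{kder}: each additional power of decay costs one commutation with $\partial_{r}$ and hence one additional angular frequency.
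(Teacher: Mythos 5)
Your proof is correct and follows essentially the same route as the paper: the paper simply integrates the monotonicity inequality $\int_{\tilde{\Sigma}_{\tau}}J^{N}_{\mu}n^{\mu}\leq C\int_{\tilde{\Sigma}_{\tau'}}J^{N}_{\mu}n^{\mu}$ over $\tau'\in[0,\tau]$ and invokes \eqref{ninte1}, which is the same combination of integrated decay plus uniform boundedness that your pigeonhole-then-propagate variant uses. The surrounding remarks on the role of the $l\geq 1$ hypothesis are accurate but not needed for the proof itself.
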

\begin{proof}
We apply \eqref{ninte1} for the interval $[0,\tau]$ and use the energy estimate
\begin{equation*}
\begin{split}
\int_{\tilde{\Sigma}_{\tau}}{J^{N}_{\mu}(\psi)n^{\mu}_{\tilde{\Sigma}_{\tau}}}\ \leq\, C\int_{\tilde{\Sigma}_{\tau'}}{J^{N}_{\mu}(\psi)n^{\mu}_{\tilde{\Sigma}_{\tau'}}},
\end{split}
\end{equation*}
which holds for all $\tau\geq \tau'$ and for a uniform constant $C$.
\end{proof}

\subsubsection{The Dyadic Sequence $\tau_{n}$}
\label{sec:APriviledgedDyadicSequence}

If we  consider waves which are supported on $l\geq 2$ then from Propositions  \ref{rwe1}, \ref{inte2} and by commuting \eqref{ninte1} with $T$ we take
\begin{equation}
\begin{split}
\int_{\tau_{1}}^{\tau_{2}}{I^{N}_{\tilde{\Sigma}_{\tau}}(\psi)d\tau}\,\leq \, &CI^{N}_{\tilde{\Sigma}_{\tau_{1}}}(\psi)+CI^{N}_{\tilde{\Sigma}_{\tau_{1}}}(T\psi)\\
&+C\int_{\mathcal{A}\cap\tilde{\Sigma}_{\tau_{1}}}{J^{N}_{\mu}(\partial_{r}\partial_{r}\psi)n^{\mu}_{\tilde{\Sigma}_{\tau}}}+C\int_{\tilde{N}_{\tau_{1}}}{(\partial_{v}\phi)^{2}}
\end{split}
\label{ninte2}
\end{equation}
This implies that there exists a dyadic sequence $\tau_{n}$ such that
\begin{equation*}
\begin{split}
I^{N}_{\tilde{\Sigma}_{\tau_{n}}}(\psi) \, \leq\, \frac{E_{3}}{\tau_{n}},
\end{split}
\end{equation*}
where the constant $E_{3}$ is equal to the right hand side of \eqref{ninte2} (with $\tau_{1}=0)$. We can now derive decay for the non-degenerate energy.
\begin{proposition}
There exists a constant $C$ that depends  on $M$   and $\tilde{\Sigma}_{0}$ such that for all solutions $\psi$ to the wave equation which are supported on $l\geq 2$ we have
\begin{equation*}
\begin{split}
\int_{\tilde{\Sigma}_{\tau}}{J^{N}_{\mu}(\psi)n^{\mu}_{\tilde{\Sigma}_{\tau}}}\,\leq\, CE_{3}\frac{1}{\tau^{2}},
\end{split}
\end{equation*}
where $E_{3}$ is as defined above.
\label{energydecay}
\end{proposition}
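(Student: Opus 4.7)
The plan is to mimic exactly the argument used for Proposition \ref{tdecay}, but with $J^{T}$ replaced by $J^{N}$ and the $T$-hierarchy replaced by the $N$-hierarchy. The two ingredients already in hand are: first, the integrated local energy decay bound \eqref{ninte1}, which says that on any interval $[\tau_{1},\tau_{2}]$ one controls $\int_{\tau_{1}}^{\tau_{2}}\!\int_{\tilde\Sigma_{\tau}}J^{N}_{\mu}(\psi)n^{\mu}\,d\tau$ by $C\,I^{N}_{\tilde\Sigma_{\tau_{1}}}(\psi)$; second, the decay of the higher quantity along the privileged dyadic sequence $\tau_{n}$, namely $I^{N}_{\tilde\Sigma_{\tau_{n}}}(\psi)\leq E_{3}/\tau_{n}$, which was produced precisely from \eqref{ninte2} via a pigeonhole in $\tau$.

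First I would apply \eqref{ninte1} with $\tau_{1}=\tau_{n}$ and $\tau_{2}=\tau_{n+1}$ to obtain
\[
\int_{\tau_{n}}^{\tau_{n+1}}\left(\int_{\tilde\Sigma_{\tau}}J^{N}_{\mu}(\psi)n^{\mu}_{\tilde\Sigma_{\tau}}\right)d\tau \;\leq\; C\,I^{N}_{\tilde\Sigma_{\tau_{n}}}(\psi)\;\leq\; \frac{CE_{3}}{\tau_{n}}.
\]
Next, I would invoke the uniform boundedness of the non-degenerate $N$-energy (Theorem \ref{nenergy}, with $\tilde\Sigma_{\tau}$ in place of $\Sigma_{\tau}$), which gives
\[
\int_{\tilde\Sigma_{\tau_{n+1}}}\!J^{N}_{\mu}(\psi)n^{\mu}_{\tilde\Sigma_{\tau_{n+1}}}\;\leq\; C\int_{\tilde\Sigma_{\tau}}\!J^{N}_{\mu}(\psi)n^{\mu}_{\tilde\Sigma_{\tau}}
\]
for every $\tau\in[\tau_{n},\tau_{n+1}]$. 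Combining the two and using the dyadic property $\tau_{n+1}-\tau_{n}\sim\tau_{n+1}\sim\tau_{n}$ yields
\[
\int_{\tilde\Sigma_{\tau_{n+1}}}\!J^{N}_{\mu}(\psi)n^{\mu}_{\tilde\Sigma_{\tau_{n+1}}}\;\leq\;\frac{CE_{3}}{(\tau_{n+1}-\tau_{n})\,\tau_{n}}\;\leq\;\frac{CE_{3}}{\tau_{n+1}^{2}}.
\]

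Finally, for an arbitrary $\tau\geq \tau_{1}$, choose $n$ with $\tau_{n}\leq \tau\leq \tau_{n+1}$ and apply once more the uniform boundedness of the $N$-energy to propagate the bound from the dyadic node $\tau_{n+1}$ backwards to $\tau$, absorbing the resulting constants into $C$. This gives the claimed $\tau^{-2}$ decay on the entire future, since $\tau_{n+1}^{-2}\sim \tau^{-2}$.

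I do not expect a genuine obstacle: every structural ingredient (the $r$-weighted hierarchy at infinity, the $\partial_{r}$-commuted redshift hierarchy at $\mathcal{H}^{+}$, the integrated local energy decay for $l\geq 1$, and the uniform boundedness of $N$-energy) has been assembled exactly so that this final pigeonhole goes through. The only subtlety to check is that all the quantities appearing in $I^{N}_{\tilde\Sigma_{\tau_{n}}}(\psi)$ — in particular $I^{N}_{\tilde\Sigma_{\tau_{n}}}(T\psi)$ and $\int_{\mathcal{A}\cap\tilde\Sigma_{\tau_{n}}}J^{N}_{\mu}(\partial_{r}\partial_{r}\psi)n^{\mu}$ — are finite in terms of the initial data, which is precisely the role of $E_{3}$; and that the assumption $l\geq 2$ is preserved under $T$ and $\partial_{r}$ (this was already verified in Section \ref{sec:EllipticTheoryOnMathbbS2}), so that Proposition \ref{inte2} is applicable throughout.
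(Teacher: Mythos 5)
Your argument is correct and essentially identical to the paper's own proof: the same pigeonhole on the dyadic sequence $\tau_{n}$ produced by \eqref{ninte2}, the same application of \eqref{ninte1} on $[\tau_{n},\tau_{n+1}]$, and the same use of uniform $N$-energy boundedness to convert the integrated bound into the pointwise-in-$\tau$ bound $CE_{3}\tau_{n+1}^{-2}$ at the node $\tau_{n+1}$. The only quibble is the final step: energy boundedness propagates bounds \emph{forward} in time, so for $\tau\in[\tau_{n},\tau_{n+1}]$ one should compare with the earlier node $\tau_{n}$ (whose energy obeys the same $CE_{3}\tau_{n}^{-2}\sim CE_{3}\tau^{-2}$ bound) rather than ``backwards'' from $\tau_{n+1}$ --- a trivially repaired slip which the paper's own write-up shares.
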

\begin{proof}
If we apply \eqref{ninte1} for the dyadic intervals $[\tau_{n},\tau_{n+1}]$ we obtain
\begin{equation*}
\begin{split}
\int_{\tau_{n}}^{\tau_{n+1}}{\left(\int_{\tilde{\Sigma}_{\tau}}{J^{N}_{\mu}(\psi)n^{\mu}_{\tilde{\Sigma}_{\tau}}}\right)d\tau}\, \leq\, \frac{CE_{3}}{\tau_{n}}.
\end{split}
\end{equation*}
In view of the boundedness of the $N$-energy  we have
\begin{equation*}
\begin{split}
(\tau_{n+1}-\tau_{n})\int_{\tilde{\Sigma}_{\tau_{n+1}}}{J^{N}_{\mu}(\psi)n^{\mu}_{\tilde{\Sigma}_{\tau_{n+1}}}}\,\leq\, \frac{CE_{3}}{\tau_{n+1}}.
\end{split}
\end{equation*}
Since there exists a uniform constant $b>0$ such that $b\tau_{n+1}\leq\tau_{n+1}-\tau_{n}$ we obtain
\begin{equation*}
\begin{split}
\int_{\tilde{\Sigma}_{\tau_{n+1}}}{J^{N}_{\mu}(\psi)n^{\mu}_{\tilde{\Sigma}_{\tau_{n+1}}}}\,\leq\, \frac{CE_{3}}{\tau_{n+1}^{2}}.
\end{split}
\end{equation*}
Now, for $\tau\geq\tau_{1}$ there exists $n\in\mathbb{N}$ such that $\tau_{n}\leq\tau\leq\tau_{n+1}$. Therefore,
\begin{equation*}
\begin{split}
\int_{\tilde{\Sigma}_{\tau}}{J^{N}_{\mu}(\psi)n^{\mu}_{\tilde{\Sigma}_{\tau}}}\,\leq\, \tilde{C}\int_{\tilde{\Sigma}_{\tau_{n+1}}}{J^{N}_{\mu}(\psi)n^{\mu}_{\tilde{\Sigma}_{\tau_{n+1}}}}\, \leq\frac{CE_{3}}{\tau_{n}^{2}}\sim\frac{CE_{3}}{\tau_{n+1}^{2}}\leq CE_{3}\frac{1}{\tau^{2}}
\end{split}
\end{equation*}
which is the required decay result for the energy. 

\end{proof}
Summarizing, we have finally proved the following theorem
\begin{theorem}
There exists a constant $C$ that depends  on the mass $M$  and $\tilde{\Sigma}_{0}$  such that:
\begin{itemize}
	\item For all solutions $\psi$ of the wave equation we have 
\begin{equation*}
\int_{\tilde{\Sigma}_{\tau}}J^{T}_{\mu}(\psi)n_{\tilde{\Sigma}_{\tau}}^{\mu}\leq CE_{1}\frac{1}{\tau^{2}},
\end{equation*}
where $E_{1}$ depends only on the initial data and is equal to the right hand side of \eqref{tinte2} (with $\tau_{1}=0$).
\end{itemize}
\begin{itemize}
	\item For all solutions $\psi$ to the wave equation which are supported on the frequencies $l\geq 1$ we have
\begin{equation*}
\begin{split}
\int_{\tilde{\Sigma}_{\tau}}{J^{N}_{\mu}(\psi)n^{\mu}_{\tilde{\Sigma}_{\tau}}}\,\leq\, CE_{2}\frac{1}{\tau},
\end{split}
\end{equation*}
where $E_{2}$ depends only on the initial data of $\psi$ and is equal to the right hand side of \eqref{ninte1} (with $\tau_{1}=0$).
\end{itemize}
\begin{itemize}
	\item  For all solutions $\psi$ to the wave equation which are supported on $l\geq 2$ we have
\begin{equation*}
\begin{split}
\int_{\tilde{\Sigma}_{\tau}}{J^{N}_{\mu}(\psi)n^{\mu}_{\tilde{\Sigma}_{\tau}}}\,\leq\, CE_{3}\frac{1}{\tau^{2}},
\end{split}
\end{equation*}
where $E_{3}$ depends only on the initial data of $\psi$ and is equal to the right hand side of \eqref{ninte2} (with $\tau_{1}=0$).
\end{itemize}
\label{decayofenergy}
\end{theorem}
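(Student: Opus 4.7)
The plan is to prove each of the three bullets by combining the preceding integrated local energy decay estimates (Propositions \ref{nondegx}, \ref{inte1}, \ref{inte2}, \ref{propp1}) with the $r$-weighted estimates of Section \ref{sec:RWeightedEnergyEstimates} (Propositions \ref{rweigestprop} and \ref{rwe1}), and then running the dyadic-sequence argument of \cite{new}. Throughout, I shall split the slice $\tilde\Sigma_\tau$ into the near-horizon piece in $\mathcal{A}$, an intermediate spatially-compact piece, and the far-away piece $\tilde N_\tau$; on the last piece the $r$-weights play the role that the redshift plays near $\mathcal{H}^+$, while in $\mathcal{A}$ the vector field $P$ of Proposition \ref{rstarHenergy} (for the first bullet) or the redshift commutations of Sections \ref{sec:CommutingWithAVectorFieldTransversalToMathcalH}--\ref{sec:HigherOrderEstimates} (for the second and third bullets) supply the needed spacetime control.

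For the first bullet, I would first put together the non-degenerate $X$-estimate in the far region (Proposition \ref{nondegx}), the local $T$-energy integrated decay near $\mathcal H^+$ (Proposition \ref{propp1}, giving $\int_{\tau_1}^{\tau_2}\!\int_{\mathcal{A}\cap\tilde\Sigma_\tau}J^T_\mu(\psi)n^\mu d\tau \lesssim \int_{\tilde\Sigma_{\tau_1}} J^P_\mu(\psi)n^\mu$), and the non-degenerate $X$-estimate with commutation by $T$ away from the horizon (Theorem \ref{xtheo} combined with the coarea formula) to get the first hierarchy bound $\int_{\tau_1}^{\tau_2}\!\int_{\tilde\Sigma_\tau}J^T_\mu(\psi)n^\mu d\tau \leq C\,I^T_{\tilde\Sigma_{\tau_1}}(\psi)$ (inequality \eqref{tinte1}). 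Then combine Propositions \ref{propp1} and \ref{rwe1}, commuted once with $T$ where needed, to close the hierarchy $\int_{\tau_1}^{\tau_2} I^T_{\tilde\Sigma_\tau}(\psi)\,d\tau \leq C\,I^T_{\tilde\Sigma_{\tau_1}}(T\psi)+\cdots = E_1$ (inequality \eqref{tinte2}), finiteness of the right-hand side being guaranteed by the definition of $E_1$. By a mean-value/pigeonhole argument this yields a dyadic sequence $\rho_n\sim 2^n$ with $I^T_{\tilde\Sigma_{\rho_n}}(\psi)\leq E_1/\rho_n$. Feeding this into \eqref{tinte1} on $[\rho_n,\rho_{n+1}]$ and using the (non-strict) monotonicity of the $T$-flux (Proposition \ref{ubdenergy} on $\tilde\Sigma_\tau$, which holds in view of the sign of the flux through $\mathcal I^+$) gives $(\rho_{n+1}-\rho_n)\int_{\tilde\Sigma_{\rho_{n+1}}}J^T_\mu(\psi)n^\mu \leq C E_1/\rho_n$, hence the $\tau^{-2}$ rate at dyadic times; monotonicity again propagates this to all $\tau$.

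For the second bullet, with $\psi$ supported on $l\geq1$, use Proposition \ref{inte1} together with Proposition \ref{nondegx} and the coarea formula to obtain \eqref{ninte1}: $\int_{\tau_1}^{\tau_2}\!\int_{\tilde\Sigma_\tau}J^N_\mu(\psi)n^\mu d\tau \leq C I^N_{\tilde\Sigma_{\tau_1}}(\psi)$. Then, thanks to the uniform boundedness of the non-degenerate energy (Theorem \ref{nenergy}) one immediately concludes a $\tau^{-1}$ rate by the usual pigeonhole: find some $\tau^\ast\in[\tau/2,\tau]$ with $\int_{\tilde\Sigma_{\tau^\ast}}J^N_\mu(\psi)n^\mu\leq 2CE_2/\tau$, and propagate forward by monotonicity. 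No dyadic cascade is needed here because the hierarchy only has one step.

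For the third bullet, with $\psi$ supported on $l\geq 2$, Proposition \ref{inte2} supplies integrated decay of the top-order redshift commuted quantity $J^N_\mu(\partial_r\psi)$, which is precisely the new term appearing in $I^N_{\tilde\Sigma_\tau}(\psi)$. Combining Propositions \ref{inte1}, \ref{inte2}, \ref{rwe1} and \eqref{ninte1} commuted with $T$ closes the second-order hierarchy \eqref{ninte2}: $\int_{\tau_1}^{\tau_2} I^N_{\tilde\Sigma_\tau}(\psi)d\tau \leq C I^N_{\tilde\Sigma_{\tau_1}}(\psi) + C I^N_{\tilde\Sigma_{\tau_1}}(T\psi) + C\int_{\mathcal{A}\cap\tilde\Sigma_{\tau_1}}J^N_\mu(\partial_r^2\psi)n^\mu + C\int_{\tilde N_{\tau_1}}(\partial_v\phi)^2 = E_3$. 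A dyadic sequence $\tau_n$ with $I^N_{\tilde\Sigma_{\tau_n}}(\psi)\leq E_3/\tau_n$ exists by mean value, and then re-applying \eqref{ninte1} on each dyadic interval together with boundedness of the $N$-energy gives $\int_{\tilde\Sigma_{\tau_{n+1}}}J^N_\mu(\psi)n^\mu \leq C E_3/\tau_{n+1}^2$, which extends to all $\tau$ by monotonicity.

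The delicate point, and what I expect to be the main obstacle, is closing the hierarchies in the presence of the degenerate horizon: for bullet~2 the commutation $[\Box_g,\partial_r]$ forces the condition $l\geq 1$ and for bullet~3 one additionally needs the second-order estimate of Theorem \ref{kder} with $l\geq 2$, because the Poincaré inequality on $\mathcal H^+$ exactly compensates the bad integral $\int_{\mathcal H^+}H_{10}(\partial_r\psi)^2$ only in that regime. Verifying that the $r$-weighted estimate of Proposition \ref{rweigestprop} plays nicely with commutation by $T$ (so that the right-hand sides $E_1,E_2,E_3$ are genuinely controlled by the stated initial data norms) and that the error terms from the cutoff $\zeta$ in the spatially compact region are absorbed by the previously established $X$ and Morawetz estimates is the technical work that underlies every step.
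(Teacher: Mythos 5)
Your proposal is correct and follows essentially the same route as the paper: the hierarchies \eqref{tinte1}--\eqref{tinte2} and \eqref{ninte1}--\eqref{ninte2} assembled from Propositions \ref{nondegx}, \ref{propp1}, \ref{rwe1}, \ref{inte1}, \ref{inte2}, followed by the dyadic/pigeonhole extraction and forward propagation via the uniform boundedness of the $T$- and $N$-energies. The only cosmetic difference is that for the second bullet the paper integrates \eqref{ninte1} directly over $[0,\tau]$ rather than selecting a good slice $\tau^{\ast}\in[\tau/2,\tau]$, but the two arguments are interchangeable.
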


\section{Pointwise Estimates}
\label{sec:PointwiseEstimates}

\subsection{Uniform Pointwise Boundedness}
\label{sec:UniformPointwiseBoundedness}

We first prove that all the waves $\psi$ remain uniformly bounded in $\mathcal{M}$. We work with the foliation $\Sigma_{\tau}$ (or $\tilde{\Sigma}_{\tau}$) and  the induced coordinate system $(\rho,\omega)$. For $r_{0}\geq M$   we have 
\begin{equation*}
\begin{split}
\psi^{2}\left(r_{0},\omega\right)&=\left(\int_{r_{0}}^{+\infty}{\left(\partial_{\rho}\psi\right)d\rho}\right)^{2} \\
&\leq\left(\int_{r_{0}}^{+\infty}{\left(\partial_{\rho}\psi\right)^{2}\rho^{2}d\rho}\right)\left(\int_{r_{0}}^{+\infty}{\frac{1}{\rho^{2}}d\rho}\right)
=\frac{1}{r_{0}}\left(\int_{r_{0}}^{+\infty}{\left(\partial_{\rho}\psi\right)^{2}\rho^{2}d\rho}\right).
\end{split}
\end{equation*}
Therefore,
\begin{equation}
\begin{split}
\int_{\mathbb{S}^{2}}{\psi^{2}(r_{0},\omega)d\omega}&\leq\frac{1}{r_{0}}\int_{\mathbb{S}^{2}}{\int_{r_{0}}^{+\infty}{\left(\partial_{\rho}\psi\right)^{2}\rho^{2}d\rho d\omega}}\\
&\leq \frac{C}{r_{0}}\int_{\Sigma_{\tau}\cap\left\{r\geq r_{0}\right\}}{J_{\mu}^{N}(\psi)n^{\mu}_{\Sigma_{\tau}}},
\end{split}
\label{1pointwise}
\end{equation}
where $C$ is a constant that depends only on $M$ and $\Sigma_{0}$. 
\begin{theorem}
There exists a constant $C$ which depends on $M$ and $\Sigma_{0}$ such that for all solutions $\psi$ of the wave equation we have
\begin{equation}
\begin{split}
\left|\psi\right|^{2}\leq C\cdot E_{4}\frac{1}{r},
\end{split}
\label{2pointwise}
\end{equation}
where
\begin{equation*}
E_{4}=\sum_{\left|k\right|\leq 2}{\int_{\Sigma_{0}}{J_{\mu}^{N}(\Omega^{k}\psi)n^{\mu}_{\Sigma_{0}}}}.
\end{equation*}
\label{unponpsi}
\end{theorem}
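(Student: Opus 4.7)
The starting point is the one-dimensional estimate \eqref{1pointwise} just derived, which gives
\begin{equation*}
\int_{\mathbb{S}^{2}}{\psi^{2}(r_{0},\omega)\,d\omega}\leq \frac{C}{r_{0}}\int_{\Sigma_{\tau}\cap\{r\geq r_{0}\}}{J^{N}_{\mu}(\psi)n^{\mu}_{\Sigma_{\tau}}}.
\end{equation*}
This is an $L^{2}(\mathbb{S}^{2})$ bound on each coordinate sphere. To upgrade to an $L^{\infty}$ bound, I will commute with the generators $\Omega_{i}$, $i=1,2,3$, of the Lie algebra $\mathfrak{so}(3)$ and apply a Sobolev embedding on the unit sphere.

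The key observation is that the $\Omega_{i}$ are Killing vector fields on $(\mathcal{M},g)$ and in particular commute with $\Box_{g}$, so $\Omega^{k}\psi$ is again a solution of the wave equation for every multi-index $k$. Moreover, $[\Omega_{i},T]=0$ and $[\Omega_{i},\partial_{r}]=0$ in the $(v,r,\theta,\phi)$ coordinates, so all the estimates we have derived, and in particular the uniform boundedness of the non-degenerate $N$-energy from Theorem \ref{nenergy}, can be applied to $\Omega^{k}\psi$. This yields
\begin{equation*}
\int_{\Sigma_{\tau}}{J^{N}_{\mu}(\Omega^{k}\psi)n^{\mu}_{\Sigma_{\tau}}}\leq C\int_{\Sigma_{0}}{J^{N}_{\mu}(\Omega^{k}\psi)n^{\mu}_{\Sigma_{0}}}
\end{equation*}
for each $|k|\leq 2$, with $C$ depending only on $M$ and $\Sigma_{0}$.

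The next step is to apply the Sobolev embedding $H^{2}(\mathbb{S}^{2})\hookrightarrow L^{\infty}(\mathbb{S}^{2})$ on the unit round sphere, which gives
\begin{equation*}
\sup_{\omega\in\mathbb{S}^{2}}\psi^{2}(r_{0},\omega)\leq C\sum_{|k|\leq 2}\int_{\mathbb{S}^{2}}{(\Omega^{k}\psi)^{2}(r_{0},\omega)\,d\omega},
\end{equation*}
with a universal constant $C$. (Since $\Omega_{i}$ are the rotation vector fields on $\mathbb{S}^{2}$ independent of $r$, no $r$-weights enter here.) Combining this with \eqref{1pointwise} applied to each $\Omega^{k}\psi$ and with the energy boundedness above yields
\begin{equation*}
\psi^{2}(r_{0},\omega_{0})\leq \frac{C}{r_{0}}\sum_{|k|\leq 2}\int_{\Sigma_{\tau}\cap\{r\geq r_{0}\}}{J^{N}_{\mu}(\Omega^{k}\psi)n^{\mu}_{\Sigma_{\tau}}}\leq \frac{C}{r_{0}}\sum_{|k|\leq 2}\int_{\Sigma_{0}}{J^{N}_{\mu}(\Omega^{k}\psi)n^{\mu}_{\Sigma_{0}}}=\frac{C\cdot E_{4}}{r_{0}},
\end{equation*}
which is the claimed estimate on any point $(r_{0},\omega_{0})\in\Sigma_{\tau}$. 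Since the foliation $\Sigma_{\tau}$ covers $\mathcal{R}$, we conclude.

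There is no real obstacle here: the argument is a standard Klainerman-type commutation with angular Killing fields combined with the already established non-degenerate energy boundedness (Theorem~\ref{nenergy}); the only mild point to watch is that the $L^{2}$--$L^{\infty}$ Sobolev step on $\mathbb{S}^{2}$ requires two commutations, which forces the appearance of $|k|\leq 2$ in $E_{4}$. The alternative formulation with $\tilde{E}_{4}$ follows by replacing the $\Omega^{k}$ commutations by elliptic estimates on $\Sigma_{\tau}$: one bounds the $H^{2}(\mathbb{S}^{2})$ norm of $\psi$ on a sphere by the $N$-flux of $\psi$ and of $n_{\Sigma_{0}}\psi$ using the elliptic estimates of Appendix~\ref{sec:EllipticEstimates} (this essentially trades angular commutations for a single commutation with the normal direction).
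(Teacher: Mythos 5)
Your proof is correct and follows essentially the same route as the paper: the estimate \eqref{1pointwise} on each sphere, commutation with the angular Killing fields $\Omega^{k}$ for $|k|\leq 2$, the Sobolev embedding on $\mathbb{S}^{2}$, and the uniform boundedness of the non-degenerate $N$-energy \eqref{nener}. Your closing remark on the alternative bound $\tilde{E}_{4}$ via elliptic estimates also matches the paper's subsequent discussion.
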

\begin{proof}
From the Sobolev inequality on $\mathbb{S}^{2}$ we have
 \begin{equation*}
\left|\psi\right|^{2} \leq C\sum_{\left|k\right|\leq 2}{\int_{\mathbb{S}^{2}}{\left(\Omega^{k}\psi\right)^{2}}}
\end{equation*}
and the theorem follows from \eqref{1pointwise} and \eqref{nener}.
\end{proof}
Note that the above also gives us decay in $r$. Observe also that for the boundedness result, one could avoid commuting with angular derivatives. Indeed, we can decompose
\begin{equation*}
\psi=\psi_{0}+\psi_{\geq 1}
\end{equation*}
and  prove that each projection is uniformly bounded by a norm that depends only on initial data. Indeed, by applying the following Sobolev inequality on the hypersurfaces $\Sigma_{\tau}$ we have
\begin{equation}
\begin{split}
\left\|\psi_{\geq 1}\right\|_{L^{\infty}\left(\Sigma_{\tau}\right)}&\leq C\left(\left\|\left|\nabla\psi_{\geq 1}\right|\right\|_{L^{2}\left(\Sigma_{\tau}\right)}+\left\|\left|\nabla\nabla\psi_{\geq 1}\right|\right\|_{L^{2}\left(\Sigma_{\tau}\right)}+\lim_{x\rightarrow i^{0}}{\left|\psi_{\geq 1}\right|}\right)\\
&=
C\left(\left\|\psi_{\geq 1}\right\|_{\overset{.}{H}^{1}\left(\Sigma_{\tau}\right)}+\left\|\psi_{\geq 1}\right\|_{\overset{.}{H}^{2}\left(\Sigma_{\tau}\right)}+\lim_{x\rightarrow i^{0}}{\left|\psi_{\geq 1}\right|}\right)
\label{Sobolev}
\end{split}
\end{equation}
where again $\nabla\nabla$ is a 2-tensor on the Riemannian manifold $\Sigma_{\tau}$ and $\left|-\right|$ is the induced norm.  Clearly, $C$ depends only on $\Sigma_{0}$. Note also that we use the Sobolev inequality that does not involve the $L^{2}$-norms of zeroth order terms. We observe that the vector field $\partial_{v}-\partial_{r}$ is timelike since $g\left(\partial_{v}-\partial_{r},\partial_{v}-\partial_{r}\right)=-D-2$ and, therefore, by an elliptic estimate (see Appendix \ref{sec:EllipticEstimates}) there exists a uniform positive constant $C$ which depends on $M$ and $\Sigma_{0}$ such that 
\begin{equation*} 
\begin{split}
\left\|\psi_{\geq 1}\right\|^{2}_{\overset{.}{H}^{1}\left(\Sigma_{\tau}\right)}+\left\|\psi_{\geq 1}\right\|^{2}_{\overset{.}{H}^{2}\left(\Sigma_{\tau}\right)}\leq & C\int_{\Sigma_{\tau}}{J_{\mu}^{N}(\psi_{\geq 1})n^{\mu}_{\Sigma_{\tau}}}+C\int_{\Sigma_{\tau}}{J_{\mu}^{N}(T\psi_{\geq 1})n^{\mu}_{\Sigma_{\tau}}}\\ &+C\int_{\Sigma_{\tau}}{J_{\mu}^{N}(\partial_{r}\psi_{\geq 1})n^{\mu}_{\Sigma_{\tau}}}.
\end{split}
\end{equation*}
Pointwise bound for $\psi_{0}$ is immidiately derived by \eqref{1pointwise}.
Therefore, in view of \eqref{nener} and \eqref{uniformbounden2ndorder}  we obtain the following:

There exists a constant $C$ which depends on $M$ and $\Sigma_{0}$ such that for all solutions $\psi$ of the wave equation we have
\begin{equation}
\left|\psi\right|^{2}\leq C\!\!\int_{\Sigma_{0}}{J_{\mu}^{N}(\psi)n^{\mu}_{\Sigma_{0}}}+C\!\!\int_{\Sigma_{0}}{J_{\mu}^{N}(T\psi)n^{\mu}_{\Sigma_{0}}}+C\!\!\int_{\Sigma_{0}}{J_{\mu}^{N}(\partial_{r}\psi)n^{\mu}_{\Sigma_{0}}}.
\label{3pointwise}
\end{equation}
\label{rempoint}

\subsection{Pointwise Decay}
\label{sec:PointwiseDecay}

\subsubsection{Decay away $\mathcal{H}^{+}$}
\label{sec:DecayAwayMathcalH}

We consider the region $\left\{r\geq R_{0}\right\}$, where $R_{0}>M$. From now on, $C$ will be a constant depending only on $M$, $R_{0}$ and $\tilde{\Sigma}_{0}$.
\begin{figure}[H]
	\centering
		\includegraphics[scale=0.15]{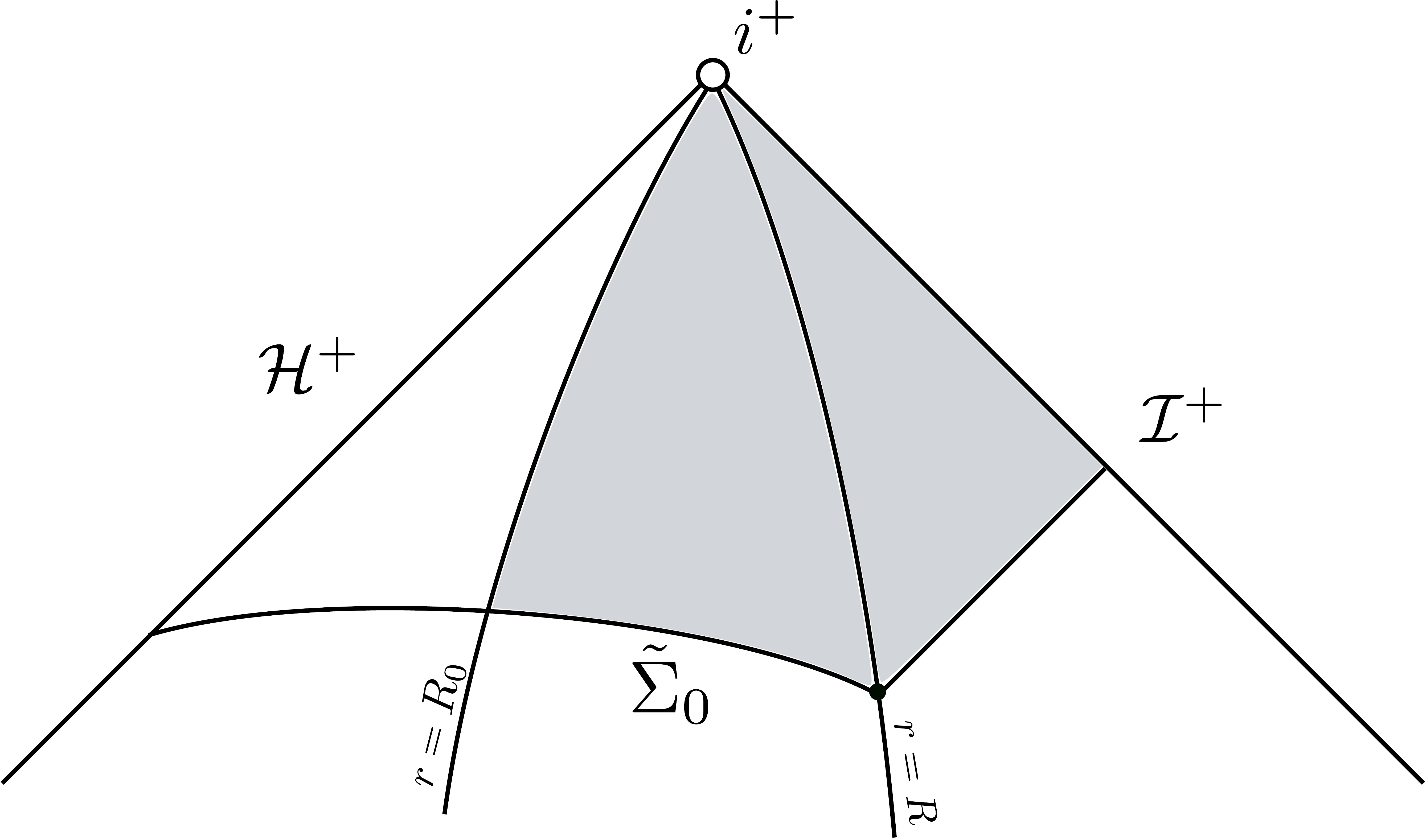}
	\label{fig:1pd}
\end{figure}
Clearly, in this region we have $J_{\mu}^{N}n^{\mu}_{\Sigma}\sim  J_{\mu}^{T}n_{\Sigma}^{\mu}$ and $\sim$ depends on $R_{0}$. Therefore, from \eqref{1pointwise} we have that for any $r\geq R_{0}$  
\begin{equation*}
\int_{\mathbb{S}^{2}}{\psi^{2}(r,\omega)d\omega}\leq \frac{C}{r}\int_{\tilde{\Sigma}_{\tau}}{J_{\mu}^{T}(\psi)n^{\mu}_{\tilde{\Sigma}_{\tau}}}\leq C\cdot E_{1}(\psi)\frac{1}{r\cdot \tau^{2}}.
\end{equation*}
Applying the above for $\psi$, $\Omega_{i}\psi$, $\Omega_{ij}\psi$ and using the  Sobolev inequality for $\mathbb{S}^{2}$ yields
\begin{equation*}
\psi^{2}\leq C E_{5}\frac{1}{r\cdot \tau^{2}},
\end{equation*}
where $C$ depends on $M$, $R_{0}$ and $\tilde{\Sigma}_{0}$ and
\begin{equation*}
E_{5}=\sum_{\left|k\right|\leq 2}{E_{1}\left(\Omega^{k}\psi\right)}.
\end{equation*}
Next we improve the decay with respect to $r$. Observe that for all $r\geq R_{0}$ we have
\begin{equation*}
\begin{split}
\int_{\mathbb{S}^{2}}{(r\psi)^{2}(r,\omega)d\omega}&=\int_{\mathbb{S}^{2}}{(R_{0}\psi)^{2}(R_{0},\omega)d\omega}+2\int_{\mathbb{S}^{2}}\int_{R_{0}}^{r}{\frac{\psi}{\rho}\partial_{\rho}(\rho\psi)\rho^{2}d\rho d\omega}\\
&\leq CE_{1}(\psi)\frac{1}{\tau^{2}}+C\sqrt{\int_{\tilde{\Sigma}_{\tau}\cap\left\{r\geq R_{0}\right\}}\frac{1}{\rho^{2}}\psi^{2}\int_{\tilde{\Sigma}_{\tau}\cap\left\{r\geq R_{0}\right\}}{\left(\partial_{\rho}(\rho \psi)\right)^{2}}}.
\end{split}
\end{equation*}
However, from  the first Hardy inequality (see Section \ref{sec:HardyInequalities}) we have
\begin{equation*}
\int_{\tilde{\Sigma}_{\tau}}{\frac{1}{\rho^{2}}\psi^{2}}\leq C\int_{\tilde{\Sigma}_{\tau}}{J_{\mu}^{T}(\psi)n^{\mu}_{\Sigma_{\tau}}}\leq CE_{1}\frac{1}{\tau^{2}}.
\end{equation*}
Moreover, if $R$ is the constant defined in Section \ref{sec:EnergyDecay} and  recalling that $\rho\psi=\phi$ we have
\begin{equation*}
\begin{split}
\int_{\tilde{\Sigma}_{\tau}\cap\left\{r\geq R_{0}\right\}}{\left(\partial_{\rho}(\rho \psi)\right)^{2}}&=\int_{\tilde{\Sigma}_{\tau}\cap\left\{R\geq r\geq R_{0}\right\}}{\left(\partial_{\rho}(\rho \psi)\right)^{2}}+\int_{\tilde{N}_{\tau}}{(\partial_{v}\phi)^{2}}\\
&\leq C\int_{\tilde{\Sigma}_{0}}{J_{\mu}^{T}(\psi)n_{\tilde{\Sigma}_{0}}^{\mu}}+\int_{\tilde{N}_{\tau}}{(\partial_{v}\phi)^{2}}\\
&\leq  C\int_{\tilde{\Sigma}_{0}}{J_{\mu}^{T}(\psi)n_{\tilde{\Sigma}_{0}}^{\mu}}+\int_{\tilde{N}_{0}}{(\partial_{v}\phi)^{2}},
\end{split}
\end{equation*}
where for the second inequality we used Propositions \ref{rweigestprop} and \ref{nondegx}. Hence for $\tau\geq 1$ we have
\begin{equation*}
\begin{split}
r^{2}\int_{\mathbb{S}^{2}}{\psi^{2}(r,\omega)d\omega}&\leq CE_{1}\frac{1}{\tau^{2}}+C\sqrt{E_{1}}\sqrt{C\int_{\tilde{\Sigma}_{0}}{J_{\mu}^{T}(\psi)n_{\tilde{\Sigma}_{0}}^{\mu}}+\int_{\tilde{N}_{0}}{(\partial_{v}\phi)^{2}}}\frac{1}{\tau}\\
&\leq C E_{1}\frac{1}{\tau},
\end{split}
\end{equation*}
since the quantitiy in the square root is dominated by $E_{1}$. Therefore, by the Sobolev inequality on $\mathbb{S}^{2}$ we obtain
\begin{equation*}
\begin{split}
\psi^{2}\leq CE_{5}\frac{1}{r^{2}\cdot\tau}.
\end{split}
\end{equation*}

\subsubsection{Decay near $\mathcal{H}^{+}$}
\label{sec:DecayNearMathcalH}

We are now investigating the behaviour of $\psi$ in the region $\left\{M\leq r\leq R_{0}\right\}$. We first prove the following
\begin{lemma}
There exists a constant $C$ which depends only on $M, R_{0}$ such that for all $r_{1}$ with $M<r_{1}\leq R_{0}$ and all solutions $\psi$ of the wave equation we have
\begin{equation*}
\begin{split}
\int_{\mathbb{S}^{2}}{\psi^{2}(r_{1},\omega)d\omega}\leq\frac{C}{(r_{1}-M)^{2}}\frac{E_{1}}{\tau^{2}}.
\end{split}
\end{equation*}
\label{1lemmadecay}
\end{lemma}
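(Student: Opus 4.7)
The plan is to reduce the bound at radius $r_1$ to the bound at radius $R_0$ (already controlled by the previous subsection) by integrating a radial derivative, and to absorb the degeneracy of the $T$-energy at $\mathcal{H}^{+}$ into a weighted Cauchy--Schwarz estimate. Concretely, on $\tilde{\Sigma}_\tau$ with induced coordinate $\rho \in [M,+\infty)$, I would start from
\begin{equation*}
\psi(r_1,\omega)=\psi(R_0,\omega)-\int_{r_1}^{R_0}\partial_\rho\psi(\rho,\omega)\,d\rho,
\end{equation*}
so that $\psi^2(r_1,\omega)\le 2\psi^2(R_0,\omega)+2\bigl|\int_{r_1}^{R_0}\partial_\rho\psi\,d\rho\bigr|^2$.

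The first term is already handled: by the arguments of Section \ref{sec:DecayAwayMathcalH} (which give $\int_{\mathbb{S}^2}\psi^2(R_0,\omega)\,d\omega \le CE_1/\tau^2$ via the decay of the $T$-flux through $\tilde{\Sigma}_\tau$), it contributes at most $CE_1/\tau^2$. For the radial integral, I would insert the weight $(\rho-M)^{-1}\cdot(\rho-M)$ and apply Cauchy--Schwarz:
\begin{equation*}
\Bigl|\int_{r_1}^{R_0}\partial_\rho\psi\,d\rho\Bigr|^2 \le \Bigl(\int_{r_1}^{R_0}\frac{d\rho}{(\rho-M)^2}\Bigr)\Bigl(\int_{r_1}^{R_0}(\rho-M)^2(\partial_\rho\psi)^2\,d\rho\Bigr).
\end{equation*}
The first factor equals $\frac{1}{r_1-M}-\frac{1}{R_0-M}\le \frac{1}{r_1-M}\le \frac{C(R_0)}{(r_1-M)^2}$ (using $r_1\le R_0$), which is where the stated blowup at the horizon enters.

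For the second factor, I would integrate over $\mathbb{S}^2$ and observe that, since $D(\rho)=(1-M/\rho)^2\sim (\rho-M)^2/M^2$ on $[M,R_0]$, the weight $(\rho-M)^2$ is comparable to $D\cdot \rho^2$, so
\begin{equation*}
\int_{\mathbb{S}^2}\!\!\int_{r_1}^{R_0}(\rho-M)^2(\partial_\rho\psi)^2\,d\rho\,d\omega \;\le\; C\!\int_{\tilde{\Sigma}_\tau}\! D(\partial_\rho\psi)^2\rho^2 d\rho d\omega \;\le\; C\!\int_{\tilde{\Sigma}_\tau} J^T_\mu(\psi)n^\mu_{\tilde{\Sigma}_\tau},
\end{equation*}
where the last step uses the fact (recorded in Section \ref{sec:TheMainTheorems}) that $J^T_\mu n^\mu_\Sigma$ controls $D(\partial_r\psi)^2$ together with the remaining squared derivatives. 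Applying the degenerate energy decay from Proposition \ref{tdecay}, this is bounded by $CE_1/\tau^2$.

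Combining the two contributions and taking the supremum over $r_1\le R_0$ in the Cauchy--Schwarz weight yields the claimed bound. The only delicate point, which I view as the ``main obstacle,'' is the degeneracy of the $T$-flux at $\mathcal{H}^{+}$: one cannot integrate $(\partial_\rho\psi)^2$ down to $r=M$ using $T$-energy alone. The trick is to accept a loss of $(r_1-M)^{-1}$ (hence $(r_1-M)^{-2}$ in the statement) by pairing the degeneracy weight $(r-M)^2$ of $D$ with its reciprocal in Cauchy--Schwarz; this is precisely the reason the estimate blows up as $r_1\to M$, consistent with the conservation-law obstructions of Section \ref{sec:ConservationLawsOnDegenerateEventHorizons} which forbid pointwise decay on $\mathcal{H}^{+}$ from the $T$-flux alone.
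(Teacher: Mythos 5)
Your proof is correct, and it takes a mildly but genuinely different route from the paper's. The paper does not split at $R_{0}$ at all: it starts from the global estimate \eqref{1pointwise}, $\int_{\mathbb{S}^{2}}\psi^{2}(r_{1},\omega)\,d\omega\leq \frac{C}{r_{1}}\int_{\tilde{\Sigma}_{\tau}\cap\{r\geq r_{1}\}}J^{N}_{\mu}n^{\mu}$ (Cauchy--Schwarz from $r_{1}$ to $+\infty$ with the weights $\rho^{\pm 2}$), then inserts $D(\rho)/D(\rho)$ and uses the monotonicity $D(\rho)\geq D(r_{1})$ for $\rho\geq r_{1}$ to pull out $1/D(r_{1})\sim (r_{1}-M)^{-2}$ and convert the non-degenerate $N$-flux into the degenerate $T$-flux, which decays by Proposition \ref{tdecay}. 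You instead localize to $[r_{1},R_{0}]$, control the boundary value at $R_{0}$ by the already-established decay away from $\mathcal{H}^{+}$, and build the degeneracy directly into the Cauchy--Schwarz weight via $(\rho-M)^{2}=D(\rho)\rho^{2}$. Both arguments rest on exactly the same input ($\int_{\tilde{\Sigma}_{\tau}}J^{T}_{\mu}n^{\mu}\lesssim E_{1}\tau^{-2}$) and the same philosophy of trading the horizon degeneracy of $D$ for a negative power of $r_{1}-M$. A small bonus of your weight choice: since $\int_{r_{1}}^{R_{0}}(\rho-M)^{-2}d\rho\leq (r_{1}-M)^{-1}$, your main term actually comes out as $\frac{C}{r_{1}-M}\frac{E_{1}}{\tau^{2}}$, one power of $(r_{1}-M)$ better than the stated bound (which you then weaken using $r_{1}\leq R_{0}$); the paper's monotonicity step inherently costs the full $1/D(r_{1})\sim(r_{1}-M)^{-2}$. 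The paper's version is more economical in that it reuses the displayed estimate \eqref{1pointwise} verbatim with no splitting, but your sharper exponent would, if propagated through Lemma \ref{2lemmadecay}, permit a better choice of $\a$ there.
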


\begin{proof}
Using \eqref{1pointwise} we obtain 
\begin{equation*}
\begin{split}
\int_{\mathbb{S}^{2}}{\psi^{2}(r_{1},\omega)d\omega}&\leq \frac{C}{r_{1}}\int_{\tilde{\Sigma}_{\tau}\cap\left\{r\geq r_{1}\right\}}{J_{\mu}^{N}(\psi)n^{\mu}_{\tilde{\Sigma}_{\tau}}}=\frac{C}{r_{1}}\int_{\tilde{\Sigma}_{\tau}\cap\left\{r\geq r_{1}\right\}}{
\frac{D(\rho)}{D(\rho)}J_{\mu}^{N}(\psi)n^{\mu}_{\tilde{\Sigma}_{\tau}}}\\ 
&\leq \frac{C}{r_{1}D(r_{1})}\int_{\tilde{\Sigma}_{\tau}\cap\left\{r\geq r_{1}\right\}}{D(\rho)J_{\mu}^{N}(\psi)n^{\mu}_{\tilde{\Sigma}_{\tau}}}\\ &\leq
\frac{C}{(r_{1}-M)^{2}}\int_{\tilde{\Sigma}_{\tau}}{J_{\mu}^{T}(\psi)n^{\mu}_{\tilde{\Sigma}_{\tau}}}\leq\frac{C}{(r_{1}-M)^{2}}\frac{E_{1}}{\tau^{2}}.
\end{split}
\end{equation*}
\end{proof}

\begin{lemma}
There exists a constant $C$ which depends only on $M, R_{0}$ such that for all $r_{0}\in[M, R_{0}]$, $\a >0$ and  solutions $\psi$ of the wave equation, we have
\begin{equation*}
\begin{split}
\int_{\mathbb{S}^{2}}{\psi^{2}(r_{0},\omega)d\omega}&\leq CE_{1}\frac{1}{\tau^{2-2\a}}+C\sqrt{E_{1}}\frac{1}{\tau}\sqrt{\int_{\tilde{\Sigma}_{\tau}\cap\left\{r_{0}\leq r\leq r_{0}+\tau^{-\a}\right\}}{\!\!(\partial_{\rho}\psi)^{2}}}.
\end{split}
\end{equation*}
\label{2lemmadecay}
\end{lemma}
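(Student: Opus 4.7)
\medskip
\noindent\textbf{Proof plan.} The plan is to reduce the estimate at $r=r_0$ to an estimate at the shifted radius $r_1:=r_0+\tau^{-\alpha}$, where Lemma \ref{1lemmadecay} becomes effective, plus a remainder controlled by a first--order term on the slab $\{r_0\le r\le r_1\}$. The starting point is the one--dimensional identity
\begin{equation*}
\psi^{2}(r_0,\omega)=\psi^{2}(r_1,\omega)-2\int_{r_0}^{r_1}\psi\,\partial_{\rho}\psi\,d\rho,
\end{equation*}
which, after integration over $\mathbb{S}^{2}$ and the Cauchy--Schwarz inequality, gives
\begin{equation*}
\int_{\mathbb{S}^{2}}\psi^{2}(r_0,\omega)\,d\omega\le\int_{\mathbb{S}^{2}}\psi^{2}(r_1,\omega)\,d\omega\;+\;2\sqrt{\int_{\mathbb{S}^{2}}\!\int_{r_0}^{r_1}\psi^{2}\,d\rho\,d\omega}\cdot\sqrt{\int_{\mathbb{S}^{2}}\!\int_{r_0}^{r_1}(\partial_{\rho}\psi)^{2}\,d\rho\,d\omega}.
\end{equation*}

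\medskip
\noindent The first term is then handled by Lemma \ref{1lemmadecay} applied at $r_1$. Since $r_0\ge M$, one has $r_1-M\ge\tau^{-\alpha}$, hence $(r_1-M)^{-2}\le\tau^{2\alpha}$ and therefore
\begin{equation*}
\int_{\mathbb{S}^{2}}\psi^{2}(r_1,\omega)\,d\omega\le \frac{C}{(r_1-M)^{2}}\,\frac{E_{1}}{\tau^{2}}\le C\,E_{1}\,\frac{1}{\tau^{2-2\alpha}},
\end{equation*}
which is precisely the first contribution on the right--hand side.

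\medskip
\noindent For the bulk factor $\int\!\int\psi^{2}\,d\rho\,d\omega$, the key observation is that the first Hardy inequality (Proposition \ref{firsthardy}) together with the $T$--energy decay of Theorem \ref{decayofenergy} (Proposition \ref{tdecay}) yields
\begin{equation*}
\int_{\tilde{\Sigma}_{\tau}}\frac{\psi^{2}}{r^{2}}\le C\!\int_{\tilde{\Sigma}_{\tau}}\!J^{T}_{\mu}(\psi)n^{\mu}_{\tilde{\Sigma}_{\tau}}\le C\,E_{1}\,\frac{1}{\tau^{2}},
\end{equation*}
and since the volume form of $\tilde{\Sigma}_{\tau}$ is $V\rho^{2}\,d\rho\,d\omega$ with $V$ bounded above and below by positive constants and since $\rho\ge M>0$ on the region under consideration, the measure $d\rho\,d\omega$ is comparable (up to the factor $1/M^{2}$) to $\rho^{-2}\,dg_{\tilde{\Sigma}_{\tau}}$. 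Hence
\begin{equation*}
\int_{\mathbb{S}^{2}}\!\int_{r_0}^{r_1}\psi^{2}\,d\rho\,d\omega\le C\!\int_{\tilde{\Sigma}_{\tau}}\frac{\psi^{2}}{r^{2}}\le C\,E_{1}\,\frac{1}{\tau^{2}}.
\end{equation*}
The analogous comparison for the derivative term replaces $d\rho\,d\omega$ by the volume form on the slab $\{r_0\le r\le r_0+\tau^{-\alpha}\}$, giving the required expression.

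\medskip
\noindent Putting these three ingredients together produces exactly the estimate stated in the lemma. There is no real obstacle here; the only delicate point is choosing $r_1=r_0+\tau^{-\alpha}$ so that the degeneration $(r_1-M)^{-2}$ in Lemma \ref{1lemmadecay} is exactly balanced against the factor $\tau^{-2}$ coming from $T$--energy decay, and correctly accounting for the fact that all spatial integrals are taken with respect to the volume form of $\tilde{\Sigma}_{\tau}$ (which, crucially, is uniformly comparable to $d\rho\,d\omega$ away from the origin).
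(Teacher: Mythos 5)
Your proposal is correct and follows essentially the same route as the paper: the radial fundamental-theorem-of-calculus identity up to the shifted radius $r_{1}=r_{0}+\tau^{-\alpha}$, Lemma \ref{1lemmadecay} applied at $r_{1}$ (exploiting $r_{1}-M\geq\tau^{-\alpha}$), and Cauchy--Schwarz combined with the first Hardy inequality and the $T$-energy decay of Theorem \ref{decayofenergy} for the slab term. The only cosmetic difference is that the paper phrases the first step as Stokes' theorem between the hypersurfaces $\{r=r_{0}\}$ and $\gamma_{\alpha}$, which is the same computation.
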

\begin{proof}
We consider the hypersurface $\gamma_{\a}=\left\{r=r_{0}+\tau^{-\a}\right\}$.
\begin{figure}[H]
	\centering
		\includegraphics[scale=0.15]{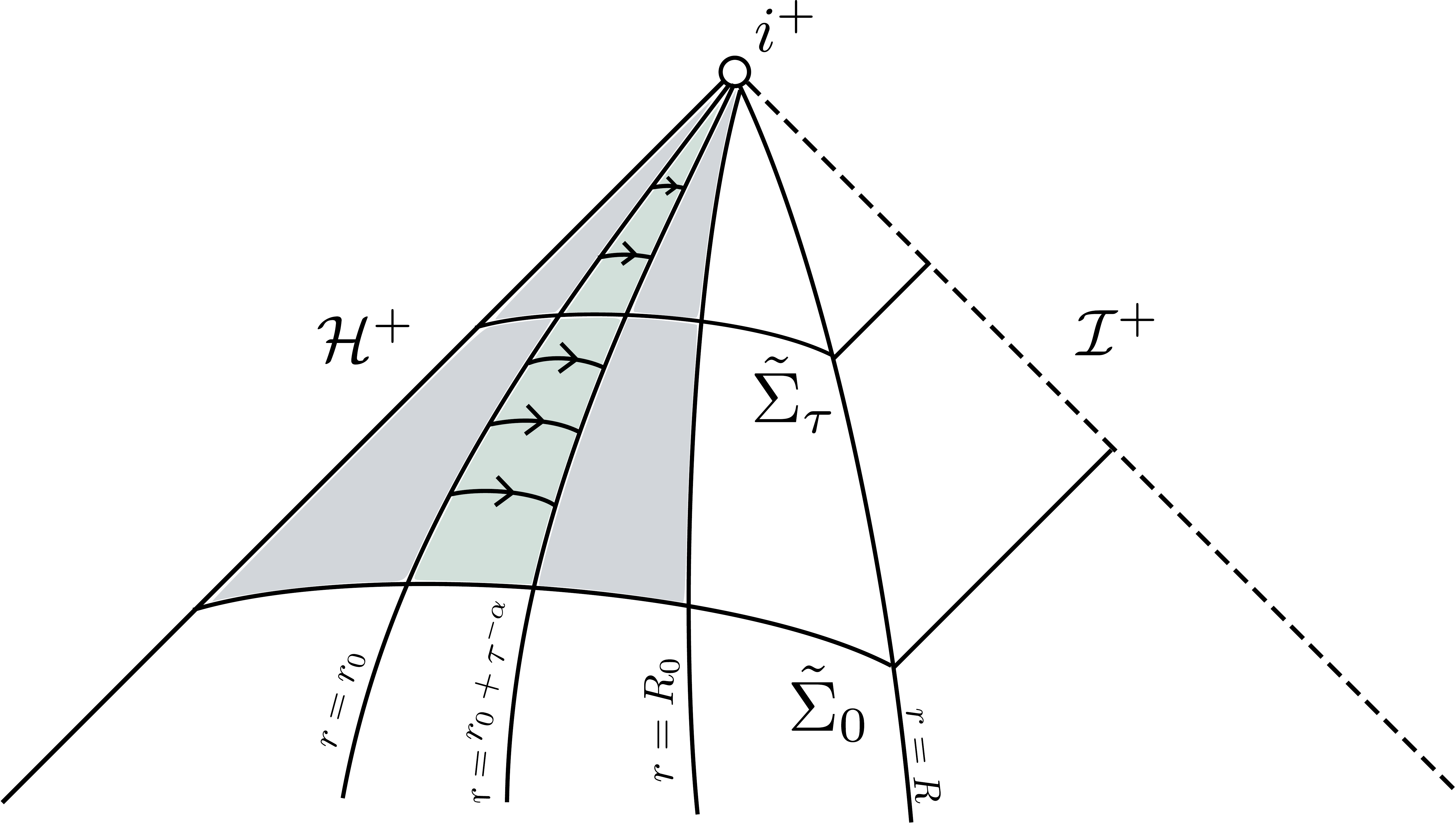}
		\label{fig:2decaypoint}
\end{figure}
Then by applying Stokes' theorem for the hypersurfaces shown in the figure above we obtain
\begin{equation*}
\begin{split}
\int_{\mathbb{S}^{2}}{\psi^{2}(r_{0},\omega)}\leq\int_{\mathbb{S}^{2}}{\psi^{2}(r_{0}+\tau^{-\a},\omega)}+C\int_{\tilde{\Sigma}_{\tau}\cap\left\{r_{0}\leq r\leq r_{0}+\tau^{-\a}\right\}}{\psi(\partial_{\rho}\psi)}.
\end{split}
\end{equation*}
For the first term on the right hand side we apply Lemma \ref{1lemmadecay} (note that $M<r_{0}+\tau^{-\a}$). The lemma now follows from Cauchy-Schwarz applied for the second term, the first Hardy inequality and Theorem   \ref{decayofenergy}.
\end{proof}

\textbf{The case $l=0$}

We first assume that $\psi$ is spherically symmetric. Then we have the pointwise bound
\begin{equation*}
\begin{split}
\left|\partial_{\rho}\psi\right|\leq C\sqrt{\tilde{E}_{6}},
\end{split}
\end{equation*}
in $\left\{M\leq r\leq R_{0}\right\}$, where
\begin{equation*}
\begin{split}
\tilde{E}_{6}=\left\|\partial_{r}\psi\right\|_{L^{\infty}\left(\tilde{\Sigma}_{0}\right)}^{2}+E_{4}(\psi)+E_{4}(T\psi).
\end{split}
\end{equation*}
This can by easily proved by using the method of characteristics and  integrating along the characteristic $u=c$ the wave equation (expressed in null coordinates).
Hence  Lemma \ref{2lemmadecay} for $\a =\frac{2}{5}$ gives
\begin{equation*}
\begin{split}
\int_{\mathbb{S}^{2}}{\psi^{2}(r_{0},\omega)d\omega}&\leq CE_{1}\frac{1}{\tau^{\frac{6}{5}}}+C\sqrt{E_{1}}\sqrt{\tilde{E}_{6}}\frac{1}{\tau^{\frac{6}{5}}}\\
&\leq CE_{6}\frac{1}{\tau^{\frac{6}{5}}},
\end{split}
\end{equation*}
where 
$E_{6}=E_{1}+\tilde{E_{6}}$. Since $\psi$ is spherically symmetric we obtain
\begin{equation}
\begin{split}
\psi^{2}\leq CE_{6}\frac{1}{\tau^{\frac{6}{5}}}.
\end{split}
\label{l=0pointdecay}
\end{equation}

\textbf{The case $l=1$}

Suppose that $\psi$ is supported on $l=1$. Then from Lemma \ref{2lemmadecay} for $\a=\frac{1}{4}$ we obtain
\begin{equation*}
\begin{split}
\int_{\mathbb{S}^{2}}{\psi^{2}(r_{0},\omega)d\omega}&\leq CE_{1}\frac{1}{\tau^{\frac{3}{2}}}+C\sqrt{E_{1}}\frac{1}{\tau}\sqrt{\int_{\tilde{\Sigma}_{\tau}\cap\left\{r_{0}\leq r\leq r_{0}+\tau^{-\a}\right\}}{\!\!(\partial_{\rho}\psi)^{2}}}\\
&\leq CE_{1}\frac{1}{\tau^{\frac{3}{2}}}+C\sqrt{E_{1}}\sqrt{E_{2}}\frac{1}{\tau^{\frac{3}{2}}}\\
&\leq C\tilde{E}_{7}\frac{1}{\tau^{\frac{3}{2}}}
\end{split}
\end{equation*}
where we have used Theorem \ref{energydecay} and $\tilde{E}_{7}=E_{1}+E_{2}$. Therefore, by the Sobolev inequality on $\mathbb{S}^{2}$ we have 
\begin{equation*}
\begin{split}
\psi^{2}\leq CE_{7}\frac{1}{\tau^{\frac{3}{2}}}
\end{split}
\end{equation*}
in $\left\{M\leq r\leq R_{0}\right\}$, where $E_{7}=\sum_{\left|k\right|\leq 2}\tilde{E}_{7}{\left(\Omega^{k}\psi\right)}$.

\textbf{The case $l\geq 2$}

Suppose that $\psi$ is supported on $l\geq 2$. Then from \eqref{1pointwise} and Theorem \ref{energydecay} we have that there exists a constant $C$ which depends only on $M$ and $R_{0}$ such that 
\begin{equation*}
\begin{split}
\int_{\mathbb{S}^{2}}\psi^{2}\leq CE_{3}\frac{1}{\tau^{2}}
\end{split}
\end{equation*}
in $\left\{M\leq r\leq R_{0}\right\}$. By Sobolev we finally obtain
\begin{equation*}
\begin{split}
\psi^{2}\leq CE_{8}\frac{1}{\tau^{2}},
\end{split}
\end{equation*}
where 
\begin{equation*}
\begin{split}
E_{8}=\sum_{\left|k\right|\leq 2}E_{3}{\left(\Omega^{k}\psi\right)}.
\end{split}
\end{equation*}

Summarizing, we have proved the following
\begin{theorem}
Fix $R_{0}$ such that $M<R_{0}$ and let $\tau\geq 1$. Let $E_{1}, E_{2}, E_{3}$ be the quantities as defined in Theorem \ref{energydecay}. Then there exists a constant $C$ that depends  on the mass $M$, $R_{0}$  and $\tilde{\Sigma}_{0}$  such that:
\begin{itemize}
	\item  For all solutions $\psi$ to the wave equation  we have
	\begin{equation*}
\left|\psi\right|^{2}\leq C E_{5}\frac{1}{r\cdot \tau^{2}}
\end{equation*}
in $\left\{R_{0}\leq r\right\}$, where
\begin{equation*}
E_{5}=\sum_{\left|k\right|\leq 2}{E_{1}\left(\Omega^{k}\psi\right)}.
\end{equation*}
Moreover,
\begin{equation*}
\begin{split}
\left|\psi\right|^{2}\leq CE_{5}\frac{1}{r^{2}\cdot\tau}
\end{split}
\end{equation*}
in $\left\{R_{0}\leq r\right\}$.
\end{itemize}
\begin{itemize}
	\item For all spherically symmetric solutions $\psi$ of the wave equation we have 
\begin{equation*}
\begin{split}
\left|\psi\right|^{2}\leq CE_{6}\frac{1}{\tau^{\frac{6}{5}}}
\end{split}
\end{equation*}
in $\left\{M\leq r\leq R_{0}\right\}$, where $E_{6}=E_{1}+E_{4}(\psi)+E_{4}(T\psi)+\left\|\partial_{r}\psi\right\|_{L^{\infty}\left(\tilde{\Sigma}_{0}\right)}^{2}$. 
\end{itemize}
\begin{itemize}
	\item For all solutions $\psi$ to the wave equation which are supported on the frequency $l=1$ we have
\begin{equation*}
\begin{split}
\left|\psi\right|^{2}\leq CE_{7}\frac{1}{\tau^{\frac{3}{2}}}
\end{split}
\end{equation*}
in $\left\{M\leq r\leq R_{0}\right\}$, where $E_{7}=E_{5}+\sum_{\left|k\right|\leq 2}E_{2}{\left(\Omega^{k}\psi\right)}$.
\end{itemize}
\begin{itemize}
\item For all solutions $\psi$ to the wave equation which are supported on the frequencies $l\geq 2$ we have
\begin{equation*}
\begin{split}
\left|\psi\right|^{2}\leq CE_{8}\frac{1}{\tau^{2}},
\end{split}
\end{equation*}
in $\left\{M\leq r\leq R_{0}\right\}$, where 
\begin{equation*}
\begin{split}
E_{8}=\sum_{\left|k\right|\leq 2}E_{3}{\left(\Omega^{k}\psi\right)}.
\end{split}
\end{equation*}
\end{itemize}
\label{pointdecaytheorem}
\end{theorem}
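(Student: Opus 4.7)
\medskip

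\noindent\textbf{Proof proposal for Theorem \ref{pointdecaytheorem}.}
The plan is to combine a Sobolev embedding on $\mathbb{S}^{2}$ with the energy decay results of Theorem \ref{decayofenergy}, the Hardy inequalities of Section \ref{sec:HardyInequalities}, and the $r$-weighted estimate of Proposition \ref{rweigestprop}. Throughout, the observation is that although $\Omega\in so(3)$ is Killing and commutes with $\Box_{g}$, so we can freely apply any energy or decay estimate to $\Omega^{k}\psi$ with $|k|\leq 2$, the Sobolev embedding $\|f\|_{L^{\infty}(\mathbb{S}^{2})}^{2}\leq C\sum_{|k|\leq 2}\|\Omega^{k} f\|_{L^{2}(\mathbb{S}^{2})}^{2}$ converts an $L^{2}(\mathbb{S}^{2})$ decay estimate into a pointwise one. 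Thus it suffices to establish $L^{2}(\mathbb{S}^{2})$ decay with appropriate weights in $r$ and $\tau$, and then apply the embedding to $\psi,\Omega\psi,\Omega^{2}\psi$.

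\medskip

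\noindent\textbf{Far region $\{r\geq R_{0}\}$.} First I would reproduce the argument that led to \eqref{1pointwise}: integrating $\partial_{\rho}\psi$ from $r_{0}$ to $+\infty$ along $\tilde{\Sigma}_{\tau}$ and applying Cauchy--Schwarz yields
$\int_{\mathbb{S}^{2}}\psi^{2}(r_{0},\omega)\,d\omega \leq (C/r_{0})\int_{\tilde{\Sigma}_{\tau}\cap\{r\geq r_{0}\}}J^{N}_{\mu}(\psi)n^{\mu}$, and in the region $r\geq R_{0}>M$ the non-degenerate and $T$-fluxes are equivalent, so the right-hand side is controlled by $CE_{1}(\psi)/\tau^{2}$ by Theorem~\ref{decayofenergy}. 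This gives the first estimate after the Sobolev step. For the sharper $r^{-2}\tau^{-1}$ estimate, I would write, for $r\geq R_{0}$,
\[
\int_{\mathbb{S}^{2}}(r\psi)^{2}(r,\omega)\,d\omega
=\int_{\mathbb{S}^{2}}(R_{0}\psi)^{2}(R_{0},\omega)\,d\omega
+2\int_{\mathbb{S}^{2}}\!\!\int_{R_{0}}^{r}\frac{\psi}{\rho}\,\partial_{\rho}(\rho\psi)\,\rho^{2}\,d\rho\,d\omega,
\]
bound the first term by the previous estimate, apply Cauchy--Schwarz to the second, and then control $\int\rho^{-2}\psi^{2}$ by the first Hardy inequality and the $T$-flux decay, while $\int(\partial_{\rho}(\rho\psi))^{2}$ on $\tilde{\Sigma}_{\tau}\cap\{r\geq R_{0}\}$ splits into a compact piece (controlled by the $T$-flux) and a piece on $\tilde{N}_{\tau}$ controlled by $\int_{\tilde{N}_{0}}(\partial_{v}\phi)^{2}$ via the $r$-weighted estimate \eqref{rwest} with $p=2$ applied backwards from $\tau$ to $0$.

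\medskip

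\noindent\textbf{Near-horizon region $\{M\leq r\leq R_{0}\}$.} The cases $l\geq 2$, $l=1$, and $l=0$ differ only by the available energy decay (Theorem~\ref{decayofenergy}). For $l\geq 2$ the non-degenerate energy decays like $\tau^{-2}$, so \eqref{1pointwise} applied at $r_{0}=M$ and Sobolev immediately give $|\psi|^{2}\leq CE_{8}\tau^{-2}$. For $l\leq 1$, the non-degenerate energy either does not decay ($l=0$) or decays only as $\tau^{-1}$ ($l=1$), so one must interpolate. The key interpolation lemma, which I would prove in the order indicated in the excerpt: first, for any $r_{1}\in(M,R_{0}]$,
$\int_{\mathbb{S}^{2}}\psi^{2}(r_{1},\omega)\,d\omega\leq \tfrac{C}{(r_{1}-M)^{2}}\int_{\tilde{\Sigma}_{\tau}\cap\{r\geq r_{1}\}}J^{T}_{\mu}(\psi)n^{\mu}\leq \tfrac{C}{(r_{1}-M)^{2}}\tfrac{E_{1}}{\tau^{2}}$, by inserting a factor $D(\rho)/D(\rho)$ in the integrand of the previous estimate and using that $D(\rho)\geq D(r_{1})$ controls the degeneracy factor in $J^{N}$ versus $J^{T}$. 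Second, for $r_{0}\in[M,R_{0}]$ and any $\alpha>0$, I would apply Stokes' theorem on the thin region $\{r_{0}\leq r\leq r_{0}+\tau^{-\alpha}\}\cap\tilde{\Sigma}_{\tau}$ to transfer the bound from $r=r_{0}+\tau^{-\alpha}$ (where the first lemma applies with loss $\tau^{2\alpha}$) down to $r=r_{0}$, at the cost of a cross-term $\int\psi(\partial_{\rho}\psi)$; bounding this via Cauchy--Schwarz using the first Hardy inequality for $\psi^{2}/\rho^{2}$ and the appropriate energy decay for $(\partial_{\rho}\psi)^{2}$ gives the second lemma.

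\medskip

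\noindent\textbf{Optimising $\alpha$ and the $l=0$ obstacle.} For $l=1$ I would use Theorem~\ref{decayofenergy} to bound $\int_{\tilde{\Sigma}_{\tau}}(\partial_{\rho}\psi)^{2}\leq CE_{2}\tau^{-1}$, so the second lemma yields $\int_{\mathbb{S}^{2}}\psi^{2}(r_{0},\omega)\leq CE_{1}\tau^{-2+2\alpha}+C\sqrt{E_{1}E_{2}}\,\tau^{-1}\tau^{-\alpha/2}$; balancing gives $\alpha=1/4$ and rate $\tau^{-3/2}$. The hard part is the spherically symmetric case $l=0$: since the non-degenerate energy does not decay at all, we cannot control $\int(\partial_{\rho}\psi)^{2}$ in the thin strip by energy decay, and must instead obtain a \emph{pointwise} bound $|\partial_{r}\psi|\leq C\sqrt{\tilde{E}_{6}}$ in $\{M\leq r\leq R_{0}\}$. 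This I would do by writing the wave equation in double-null coordinates $(u,v)$ and integrating along the characteristic $u=\text{const}$ from $\tilde{\Sigma}_{0}$: since for a spherically symmetric wave $\Box_{g}\psi=0$ becomes a linear ODE in $\partial_{r}\psi$ along null rays with source terms involving $\psi$ and $\partial_{v}\psi$ (which are controlled by the uniform $L^{\infty}$ bound of Theorem~\ref{unponpsi} and the $T$-commuted version thereof), Grönwall produces the desired bound in terms of $\|\partial_{r}\psi\|_{L^{\infty}(\tilde{\Sigma}_{0})}$ and $E_{4}(\psi),E_{4}(T\psi)$. Then the strip bound gives $\int_{\mathbb{S}^{2}}\psi^{2}(r_{0},\omega)\leq CE_{1}\tau^{-2+2\alpha}+C\sqrt{E_{1}\tilde{E}_{6}}\,\tau^{-1}\tau^{-\alpha/2}$, and balancing requires $-2+2\alpha=-1-\alpha/2$, i.e.\ $\alpha=2/5$, yielding the rate $\tau^{-6/5}$. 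Since $\psi$ is already spherically symmetric, no further Sobolev step is needed to produce the pointwise estimate.
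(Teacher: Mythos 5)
Your proposal is correct and follows essentially the same route as the paper: the far-region estimates via the radial integration identity \eqref{1pointwise}, the first Hardy inequality and the $p=2$ $r$-weighted estimate; and, near the horizon, the same two interpolation lemmas (the $(r_{1}-M)^{-2}$-weighted bound obtained by inserting $D(\rho)/D(\rho)$, and the thin-strip Stokes argument on $\{r_{0}\leq r\leq r_{0}+\tau^{-\a}\}$), with the characteristics/Gr\"{o}nwall argument along $u=\mathrm{const}$ supplying the $L^{\infty}$ bound on $\partial_{r}\psi$ needed for $l=0$, exactly as the paper indicates. One bookkeeping slip in the $l=1$ case: there the strip integral of $(\partial_{\rho}\psi)^{2}$ is bounded directly by $CE_{2}\tau^{-1}$ from the energy decay (the width of the strip plays no role), so the second term is $C\sqrt{E_{1}E_{2}}\,\tau^{-3/2}$ with no $\a$-dependence; your written $\tau^{-1-\a/2}$ would, taken literally, balance at $\a=2/5$ and yield only $\tau^{-6/5}$, whereas the correct balancing is $-2+2\a=-3/2$, i.e.\ $\a=1/4$, which is the conclusion you in fact state.
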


\section{Higher Order  Estimates}
\label{sec:HigherOrderPointwiseEstimates}

We finish this paper by obtaining energy and pointwise results for all the derivatives of $\psi$. We first derive decay for the local higher order (non-degenerate) energy and then pointwise decay, non-decay and blow-up results. We finally use a contradiction argument to obtain non-decay and blow-up results for the local higher order energy.

\subsection{Energy and Pointwise Decay}
\label{sec:EnergyAndPointwiseDecay}

\begin{theorem}
Fix $R_{1}$ such that $R_{1}>M$ and let $\tau\geq 1$. Let also $k,l\in\mathbb{N}$. Then there exists a constant $C$ which depend on $M,l, R_{1}$ and $\tilde{\Sigma}_{0}$ such that the following holds: For all solutions $\psi$ of the wave equation which are supported on the 
 angular frequencies greater or equal to $l$, there exist norms $\tilde{E}_{k,l}$ of the initial data of $\psi$ such that
\begin{itemize}
	\item $\displaystyle\int_{\tilde{\Sigma}_{\tau}\cap\left\{M\leq r\leq R_{1}\right\}}{J_{\mu}^{N}(\partial_{r}^{k}\psi)n_{\tilde{\Sigma}_{\tau}}^{\mu}}\leq C\tilde{E}_{k,l}^{2}\frac{1}{\tau^{2}}$ for all $k\leq l-2$,
	\item $\displaystyle\int_{\tilde{\Sigma}_{\tau}\cap\left\{M\leq r\leq R_{1}\right\}}{J_{\mu}^{N}(\partial_{r}^{l-1}\psi)n_{\tilde{\Sigma}_{\tau}}^{\mu}}\leq C\tilde{E}_{l-1,l}^{2}\frac{1}{\tau}$.
\end{itemize}
\label{hoe}
\end{theorem}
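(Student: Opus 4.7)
The proof proceeds by induction on $k$, closely following the dyadic hierarchy strategy developed in Section \ref{sec:EnergyDecay}. The base case $k=0$ follows directly from Theorem \ref{decayofenergy}, which yields $1/\tau^2$ decay of the non-degenerate energy whenever $\psi$ is supported on angular frequencies $\geq 2$. For the inductive step, I would first establish a higher-order integrated local energy decay estimate by applying Theorem \ref{kder} on the dyadic slab $\tilde{\mathcal{R}}_{\tau_1}^{\tau_2}$ and using the coarea formula, producing
\begin{equation*}
\int_{\tau_1}^{\tau_2}\! \Big( \int_{\tilde{\Sigma}_\tau \cap \mathcal{A}}\! J^N_\mu(\partial_r^k \psi) n^\mu_{\tilde{\Sigma}_\tau} \Big) d\tau \leq C \sum_{i=0}^k \int_{\tilde{\Sigma}_{\tau_1}}\! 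J^N_\mu(T^i \psi) n^\mu + C \sum_{i=1}^k \int_{\tilde{\Sigma}_{\tau_1} \cap \mathcal{A}}\! J^N_\mu(\partial_r^i \psi) n^\mu.
\end{equation*}
Combining with Propositions \ref{rweigestprop} and \ref{rwe1} to handle the far region $r \geq R_1$ produces a full hierarchy $\int_{\tau_1}^{\tau_2} I^{(k)}_{\tilde{\Sigma}_\tau}(\psi) \, d\tau \leq C I^{(k+1)}_{\tilde{\Sigma}_{\tau_1}}(\psi)$ for an appropriate cumulative energy $I^{(k)}$, valid provided $k+1 \leq l$ so that the commutation identity of Theorem \ref{kder} is available at level $k+1$.

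Given this two-level hierarchy, the dyadic pigeonhole argument of Proposition \ref{energydecay} produces a sequence $\tau_n$ along which $I^{(k)}_{\tilde{\Sigma}_{\tau_n}} \leq C/\tau_n$, and a second pigeonhole (bootstrapping on $T\psi$ whose initial higher-order norms are absorbed into the induction hypothesis) upgrades this to $I^{(k)}_{\tilde{\Sigma}_\tau} \leq C/\tau^2$ whenever $k \leq l-2$. At the critical value $k = l-1$ the second level $k+1 = l$ of the hierarchy collapses: Theorem \ref{kder} at level $l$ requires $\chi_l = 1$, which fails for $\psi$ supported genuinely on the frequency $l$, so the bootstrap halts after one application and only the single power $1/\tau$ can be extracted. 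The pointwise statements then follow as in Section \ref{sec:DecayNearMathcalH}: commute with the angular Killing fields $\Omega^k$ (which preserve the spherical support), apply Sobolev on $\mathbb{S}^2$ and on $\tilde{\Sigma}_\tau$, and interpolate between the $L^2$ decay just established and the uniform boundedness of $\partial_r^{k+1}\psi$ supplied by Theorem \ref{kder} via the weighted estimate of Lemma \ref{2lemmadecay}. The exponents $1, \tfrac{3}{4}, \tfrac{1}{4}$ in the pointwise statements arise from optimizing the interpolation parameter $\alpha$ against the available energy decay rate at each level: full $1/\tau^2$ energy decay gives $1/\tau$ pointwise, $1/\tau$ energy decay combined with bounded $\partial_r^{k+1}\psi$ gives $1/\tau^{3/4}$, and mere boundedness of the energy combined with bounded transversal derivative gives $1/\tau^{1/4}$.

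The main obstacle is the algebraic bookkeeping needed to identify the correct cumulative norms $\tilde{E}_{k,l}$ and $E_{k,l}$: the commutation identity \eqref{comhigherorder} produces error terms mixing $T^j \partial_r^i \psi$ with $i+j \leq k$, and one must simultaneously track how the $N$-flux of each of these lower order derivatives decays (by the induction hypothesis for those with $i < k$) while the top order term $\partial_r^k \psi$ itself appears with the degenerate weight $\sqrt{D}$ at the horizon in the spacetime term of Theorem \ref{kder}. The cleanest route is to organize the induction not on $k$ alone but jointly on the pair $(k,j)$ representing the numbers of $\partial_r$ and $T$ commutations, using the $T$-decay already available at $k=0$ (Theorem \ref{decayofenergy}) to seed the $j$-induction on $T^j \psi$. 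The termination of decay at $k = l-1$ rather than $k = l$ is a genuine, rather than technical, phenomenon tied to the conservation laws of Theorem \ref{ndk}, and the proof must reflect this sharply; in particular, the loss of the $\chi_l$ factor in Theorem \ref{kder} at precisely the frequency of $\psi$ is what closes the inductive chain.
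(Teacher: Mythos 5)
Your overall architecture coincides with the paper's: away from $\mathcal{H}^{+}$ one commutes with $T$ and uses local elliptic estimates together with the decay of Theorem \ref{decayofenergy}; near $\mathcal{H}^{+}$ one integrates the spacetime estimate of Theorem \ref{kder} over dyadic slabs, extracts by pigeonhole a dyadic sequence $\tau_{n}$ along which the flux of $\partial_{r}^{k}\psi$ decays like $\tau_{n}^{-1}$, propagates this to all $\tau$ by reapplying Theorem \ref{kder} as a boundedness statement on $[\tau_{n},\tau_{n+1}]$, and then, for $k\leq l-2$, runs the pigeonhole a second time using the already established $\tau^{-1}$ decay of every term on the right-hand side. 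This is exactly the paper's proof.

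However, your explanation of why the improvement stops at $k=l-1$ is wrong, and taken literally it would contradict the first half of your own argument. You claim that ``Theorem \ref{kder} at level $l$ requires $\chi_{l}=1$, which fails for $\psi$ supported on the frequency $l$, so the bootstrap halts.'' The factor $\chi_{k}$ in Theorem \ref{kder} only governs whether the term $\left|\nabb\partial_{r}^{k}\psi\right|^{2}$ is controlled \emph{on the horizon}; the theorem itself --- flux and spacetime terms alike --- is valid at level $k=l$ for $\psi$ supported on frequencies $\geq l$ (one merely has $\chi_{l}=0$). If the level-$l$ estimate were genuinely unavailable, you could not control the spacetime integral of $(\partial_{r}^{l}\psi)^{2}$ near $\mathcal{H}^{+}$ and the first pigeonhole would already fail at $k=l-1$, destroying the $\tau^{-1}$ statement you are trying to prove. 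The true obstruction sits one level higher: the integrated local energy decay for $J^{N}_{\mu}(\partial_{r}^{l-1}\psi)$ carries on its right-hand side the flux of $\partial_{r}^{l}\psi$, and making \emph{that} flux decay would require Theorem \ref{kder} at level $l+1$, i.e.\ support on frequencies $\geq l+1$ --- precisely what fails. Equivalently, the flux of $\partial_{r}^{l}\psi$ is bounded but non-decaying (consistently with the conservation law of Theorem \ref{ndk}), so the second pigeonhole cannot be seeded at $k=l-1$. With this correction the argument closes. Note also that the pointwise exponents $1,\tfrac{3}{4},\tfrac{1}{4}$ you discuss belong to Theorem \ref{hp3}, not to the statement at hand.
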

\begin{proof}
By commuting with T and applying local elliptic estimates and previous decay results, the above  integrals decay  on $\tilde{\Sigma}_{\tau}\cap\left\{r_{0}\leq r\leq R_{1}\right\}$ where $r_{0}>M$. So it suffices to prove the above result for $\tilde{\Sigma}_{\tau}\cap\mathcal{A}$, where $\mathcal{A}$ is a $\varphi^{T}_{\tau}$-invariant neighbourhood of $\mathcal{H}^{+}$. For we use the spacetime bound given by  Theorem \ref{theorem3} which implies that there exists a dyadic sequence $\tau_{n}$ such that for all $k\leq l-1$ we have
\begin{equation}
\begin{split}
\int_{\tilde{\Sigma}_{\tau_{n}}\cap\mathcal{A}}{J_{\mu}^{N}(\partial_{r}^{k}\psi)n^{\mu}_{\tilde{\Sigma}_{\tau_{n}}}}\leq CK_{l}\frac{1}{\tau_{n}},
\end{split}
\label{hoed1}
\end{equation}
where
\begin{equation*}
\begin{split}
K_{l}=\sum_{i=0}^{l}\displaystyle\int_{\tilde{\Sigma}_{0}}{J_{\mu}^{N}\left(T^{i}\psi\right)n^{\mu}_{\tilde{\Sigma}_{0}}}+\sum_{i=1}^{l}\int_{\tilde{\Sigma}_{0}\cap\mathcal{A}}{J_{\mu}^{N}\left(\partial^{i}_{r}\psi\right)n^{\mu}_{\tilde{\Sigma}_{0}}}.
\end{split}
\end{equation*}
Then, by Theorem \ref{theorem3} again we have for any $\tau$ such that $\tau_{n}\leq \tau\leq \tau_{n+1}$ 
\begin{equation*}
\begin{split}
\int_{\tilde{\Sigma}_{\tau}\cap\mathcal{A}}{J_{\mu}^{N}(\partial_{r}^{k}\psi)n^{\mu}_{\tilde{\Sigma}_{\tau}}}&\leq C\sum_{i=0}^{k}\displaystyle\int_{\tilde{\Sigma}_{\tau_{n}}}{\!\!J_{\mu}^{N}\left(T^{i}\psi\right)n^{\mu}_{\tilde{\Sigma}_{\tau_{n}}}}\!\!+C\sum_{i=1}^{k}\int_{\tilde{\Sigma}_{\tau_{n}}\cap\mathcal{A}}{\!J_{\mu}^{N}\left(\partial^{i}_{r}\psi\right)n^{\mu}_{\tilde{\Sigma}_{\tau_{n}}}}\leq C E\frac{1}{\tau_{n}}\lesssim CE\frac{1}{\tau},
\end{split}
\end{equation*}
where $E$ depends only on the initial data.  Suppose now that $k\leq l-2$. We apply Theorem \ref{theorem3} for the dyadic intervals $[\tau_{n}, \tau_{n-1}]$ and we obtain
\begin{equation*}
\begin{split}
\int_{\mathcal{A}}{J_{\mu}^{N}(\partial_{r}^{k}\psi)n_{\Sigma}^{\mu}}\leq C\sum_{i=0}^{l-1}\displaystyle\int_{\tilde{\Sigma}_{\tau_{n-1}}}{\!\!\!J_{\mu}^{N}\left(T^{i}\psi\right)n^{\mu}_{\tilde{\Sigma}_{\tau_{n-1}}}}\!\!+\sum_{i=1}^{l-1}\int_{\tilde{\Sigma}_{\tau_{n-1}}\cap\mathcal{A}}{\!\!\!J_{\mu}^{N}\left(\partial^{i}_{r}\psi\right)n^{\mu}_{\tilde{\Sigma}_{\tau_{n-1}}}}.
\end{split}
\end{equation*}
However, the right hand side has been shown to decay like $\tau^{-1}$ and thus a similar argument as above gives us the improved decay for all $k\leq l-2$.
\end{proof}

The next theorem provides pointwise results for the derivatives transversal to $\hh$ of $\psi$.
\begin{theorem}
Fix $R_{1}$ such that $R_{1}>M$ and let $\tau\geq 1$. Let also $k,l,m\in\mathbb{N}$. Then, there exist constants $C$ which depend on $M,l, R_{1}$ and $\tilde{\Sigma}_{0}$ such that the following holds: For all solutions $\psi$ of the wave equation which are supported on  angular frequencies greater or equal to $l$, there exist norms $E_{k,l}$ of the initial data  $\psi$ such that
\begin{itemize}
	\item $\displaystyle\left|\partial_{r}^{k}\psi\right|\leq CE_{k,l}\displaystyle\frac{1}{\tau}$ in $\left\{M\leq r\leq R_{1}\right\}$ for all $k\leq l-2$,
	\item $\displaystyle\left|\partial_{r}^{l-1}\psi\right|\leq CE_{l-1,l}\displaystyle\frac{1}{\tau^{\frac{3}{4}}}$ in $\left\{M\leq r\leq R_{1}\right\}$,
		\item $\displaystyle\left|\partial_{r}^{l}\psi\right|\leq CE_{l,l}\displaystyle\frac{1}{\tau^{\frac{1}{4}}}$ in $\left\{M\leq r\leq R_{1}\right\}$.
		\end{itemize}
\label{hp3}
\end{theorem}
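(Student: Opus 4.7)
The plan is to convert the higher-order $L^2$ decay of Theorem \ref{hoe} into pointwise decay via Sobolev embedding on spheres, refined in the borderline cases $k=l-1$ and $k=l$ by the slab interpolation device of Lemma \ref{2lemmadecay}. Commuting with the angular Killing fields $\Omega\in$ so(3) is harmless since $[\Omega,\partial_r]=[\Omega,T]=0$ and $\Omega$ preserves the angular frequency support, so Theorems \ref{hoe}, \ref{kder}, and \ref{decayofenergy} apply verbatim to $\partial_r^k\Omega^\beta\psi$ for $|\beta|\leq 2$. By Sobolev embedding $H^2(\mathbb{S}^2)\hookrightarrow L^\infty(\mathbb{S}^2)$ it then suffices to control $\int_{\mathbb{S}^2}(\partial_r^k\Omega^\beta\psi)^2(r_0,\omega,\tau)\,d\omega$ uniformly in $r_0\in[M,R_1]$.

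The case $k\leq l-2$ is immediate: apply \eqref{1pointwise} to $f=\partial_r^k\Omega^\beta\psi$ to bound the sphere integral by $Cr_0^{-1}\int_{\tilde{\Sigma}_\tau\cap\{r\geq r_0\}}J_\mu^N(f)n^\mu$, which decays like $\tau^{-2}$ by Theorem \ref{hoe} on $\{M\leq r\leq R_1\}$ and by local elliptic estimates combined with Theorem \ref{decayofenergy} applied to $T^k\Omega^\beta\psi$ on $\{r\geq R_1\}$. Sobolev then delivers the $\tau^{-1}$ pointwise rate.

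For $k=l-1$, Theorem \ref{hoe} gives only $\tau^{-1}$, so naive Sobolev yields only $\tau^{-1/2}$. To improve to $\tau^{-3/4}$ I would use the slab identity
\begin{equation*}
\!\int_{\mathbb{S}^2}\!(\partial_r^{l-1}\Omega^\beta\psi)^2(r_0)\leq \!\int_{\mathbb{S}^2}\!(\partial_r^{l-1}\Omega^\beta\psi)^2(r_0+\tau^{-\alpha})+2\!\int_{r_0}^{r_0+\tau^{-\alpha}}\!\!\int_{\mathbb{S}^2}\!|\partial_r^{l-1}\Omega^\beta\psi\,\partial_r^l\Omega^\beta\psi|
\end{equation*}
exactly as in Lemma \ref{2lemmadecay}. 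The boundary term is controlled by the Lemma \ref{1lemmadecay} analogue: the $\tau^{2\alpha}$ loss from $(r_0+\tau^{-\alpha}-M)^{-2}$ is paid by the $\tau^{-2}$ decay of the $T$-flux of $\partial_r^{l-1}\Omega^\beta\psi$, obtained by identifying this flux away from $\mathcal{H}^+$ with the $T$-flux of $T^{l-1}\Omega^\beta\psi$ via local elliptic estimates and invoking Theorem \ref{decayofenergy}. The interior term is bounded by Cauchy-Schwarz against the product $\sqrt{\tau^{-2}}\cdot\sqrt{\tau^{-1}}=\tau^{-3/2}$ of the $T$-energy decay of $\partial_r^{l-1}$ (first Hardy plus Theorem \ref{decayofenergy} for $T^{l-1}\psi$) and the $\tau^{-1}$ $N$-energy decay of $\partial_r^l$ contained in $J^N(\partial_r^{l-1}\psi)$ from Theorem \ref{hoe}. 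The choice $\alpha=1/4$ balances both contributions at $\tau^{-3/2}$.

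The case $k=l$ is the main obstacle. Theorem \ref{kder} (applied at $k=l$, valid since $\psi$ is supported on $l\geq l$) provides only uniform $N$-energy boundedness for $\partial_r^l\psi$, and its companion bulk estimate carries the degenerate weight $\sqrt{D}$ on $(\partial_r^{l+1}\psi)^2$, a degeneracy which the conservation law of Theorem \ref{ndk} shows to be genuine. My strategy is to run the same slab identity for $\partial_r^l$ and exploit the previously proven $\tau^{-3/4}$ pointwise decay of $\partial_r^{l-1}\psi$. The boundary term at $r_0+\tau^{-\alpha}$ is handled via the Lemma \ref{1lemmadecay} analogue (losing $\tau^{2\alpha}$, compensated only by initial-data boundedness of the $T$-flux of $\partial_r^l$, and further absorbed through an $r$-integration using the $\tau^{-3/4}$ bound on $\partial_r^{l-1}\psi$). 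The interior term is handled by a Cauchy-Schwarz splitting adapted to the $\sqrt{D}$-weight, using that $\sqrt{D}\gtrsim\tau^{-\alpha}$ on the outer portion of the slab to extract a $\tau^{-\alpha/2}$ from the bulk bound $\int_{\mathcal{A}}\sqrt{D}(\partial_r^{l+1}\psi)^2\leq C$. Balancing these contributions through a careful choice of $\alpha$ yields $|\partial_r^l\psi|^2\leq CE/\tau^{1/2}$, and Sobolev produces $|\partial_r^l\psi|\leq CE/\tau^{1/4}$. The delicate point, which is the real content of the argument, is the threading of the $\sqrt{D}$-degeneracy through the $\tau^{-\alpha}$ slab width without running afoul of the non-decaying quantity $\partial_r^{l+1}\psi$ on $\mathcal{H}^+$.
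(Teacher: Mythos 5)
Your cases $k\leq l-2$ and $k=l-1$ are essentially correct, though more elaborate than necessary: the paper handles all three cases with a single one-dimensional Agmon-type identity, writing $\int_{\mathbb{S}^{2}}(\partial_{r}^{k}(\delta\psi))^{2}(r_{0})=-2\int_{\tilde{\Sigma}_{\tau}\cap\{r_{0}\leq r\leq R_{1}+1\}}(\partial_{r}^{k}(\delta\psi))(\partial_{\rho}\partial_{r}^{k}(\delta\psi))$ for a cut-off $\delta$ supported in $r\leq R_{1}+1$, and then applying Cauchy--Schwarz so that the decay rate is the geometric mean of the rates of $\int(\partial_{r}^{k}\psi)^{2}$ and $\int(\partial_{r}^{k+1}\psi)^{2}$ on $\tilde{\Sigma}_{\tau}$. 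In your $k=l-1$ argument the slab of width $\tau^{-\alpha}$ is in fact superfluous: the interior Cauchy--Schwarz term you compute, $\sqrt{\tau^{-2}}\cdot\sqrt{\tau^{-1}}=\tau^{-3/2}$, does not use the slab width at all and already gives the answer over the full region, with no boundary term to control.

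The genuine gap is in the case $k=l$. You assert that Theorem \ref{kder} ``provides only uniform $N$-energy boundedness for $\partial_{r}^{l}\psi$'' with the non-degenerate control of $(\partial_{r}^{l+1}\psi)^{2}$ available only in the bulk with a $\sqrt{D}$ weight, and you therefore build an argument around threading that degeneracy through a $\tau^{-\alpha}$ slab. But the hypersurface estimate in Theorem \ref{kder}/\ref{theorem3} bounds $\int_{\Sigma_{\tau}\cap\mathcal{A}}(\partial_{r}^{l+1}\psi)^{2}$ \emph{without} any degenerate weight, uniformly in $\tau$; only the spacetime integral carries $\sqrt{D}$. With this observation the same Agmon identity closes immediately: $\sqrt{\tau^{-1}}\cdot\sqrt{1}=\tau^{-1/2}$ for the square, hence $\tau^{-1/4}$ pointwise. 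As written, your replacement argument does not close: the boundary term of your slab identity at $r_{0}+\tau^{-\alpha}$ costs a factor $(r_{0}+\tau^{-\alpha}-M)^{-2}\sim\tau^{2\alpha}$, and since (as you concede) the $T$-flux of $\partial_{r}^{l}\psi$ is only bounded and not decaying, this term \emph{grows} like $\tau^{2\alpha}$; the proposed ``absorption through an $r$-integration using the $\tau^{-3/4}$ bound on $\partial_{r}^{l-1}\psi$'' is not a defined estimate and cannot cancel a growing term. You should replace the entire $k=l$ discussion by the direct use of the non-degenerate hypersurface bound on $(\partial_{r}^{l+1}\psi)^{2}$.
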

\begin{proof}
 Let $r_{0}$ such that $M\leq r_{0}\leq R_{1}$. We consider the cut-off $\delta:[M,R_{1}+1]\rightarrow [0,1]$ such that $\delta (r)=1,\text{ for } r\leq R_{1}+\frac{1}{4}$ and $\delta(r)=0, \text{ for } R_{1}+1/2\leq r \leq R_{1}+1$.
\begin{figure}[H]
	\centering
		\includegraphics[scale=0.15]{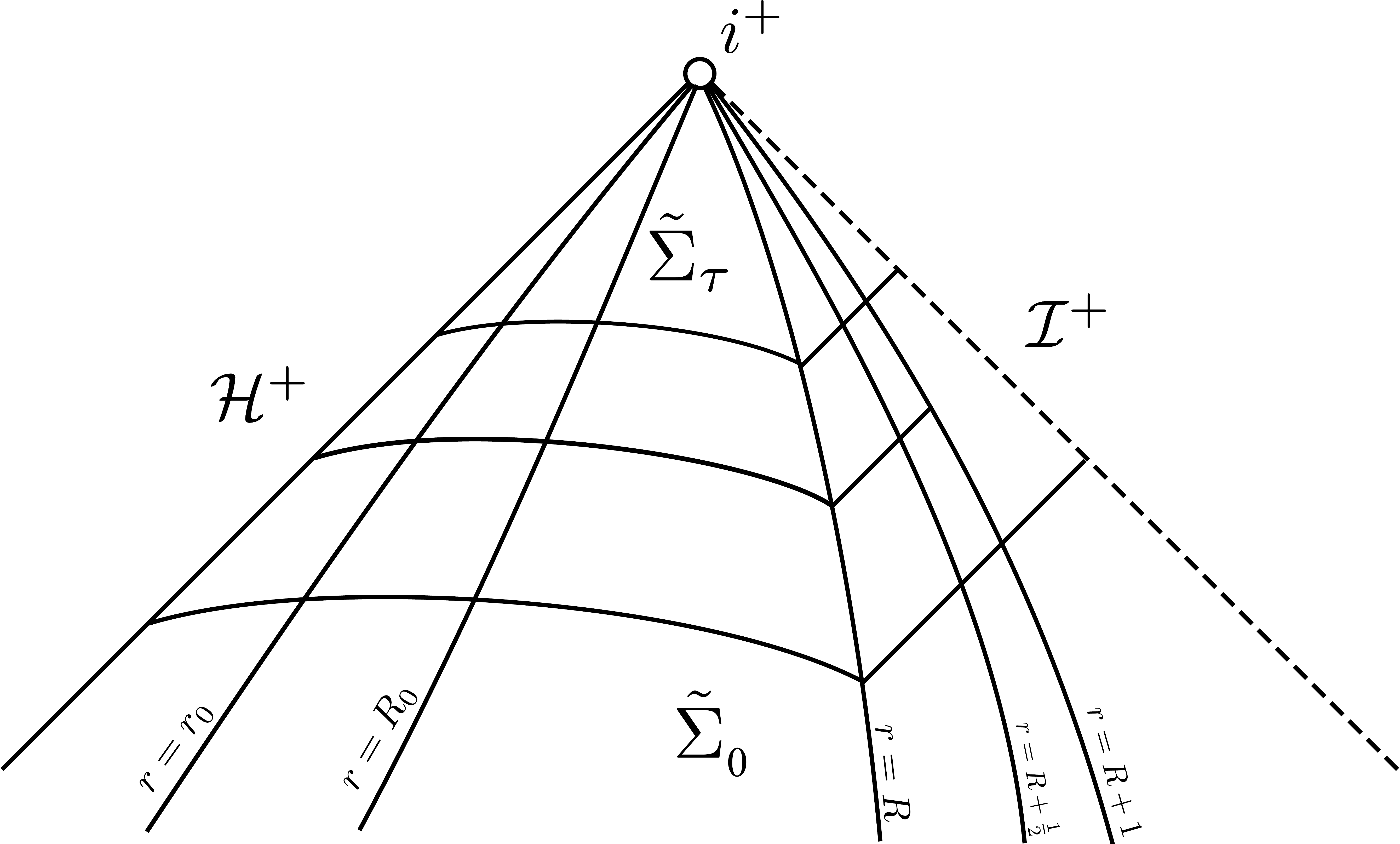}
	\label{fig:hp1}
\end{figure} 
Then,
\begin{equation*}
\begin{split}
\int_{\mathbb{S}^{2}}{\left(\partial_{r}^{k}\psi\right)^{2}(r_{0},\omega)d\omega}&=-2\int_{\tilde{\Sigma}_{\tau}\cap\left\{r_{0}\leq r\leq R_{1}+1\right\}}{\left(\partial_{r}^{k}(\delta\psi)\right)\left(\partial_{\rho}\partial_{r}^{k}(\delta\psi)\right)}\\
&\leq 2\!\left(\int_{\tilde{\Sigma}_{\tau}\cap\left\{r\leq R_{1}+1\right\}}{\!\!\!\left(\partial_{r}^{k}(\delta\psi)\right)^{2}}\right)^{\frac{1}{2}}\!\!\left(\int_{\tilde{\Sigma}_{\tau}\cap\left\{r\leq R_{1}+1\right\}}{\!\!\!\left(\partial_{\rho}\partial_{r}^{k}(\delta\psi)\right)^{2}}\right)^{\frac{1}{2}}\!\!\!.
\end{split}
\end{equation*}
In view of Theorem \ref{hoe} if $k\leq l-2$ then both integrals on the right hand side decay like $\tau^{-2}$. If $k=l-1$ then the first integral decays like $\tau^{-2}$ and the second like $\tau^{-1}$ and if $k=l$ the first integral decays like $\tau^{-1}$ and the second is bounded (Theorem \ref{theorem3}). Commuting with the angular momentum operators and using the Sobolev inequality yield the required pointwise estimates for $\partial_{r}^{k}\psi$ for $k\leq l$.

\end{proof}

\subsection{Non-Decay}
\label{sec:NonDecay}

Let now $H_{l}[\psi]$ be the function on $\mathcal{H}^{+}$ as defined in Theorem \ref{t3}. Since $H_{l}[\psi]$ is conserved along the null geodesics of $\hh$ whenever $\psi$ is supported on the angular frequency $l$, we can simply think of $H_{l}[\psi]$ as a function on $\mathbb{S}^{2}_{\scriptsize 0}=\tilde{\Sigma}_{0}\cap\hh$. We then have the following non-decay result.
\begin{proposition}
For all solutions $\psi$ supported on the angular frequency $l$ we  have  
\begin{equation*}
\partial_{r}^{l+1}\psi(\tau, \theta, \phi)\rightarrow H_{l}[\psi](\theta, \phi)
\end{equation*}
along $\hh$ and generically $H_{l}[\psi](\theta,\phi)\neq 0$ almost everywhere on $\mathbb{S}^{2}_{0}$.
\label{nondecay}
\end{proposition}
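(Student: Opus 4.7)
The strategy is to read off the asymptotic behavior of $\partial_r^{l+1}\psi$ along $\mathcal{H}^+$ by rearranging the conservation law of Theorem \ref{t3} and then invoking the pointwise decay estimates of Theorem \ref{hp3} on all lower order transversal derivatives. First, if $\psi$ is supported on the angular frequency $l$, then so is $\partial_r^i\psi$ for every $i$, since $\partial_r$ preserves each eigenspace $E^l$ of $\lapp$ (see Section \ref{sec:EllipticTheoryOnMathbbS2}). In particular, $\psi$ is supported on angular frequencies $\geq l$ and Theorem \ref{hp3} applies. Restricting the estimates therein to $\mathcal{H}^+\subset\{M\leq r\leq R_1\}$ yields
\begin{equation*}
\left|\partial_r^i \psi\right|(\tau,\theta,\phi)\longrightarrow 0\quad\text{as }\tau\to\infty,\qquad i=0,1,\ldots,l,
\end{equation*}
with rates $\tau^{-1}$ for $i\leq l-2$, $\tau^{-3/4}$ for $i=l-1$ and $\tau^{-1/4}$ for $i=l$.

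By Theorem \ref{t3}, on $\mathcal{H}^+$ we have the identity
\begin{equation*}
\partial_r^{l+1}\psi(\tau,\theta,\phi)=H_l[\psi](\theta,\phi)-\sum_{i=0}^l \beta_i\,\partial_r^i\psi(\tau,\theta,\phi),
\end{equation*}
where $H_l[\psi]$ is independent of $\tau$, being conserved along the null generators. Letting $\tau\to\infty$ in this identity and using the pointwise decay above, the sum on the right vanishes, which establishes the pointwise limit $\partial_r^{l+1}\psi(\tau,\theta,\phi)\to H_l[\psi](\theta,\phi)$ along $\mathcal{H}^+$.

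The genericity assertion reduces to a linear-algebra observation: evaluated on $\mathbb{S}^2_0=\tilde\Sigma_0\cap\mathcal{H}^+$, the quantity $H_l[\psi]$ is a fixed linear combination of $\partial_r^i\psi|_{\mathbb{S}^2_0}$ for $0\leq i\leq l+1$, with the coefficient of $\partial_r^{l+1}\psi|_{\mathbb{S}^2_0}$ equal to $1$. Since the transversal derivative $\partial_r^{l+1}\psi|_{\mathbb{S}^2_0}$ can be prescribed freely as part of the initial data on $\tilde\Sigma_0$, independently of $\partial_r^i\psi|_{\mathbb{S}^2_0}$ for $i\leq l$, the set of initial data for which $H_l[\psi]$ vanishes on a positive-measure subset of $\mathbb{S}^2_0$ is a proper (closed, infinite-codimensional) subspace of the initial data space. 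Hence for generic initial data, $H_l[\psi](\theta,\phi)\neq 0$ almost everywhere on $\mathbb{S}^2_0$. The argument presents no serious obstacle; the only point requiring a bit of care is that Theorem \ref{hp3} is applied to a wave of unit frequency $l$, so one must confirm that the norms $E_{k,l}$ appearing there are finite under the standing regularity hypothesis of Section \ref{sec:TheCauchyProblemForTheWaveEquation}, which follows from the higher order $L^2$ bounds of Theorem \ref{theorem3}.
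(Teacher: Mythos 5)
The first assertion is handled exactly as in the paper: you combine the conservation law $\partial_r^{l+1}\psi+\sum_{i=0}^{l}\beta_i\partial_r^{i}\psi=H_l[\psi]$ on $\mathcal{H}^{+}$ with the pointwise decay of $\partial_r^{i}\psi$, $i\leq l$, from Theorem \ref{hp3}, and this part is correct.

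The genericity argument, however, has a genuine gap. You assert that $\partial_r^{l+1}\psi|_{\mathbb{S}^2_0}$ ``can be prescribed freely as part of the initial data on $\tilde{\Sigma}_0$, independently of $\partial_r^i\psi|_{\mathbb{S}^2_0}$ for $i\leq l$.'' This is false for $l\geq 1$: on the spacelike hypersurface $\tilde{\Sigma}_0$ the free Cauchy data consist only of $\psi$ and its first normal derivative, and since $\partial_r$ is transversal to $\tilde{\Sigma}_0$, the quantities $\partial_r^{k}\psi|_{\mathbb{S}^2_0}$ for $k\geq 2$ are \emph{determined} by the wave equation from tangential derivatives of that data — they are not independent free parameters. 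Consequently the ``linear-algebra observation'' that $H_l[\psi]$ is a nontrivial functional because the coefficient of a freely prescribable quantity equals $1$ does not go through as stated; one would have to analyze the (nontrivial) map from the Cauchy data on $\tilde{\Sigma}_0$ to $H_l[\psi]$ and show its range is generic among degree-$l$ eigenfunctions. The paper sidesteps exactly this difficulty: it remarks that the data on $\tilde{\Sigma}_0$ do not a priori determine $H_l[\psi]$ unless $l=0$, and instead poses an equivalent Cauchy problem on a deformed hypersurface $\tilde{\Sigma}_0^{p}$ (coinciding with $\tilde{\Sigma}_0$ for $r\geq R_1$) to which all the derivatives entering $H_l[\psi]$ at $\mathbb{S}^2_0$ are \emph{tangential}; on $\tilde{\Sigma}_0^{p}$ the function $H_l[\psi]$ is then manifestly a freely prescribable, hence generic, eigenfunction of $\lapp$ of order $l$, and being real-analytic it is nonzero almost everywhere unless it vanishes identically. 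To repair your proof you should either adopt this change of initial hypersurface or supply the missing surjectivity argument for the data-to-$H_l$ map on $\tilde{\Sigma}_0$; as written, the genericity claim is not established.
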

\begin{proof}
Since 
\begin{equation*}
\partial_{r}^{l+1}\psi(\tau, \theta, \phi) +\sum_{i=0}^{l}\beta_{i}\partial_{r}^{i}\psi(\tau, \theta, \phi)=H_{l}[\psi](\theta,\phi)
\end{equation*}
on $\hh$ and since all the terms in the sum on the left hand side decay (see Theorem \ref{hp3}) we take  $\partial_{r}^{l+1}\psi(\tau, \theta, \phi)\rightarrow H_{l}[\psi](\theta, \phi)$ on $\hh$. It suffices to show that generically $H_{l}[\psi](\theta,\phi)\neq 0$ almost everywhere on $\mathbb{S}^{2}_{0}$. We will in fact show that for generic solutions $\psi$ of the wave equation the function $H_{l}[\psi]$ is a generic eigenfunction of order $l$ of $\lapp$ on $\mathbb{S}^{2}_{0}$.

Note that the initial data prescribed on $\tilde{\Sigma}_{0}$ do not a priori  determine the function $H_{l}[\psi]$ on $\mathbb{S}^{2}_{0}$ unless $l=0$. Indeed,  $H_{l}[\psi]$ involves derivatives of order $k\leq l+1$ which are not tangential to $\tilde{\Sigma}_{0}$. For this reason we consider another Cauchy problem of the wave equation with initial data prescribed on $\tilde{\Sigma}_{0}^{p}$, where the hypersurface $\tilde{\Sigma}_{0}^{p}$ is as depicted below:
\begin{figure}[H]
	\centering
		\includegraphics[scale=0.11]{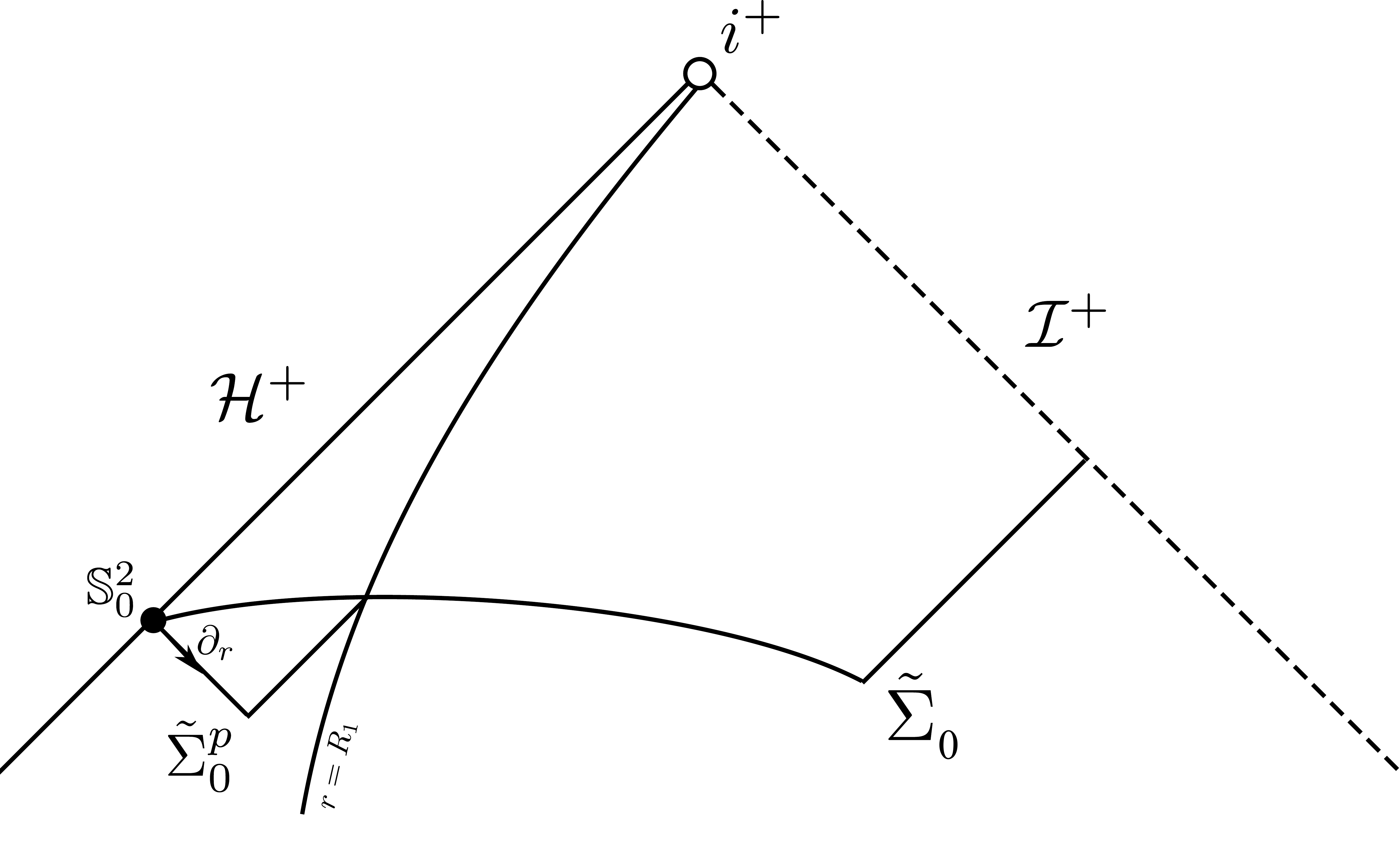}
	\label{fig:s0}
\end{figure} 
Note that the hypersurfaces $\tilde{\Sigma}_{0}$ and $\tilde{\Sigma}_{0}^{p}$ coincide for $r\geq R_{1}$. Any initial data set prescribed on $\tilde{\Sigma}_{0}$ gives rise to a unique initial data set of $\tilde{\Sigma}_{0}^{p}$ and vice versa. The Sobolev norms of the initial data on $\tilde{\Sigma}_{0}$ and $\tilde{\Sigma}_{0}^{p}$ can be compared using the pointwise and energy boundedness. Observe now that given initial data on $\tilde{\Sigma}_{0}^{p}$ the function $H_{l}[\psi]$ is completely determined on $\mathbb{S}^{2}_{0}$, since $H_{l}[\psi]$ involves only tangential to $\tilde{\Sigma}_{0}^{p}$ derivatives at $\mathbb{S}_{0}$. Therefore, generic initial data on $\tilde{\Sigma}_{0}^{p}$ give rise to generic eigenfunctions $H_{l}[\psi]$ of order $l$ of $\lapp$ on $\mathbb{S}_{0}$. Hence, for generic solutions $\psi$ of the wave equation the functions $H_{l}[\psi]$  do not vanish almost everywhere on $\mathbb{S}_{0}$.

\end{proof}

\subsection{Blow-up}
\label{sec:BlowUp}

We next show that the above non-decay results imply that  higher order derivatives of generic solutions $\psi$ blow-up along $\mathcal{H}^{+}$. To make our argument clear we first consider the spherically symmetric case where $l=0$. 

\begin{proposition}
Let $k\in\mathbb{N}$ with $k\geq 2$. Then there exists a positive constant $c$ which depends only on $M$ such that for all spherically symmetric solutions $\psi$ to the wave equation  we have 
\begin{equation*}
\left|\partial_{r}^{k}\psi\right|\geq c \left|H_{0}[\psi]\right|\tau^{k-1}
\end{equation*}
asymptotically on $\mathcal{H}^{+}$.
\label{rrl=0}
\end{proposition}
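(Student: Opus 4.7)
The plan is to induct on $k$ using a recurrence for $\partial_v\partial_r^k\psi$ on $\mathcal{H}^+$ obtained by commuting the wave equation with $\partial_r^{k-1}$. Applying the commutation formula \eqref{kcom} (with $k$ replaced by $k-1$) and restricting to $\mathcal{H}^+$, where $D(M) = D'(M) = R(M) = 0$ and where $\partial_r^{k-1}\lapp\psi = 0$ for spherically symmetric $\psi$, one is left with
\[
\partial_v\partial_r^k\psi = -\frac{1}{M}\partial_v\partial_r^{k-1}\psi - \frac{k(k-1)}{2M^2}\partial_r^{k-1}\psi + R_{k-2},
\]
on $\mathcal{H}^+$, where $R_{k-2}$ is a linear combination (with constants depending only on $M$ and $k$) of $\partial_v\partial_r^j\psi$ and $\partial_r^j\psi$ with $j\leq k-2$. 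The crucial coefficient $\tfrac{k(k-1)}{M^2}$ on $\partial_r^{k-1}\psi$ arises from $\binom{k-1}{2}\partial_r^2 D(M) = (k-1)(k-2)/M^2$ together with $(k-1)R'(M) = 2(k-1)/M^2$, using $\partial_r^2 D(M) = 2/M^2$ and $R'(M) = 2/M^2$.

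For the base case $k=2$: since $\partial_v\partial_r\psi = -\tfrac{1}{M}\partial_v\psi$ on $\mathcal{H}^+$ (the infinitesimal form of Proposition \ref{ndl0}) and both $\partial_v\psi$ and $\psi$ decay along $\mathcal{H}^+$ by Theorem \ref{t6}, while $\partial_r\psi = H_0[\psi] - \psi/M \to H_0[\psi]$ by Proposition \ref{nondecay}, the recurrence yields $\partial_v\partial_r^2\psi \to -\tfrac{1}{M^2}H_0[\psi]$ as $\tau\to\infty$. Integrating along the null generators of $\mathcal{H}^+$ then gives $\partial_r^2\psi(\tau,\omega) \sim -\tfrac{1}{M^2}H_0[\psi](\omega)\,\tau$.

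For the inductive step, assume that on $\mathcal{H}^+$ one has $\partial_r^{k-1}\psi \sim A_{k-1}H_0[\psi]\,\tau^{k-2}$ with $A_{k-1}\neq 0$, that $\partial_v\partial_r^j\psi = O(\tau^{j-2})$ for $2\leq j\leq k-1$, and that $\partial_v\psi$, $\partial_v\partial_r\psi$ decay. Substituting into the recurrence, the first term on the right is $O(\tau^{k-3})$, every term in $R_{k-2}$ is $O(\tau^{k-3})$ by the induction hypothesis, and the second term is asymptotically $-\tfrac{k(k-1)}{2M^2}A_{k-1}H_0[\psi]\,\tau^{k-2}$. Hence $\partial_v\partial_r^k\psi \sim -\tfrac{k(k-1)}{2M^2}A_{k-1}H_0[\psi]\,\tau^{k-2}$, and integrating once in $v$ along $\mathcal{H}^+$ gives $\partial_r^k\psi \sim A_kH_0[\psi]\,\tau^{k-1}$ with $A_k = -\tfrac{k}{2M^2}A_{k-1}\neq 0$. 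Taking $c=|A_k|/2$ yields the claimed lower bound.

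The main obstacle is the bookkeeping in the inductive step: one must verify that every term in $R_{k-2}$, and the $\partial_v\partial_r^{k-1}\psi$ term, is strictly lower order than $\tau^{k-2}$, so that they cannot cancel the leading contribution from $\partial_r^{k-1}\psi$. This is the analogue, in the spherically symmetric $l=0$ case, of the algebraic obstruction identified in Theorem \ref{ndk}; the key fact making the induction go through is that the coefficient $-\tfrac{k(k-1)}{2M^2}$ of $\partial_r^{k-1}\psi$ is nonzero for every $k\geq 2$, ensuring that at each step of the induction one gains exactly one additional power of $\tau$.
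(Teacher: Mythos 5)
Your proposal is correct and follows essentially the same route as the paper: both restrict the $\partial_r$-commuted wave equation \eqref{kcom} to $\mathcal{H}^{+}$ (where $D$, $D'$ and $R$ vanish, leaving a transport equation along the null generators), observe that the coefficient $\binom{k}{2}D''(M)+kR'(M)=k(k+1)/M^{2}$ of the top undifferentiated term is nonzero, and integrate in $v$ inductively to gain one power of $\tau$ at each step. The only difference is organizational — you track the exact asymptotic constants $A_k$ for $\partial_v\partial_r^k\psi$ before integrating, whereas the paper integrates the identity directly and argues that $\int_0^\tau\partial_r^k\psi$ dominates the boundary and lower-order terms — and your constant $A_2=-1/M^2$ agrees with what the paper's $k=2$ computation yields.
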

\begin{proof}
We work inductively. Consider the case $k=2$. By differentiating the wave equation (see for instance \eqref{kcom}) we take
\begin{equation}
2T\partial_{r}\partial_{r}\psi+\frac{2}{M}T\partial_{r}\psi-\frac{2}{M^{2}}T\psi+\frac{2}{M^{2}}\partial_{r}\psi=0
\label{1edw}
\end{equation}
on $\mathcal{H}^{+}$. Note that  $T\partial_{r}^{2}\psi$ and $\partial_{r}\psi$ appear with the same sign. If $H_{0}[\psi]=0$ then there is nothing to prove.  Let's suppose that $H_{0}>0$. Then
\begin{equation*}
\begin{split}
\int_{0}^{\tau}\partial_{r}\psi=\int_{0}^{\tau}H_{0}[\psi]-\frac{1}{M}\psi=H_{0}[\psi]\tau-\frac{1}{M}\int_{0}^{\tau}\psi.
\end{split}
\end{equation*}
We observe 
\begin{equation*}
\begin{split}
\left|\int_{0}^{\tau}\psi\right|\leq \int_{0}^{\tau}\left|\psi\right|\leq CE_{6}\int_{0}^{\tau}\frac{1}{\tau^{\frac{3}{5}}}=CE_{6}\tau^{\frac{2}{5}}.
\end{split}
\end{equation*}
Therefore, 
\begin{equation*}
\begin{split}
\int_{0}^{\tau}\partial_{r}\psi\geq H_{0}[\psi]\tau-CE_{6}\tau^{\frac{2}{5}}\geq cH_{0}[\psi]\tau
\end{split}
\end{equation*}
asymptotically on $\hh$. By integrating \eqref{1edw} along $\mathcal{H}^{+}$ we obtain
\begin{equation*}
\begin{split}
\partial_{r}^{2}\psi(\tau)&=\partial_{r}^{2}\psi(0)+\frac{1}{M}\partial_{r}\psi(0)-\frac{1}{M}\partial_{r}\psi(\tau)-\frac{1}{M^{2}}\psi(0)+\frac{1}{M^{2}}\psi(\tau)-\frac{1}{2M^{2}}\int_{0}^{\tau}\partial_{r}\psi\\
&\leq \partial_{r}^{2}\psi(0)+\frac{1}{M}\partial_{r}\psi(0)-\frac{1}{M}\left(H_{0}[\psi]+\frac{1}{M}\psi\right)-\frac{1}{M^{2}}\psi(0)+\frac{1}{M^{2}}\psi(\tau)-\frac{1}{2M^{2}}\int_{0}^{\tau}\partial_{r}\psi\\
&\leq \partial_{r}^{2}\psi(0)+\frac{1}{M}\partial_{r}\psi(0)-\frac{1}{M}H_{0}[\psi]+CE_{6}\frac{1}{\tau^{\frac{3}{5}}}-cH_{0}[\psi]\tau\\
&\leq -cH_{0}[\psi]\tau
\end{split}
\end{equation*}
asymptotically on $\hh$. A similar argument works for any $k\geq 2$. Indeed, we integrate \eqref{kcom} for $k\geq 1$ (and $l=0$) along $\hh$ and note that  $T\partial_{r}^{k+1}\psi$ and $\partial_{r}^{k}\psi$ appear with the same sign. Therefore, by induction on $k$, the integral $\int_{0}^{\tau}\partial_{r}^{k}\psi$ dominates asymptotically all the remaining terms which yields the required blow-up rates on $\mathcal{H}^{+}$. Note that the sign of $\partial_{r}^{k}\psi$ depends on $k$ and $H_{0}[\psi]$.

\end{proof}

\begin{corollary}
Let $k\geq 2$. For generic initial data which give rise to solutions $\psi$ of the wave equation we have
\begin{equation*}
\begin{split}
\left|\partial_{r}^{k}\psi\right|\rightarrow +\infty
\end{split}
\end{equation*}
along $\mathcal{H}^{+}$.
\end{corollary}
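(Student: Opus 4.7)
The plan is to reduce the general corollary to the spherically symmetric Proposition \ref{rrl=0} by exploiting the spherical decomposition developed in Section \ref{sec:EllipticTheoryOnMathbbS2}. Write $\psi=\psi_{0}+\psi_{\geq 1}$, where $\psi_{0}$ is the spherical average and $\psi_{\geq 1}=\sum_{l\geq 1}\psi_{l}$. Since $T$, $\partial_{r}$ and $\lapp$ each preserve the eigenspaces $E^{l}$ (as recorded at the end of Section \ref{sec:EllipticTheoryOnMathbbS2}), both $\psi_{0}$ and $\psi_{\geq 1}$ are themselves solutions of the wave equation and $\partial_{r}^{k}\psi_{0}=(\partial_{r}^{k}\psi)_{0}$ is spherically symmetric. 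Genericity of the Cauchy data for $\psi$ on $\tilde{\Sigma}_{0}$ translates directly into non-vanishing of the conserved quantity $H_{0}[\psi_{0}]$ associated to the $l=0$ mode (Proposition \ref{ndl0}).

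Applying Proposition \ref{rrl=0} to the spherically symmetric wave $\psi_{0}$ produces
\begin{equation*}
|\partial_{r}^{k}\psi_{0}|(\tau)\geq c\,|H_{0}[\psi_{0}]|\,\tau^{k-1}
\end{equation*}
asymptotically along $\mathcal{H}^{+}$, with $H_{0}[\psi_{0}]\neq 0$ for generic data. Next I would establish a matching upper bound $|\partial_{r}^{k}\psi_{\geq 1}|\leq C\tau^{k-2}$ on $\mathcal{H}^{+}$. Decomposing $\psi_{\geq 1}=\sum_{l\geq 1}\psi_{l}$, the exact same integration-along-null-generators argument as in Proposition \ref{rrl=0}, applied to the identity \eqref{kcom} at each fixed frequency $l$, yields $|\partial_{r}^{k}\psi_{l}|\lesssim E_{l}\,\tau^{k-l-1}$; this is precisely the content of the second bullet of Theorem \ref{theo9}. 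For $l\geq 1$ every such bound is dominated by $\tau^{k-2}$. Commuting with the angular Killing fields $\Omega_{i}$ and applying the Sobolev embedding on $\mathbb{S}^{2}$ (as in Section \ref{sec:UniformPointwiseBoundedness}) converts the frequency-by-frequency estimates into a pointwise bound for the sum $\psi_{\geq 1}$.

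Combining the two ingredients gives the triangle-inequality estimate
\begin{equation*}
|\partial_{r}^{k}\psi|\geq |\partial_{r}^{k}\psi_{0}|-|\partial_{r}^{k}\psi_{\geq 1}|\geq c\,|H_{0}[\psi_{0}]|\,\tau^{k-1}-C\tau^{k-2}\longrightarrow+\infty
\end{equation*}
asymptotically along $\mathcal{H}^{+}$, which is the statement of the corollary.

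The only non-routine step is the uniform control of $\psi_{\geq 1}$, since the pointwise blow-up rate one obtains per frequency grows with $k-l-1$. The saving feature is that the spherically symmetric mode has the fastest rate $\tau^{k-1}$, strictly faster than any higher mode (which is at most $\tau^{k-2}$), so after fixing enough angular regularity on the data (so that the Sobolev sum over $l$ converges) the $l=0$ contribution dominates. This is exactly the mechanism behind the hierarchy in Theorem \ref{theo9}, and I would either invoke that theorem directly or reproduce its proof for the range $l\geq 1$ here.
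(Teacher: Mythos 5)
Your first step coincides with the paper's: decompose $\psi=\psi_{0}+\psi_{\geq 1}$ and apply Proposition \ref{rrl=0} to the spherical mean to get $\left|\partial_{r}^{k}\psi_{0}\right|\geq c\left|H_{0}[\psi]\right|\tau^{k-1}$ with $H_{0}[\psi]\neq 0$ generically. But the way you then pass from $\psi_{0}$ to $\psi$ has a genuine gap. Your triangle-inequality argument requires a pointwise \emph{upper} bound $\left|\partial_{r}^{k}\psi_{\geq 1}\right|\leq C\tau^{k-2}$ on $\mathcal{H}^{+}$, and you attribute the per-frequency estimate $\left|\partial_{r}^{k}\psi_{l}\right|\lesssim E_{l}\tau^{k-l-1}$ to the second bullet of Theorem \ref{theo9}. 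That bullet is a \emph{lower} bound ($\left|\partial_{r}^{l+k}\psi\right|\geq c\left|H_{l}[\psi]\right|\tau^{k-1}$); nowhere does the paper prove the corresponding upper bounds, and even granting that the same integration-along-$\mathcal{H}^{+}$ argument could be run in the other direction for each fixed $l$, you would still need to sum over all $l\geq 1$ with constants $E_{l}$ whose $l$-dependence is never tracked. Your parenthetical ``after fixing enough angular regularity on the data (so that the Sobolev sum over $l$ converges)'' is precisely the step that is not justified and would require substantial new work.

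The paper avoids this entirely by exploiting $L^{2}(\mathbb{S}^{2})$-orthogonality of the eigenspaces: since $\partial_{r}^{k}$ preserves each $E^{l}$, the cross terms vanish and
\begin{equation*}
\int_{\mathbb{S}^{2}}\left|\partial_{r}^{k}\psi\right|^{2}(M,\omega)\,d\omega\;\geq\;4\pi\left|\partial_{r}^{k}\psi_{0}\right|^{2},
\end{equation*}
so no control whatsoever on $\psi_{\geq 1}$ is needed; the blow-up of the right hand side already forces the (sup over the spheres of the) left hand side to blow up. If you want to salvage your route, replace the unproven pointwise upper bound on $\psi_{\geq 1}$ by this one-line orthogonality observation.
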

\begin{proof}
Decompose $\psi=\psi_{0}+\psi_{\geq 1}$ and thus
\begin{equation*}
\begin{split}
\int_{\mathbb{S}^{2}}{\left|\partial_{r}^{k}\psi\right|^{2}(M,\omega)d\omega}\geq 4\pi\left|\partial_{r}^{k}\psi_{0}\right|^{2}(M,\omega).
\end{split}
\end{equation*}
Hence the result follows by commuting with $\Omega_{i}$, the Sobolev inequality and  the fact that the right hand side blows up as $\tau\rightarrow +\infty$ as $H_{0}[\psi]\neq 0$ generically. 
\end{proof}
Let us consider the case of a general angular frequency $l$. 
\begin{proposition}
Let $k,l\in\mathbb{N}$ with $k\geq 2$. Then there exists a positive constant $c$ which depends only on $M,l,k$ such that for all  solutions $\psi$ to the wave equation which are supported on the frequency $l$ we have 
\begin{equation*}
\left|\partial_{r}^{l+k}\psi\right|(\tau,\theta,\phi)\geq c \left|H_{l}[\psi](\theta,\phi)\right|\tau^{k-1}
\end{equation*}
asymptotically on $\mathcal{H}^{+}$.
\label{rrl}
\end{proposition}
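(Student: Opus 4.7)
The plan is to proceed by induction on $k \geq 2$, mirroring the structure of the $l=0$ case (Proposition \ref{rrl=0}) but using the conservation law of Theorem \ref{ndk} in place of the elementary $l=0$ identity. The starting point is that by Proposition \ref{nondecay} we already know $\partial_r^{l+1}\psi(\tau,\theta,\phi)\to H_l[\psi](\theta,\phi)$ along $\mathcal{H}^+$, and in fact the difference $\partial_r^{l+1}\psi-H_l[\psi]$ decays at least like $\tau^{-1/4}$ by Theorem \ref{hp3} (since it is a linear combination of $\partial_r^j\psi$ with $j\le l$). Hence $\int_0^\tau \partial_r^{l+1}\psi\,d\tau' = H_l[\psi]\tau + O(\tau^{3/4})$, which is the analogue of the key computation in the $l=0$ proof.

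For the base case $k=2$, I would apply $\partial_r^{l+1}$ to $\Box_g\psi=0$ and restrict to $\mathcal{H}^+$. Using the formula \eqref{kcom} together with the computation in the proof of Theorem \ref{ndk}, on $\mathcal{H}^+$ the coefficient of $\partial_r^{l+1}\psi$ equals $(l+1)(l+2)/M^2-l(l+1)/M^2=2(l+1)/M^2\neq 0$, while the terms involving a tangential derivative $\partial_v$ are a linear combination of $\partial_v\partial_r^j\psi$ for $j\leq l+2$ dominated by $2\partial_v\partial_r^{l+2}\psi$. Integrating this identity along a null generator of $\mathcal{H}^+$ from $0$ to $\tau$, the tangential terms produce boundary contributions bounded by $|\partial_r^{l+2}\psi(\tau)|$ plus quantities that decay by Theorem \ref{hp3}, while the $\partial_r^j\psi$ terms with $j\le l$ yield $O(\tau^{3/4})$ after integration in $\tau$. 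The dominant contribution is $\frac{2(l+1)}{M^2}\int_0^\tau \partial_r^{l+1}\psi\,d\tau' = \frac{2(l+1)}{M^2}H_l[\psi]\tau+O(\tau^{3/4})$. Rearranging yields $|\partial_r^{l+2}\psi|\ge c|H_l[\psi]|\tau$ asymptotically on $\mathcal{H}^+$, with a definite sign.

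For the inductive step, assume $|\partial_r^{l+j}\psi|\ge c_j|H_l[\psi]|\tau^{j-1}$ with a definite sign (tracked through the recursion) for $2\le j\le k-1$. Apply $\partial_r^{l+k-1}$ to $\Box_g\psi=0$ on $\mathcal{H}^+$. From the computation in the proof of Theorem \ref{ndk}, the coefficient of $\partial_r^{l+k-1}\psi$ on $\mathcal{H}^+$ equals $\bigl[(l+k-1)(l+k)-l(l+1)\bigr]/M^2=(k-1)(2l+k)/M^2>0$. The identity takes the schematic form
\begin{equation*}
\partial_v\bigl(\text{linear combination of }\partial_r^j\psi,\ j\le l+k\bigr)+\frac{(k-1)(2l+k)}{M^2}\partial_r^{l+k-1}\psi+(\text{terms in }\partial_r^j\psi,\ j<l+k-1)=0.
\end{equation*}
Integrating in $\tau$: the tangential part produces $\partial_r^{l+k}\psi(\tau)$ up to boundary contributions that by the inductive hypothesis grow at most like $\tau^{k-2}$; the lower-order non-tangential terms $\partial_r^j\psi$ either decay (for $j\le l$) or grow like $\tau^{j-l-1}\le\tau^{k-3}$ after time-integration; and the main term $\int_0^\tau \partial_r^{l+k-1}\psi$ grows like $|H_l[\psi]|\tau^{k-1}/(k-1)$ with a definite sign by the inductive hypothesis. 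The $\tau^{k-1}$ contribution dominates, giving $|\partial_r^{l+k}\psi|\ge c|H_l[\psi]|\tau^{k-1}$.

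The main obstacle is bookkeeping: carefully identifying which terms are tangential to $\mathcal{H}^+$ (and thus yield boundary contributions on integration) versus non-tangential (yielding bulk contributions), and verifying that all lower-order derivatives $\partial_r^j\psi$ with $j<l+k$ can be bounded by quantities growing strictly slower than $\tau^{k-1}$. The critical inputs are (i) the pointwise decay rates of $\partial_r^j\psi$ for $j\le l$ from Theorem \ref{hp3} (particularly the weakest, $\tau^{-1/4}$ for $j=l$), whose $\tau$-integrals remain $o(\tau)$, and (ii) tracking the signs of $\partial_r^{l+j}\psi$ through the induction so that the contributions do not accidentally cancel — this is the analogue of the sign-tracking in the $l=0$ proof and works because $(k-1)(2l+k)/M^2$ is strictly positive for every $k\ge 2$.
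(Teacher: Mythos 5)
Your proposal is correct and follows essentially the same route as the paper: establish $\int_0^\tau\partial_r^{l+1}\psi = H_l[\psi]\tau + o(\tau)$ from the conservation law and the decay of $\partial_r^j\psi$ for $j\le l$, then integrate the $\partial_r^{l+k-1}$-commuted wave equation \eqref{kcom} along the null generators of $\mathcal{H}^+$ and induct on $k$, tracking signs so the dominant term $\int_0^\tau\partial_r^{l+k-1}\psi\sim\tau^{k-1}$ is not cancelled. Your explicit computation of the non-vanishing coefficient $(k-1)(2l+k)/M^2$ of $\partial_r^{l+k-1}\psi$ on $\mathcal{H}^+$ is a correct and welcome elaboration of a step the paper leaves implicit.
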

\begin{proof}
We first consider $k=2$. If $H_{l}[\psi](\theta,\phi)=0$ then there is nothing to prove. Suppose that $H_{l}[\psi](\theta,\phi)>0$. Note 
 \begin{equation*}
 \begin{split}
 \int_{0}^{\tau}\partial_{r}^{l+1}\psi&=H_{l}[\psi]\tau-\int_{0}^{\tau}\sum_{i=0}^{l}\beta_{i}\partial_{r}^{i}\psi\\
 &\geq cH_{l}[\psi]\tau
 \end{split}
 \end{equation*}
 asymptotically on $\hh$, since the integral on the right hand side is eventually dominated by $H_{l}[\psi]\tau$ in view of Theorem \ref{hp3}. If we integrate \eqref{kcom} (applied for $k=l+1)$ along the null geodesic of $\mathcal{H}^{+}$ whose projection on the sphere is $(\theta,\phi)$ we will obtain
  \begin{equation*}
 \begin{split}
 \partial_{r}^{l+2}\psi(\tau,\theta,\phi)\leq -cH_{l}[\psi](\theta,\phi)\tau,
  \end{split}
 \end{equation*}
 since the integral $ \int_{0}^{\tau}\partial_{r}^{l+1}\psi$ eventually dominates all the remaining terms (again in view of the previous decay results). The proposition follows inductively by integrating \eqref{kcom} as in Proposition \ref{rrl=0}. Recall finally that for generic solutions $\psi$ we have $H_{l}[\psi]\neq 0$ almost everywhere on $\mathbb{S}^{2}_{0}$.

\end{proof}

The next theorem provides  blow-up results for the higher order non-degenerate energy. It also shows that our estimates in Section \ref{sec:HigherOrderEstimates} are in fact sharp (regarding at least the restriction on the angular frequencies). 
\begin{theorem}
Fix $R_{1}$ such that $R_{1}>M$. Let also $k,l\in\mathbb{N}$. Then  for generic solutions $\psi$ of the wave equation which are supported on the (fixed) angular frequency $l$ we have
\begin{equation*}
\displaystyle\int_{\tilde{\Sigma}_{\tau}\cap\left\{M\leq r\leq R_{1}\right\}}{J_{\mu}^{N}(\partial_{r}^{k}\psi)n_{\tilde{\Sigma}_{\tau}}^{\mu}}\longrightarrow +\infty 
\end{equation*}
as $\tau\rightarrow +\infty$ for all $k\geq l+1$.
\label{hoeblowup}
\end{theorem}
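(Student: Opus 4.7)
The plan is to convert the pointwise blow-up along $\mathcal{H}^{+}$ furnished by Proposition \ref{rrl} into an $L^{2}$ blow-up of the transversal derivative over a thin but explicit radial slab abutting $\mathcal{H}^{+}$. Fix $k\geq l+1$ and a generic solution $\psi$ supported on angular frequency $l$, so that by the genericity discussion at the end of Proposition \ref{nondecay} the conserved quantity $H_{l}[\psi]$ is nonzero on a set $A\subset \mathbb{S}^{2}_{0}$ of positive measure and in fact $|H_{l}[\psi]|\geq h>0$ on some $A_{0}\subset A$. Applying Proposition \ref{rrl} with $k'=k+1-l\geq 2$ yields asymptotically on $\mathcal{H}^{+}$ the lower bound
\[
\left|\partial_{r}^{k+1}\psi\right|(\tau,\theta,\phi)\;\geq\; c\,\left|H_{l}[\psi](\theta,\phi)\right|\,\tau^{k-l}.
\]

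The crux is to upgrade this estimate from $\mathcal{H}^{+}$ to a uniform radial neighbourhood of $\mathcal{H}^{+}$. The plan is to run the same inductive integration of the commuted wave equation \eqref{kcom} used in the proofs of Propositions \ref{rrl=0} and \ref{rrl}, but now along the integral curves of $T$ inside $\{M\leq r\leq M+\delta_{0}\}$ instead of on $\mathcal{H}^{+}$ itself, carefully retaining the extra terms involving $D(r)$, $D'(r)$ and $R(r)$ which vanish on $\mathcal{H}^{+}$ to orders two, one and one respectively. For $r$ sufficiently close to $M$ these extra contributions are subdominant with respect to the $H_{l}[\psi]$-generated growth, so the same ODE-type argument should yield the matching polynomial upper bound
\[
\left|\partial_{r}^{k+2}\psi\right|(\tau,r,\theta,\phi)\;\leq\; C'\,\tau^{k-l+1}
\]
throughout a fixed neighbourhood $\{M\leq r\leq M+\delta_{0}\}$. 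Combining the two bounds via the fundamental theorem of calculus in $r$, one then obtains
\[
\left|\partial_{r}^{k+1}\psi\right|(\tau,r,\theta,\phi)\;\geq\; \tfrac{c}{2}\,\left|H_{l}[\psi](\theta,\phi)\right|\tau^{k-l}\qquad\text{whenever}\qquad r-M\;\leq\;\tfrac{c\,|H_{l}[\psi](\theta,\phi)|}{2C'}\,\tau^{-1}.
\]

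With this pointwise lower bound in place, set $\mathcal{U}_{\tau}:=\tilde{\Sigma}_{\tau}\cap\{M\leq r\leq M+ch/(2C')\tau^{-1}\}\cap(A_{0}\times\mathbb{R})$. The volume of $\mathcal{U}_{\tau}$ is bounded below by $c''\tau^{-1}$ (with $c''>0$ depending on $|A_{0}|$, $h$, $C'$ and $\tilde{\Sigma}_{0}$), and hence
\[
E_{k}(\tau)\;\geq\;\int_{\mathcal{U}_{\tau}}\!\left(\partial_{r}^{k+1}\psi\right)^{2}\;\gtrsim\; h^{2}\,\tau^{2(k-l)}\cdot \tau^{-1}\;=\; h^{2}\,\tau^{2k-2l-1}\;\longrightarrow\;+\infty,
\]
since $2k-2l-1\geq 1$ for every $k\geq l+1$. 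This forces the energy to blow up.

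The main obstacle is clearly the middle step: extending the ODE argument of Propositions \ref{rrl=0}--\ref{rrl} from $\mathcal{H}^{+}$ to an actual neighbourhood. Away from the horizon, the commuted wave equation \eqref{comhigherorder} couples $\partial_{r}^{k+2}\psi$ to $D\partial_{r}^{k+3}\psi$, $\partial_{v}\partial_{r}^{k+1}\psi$ and to the angular Laplacian through nonvanishing coefficients; one must show that all these correction terms, once evaluated on the explicit hierarchy of growth/decay rates already established by Theorem \ref{theo8} for $\partial_{r}^{i}\psi$ with $i\leq l$ and by Proposition \ref{rrl} for $\partial_{r}^{l+j}\psi$ with $j\geq 1$, are dominated by the principal term driven by $H_{l}[\psi]$. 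The natural route is to argue inductively in $k$, using Theorem \ref{theo8} as the base case and the previously derived blow-up rate on $\mathcal{H}^{+}$ to bootstrap the upper bound one derivative further, with the radius $\delta_{0}$ chosen once and for all so that the $D$- and $R$-coefficients contribute at most half of the leading growth.
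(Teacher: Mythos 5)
Your overall strategy --- converting the pointwise blow-up on $\mathcal{H}^{+}$ into an $L^{2}$ lower bound over a thin slab --- is not the paper's route, and it hinges on a step that you correctly flag as the crux but do not, and I believe cannot easily, close: the uniform upper bound $\left|\partial_{r}^{k+2}\psi\right|\leq C'\tau^{k-l+1}$ on a \emph{fixed} neighbourhood $\left\{M\leq r\leq M+\delta_{0}\right\}$. Nothing of this kind is proved in the paper (Theorem \ref{theo8} gives decay only for $\partial_{r}^{j}\psi$ with $j\leq l$, and Propositions \ref{nondecay}, \ref{rrl} give only \emph{lower} bounds, only \emph{on} $\mathcal{H}^{+}$). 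More importantly, the ODE argument of Propositions \ref{rrl=0} and \ref{rrl} works precisely because, upon restricting \eqref{kcom} to $\mathcal{H}^{+}$, the coefficients of the two top-order terms $\partial_{r}^{k+3}\psi$ and $\partial_{r}^{k+2}\psi$ (namely $D$ and $R$) vanish, so the transport equation for $\partial_{r}^{k+2}\psi$ along $T$ closes in terms of already-controlled quantities. Off the horizon, on a fixed neighbourhood, $D$ does not vanish, and the transport equation for $\partial_{r}^{k+2}\psi$ acquires the source $D\,\partial_{r}^{k+3}\psi$ with a nondegenerate coefficient; bounding it requires an a priori bound on $\partial_{r}^{k+3}\psi$, which in turn requires $\partial_{r}^{k+4}\psi$, and so on --- an infinite regress that your proposed induction (which runs in the wrong direction, needing the \emph{next} derivative at each step rather than a previous one) does not terminate. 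Any attempt to close it with energy estimates is circular, since the theorem you are proving asserts exactly that those higher-order energies blow up.

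The paper avoids needing any upper bound on higher transversal derivatives by arguing by contradiction, essentially reversing your use of the fundamental theorem of calculus. Suppose $\int_{\tilde{\Sigma}_{\tau_{j}}\cap\left\{M\leq r\leq R_{1}\right\}}J_{\mu}^{N}(\partial_{r}^{k}\psi)n^{\mu}\leq B$ along a sequence $\tau_{j}\rightarrow+\infty$. First, for any fixed $r_{0}>M$ one shows $\left|\partial_{r}^{k}\psi\right|\rightarrow 0$ along $\left\{r=r_{0}\right\}$ (away from the horizon one trades $\partial_{r}$ derivatives for $T$ derivatives via the wave equation and elliptic estimates, picking up a factor $D(r_{0})^{-m_{k}}$, and uses the decay of $E_{1}(T^{i}\psi)$). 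Then, choosing $r_{0}-M$ small in terms of $B$ and applying Cauchy--Schwarz to $\int_{M}^{r_{0}}\left|\partial_{\rho}\partial_{r}^{k}\psi\right|$ --- which is controlled by $(r_{0}-M)^{1/2}B^{1/2}$ --- one concludes $\int_{\mathbb{S}^{2}}\left|\partial_{r}^{k}\psi(\tau_{j},M)\right|\rightarrow 0$. This contradicts Propositions \ref{nondecay} and \ref{rrl}, which give $\int_{\mathbb{S}^{2}(M)}\left|\partial_{r}^{k}\psi\right|(\tau_{j})\geq c\,\tau_{j}^{k-l-1}\int_{\mathbb{S}^{2}(M)}\left|H_{l}[\psi]\right|>0$ for generic data. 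Note that this argument only needs the information you already have (non-decay on $\mathcal{H}^{+}$, decay at fixed $r_{0}>M$, and the assumed energy bound), at the price of yielding a purely qualitative divergence rather than the quantitative rate $\tau^{2k-2l-1}$ your computation would give if the missing step were available. I recommend you restructure your proof along these lines, or else supply a genuine proof of the propagation estimate, which I do not see how to obtain with the tools in the paper.
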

\begin{proof}

 Consider $M<r_{0}<R_{1}$ and let $\delta$ be the cut-off introduced in the proof of Theorem \ref{hp3}.  Then,
\begin{equation*}
\begin{split}
\int_{\mathbb{S}^{2}}{\left(\partial_{r}^{k}\psi\right)^{2}(r_{0},\omega)d\omega}&=-2\int_{\mathbb{S}^{2}}{\int_{r_{0}}^{R_{1}+1}{(\partial_{r}^{k}(\delta\psi))(\partial_{\rho}\partial_{r}^{k}(\delta\psi))d\rho}d\omega}\\
&\leq C\int_{\tilde{\Sigma}_{\tau}\cap\left\{r_{0}\leq r\leq R_{1}+1\right\}}{\sum_{i=0}^{k}(T\partial_{r}^{i}\psi)^{2}+\sum_{i=0}^{k+1}(\partial_{r}^{i}\psi)^{2}},
\end{split}
\end{equation*}
where $C$ depends on $M$,  $R_{1}$ and $\tilde{\Sigma}_{0}$. Then,
\begin{equation*}
\begin{split}
\int_{\mathbb{S}^{2}}{\left(\partial_{r}^{k}\psi\right)^{2}(r_{0},\omega)d\omega}&\leq \frac{C}{D^{m_{k}}(r_{0})}\int_{\tilde{\Sigma}_{\tau}\cap\left\{r_{0}\leq r\leq R_{1}+1\right\}}{\sum_{i=0}^{k}J_{\mu}^{T}(T^{i}\psi)n^{\mu}_{\tilde{\Sigma}_{\tau}}}\\
&\leq \frac{C}{(r_{0}-M)^{2m_{k}}}\left(\sum_{i=0}^{k}E_{1}(T^{i}\psi)\right)\frac{1}{\tau^{2}},
\end{split}
\end{equation*}
where $m_{k}\in\mathbb{N}$. Note that for the above inequality we used local elliptic estimates (or a more pedestrian way is to use the wave equation and solve with respect to $\partial_{r}^{k}\psi$; this is something we can do since $D(r_{0})>0$). Then using \eqref{kcom} we can inductively replace the $\partial_{r}$ derivatives with the $T$ derivatives.  Therefore, commuting with $\Omega_{i}$ and applying the Sobolev inequality imply that  for any $r_{0}>M$ we have $\left|\partial_{r}^{k}\psi\right|\rightarrow 0$ as $\tau\rightarrow +\infty$ along $r=r_{0}$. Let us assume now that the energy of $\partial_{r}^{k}\psi$ on $\tilde{\Sigma}_{\tau_{j}}\cap\left\{M\leq r\leq R_{1}\right\}$ is uniformly bounded by $B$ (as $\tau_{j}\rightarrow +\infty$). Given $\epsilon >0$ take $r_{0}$ such that $r_{0}-M=\frac{\epsilon^{2}}{4Br_{0}^{2}}$ and let $\tau_{\epsilon}$ be such that for all $\tau\geq \tau_{\epsilon}$ we have $\left|\partial_{r}^{k}\psi(\tau,r_{0})\right|\leq \frac{\epsilon}{8\pi}$. Then,
\begin{equation*}
\begin{split}
\int_{\mathbb{S}^{2}}\left|\partial_{r}^{k}\psi(\tau_{j},M)\right|&\leq \int_{\mathbb{S}^{2}}\left|\partial_{r}^{k}\psi(\tau_{j}, r_{0})\right|+\int_{\tilde{\Sigma}_{\tau_{j}}\cap\left\{M\leq r\leq r_{0}\right\}}\left|\partial_{\rho}\partial_{r}^{k}\psi\right|\\
&\leq \frac{\epsilon}{2}+r_{0}(r_{0}-M)^{\frac{1}{2}}\left(\int_{\tilde{\Sigma}_{\tau_{j}}\cap\left\{M\leq r\leq R_{1}\right\}}{J_{\mu}^{N}(\partial_{r}^{k}\psi)n_{\tilde{\Sigma}_{\tau_{j}}}^{\mu}}\right)^{\frac{1}{2}}\leq \epsilon,
\end{split}
\end{equation*}
for all $\tau\geq \tau_{\epsilon}$.  This proves that $\int_{\mathbb{S}^{2}}\left|\partial_{r}^{k}\psi(\tau_{j},M)\right|\rightarrow 0$ as $t_{j}\rightarrow +\infty$ along $\hh$. However, in view of Propositions \ref{nondecay} and \ref{rrl} we have
\begin{equation*}
\int_{\mathbb{S}^{2}(M)}\left|\partial_{r}^{k}\psi\right|(\tau_{j})\geq c\tau_{j}^{k-1}\int_{\mathbb{S}^{2}(M)}\left|H_{l}[\psi]\right|.
\end{equation*}
We have seen that for generic $\psi$ the function $H_{l}[\psi]$ is non-zero almost everywhere and since it is smooth we have $\int_{\mathbb{S}^{2}(M)}\left|H_{l}[\psi]\right|>0$. This shows that the integral $\int_{\mathbb{S}^{2}}\left|\partial_{r}^{k}\psi(\tau_{j},M)\right|$ can not decay, contradiction.
\end{proof}

\section{Acknowledgements}
\label{sec:Acknowledgements}

I would like to thank Mihalis Dafermos for introducing to me  the problem and for his teaching and advice. I also thank Igor Rodnianski for sharing useful insights. I am supported by a Bodossaki Grant.

\appendix

\section{On the Geometry of Reissner-Nordstr\"{o}m}
\label{sec:OnTheGeometryOfReissnerNordstrOM}

The coupled Einstein-Maxwell equations consist of the system
\begin{equation}
\begin{split}
& R_{\mu\nu}\left(g\right)-\frac{1}{2}R\left(g\right)g_{\mu\nu}=T_{\mu\nu},\\
& T_{\mu\nu}=2\left(F_{\mu}^{\rho}F_{\nu\rho}-\frac{1}{4}g_{\mu\nu}F^{ab}F_{ab}\right),\\
& \nabla ^{\mu}F_{\mu\nu}=0,\\
& dF=0,
\end{split}
\label{eme}
\end{equation}
where $g$ is Lorentzian metric on an appropriate manifold $\mathcal{M}$ and $R_{\mu\nu}\left(g\right),R\left(g\right)$ are the Ricci and scalar curvature of the Levi-Civita connection, respectively and  $F$  a 2-form on $\mathcal{M}$. Here  $T_{\mu\nu}$ denotes  the electromagnetic energy momentum tensor.

The unique family of spherically symmetric  asymptotically flat solutions of these equations is the two parameter  \textit{Reissner-Nordstr\"{o}m} family of 4-dimensional Lorentzian manifolds $\left(\mathcal{N}_{M,e},g_{M,e}\right)$ where the parameters $M$ and $e$ are called mass and (electromagnetic) charge, respectively. The extreme case corresponds to $M=\left|e\right|$.

\subsection{Constructing the Extention of Reissner-Nordstr\"{o}m}
\label{sec:ConstructingTheExtentionOfReissnerNordstrOM}

We first present the Reissner-Nordstr\"{o}m  metric in local coordinates $\left(t,r,\theta,\phi\right)$ which were discovered in 1916 \cite{r} and 1918 \cite{n}. In these coordinates, one metric component blows up for various values of $r$ and it is not a priori obvious what is the appropriate underlying manifold to study the geometry of this solution. Indeed, as we shall see, one can construct another   coordinate system $\left(v,r,\theta,\phi\right)$ which covers a ``bigger'' manifold (which we will denote by $\tilde{\mathcal{M}}$) which is homeomorphic to $\mathbb{R}^{2}\times\mathbb{S}^{2}$.

The Reissner-Nordstr\"{o}m metric $g=g_{M,e}$ in the coordinates $\left(t,r\right)$ is given by
\begin{equation}
g=-Ddt^{2}+\frac{1}{D}dr^{2}+r^{2}g_{\scriptstyle\mathbb{S}^{2}},
\label{rnm}
\end{equation}
where 
\begin{equation}
D=D\left(r\right)=1-\frac{2M}{r}+\frac{e^{2}}{r^{2}}
\label{d}
\end{equation}
and $g_{\scriptstyle\mathbb{S}^{2}}$ is the standard metric on $\mathbb{S}^{2}$. Note that the Maxwell potential $A$ in these coordinates is given by
\begin{equation*}
A=-\frac{Q}{r}dt-B\cos\theta d\phi
\end{equation*}
where $e=\sqrt{Q^{2}+B^{2}}$, $(\theta,\phi)\in\mathbb{S}^{2}$ and $Q$, $B$ are the electric and magnetic charge, respectively.

Clearly,  SO(3) acts by isometry on these spacetimes. We will refer to the SO(3)-orbits as (symmetry) spheres. The coordinate $r$ is defined intrinsically such that the area of the spheres of symmetry is $4\pi r^{2}$ (and thus should be thought of 
as a purely geometric function of the spacetime).  

One could now pose the following question: On what manifold is the metric \eqref{rnm} most naturally defined?
In the above coordinates, it is clear that the metric component $g_{rr}$ is singular at $r=0,r_{-},r_{+}$ where $r_{-},r_{+}$ are the roots of $D$. The computation of the curvature shows that as $r\rightarrow 0$ the curvature blows up and so the singularity of $r=0$ in \eqref{rnm} is essential (for a very detailed description of these phenomena in Schwarzschild case see \cite{md} and \cite{haw}). However,  the points where $r=r_{-},r_{+}$ form coordinate singularities\footnote{It is the function $t$ that is singular at these points.} which can be eliminated  by introducing the so-called \textit{tortoise} coordinate $r^{*}$ 
\begin{equation*}
\frac{\partial r^{*}\left(r\right)}{\partial r}=\frac{1}{D}.
\end{equation*}
We can easily see that if $r_{+}>r_{-}$ then
\begin{equation*}
r^{*}\left(r\right)=r+\frac{1}{2\kappa_{+}}\ln\left|\frac{r-r_{+}}{r_{+}}\right|+\frac{1}{2\kappa_{-}}\ln\left|\frac{r-r_{-}}{r_{-}}\right|+C
\end{equation*}
where $\kappa_{+},\kappa_{-}$ are given by
\begin{equation*}
\kappa_{\pm}=\frac{r_{\pm}-r_{\mp}}{2r^{2}_{\pm}}=\frac{1}{2}\left.\frac{dD\left(r\right)}{dr}\right|_{r=r_{\pm}}.
\end{equation*}
In the extreme case we have $r_{+}=r_{-}=M$ and thus $\kappa_{+}=\kappa_{-}=0$. Then
\begin{equation}
r^{*}(r)=r+2M\ln (r-M)-\frac{M^{2}}{r-M}+C.
\label{exttor}
\end{equation}
The fact that in extreme case $r^{*}$ is inverse linear (instead of logarithmic in the non-extreme case) is crucial. The constant $C$ is taken such that $r^{*}\left(Q\right)=0$, where 
\begin{equation}
Q=\frac{3M}{2}\left(1+\sqrt{1-\frac{8e^{2}}{9M^{2}}}\right).
\label{Q}
\end{equation}
The physical interpretation of the radius $Q$ and the constants $\kappa_{+},\kappa_{-}$  become apparent in Sections  \ref{sec:PhotonSphereAndTrappingEffect} and \ref{sec:RedshiftEffectAndSurfaceGravityOfH}.
By introducing the coordinate system $\left(t,r^{*}\right)$ the metric becomes
\begin{equation}
g=-Ddt^{2}+D\left(dr^{*}\right)^{2}+r^{2}g_{\scriptstyle\mathbb{S}^{2}}.
\label{m*rn}
\end{equation}
\begin{figure}[H]
	\centering
		\includegraphics[scale=0.2]{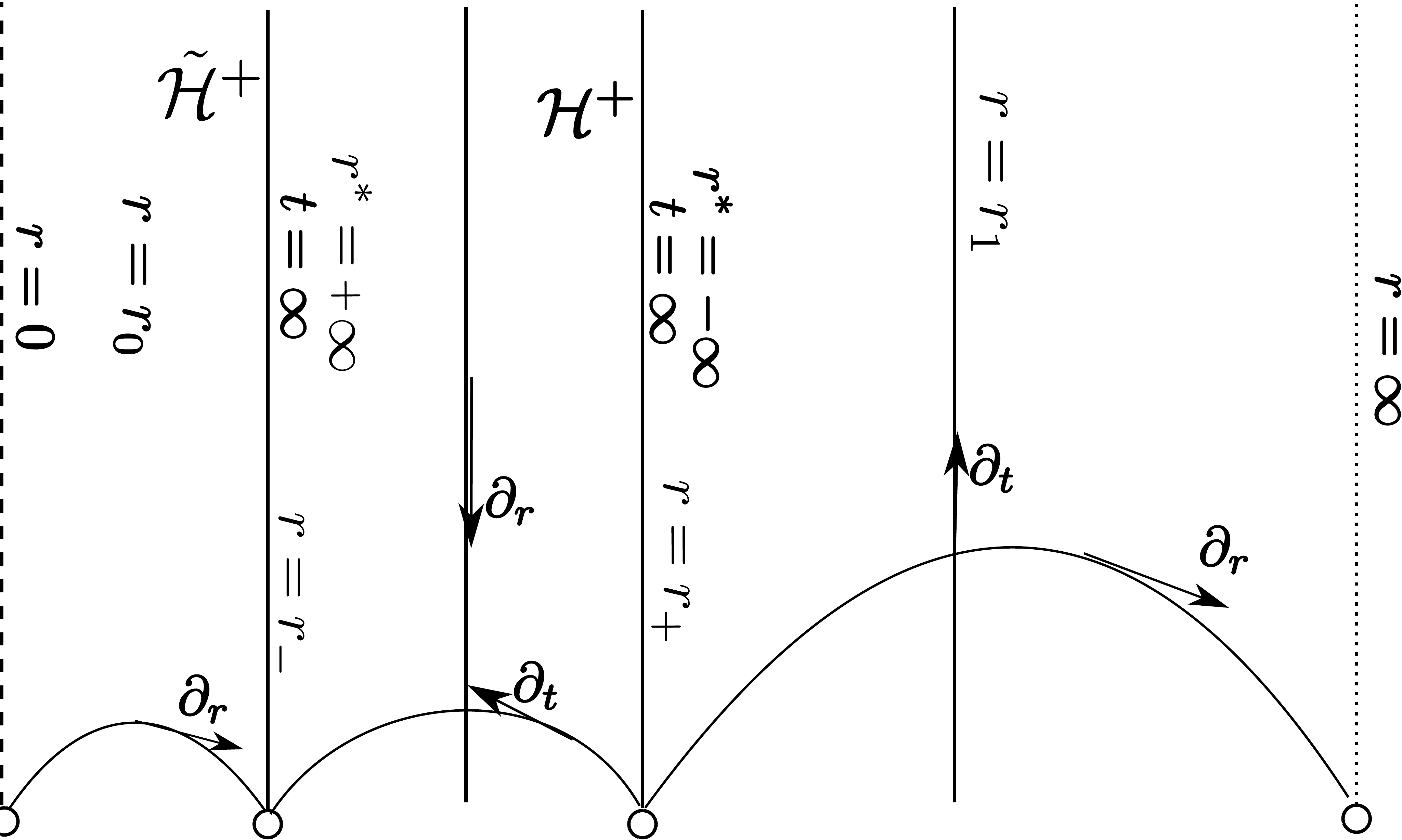}
	\caption{The coordinate systems $\left(t,r\right)$ and $\left(t,r^{*}\right)$. Each point in this diagram represents a sphere of symmetry.}
	\label{fig:tr1}
\end{figure}
The tortoise coordinate $r^{*}$ maps the hypersurface $r=r_{+}$ to $r^{*}=-\infty$. Therefore, although the coordinates $\left(t, r^{*}\right)$ capture the geometry of the region $r>r_{+}$ in a more appropriate way than the coordinates $\left(t,r\right)$ they still fail to reveal the geometric features of the neighbourhoods of the hypersurface $r=r_{+}$. This is done by the \textit{ingoing Eddington-Finkelstein coordinates} $\left(v, r\right)$ where
\begin{equation*}
v=t+r^{*}.
\end{equation*}
In these coordinates the metric is given by
\begin{equation}
g=-Ddv^{2}+2dvdr+r^{2}g_{\scriptstyle\mathbb{S}^{2}}.
\label{RN}
\end{equation}
The coordinate vector field $\partial _{v}$ is Killing and causal in the region 
\begin{equation*}
\left\{0<r\leq r_{-}\right\}\cup\left\{r\geq r_{+}\right\}.
\end{equation*}
In particular, $\partial _{v}$ is everywhere timelike in this region except on the hypersurfaces
\begin{equation*}
\mathcal{H}^{+}=\left\{r=r_{+}\right\},\tilde{\mathcal{H}}^{+}=\left\{r=r_{-}\right\}
\end{equation*}
where it is null. Note that in  extreme Reissner-Nordstr\"{o}m  we have $r_{-}=r_{+}$ and so $\tilde{\mathcal{H}}^{+}\equiv\mathcal{H}^{+}$. The region between $\tilde{\mathcal{H}}^{+}$ and $\mathcal{H}^{+}$ disappears and the Killing vector field $\partial_{v}$ is everywhere causal.  In view of  \eqref{RN} and the fact that $\partial_{v}$ is  tangent and null on $\mathcal{H}^{+}$ we have that the vector $\partial_{v}$ is normal to $\mathcal{H}^{+}$. Recall that if the normal of a null hypersurface is Killing then the hypersurface is called \textit{Killing horizon}. The importance of such  null hypersurfaces will become apparent later. 
\begin{figure}[H]
	\centering
		\includegraphics[scale=0.2]{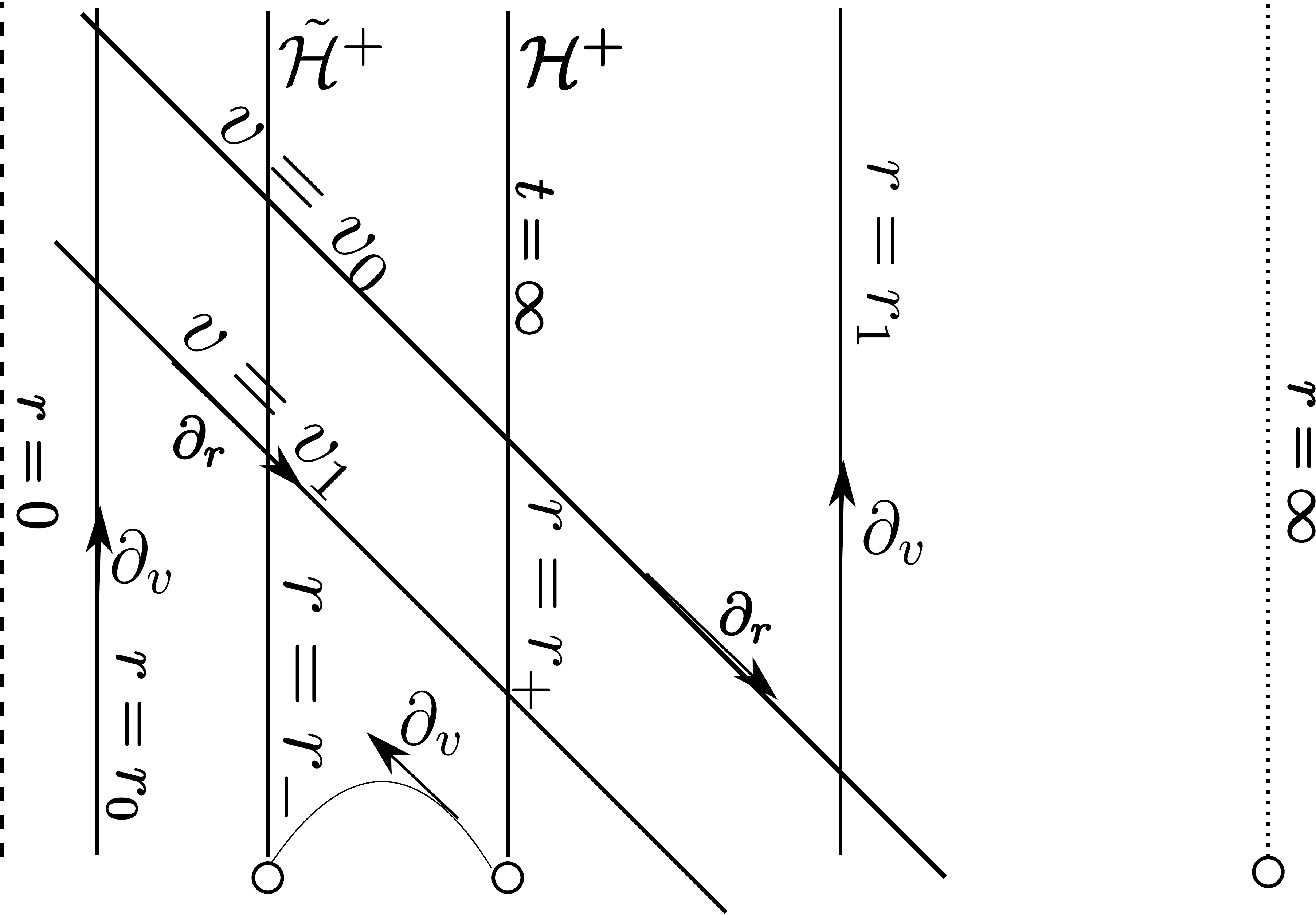}
	\caption{The coordinate system $\left(v,r\right)$}
	\label{fig:tr2}
\end{figure}
The radial curves  $v=c$, where c is a constant, are the ingoing radial null geodesics. This means that the null coordinate vector field $\partial_{r}$ differentiates with respect to $r$ on these null hypersurfaces. This geometric property of $\partial_{r}$ makes this vector field very useful for understanding the behaviour of waves close to $\mathcal{H}^{+}$. 

We see that the coordinates $\left(v,r\right)$ extend the domain that the coordinates $\left(t,r\right)$ cover. Therefore, using the coordinates $\left(v,r\right)$, let us define 
\begin{equation}
\tilde{\mathcal{M}}=\left(-\infty,+\infty\right)\times\left(0,+\infty\right)\times\mathbb{S}^{2}.
\label{Mtilde}
\end{equation}
Figure \ref{fig:tr2} describes the structure of $\tilde{\mathcal{M}}$.

Another coordinate system that partially covers $\tilde{\mathcal{M}}$ is the null system $(u,v)$ where
\begin{equation*}
\begin{split}
&u=t-r^{*},\\
&v=t+r^{*}
\end{split}
\end{equation*}
and with respect to which the metric is
\begin{equation}
g=-Ddudv+r^{2}g_{\scriptstyle\mathbb{S}^{2}}.
\label{metricnull}
\end{equation}
The hypersurfaces $v=c$ and $u=c$ are null and thus this system is useful for applying the method of characteristics or understanding null infinity.

At last, we mention another coordinate
\begin{equation}
t^{*}=t+r^{*}-r=v-r
\label{t*}
\end{equation}
\begin{figure}[H]
	\centering
		\includegraphics[scale=0.2]{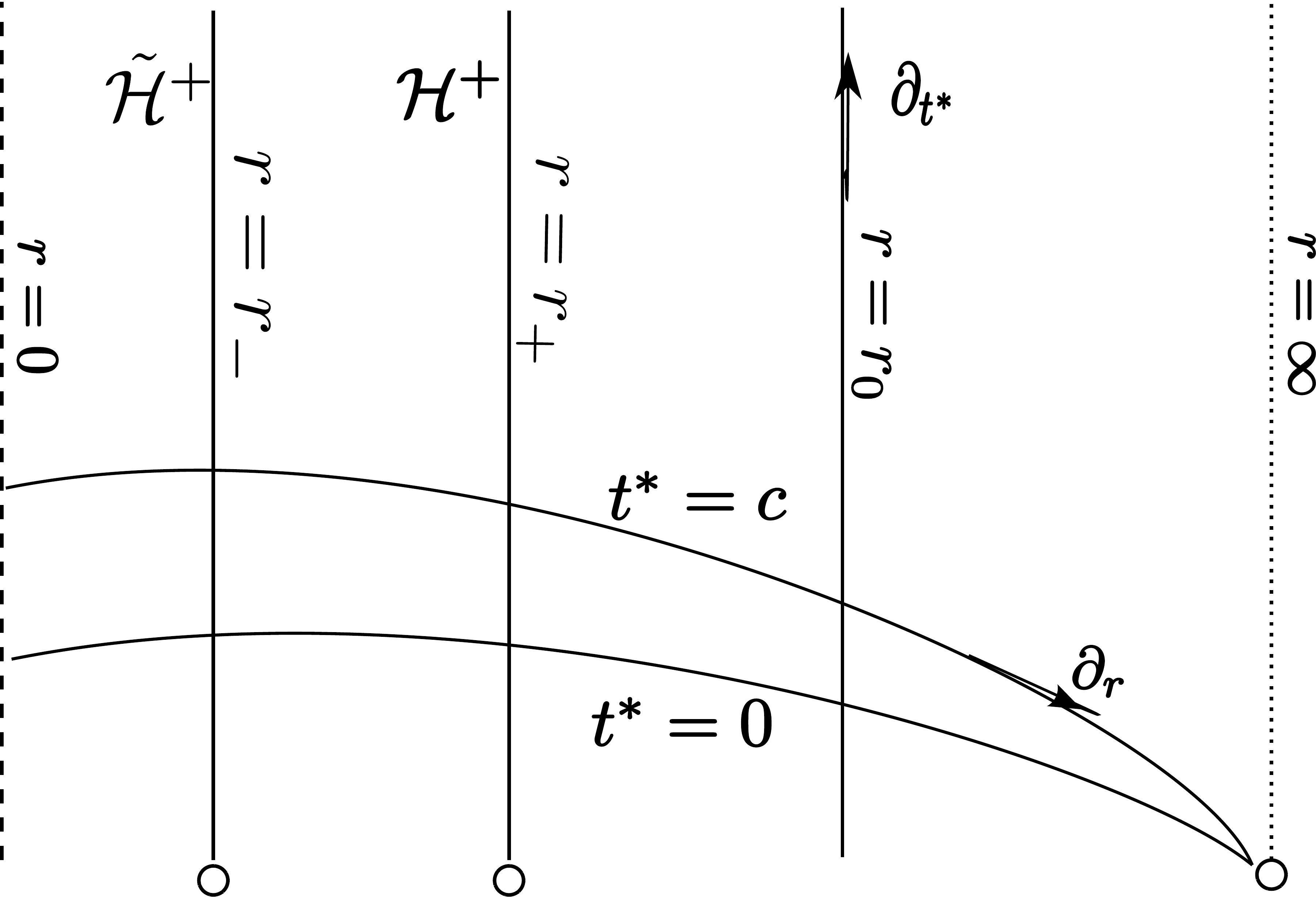}
	\caption{The coordinate system $\left(t^{*},r\right)$}
	\label{fig:tr3}
\end{figure}
As far as the Cauchy problem for the wave equation is concerned, a possible initial hypersurface  is given in these coordinates by $\left\{v=r\right\}=\left\{t^{*}=0\right\}$.

\subsection{Useful Reissner-Nordstr\"{o}m Computations}
\label{sec:UsefulReissnerNordstrOMComputations}

If $N=f_{v}\partial_{v}+f_{r}\partial_{r}$ then the current $K^{N}$ defined in Section \ref{sec:TheVectorFieldMethod} is given by
\begin{equation*}
\begin{split}
K^{N}\left(\psi\right)=\sum_{i,j}{F_{ij}\partial_{i}\psi\partial_{j}\psi}
\end{split}
\end{equation*}
where  the coef{}ficients in $(v,r)$ coordinates are given by
\begin{equation*}
\begin{split}
&F_{vv}=\left(\partial_{r}f_{v}\right),\, F_{rr}=D\left[\frac{\left(\partial_{r}f_{r}\right)}{2}-\frac{f_{r}}{r}\right]-\frac{f_{r}D'}{2}, \\
&F_{vr}=D\left(\partial_{r}f_{v}\right)-\frac{2f_{r}}{r},\, F_{\scriptsize\nabb}=-\frac{1}{2}\left(\partial_{r}f_{r}\right)
\end{split}
\end{equation*}
and in $(t,r^{*})$ coordinates by
\begin{equation*}
\begin{split}
&F_{tt}=\frac{f'}{2D}+\frac{f}{r},\, F_{r^{*}r^{*}}=\frac{f'}{2D}-\frac{f}{r},\, F_{\scriptsize\nabb}=-\frac{f'}{2}-\frac{fD'}{2},\\
\end{split}
\end{equation*}
where $f'=\frac{df}{dr^{*}}$.

\subsubsection{The Wave Operator}
\label{sec:TheWaveOperatorINRN}

The wave operator in $(v,r)$ coordinates is
\begin{equation*}
\Box_{g}\psi=D\partial_{r}\partial_{r}\psi+2\partial_{v}\partial_{r}\psi+\frac{2}{r}\partial_{v}\psi+R\partial_{r}\psi+\lapp\psi,
\end{equation*}
where  $R=D'+\frac{2D}{r}$ and $D'=\frac{dD}{dr}$. Moreover, note that
\begin{equation*}
\begin{split}
&\nabla_{v}\partial_{v}=\left(\frac{D'}{2}\right)\partial_{v}+\left(\frac{D\cdot D'}{2}\right)\partial _{r},\\
&\nabla_{v}\partial_{r}=\left(-\frac{D'}{2}\right)\partial_{r},\\
&\nabla_{r}\partial_{r}=0\\
\end{split}
\end{equation*}
and observe that in degenerate black holes the right hand side of all of them vanishes on the horizon. Also
\begin{equation*}
\operatorname{Div}\partial_{v}=0,\operatorname{Div}\partial_{r}=\frac{2}{r}.
\end{equation*}

The wave operator in $\left(t, r^{*}\right)$ coordinates is
\begin{equation*}
\begin{split}
\Box_{g}\psi=\frac{1}{D}\left[-\partial_{tt}\psi+r^{-2}\partial_{r^{*}}\left(r^{2}\partial_{r^{*}}\psi\right)\right]+\lapp\psi\\
\end{split}
\end{equation*}
and 
\begin{equation*}
\begin{split}
\operatorname{Div}\partial_{r^{*}}=D'+\frac{2D}{r}=R.
\end{split}
\end{equation*}
In $(u,v)$ coordinates
\begin{equation*}
\Box_{g}\psi=-\frac{4}{Dr}\partial_{u}\partial_{v}(r\psi)-\frac{D'}{r}\psi+\lapp\psi
\end{equation*}
and
\begin{equation*}
\operatorname{Div}\partial_{v}=-\operatorname{Div}\partial_{u}=\frac{D'}{2}+\frac{D}{r}.
\end{equation*}

Note that if $\a\in\mathbb{R}$ then 
\begin{equation*}
\Box_{g}\left(\frac{1}{r^{\a}}\right)=D\frac{\a(\a-1)}{r^{\a+2}}-D'\frac{\a}{r^{\a+1}}
\end{equation*}

\subsubsection{The Non-Negativity of the Energy-Momentum Tensor \textbf{T}}
\label{sec:TheHyperbolicityOfTheWaveEquation1}

It is essential to know how the energy momentum tensor $\textbf{T}$  depends on the derivatives of $\psi$. We use the coordinate system $\left(v,r,\theta,\phi\right)$ and suppose that 
\begin{equation*}
V_{1}=V=\left(V^{v},V^{r},0,0\right)
\end{equation*}
and
\begin{equation}
V_{2}=n=\left(n^{v},n^{r},0,0\right).
\label{nS0}
\end{equation}
The reason for the above notation is that everytime we apply the vector field $V$ as a multiplier we have to contract $J_{\mu}^{V}$ with the normal $n$ to the boundary . We proceed by computing
\begin{equation*}
J_{\mu}^{V}n^{\mu}=\textbf{T}_{\mu\nu}V^{\nu}n^{\mu}
\end{equation*}
when $V$ and $n$ are causal future directed vectors. First note that under these assumptions we have that $V^{v}$ and $n^{v}$ are non-negative. Indeed, we have
\begin{equation*}
\begin{split}
&g\left(n,n\right)=-D\left(n^{v}\right)^{2}+2n^{v}n^{r}\leq 0,\\
&g\left(n,T\right)=-Dn^{v}+n^{r}\leq 0.
\end{split}
\end{equation*}
If $n^{v}<0$ then we would have
\begin{equation*}
\begin{split}
-Dn^{v}+2n^{r}\geq &0\geq -Dn^{v}+n^{r}\Leftrightarrow\\
\frac{Dn^{v}}{2}\leq &n^{r}\leq Dn^{v}.
\end{split}
\end{equation*}
These two inequalities imply that $n^{v}>0$  which is contradiction\footnote{Clearly if $n$ is timelike then $n^{v}> 0$.}. Similarly, we take $V^{v}\geq 0$.
In view of equation \eqref{tem} we have
\begin{equation*}
\begin{split}
J^{V}_{\mu}n^{\mu}&=\textbf{T}_{vv}V^{v}n^{v}+\textbf{T}_{vr}\left(V^{v}n^{r}+V^{r}n^{v}\right)+\textbf{T}_{rr}V^{r}n^{r}\\
&=\left[\left(\partial_{v}\psi\right)^{2}+\frac{1}{2}D\left(2\partial_{v}\psi\partial_{r}\psi+D\left(\partial_{r}\psi\right)^{2}+\left|\nabb\psi\right|^{2}\right)\right]V^{v}n^{v}+\\
&\ \ \ \ +\left[-\frac{1}{2}D\left(\partial_{r}\psi\right)^{2}-\frac{1}{2}\left|\nabb\psi\right|^{2}\right]\left(V^{v}n^{r}+V^{r}n^{v}\right)+(\partial_{r}\psi)^{2}V^{r}n^{r}\\
&=V^{v}n^{v}\left(\partial_{v}\psi\right)^{2}+\left[\frac{1}{2}D^{2}V^{v}n^{v}-\frac{1}{2}DV^{v}n^{r}-\frac{1}{2}DV^{r}n^{v}+V^{r}n^{r}\right]\left(\partial_{r}\psi\right)^{2}+\\
&\ \ \ \ +[DV^{v}n^{v}]\partial_{v}\psi\partial_{r}\psi+\left[\frac{1}{2}DV^{v}n^{v}-\frac{1}{2}V^{v}n^{r}-\frac{1}{2}V^{r}n^{v}\right]\left|\nabb\psi\right|^{2}.
\end{split}
\end{equation*}
First observe that 
\begin{equation*}
\begin{split}
\frac{1}{2}DV^{v}n^{v}-\frac{1}{2}V^{v}n^{r}-\frac{1}{2}V^{r}n^{v}&=\frac{V^{v}}{4n^{v}}\left(-g(n,n)\right)+\frac{n^{v}}{4V^{v}}\left(-g(V,V)\right)\\
&=-\frac{1}{2}g(V,n).
\end{split}
\end{equation*}
Clearly the above makes sense even if $n_{v}=0$ since $n_{v}/ g(n,n)$. Furthermore,
\begin{equation*}
\begin{split}
\frac{1}{2}D^{2}V^{v}n^{v}-\frac{1}{2}DV^{v}n^{r}-\frac{1}{2}DV^{r}n^{v}+V^{r}n^{r}=\frac{1}{4}D^{2}V^{v}n^{v}+\frac{g(V,V)}{2V^{v}}\frac{g(n,n)}{2n^{v}}.
\end{split}
\end{equation*}
Therefore, if we denote $\omega_{V}=\frac{1}{2\left(V^{v}\right)^{2}}\left(-g\left(V,V\right)\right)$ and similarly for $n$ then we obtain
\begin{equation*}
\begin{split}
J^{V}_{\mu}n^{\mu}&=V^{v}n^{v}\left[\left(\partial_{v}\psi\right)^{2}+\left[\frac{D^{2}}{4}+\omega_{V}\cdot\omega_{n}\right]\left(\partial_{r}\psi\right)^{2}+D\partial_{v}\psi\partial_{r}\psi\right]+\\
&\ \ \ \ +\left[-\frac{1}{2}g\left(V,n\right)\right]\left|\nabb\psi\right|^{2}.
\end{split}
\end{equation*}
Note also that
\begin{equation*}
\begin{split}
\frac{D}{2}=\sqrt{\frac{D^{2}}{D^{2}+2\omega_{V}\cdot\omega_{n}}}\cdot \sqrt{\frac{D^{2}+2\omega_{V}\cdot\omega_{n}}{4}},
\end{split}
\end{equation*}
where the first fraction on the right hand side is well defined (even on $\mathcal{H}^{+}$) since $\omega_{V}\cdot\omega_{n}$ can vanish at most like $D^{2}$. Indeed, if $V^{v}>0$ then\footnote{If $V^{v}$ vanishes on $\mathcal{H}^{+}$ then we can argue similarly bringing back the factor $V^{v}$ which is outside the brackets. However, in this paper we only need to consider the case where $V^{v}>0$.} $\omega_{V}$ can vanish on $\mathcal{H}^{+}$ at most like $D$ and similarly for $n$. If, in addition, $n$ is timelike (as will be the case in this paper) then the denominator can vanish at most like $D$ and thus the fraction vanishes on $\mathcal{H}^{+}$. Clearly away from $\mathcal{H}^{+}$ the denominator is strictly positive.  Then, 
\begin{equation}
\begin{split}
J^{V}_{\mu}n^{\mu}=&\left[V^{v}n^{v}\left(1-\frac{D^{2}}{D^{2}+2\omega_{V}\cdot\omega_{n}}\right)\right]\left(\partial_{v}\psi\right)^{2}+\left[V^{v}n^{v}\left(\frac{\omega_{V}\cdot\omega_{n}}{2}\right)\right]\left(\partial_{r}\psi\right)^{2}+\\
&+\left(\sqrt{\frac{D^{2}}{D^{2}+2\omega_{V}\cdot\omega_{n}}}\cdot\partial_{v}\psi+\sqrt{\frac{D^{2}+2\omega_{V}\cdot\omega_{n}}{4}}\cdot\partial_{r}\psi\right)^{2}\\
&+\left[-\frac{1}{2}g\left(V,n\right)\right]\left|\nabb\psi\right|^{2}.
\label{GENERALT}
\end{split}
\end{equation}
Note that if $V$ is null and tangent to $\mathcal{H}^{+}$ then the coef{}ficient of the transversal derivative vanishes. Therefore, \eqref{GENERALT} will allow us to understand the rate of the degeneration of this coef{}ficient for several causal multipliers $V$.

\subsection{Penrose Diagrams}
\label{sec:PenroseDiagrams}

The Penrose diagram of a fundamental domain of  subextreme Reissner-Nordstr\"{o}m in the range $0<e< M$ is
\begin{figure}[H]
	\centering
		\includegraphics[scale=0.15]{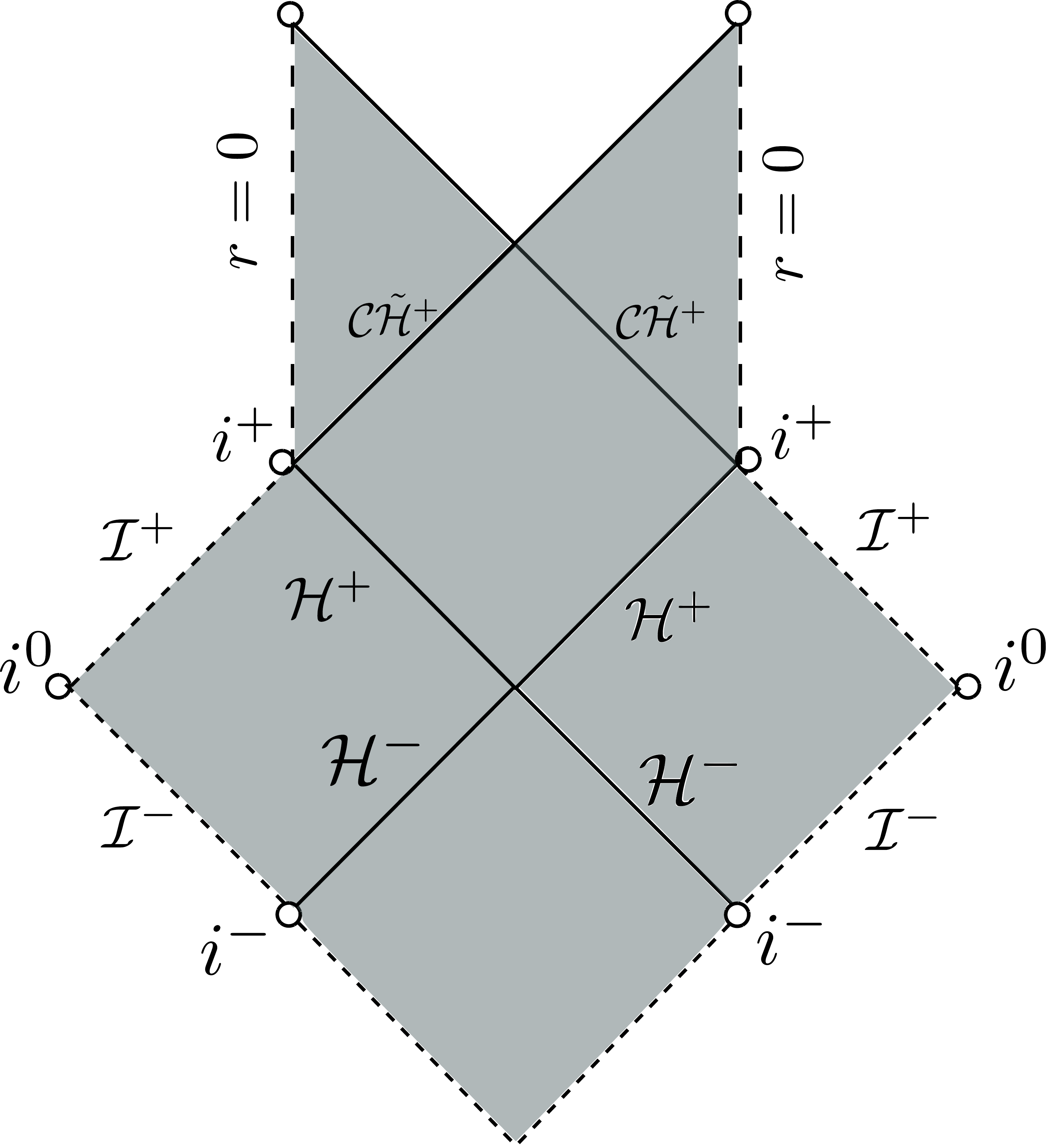}
		\label{fig:Mtildeunion}
\end{figure}
One can glue all these solutions together to obtain the following ``maximal'' solution in the range\footnote{Note that in the schwarzschild case $e=0$, the ``maximal'' solution is rather different and one of its features is that it is globally hyperbolic.} $0<e<M$ 
\begin{figure}[H]
	\centering
		\includegraphics[scale=0.09]{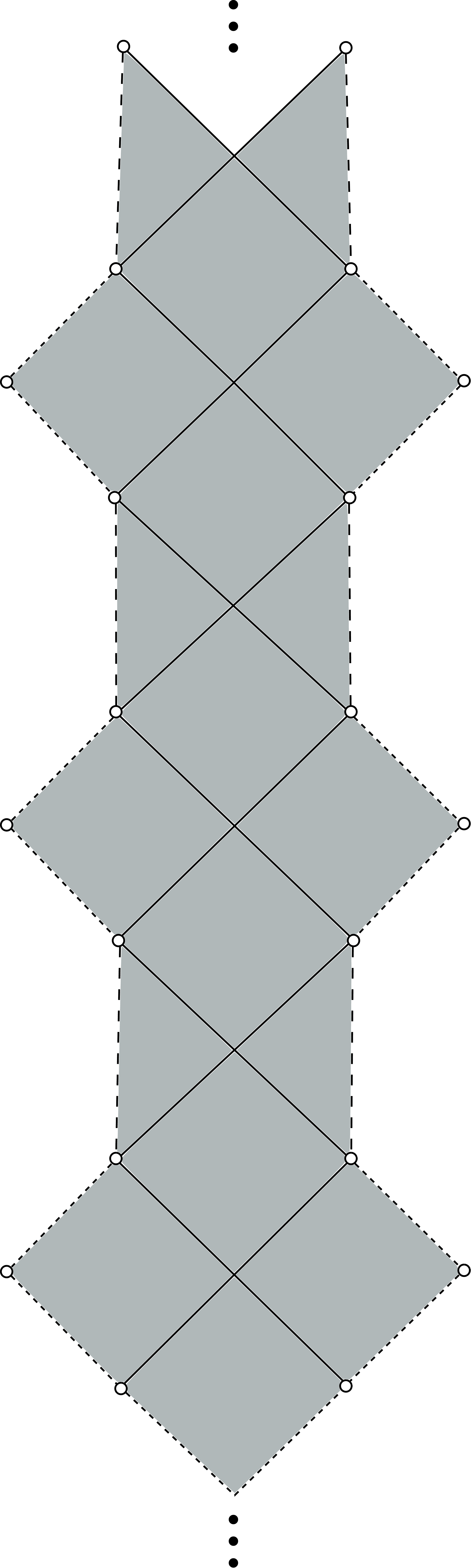}
	\label{fig:RNmax}
\end{figure}
The geometry of the above diagram  was discovered by Graves and Brill \cite{brill} in 1960. Before  presenting the Penrose diagram for  extreme Reissner-Nordstr\"{o}m, it is useful to consider the following subset of the above diagram
\begin{figure}[H]
	\centering
		\includegraphics[scale=0.15]{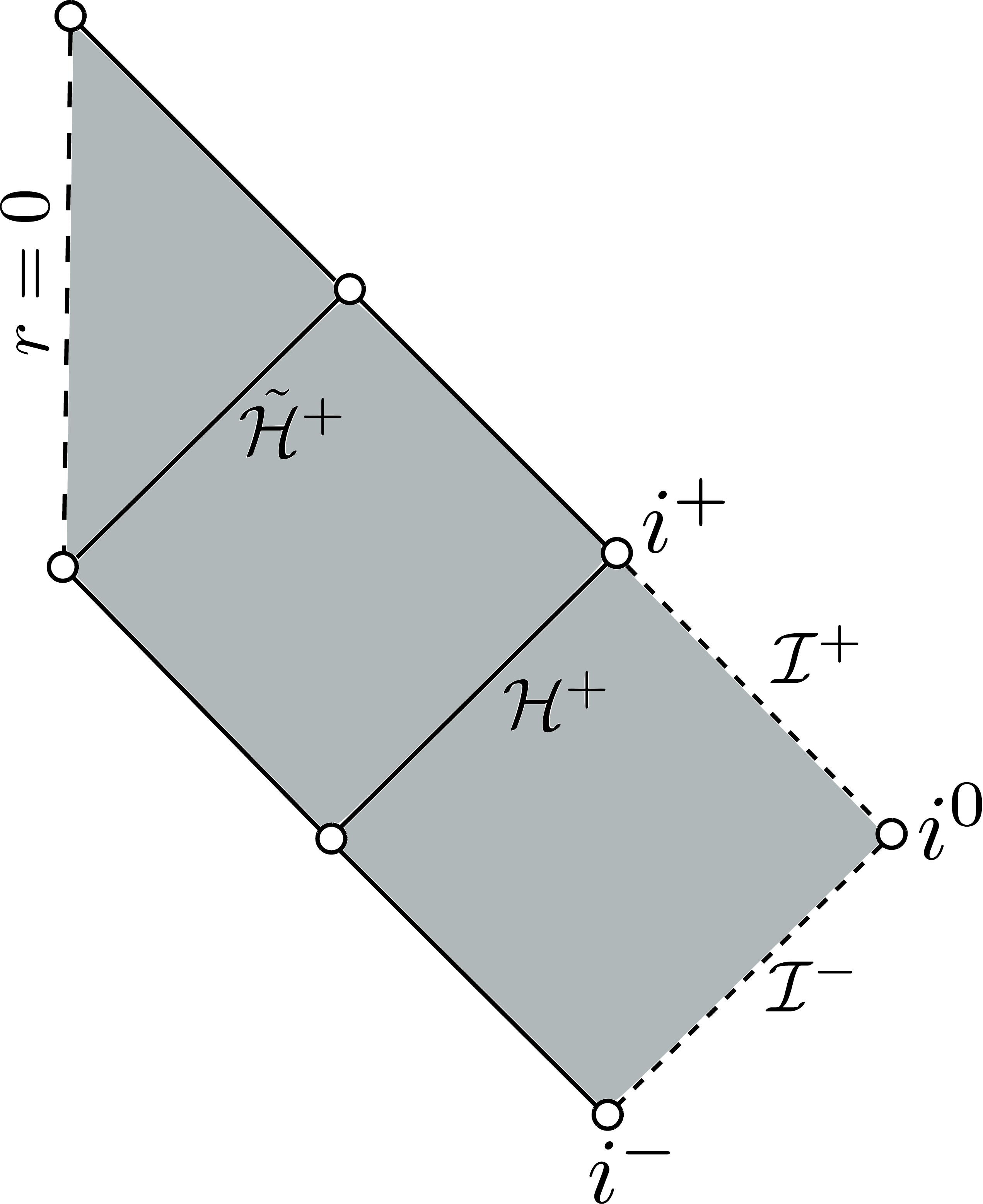}
		\label{fig:Mtilde}
\end{figure}
 In the extreme case the regions between the two horizons collapses to a single   hypersurface $\tilde{\mathcal{H}}^{+}\equiv\mathcal{H}^{+}$. One obtains in fact: 

\begin{figure}[H]
	\centering
		\includegraphics[scale=0.14]{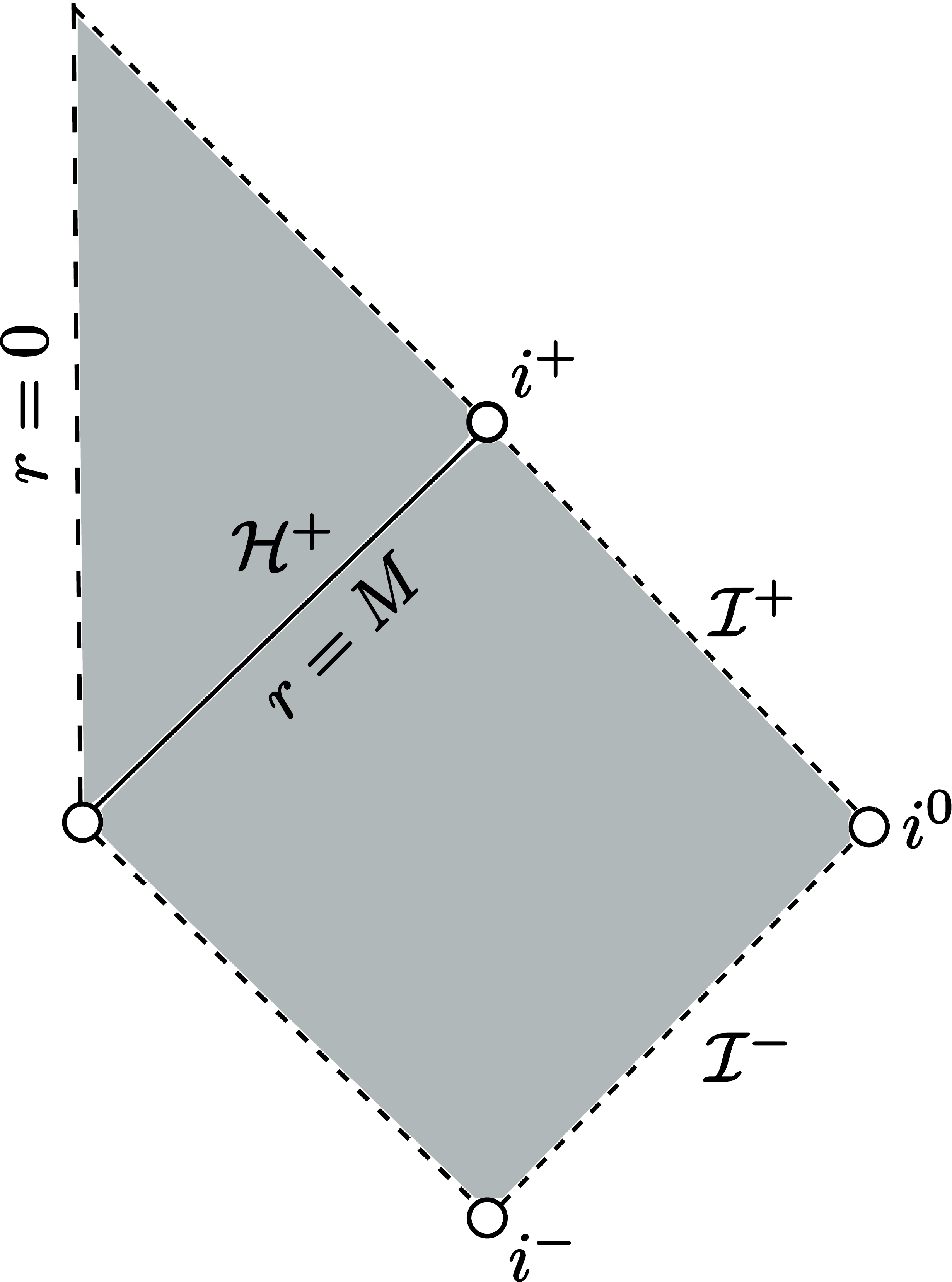}
	\label{fig:exrnsvg1}
\end{figure}
The maximally extended solution is 
\begin{figure}[H]
	\centering
		\includegraphics[scale=0.2]{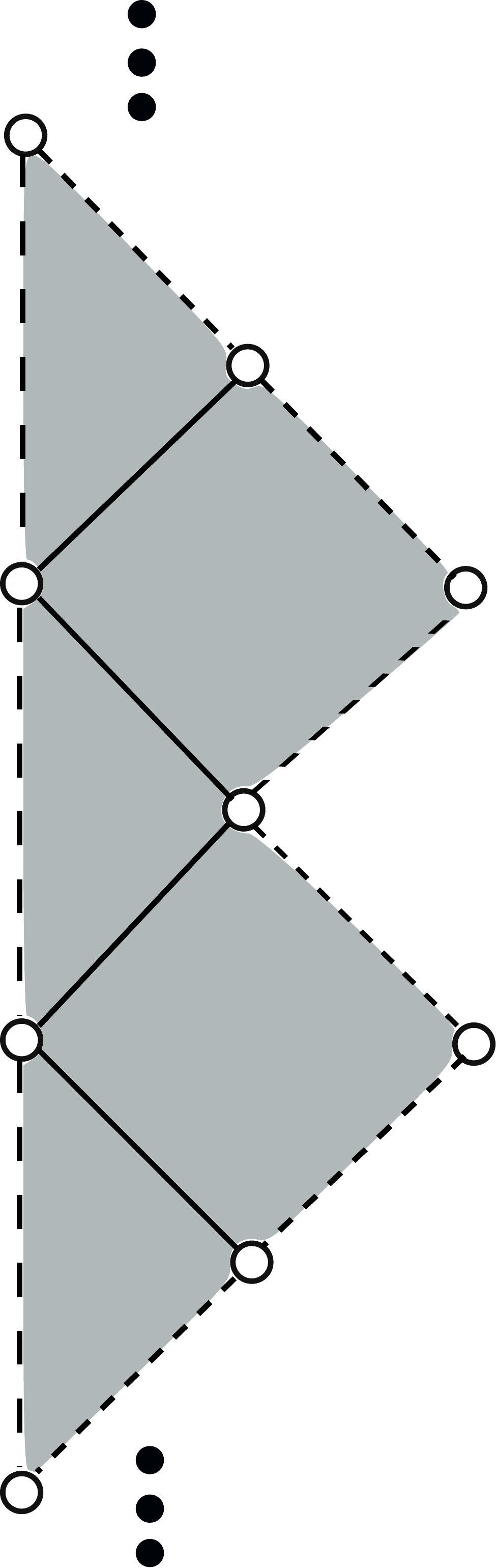}
	\label{fig:exrnsvg}
\end{figure}

\section{Stokes' Theorem on Lorentzian Manifolds}
\label{sec:StokesTheoremOnLorentzianManifolds}

If $\mathcal{R}$ is a pseudo-Riemannian manifold and $P$ is a vector field on it then we have the identity
\begin{equation*}
\int_{\mathcal{R}}{\nabla_{\mu}P^{\mu}}=\int_{\partial\mathcal{R}}{P\cdot n_{\partial\mathcal{R}}},
\end{equation*}
which is an application of Stokes' theorem. Both integrals are taken with respect to the induced volume form. Note that $n_{\partial\mathcal{R}}$ is the unit normal to $\partial\mathcal{R}$ and its direction depends on the convention of the signature of the metric. For example, if $\mathcal{R}$ is a Riemannian manifold then $n_{\partial\mathcal{R}}$ is the outward directed unit normal to $\partial\mathcal{R}$. On the other hand, for Lorentzian metrics with signature  $\left(-,+,+,+\right)$ the vector $n_{\partial\mathcal{R}}$ is the inward directed unit normal to $\partial\mathcal{R}$ in case $\partial\mathcal{R}$ is spacelike and the outward directed unit normal in case $\partial\mathcal{R}$ is timelike. If $\partial\mathcal{R}$ (or a piece of it) is null then    we take a past  (future) directed null normal to $\partial\mathcal{R}$ if it is future (past) boundary. Recall that a piece of the boundary is future (past) if  the past (future) of points on it lie in $\mathcal{R}$. The following diagram is embedded in $\mathbb{R}^{1+1}$
 \begin{figure}[H]
	\centering
		\includegraphics[scale=0.15]{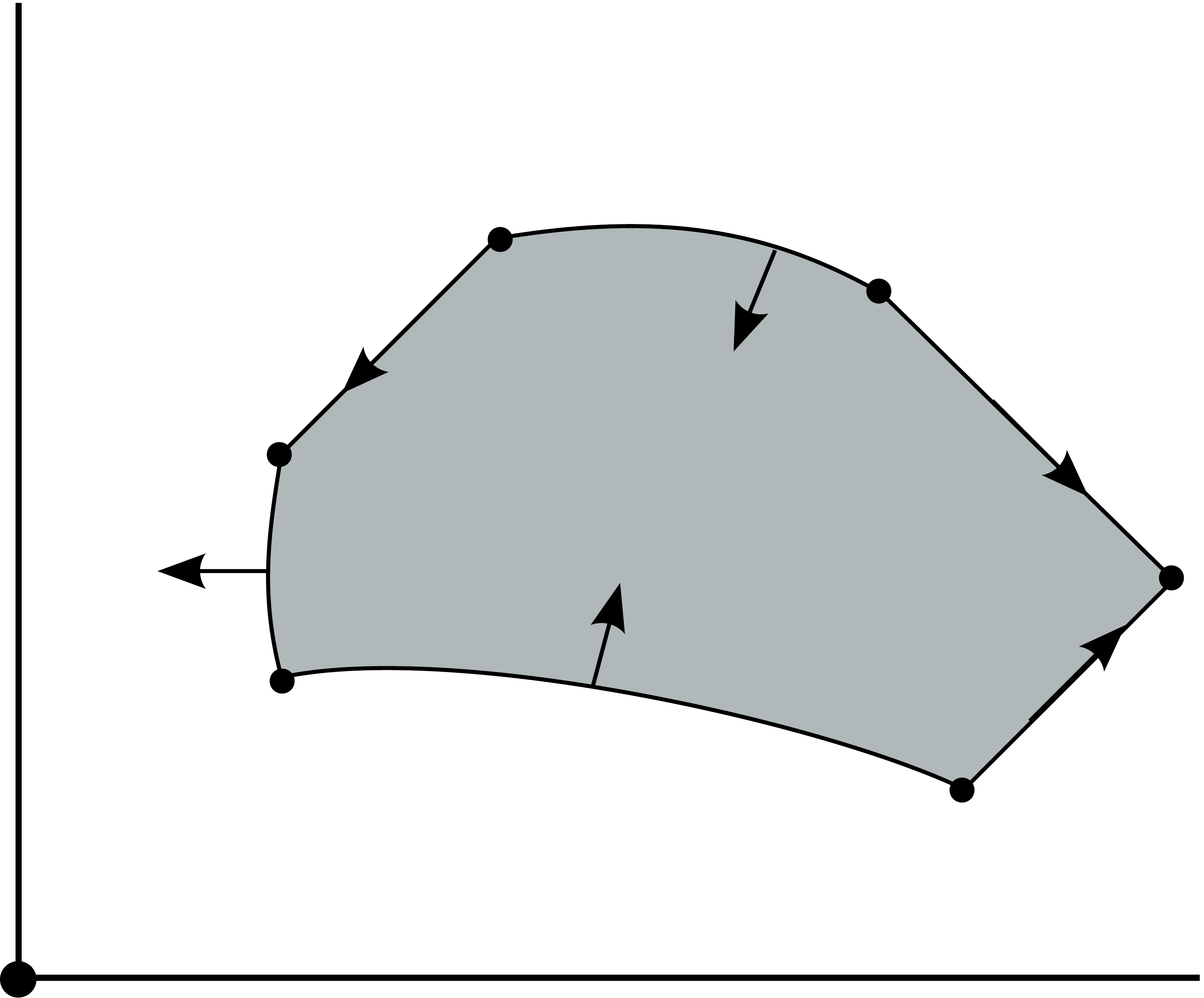}
	\label{fig:appendix}
\end{figure}
If $u$ is a function on $\mathcal{R}$ and $\textbf{V}$ a vector field and if $P=u\textbf{V}$ then
\begin{equation*}
\nabla_{\mu}P^{\mu}=\nabla_{\mu}\left(u\textbf{V}^{\mu}\right)=\left(\nabla_{\mu}u\right)\textbf{V}^{\mu}+u\nabla_{\mu}\textbf{V}^{\mu}= \nabla u\cdot \textbf{V}+u\operatorname{Div}\left(\textbf{V}\right),
\end{equation*}
therefore,
\begin{equation*}
\int_{\mathcal{R}}{ \nabla u\cdot \textbf{V}}+\int_{\mathcal{R}}{u\operatorname{Div}\left(\textbf{V}\right)}=\int_{\partial\mathcal{R}}{u\textbf{V}\cdot n_{\partial\mathcal{R}}}.
\end{equation*}
If we assume that $\textbf{V}=\nabla v$ then 
\begin{equation*}
\int_{\mathcal{R}}{ \nabla u\cdot \nabla v}+\int_{\mathcal{R}}{u\left(\Box v\right)}=\int_{\partial\mathcal{R}}{u\nabla v\cdot n_{\partial\mathcal{R}}}.
\end{equation*}
Note that a special case of the above equality is to consider $\mathcal{R}=\mathbb{S}^{2}$ and $\textbf{V}=\nabb v$ and take
\begin{equation}
\int_{\mathbb{S}^{2}}{ \nabb u\cdot \nabb v}=-\int_{\mathbb{S}^{2}}{u\left(\lapp v\right)}.
\label{ip1sphe}
\end{equation}
Moreover, if we set $\textbf{V}=v\frac{\partial}{\partial x^{i}}$ then 
\begin{equation*}
Div\left(v\frac{\partial}{\partial x^{i}}\right)=\nabla v\cdot \frac{\partial}{\partial x^{i}}+vDiv\left(\frac{\partial}{\partial x^{i}}\right)=\frac{\partial v}{\partial x^{i}}+vDiv\left(\frac{\partial}{\partial x^{i}}\right)
\end{equation*}
yields
\begin{equation}
\int_{\mathcal{R}}{ v\left(\frac{\partial u}{\partial x^{i}}\right)}+\int_{\mathcal{R}}{u\left(\frac{\partial v}{\partial x^{i}}\right)}+\int_{\mathcal{R}}{uvDiv\left(\frac{\partial}{\partial x^{i}}\right)}=\int_{\partial\mathcal{R}}{uv\frac{\partial}{\partial x^{i}}\cdot n_{\partial\mathcal{R}}}.
\label{ip2}
\end{equation}

Note also that another form of Stokes' theorem is 
\begin{equation*}
\int_{\mathcal{M}}{(\partial_{x_{1}}f)dx^{1}\wedge dx^{2}\wedge...\wedge dx^{n}}=\int_{\partial\mathcal{M}}{fdx^{2}\wedge...\wedge dx^{n}}.
\end{equation*}
Indeed, we just have to consider the form $\omega=fdx^{2}\wedge...\wedge dx^{n}$ and notice that $d\omega=(\partial_{x_{1}}f)dx^{1}\wedge dx^{2}\wedge...\wedge dx^{n}$. Clearly, the only components of the boundary that contribute on the right hand side are these where $\left\{x_{1}=c\right\}$, $c$ constant. Of course, this "less geometric'' identity is completely consistent with \eqref{ip2}. Note that the appearance of the divergence in \eqref{ip2} is due to the fact that when a coordinate vector field acts on the factor of the volume form  then we take the divergence of the field times this factor, i.e.
\begin{equation*}
\partial_{i}\sqrt{g}=\left(\operatorname{Div}\partial_{i}\right)\sqrt{g}.
\end{equation*}

\section{Sobolev Spaces}
\label{sec:SobolevSpaces}
Given a function $\psi:\mathcal{M}\rightarrow \mathbb{R}$, where $\left(\mathcal{M},g\right)$ is Riemannian manifold  we  consider the tensors
$\nabla\psi$,$\nabla^{2}\psi$, etc. where $\nabla$ is the associated Levi-Civita connection. For example, the Hessian is given by 
\begin{equation*}  
\left(\nabla^{2}\psi\right)_{ij}=\partial_{i}\partial_{j}\psi-\left(\nabla_{i}\partial_{j}\right)\psi.
\end{equation*}
Given  two tensor fields $T_{i_{1}i_{2}...i_{k}}$ and $S_{j_{1}j_{2}...j_{k}}$ of the same type,  we define the induced inner product on tensors by 
\begin{equation*} 
\left\langle T_{p},S_{p}\right\rangle=g^{i_{1}j_{1}}\cdot g^{i_{2}j_{2}}\cdot ...\cdot g^{i_{k}j_{k}}\cdot T_{i_{1}i_{2}...i_{k}}\cdot S_{j_{1}j_{2}...j_{k}},
\end{equation*}
where $p\in\mathcal{M}$ and similarly we define an inner product on tensor fields by
\begin{equation*} 
\left\langle T,S\right\rangle=\int_{\mathcal{M}}{\left\langle T_{p},S_{p}\right\rangle}.
\end{equation*}
Therefore, if $T=\nabla^{k}\psi$ and $S=\nabla^{k}\phi$, $k\in\mathbb{N},$ then we define the homogeneous Sobolev inner product by
\begin{equation*}
\left\langle \psi,\phi\right\rangle_{\overset{.}{H}^{k}}=\int_{\mathcal{M}}{\left\langle \nabla^{k}\psi,\nabla^{k}\phi\right\rangle}.
\end{equation*}
Note that this is an inner product provided $\psi,\phi$ either decay at infinity of vanish at the boundary of $\mathcal{M}$. The Sobolev inner product is defined by
\begin{equation*}
\left\langle \psi,\phi\right\rangle_{{H}^{k}}=\sum_{i=0}^{k}{\left\langle \psi,\phi\right\rangle_{\overset{.}{H}^{i}}}
\end{equation*}
and thus the Sobolev space $H^{k}(\mathcal{M})$ is the set of functions such that the Sobolev norm
\begin{equation*}
\left\|\psi\right\|_{{H}^{k}}=\sum_{i=0}^{k}{\left\|\psi\right\|_{\overset{.}{H}^{i}}}
\end{equation*}
is finite. Note that if $\psi$ is not sufficiently regular then the derivatives in $\nabla^{k}\psi$ are defined in a weak (distributive) way using $\eqref{ip2}$. Then the Sobolev space $H^{k}(\mathcal{M})$ is complete and thus Hilbert space. In other words, $\psi\in H^{k}(\mathcal{M})$ if and only if all the derivatives up to the k-th order are in $L^{2}(\mathcal{M})$. Finally, we define the space $H^{k}_{\operatorname{loc}}(\mathcal{M})$ to be the set of all functions such that the above expression is finite over all compact subsets of $\mathcal{M}$.

Now according to the Sobolev inequality, if n is the dimension of the space and  $q\geq 1$ and $k<\frac{n}{2}$  such that
\begin{equation*}
\begin{split}
\frac{1}{2}-\frac{1}{q}=\frac{k}{n}
\end{split}
\end{equation*}
then
\begin{equation*}
\begin{split}
\left\|\psi\right\|_{L^{q}}\leq C\left\|\psi\right\|_{H^{k}}.
\end{split}
\end{equation*}
Moreover, if $k>\frac{n}{2}$, then
\begin{equation*}
\begin{split}
\left\|\psi\right\|_{L^{\infty}}\leq C\left\|\psi\right\|_{H^{k}},
\end{split}
\end{equation*}
where the Sobolev constants depend only on the geometry of the space and on $k,q,n$.

\section{Elliptic Estimates on Lorentzian Manifolds}
\label{sec:EllipticEstimates}

Let us suppose that $\left(\mathcal{M},g\right)$ is a globally hyperbolic time-orientable Lorentzian manifold which admits a Killing vector field $T$. We also suppose that $\mathcal{M}$ is   foliated by spacelike hypersurfaces $\Sigma_{\tau}$, where $\Sigma_{\tau}=\phi_{\tau}\left(\Sigma_{0}\right)$. Here, $\Sigma_{0}$ is a Cauchy hypersurface and $\phi_{\tau}$ is the flow of $T$.

Let $N$ be a $\phi_{\tau}$-invariant \textbf{timelike} vector field and constants $B_{1},B_{2}$ such that
\begin{equation*}
0<B_{1}<-g(N,N)<B_{2}.
\end{equation*}
 
We will first derive the required estimate in $\Sigma_{0}$ which for simplicity we denote by $\Sigma$. For each point $p\in\Sigma$ the orthogonal complement in $T_{p}\mathcal{M}$ of the line that contains $N$ is 3-dimensional and contains a 2-dimensional subspace of the tangent space $T_{p}\Sigma$. Let $X_{2},X_{3}$ be an orthonormal basis of this subspace. Let now $X_{1}$ be a vector tangent to $\Sigma$ which is perpendicular to the plane that is spanned by $X_{2},X_{3}$. Note that the line that passes through $X_{1}$ is uniquely determined by $N$ and $\Sigma$. Then, the metric $g$ can be written as 
\begin{equation*}
\begin{split}
g=\begin{pmatrix}
g_{NN}&g_{NX_{1}}&0&0\\
g_{NX_{1}}&g_{X_{1}X_{1}}&0&0\\
0&0&1&0\\
0&0&0&1\\
\end{pmatrix}
\end{split}
\end{equation*}
with respect to the frame $\left(N,X_{1},X_{2},X_{3}\right)$. Then, if we set $\left|g\right|=g_{NN}\cdot g_{X_{1}X_{1}}-g_{NX_{1}}^{2}$, the inverse is 
\begin{equation*}
\begin{split}
g^{-1}=\begin{pmatrix}
\frac{1}{\left|g\right|}g_{X_{1}X_{1}}&-\frac{1}{\left|g\right|}g_{NX_{1}}&0&0\\
-\frac{1}{\left|g\right|}g_{NX_{1}}&\frac{1}{\left|g\right|}g_{NN}&0&0\\
0&0&1&0\\
0&0&0&1\\
\end{pmatrix}.
\end{split}
\end{equation*}
Let $h_{\Sigma}$ be the induced Riemannian metric on the spacelike hypersurface $\Sigma$. Clearly, in general we do \textbf{not} have $h^{ij}_{\Sigma}= g^{ij}$. Indeed
\begin{equation*}
\begin{split}
h_{\Sigma}=\begin{pmatrix}
g_{X_{1}X_{1}}&0&0\\
0&1&0\\
0&0&1\\
\end{pmatrix}
\end{split}
\end{equation*}
and therefore,
\begin{equation*}
\begin{split}
h_{\Sigma}^{-1}=\begin{pmatrix}
\frac{1}{g_{X_{1}X_{1}}}&0&0\\
0&1&0\\
0&0&1\\
\end{pmatrix}.
\end{split}
\end{equation*}
Let $\psi :\mathcal{M}\rightarrow\mathbb{R}$ satisfy the wave equation. Then,
\begin{equation*}
\begin{split}
\Box_{g}\psi &=\text{tr}_{g}\left(\text{Hess}\,\psi\right)=g^{\a\b}\left(\nabla^{2}\psi\right)_{\a\b}=\\
&=g^{0\b}\left(\left(\nabla^{2}\psi\right)_{0\b}+\left(\nabla^{2}\psi\right)_{\b 0}\right)+g^{ij}\left(\nabla^{2}\psi\right)_{ij}.
\end{split}
\end{equation*}
We will prove that the operator 
\begin{equation*}
P\psi=g^{ij}\left(\nabla^{2}\psi\right)_{ij}
\end{equation*}
is strictly elliptic. Indeed, in view of the formula
\begin{equation*}
\left(\nabla^{2}\psi\right)_{ij}=X_{i}X_{j}\psi-\left(\nabla_{X_{i}}X_{j}\right)\psi,
\end{equation*}
the principal part $\sigma$ of $P$ is 
\begin{equation*}
\begin{split}
\sigma\psi=g^{ij}X_{i}X_{j}\psi.
\end{split}
\end{equation*}
If $\xi\in T^{*}\Sigma$, then 
\begin{equation*}
\begin{split}
\sigma\xi&=g^{ij}\xi_{i}\xi_{j}\\
&=\frac{1}{\left|g\right|}g_{NN}\xi_{1}^{2}+\xi_{2}^{2}+\xi_{3}^{3}\\
&>b\left(\frac{1}{g_{X_{1}X_{1}}}\xi_{1}^{2}+\xi_{2}^{2}+\xi_{3}^{3}\right)\\
&=b\left\|\xi\right\|,
\end{split}
\end{equation*}
where the ellipticity constant $b>0$  depends only on $\Sigma$. Moreover, if $\psi$ satisfies $\Box_{g}\psi=0$ then
\begin{equation*}
\begin{split}
\left\|P\psi\right\|^{2}_{L^{2}(\Sigma)}= &\left\|g^{0\b}\left(\left(\nabla^{2}\psi\right)_{0\b}+\left(\nabla^{2}\psi\right)_{\b 0}\right)\right\|_{L^{2}(\Sigma)}^{2}\\
\leq &C\int_{\Sigma}{ \left(\left\|NN\psi\right\|_{L^{2}\left(\Sigma\right)}^{2}+\sum_{i=1}^{3}{\left\|X_{i}N\psi\right\|_{L^{2}\left(\Sigma\right)}^{2}}+\sum_{i=1}^{3}{\left\|X_{i}\psi\right\|_{L^{2}\left(\Sigma\right)}^{2}}+\left\|N\psi\right\|_{L^{2}\left(\Sigma\right)}^{2}\right)}\\
\leq & C\int_{\Sigma}{J_{\mu}^{N}(\psi)n^{\mu}_{\Sigma}+J_{\mu}^{N}(N\psi)n^{\mu}_{\Sigma}},
\end{split}
\end{equation*}
where $C$ is a uniform constant that depends only on the geometry of $\Sigma$ and the precise choice of $N$. Therefore, if $\psi$ can be  shown to appropriately decay at infinity then by a global elliptic estimate on $\Sigma$ we obtain
\begin{equation*}
\begin{split}
\left\|\psi\right\|_{\overset{\!\!.}{H^{1}}\left(\Sigma\right)}^{2}+\left\|\psi\right\|_{\overset{\!\!.}{H^{2}}\left(\Sigma\right)}^{2}&\leq C\cdot\left\|P\psi\right\|_{L^{2}\left(\Sigma\right)}^{2}\leq\int_{\Sigma}{ CJ_{\mu}^{N}(\psi)n^{\mu}_{\Sigma}+CJ_{\mu}^{N}(N\psi)n^{\mu}_{\Sigma}}.
\end{split}
\end{equation*}for some uniform positive constant $C$.

In case our analysis is local and thus we want to confine ourselves in a compact submanifold $\overline{\Sigma}$ of $\Sigma$ then by a local elliptic estimate on $\overline{\Sigma}$ we have
\begin{equation*}
\begin{split}
\left\|\psi\right\|_{{H}^{2}\left(\overline{\Sigma}\right)}^{2}&\leq C\cdot\left\|P\psi\right\|_{L^{2}\left(\overline{\Sigma}\right)}^{2}+\left\|\psi\right\|_{{H}^{1}\left(\overline{\Sigma}\right)}^{2}\\
&\leq \int_{\overline{\Sigma}}{\left(CJ_{\mu}^{N}(\psi)n^{\mu}_{\overline{\Sigma}}+CJ_{\mu}^{N}(N\psi)n^{\mu}_{\overline{\Sigma}}+\psi^{2}\right)}.
\end{split}
\end{equation*}

One can also estimate spacetime integrals by using elliptic estimates. Indeed, if $\overline{\mathcal{R}}\left(0,\tau\right)$ is the spacetime region as defined before, then  
\begin{equation*}
\int_{\overline{\mathcal{R}}\left(0,\tau\right)}{f\left|\nabla u\right|dg_{\overline{\mathcal{R}}}}=\int_{0}^{\tau}{\left(\int_{\overline{\Sigma}_{\tau}}{f}dg_{\overline{\Sigma}_{\tau}}\right)dt},
\end{equation*}
where the integrals are with respect to the induced volume form and $u:\mathcal{M}\rightarrow\mathbb{R}$ is such that $u\left(p\right)=\tau$ iff $p\in\overline{\Sigma}_{\tau}$. Then $\nabla u$ is proportional to $n_{\overline{\Sigma}_{\tau}}$ and since $T(u)=1$, $\nabla u$ is $\phi_{\tau}$-invariant. Therefore, $\left|\nabla u\right|$ is uniformly bounded. If now $f$ is quadratic on the 2-jet of $\psi$ then
\begin{equation*}
\begin{split}
\left|\int_{\overline{\mathcal{R}}\left(0,\tau\right)}{fdg_{\overline{\mathcal{R}}}}\right|&\leq C\int_{0}^{\tau}{\left\|\psi\right\|_{{H}^{2}\left(\overline{\Sigma}_{\tilde{\tau}}\right)}^{2}d\tilde{\tau}}\\
&C\leq \int_{0}^{\tau}{\left(\int_{\overline{\Sigma}_{\tilde{\tau}}}{ J_{\mu}^{N}(\psi)n^{\mu}_{\overline{\Sigma}_{\tilde{\tau}}}+J_{\mu}^{N}(N\psi)n^{\mu}_{\overline{\Sigma}_{\tilde{\tau}}}}+\psi^{2}\right)d\tilde{\tau}}\\
&C\leq \int_{\overline{\mathcal{R}}\left(0,\tau\right)}{J_{\mu}^{N}(\psi)n^{\mu}_{\overline{\Sigma}}+CJ_{\mu}^{N}(N\psi)n^{\mu}_{\overline{\Sigma}}+\psi^{2}}.
\end{split}
\end{equation*}
In applications  we usually use these results away from $\mathcal{H}^{+}$ where we commute with $T$ and we use the $X$ estimate. We can also use this estimate even if $\Sigma$ (and $\mathcal{R}$) crosses $\mathcal{H}^{+}$, provided we have commuted the wave equation with $N$ and $NN$ (recall that we  need commutation with $NN$ only for degenerate black holes).

\end{document}